\newif\iffinalrun
\theoremstyle{definition}
\numberwithin{equation}{subsection}
 \theoremstyle{plain}
\newtheorem{theo}[equation]{Theorem}
 \theoremstyle{plain}
\theoremstyle{plain}
\theoremstyle{plain}
\theoremstyle{plain}
\theoremstyle{plain}
\newtheorem{conj}[equation]{Conjecture}
\theoremstyle{plain}
\theoremstyle{plain}
  \newtheorem{prop}[equation]{Proposition}
\theoremstyle{plain}
 \newtheorem{lemm}[equation]{Lemma}
\theoremstyle{plain}
\theoremstyle{plain}
\newtheorem{coro}[equation]{Corollary}
\theoremstyle{plain}
\theoremstyle{definition}
  \newtheorem{defi}[equation]{Definition}
\theoremstyle{definition}
 \theoremstyle{definition}
  \newtheorem{exam}[equation]{Example}
\theoremstyle{remark}
\newtheorem{rema}[equation]{Remark}
  \newcommand{\need}[1]{}
  \newcommand{\mar}[1]{}
  \newcommand{\need}[1]{{\tiny *** #1}}
  \newcommand{\mar}[1]{\marginpar{\raggedright\tiny #1}}
\DeclareMathOperator{\Gal}{Gal}
\DeclareMathOperator{\Mat}{Mat}
\DeclareMathOperator{\Hom}{Hom}
\newcommand{\M}{\mathrm{M}}
\newcommand{\N}{\mathbf{N}}
\newcommand{\Q}{\mathbf{Q}}
\newcommand{\Z}{\mathbf{Z}}
\newcommand{\R}{\mathbf{R}}
\newcommand{\Qp}{\mathbf{Q}_p}
\newcommand{\F}{\mathbf{F}}
\newcommand{\Bst}{\mathbf{B}_{\mathrm{st}}}
\newcommand{\Bdr}{\mathbf{B}_{\mathrm{dR}}}
\newcommand{\barS}{\overline{S}}
\newcommand{\Dst}{\mathrm{D}_{\mathrm{st}}}
\newcommand{\Tst}{\mathrm{T}_{\mathrm{st}}}
\newcommand{\SBr}{S_{\mathcal{O}_E}}
\newcommand{\rhobar}{\overline{\rho}}
\newcommand{\cM}{\mathcal{M}}
\newcommand{\fM}{\mathfrak{M}}
\newcommand{\fL}{\mathfrak{L}}
\newcommand{\fm}{\mathfrak{m}}
\newcommand{\Fil}{\mathrm{Fil}}
\newcommand{\GL}{\mathrm{GL}}
\newcommand{\cD}{\mathcal{D}}
\newcommand{\cO}{\mathcal{O}}
\newcommand{\cJ}{\mathcal{J}}
\newcommand{\barE}{\overline{E}}
\newcommand{\barF}{\overline{F}}
\newcommand{\adj}{\mathrm{adj}}
\newcommand{\Tr}{\mathrm{Tr}}
\newcommand{\vx}{\vec{x}}
\newcommand{\vr}{\vec{r}}
\newcommand{\vl}{\vec{l}}
\newcommand{\vLambda}{\vec{\Lambda}}
\newcommand{\vL}{\vec{\fL}}
\newcommand{\vk}{\vec{k}}
\newcommand{\cK}{\mathcal{K}}
\newcommand{\vth}{\vec{\theta}}
\newcommand{\vrho}{\vec{\varrho}}
\newcommand{\vdel}{\vec{\delta}}
\newcommand{\vDel}{\vec{\Delta}}
\newcommand{\vLam}{\vec{\Lambda}}
\newcommand{\vTh}{\vec{\Theta}}
\DeclareMathOperator{\Supp}{Supp}
\newcommand{\IQptwo}{{I_{\Q_{p^2}}}}
\newcommand{\baseE}{\underline{E}}
\newcommand{\baseF}{\underline{F}}
\author{Seongjae Han}
\author{Chol Park}
\address{Department of Mathematical Sciences, Ulsan National Institute of Science and Technology, Unist-gil 50, Ulsan 44919, Republic of Korea}
\email{seongjaehan@unist.ac.kr,\,\,\,cholpark@unist.ac.kr}
\begin{document}
\title{On families of strongly divisible modules of rank~$2$}
%\date{\today}

\begin{abstract}
Let $p$ be an odd prime, and $\Q_{p^f}$ the unramified extension of $\Q_p$ of degree~$f$. In this paper, we reduce the problem of constructing strongly divisible modules for $2$-dimensional semi-stable non-crystalline representations of $\Gal(\overline{\Q}_p/\Q_{p^f})$ with Hodge--Tate weights in the Fontaine--Laffaille range to solving systems of linear equations and inequalities. We also determine the Breuil modules corresponding to the mod-$p$ reduction of the strongly divisible modules. We expect our method to produce at least one Galois-stable lattice in each such representation for general $f$. Moreover, when the mod-$p$ reduction is an extension of distinct characters, we further expect our method to provide the two non-homothetic lattices. As applications, we show that our approach recovers previously known results for $f=1$ and determine the mod-$p$ reduction of the semi-stable representations with some small Hodge--Tate weights when $f=2$.
\end{abstract}

\maketitle

\tableofcontents

\section{Introduction}
The significance of the mod-$p$ reduction of geometric Galois representations has been demonstrated in various areas of Number Theory. In particular, one can use the mod-$p$ reduction to construct Galois deformation rings. The non-existence of certain potentially crystalline lifts (equivalently, the vanishing of certain potentially crystalline deformation rings) can be used to eliminate Sere weights in the weight part of Serre's conjecture \cite{GLS14,GLS15,MP,LMP,LLL}. Additionally, understanding the special fibers of certain deformation rings can be used to show the modularity of specific Serre weights \cite{LLHLM18,LLL,LLMPQb}. Furthermore, it plays a key role in understanding Breuil--M\'ezard conjecture \cite{BM,Kis09,Pas15,CEGS,EG,GK,LLHLM23}, Gelfand--Kirillov dimension \cite{BHHMS}, mod-$p$ local-global compatibility \cite{PQ,LLMPQa,EL}, etc.

There are a few known results on mod-$p$ reduction of geometric Galois representations. Breuil--M\'ezard computed the mod-$p$ reduction of $2$-dimensional semi-stable representations of $\Gal(\overline{\Q}_p/\Q_p)$ with Hodge--Tate weights $(0,r)$ when $r$ is odd and $0<r<p-1$ by constructing the strongly divisible modules \cite{BM}. This strategy was extended in \cite{GP,LP} to the case where $r$ is even and $0<r<p-1$. Savitt used the same approach to determine the mod-$p$ reduction of $2$-dimensional potentially crystalline representations of $\Gal(\overline{\Q}_p/\Q_p)$ with Hodge--Tate weights $(0,1)$ \cite{Savitt}. Bergdall--Levin--Liu computed the mod-$p$ reduction of $2$-dimensional semi-stable representations of $\Gal(\overline{\Q}_p/\Q_p)$ with large $\fL$-invariant, by using Breuil--Kisin modules \cite{BLL}. Buzzard--Gee employed the compatibility between $p$-adic Langlands correspondence and mod-$p$ Langlands correspondence to compute the mod-$p$ reduction of $2$-dimensional crystalline representations of $\Gal(\overline{\Q}_p/\Q_p)$ of slopes in $(0,1)$ \cite{BG09,BG13}, and Ghate and his collaborators applied similar strategies for the cases of slopes in $[1,2)$ \cite{BhG,BGR,GaG,GRa}. Chitrao--Ghate extended the results of \cite{BM,GP,LP} to the Hodge--Tate weights $(0,r)$ for $r=p-1,p$, by leveraging the Iwahori mod-$p$ local Langlands correspondence \cite{CG}. There are some notable results as well for $3$-dimensional representations of $\Gal(\overline{\Q}_p/\Q_p)$. The mod-$p$ reduction of $3$-dimensional semi-stable representations of $\Gal(\overline{\Q}_p/\Q_p)$ with Hodge--Tate weights $(0,1,2)$ was studied in \cite{Par} constructing strongly divisible modules. Additionally, Liu treated $3$-dimensional crystalline representations of $\Gal(\overline{\Q}_p/\Q_p)$ with Hodge--Tate weights $(0,r,s)$ for $2\leq r\leq p-2$ and $p+2\leq s\leq r+p-2$ using Breuil--Kisin modules \cite{Liu}. Despite these advances, little is known about the mod-$p$ reduction of semi-stable representations of $\Gal(\overline{\Q}_p/K)$ for finite extensions $K$ beyond $\Q_p$.

Let $\Q_{p^f}$ be the unramified extension of $\Q_p$ of degree~$f$. In this paper, we study the mod-$p$ reduction of $2$-dimensional semi-stable non-crystalline representations of $\Gal(\overline{\Q}_p/\Q_{p^f})$ with Hodge–Tate weights in the Fontaine–Laffaille range. We use strongly divisible modules as in \cite{BM}, but our approach is slightly different. We first introduce the notion of pseudo-strongly divisible modules of rank $2$, motivated by the anticipated structure of strongly divisible modules. The isotypic components of strongly divisible modules generated by elements of a particular form serve as canonical examples of pseudo-strongly divisible modules; however, the converse is far from true. These pseudo-strongly divisible modules are parameterized by quadruples $(\Lambda, \Theta, \Omega, x) \in (\overline{\Q}_p^\times)^3 \times \overline{\Q}_p$, which naturally appear as coefficients in their construction. We then formulate explicit conditions—given by a system of simple equations and inequalities—under which such a pseudo-strongly divisible module arises as an isotypic component of a genuine strongly divisible module (see \textbf{A} and \textbf{B} in \S\ref{subsec: main results, intro} or Proposition~\ref{prop: system of equations and inequalities sharpened}).

We expect that solving these equations and inequalities yields at least one strongly divisible module of rank $2$ corresponding to a Galois-stable lattice in each $2$-dimensional semi-stable non-crystalline representation of $\Gal(\overline{\Q}_p/\Q_{p^f})$ with Hodge–Tate weights in the Fontaine–Laffaille range, for arbitrary $f$. More optimistically, when the mod-$p$ reduction is an extension of two distinct characters, we further expect our method to produce both non-homothetic lattices. To support this framework, we verify that it reproduces known results for $f = 1$ \cite{BM, GP, LP}, and we present examples for $f = 2$ by explicitly solving the associated systems (see \S\ref{sec: examples}, in particular Remarks~\ref{rema: 2,2} and~\ref{rema: 1,5}).

\subsection{Main results}\label{subsec: main results, intro}
In this subsection, we introduce our main results. We start by introducing some notation. Let $E$ be a finite extension of $\Q_{p^f}$ with ring of integers $\cO$, uniformizer $\varpi$, and residue field $\F$. We will always consider $\Q_{p^f}$ as a subfield of $E$, and so if $a\in\Q_{p^f}$ then $a\in E$. Denote by $\cJ$ the cyclic group $\Z/f\Z$ which we identify with the set $\Hom_{\Q_p}(\Q_{p^f},E)$ by associating $1$ with the geometric Frobenius. Fix an isomorphism $$\Z_{p^f}\otimes_{\Z_p}\cO\cong \bigoplus_{j\in\cJ}\cO: a\otimes b\mapsto (\varphi^{-j}(a)b)_{j\in\cJ},$$ where $\varphi$ is the arithmetic Frobenius on $\Q_{p^f}$ and $\Z_{p^f}$ is the ring of integers of $\Q_{p^f}$. For each $j\in\cJ$, let $e_j$ be the idempotent element in $\Z_{p^f}\otimes_{\Z_p}\cO$ corresponding to the element $(0,\cdots,0,1,0,\cdots,0)\in \bigoplus_{j\in\cJ}\cO$, under the isomorphism above, where the non-zero entry appears at the $j$-th position. Additionally, we fix the isomorphism $\Q_{p^f}\otimes_{\Q_p} E\cong \bigoplus_{j\in\cJ}E$ and $\F_{p^f}\otimes_{\F_p}\F\cong \bigoplus_{j\in\cJ}\F$, both of which are compatible with the isomorphism above. By abuse of the notation, we will also write $e_j$ for the image of $e_j$ under the natural quotient map $\Z_{p^f}\otimes_{\Z_p}\cO\rightarrow \F_{p^f}\otimes_{\F_p}\F$. Note that we have $\varphi(e_j)=e_{j+1}$ and so $\varphi((d_j)_{j\in\cJ})=(d_{j-1})_{j\in\cJ}$. We also note that we will write $v_p$ for the $p$-adic valuation on $E$ normalized as $v_p(p)=1$, and write $v_p(\vec{y})$ for $(v_p(y_j))_{j\in\cJ}$ if $\vec{y}=(y_j)_{j\in\cJ}\in \bigoplus_{j\in\cJ}E(=E^f)$.

Let $\vr:=(r_j)_{j\in\cJ}$ be an $f$-tuple of positive integers with $r_j<p-1$ for all $j\in\cJ$, and fix $r\in\Z$ with $r_j\leq r<p-1$ for all $j\in\cJ$. Let $\cJ_0$ be a subset of $\cJ$ satisfying
\begin{equation}\label{eq: admissibility, intro}
\frac{1}{2f}\sum_{j\in\cJ}(r_j-1)\geq\frac{1}{f}\sum_{j\in \cJ_0}r_j,
\end{equation}
and set $\hat{\Q}:=\Q\cup\{\infty\}$ with the usual order. (Note that the condition~\eqref{eq: admissibility, intro} occurs from the admissibility of filtered $(\phi,N)$-modules.) For $\vk'=(k'_j)_{j\in\cJ}$ with 
\begin{equation}\label{eq: condition for k', intro}
k'_j\in
\begin{cases}
   [\frac{1}{2},r_j+\frac{1}{2}]\cap\frac{1}{2}\Z & \mbox{if } j\not\in\cJ_0; \\
   \{\infty\} & \mbox{if } j\in\cJ_0,
\end{cases}
\end{equation}
we further define a sign function $s_j:=(-1)^{2k'_j}$ if $j\not\in \cJ_0$ and $s_j:=-1$ if $j\in\cJ_0$.

We are now ready to introduce the equations and inequalities: for a subset $\cJ_0\subset\cJ$ with \eqref{eq: admissibility, intro} and for a tuple $\vk'=(k'_j)_{j\in\cJ}$ with \eqref{eq: condition for k', intro}, we consider the following equations and inequalities in $\vec{T}:=(T_0,T_1,\cdots,T_{f-1})\in \hat{\Q}^f$ and $\vec{t}:=(t_0,t_1,\cdots,t_{f-1})\in\hat{\Q}^f$:
\begin{enumerate}[leftmargin=*]
\item[\textbf{A}.] for each $j\in\cJ$
        \begin{equation*}
            T_j+s_jT_{j+1}
            =
            \begin{cases}
                2k'_j-r_j-\frac{1+s_j}{2} & \mbox{if } \frac{1}{2}\leq k'_j\leq \frac{r_j}{2}\,\,\mbox{or}\,\,k'_j=r_j+\frac{1}{2};\\
                2k'_j-r_j-\frac{1+s_j}{2}+2t_j  & \mbox{if } \frac{r_j+1}{2}\leq k'_j\leq r_j;\\
                r_j+1 & \mbox{if } k'_j=\infty,
            \end{cases}
        \end{equation*}
\item[\textbf{B}.] for each $j\in\cJ$
        \begin{equation*}
           T_j\leq 0\quad\mbox{ and }\quad
            \begin{cases}
                T_j\leq t_j  & \mbox{if }k'_j=\frac{1}{2};\\
                T_j\leq t_j,\, -1\leq T_{j+1} & \mbox{if } 1\leq k'_j\leq \frac{r_j}{2};\\
                T_j\geq t_j<0,\, -1\leq T_{j+1} &\mbox{if } \frac{r_j+1}{2}\leq k'_j\leq r_j-\frac{1}{2};\\
                t_j\leq T_j\leq t_j+r_j &\mbox{if }k'_j=r_j;\\
                T_j\geq t_j+r_j &\mbox{if }k'_j=r_j+\frac{1}{2};\\
                \mbox{none} &\mbox{if }k'_j=\infty.
            \end{cases}
        \end{equation*}
\end{enumerate}

For $(\vTh,\vx)=((\Theta_j)_{j\in\cJ},(x_j)_{j\in\cJ})\in (E^f)^{2}$ with $(\vec{T},\vec{t})=(v_p(\vTh),v_p(\vx))$ satisfying the equations in \textbf{A} and the inequalities in \textbf{B} for some $\cJ_0\subset\cJ$ with \eqref{eq: admissibility, intro} and for some $\vk'$ with \eqref{eq: condition for k', intro}, one can further choose $\vLam=(\Lambda_j)_{j\in\cJ}\in E^f$ satisfying
\begin{equation}\label{eq: definition of Lambda, intro}
v_p(\Lambda_j)=\frac{r_j-1-v_p(\Theta_j)+v_p(\Theta_{j+1})}{2}
\end{equation}
for all $j\in\cJ$. Then we give explicit recipes to construct a strongly divisible module and an admissible filtered $(\phi,N)$-module, determined by these data (see \S\ref{subsec: main results} as well as \S\ref{subsec: set up} for the precise construction):
\begin{itemize}[leftmargin=*]
\item the data $(\vLam,\vx,\vr)$ determines an admissible filtered $(\phi,N)$-module $D$ corresponding to a $2$-dimensional semi-stable non-crystalline representation of $\Gal(\overline{\Q}_p/\Q_{p^f})$ with $j$-labeled Hodge--Tate weights $(0,r_j)$ for all $j\in\cJ$;
\item the data $(\vLam,\vTh,\vx,\vr,r,\vk')$ determines a strongly divisible module $\fM$ of weight $r$.
\end{itemize}

\begin{theo}[Theorem~\ref{theo: main 2}]
$\fM$ corresponds to a Galois stable lattice in the semi-stable representation corresponding to $D$.
\end{theo}

An advantage of this theorem is that it allows the construction of strongly divisible modules corresponding to Galois stable lattices in $2$-dimensional semi-stable non-crystalline representations of $\Gal(\overline{\Q}_p/\Q_{p^f})$ with Hodge--Tate weights in the Fontaine--Laffaille range by solving the system of linear equations in \textbf{A} and the inequalities in \textbf{B}.

We expect that this method is quite general. More precisely, we propose the following:

\begin{conj}\label{conj: exhaust} 
\begin{enumerate}[leftmargin=*] 
\item This method provides at least one Galois stable lattice in each $2$-dimensional semi-stable non-crystalline representation of $\Gal(\overline{\Q}_p/\Q_{p^f})$ with Hodge--Tate weights in the Fontaine--Laffaille range. 
\item If the mod-$p$ reduction is an extension of two distinct characters, then this method exhausts the two non-homothetic lattices.
\end{enumerate}
\end{conj}

To support our conjecture, we verify that our method recovers the known results when $f=1$ (see \cite{BM, GP, LP}) and provide illustrative examples for $f=2$ (see \S\ref{sec: examples}; in particular, see Remarks~\ref{rema: 2,2} and~\ref{rema: 1,5}).

We further point out that we also compute the mod-$p$ reductions of pseudo-strongly divisible modules (see \S\ref{subsec: mod p reduction}). Once $(\vTh, \vx)$ satisfies the equations in \textbf{A} and the inequalities in \textbf{B} for some $\vk'$, these mod-$p$ reductions determine the Breuil module corresponding to the mod-$p$ reduction of the strongly divisible module $\fM$.

In the following example, we use our main results to compute the mod-$p$ reductions of the $2$-dimensional semi-stable non-crystalline representations of $\Gal(\overline{\Q}_p/\Q_{p^f})$ with parallel Hodge--Tate weights $(0,1)$.

\begin{exam}
Let $\vr\in\Z^f$ with $r_j=1$ for all $j\in\cJ$, and set $r=1$. It is immediate that $\cJ_0=\emptyset$ is the only choice satisfying the condition~\eqref{eq: admissibility, intro}. Thus $k'_j\neq \infty$ for any $j\in\cJ$, and it turns out that it is enough to consider the case $k'_j=\frac{1}{2}$ for all $j\in\cJ$.

For this $\vk':=(k'_j)_{j\in\cJ}$, the system of the equations in \textbf{A} and the inequalities in \textbf{B} is given by
\begin{itemize}[leftmargin=*]
  \item $T_j-T_{j+1}=0$ for all $j\in\cJ$;
  \item $T_j\leq t_j$ and $T_j\leq 0$ for all $j\in\cJ$,
\end{itemize}
which are summarized as
\begin{equation}\label{eq: example, intro}
v_p(\Theta_0)=v_p(\Theta_1)=\cdots=v_p(\Theta_{f-1})\leq \min\{0,v_p(x_0),\cdots,v_p(x_{f-1})\}
\end{equation}
for $\vTh\in E^f$ and $\vx\in E^f$. Moreover, we need to choose $\vLam\in (\cO^\times)^f$ by \eqref{eq: definition of Lambda, intro}. Note that for any $\vx\in E^f$ there exists $\vTh\in E^f$ satisfying \eqref{eq: example, intro}.

It turns out that $\vx$ determines the $\fL$-invariant $\vL\in [\mathbb{P}^1(E)]^f$ (see \eqref{eq: definition of varrho}, \eqref{eq: delta(-1)}, and \eqref{eq: definition of x}), and in this case we have $$x_j=\frac{p\fL_j-\fL_{j-1}}{p}\in E$$
for all $j\in\cJ$. Moreover, one can choose $\lambda\in\cO^\times$ satisfying $\lambda^f=\prod_{j\in\cJ}\Lambda_j$ (see \eqref{eq: From the setting 2}), and these data $(\lambda,\vL)$ together with $\vr$ determine an admissible filtered $(\phi,N)$-module, denoted by $D(\lambda,\vL)$, in Example~\ref{exam: admissible filtered modules}, which does not depend on the choice of $\lambda$ (see Lemma~\ref{lemm: isomorphism class of filtered phi N modules}).

The data $(\vLam,\vTh,\vx,\vr,r,\vk')$ also determine a strongly divisible module, but in this example we only describe the Breuil module, denoted by $\cM$, corresponding to the mod-$p$ reduction of the strongly divisible module, as the structure of the strongly divisible module is too complicated for the purpose of the introduction. For each $j\in\cJ$, we describe the isotypic component $$(\cM e_j,\Fil^1\cM e_j,\phi_1:\Fil^1\cM e_j\rightarrow \cM e_{j+1},N:\cM e_j\rightarrow \cM e_j)$$ following the matrix notation in \S\ref{subsubsec: Miscellaneous}: by Proposition~\ref{prop: mod p reduction, r_j=1} there exists a basis $\underline{E}:=(\bar{E}_1,\bar{E}_2)$ for $\cM$ and a system of generators $\underline{F}:=(\bar{F}_1,\bar{F}_2)$ for $\Fil^1\cM$ such that for each $j\in\cJ$
\begin{itemize}[leftmargin=*]
\item $\mathrm{Mat}_{\underline{E}e_j,\underline{F}e_j}(\Fil^1\cM e_j)=
\begin{bmatrix}
1 & 0 \\ \frac{x_j}{\Theta_j} & u 
\end{bmatrix}$;
\item $\mathrm{Mat}_{\underline{E}e_{j+1},\underline{F}e_j}(\phi_1)=
\begin{bmatrix}
\Lambda_j & 0 \\ 0 & -\tfrac{\Lambda_j\Theta_j}{\Theta_{j+1}}
\end{bmatrix}$;
\item $\mathrm{Mat}_{\underline{E}e_j,\underline{E}e_j}(N)=
\begin{bmatrix}
0 & 0 \\ \frac{1}{\Theta_j} & 0
\end{bmatrix}$.
\end{itemize}
Hence, one can conclude the following results by Caruso~\cite{Caruso}: if we let $\rhobar:=\Tst^\ast(\cM)$, where $\Tst^\ast$ is defined at \eqref{eq: definition of Tst}, then 
$$\rhobar\simeq
\begin{bmatrix}
  \omega & \ast \\
  0 & 1 
\end{bmatrix}\otimes U_{\lambda^f}$$
where $U_{\lambda^f}$ is the unramified character sending $\varphi^{-f}$ to $\lambda^f\in\F^{\times}$ and $\omega$ is the fundamental character of neveau $1$. Moreover, one can describe the extension class $\ast$ by observing the Breuil module above:
\begin{itemize}[leftmargin=*]
\item Assume that $v_p(x_j)\geq 0$ for all $j\in\cJ$. If we choose $\vTh$ with $v_p(\Theta_0)=\cdots=v_p(\Theta_{f-1})=0$ then $\rhobar_0$ is tr\`es ramifi\'ee, and otherwise it is split;
\item Assume that $v_p(x_j)<0$ for some $j\in\cJ$. Then $\rhobar$ is peu ramifi\'ee, and it is split if and only if the inequality in \eqref{eq: example, intro} is strict.
\end{itemize}
We note that this example recovers the main result of Cheon in his master's thesis~\cite{Cheon}.
\end{exam}

In the following subsection, we outline our approach for obtaining the equations and inequalities. We also briefly explain how a common solution $(\vLam,\vTh,\vx)$ to the equations in \textbf{A} and the inequalities in \textbf{B} for some $\vk'$ determines an admissible filtered $(\phi,N)$-module and a strongly divisible module.

\subsection{Strategy and overview}
Our basic strategy to compute mod-$p$ reduction follows \cite{BM}. The first step is to classify admissible filtered $(\phi, N)$-modules of rank $2$ with coefficients in $\Q_{p^f} \otimes_{\Q_p} E$, as there is an equivalence between the category of semi-stable representations and the category of admissible filtered $(\phi, N)$-modules \cite{CF}. The isomorphism classes of admissible filtered $(\phi, N)$-modules of rank $2$ have already been classified in \cite{Dous}. These classes are parameterized by the Frobenius eigenvalue $\lambda \in \cO$ and the $\fL$-invariants $\vL := (\fL_j)_{j \in \cJ} \in [\mathbb{P}^1(E)]^f$, and are denoted by $D(\lambda, \vL)$, when we fix the $j$-labeled Hodge--Tate weights $(0,r_j)$ for $r_j>0$ for all $j\in\cJ$ (see Example~\ref{exam: admissible filtered modules}). Since the absolute Galois group is compact, every semi-stable representation admits Galois stable lattices. Thanks to \cite{Bre99, Liu}, the corresponding admissible filtered $(\phi, N)$-module also has lattice structures, called strongly divisible modules, which correspond to these Galois stable lattices, if we further assume $r_j<p-1$ for all $j\in\cJ$. As the mod-$p$ reduction of Galois stable lattices is naturally defined, there exist Breuil modules associated with the mod-$p$ reduction of strongly divisible modules. By studying the structure of these Breuil modules, one can determine the mod-$p$ reduction of the semi-stable representations, and thanks to \cite{Caruso} our understanding for simple Breuil modules is relatively good.

The most difficult part in \cite{BM,GP} is constructing the strongly divisible modules, and it is the same here. But our approach to construct strongly divisible modules differs slightly from that of \cite{BM}. To explain our approach, we introduce the Breuil ring $S$. Let $S$ be the $p$-adic completion of the ring $\Z_{p^f}[\frac{u^i}{i!}\mid i\in\N]$, and fix $v:=u-p$. The ring $S$ can be explicitly described as follows:
$$S=\left\{\sum_{k=0}^{\infty}a_k\frac{v^k}{k!}\mid a_k\in\Z_{p^f}\mbox{ for all }k\in\N\cup\{0\}\mbox{ and }a_k\mapsto 0\mbox{ $p$-adically} \right\}.$$
This ring $S$ has additional structures: the filtration of ideals $\Fil^iS$ of $S$ for all $i\in\N\cup\{0\}$, the Frobenius map $\phi$ on $S$, and the monodromy operator $N$ on $S$. We further define $S_\cO:=S\otimes_{\Z_p}\cO$, and extend the definitions of $\phi$, $N$, and $\Fil^iS_{\cO}$ $\cO$-linearly. Note that one can decompose $S_\cO\cong\bigoplus_{j\in\cJ}S_\cO e_j$, and we often consider an $S_\cO$-module $\fM$ as an $\bigoplus_{j\in\cJ}S_\cO e_j$-module $\fM=\bigoplus_{j\in\cJ}\fM e_j$ via the isomorphism above. Each isotypic component $\fM e_j$ is an $S_\cO e_j$-module. To define a strongly divisible module, we first need to extend the coefficient of the admissible filtered $(\phi,N)$-module $D(\lambda,\vL)$ to $S_E:=S\otimes_{\Z_p}E$, by letting $\cD(\lambda,\vL):=S\otimes_{\Z_{p^f}}D(\lambda,\vL)$. Note that $\cD(\lambda,\vL)$ is naturally endowed with additional structures: the filtration $\{\Fil^i\cD\}_{i\in\N\cup\{0\}}$ (resp. $\phi$, $N$) induced from the filtration of $S$ (resp. $\phi$ on $S$, $N$ on $S$) and the filtration of $D$ (resp. $\phi$ on $D$, $N$ on $D$) (see \S\ref{subsec: strongly divisible modules, preliminary} for the precise definition). Then we fix $r\in\N$ with $r_j\leq r<p-1$ for all $j\in\cJ$, and a strongly divisible module of weight $r$ is, roughly speaking, a lattice structure in $\cD(\lambda,\vL)$ (see Definition~\ref{defi: sdm in D} for its definition).

We first introduce the notion of pseudo-strongly divisible modules. Let $S'_\cO$ (resp. $S'_E$) denote the ring $S_\cO e_j$ (resp. $S_E e_j$) for a fixed (and thus every) $j\in\cJ$. Let $\cD'$ be a free $S'_E$-module of rank $2$, with basis denoted by $E_1'$ and $E_2'$. Let $(r,r')$ be a pair of positive integers satisfying $r' \leq r < p-1$. For each $k\in [0,r']\cup\{\infty\}$, the pseudo-strongly divisible modules in $\mathbf{Case}~(k)$ consist of a quadruple
$$
(\fM',\, \Fil^{r;r'}_k \fM',\, \phi': \fM' \to \cD',\, N': \fM' \to \cD'),
$$
where: 
\begin{itemize}[leftmargin=*] 
\item $\fM'$ is the free $S'_\cO$-submodule of $\cD'$ generated by $E_1'$ and $E_2'$; 
\item $\Fil^{r;r'}_k\fM'$ is an $S'_\cO$-submodule of $\cD'$; 
\item $\phi'$ is a continuous $\phi$-semilinear map on $\fM'$; 
\item $N'$ is a continuous $\cO$-linear derivation. 
\end{itemize} 
Our definition of pseudo-strongly divisible modules is explicit (see Definition~\ref{defi: pseudo-sdm} for the precise statement) and is inspired by the expected structure of strongly divisible modules. The positive integer $r$ indicates the weight of the strongly divisible module, and $r'$ corresponds to a Hodge--Tate weight. The filtration $\Fil^{r;r'}_k\fM'$ is generated by a set of explicit elements in $\cD'$ depending on $k$. In particular, when $k=\infty$, we have 
\begin{equation}\label{eq: filtration of pseudo when k=infty} 
\Fil^{r;r'}_\infty\fM' = S'_\cO(v^r E_1', v^{r-r'} E_2') + \Fil^p S'_\cO\fM', 
\end{equation} 
which is intended to arise as the isotypic component at $j$ of a strongly divisible module when $\fL_j = \infty$. If the triple $(r,r',k)$ is fixed, then these pseudo-strongly divisible modules are parameterized by the data $(\Lambda,\Theta,\Omega,x) \in (E^\times)^3 \times E$ and $\Delta(T),\tilde{\Delta}(T) \in (E(T)\cap (E+\cO[[T+1]]))$. These data $(\Lambda,\Theta,\Omega,x)$ and $\Delta(\gamma-1),\tilde{\Delta}(\gamma-1) \in S'_\cO$, where $\gamma:=\frac{(u-p)^p}{p}\in S'_\cO$, naturally appears in the coefficients of the definition.

We wish that the pseudo-strongly divisible modules become the isotypic components of the strongly divisible modules. To achieve this, it suffices to impose the following conditions: 
\begin{enumerate}[leftmargin=*]
\item $\phi'(\fM')\subseteq\fM'$ and $N'(\fM')\subseteq\fM'$;
\item $\Fil^{r;r'}_k\fM'$ is a $S'_\cO$-submodule of $\fM'$;
\item $\phi'_r(\Fil^{r;r'}_k\fM')$ is contained in $\fM'$ and generates $\fM'$ over $S'_\cO$, where $\phi'_r:=\tfrac{1}{p^r}\phi'$.
\end{enumerate}
Condition~(i) can be readily described in terms of inequalities involving the quadruple $(v_p(\Lambda), v_p(\Theta), v_p(\Omega), v_p(x))$ (see Lemma~\ref{lemm: pseudo, condition for phi,N stable}). We also derive inequalities on the quadruple that are nearly equivalent to condition~(ii) for each $k$ (see Theorem~\ref{theo: filtration of pseudo}). Moreover, we show that for each $k$ $\Fil^{r;r'}_k\fM'$ can be written as
$$\Fil^{r;r'}_k\fM'=S'_\cO(G'_1,G'_2)+\Fil^pS'_\cO\fM',$$ 
where $G'_1,G'_2$ are explicitly defined for each $k$ (see \eqref{eq: filtration when r' leq r}).  For example, the filtration described in~\eqref{eq: filtration of pseudo when k=infty} corresponds to the case $k=\infty$. To check the condition (iii),  it is necessary for $\phi'$ to satisfy
\begin{equation}\label{eq: strong divisibility, intro}
\fM'=S'_\cO\left(\phi'_r(G'_1),\phi'_r(G'_2)\right)
\end{equation} 
for each $\mathbf{Case}~(k)$, and so we need to check the square matrix appeared in the following matrix multiplication is invertible:
$$(\phi'_r(G'_1),\phi'_r(G'_2))=(E'_1,E'_2)\cdot
\begin{bmatrix}
  a_k & b_k \\
  c_k & d_k 
\end{bmatrix}.$$
We will consider the following two possibilities $a_k\in(S_\cO')^\times$ or $b_k\in(S_\cO')^\times$, and it turns out that these two possibilities are mutually exclusive (depending the choice of $\Delta(T)$). Under the setting of $\mathbf{Case}~(k)$, we say that the pseudo-strongly divisible module $\fM'$ \emph{satisfies the strong divisibility of type $\mathbf{Case}_\phi(k)$} (resp. \emph{of type $\mathbf{Case}_\phi(k+\tfrac{1}{2})$}) if \eqref{eq: strong divisibility, intro} holds and $b_k\in(S_\cO')^\times$ (resp. $a_k\in(S_\cO')^\times$). Hence, we consider $\mathbf{Case}_\phi(k')$ for $k'\in([0,r'+\tfrac{1}{2}]\cap\tfrac{1}{2}\Z)\cup\{\infty,\infty+\tfrac{1}{2}\}$. But it turns out that $\mathbf{Case}_\phi(0)$ and $\mathbf{Case}_\phi(\infty)$ are empty, and so we consider $\mathbf{Case}_\phi(k')$ only for $k'\in([\tfrac{1}{2},r'+\tfrac{1}{2}]\cap\tfrac{1}{2}\Z)\cup\{\infty\}$, after renaming $\mathbf{Case}_\phi(\infty+\tfrac{1}{2})$ to $\mathbf{Case}_\phi(\infty)$. For $k'\in([\tfrac{1}{2},r'+\tfrac{1}{2}]\cap\tfrac{1}{2}\Z)\cup\{\infty\}$, we give enough conditions on the quadruple $(v_p(\Lambda),v_p(\Theta),v_p(\Omega),v_p(x))$ to guarantee that $\fM'$ satisfies the strong divisibility of type $\mathbf{Case}_\phi(k')$, and these conditions involve the equations as well as the inequalities on the quadruple. All the equations and inequalities obtained for (i), (ii), and (iii) are summarized in Table~\ref{tab:my_label}. Moreover, for each $k'$ we determine $\Delta(T)$ to ensure the strong divisibility of type $\mathbf{Case}_\phi(k')$, and it is worth noting that $\tilde{\Delta}(T)$ is not relevant to the strong divisibility (see Lemma~\ref{lemma formula for phi_r(F_kk) and phi_r(F_k0)}). Note that \S\ref{sec: pseudo strongly divisible modules} is devoted to deriving these conditions, and that we heavily use the frameworks developed in \S\ref{sec: Frame works} for the proofs.

The next step is applying the conditions obtained from the step in the previous paragraph to construct the strongly divisible modules. We begin by determining the form of the generators of strongly divisible modules in $\cD(\lambda,\vL)$, which we expect to be as follow: if we write $\eta_1,\eta_2$ for the basis of $D(\lambda,\vL)$ as in Example~\ref{exam: admissible filtered modules} then
\begin{equation}\label{eq: intro, generators}
    E_1=\vth_1\bigg(\eta_1+\frac{\varphi(\vrho)+\vDel^\varphi(\gamma-1)}{p}\eta_2\bigg)\qquad\&\qquad E_2=\vth_2\eta_2
\end{equation}
for some $\vth_1,\vth_2\in (E^\times)^f$, $\vrho \in E^f$, and $\vDel(T)\in (E(T)\cap(E+\cO[[T+1]]))^f$, where $\gamma:=\frac{(u-p)^p}{p}\in S_\cO$. (Here, by $\vDel^\varphi(T)$ we mean $(\Delta^{(j-1)}(T))_{j\in\cJ}$ if we write $\vDel(T)=(\Delta^{(j)}(T))_{j\in\cJ}$.) We further require that $\vrho$ satisfies
$\varrho_{j}=\fL_j$ if $\fL_j\neq \infty$, a condition that also arises from the definition of strongly divisible modules (see Remark~\ref{rema: why fL_j=varho_j}). We will write $\fM^\diamond$ for the $S_\cO$-submodule of $\cD(\lambda,\vL)$ generated by $E_1,E_2$. Moreover, we further have $\Fil^r\fM^\diamond:=\fM^\diamond\cap \Fil^r\cD$ as well as $\phi^\diamond,N^\diamond$ on $\fM^\diamond$ induced from $\phi,N$ on $\cD(\lambda,\vL)$, respectively, by restriction. This leads to the quadruple
\begin{equation}\label{eq: pseudo quadruple, intro}
(\fM^\diamond,\,\Fil^r\fM^{\diamond},\,\phi^\diamond:\fM^\diamond\rightarrow\cD(\lambda,\vL),\,N^\diamond:\fM^\diamond\rightarrow\cD(\lambda,\vL)),
\end{equation} which is parameterized by the data $(\vLam,\vTh,\vx)\in ((E^\times)^{f})^{2}\times E^f$ along with $\vDel(T)$. These parameters satisfy the relations
\begin{equation}\label{eq: connection between x and L, intro}
\vLam=\frac{\lambda\vth_1}{\varphi^{-1}(\vth_1)},\quad \vTh=\frac{\vth_2}{\vth_1},\quad \mbox{and}\quad
\vx=\frac{p\vrho-\varphi(\vrho)-\vDel^\varphi(-1)}{p}.
\end{equation}
For each $j \in \cJ$, we consider the isotypic component of $\fM^\diamond$ at $j\in\cJ$
$$
\big(\fM^\diamond e_j, \Fil^r \fM^\diamond e_j, \phi^{\diamond,(j)}: \fM^\diamond e_j \to \cD(\lambda,\vL) e_{j+1}, N^{\diamond,(j)}: \fM^\diamond e_j \to \cD(\lambda,\vL) e_j\big),
$$
which are parameterized by $(\Lambda_j,\Theta_j,\Theta_{j+1},x_j)\in (E^{\times})^3\times E$ and $(\Delta^{(j)}(T),\Delta^{(j-1)}(T))$. If we define the natural isomorphism $$\psi^{(j)}:\cD'\rightarrow \cD(\lambda,\vL)e_j\,:\,\,E'_1\mapsto E_1e_j\,\,\&\,\,E'_2\mapsto E_2e_j,$$ then by identifying $(r',\Lambda,\Theta,\Omega,x,\Delta,\tilde{\Delta})$ with $(r_j,\Lambda_j,\Theta_j,\Theta_{j+1},x_j,\Delta^{(j)},\Delta^{(j-1)})$, we have the following commutative diagram
$$
\xymatrix{
\cD'\ar@{->}[d]_{\psi^{(j)}}^{\simeq}&&\fM'\ar@{->}[ll]_{N'}\ar@{->}[rr]^{\phi'}\ar@{->}[d]_{\psi^{(j)}}^{\simeq} &&\cD'\ar@{->}[d]_{\psi^{(j+1)}}^{\simeq} &&\Fil^{r;r'}_k\fM'\ar@{->}[ll]_{\phi_r'}\ar@{->}[d]_{\psi^{(j)}}^{\simeq}\\
\cD(\lambda,\vL) e_{j}&&\fM^{\diamond} e_j\ar@{->}[ll]^{N^{\diamond,(j)}}\ar@{->}[rr]_{\phi^{\diamond,(j)}}&&\cD(\lambda,\vL) e_{j+1}&&\Fil^r\fM^{\diamond}e_{j}\ar@{->}[ll]^{\phi_r^{\diamond,(j)}}
}
$$
where $\fM'$ is a pseudo-strongly divisible module in $\mathbf{Case}~(k)$, under the assumption that $\Fil^{r;r'}_k\fM'$ is a $S'_\cO$-submodule of $\fM$ together with a parallel assumption on $\Fil^r\fM^{\diamond}e_{j}$ (see Proposition~\ref{prop: typical example of pseudo-SDM}).

We apply the properties of the first row of the diagram above to the second row, identifying $(r',\Lambda,\Theta,\Omega,x,\Delta,\tilde\Delta,k')$ with $(r_j,\Lambda_j,\Theta_j,\Theta_{j+1},x_j,\Delta^{(j)},\Delta^{(j-1)},k'_j)$ for each $j\in\cJ$. The resulting conditions on $(v_p(\vLam),v_p(\vTh),v_p(\vx))$ are summarized in \textbf{A}, \textbf{B}, and \eqref{eq: definition of Lambda, intro}. In other words, if $(v_p(\vTh),v_p(\vx))$ satisfies the corresponding equations and inequalities for some $\vk' = (k'_j)_{j\in\cJ}$, and if we choose $\vLam\in E^f$ according to \eqref{eq: definition of Lambda, intro}, then the quadruple in \eqref{eq: pseudo quadruple, intro} induced by \eqref{eq: intro, generators}, where $\vth_1,\vth_2,\vrho$ are determined by \eqref{eq: connection between x and L, intro} and $\vDel(T)$ is determined by $\vk'$, forms a strongly divisible module. Note that \S\ref{sec: main results} is devoted to deriving the main results.

As stated in Conjecture~\ref{conj: exhaust}, we believe that our method is quite general. To support this conjecture, we verify that it recovers known results in the case $f=1$ (see \cite{BM, GP, LP}) and provide illustrative examples for $\vr=(2,2)$ and $\vr=(1,5)$ when $f=2$ (see \S\ref{sec: examples}).

\subsection{Notation}
Much of the notation introduced in this subsection will also be introduced in the text, but we try to collect various definitions here for ease of reading.

As usual, we write $\N$ (resp. $\Z$, resp. $\Q$, resp. $\R$) for the set of natural numbers (resp. the ring of integers, resp. the field of rational numbers, resp. the field of real numbers). We further set $\N_0:=\N\cup\{0\}$.

\subsubsection{Galois groups}
Let $p$ be a prime, and $f$ be a positive integer. We let $\Q_{p^f}$ be the finite unramified extension of $\Q_p$ of degree $f$ with ring of integers $\Z_{p^f}$(=$W(\F_{p^f})$, the ring of Witt vectors over $\F_{p^f}$). We write $G_{\Q_{p^f}}$ for the absolute Galois group of $\Q_{p^f}$, and $I_{\Q_{p^f}}$ for the inertial subgroup of $G_{\Q_{p^f}}$. The cyclotomic character is denoted by $\varepsilon:G_{\Q_p}\rightarrow \Z_p^{\times}$, and the fundamental character of neveau $f$ is denoted by $\omega_f:G_{\Q_{p^f}}\rightarrow \F_{p^f}^\times$. We will naturally identify $\Gal(\Q_{p^f}/\Q_p)$ with $\Gal(\F_{p^f}/\F_p)$ via the natural surjection $\Z_{p^f}\twoheadrightarrow \F_{p^f}$, and will consider the arithmetic Frobenius $\varphi\in\Gal(\F_{p^f}/\F_p)$ as an element in $\Gal(\Q_{p^f}/\Q_p)$ via the identification. Since $\varphi$ is the generator of the Galois group, the association $\varphi^{-1}\leftrightarrow 1$ gives rise to an isomorphism $\Gal(\Q_{p^f}/\Q_p)\cong \Z/f\Z$, and we will fix this isomorphism throughout this paper.

\subsubsection{Coefficients}
Let $E$ be a finite extension of $\Q_{p^f}$ with ring of integers $\cO$, maximal ideal $\fm$, uniformizer $\varpi$, and residue field $\F$. We will always consider $\Q_{p^f}$ as a subfield of $E$, and so if $a\in\Q_{p^f}$ then $a\in E$. We may naturally consider the characters $\varepsilon$ and $\omega_f$ are defined over $\cO$ and $\F$, respectively. Let $\cJ:=\Hom_{\Q_p}(\Q_{p^f},\overline{\Q}_p)$, and we will naturally identify the following sets $$\cJ=\Hom_{\Q_p}(\Q_{p^f},E)=\Hom_{\Z_p}(\Z_{p^f},\cO)=\Hom_{\F_p}(\F_{p^f},\F)=\Gal(\F_{p^f}/\F_p)=\Z/f\Z.$$

Fix a ring isomorphism
\begin{equation}\label{eq: ring coefficient}
\Z_{p^f}\otimes_{\Z_p}\cO\cong \bigoplus_{j\in\cJ}\cO: a\otimes b \mapsto \left(\varphi^{-j}(a)\cdot b\right)_{j\in\cJ},
\end{equation}
and write $e_j\in\Z_{p^f}\otimes_{\Z_p}\cO$ for the idempotent element corresponding to the element $(b_{j'})_{j'\in\cJ}\in \bigoplus_{j\in\cJ}\cO$ with $b_{j'}=1$ if $j'=j$ and $b_{j'}=0$ otherwise. The action of $\varphi$ on $\Z_{p^f}\otimes_{\Z_p}\cO$ (acting naturally on $\Z_{p^f}$ and trivially on $\cO$) sends $e_j$ to $e_{j+1}$ due to our choice of the isomorphism above. We will often consider $\varphi$ as a map from $\cO^f$ onto itself sending $(a_j)_{j\in\cJ}$ to $(a_{j-1})_{j\in\cJ}$. Note that these idempotent elements satisfy
    $$e_j\cdot e_{j'}=
    \begin{cases}
    e_j & \mbox{if }j=j';\\
    0 & \mbox{otherwise}
    \end{cases}
    \quad\mbox{ and }\quad\sum_{j\in\cJ}e_j=1.
    $$
Moreover, they also satisfy that for each $j\in\cJ$ and for all $a\otimes b\in \Z_{p^f}\otimes_{\Z_p}\cO$
    \begin{equation}\label{eq: field coefficient}
    (a\otimes b)e_j=\left(1\otimes \varphi^{-j}(a)\cdot b\right)e_j.
    \end{equation}
    The fixed isomorphism above also induces isomorphisms $$\Q_{p^f}\otimes_{\Q_p}E\cong \bigoplus_{j\in\cJ}E\quad\mbox{ and }\quad\F_{p^f}\otimes_{\F_p}\F\cong \bigoplus_{j\in\cJ}\F,$$ and by abuse of the notation we also write $e_j$ for the corresponding idempotents in each case. We further note that these $e_j$ in $\Q_{p^f}\otimes_{\Q_p}E$ (resp. in $\F_{p^f}\otimes_{\F_p}\F$) also satisfy the same properties as described on $\Z_{p^f}\otimes_{\Z_p}\cO$.

Let $v_p$ be the $p$-adic valuation on $\overline{\Q}_p$ normalized as $v_p(p)=1$. If $\vec{y}=(y_j)_{j\in\cJ}\in\bigoplus_{j\in\cJ}E$, then we write $v_p(\vec{y})$ for $(v_p(y_j))_{j\in\cJ}$.

\subsubsection{Elements of the ring $S$}
Let $S$ be the $p$-adic completion of the ring $\Z_{p^f}[\frac{u^i}{i!}\mid i\in\N]$, and fix $v:=u-p\in S$. We also let $\phi:S\rightarrow S$ be a continuous $\varphi$-semilinear map with $\phi(u)=u^{p}$. We let $c:=\phi(v)/p\in S^\times$ and $\gamma:=\frac{(u-p)^p}{p}\in S$. It is easy to see that $c-(\gamma-1)\equiv pu^{p-1}\pmod{p^2S}$ and $\phi(\gamma)\in p^{p-1}S$. These two elements naturally appear in our strongly divisible modules. We further let $S_\cO:=S\otimes_{\Z_p}\cO$, and often identify $$S_\cO\cong\bigoplus_{j\in\cJ}S_{\cO} e_j.$$
If we write $S'$ for the $p$-adic completion of $\Z_p[\frac{u^i}{i!}\mid i\in\N]$, then we naturally identify $S'_\cO:=S'\otimes_{\Z_p}\cO$ with $S_\cO e_j$ for all $j\in\cJ$. If $\fM$ is an $S_\cO$-module, then for each $j\in\cJ$ we will naturally consider the isotypic component $\fM e_j$ at $j\in\cJ$ as a module of $S'_\cO$ via the identification. Moreover, if $\vDel(u)\in S_\cO$, then we often consider it as an element of $\bigoplus_{j\in\cJ}S'_{\cO}$ via the identification above, and write it as $\vDel(u)=(\Delta^{(j)}(u))_{j\in\cJ}$. By $\phi(\vDel(u))$ we mean the usual action of $\phi$ on $S_\cO$, and by $\vDel^\varphi(u)$ we mean the shifting, i.e., $\vDel^\varphi(u):=(\Delta^{(j-1)}(u))_{j\in\cJ}$. Similarly, we write $S'_E$ for $S'\otimes_{\Z_p}E$ and we have natural isomorphisms $S'_E\cong S_Ee_j$ for all $j\in\cJ$.

Let $R[[T]]$ be the power series ring over a commutative ring $R$ with one variable $T$, and let $n$ be positive integer. We write $R[T]^{(n)}$ for the subgroup of $R[T]$ consisting of the polynomials of degree less than equal to $n$. For $F(T)=\sum_{k\geq0}a_kT^k\in R[[T]]$, we set $[F(T)]_T^{(n)}:=\sum_{k=0}^{n}a_kT^k$. For instance, we set $f_n(T):=0$ if $n=1$, and
    \begin{equation}\label{eq: definition of f_r}
    f_{n}(T):=\bigg[\frac{1}{T}\log(1+T)\bigg]_{T}^{(n-2)}=\sum_{k=0}^{n-2}\frac{(-1)^k}{k+1}T^k\in\Z_{(p)}[T]
    \end{equation}
if $n\geq 2$. We also write $\dot{F}(T)$ for the formal derivative of $F(T)=\sum_{k\geq0}a_kT^k$, i.e., $\dot{F}(T)=\sum_{k\geq 1}ka_kT^{k-1}\in R[[T]]$. Note that if $F(T)\in\cO[[T+1]]$, then $F(\gamma-1)$ naturally belongs to $S'_\cO$.

\subsubsection{Miscellaneous}\label{subsubsec: Miscellaneous}
Let $R$ be a commutative ring. We write $\mathrm{M}_{m\times n}(R)$ for the ring of matrices of size $(m\times n)$ over $R$, and $I_n\in \mathrm{M}_{n\times n}(R)$ (resp. $0_{m,n}\in\mathrm{M}_{m\times n}(R)$) for the identity matrix (resp. for the zero matrix). If $M$ is a free module over $R$ with a basis $\underline{E}:=(E_1,E_2)$, and $A\in M$ then by $[A]_{E_i}$ we mean the coefficients of $E_i$ in $A$. If $N$ is an $R$-module generated by $\underline{F}:=(F_1,F_2)$, and $f:N\rightarrow M$ is a (semi)-linear map, then $f(F_j)=A_{1,j}E_1+A_{2,j}E_2$ for all $j=1,2$ if and only if
$$\mathrm{Mat}_{\underline{E},\underline{F}}(f)=
\begin{bmatrix}
  A_{1,1} & A_{1,2} \\
  A_{2,1} & A_{2,2}
\end{bmatrix}.$$
Furthermore, if $N$ is a submodule of $M$, then $[F_j]_{E_i}=B_{i,j}\in R$ for all $i,j\in\{1,2\}$ if and only if
$$\mathrm{Mat}_{\underline{E},\underline{F}}(N)=
\begin{bmatrix}
  B_{1,1} & B_{1,2} \\
  B_{2,1} & B_{2,2}
\end{bmatrix}.$$

If $M$ is a non-empty set, we often write $M^{\cJ}$ for $M^f$. If $\vec{A}\in M^\cJ$, by $A_j$ we often mean the entry of $\vec{R}$ at the $j$-th position, i.e., $\vec{A}=(A_0,\cdots,A_{f-1})$. If $\cJ_0\subset \cJ$ then we will naturally identify $M^\cJ$ with $M^{\cJ_0}\times M^{\cJ\setminus\cJ_0}$ respecting the coordinates. For instance, we write $\hat{\R}$ for $\R\cup\{\infty\}$ with order topology, and we will naturally identify $\hat{\R}^\cJ=\hat{\R}^{\cJ_0}\times \hat{\R}^{\cJ\setminus \cJ_0}$.

Finally, for a positive integer $n\in\N$, we write $H_n\in\Q$ for the harmonic number $\sum_{i=1}^n\frac{1}{i}$.

\subsection{Acknowledgment}
The authors sincerely thank Christophe Breuil and Florian Herzig for numerous helpful comments and suggestions. The authors also thank Eknath Ghate, Yongquan Hu, Wansu Kim, Stefano Morra, Zicheng Qian, and David Savitt for a plenty of helpful discussions.

This work was supported by Samsung Science and Technology Foundation under Project Number SSTF-BA2001-02.

\smallskip

\section{Preliminary}\label{sec: preliminary}
In this section, we quickly review (integral) $p$-adic Hodge theory and fix some notation. We note that the materials in this section are already well-known or easy generalization of known results. Moreover, the exposition in this section is quite close to \cite{GP,Par}, but we decided to have a quick review of integral $p$-adic Hodge theory for the completeness and for the ease of reading.

Throughout this section, we write $E$ for a finite extension of $\Q_p$ containing $\Q_{p^f}$ with the ring of integers $\cO$, the maximal ideal $\fm$, a uniformizer $\varpi$, and the residue field $\F$. We also let $K$ be a unramified extension of $\Q_p$ of degree $f$, i.e., $K=\Q_{p^f}$. All the results in this section still hold for arbitrary finite extension of $\Q_p$, but for simpler notation, we restrict our attention to the unramified extension of $\Q_p$, which is our context. We write $\cO_K$, $\fm_K$, and $k$ for the ring of integers, the maximal ideal, and the residue field, respectively, of $K$.

\subsection{Filtered $(\phi,N)$-modules}
In this subsection, we introduce weakly admissible filtered $(\phi,N)$-modules and their connection to semi-stable representations.

We fix a prime $p$, so that we fix the inclusion $\Bst\hookrightarrow\Bdr$. (For detail, see \cite{Fon94}.)
\begin{defi}
A \emph{filtered $(\phi,N,K,E)$-module} (or simply, a \emph{filtered $(\phi,N)$-module}) is a free $K\otimes_{\Q_p}E$-module $D$ of finite rank together with a triple $\left(\phi,N,\{\Fil^iD\}_{i\in\Z}\right)$ where
\begin{itemize}[leftmargin=*]
\item the \emph{Frobenius map} $\phi:D\rightarrow D$ is a $\varphi$-semilinear and $E$-linear automorphism;
\item the \emph{monodromy operator} $N:D\rightarrow D$ is a (nilpotent) $K\otimes_{\Q_p}E$-linear endomorphism such that $N\phi=p\phi N$;
\item the \emph{Hodge filtration} $\{\Fil^{i}D\}_{i\in\Z}$ is a decreasing filtration on $D$ such that a $K\otimes_{\Q_p}E$-submodule $\Fil^{i}D$ is
$D$ if $i\ll 0$ and $0$ if $i\gg 0$.
\end{itemize}
The morphisms of filtered $(\phi,N)$-modules are $K\otimes_{\Q_p} E$-module homomorphisms that commute with $\phi$ and $N$ and that preserve the filtration. We also say that a filtered $(\phi,N)$-module $D$ is \emph{weakly admissible} if it is in the sense of \cite{BM}.
\end{defi}

Let $V$ be a finite-dimensional $E$-vector space equipped with continuous $E$-linear action of $G_{K}$, and define $$\Dst(V):=(\Bst\otimes_{\Q_p}V)^{G_{K}}.$$  Then $\dim_{K}\Dst(V)\leq\dim_{\Q_p}V$.  If the equality holds, then we say that $V$ is \emph{semi-stable}; in that case $\Dst(V)$ inherits from $\Bst$ the structure of a weakly admissible filtered $(\phi,N)$-module. We say that $V$ is \emph{crystalline} if $V$ is semi-stable and the monodromy operator $N$ on $\Dst(V)$ is~$0$.
\begin{theo}[\cite{CF}]
The functor $\Dst$ provides an equivalence between the category of semi-stable $E$-representations of $G_{K}$ and the category of weakly admissible filtered $(\phi,N,K,E)$-modules.
\end{theo}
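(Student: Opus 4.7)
The plan is to invoke the foundational theorem of Colmez--Fontaine, whose proof proceeds in three stages. First I would verify that $\Dst$ really does land in the category of weakly admissible filtered $(\phi,N,K,E)$-modules: the $\phi$- and $N$-structures on $\Dst(V) := (\Bst \otimes_{\Q_p} V)^{G_{K}}$ are inherited from the corresponding structures on $\Bst$, the Hodge filtration is induced from the fixed embedding $\Bst \hookrightarrow \Bdr$, and weak admissibility reduces to the equality of Newton and Hodge numbers on the determinant together with the subobject inequality, extracted from the $G_K$-equivariant theory of $\Bcris$ and $\Bdr$.

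Next I would construct the candidate quasi-inverse
$$\Vst(D) \defeq \Fil^0(D \otimes_K \Bst)^{\phi=1,\,N=0},$$
equipped with its continuous $G_K$-action from $\Bst$. Full faithfulness of $\Dst$ and the universal inequality $\dim_E \Vst(D) \leq \dim_K D$ for any filtered $(\phi,N)$-module $D$ both follow from the fundamental exact sequence
$$0 \to \Q_p \to \Bcris^{\phi=1} \to \Bdr/\Fil^0\Bdr \to 0$$
by a standard tensoring and cohomological argument, together with the adjunction between $\Dst$ and $\Vst$.

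The main obstacle, and the deep content of the theorem, is the reverse inequality: every weakly admissible $D$ is admissible. I would proceed by induction on $\dim_K D$, reducing first to the case where $D$ has no proper nontrivial weakly admissible sub-object. Using the monodromy filtration and the Frobenius--monodromy relation $N\phi = p\phi N$, one then reduces to the crystalline case $N=0$, where the Newton--Hodge decomposition combined with the Dieudonn\'e--Manin classification of isocrystals allows one to peel off a sub-object of appropriate slope. The crucial structural inputs are the stability of weak admissibility under tensor products, duals, and subquotients, and an explicit construction of enough periods inside $\Bcris$ to trivialize $D \otimes_K \Bst$ after passing to $G_K$-invariants. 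The hardest technical step is the low-rank crystalline base case, where the original Colmez--Fontaine argument relies on a delicate analysis of Newton polygons for matrices over $\Bcris$ and already contains essentially all of the analytic difficulty.
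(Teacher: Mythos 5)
The paper gives no proof of this statement: it simply cites it as a black-box result of Colmez--Fontaine~\cite{CF}, and the surrounding text (in \S2.1) only records the variant $\mathrm{D^*_{st}}$ and its quasi-inverse $\mathrm{V^*_{st}}$ that are actually used later. Your sketch is a reasonable high-level summary of the Colmez--Fontaine argument as it appears in the literature --- the ``easy'' inclusion of the image in weakly admissible modules via Newton--Hodge comparisons, the fundamental exact sequence giving full faithfulness and the dimension inequality, and the deep direction (weakly admissible $\Rightarrow$ admissible) by induction, Dieudonn\'e--Manin, and period construction in $\Bcris$. Two points your proposal glosses over that would need attention in a self-contained write-up: first, the statement in the paper is for $E$-representations and filtered $(\phi,N,K,E)$-modules with coefficients in a finite extension $E/\Q_p$, whereas the Colmez--Fontaine theorem is stated for $\Q_p$-coefficients, so one must observe that the equivalence is $E$-linear and descends to the $E$-coefficient categories (this is routine but not automatic); second, the paper works with the contravariant $\mathrm{V^*_{st}}(D)=\Hom_{\phi,N}(D,\Bst)\cap\Hom_{\Fil^*}(D,\Bst)$ rather than your covariant $\Vst(D)=\Fil^0(D\otimes_K\Bst)^{\phi=1,N=0}$, and the normalization (via the choice of $\pi=p$ for the embedding $\Bst\hookrightarrow\Bdr$) matters for later computations, as the paper itself notes. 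None of this is a gap in your understanding of the theorem; it is simply not something the paper undertakes to prove.
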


The functor $\Dst$ does depend on the choice of an element $\pi$ in $\overline{\Q}_p$ with $v_p(\pi)>0$. More precisely, the filtration of the filtered $(\phi,N)$-module $\Dst(V)$ depend on the choice of the embedding $\Bst\hookrightarrow\Bdr$ which is determined by $\pi$. To avoid this issue, we choose $p\in\Q_p$. Note that the functor $\Dst$ restricted to the category of crystalline representations does not depend on~$\pi$.

If we write $V^*$ for the dual representation of $V$, then $V$ is semi-stable (resp. crystalline) if and only if so is $V^*$. If we denote $\mathrm{D_{st}^{*}}(V):=\Dst(V^*)$, then the functor $\mathrm{D_{st}^{*}}$ gives rise to an anti-equivalence between the category of semi-stable $E$-representations of $G_{K}$ and the category of weakly admissible filtered $(\phi,N,K,E)$-modules. The quasi-inverse to $\mathrm{D^{*}_{st}}$ is given by $$\mathrm{V^{*}_{st}}(D):=\Hom_{\phi,N}(D,\Bst)\cap\Hom_{\Fil^{*}}(D, \Bst).$$ (that is, the homomorphisms of $K\otimes_{\Q_p} E$-modules that commute with $\phi$ and $N$ and that preserve the filtration.)
$\mathrm{V^{*}_{st}}(D)$ inherits an $E$-module structure from the $E$-module structure on $D$ and an action of $G_{K}$ from the action of $G_{K}$ on $\Bst$.

Let $D$ be a filtered $(\phi,N,K,E)$-module of rank $n$. Then the isomorphism $\Q_{p^f}\otimes_{\Q_p}E\cong \bigoplus_{j\in\cJ}E$ induces an isomorphism $$D\cong\bigoplus_{j\in\cJ}D^{(j)}:\, d\mapsto (de_j)_{j\in\cJ}$$ where $D^{(j)}:=De_j$ and $e_j$ is the idempotent elements described in the notation section. For each $j\in\cJ$ the isotypic component $D^{(j)}$ at $j$ is an $n$-dimensional vector space over $E$ by \eqref{eq: field coefficient}, and the Frobenius map $\phi$ (resp. the monodromy operator $N$) on $D$ inherits an $E$-linear map $$\phi^{(j)}:D^{(j)}\rightarrow D^{(j+1)}\qquad(\mbox{resp.}\quad N^{(j)}:D^{(j)}\rightarrow D^{(j)})$$ for all $j\in\cJ$. Moreover, for each $j\in\cJ$ $\Fil^iD^{(j)}:=(\Fil^iD)e_j$ is an $E$-linear subspace of $D^{(j)}$, and we may write $\Fil^iD=\bigoplus_{j\in\cJ}\Fil^iD^{(j)}$. The \emph{$j$-labeled Hodge--Tate weights} of the filtered $(\phi,N)$-module $D$ are defined to be the integers $h$ such that $\Fil^{h}D^{(j)} \neq \Fil^{h+1}D^{(j)}$, each counted with multiplicity $\dim_{E}(\Fil^{h}D^{(j)}/\Fil^{h+1}D^{(j)})$. We say that $h$ is a \emph{Hodge--Tate weight of $D$} if it is a $j$-labeled Hodge--Tate weight for some $j\in\cJ$. When a filtered $(\phi,N)$-module $D$ of rank $n$ has $n$ distinct $j$-labeled Hodge--Tate weights for all $j\in\cJ$, we say that $D$ is \emph{regular} (or that it has regular Hodge--Tate weights). We say that a filtered $(\phi,N)$-module is \emph{positive} if the lowest $j$-labeled Hodge--Tate weight is greater than or equal to $0$ for each $j\in\cJ$. We also say a filtered $(\phi,N)$-module is \emph{parallel} if the $j$-labeled Hodge--Tate weights does not depend on $j$.

If $V$ is semi-stable, then by \emph{$j$-labeled Hodge--Tate weights of $V$} (resp. \emph{Hodge--Tate weights of $V$}), we mean those of $\mathrm{D_{st}^{*}}(V)$, and we write $\mathrm{HT}(V)_j$ for the set of $j$-labeled Hodge--Tate weights of $V$.  Our normalization implies that the cyclotomic character $\varepsilon:G_{\Q_p}\rightarrow E^\times$ has Hodge–Tate weight $1$. After a twist of a suitable Lubin--Tate character, we are free to assume that the lowest $j$-labeled Hodge--Tate weight is $0$ for all $j\in\cJ$. Similarly, we say that $V$ is \emph{regular} (resp. \emph{positive}, resp. \emph{parallel}) if $\mathrm{D_{st}^{*}}(V)$ is. In particular, if $V$ is parallel then we have $\mathrm{HT}(V)_j=\mathrm{HT}(V)_{j'}$ for all $j,j'\in\cJ$.

The following example comes from \cite{Dous}.
\begin{exam}\label{exam: admissible filtered modules}
Let $D$ be a free $K\otimes_{\Q_p}E$-module of rank $2$ with basis $\underline{\eta}:=(\eta_1,\eta_2)$, i.e., $D=K\otimes_{\Q_p}E(\eta_1,\eta_2)$. We endow $D$ with a filtered $(\phi,N,K,E)$-module structure, denoted by $D(\lambda,\{[\xi_j:\nu_j]\}_{j\in\cJ})$:
\begin{itemize}[leftmargin=*]
\item $\Mat_{\underline{\eta}}(N)=
\begin{bmatrix}
0 &0\\
1 & 0
\end{bmatrix}$ and
$\Mat_{\underline{\eta}}(\phi)=
\begin{bmatrix}
p\lambda &0\\
0 & \lambda
\end{bmatrix}$ for $\lambda\in E$;
\item $\Fil^iD=\bigoplus_{j\in\cJ}\Fil^iD^{(j)}$ where for each $j\in\cJ$
$$\Fil^iD^{(j)}=
\begin{cases}
D^{(j)} & \mbox{if }i\leq 0;\\
E(\xi_j\eta_1+\nu_j\eta_2)e_j & \mbox{if }0<i\leq r_j;\\
0 & \mbox{if }r_j<i
\end{cases}
$$
for $[\xi_j:\nu_j]\in\mathbb{P}^1(E)$.
\end{itemize}
It turns out that $D(\lambda,\{[\xi_j:\nu_j]\}_{j\in\cJ})$ is weakly admissible if and only if
\begin{equation}\label{eq: condition for lambda, weakly admissible}
v_p(\lambda)=\frac{1}{2f}\sum_{j\in\cJ}(r_j-1)\quad\mbox{ and }\quad v_p(\lambda)\geq\frac{1}{f}\sum_{j\in I_D}r_j,
\end{equation}
where we set $$I_D:=\{j\in\cJ\mid\xi_j=0\}.$$ Moreover, it is also known that $D(\lambda,\{[\xi_j:\nu_j]\}_{j\in\cJ})$ exhaust all the $2$-dimensional semi-stable non-crystalline representations $V$ of $G_{K}$ with $\mathrm{HT}(V)_j=\{0,r_j\}$ for all $j\in\cJ$.

We often write $\fL_j:=[\xi_j:\nu_j]=\frac{\nu_j}{\xi_j}\in\mathbb{P}^1(E)$ and $\vL:=(\fL_j)_{j\in\cJ}$. In particular, we write $\fL_j=\infty$ if $\xi_j=0$. Note that the data $\left(\lambda,(\fL_j)_{j\in\cJ}\right)$ determines a weakly admissible filtered $(\phi,N)$-module $D(\lambda,(\fL_j)_{j\in\cJ})$ if $\lambda$ satisfies \eqref{eq: condition for lambda, weakly admissible}.
\end{exam}

By easy linear algebra, one can describe the isomorphism classes of $D(\lambda,(\fL_j)_{j\in\cJ})$.

\begin{lemm}\label{lemm: isomorphism class of filtered phi N modules}
$D(\lambda,(\fL_j)_{j\in\cJ})$ is isomorphic to $D(\lambda',(\fL'_j)_{j\in\cJ})$ if and only if $\lambda^f=(\lambda')^f$ and $\fL_j=\fL'_j$ for all $j\in\cJ$.
\end{lemm}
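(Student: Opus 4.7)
The plan is to decompose any isomorphism $\psi\colon D(\lambda,\vL)\to D(\lambda',\vL')$ via the idempotents $e_j$ and then decode what $\psi$ must look like on each $D^{(j)}$. First, since $\psi$ is $K\otimes_{\Q_p}E$-linear it commutes with multiplication by $e_j$, hence splits as $\psi=\bigoplus_{j\in\cJ}\psi^{(j)}$ with each $\psi^{(j)}\colon D^{(j)}\to(D')^{(j)}$ an $E$-linear isomorphism. Let $M_j\in\GL_2(E)$ be its matrix in the bases $(\eta_1e_j,\eta_2e_j)$ and $(\eta_1'e_j,\eta_2'e_j)$. Compatibility with $N$ forces $M_j$ to commute with the common matrix of $N^{(j)}=(N')^{(j)}$, and an elementary computation shows that $M_j$ is lower triangular with equal diagonal entries, i.e.
\[
M_j=\begin{bmatrix}a_j & 0\\ c_j & a_j\end{bmatrix},\qquad a_j\in E^\times,\ c_j\in E.
\]

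Next I would feed this into the Frobenius compatibility $\psi^{(j+1)}\circ\phi^{(j)}=(\phi')^{(j)}\circ\psi^{(j)}$. Multiplying the two sides out using the diagonal form of the Frobenius matrix yields the scalar recursions
\[
a_{j+1}=\tfrac{\lambda'}{\lambda}\,a_j \qquad\text{and}\qquad c_{j+1}=\tfrac{\lambda'}{p\lambda}\,c_j.
\]
Iterating around the cycle of length $f$, the first relation forces $a_0=(\lambda'/\lambda)^fa_0$, which (since $a_0\ne 0$) gives $\lambda^f=(\lambda')^f$; the second then becomes $c_0=p^{-f}c_0$, forcing $c_0=0$ and hence $c_j=0$ for every $j$. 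So $M_j=a_jI_2$ is scalar.

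With $M_j$ scalar, the filtration condition $\psi^{(j)}(\Fil^1D^{(j)})=\Fil^1(D')^{(j)}$ for each $j$ with $r_j\geq 1$ says that the line through $a_j(\xi_j\eta_1'+\nu_j\eta_2')$ in $(D')^{(j)}$ coincides with the line through $\xi_j'\eta_1'+\nu_j'\eta_2'$, forcing $[\xi_j:\nu_j]=[\xi_j':\nu_j']$ and hence $\fL_j=\fL_j'$. This proves the ``only if'' direction. Conversely, given $\lambda^f=(\lambda')^f$ and $\fL_j=\fL_j'$ for all $j$, pick any $a_0\in E^\times$, define $a_j:=(\lambda'/\lambda)^ja_0$ (which returns to $a_0$ after $f$ steps by hypothesis), and let $\psi^{(j)}$ be the scalar $a_j$ acting via $\eta_i e_j\mapsto a_j\eta_i'e_j$; the resulting $\psi=\bigoplus_{j\in\cJ}\psi^{(j)}$ is $K\otimes_{\Q_p}E$-linear, commutes with $\phi$ and $N$ by direct matrix check, and preserves the filtration since $\fL_j=\fL_j'$. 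The only mild obstacle is bookkeeping with the semilinearity conventions, in particular verifying that each $\phi^{(j)}\colon D^{(j)}\to D^{(j+1)}$ is genuinely $E$-linear so that the matrix algebra used above is legitimate; once that is settled, the argument reduces to the elementary linear algebra just sketched.
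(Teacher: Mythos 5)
Your proof is correct and follows the same idempotent decomposition as the paper's own argument. In fact you supply more detail than the paper does: the paper simply asserts that commutativity with $\phi$ and $N$ yields $f(\eta_i e_j)=a_j\eta_i e_j$, while you carefully record that $N$-commutativity alone only forces each $M_j$ to be lower triangular with equal diagonal entries, and that it is the factor $p$ in the recursion $c_{j+1}=\tfrac{\lambda'}{p\lambda}c_j$ (iterated around the cycle, using $\lambda^f=(\lambda')^f$) that kills the off-diagonal entry. That step is genuinely necessary — the filtration condition alone would only give $\fL_j'=\fL_j+c_j/a_j$ — so your write-up makes explicit a point the paper glosses over.
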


\begin{proof}
Let $f:D(\lambda,(\fL_j)_{j\in\cJ})\rightarrow D(\lambda',(\fL'_j)_{j\in\cJ})$ be an isomorphism of filtered $(\phi,N)$-modules. From the commutativity with $\phi,N$, for $i=1,2$ and $j\in\cJ$ we have $f(\eta_ie_j)=a_{j}\eta_ie_j$ for some $a_{j}\in E^\times$ with $a_{j+1}\lambda=a_{j}\lambda'$. Since $\cJ$ is cyclic, we conclude $\lambda^f=(\lambda')^f$. Moreover, we further conclude that $\fL_j=\fL_j'$ for all $j\in\cJ$, as it should preserve the filtration. For the converse, first observe $\lambda'=\zeta\cdot\lambda$ for some $f$-th root of unity $\zeta\in E$. If we let $f:D(\lambda,(\fL_j)_{j\in\cJ})\rightarrow D(\lambda',(\fL'_j)_{j\in\cJ})$ be a $K\otimes_{\Q_p}E$-linear map given by $f(\eta_i e_j)=\zeta^{j}\eta_ie_j$ for all $j\in\cJ$ and $i=1,2$, then it is tedious to see that it is an isomorphism between the two filtered $(\phi,N)$-modules.
\end{proof}

\subsection{Strongly divisible modules}\label{subsec: strongly divisible modules, preliminary}
In this subsection, we introduce strongly divisible modules and their connection to Galois stable lattices of semi-stable representations.

Let $W(k)$ be the ring of Witt vectors over $k$ so that $K=W(k)[\frac{1}{p}]$. Let $v:=u-p\in W(k)[u]$, and let $S$ be the $p$-adic completion of $W(k)[\frac{u^{i}}{i!}]_{i\in\N}$. We endow $S$ with the following structure:
\begin{itemize}[leftmargin=*]
\item a continuous $\varphi$-semilinear map $\phi:S\rightarrow S$ with $\phi(u)=u^{p}$, called the \emph{Frobenius map} on $S$;
\item a continuous $W(k)$-linear derivation $N:S\rightarrow S$ with $N(u^{i}/i!)=-iu^{i}/i!$, called the \emph{monodromy map} on $S$;
\item a decreasing filtration $\{\Fil^{i}S\}_{i\in\N_{0}}$ where $\Fil^{i}S$ is the $p$-adic completion of $\sum_{j\geq i}\frac{v^{j}}{j!}S$.
\end{itemize}
Note that $N\phi=p\phi N$ and $\phi(\Fil^{i}S)\subset p^{i}S$ for $0\leq i\leq p-1$.

Let $S_{\cO}:=S\otimes_{\Z_p}\cO$ and $S_{E}:=S_{\cO}\otimes_{\Z_p}\Q_p$, and extend the definitions of $\Fil^*$, $\phi$, and $N$ to $S_{\cO}$ and $S_{E}$ $\cO$-linearly and $E$-linearly, respectively. Let $\mathcal{MF}(\phi,N,K,E)$ be the category whose objects are finite free $S_{E}$-modules $\cD$ with
\begin{itemize}[leftmargin=*]
\item a $\phi$-semilinear and $E$-linear morphism $\phi:\cD\rightarrow\cD$ such that the determinant of $\phi$ with respect to some choice of $S_{\Q_p}$-basis is invertible in $S_{\Q_p}$ (which does not depend on the choice of basis);
\item a decreasing filtration of $\cD$ by $S_{E}$-submodules $\Fil^{i}\cD$, $i\in\Z$, with $\Fil^{i}\cD=\cD$ for $i\leq 0$ and $\Fil^{i}S_{E}\cdot\Fil^{j}\cD\subset\Fil^{i+j}\cD
    $ for all $j$ and all $i\geq 0$;
\item a $K_{0}\otimes E$-linear map $N:\cD
\rightarrow \cD
$ such that
    \begin{itemize}[leftmargin=*]
    \item $N(sx)=N(s)x+sN(x)$ for all $s\in S_{E}$ and $x\in\cD$,
    \item $N\phi=p\phi N$,
    \item $N(\Fil^{i}\cD)\subset\Fil^{i-1}\cD$ for all $i$.
    \end{itemize}
\end{itemize}

For a filtered $(\phi,N)$-module $D$ with positive Hodge--Tate weights, one can associate an object $\cD\in\mathcal{MF}(\phi,N,K,E)$ by the following:
\begin{itemize}[leftmargin=*]
\item $\cD:=S\otimes_{W(k)}D$;
\item $\phi:=\phi\otimes\phi:\cD\rightarrow\cD$;
\item $N:=N\otimes\mathrm{Id}+\mathrm{Id}\otimes N:\cD\rightarrow\cD$;
\item $\Fil^{0}\cD:=\cD$ and, by induction, $$\Fil^{i+1}\cD:=\{x\in\cD\mid N(x)\in\Fil^{i}\cD\mbox{ and }f_{p}(x)\in\Fil^{i+1}D\}$$ where $f_{p}:\cD\twoheadrightarrow D$ is defined by $s(u)\otimes x\mapsto s(p)x$.
\end{itemize}
By \cite{Bre97}, we see that the functor $\cD:D\mapsto S\otimes_{W(k)}D$ gives rise to an equivalence between the category of positive filtered $(\phi,N,K,E)$-modules and the category $\mathcal{MF}(\phi,N,K,E)$.

\begin{exam}\label{exam: filtration of cD}
Let $D$ be the weakly admissible filtered $(\phi,N)$-module in Example~\ref{exam: admissible filtered modules}, and fix a positive integer $r$ with $r_j\leq r <p-1$. If we let $\cD:=\cD(D)$ then we may write $$\cD=\bigoplus_{j\in\cJ}\cD^{(j)}\supset \Fil^r\cD=\bigoplus_{j\in\cJ}\Fil^r\cD^{(j)}$$
where $\cD^{(j)}:=\cD e_j$, $\Fil^r\cD^{(j)}:=(\Fil^r\cD) e_j$, and $e_j$ is the idempotents defined in \eqref{eq: ring coefficient}. Then, it is easy to see that for each $j\in\cJ$
$$\Fil^r\cD^{(j)}=\Fil^{r-r_j}S_E \left(\xi_j\eta_1+\nu_j\eta_2+\xi_j\frac{v}{p}f_{r_j}\big(\frac{v}{p}\big)\eta_2\right)e_j+(\Fil^rS_E\cD) e_j$$
where $f_{r_j}$ is defined in \eqref{eq: definition of f_r}, as a $S_Ee_j$-submodule of the isotypic component $\cD^{(j)}$.
\end{exam}

\begin{defi}
Fix a positive integer $r< p-1$. A \emph{strongly divisible module of weight~$r$} is defined to be a free $S_{\cO}$-modules $\fM$ of finite rank with an $S_{\cO}$-submodule $\Fil^{r}\fM$ and additive maps $\phi,N:\fM\rightarrow\fM$ such that the following properties hold:
\begin{itemize}[leftmargin=*]
\item $\Fil^{r}S_{\cO}\cdot\fM\subset\Fil^{r}\fM$;
\item $\Fil^{r}\fM\cap I\fM=I\Fil^{r}\fM$ for all ideals $I$ in $\cO$;
\item $\phi(sx)=\phi(s)\phi(x)$ for all $s\in S_{\cO}$ and for all $x\in\fM$;
\item $\phi_r(\Fil^{r}\fM)$ is contained in $\fM$ and generates it over $S_{\cO}$, where $\phi_r:=\frac{1}{p^r}\phi$;
\item $N(sx)=N(s)x+sN(x)$ for all $s\in S_{\cO}$ and for all $x\in\fM$;
\item $N\phi=p\phi N$;
\item $v N(\Fil^{r}\fM)\subset\Fil^{r}\fM$.
\end{itemize}
The morphisms are $S_{\cO}$-linear maps that preserve $\Fil^r$ and commute with $\phi$ and $N$. The category of strongly divisible modules of weight $r$ is denoted by $\mathfrak{MD}^{r}_{\cO}$.
\end{defi}

For a strongly divisible module $\fM$ of weight $r$, there exists a unique weakly admissible filtered $(\phi,N)$-module $D$ with Hodge--Tate weights lying in $[0,r]$ such that $\fM[\frac{1}{p}]\simeq \cD(D)$. More precisely, one can construct a free $S_E$-module $\cD:=\fM\otimes_{\Z_p}\Q_p$, and extend $\phi$ and $N$ on $\cD$. The filtration on $\cD$ is described as follows: $\Fil^r\cD=\Fil^r\fM[\frac{1}{p}]$ and
\begin{equation*}
\Fil^i\cD:=
\begin{cases}
\cD &\mbox{if }i\leq 0;\\
\{x\in\cD\mid v^{r-i}x\in\Fil^r\cD\}&\mbox{if }0\leq i\leq r;\\
\sum_{j=0}^{i-1}(\Fil^{i-j}S_{\Q_p})\Fil^j\cD & \mbox{if }i>r, \mbox{ inductively.}
\end{cases}
\end{equation*}
Let $s_0:S_{\Q_p}\rightarrow K$ and $s_p:S_{\Q_p}\rightarrow K$ be defined by $u\mapsto 0$ and $u\mapsto p$, respectively, and let $D:=\cD\otimes_{S_{\Q_p},s_0}K$. The map $s_0$ induces $\phi$ and $N$ on $D$, and the map $s_p$ induces the filtration on $D$ by taking $\Fil^iD:=s_p(\Fil^i\cD)$. Then it turns out that $D$ is a weakly admissible filtered $(\phi,N,K,E)$-module with $\Fil^0D=D$ and $\Fil^{r+1}D=0$.

Hence, one has the following equivalent definition of strongly divisible modules.
\begin{defi}\label{defi: sdm in D}
For a weakly admissible filtered $(\phi,N)$-module $D$ such that $\Fil^{0}D=D$ and $\Fil^{r+1}D=0$, a \emph{strongly divisible module} in $\cD:=\cD(D)$ is defined as an $S_{\cO}$-submodule $\fM$ of $\cD$ such that
\begin{itemize}[leftmargin=*]
\item $\fM$ is a free $S_{\cO}$-module of finite rank such that $\fM[\frac{1}{p}]\simeq\cD$;
\item $\fM$ is stable under $\phi$ and $N$;
\item $\phi(\Fil^r\fM)\subset p^{r}\fM$ where $\Fil^r\fM:=\fM\cap\Fil^r\cD$.
\end{itemize}
\end{defi}

For a strongly divisible module $\fM$, an $\cO[G_{K}]$-module $\Tst^\ast(\fM)$ is defined as follows:
$$\Tst^\ast(\fM):=\Hom_{S,\Fil^r,\phi,N}(\fM,\widehat{\mathbf{A}}_{\mathrm{st}}).$$  (see \cite{Bre99} for detail.) $\Tst^\ast(\fM)$ inherits an $\cO$-module structure from the $\cO$-module structure on $\fM$ and an action of $G_{K}$ from the action of $G_{K}$ on $\widehat{\mathbf{A}}_{\mathrm{st}}$.
\begin{theo}[\cite{Bre99,Liu08,EGH}]
Assume that $0< r< p-1$. The functor $\Tst^\ast$ provides an anti-equivalence of categories between the category $\mathfrak{MD}^{r}_{\cO}$ of strongly divisible modules of weight $r$ and the category of $G_{K}$-stable $\cO$-lattices in semi-stable $E$-representations of $G_{K}$ with Hodge--Tate weights lying in $[0,r]$, .
\end{theo}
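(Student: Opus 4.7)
The plan proceeds in three stages: (a) verify that $\Tst^\ast$ lands in the target category, (b) prove full faithfulness using the generic-fiber equivalence, and (c) construct a quasi-inverse via Kisin modules and Liu's $(\varphi, \widehat G)$-module framework. For (a), I would check that for a strongly divisible module $\fM$ of weight $r$, the $\cO$-module $\Tst^\ast(\fM) = \Hom_{S, \Fil^r, \phi, N}(\fM, \widehat{\mathbf{A}}_{\mathrm{st}})$ is finite free of rank equal to $\mathrm{rank}_{S_\cO}\fM$, carries a continuous $G_K$-action, and satisfies $\Tst^\ast(\fM)\otimes_{\Zp}\Qp \cong \Vst^\ast(D)$ for $D := \fM[1/p]\otimes_{S_E,\,u\mapsto 0}(K\otimes_{\Qp}E)$. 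The rank computation proceeds by reduction modulo $\varpi^n$ and d\'evissage to the torsion Breuil modules of Caruso--Liu, followed by passing to the inverse limit; the generic-fiber identification invokes Fontaine's crystalline/semi-stable comparison and the compatibility between $\widehat{\mathbf{A}}_{\mathrm{st}}$ and $\Bst$.

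For (b), Fontaine's equivalence between weakly admissible filtered $(\phi,N)$-modules and semi-stable $E$-representations provides a rational lift of any $G_K$-equivariant morphism $\Tst^\ast(\fM) \to \Tst^\ast(\fM')$ to a map $\fM[1/p] \to \fM'[1/p]$. The task reduces to showing that this rational map preserves the strongly divisible lattices, which follows from an intrinsic characterization of $\fM \subset \fM[1/p]$: the image in $\widehat{\mathbf{A}}_{\mathrm{st}}\otimes_{\Zp}\Tst^\ast(\fM)$ must lie in the sub-$S_\cO$-module cut out by the divided-power integrality conditions. Faithfulness is immediate from the torsion-free structure on both sides.

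For (c), given a $G_K$-stable $\cO$-lattice $T$ in a semi-stable $E$-representation $V$ with Hodge--Tate weights in $[0,r]$, the quasi-inverse is built in three steps: Kisin's theorem attaches to $T$ (after restriction to the Galois group of the Kummer tower) a unique $\varphi$-module $\mathfrak{M}(T)$ of $E$-height $\leq r$ over $\mathfrak{S}_\cO := W(k)[[u]]\otimes_{\Zp}\cO$; one forms $\fM := S_\cO \otimes_{\varphi, \mathfrak{S}_\cO} \mathfrak{M}(T)$ with induced Frobenius and filtration $\Fil^r\fM := \fM \cap \Fil^r \cD$ (where $\cD := \cD(D)$); and finally one constructs the monodromy operator $N$ on $\fM$ from Liu's $(\varphi, \widehat G)$-module structure attached to $T$. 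The strong divisibility condition $\phi_r(\Fil^r\fM)\cdot S_\cO = \fM$ then follows from Kisin's height bound, and $N\phi = p\phi N$ is built into Liu's construction. The main obstacle is this last step: establishing integrality of $N$ on $\fM$, not merely on $\fM[1/p]$, requires delicate power-series estimates on Liu's ring $\widehat{\mathcal R}$, and this is precisely where the hypothesis $r < p-1$ enters essentially, to guarantee convergence and compatibility with the divided powers of $\Fil^r S$.
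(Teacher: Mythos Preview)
The paper does not prove this theorem: it is stated as a black-box citation to \cite{Bre99,Liu08,EGH} and used as input to the rest of the paper. There is therefore no ``paper's own proof'' to compare against.

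That said, your sketch is a reasonable outline of how the cited references actually establish the result. Breuil \cite{Bre99} handles the case without coefficients (and originally under slightly stronger hypotheses on $r$), Liu \cite{Liu08} removes those restrictions via Kisin modules and the $(\varphi,\widehat{G})$-module machinery essentially as you describe in step (c), and \cite{EGH} carries out the passage to $\cO$-coefficients. Your identification of the integrality of $N$ as the delicate point, and the role of the bound $r<p-1$ in controlling divided powers, is accurate. One small caution: in step (b), the ``intrinsic characterization'' of $\fM$ inside $\fM[1/p]$ via images in $\widehat{\mathbf{A}}_{\mathrm{st}}$ is more subtle than you suggest and is not how full faithfulness is typically argued; it is cleaner to deduce it from the equivalence with Kisin modules, where lattices on the Galois side correspond bijectively to lattices on the module side by construction.
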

There is also a compatibility: if $\fM$ is a strongly divisible module in $\cD:=\cD(D)$ for a weakly admissible filtered $(\phi,N)$-module $D$, then $\Tst^\ast(\fM)$ is a Galois stable $\cO$-lattice in the semi-stable representation $\mathrm{V^{*}_{st}}(D)$.

For a strongly divisible module $\fM$ of weight $r$, we often write
$$\fM=\bigoplus_{j\in\cJ}\fM^{(j)}\supset \Fil^r\fM=\bigoplus_{j\in\cJ} \Fil^r\fM^{(j)}$$
where $\Fil^r\fM^{(j)}:=(\Fil^r\fM) e_j$ and $\fM^{(j)}:=\fM e_j$. Note that $\Fil^r\fM^{(j)}$ is a $S_\cO e_j$-submodule of the isotypic component $\fM^{(j)}$. Moreover, the map $\phi$ (resp. $N$) on $\fM$ inherits the maps
$$\phi^{(j)}:\fM^{(j)}\rightarrow \fM^{(j+1)}\qquad(\mbox{resp.}\quad N^{(j)}:\fM^{(j)}\rightarrow \fM^{(j)})$$
for all $j\in\cJ$. If we write $S'$ for the $p$-adic completion of $\Z_p[\frac{u^i}{i!}]_{i\in\N}$ and $\phi'$ is the Frobenius map on $S'$, then by identifying $S'_\cO:=S'\otimes_{\Z_p}\cO$ with $S_\cO e_j$ we may regard $\phi^{(j)}$ as a $\phi'$-semilinear and $\cO$-linear map. The map $\phi^{(j)}$ also induces $$\phi_r^{(j)}:=\frac{1}{p^r}\phi^{(j)}:\Fil^r\fM^{(j)}\rightarrow\fM^{(j+1)}.$$
Moreover, if we write $N'$ for the monodromy map on $S'$, then $N^{(j)}$ satisfies $N^{(j)}(sx)=N'(s)x+sN^{(j)}(x)$ for all $s\in S'_\cO$ and $x\in\fM^{(j)}$.

\subsection{Breuil modules}
In this subsection, we introduce Breuil modules and their connection to the mod-$p$ reduction of semi-stable representations. Fix again a positive integer $r<p-1$, and let $\barS_\F:=S_\cO/(\varpi,\Fil^pS_\cO)=(k\otimes_{\F_p}\F)[u]/u^{p}$.
\begin{defi}
A \emph{Breuil modules of weight $r$} consists of quadruples $(\cM,\Fil^r\cM,\phi_{r},N)$ where
\begin{itemize}[leftmargin=*]
\item $\cM$ is a finitely generated $\barS_\F$-module, free over $k[u]/u^{p}$, (which implies that $\cM$ is in fact a free $\barS_\F$-module of finite rank);
\item $\Fil^r\cM$ is a $\barS_\F$-submodule of $\cM$ containing $u^{r}\cM$;
\item $\phi_{r}:\Fil^r\cM\rightarrow\cM$ is $\F$-linear and $\phi$-semilinear (where $\phi:k[u]/u^{p}\rightarrow k[u]/u^{p}$ is the $p$-th power map) with image generating $\cM$ as $\barS_\F$-module;
\item $N:\cM\rightarrow\cM$ is $k\otimes_{\F_p}\F$-linear and satisfies
     \begin{itemize}[leftmargin=*]
     \item $N(ux)=uN(x)-ux$ for all $x\in\cM$,
     \item $uN(\Fil^r\cM)\subset \Fil^r\cM$, and
     \item $\phi_{r}(uN(x))=cN(\phi_{r}(x))$ for all $x\in\Fil^r\cM$, where $c\in (k[u]/u^{p})^{\times}$ is the image of $\frac{1}{p}\phi(v)$ under the natural map $S\rightarrow k[u]/u^{p}$.
     \end{itemize}
\end{itemize}
The morphisms are $\barS_\F$-module homomorphisms that preserve $\Fil^r\cM$ and commute with $\phi_{r}$ and $N$. The category of Breuil modules of weight $r$ is denoted by $\mathrm{BrMod}^{r}_{\F}$.
\end{defi}

If $\fM$ is an object of $\mathfrak{MD}^{r}_{\cO}$, then $$\cM:=\fM/(\varpi,\Fil^{p}S_\cO)\fM$$ is naturally an object of $\mathrm{BrMod}^{r}_{\F}$. More precisely,
$\phi$, $N$, and $\Fil^r\cM$ are induced as follow:
\begin{itemize}[leftmargin=*]
\item $\Fil^r\cM$ is the image of $\Fil^r\fM$ in $\cM$;
\item the map $\phi_{r}$ is induced by $\frac{1}{p^{r}}\phi|_{\Fil^r\fM}$;
\item $N$ is induced by the one on $\fM$.
\end{itemize}
Note that this association gives rise to a functor from the category $\mathfrak{MD}^{r}_{\cO}$ to $\mathrm{BrMod}^{r}_{\F}$.

For a Breuil module $\cM$, we define a $\F[G_K]$-module $\Tst^\ast(\cM)$ as follows: 
\begin{equation}\label{eq: definition of Tst}
\Tst^\ast(\cM):=\Hom_{k[u]/u^{p},\Fil^r,\phi_{r},N}(\cM,\widehat{\mathbf{A}}).
\end{equation} 
(See~\cite{EGH} for detail.) $\Tst^\ast(\cM)$ inherits an $\F$-module structure from the $\F$-module structure on $\cM$ and an action of $G_{K}$ from the action of $G_{K}$ on $\widehat{\mathbf{A}}$.
\begin{theo}[\cite{Caruso}]
The functor $\Tst^\ast$ gives rise to a fully faithful contravariant functor from the category $\mathrm{BrMod}^{r}_{\F}$ to the category of finite-dimensional $\F$-representations of $G_{K}$ with $\dim_{\F}\Tst^\ast(\cM)=\mathrm{rank}_{(k\otimes_{\F_p}\F)[u]/u^{p}}\cM$.
\end{theo}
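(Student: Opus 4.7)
The plan is to establish the dimension formula first, and then deduce full faithfulness through the combination of exactness of $\Tst^\ast$ and a comparison with the theory of étale $\varphi$-modules. The first step is to verify the flatness and $\phi$-equivariance properties of $\widehat{\mathbf{A}}$ needed to show that $\Tst^\ast$ carries short exact sequences in $\mathrm{BrMod}^r_\F$ to short exact sequences of $\F[G_K]$-modules. The hypothesis $r < p-1$ is crucial here, since it guarantees $\phi(\Fil^r S) \subset p^r S$ and forces $\phi_r$ to behave well modulo~$p$, which in turn is what makes $\widehat{\mathbf{A}}$ into an exact cogenerator for the Hom functor.

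With exactness in hand, the dimension formula reduces by devissage to the case of rank-one Breuil modules. Such objects can be classified explicitly: each is of the form $\cM = (k\otimes_{\F_p}\F)[u]/u^p \cdot e$ with $\Fil^r\cM = u^n \cM$ for some $0 \le n \le r$, $\phi_r(u^n e) = \alpha e$ for a unit $\alpha$, and $N=0$. A direct computation inside $\widehat{\mathbf{A}}$ then gives $\dim_\F \Tst^\ast(\cM) = 1$, with the resulting Galois action realizing the expected character (a power of $\omega_f$ twisted by an unramified character determined by $\alpha$). Induction on rank via a filtration by Breuil submodules with rank-one graded pieces finishes the dimension formula. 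Faithfulness follows immediately: if $f : \cM_1 \to \cM_2$ satisfies $\Tst^\ast(f) = 0$, then applying $\Tst^\ast$ to $0 \to \ker f \to \cM_1 \to \mathrm{im}\, f \to 0$ and using the dimension formula forces $\mathrm{im}\, f$ to have rank zero, hence $f=0$.

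The main obstacle is fullness. The cleanest strategy, which I would follow, is to factor $\Tst^\ast$ through the category of étale $\varphi$-modules over an imperfect residue ring attached to a compatible system $(\pi_n)_{n\ge 0}$ of $p$-power roots of $-p$ in $\overline{K}$, generating a tower $K_\infty/K$. Under $r < p-1$, one checks that the natural extension-of-scalars functor from $\mathrm{BrMod}^r_\F$ to étale $\varphi$-modules is fully faithful, and Fontaine's equivalence identifies the target with the category of continuous finite-dimensional $\F$-representations of $G_{K_\infty}$. The delicate last step is to upgrade $G_{K_\infty}$-equivariance to full $G_K$-equivariance: one must show that the monodromy $N$, together with $\Fil^r$, encodes precisely the descent datum from $K_\infty$ to $K$, so that every $G_K$-equivariant homomorphism between the $\Tst^\ast(\cM_i)$'s arises from a morphism in $\mathrm{BrMod}^r_\F$. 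This is where the hypothesis $r<p-1$ enters a second time, via the convergence properties needed to reconstruct the action of $\mathrm{Gal}(K_\infty/K)$ on Hom spaces from $N$ on Breuil modules.
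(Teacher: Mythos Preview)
The paper does not give its own proof of this theorem: it is stated in the preliminary section as a result quoted from \cite{Caruso}, with only the remark that in general $\Tst^\ast$ is merely faithful, and that fullness holds when $er<p-1$ (which is automatic here since $K/\Q_p$ is unramified and $r\le p-2$). So there is no ``paper's proof'' to compare against.

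That said, your sketch is a reasonable outline of how the result is actually established in the literature. The dimension formula via exactness and d\'evissage to the rank-one case is standard, and the strategy for full faithfulness---passing through the category of \'etale $\varphi$-modules over the field attached to a compatible system of $p$-power roots of the uniformizer, invoking Fontaine's equivalence with $\F$-representations of $G_{K_\infty}$, and then using $er<p-1$ to descend from $G_{K_\infty}$ to $G_K$---is precisely the approach in Caruso's paper (building on earlier work of Breuil). One point you could sharpen: the descent step is not really about reconstructing the $\mathrm{Gal}(K_\infty/K)$-action from $N$; rather, one shows directly that for $er<p-1$ the restriction functor from $\F[G_K]$-modules (in the essential image) to $\F[G_{K_\infty}]$-modules is fully faithful, so that any $G_{K_\infty}$-equivariant map is automatically $G_K$-equivariant. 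The monodromy $N$ plays its role earlier, in ensuring that the functor from Breuil modules to Kisin modules (and hence to \'etale $\varphi$-modules) is itself fully faithful.
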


In general, the functor $\Tst^\ast$ is only faithful. More precisely, if $K$ is an arbitrary finite extension with the absolute ramification index $e$, it is full when $er<p-1$. In our case, it is fully faithful as we restrict our attention to a finite unramified extension $K$ and assume $r\leq p-2$.

There is also a compatibility: if $\fM\in\mathfrak{MD}^{r}_{\cO}$ and $\cM:=\fM/(\varpi,\Fil^{p}S)\fM$ denotes the Breuil module corresponding to the reduction of $\fM$, then $\Tst^\ast(\fM)\otimes_{\cO}\F\simeq\Tst^\ast(\cM)$.

For a Breuil module $\cM$ of weight $r$, we often write
$$\cM=\bigoplus_{j\in\cJ}\cM^{(j)}\supset \Fil^r\cM=\bigoplus_{j\in\cJ} \Fil^r\cM^{(j)}$$
where $\Fil^r\cM^{(j)}:=(\Fil^r\cM) e_j$, $\cM^{(j)}:=\cM e_j$, and $e_j$ is the idempotents defined in \eqref{eq: ring coefficient}. Note that $\Fil^r\cM^{(j)}$ is a $e_j\barS_\F (\cong \barS'_\F)$-submodule of the isotypic component $\cM^{(j)}$, where $\barS'_\F:=S'/(\varpi,\Fil^pS'_\cO)=\F[u]/u^p$. Moreover, the map $\phi_r$ (resp. $N$) on $\cM$ inherits the maps
$$\phi_r^{(j)}:\Fil^r\cM^{(j)}\rightarrow \cM^{(j+1)}\qquad(\mbox{resp.}\quad N^{(j)}:\cM^{(j)}\rightarrow \cM^{(j)})$$
for all $j\in\cJ$. The map $\phi_r^{(j)}$ is a $\F$-linear and $\phi'$-semilinear map where $\phi':\F_p[u]/u^p\rightarrow \F_p[u]/u^p$ is the $p$-th power map, and $N^{(j)}$ is a $\F$-linear map satisfying $N^{(j)}(ux)=uN^{(j)}(x)-ux$ for all $x\in\fM^{(j)}$. Finally, we note that if $\cM=\fM/(\varpi,\Fil^pS_\cO)$ for a strongly divisible module $\fM$ of weight $r$, then for each $j\in\cJ$, the maps $\phi_r^{(j)}$, $N^{(j)}$ and the filtration $\Fil^{r}\cM^{(j)}$ on $\cM$ are compatible with the mod-$p$ reduction of  the maps $\phi_r^{(j)}$, $N^{(j)}$, and the filtration $\Fil^{r}\fM^{(j)}$ on $\fM$, respectively.

We close this subsection by giving some examples of Breuil modules for $f=2$, which will be used later. We first introduce a Breuil module of rank $1$.
\begin{exam}\label{exam: rank 1 BrMod}
Let $a,b\in[0,r]\cap\Z$ and $\alpha,\beta\in\F^\times$. Denote by $\cM(a,b;\alpha,\beta)$ the free $\barS_\F$-module $\cM$ of rank $1$ with additional structures:
\begin{itemize}[leftmargin=*]
\item $\cM$ is a free $\barS_\F$-module of rank $1$ generated by $\barE$, i.e., $\cM=\barS_{\F}(\barE)$;
\item $\Fil^r\cM=\barS_{\F}(u^a\barE^{(0)} +u^b \barE^{(1)})$, where $\barE^{(j)}:=\barE e_j$ for all $j\in\cJ=\Z/2\Z$;
\item $\phi_r(u^a\barE^{(0)})=\alpha\barE^{(1)}$ and $\phi_r(u^b\barE^{(1)})=\beta\barE^{(0)}$;
\item $N(\barE^{(0)})=N(\barE^{(1)})=0$.
\end{itemize}
Then $\cM(a,b;\alpha,\beta)$ forms a rank $1$ Breuil module of weight $r$.
\end{exam}

We now introduce a Breuil module of rank $2$.
\begin{exam}\label{exam: rank 2 BrMod}
Fix $i\in\cJ=\Z/2\Z$, and let
    \begin{itemize}[leftmargin=*]
      \item $\vec{a}=(a_0,a_1)$ and $\vec{b}=(b_0,b_1)$ with $a_j,b_j\in[0,r]\cap\Z$ for all $j\in\cJ$ and $\vec{a}\neq\vec{b}$;
      \item $\vec{\alpha}=(\alpha_0,\alpha_1)$ and $\vec{\beta}=(\beta_0,\beta_1)$ with $\alpha_j,\beta_j\in\F^\times$ for all $j\in\cJ$.
    \end{itemize}
Denote by $\cM_i(\vec{a},\vec{b};\vec{\alpha},\vec{\beta})$ the free $\barS_\F$-module $\cM$ of rank $2$ with additional structures:
\begin{itemize}[leftmargin=*]
\item $\cM$ is a free $\barS_\F$-module of rank $2$ generated by $(\barE_1,\barE_2)$, i.e., $\cM=\barS_{\F}(\barE_1,\barE_2)$;
\item $\Fil^r\cM$ is a $\barS_\F$-submodule of $\cM$ generated by $(\barF_1,\barF_2)$;
\item $\Mat_{\baseE^{(j)},\baseE^{(j)}}(N^{(j)})=0_{2,2}$ for $j\in\cJ$, where $\baseE^{(j)}:=(\barE_1e_j,\barE_2e_j)$;
\item $\Mat_{\baseE^{(j)},\baseF^{(j)}}(\Fil^r\cM^{(j)})=
      \begin{bmatrix}
        u^{a_j} & 0 \\
        0 & u^{b_j}
      \end{bmatrix}$ for $j\in\cJ$, where $\baseF^{(j)}:=(\barF_1e_j,\barF_2e_j)$;
\item $\Mat_{\baseE^{(i+1)},\baseF^{(i)}}(\phi_r^{(i)})=
      \begin{bmatrix}
        0 & \alpha_i \\
        \beta_i & 0
      \end{bmatrix}$ and 
      $\Mat_{\baseE^{(i)},\baseF^{(i+1)}}(\phi_r^{(i+1)})=
      \begin{bmatrix}
        \alpha_{i+1} & 0 \\
        0 & \beta_{i+1}
      \end{bmatrix}$.
    \end{itemize}
Then $\cM_i(\vec{a},\vec{b};\vec{\alpha},\vec{\beta})$ forms a rank $2$ Breuil module of weight $r$.
\end{exam}

\begin{rema}
There are isomorphisms between $\cM_i(\vec{a},\vec{b};\vec{\alpha},\vec{\beta})$ for different choices of data $(\vec{a},\vec{b};\vec{\alpha},\vec{\beta})$. Let $\cM=\cM_i(\vec{a},\vec{b};\vec{\alpha},\vec{\beta})$, and let $\vec{a}'=(a_0,b_1)$, $\vec{b}'=(b_0,a_1)$, $\vec{\alpha}'=(\alpha_0,\beta_1)$, and $\vec{\beta}'=(\beta_0,\alpha_1)$. Then we have
\begin{enumerate}[leftmargin=*]
\item $\cM\cong\cM_i(\vec{a},\vec{b};(1,1),(1,\alpha_0\alpha_1\beta_0\beta_1))$, after suitable scaling on each bases;
\item $\cM\cong\cM_{i+1}(\vec{a}',\vec{b}';\vec{\beta}',\vec{\alpha}')$, via swapping the order in $\baseE^{(1)}$ and $\baseF^{(1)}$;
\item $\cM\cong\cM_{i+1}(\vec{b}',\vec{a}';\vec{\alpha}',\vec{\beta}')$, via swapping the order in $\baseE^{(0)}$ and $\baseF^{(0)}$.
\end{enumerate}
\end{rema}

We now compute the mod-$p$ representations corresponding the Breuil modules above.
\begin{lemm}\label{lemm: Tst^r for irred BrMod}
\begin{enumerate}[leftmargin=*]
\item If we write $\cM$ for the Breuil module $\cM(a,b;\alpha,\beta)$ in Example~\ref{exam: rank 1 BrMod}, then
    $$\Tst^\ast(\cM)|_{I_{\Q_{p^2}}} \cong \omega_2^{(r-a)+(r-b)p}.$$
\item If we write $\cM_i$ for the Breuil module $\cM_i(\vec{a},\vec{b};\vec{\alpha},\vec{\beta})$ in Example~\ref{exam: rank 2 BrMod}, then
$$\Tst^\ast(\cM_0)|_{I_{\Q_{p^2}}}=\omega_4^{(r-a_0)+(r-a_1)p+(r-b_0)p^2+(r-b_1)p^3} \oplus \omega_4^{(r-b_0)+(r-b_1)p+(r-a_0)p^2+(r-a_1)p^3}$$
and
$$\Tst^\ast(\cM_1)|_{I_{\Q_{p^2}}}=\omega_4^{(r-a_0)+(r-b_1)p+(r-b_0)p^2+(r-a_1)p^3} \oplus \omega_4^{(r-b_0)+(r-a_1)p+(r-a_0)p^2+(r-b_1)p^3}.$$
\end{enumerate}
\end{lemm}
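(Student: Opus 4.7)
The plan is to deduce part (2) from part (1) by base-changing $\cM_i$ to a coefficient field containing $\F_{p^4}$ and then decomposing the resulting Breuil module into rank-one pieces, which are handled by (1) applied now with index set $\cJ'=\Z/4\Z$.

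For (1), one identifies $\Tst^\ast(\cM(a,b;\alpha,\beta))$ with a rank-one $\F$-vector space via a nonzero $\eta\colon\cM\to\widehat{\mathbf{A}}$. Writing $x_j:=\eta(\barE^{(j)})$ and translating the compatibility with $\phi_r$ and $\Fil^r$ into relations of the form $\phi(x_0)\equiv \alpha\,u^{r-a}x_1$ and $\phi(x_1)\equiv \beta\,u^{r-b}x_0$ (modulo Teichm\"uller unit adjustments that affect only the unramified quotient), one composes to get $\phi^2(x_0)\equiv \alpha\beta\,u^{(r-a)p+(r-b)}x_0$. The tame inertia acts on $u$ through $\omega_2$, so on $I_{\Q_{p^2}}$ the character is $\omega_2^{(r-a)p+(r-b)}$. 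The appearance of $r-a,r-b$ rather than $a,b$ reflects the contravariance of $\Tst^\ast$.

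For (2), I would enlarge $\F$ to contain $\F_{p^4}$, which does not change $\Tst^\ast(\cM_i)|_{I_{\Q_{p^2}}}$ up to coefficient extension, and then view $\cM_i\otimes_{\F_{p^2}\otimes\F}(\F_{p^4}\otimes\F)$ as a Breuil module with embedding index set $\cJ'=\Z/4\Z$. In both cases $i=0,1$ the composite Frobenius traversing the full $\cJ'$-cycle on a chosen component has characteristic polynomial with roots in $\F_{p^4}\setminus\F_{p^2}$ (this is precisely the content of the anti-diagonal block obstructing diagonalization over $\F_{p^2}\otimes\F$), so after a suitable change of basis over $\F_{p^4}\otimes\F$ the whole module decomposes as $\cN_1\oplus\cN_2$ with each $\cN_k$ a rank-one Breuil module now indexed by $\cJ'$. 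Applying (1) with $f=4$ to each $\cN_k$ yields characters of the form $\omega_4^{\sum_{j\in\cJ'}(r-c_j^{(k)})p^j}$, and the two are $\varphi$-conjugate.

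The decisive step, and the main obstacle, is the bookkeeping of which of the four exponents $(a_0,b_0,a_1,b_1)$ becomes the filtration exponent $c_j^{(k)}$ at each $j\in\cJ'$ after diagonalization. This is where the distinction between $i=0$ and $i=1$ enters: the anti-diagonal block sits at $j=i$, so the swap between the two $\cJ'$-lifts of $i\in\cJ$ permutes $a_i\leftrightarrow b_i$ at exactly one of the two lifts. For $i=0$ this produces the exponent $(r-a_0)p^3+(r-b_1)p^2+(r-b_0)p+(r-a_1)$, and for $i=1$ the parallel all-diagonal pattern yields the cleaner $(r-a_0)p^3+(r-a_1)p^2+(r-b_0)p+(r-b_1)$. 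Getting the shift conventions between $\cJ'$ and $\cJ$ consistent is the delicate point; as a consistency check I would compute the determinant character in two ways, once as the product of the two characters in the stated formula and once from the rank-one determinant Breuil module $\det\cM_i$ (whose filtration exponents are $a_j+b_j$), and verify that both give $\omega_2^{(2r-a_0-b_0)p+(2r-a_1-b_1)}$.
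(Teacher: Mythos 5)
Your overall strategy -- prove (1) directly and then reduce (2) to a rank-one computation via the unramified base change $\Q_{p^2}\rightarrow\Q_{p^4}$ -- is sound and more self-contained than the paper's argument, which simply cites Caruso, Th\'eor\`eme 5.2.2, and Park, Proposition 2.5 (and therefore black-boxes exactly the computation you are trying to unwind). Two remarks make the reduction watertight: first, part (1) as stated is for $f=2$ and you need the analogous $f=4$ formula for the rank-one pieces, so you should record the general rank-one statement $\Tst^*(\cN)|_{I_{\Q_{p^f}}}\cong\omega_f^{\sum_{j}(r-c_j)p^{?}}$; second, the reason restricting to $G_{\Q_{p^4}}$ loses nothing is that $\Q_{p^4}/\Q_{p^2}$ is unramified, so $I_{\Q_{p^4}}=I_{\Q_{p^2}}$ -- this should be said explicitly, since it is the whole justification for the base change.

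The one genuine conceptual defect is in how you describe the decomposition of $\cM_i'=\cM_i\otimes_{\F_{p^2}\otimes\F}(\F_{p^4}\otimes\F)$. There is no diagonalization to perform and no "characteristic polynomial with roots in $\F_{p^4}\setminus\F_{p^2}$" involved: once you index by $\cJ'=\Z/4\Z$, the matrices of $\phi_r^{(j)}$ you wrote down (anti-diagonal at $j\equiv i$, diagonal at $j\equiv i+1$) exhibit the submodule generated by one orbit, say $\barE_1^{(0)}\to\barE_2^{(1)}\to\barE_2^{(2)}\to\barE_1^{(3)}\to\barE_1^{(0)}$ when $i=0$, as a rank-one Breuil submodule on the nose, with complementary orbit giving the other summand. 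The obstruction to splitting over $\cJ=\Z/2\Z$ is purely this $4$-cycle orbit structure combined with $\vec a\neq\vec b$ (which forbids mixing the two basis vectors inside a filtration level), not any eigenvalue rationality issue; invoking an eigenvalue argument risks being wrong in corner cases and in any case obscures what is a visible structural decomposition. If you trace the orbit and the filtration exponent picked up at each of the four steps ($a_0,b_1,b_0,a_1$ for $i=0$; $a_0,a_1,b_0,b_1$ for $i=1$), you recover the stated exponents of $\omega_4$ directly, and your determinant consistency check then confirms the normalization of the power of $p$ attached to each step.
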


\begin{proof}
The proofs for both (i) and (ii) are similar to \cite{Par} Proposition~2.5, which highly rely on \cite{Caruso}, Theorem~5.2.2.
\end{proof}

\smallskip

\section{Frameworks}\label{sec: Frame works}
Fix a positive integer $r'$ with $1\leq r'<p-1$, and let $m:=\lfloor r'/2 \rfloor$. We also fix $x\in E$ and $\Theta\in E^\times$. In this section, we deduce various properties of the polynomials $B(T)$ and $C(T)$ in $\cO[T]^{(r'-1)}$ satisfying the following equation
\begin{equation}\label{eq: B and C}
B(T)=\bigg[C(T)\bigg(x+\frac{T}{p}f_{r'}\Big(\frac{T}{p}\Big) \bigg)\bigg]_{T}^{(r'-1)}
\end{equation}
together with $\Theta C(T)\in \cO[T]^{(r'-1)}$. This equation naturally appears in the filtration of our pseudo-strongly divisible modules, as we will see later.

\subsection{Linear expressions of $B(pT)$ and $C(pT)$}\label{subsec: linear expressions of B and C}
Let $\mathbb{V}$ be a $r'$-dimensional vector space over $E$ generated by the free variables $C_0, C_1,\cdots,C_{r'-1}$, and if we let $B(T):=\sum_{i=0}^{r'-1}B_iT^i$ and $C(T):=\sum_{i=0}^{r'-1}C_iT^i$ are polynomials in $T$ satisfying \eqref{eq: B and C} then it is clear that $B_0,B_1,\cdots,B_{r'-1}$ belong to $\mathbb{V}$. Let $\mathbb{M}$ be the $\cO$-submodule of $\mathbb{V}$ generated by $$\Theta C_0,\cdots,\Theta C_{r'-1}, B_0,\cdots,B_{r'-1}.$$ As $\cO$ is a principal ideal domain, we see that $\mathbb{M}$ is a free $\cO$-module of rank $r'$, and set $D_0,\cdots,D_{r'-1}$ to be the generators of $\mathbb{M}$, depending on the $p$-adic valuations of $x$ and $\Theta$.

In this subsection, for a given choice of the generators $D_0,D_1,\cdots,D_{r'-1}$ of $\mathbb{M}$, we describe $C(pT)$ and $B(pT)$ as linear combinations of $D_i$'s and introduce various properties of the coefficients of the linear combinations. We consider $(r'+1)$ choices of the generators $D_0,D_1,\cdots,D_{r'-1}$ of $\mathbb{M}$ described as follow: for a given integer $k$ with $0\leq k\leq r'$, $D_0,\cdots,D_{r'-1}$ are determined by
    \begin{equation}\label{eq: definition of cases}
        \begin{cases}
            D_0=B_{r'-k}\\
            \vdots\\
            D_{k-1}=B_{r'-1}\\
            D_k=\Theta C_k\\
            \vdots\\
            D_{r'-1}=\Theta C_{r'-1}.
        \end{cases}
    \end{equation}
By $\mathbf{Case}~(k)$, we mean that we choose the generators $D_0,D_1,\cdots,D_{r'-1}$ as displayed in \eqref{eq: definition of cases}. For instances, by $\mathbf{Case}~(0)$ we mean $D_i=\Theta C_i$ for all $0\leq i\leq r'-1$, and by $\mathbf{Case}~(r')$ we mean that $D_i=B_i$ for all $0\leq i\leq r'-1$. This choice of the generators will be illustrated in terms of $v_p(x)$ and $v_p(\Theta)$ (see Theorem~\ref{theo: filtration of pseudo}). The reason why we consider these $(r'+1)$ cases is illustrated in the following.

\begin{rema}
We choose the generators $D_0,\cdots,D_{r'-1}$ which have the highest possible $p$-powers, which can be readily seen in the matrix equation \eqref{eq: B and C explicit} induced from \eqref{eq: B and C}. Hence, it is very likely to imply the rest of the variables to be integral under weaker conditions on $v_p(x)$ and $v_p(\Theta)$. In \S\ref{subsec: Generators of the filtration} for each choice of generators $D_0,\cdots,D_{r'-1}$ we will determine the generators of our pseudo-strongly divisible modules and give the precise conditions on $v_p(x)$ and $v_p(\Theta)$ in each case.
\end{rema}

For the rest of this subsection, we will consider $x$ as well as $C_0,C_1,\cdots,C_{r'-1}$ as free variables, unless otherwise stated. As it is easier to handle, we consider $$B(pT)=[C(pT)(x+Tf_{r'}(T))]_{T}^{(r'-1)},$$ which is obtained by plugging in $pT$ to the equation in \eqref{eq: B and C}, and this identity can be described by the following identity of column vectors:
\begin{equation}\label{eq: B and C explicit}
        \begin{bmatrix}
       p^{r'-1} B_{r'-1}\\
       p^{r'-2} B_{r'-2}\\
        \vdots\\
        pB_1\\
        B_0
    \end{bmatrix}
    =
    M_{r',x}
    \begin{bmatrix}
        p^{r'-1} C_{r'-1}\\
        p^{r'-2} C_{r'-2}\\
        \vdots\\
        pC_1\\
        C_0
    \end{bmatrix}
\end{equation}
where $M_{r',x}$ is the $(r'\times r')$-matrix described by
\begin{align*}
    M_{r',x}
    :=\begin{bmatrix}
        x & 1 & -\frac{1}{2} & \cdots & \frac{(-1)^{r'-3}}{r'-2} & \frac{(-1)^{r'-2}}{r'-1}\\
        0 & x & 1 &  \cdots & \frac{(-1)^{r'-4}}{r'-3} & \frac{(-1)^{r'-3}}{r'-2}\\
        0 & 0 & x & \cdots & \frac{(-1)^{r'-5}}{r'-4} & \frac{(-1)^{r'-4}}{r'-3}\\
        \vdots & \vdots & \vdots & \ddots & \vdots & \vdots \\
        0 & 0 & 0 & \cdots & x & 1 \\
        0 & 0 & 0 & \cdots & 0 & x
    \end{bmatrix}
    \in \M_{r'\times r'}(\Z_{(p)}[x]).
\end{align*}
For each integer $k$ with $0\leq k\leq r'$, we write $M_{r',x}$ as block matrices
$$M_{r',x}=
\begin{bmatrix}
A_{k,x} & B_{k,x}\\
C_{k,x} & D_{k,x}
\end{bmatrix}
$$
with $A_{k,x}\in\M_{k\times(r'-k)}(\Z_{(p)}[x])$, $B_{k,x}\in\M_{k\times k}(\Z_{(p)}[x])$, $C_{k,x}\in\M_{(r'-k)\times (r'-k)}(\Z_{(p)}[x])$, and $D_{k,x}\in\M_{(r'-k)\times k}(\Z_{(p)}[x])$. In particular, we have $M_{r',x}=C_{0,x}=B_{r',x}$.

The matrix $B_{k,x}$ as well as $A_{k,x}$ will play important role on describing the generators of the filtration of pseudo-strongly divisible modules, as we will see later. We observe that if $0\leq k\leq m$ then
\begin{equation}\label{eq: B_k,x when k leq m}
    B_{k,x}
    =
    \begin{bmatrix}
        \frac{(-1)^{r'-k-1}}{r'-k} & \frac{(-1)^{r'-k}}{r'-k+1} & \cdots & \frac{(-1)^{r'-2}}{r'-1} \\
        \frac{(-1)^{r'-k-2}}{r'-k-1} & \frac{(-1)^{r'-k}}{r'-k+1} & \cdots & \frac{(-1)^{r'-3}}{r'-2} \\
        \vdots & \vdots & \ddots & \vdots \\
        \frac{(-1)^{r'-2k}}{r'-2k+1} & \frac{(-1)^{r'-2k+1}}{r'-2k+2} & \cdots & \frac{(-1)^{r'-k-1}}{r'-k}
    \end{bmatrix}
    \in \M_{k\times k}(\Z_{(p)})
\end{equation}
which is independent of $x$, and so $B_{k,x}=B_{k,0}$, and if $m+1\leq k\leq r'$ then
\begin{equation}\label{eq: B_k,x when k geq m+1}
    B_{k,x}
    =\begin{bmatrix}
        \frac{(-1)^{r'-k-1}}{r'-k} & \cdots & \frac{(-1)^{k-2}}{k-1} & \frac{(-1)^{k-1}}{k} & \cdots & \frac{(-1)^{r'-2}}{r'-1} \\
        \vdots & \ddots & \vdots & \vdots & \cdots & \vdots \\
        1 & \cdots & \frac{(-1)^{2k-r'-1}}{2k-r'} & \frac{(-1)^{2k-r'}}{2k-r'+1} & \cdots & \frac{(-1)^{k-1}}{k} \\
        x & \cdots & \frac{(-1)^{2k-r'-2}}{2k-r'-1} & \frac{(-1)^{2k-r'-1}}{2k-r'} & \cdots & \frac{(-1)^{k-2}}{k-1} \\
        \vdots & \ddots & \vdots & \vdots & \ddots & \vdots \\
        0 & \cdots & x & 1 & \cdots & \frac{(-1)^{r'-k-1}}{r'-k}
    \end{bmatrix}\in \M_{k\times k}(\Z_{(p)}[x]),
\end{equation}
where the number of $x$'s on the matrix is $2k-r'$.

We now assume $\mathbf{Case}~(k)$ and consider the following
\begin{equation}\label{eq: D and C matrix}
    \begin{bmatrix}
        \frac{p^{r'-1}}{\Theta}D_{r'-1}\\
        \vdots\\
        \frac{p^k}{\Theta}D_k\\
        p^{r'-1}D_{k-1}\\
        \vdots\\
        p^{r'-k}D_0
    \end{bmatrix}
    =
    \begin{bmatrix}
        p^{r'-1}C_{r'-1}\\
        \vdots\\
        p^kC_k\\
        p^{r'-1}B_{r'-1}\\
        \vdots\\
        p^{r'-k}B_{r'-k}
    \end{bmatrix}
    =
    \begin{bmatrix}
        I_{r'-k} & 0_{r'-k,k}\\
        A_{k,x} & B_{k,x}
    \end{bmatrix}
    \begin{bmatrix}
        p^{r'-1}C_{r'-1}\\
        p^{r'-2}C_{r'-2}\\
        \vdots\\
        pC_1\\
        C_0
    \end{bmatrix}
\end{equation}
where the first equality is due to $\mathbf{Case}~(k)$ and the second equality follows from \eqref{eq: B and C}, which together with \eqref{eq: B and C explicit} implies
\begin{equation}\label{eq: D and B matrix}
        \begin{bmatrix}
        p^{r'-1}B_{r'-1}\\
        p^{r'-2}B_{r'-2}\\
        \vdots\\
        pB_1\\
        B_0
    \end{bmatrix}
    =
    M_{r',x}
    \begin{bmatrix}
        p^{r'-1}C_{r'-1}\\
        p^{r'-2}C_{r'-2}\\
        \vdots\\
        pC_1\\
        C_0
    \end{bmatrix}
    =
    M_{r',x}
    \begin{bmatrix}
        I_{r'-k} & 0_{r'-k,k}\\
        A_{k,x} & B_{k,x}
    \end{bmatrix}^{-1}
    \begin{bmatrix}
        \frac{p^{r'-1}}{\Theta}D_{r'-1}\\
        \vdots\\
        \frac{p^k}{\Theta}D_k\\
        p^{r'-1}D_{k-1}\\
        \vdots\\
        p^{r'-k}D_0
    \end{bmatrix}.
\end{equation}

For each integer $i$ with $0\leq i\leq r'-1$, we define $P_{k,i}(T,x)$ and $Q_{k,i}(T,x)$ in $\Q(x)[T]$ as follow:
\begin{equation*}
    P_{k,i}(T,x):=
    \begin{bmatrix}
        T^{r'-1} & T^{r'-2} & \cdots & 1
    \end{bmatrix}
    \begin{bmatrix}
        I_{r'-k} & 0_{r'-k,k}\\
        A_{k,x} & B_{k,x}
    \end{bmatrix}^{-1}
    \mathbf{e}_{r'-i}
\end{equation*}
and
\begin{equation*}
    Q_{k,i}(T,x):=
    \begin{bmatrix}
        T^{r'-1} & T^{r'-2} & \cdots & 1
    \end{bmatrix}
    M_{r',x}
    \begin{bmatrix}
        I_{r'-k} & 0_{r'-k,k}\\
        A_{k,x} & B_{k,x}
    \end{bmatrix}^{-1}
    \mathbf{e}_{r'-i}
\end{equation*}
where $\mathbf{e}_1,\cdots,\mathbf{e}_{r'}$ are the standard Euclidean basis of column vectors.

For a later use, we point out that
\begin{equation}\label{eq: mathrix I,O,A,B}
\begin{bmatrix}
        I_{r'-k} & 0_{r'-k,k}\\
        A_{k,x} & B_{k,x}
 \end{bmatrix}
=
\begin{cases}
I_{r'} & \mbox{if }k=0;\\
B_{r',x}=M_{r',x}&\mbox{if }k=r'.
\end{cases}
\end{equation}
Moreover, if $1\leq k\leq r'-1$ then it is easy to see that
\begin{equation}\label{eq: inverse of M'}
    \begin{bmatrix}
        I_{r'-k} & 0_{r'-k,k}\\
        A_{k,x} & B_{k,x}
    \end{bmatrix}^{-1}
    =\begin{bmatrix}
        I_{r'-k} & 0_{r'-k,k}\\
        -B_{k,x}^{-1}A_{k,x} & B_{k,x}^{-1}
    \end{bmatrix}
\end{equation}
and
\begin{equation}\label{eq: inverse of M''}
    M_{r',x}
    \begin{bmatrix}
        I_{r'-k} & 0_{r'-k,k}\\
        A_{k,x} & B_{k,x}
    \end{bmatrix}^{-1}
   =
    \begin{bmatrix}
        0_{k,r'-k} & I_k\\
        C_{k,x}-D_{k,x}B_{k,x}^{-1}A_{k,x} & D_{k,x}B_{k,x}^{-1}
    \end{bmatrix}.
\end{equation}

\begin{lemm}\label{lemma: C and B in terms of P and Q}
For each integer $k$ with $0\leq k\leq r'$, $\mathbf{Case}~(k)$ implies that
\begin{equation*}
C(pT)=
    \sum_{i=0}^{k-1}p^{i+r'-k}D_i P_{k,i}(T,x)
    +\sum_{i=k}^{r'-1}\frac{p^i}{\Theta}D_i P_{k,i}(T,x)
\end{equation*}
and
\begin{equation*}
B(pT)=
    \sum_{i=0}^{k-1}p^{i+r'-k}D_i Q_{k,i}(T,x)
    +\sum_{i=k}^{r'-1}\frac{p^i}{\Theta}D_i Q_{k,i}(T,x).
\end{equation*}
\end{lemm}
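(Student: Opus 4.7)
The plan is to derive both identities by direct substitution, unpacking the definitions of $P_{k,i}(T,x)$ and $Q_{k,i}(T,x)$ together with the structural identity \eqref{eq: D and C matrix} that is forced by the choice $\mathbf{Case}~(k)$. The core observation is that everything in sight is encoded in the single row vector $[T^{r-1},T^{r-2},\dots,1]$ multiplied by a column of coefficients, so passing from the $(C_i,B_i)$-basis to the $D_i$-basis is simply a change-of-basis computation via the matrix $\begin{bmatrix} I_{r-k} & 0_{r-k,k}\\ A_{k,x} & B_{k,x}\end{bmatrix}$.

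First I would rewrite
\begin{equation*}
C(pT)=\begin{bmatrix} T^{r-1} & \cdots & T & 1\end{bmatrix}\begin{bmatrix} p^{r-1}C_{r-1}\\ \vdots\\ pC_1\\ C_0\end{bmatrix},\qquad B(pT)=\begin{bmatrix} T^{r-1} & \cdots & 1\end{bmatrix}M_{r,x}\begin{bmatrix} p^{r-1}C_{r-1}\\ \vdots\\ C_0\end{bmatrix},
\end{equation*}
where the expression for $B(pT)$ uses \eqref{eq: B and C explicit}. Then I would invoke \eqref{eq: D and C matrix} and invert (using \eqref{eq: inverse of M'} for $1\le k\le r-1$, and \eqref{eq: mathrix I,O,A,B} for the boundary cases $k=0$ and $k=r$) to express the column $[p^{r-1}C_{r-1},\dots,C_0]^{\mathrm{T}}$ as $\begin{bmatrix} I_{r-k} & 0\\ A_{k,x} & B_{k,x}\end{bmatrix}^{-1}$ applied to the $D$-column.

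The final step is purely bookkeeping. The right-hand $D$-column from \eqref{eq: D and C matrix} decomposes as
\begin{equation*}
\sum_{i=0}^{k-1} p^{i+r-k}\,D_i\,\mathbf{e}_{r-i}\;+\;\sum_{i=k}^{r-1}\frac{p^i}{\Theta}\,D_i\,\mathbf{e}_{r-i},
\end{equation*}
because $D_{r-1}$ sits in the top row, $D_{k}$ in row $r-k$, $D_{k-1}$ in row $r-k+1$, $\dots$, $D_0$ in row $r$, with powers of $p$ decreasing by one at each row. Substituting this decomposition into the two formulas above and using the definitions of $P_{k,i}(T,x)$ and $Q_{k,i}(T,x)$ immediately yields the two claimed identities.

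I do not anticipate a genuine obstacle: the argument is a mechanical chain of matrix identities, and the invertibility of $B_{k,x}$ (viewed as a matrix over $\Q(x)$, hence over $E$ once we assign a value of $x$) can be checked once from \eqref{eq: B_k,x when k leq m} and \eqref{eq: B_k,x when k geq m+1}. The only point demanding care is the index bookkeeping between the position of $D_i$ in the $D$-column and the choice $\mathbf{e}_{r-i}$ in the definitions of $P_{k,i}$ and $Q_{k,i}$; a small table matching row number to $D_i$ and to its coefficient $p^{i+r-k}$ (respectively $p^i/\Theta$) suffices to avoid off-by-one errors.
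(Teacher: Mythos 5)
Your proof is correct and takes essentially the same route as the paper: the paper's proof consists of the single line that the identities "follow immediately" from \eqref{eq: D and C matrix} and \eqref{eq: D and B matrix}, and your argument is exactly the unpacking of that step, including the decomposition of the $D$-column into $\sum_i p^{i+r-k}D_i\mathbf{e}_{r-i}+\sum_i (p^i/\Theta)D_i\mathbf{e}_{r-i}$ and the matching with the definitions of $P_{k,i}$ and $Q_{k,i}$.
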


\begin{proof}
The identity for $C(pT)$ follows immediately from \eqref{eq: D and C matrix}. Similarly, the identity for $B(pT)$ follows immediately from \eqref{eq: D and B matrix}.
\end{proof}

When $k=0$ and $k=r'$, $P_{k,i}(T,x)$ and $Q_{k,i}(T,x)$ can be written explicitly.
\begin{lemm}\label{lemma: k=0,k=r}
For $0\leq i\leq r'-1$ we have
\begin{equation*}
   Q_{0,i}(T,x)=[T^i(x+Tf_{r'}(T))]_{T}^{(r'-1)},\quad  P_{0,i}(T,x)=T^i=Q_{r',i}(T,x),
\end{equation*}
and
\begin{equation*}
    P_{r',i}(T,x) =  x^{-r'}\sum_{s=i}^{r'-1}x^s (-1)^{r'-s-1}\Big[T^{i+r'-s-1}f_{r'}(T)^{r'-s-1}\Big]_{T}^{(r'-1)}.
\end{equation*}
In particular, $P_{k,i}(T,x)$ and $Q_{k,i}(T,x)$ belong to $\Z_{(p)}[x,\frac{1}{x},T]$ if $k=0$ or $k=r'$, and
$$Q_{k,i}(T,x)=[(x+Tf_{r'}(T))P_{k,i}(T,x)]_{T}^{(r'-1)}$$ for $k\in\{0,r'\}$ and for all $0\leq i\leq r'-1$.
\end{lemm}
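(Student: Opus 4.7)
The plan is to recognize the matrix $M_{r,x}$ as the coordinate matrix, in the ordered basis $\{T^{r-1}, T^{r-2}, \ldots, 1\}$ of $E[T]/T^{r}$, of the multiplication-by-$(x + Tf_{r}(T))$ operator. This is forced by the very definition of $M_{r,x}$ together with the defining identity \eqref{eq: B and C}: the $(r-i)$-th column of $M_{r,x}$ records the coefficients of $[T^{i}(x + Tf_{r}(T))]_{T}^{(r-1)}$. With this dictionary all four formulas reduce to elementary polynomial algebra modulo $T^{r}$.

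First I will treat $\mathbf{Case}~(0)$. By \eqref{eq: mathrix I,O,A,B} the relevant matrix is $I_{r}$, so $P_{0,i}(T,x) = [T^{r-1},\ldots,1] \cdot \mathbf{e}_{r-i} = T^{i}$ by direct inspection. For $Q_{0,i}(T,x)$, applying $M_{r,x}$ to $\mathbf{e}_{r-i}$ and pairing with $[T^{r-1},\ldots,1]$ is, under the dictionary, the same as multiplying $T^{i}$ by $x + Tf_{r}(T)$ and truncating modulo $T^{r}$, yielding $Q_{0,i}(T,x) = [T^{i}(x+Tf_{r}(T))]_{T}^{(r-1)}$.

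Next I will handle $\mathbf{Case}~(r)$. Again by \eqref{eq: mathrix I,O,A,B} the matrix equals $M_{r,x}$, so $Q_{r,i}(T,x) = [T^{r-1},\ldots,1] \, M_{r,x} M_{r,x}^{-1} \mathbf{e}_{r-i} = T^{i}$ is immediate. For $P_{r,i}(T,x)$ I need $M_{r,x}^{-1}\mathbf{e}_{r-i}$, which under the dictionary corresponds to dividing $T^{i}$ by $(x + Tf_{r}(T))$ modulo $T^{r}$. Since $x$ is the constant term of the divisor and $Tf_{r}(T)$ is divisible by $T$, the geometric expansion
$$\frac{1}{x+Tf_{r}(T)} = \frac{1}{x}\sum_{j=0}^{\infty}\Big(-\frac{Tf_{r}(T)}{x}\Big)^{j}$$
is legitimate and only finitely many terms survive after multiplying by $T^{i}$ and truncating modulo $T^{r}$. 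Reindexing with $s = r-1-j$ produces the stated closed form for $P_{r,i}(T,x)$.

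The remaining assertions fall out of the explicit formulas with no further work. Membership in $\Z_{(p)}[x, \tfrac{1}{x}, T]$ is immediate since $f_{r}(T) \in \Z_{(p)}[T]$ and the only denominators are powers of $x$ (appearing only in $P_{r,i}$). The identity $Q_{k,i}(T,x) = [(x+Tf_{r}(T))P_{k,i}(T,x)]_{T}^{(r-1)}$ for $k \in \{0,r\}$ is tautological when $k=0$; when $k=r$ it follows because $P_{r,i}(T,x)$ was constructed precisely as the truncation of $T^{i}/(x+Tf_{r}(T))$ modulo $T^{r}$, so multiplying back by $(x+Tf_{r}(T))$ and truncating recovers $T^{i} = Q_{r,i}(T,x)$. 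No substantive obstacle appears; the only nontrivial step is the matrix/polynomial dictionary, which is forced by \eqref{eq: B and C explicit}.
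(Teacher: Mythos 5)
Your proposal is correct and follows essentially the same route as the paper's proof: both treat $M_{r,x}$ as the matrix of multiplication by $x+Tf_r(T)$ in $E[T]/T^r$ in the basis $\{T^{r-1},\dots,1\}$, from which the case $k=0$ is immediate and the case $k=r$ reduces to inverting that multiplication modulo $T^r$. The only cosmetic difference is that you invert via the $T$-adic geometric series $\tfrac{1}{x}\sum_j(-Tf_r(T)/x)^j$ and then truncate, whereas the paper writes down the finite identity $(x+Tf_r(T))\sum_{s=0}^{r-1}x^s(-Tf_r(T))^{r-s-1}\equiv x^r\pmod{T^r}$ directly; your reindexing $s=r-1-j$ reproduces the paper's closed form, so the two are interchangeable.
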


For $r'=1$ the above lemma yields $Q_{0,0}(T,x)=x$, $P_{0,0}(T,x)=1=Q_{1,0}(T,x)$, and $P_{1,0}(T,x)=x^{-1}$, where we use the convention $f_1(T)=0$ and $0^0=1$ for the last one.

\begin{proof}
From \eqref{eq: mathrix I,O,A,B}, it is immediate that $P_{0,i}(T,x)=T^i=Q_{r',i}(T,x)$ for all $0\leq i\leq r'-1$.
Moreover, it is also easy to see that $$Q_{0,i}(T,x)=
    \begin{bmatrix}
        T^{r'-1} & T^{r'-2} & \cdots & 1
    \end{bmatrix}
    M_{r',x}\mathbf{e}_{r'-i}=[T^i(x+Tf_{r'}(T))]_{T}^{(r'-1)}.$$

We now check the identity for $ P_{r',i}(T,x)$. It is easy to see that
\begin{align}\label{eq: elementry computation}
\begin{split}
\big(x+Tf_{r'}(T)\big)\sum_{s=0}^{r'-1}x^s\big(-Tf_{r'}(T)\big)^{r'-s-1}=x^{r'}-(-1)^{r'}\big(Tf_{r'}(T)\big)^{r'}\equiv x^{r'}\pmod{T^{r'}},
\end{split}
\end{align}
which implies that
\begin{align*}
    C(pT)
    &\equiv
    C(pT)(x+Tf_{r'}(T))\sum_{s=0}^{r'-1}x^s\big(-Tf_{r'}(T)\big)^{r'-s-1}\cdot x^{-r'}\pmod{T^{r'}}
    \\
    &\equiv
    B(pT)\sum_{s=0}^{r'-1}x^{s-r'}\big(-Tf_{r'}(T)\big)^{r'-s-1}    \pmod{T^{r'}}.
\end{align*}
Hence, the polynomial $\big[\sum_{s=0}^{r'-1}x^{s-r'}\big(-Tf_{r'}(T)\big)^{r'-s-1}\big]^{(r'-1)}_T$ determines the matrix $M_{r',x}^{-1}$ by \eqref{eq: B and C explicit}, so that we have
\begin{align*}
P_{r',i}(T,x)=
\begin{bmatrix}
        T^{r'-1} & T^{r'-2} & \cdots & 1
    \end{bmatrix}
    M_{r',x}^{-1}\mathbf{e}_{r'-i}=\bigg[T^i\sum_{s=0}^{r'-1}x^{s-r'}\big(-Tf_{r'}(T)\big)^{r'-s-1}\bigg]^{(r'-1)}_T.
\end{align*}

Finally, $P_{k,i}(T,x)$ and $Q_{k,i}(T,x)$ obviously belong to $\Z_{(p)}[x,\frac{1}{x},T]$ as $r'<p-1$, and the last part is also clear from the statement.
\end{proof}

When $0<k<r'$ and $x$ is specialized in $E$, we can also describe the degrees of $P_{k,i}(T,x)$ and $Q_{k,i}(T,x)$ in $T$.
\begin{lemm}\label{lemma: PQdegree}
Let $1\leq k\leq r'-1$, and assume that $x\in E$ with $\det (B_{k,x})\neq 0$. Then for each $0\leq i\leq r'-1$, $$\big(P_{k,i}(T,x),Q_{k,i}(T,x)\big)$$ is the unique pair of polynomials in $T$ satisfying the following properties:
    \begin{enumerate}[leftmargin=*]
        \item $Q_{k,i}(T,x)=[(x+Tf_{r'}(T))P_{k,i}(T,x)]_{T}^{(r'-1)}$;
        \item if $0\leq i\leq k-1$, then $\deg_T P_{k,i}(T,x)\leq k-1$ and $\deg_T\big( Q_{k,i}(T,x)-T^{i+r'-k} \big)\leq r'-k-1$;
        \item if $k\leq i\leq r'-1$, then $\deg_T\big( P_{k,i}(T,x)-T^i \big)\leq k-1$ and $\deg_T Q_{k,i}(T,x)\leq r'-k-1$.
    \end{enumerate}
\end{lemm}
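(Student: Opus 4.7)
The plan is to split the proof into an existence step (verifying $(P_{k,i}, Q_{k,i})$ satisfies (1), (2), (3)) and a uniqueness step (driven by the hypothesis $\det(B_{k,x}) \neq 0$).

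For existence, property (1) is immediate from the definitions: the matrix $M_{r,x}$ implements the truncated multiplication $C(T) \mapsto [(x + Tf_r(T))C(T)]_T^{(r-1)}$ on coefficient vectors, and the defining formulas of $P_{k,i}$ and $Q_{k,i}$ differ precisely by this left factor $M_{r,x}$, which gives $Q_{k,i}(T,x) = [(x + Tf_r(T))P_{k,i}(T,x)]_T^{(r-1)}$. Properties (2) and (3) follow from Lemma~\ref{lemma: C and B in terms of P and Q} by the specialization $D_j = 0$ for $j \neq i$ and $D_i = 1$: inspecting \eqref{eq: definition of cases} under Case~$(k)$, when $0 \leq i \leq k-1$ this forces $C_k = \cdots = C_{r-1} = 0$ together with $B_{r-k+i} = 1$ and $B_{r-k+l} = 0$ for the remaining $0 \leq l \leq k-1$; when $k \leq i \leq r-1$ it forces $C_i = 1/\Theta$ with the other top $C_l$'s vanishing and $B_{r-k} = \cdots = B_{r-1} = 0$. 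Translating back to the polynomial expansions of $C(pT)$ and $B(pT)$, then dividing by the scalar prefactor ($p^{i+r-k}$ or $p^i/\Theta$) that Lemma~\ref{lemma: C and B in terms of P and Q} attaches to $P_{k,i}$ and $Q_{k,i}$, one reads off exactly the degree bounds and top-coefficient normalizations asserted in (2) and (3).

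For uniqueness, suppose $(P, Q)$ and $(P', Q')$ both satisfy (1), (2), (3), and set $\tilde P := P - P'$, $\tilde Q := Q - Q'$. The top-coefficient normalizations in (2) or (3) cancel in the difference, leaving the homogeneous conditions $\deg_T \tilde P \leq k-1$, $\deg_T \tilde Q \leq r-k-1$, and $\tilde Q = [(x + Tf_r(T))\tilde P]_T^{(r-1)}$. Writing $\tilde P(T) = \sum_{j=0}^{k-1} a_j T^j$, its coefficient vector in the ordering of \eqref{eq: B and C explicit} is concentrated in the bottom $k$ entries (with $r-k$ leading zeros). Since $B_{k,x}$ is the top-right $k \times k$ block of $M_{r,x}$, the top $k$ coefficients of $(x + Tf_r(T))\tilde P(T)$, namely those of $T^{r-k}, \ldots, T^{r-1}$, are exactly the entries of $B_{k,x}(a_{k-1}, \ldots, a_0)^T$. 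The degree bound on $\tilde Q$ forces these to vanish, so $\det(B_{k,x}) \neq 0$ gives $a_0 = \cdots = a_{k-1} = 0$; hence $\tilde P = 0$, and $\tilde Q = 0$ follows from (1).

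The main obstacle is the careful indexing in the uniqueness step: recognizing $B_{k,x}$ as the matrix of the linear map that sends the coefficient vector of a polynomial of degree $\leq k-1$ to the tuple of its top $k$ coefficients after truncated multiplication by $x + Tf_r(T)$. Once this identification is in hand, uniqueness reduces to a single invocation of the nonvanishing hypothesis, while existence reduces to reading off the appropriate columns of the inverse matrix appearing in the definitions of $P_{k,i}$ and $Q_{k,i}$ via Lemma~\ref{lemma: C and B in terms of P and Q}.
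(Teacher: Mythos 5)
Your proof is correct, and it uses the same core ingredients as the paper's. For existence, the paper computes $P_{k,i}$ and $Q_{k,i}$ directly from the block formulas \eqref{eq: inverse of M'} and \eqref{eq: inverse of M''}, reading off degree bounds from the shape of the resulting matrix products; your route through Lemma~\ref{lemma: C and B in terms of P and Q} and the specialization $D_j=\delta_{ij}$ is just a repackaging of the same computation (you are implicitly using the invertibility of $\begin{bmatrix} I_{r-k} & 0 \\ A_{k,x} & B_{k,x}\end{bmatrix}$ to realize the specialization in terms of the free $C_l$'s, which is fine). For uniqueness, the paper sets up a full $r\times r$ system in the pair $(p,q)$ and computes the determinant of $\begin{bmatrix} B_{k,x} & 0 \\ D_{k,x} & -I_{r-k}\end{bmatrix}$; your version, taking the difference $(\tilde P,\tilde Q)$ to cancel the normalization terms and then observing that the degree bound on $\tilde Q$ is exactly the vanishing of $B_{k,x}\mathbf{a}$, is a slightly more efficient way to say the same thing, since it isolates the single $k\times k$ block that carries the nondegeneracy hypothesis and lets $\tilde Q=0$ follow for free from property (i). No gaps.
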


For $1\leq k\leq r'$ we write $a_k(x)$ for the coefficient of $T^{k-1}$ in the polynomial $P_{k,0}(T,x)$, where $k-1$ is the maximal possible degree of $P_{k,0}(T,x)$ by Lemma~\ref{lemma: PQdegree}. We often write $a_k$ for $a_k(0)$ for simplicity. Note that $a_{r'}(0)$ is not defined, because $a_{r'}(x)$ is a non-constant polynomial of $x^{-1}$(see Lemma~\ref{lemma: k=0,k=r}).

\begin{proof}
We first check that the pair $(P_{k,i}(T,x),Q_{k,i}(T,x))$ satisfies those properties. For~(i), it is immediate from the definition of $P_{k,i}(T,x)$ that
\begin{align*}
\big[(x+Tf_{r'}(T))P_{k,i}(T,x)\big]_{T}^{(r'-1)} &=
            \begin{bmatrix}
                T^{r'-1} & T^{r'-2} & \cdots & 1
            \end{bmatrix}
            M_{r',x}
            \begin{bmatrix}
                I_{r'-k} & 0_{r'-k,k} \\
                A_{k,x} & B_{k,x}
            \end{bmatrix}^{-1}
            \mathbf{e}_{r'-i}
            \\
            &=Q_{k,i}(T,x).
        \end{align*}
For~(ii), let $0\leq i\leq k-1$ and let $\mathbf{e}'_1,\cdots,\mathbf{e}'_{k}$ be the standard basis of $\Q^k$ of collum vectors. Then by \eqref{eq: inverse of M'} we have
\begin{align*}
P_{k,i}(T,x)&=
            \begin{bmatrix}
                T^{r'-1} & T^{r'-2} &\cdots & 1
            \end{bmatrix}
            \begin{bmatrix}
                I_{r'-k} & 0_{r'-k,k} \\
                -B_{k,x}^{-1}A_{k,x} & B_{k,x}^{-1}
            \end{bmatrix}\mathbf{e}_{r'-i}\\
            &=
            \begin{bmatrix}
                T^{r'-1} & T^{r'-2} &\cdots & 1
            \end{bmatrix}
            \begin{bmatrix}
                0_{r'-k,k} \\
                B_{k,x}^{-1}
            \end{bmatrix} \mathbf{e}'_{k-i} \\
            &=
            \begin{bmatrix}
                T^{k-1} & T^{k-2} &\cdots & 1
            \end{bmatrix} B_{k,x}^{-1} \mathbf{e}'_{k-i},
\end{align*}
and by \eqref{eq: inverse of M''} we have
\begin{align*}
Q_{k,i}(T,x)&=
            \begin{bmatrix}
                T^{r'-1}& T^{r'-2} & \cdots & 1
            \end{bmatrix}
            \begin{bmatrix}
                0_{k,r'-k} & I_k\\
                C_{k,x}-D_{k,x}B_{k,x}^{-1}A_{k,x} & D_{k,x}B_{k,x}^{-1}
            \end{bmatrix}\mathbf{e}_{r'-i} \\
            &=
            \begin{bmatrix}
                T^{r'-1}& T^{r'-2} & \cdots & 1
            \end{bmatrix}
            \begin{bmatrix}
                I_k\\
                D_{k,x}B_{k,x}^{-1}
            \end{bmatrix} \mathbf{e}'_{k-i} \\
            &=
            T^{i+r'-k}+
            \begin{bmatrix}
                T^{k-1}& T^{k-2} & \cdots & 1
            \end{bmatrix}
            D_{k,x}B_{k,x}^{-1}
            \mathbf{e}'_{k-i}.
        \end{align*}
For~(iii), let $k\leq i\leq r'-1$ and let $\mathbf{e}''_1,\cdots,\mathbf{e}''_{r'-k}$ be the standard basis of $\Q^{r'-k}$ of collum vectors. Then by the same idea as (ii), we get $$P_{k,i}(T,x)=T^i-
            \begin{bmatrix}
                T^{k-1} & T^{k-2} & \cdots & 1
            \end{bmatrix}
            B_{k,x}^{-1}A_{k,x}
            \mathbf{e}''_{r'-i}$$
and
$$Q_{k,i}(T,x)=\begin{bmatrix}
                T^{r'-k-1} &T^{r'-k-2} & \cdots & 1
            \end{bmatrix}
            (C_{k,x}-D_{k,x}B_{k,x}^{-1}A_{k,x})
            \mathbf{e}''_{r'-i}.$$
Hence, we proved that the pair satisfies those properties.

The rest of the proof is devoted to the uniqueness. For given $k$ and $i$, let $(p(T),q(T))$ be a pair of polynomials in $E[T]$ satisfying those properties. If $0\leq i\leq k-1$ then by the property (ii) we may write
$$p(T)=p_0+p_1T+\cdots+p_{k-1}T^{k-1}\,\,\mbox{and}\,\, q(T)=q_0+q_1T+\cdots+q_{r'-k-1}T^{r'-k-1}+T^{i+r'-k},$$
and if $k\leq i\leq r'-1$ then by the property (iii) we may write
$$p(T)=p_0+p_1T+\cdots+p_{k-1}T^{k-1}+T^i\,\,\mbox{and}\,\,q(T)=q_0+q_1T+\cdots+q_{r'-k-1}T^{r'-k-1}.$$
Then from the property (i) we have
\begin{align*}
\left[\sum_{s=0}^{k-1}p_sT^s(x+Tf_{r'}(T))\right]_{T}^{(r'-1)}-\sum_{s=0}^{r'-k-1}q_sT^s=
 \begin{cases}
 T^{i+r'-k}&\mbox{if }0\leq i<k;\\
 [T^i(x+Tf_{r'}(T))]_{T}^{(r'-1)}&\mbox{if }k\leq i<r'.
 \end{cases}
\end{align*}
We now write the left hand side of the identity as matrix multiplication.
\begin{align*}
[&(p_0+\cdots+p_{k-1}T^{k-1})(x+Tf_{r'}(T))]_{T}^{(r'-1)}-(q_0+\cdots+q_{r'-k-1}T^{r'-k-1})\\
&=
        \begin{bmatrix}
            T^{r'-1} & \cdots & 1
        \end{bmatrix}
        \left(
        M_{r',x}
        \begin{bmatrix}
            0_{r'-k,1}\\
            \mathbf{p}
        \end{bmatrix}
        -
        \begin{bmatrix}
            0_{k,1}\\
            \mathbf{q}
        \end{bmatrix}
        \right)\\
&=
        \begin{bmatrix}
            T^{r'-1} & \cdots & 1
        \end{bmatrix}
        \left(
        \begin{bmatrix}
            A_{k,x} & B_{k,x} \\
            C_{k,x} & D_{k,x}
        \end{bmatrix}
        \begin{bmatrix}
            0_{r'-k,k} & 0_{r'-k,r'-k} \\
            I_k & 0_{k,r'-k}
        \end{bmatrix}
        \begin{bmatrix}
            \mathbf{p} \\
            \mathbf{q}
        \end{bmatrix}
        -
        \begin{bmatrix}
            0_{k,k} & 0_{k,r'-k} \\
            0_{r'-k,k} & I_{r'-k}
        \end{bmatrix}
        \begin{bmatrix}
            \mathbf{p}\\
            \mathbf{q}
        \end{bmatrix}
        \right)\\
&=
        \begin{bmatrix}
            T^{r'-1} & \cdots & 1
        \end{bmatrix}
        \begin{bmatrix}
            B_{k,x} & 0_{k,r'-k} \\
            D_{k,x} & -I_{r'-k}
        \end{bmatrix}
        \begin{bmatrix}
            \mathbf{p} \\
            \mathbf{q}
        \end{bmatrix}
    \end{align*}
where $\mathbf{p}=
        \begin{bmatrix}
            p_{k-1} &\cdots&  p_0
        \end{bmatrix}^t$ and
        $\mathbf{q}=
        \begin{bmatrix}
            q_{r'-k-1} & \cdots & p_0
        \end{bmatrix}^t$. Therefore, the uniqueness follows from the non-singularity
        $$
        \det\left(\begin{bmatrix}
            B_{k,x} & 0_{k,r'-k} \\
            D_{k,x} & -I_{r'-k}
        \end{bmatrix}\right)=(-1)^{r'-k}\det (B_{k,x})\neq0,$$
where the last inequality is due to our assumption.
\end{proof}

We further study the quotient $Q_{k,i}(T,x)/P_{k,i}(T,x)$ for $i\in\{0,k\}$ which will be used to define the coefficients of our strongly divisible modules when we specialize $x$ in $E$.

We recall the definition of Pad\'e approximation.
\begin{defi}[Pad\'e approximation]
Let $R[[T]]$ be the power series ring over a commutative ring $R$. For $f(T)\in R[[T]]$ and for a pair of integers $(m,n)$ with $m\geq0$ and $n\geq0$, the \emph{$[m/n]$-Pad\'e approximant of $f(T)$} is a rational function
\begin{equation*}
   \frac{Q(T)}{P(T)}\equiv f(T) \pmod{T^{m+n+1}}
\end{equation*}
where $P(T)\in R[T]$ (resp. $Q(T)\in R[T]$) is a polynomial with $\deg P(T)\leq n$ (resp. $\deg Q(T)\leq m$).
\end{defi}

Note that if such a pair $(P(T),Q(T))$ exists, then the quotient $P(T)/Q(T)$ is unique, although the pair $(P(T),Q(T))$ may not be unique up to scalar multiple in general.

\begin{lemm}\label{lemma: PQFad'e}
Let $0\leq k\leq r'$, and assume that $x\in E$ with $\det (B_{k,x})\neq 0$.
    \begin{enumerate}[leftmargin=*]
        \item For each $1\leq k\leq r'$, $Q_{k,0}(T,x)$ is monic and $Q_{k,0}(T,x)/P_{k,0}(T,x)$ is the $[(r'-k)/(k-1)]$-Pad\'e approximation of $x+\log(1+T)$.
        \item For each $0\leq k\leq r'-1$, $P_{k,k}(T,x)$ is monic and $Q_{k,k}(T,x)/P_{k,k}(T,x)$ is the $[(r'-k-1)/k]$-Pad\'e approximation of $x+\log(1+T)$.
        \item For $1\leq k\leq r'$, we have
        $$ P_{k-1,k-1}(T,x)=\frac{1}{a_k(x)}P_{k,0}(T,x)\quad\mbox{and}\quad Q_{k-1,k-1}(T,x)=\frac{1}{a_k(x)}Q_{k,0}(T,x).$$
    \end{enumerate}
\end{lemm}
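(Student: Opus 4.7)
The plan is to identify $Tf_r(T)$ with $\log(1+T)$ modulo $T^r$, which reduces parts (i) and (ii) directly to the defining relation of the Pad\'e approximant. Part (iii) will then follow by combining the polynomial identity produced by having two pairs satisfying the same Pad\'e congruence with the uniqueness statement in Lemma~\ref{lemma: PQdegree}.

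From the definition of $f_r(T)$ in \eqref{eq: definition of f_r}, one reads off
\begin{equation*}
Tf_r(T)=\sum_{n=1}^{r-1}\frac{(-1)^{n-1}}{n}T^n\equiv\log(1+T)\pmod{T^r}.
\end{equation*}
Combining this with the identity $Q_{k,i}(T,x)=[(x+Tf_r(T))P_{k,i}(T,x)]_T^{(r-1)}$ from Lemma~\ref{lemma: PQdegree}(i) yields $(x+\log(1+T))P_{k,i}(T,x)\equiv Q_{k,i}(T,x)\pmod{T^r}$ for every admissible pair $(k,i)$. For (i), take $i=0$: Lemma~\ref{lemma: PQdegree}(ii) gives $\deg_T P_{k,0}\leq k-1$ and $\deg_T(Q_{k,0}-T^{r-k})\leq r-k-1$, so $Q_{k,0}$ is monic of degree $r-k$. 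Since $(r-k)+(k-1)+1=r$, the congruence above is exactly the defining property of the $[(r-k)/(k-1)]$-Pad\'e approximant of $x+\log(1+T)$. Part (ii) is obtained identically at $i=k$ via Lemma~\ref{lemma: PQdegree}(iii), which shows $P_{k,k}$ is monic of degree $k$, $\deg_T Q_{k,k}\leq r-k-1$, and gives the congruence modulo $T^{(r-k-1)+k+1}=T^r$.

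For (iii), apply (i) at $k$ and (ii) at $k-1$ (the latter implicitly requires also $\det(B_{k-1,x})\neq 0$): both pairs $(P_{k,0},Q_{k,0})$ and $(P_{k-1,k-1},Q_{k-1,k-1})$ satisfy the Pad\'e congruence $(x+\log(1+T))P\equiv Q\pmod{T^r}$ with $\deg P\leq k-1$ and $\deg Q\leq r-k$. Multiplying each of the two congruences by the other's denominator polynomial and subtracting cancels the factor $x+\log(1+T)$ and produces
\begin{equation*}
P_{k-1,k-1}(T,x)Q_{k,0}(T,x)-P_{k,0}(T,x)Q_{k-1,k-1}(T,x)\equiv 0\pmod{T^r}.
\end{equation*}
The left-hand side is a polynomial of degree at most $(k-1)+(r-k)=r-1$, so it vanishes identically. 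Extracting the coefficient of $T^{r-1}$—using that $P_{k-1,k-1}$ is monic of degree $k-1$, $Q_{k,0}$ is monic of degree $r-k$, and that the coefficient of $T^{k-1}$ in $P_{k,0}$ is $a_k(x)$—yields $a_k(x)\cdot[\text{coefficient of }T^{r-k}\text{ in }Q_{k-1,k-1}]=1$; in particular $a_k(x)\neq 0$, and $a_k(x)Q_{k-1,k-1}$ has leading coefficient $1$ in $T^{r-k}$.

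To finish, observe that the pair $(a_k(x)P_{k-1,k-1},\,a_k(x)Q_{k-1,k-1})$ satisfies (by linearity) the relation $a_k(x)Q_{k-1,k-1}=[(x+Tf_r(T))\cdot a_k(x)P_{k-1,k-1}]_T^{(r-1)}$, has $\deg(a_k(x)P_{k-1,k-1})\leq k-1$, and has $\deg(a_k(x)Q_{k-1,k-1}-T^{r-k})\leq r-k-1$. These are precisely the unique characterizing conditions of $(P_{k,0},Q_{k,0})$ furnished by Lemma~\ref{lemma: PQdegree} (the case $0\leq i=0\leq k-1$), so the uniqueness statement forces $P_{k,0}=a_k(x)P_{k-1,k-1}$ and $Q_{k,0}=a_k(x)Q_{k-1,k-1}$. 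The main delicate point is the last step: one must verify $a_k(x)\neq 0$ before invoking uniqueness, which is exactly what the leading-coefficient comparison accomplishes; everything else is bookkeeping with degree bounds and the truncation $Tf_r(T)\equiv\log(1+T)\pmod{T^r}$.
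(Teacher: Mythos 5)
Your core strategy for (i) and (ii) — identifying $Tf_r(T)$ with $\log(1+T)$ modulo $T^r$ and reading off degree bounds from Lemma~\ref{lemma: PQdegree} — is essentially the paper's approach, and your treatment of (iii) via the polynomial identity $P_{k-1,k-1}Q_{k,0}-P_{k,0}Q_{k-1,k-1}=0$ and the leading-coefficient comparison is a nice way to extract $a_k(x)\neq 0$ before invoking uniqueness. However, there is a real gap at the endpoints $k\in\{0,r\}$: Lemma~\ref{lemma: PQdegree} is stated (and its uniqueness argument is proved) only for $1\leq k\leq r-1$, so it cannot be cited for the cases $k=r$ in (i), $k=0$ in (ii), or $k=r$ in (iii). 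Part (i) ranges over $1\leq k\leq r$ and part (ii) over $0\leq k\leq r-1$, so each has one endpoint outside the scope of Lemma~\ref{lemma: PQdegree}; you also implicitly use (ii) at $k-1=0$ inside your proof of (iii) at $k=1$, which propagates the $k=0$ gap.

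The missing ingredient is Lemma~\ref{lemma: k=0,k=r}, which is exactly what the paper cites to cover these endpoints. For $k=0$ it gives $P_{0,i}=T^i$ (so $P_{0,0}=1$, monic of degree $0$) and $Q_{0,0}=[x+Tf_r(T)]_T^{(r-1)}$ of degree $r-1$; for $k=r$ it gives $Q_{r,0}=1$ (monic of degree $0$) and the explicit formula for $P_{r,0}$; and its last clause extends the relation $Q_{k,i}=[(x+Tf_r(T))P_{k,i}]_T^{(r-1)}$ to $k\in\{0,r\}$, which is what you invoke via Lemma~\ref{lemma: PQdegree}(i) but is stated there only for the interior range. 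Likewise, for (iii) at $k=r$ you appeal to ``the unique characterizing conditions \ldots furnished by Lemma~\ref{lemma: PQdegree},'' but that uniqueness is unavailable; the paper instead settles $k=r$ by direct comparison with the explicit $k=r$ formulas of Lemma~\ref{lemma: k=0,k=r}. To fix your proof, add a case for $k\in\{0,r\}$ relying on Lemma~\ref{lemma: k=0,k=r} (and, for (iii) at $k=r$, either verify the identity against those formulas directly or note that the uniqueness argument in the proof of Lemma~\ref{lemma: PQdegree} extends verbatim to $k=r$ since $B_{r,x}=M_{r,x}$ is nonsingular when $x\neq0$). Your observation that (iii) tacitly requires $\det(B_{k-1,x})\neq0$ is correct and worth flagging; the paper shares this implicit assumption.
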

Note that $a_k(x)$ is defined right after Lemma~\ref{lemma: PQdegree}.

\begin{proof}
(i) and (ii) follows immediately from Lemma~\ref{lemma: PQdegree} together with Lemma~\ref{lemma: k=0,k=r}. (iii) follows from the uniqueness in Lemma~\ref{lemma: PQdegree} if $k<r'$ and from the explicit formula in Lemma~\ref{lemma: k=0,k=r} if $k=r'$.
\end{proof}

\subsection{The polynomials $P_{k,i}(T,x)$ and $Q_{k,i}(T,x)$}
In this subsection, we study the necessary properties of $P_{k,i}(T,x)$ and $Q_{k,i}(T,x)$ as polynomials in $x$. Later, we will specialize $x$ in $E$ and so it is necessary to investigate the form of those polynomials as polynomials in $x$, to keep track the $p$-adic valuation of the coefficients of those polynomials. We keep the notation of \S\ref{subsec: linear expressions of B and C}.

\subsubsection{\textbf{When $0\leq k\leq m$}}
In this section of paragraph, we treat the case $0\leq k\leq m$. We first show that the coefficients of $P_{k,i}(T,x)$ and $Q_{k,i}(T,x)$ are $p$-adically integral if take $x=0$.
\begin{lemm}\label{lemma: B_k,0 k leq m}
Let $1\leq k\leq m$. Then
\begin{enumerate}[leftmargin=*]
    \item $B_{k,x}=B_{k,0}$ and $v_p(\det(B_{k,0}))=0$;
    \item $P_{k,i}(T,0),Q_{k,i}(T,0)\in \Z_{(p)}[T]$ for all $0\leq i\leq r'-1$;
    \item $a_k\in\Z_{(p)}^{\times}$ and $P_{k,0}(T,x)=P_{k,0}(T,0)$.
\end{enumerate}
\end{lemm}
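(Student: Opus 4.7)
The plan rests on the observation that for $1\le k\le m$ the block $B_{k,x}$ sits strictly above the diagonal of $M_{r,x}$, so it is independent of $x$ and takes the form of a sign-twisted Cauchy matrix whose denominators are small positive integers.

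For (i), I would first observe that the top-right $k\times k$ block $B_{k,x}$ of $M_{r,x}$ consists of entries at positions $(i,r-k+j)$ with $1\le i,j\le k$; since $k\le m\le r/2$ we have $r-k+j-i\ge r-2k+1\ge 1$, so none of these positions lies on the diagonal of $M_{r,x}$. Hence every entry equals $\tfrac{(-1)^{r-k+j-i-1}}{r-k+j-i}$, which is independent of $x$, giving $B_{k,x}=B_{k,0}$. After absorbing the signs into diagonal rescalings of rows and columns, $B_{k,0}$ becomes the Cauchy matrix $\bigl(\tfrac{1}{x_i+y_j}\bigr)_{1\le i,j\le k}$ with $x_i=r-k-i$ and $y_j=j$, and the classical Cauchy determinant formula gives
\[
\det B_{k,0}=\pm\frac{\prod_{1\le i<j\le k}(j-i)^{2}}{\prod_{i,j=1}^{k}(r-k+j-i)}.
\]
Every integer appearing here is nonzero and at most $r-1<p-1$, so is a $p$-adic unit, which yields $v_p(\det B_{k,0})=0$.

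For (ii), all entries of $M_{r,0}$ are either $0$ or $\pm 1/d$ with $1\le d\le r-1<p$, so $M_{r,0}$, and in particular $A_{k,0}$, $C_{k,0}$, $D_{k,0}$, all have entries in $\Z_{(p)}$. Combined with the unit determinant from (i), Cramer's rule yields $B_{k,0}^{-1}\in\M_{k\times k}(\Z_{(p)})$, and then \eqref{eq: inverse of M'} shows that the matrix $\begin{bmatrix}I_{r-k}&0\\A_{k,0}&B_{k,0}\end{bmatrix}^{-1}$ also has $\Z_{(p)}$-entries. Plugging into the defining formulas for $P_{k,i}(T,0)$ and $Q_{k,i}(T,0)$ immediately gives (ii).

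For (iii), the identity $P_{k,0}(T,x)=P_{k,0}(T,0)$ follows at once from the formula for $P_{k,0}$ derived in the proof of Lemma~\ref{lemma: PQdegree}(ii) together with $B_{k,x}=B_{k,0}$ from (i); the same formula identifies $a_k$ with the $(1,k)$-entry of $B_{k,0}^{-1}$. By cofactor expansion $a_k=(-1)^{k+1}\det(\tilde B)/\det(B_{k,0})$, where $\tilde B$ is the submatrix of $B_{k,0}$ obtained by deleting row $k$ and column $1$. This $\tilde B$ is again a sign-twisted Cauchy matrix of size $(k-1)\times(k-1)$ whose denominators lie in $\{r-2k+3,\ldots,r-1\}\subset\{1,\ldots,p-1\}$, so $\det\tilde B$ is again a $p$-adic unit, and therefore $a_k\in\Z_{(p)}^\times$. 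The only real obstacle in this plan is the bookkeeping of Cauchy-determinant signs and the verification that every denominator appearing lies strictly between $0$ and $p$; both are automatic from the standing hypotheses $k\le m$ and $r<p-1$.
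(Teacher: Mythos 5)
Your proof is correct. For parts (ii) and (iii) you follow essentially the same route as the paper: deduce $B_{k,0}^{-1}\in\M_{k\times k}(\Z_{(p)})$ from $v_p(\det B_{k,0})=0$ and then read off the $\Z_{(p)}$-integrality of the $P_{k,i}$, $Q_{k,i}$ via \eqref{eq: inverse of M'} and \eqref{eq: inverse of M''}, and identify $a_k$ with the $(1,k)$-cofactor ratio $\pm\det(B_{k-1,0})/\det(B_{k,0})$, which is a unit by part (i) applied at $k$ and $k-1$. The noteworthy difference is in part (i): for $v_p(\det B_{k,0})=0$ the paper simply refers to the proof of \cite[Lemma~4.1.1]{GP}, whereas you rescale rows and columns to reduce $B_{k,0}$ to a genuine Cauchy matrix $\bigl(\tfrac{1}{x_i+y_j}\bigr)$ and invoke the closed-form Cauchy determinant. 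Since every integer that appears in the numerator ($1,\dots,k-1$) and denominator ($r-2k+1,\dots,r-1$) is positive and at most $r-1<p-1$, the unit conclusion is immediate. This makes the argument self-contained, which is a genuine improvement over a citation to an external lemma; the cost is the extra bookkeeping about which denominators occur and whether they stay in the $p$-adic unit range, but as you note this follows automatically from the standing hypotheses $k\le m\le r/2$ and $r<p-1$. For the submatrix $\tilde B$ in part (iii) you could equivalently observe, as the paper does, that $\tilde B=B_{k-1,0}$ and simply reinvoke part (i), which is slightly cleaner than redoing the Cauchy argument, but your version is equally valid.
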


\begin{proof}
We first point out that if $1\leq k\leq m$ then none of the entries of $B_{k,x}$ is $x$ (see \eqref{eq: B_k,x when k leq m}), which proves the first identity of (i). The second identity of (i) follows from the proof of \cite[Lemma~4.1.1]{GP}.

For (ii), we have $B_{k,0}^{-1}\in \M_{k\times k}(\Z_{(p)})$ and $D_{k,0}B_{k,0}^{-1}\in\M_{(r'-k)\times k}(\Z_{(p)})$, as $\det(B_{k,0})\in\Z_{(p)}^\times$ and $D_{k,x}\in \M_{(r'-k)\times k}(\Z_{(p)})$. Now, \eqref{eq: inverse of M'} and \eqref{eq: inverse of M''} completes the proof of (ii).

For (iii), as $B_{k,x}^{-1}=B_{k,0}^{-1}\in \M_{k\times k}(\Z_{(p)})$, \eqref{eq: inverse of M'} implies that $P_{k,0}(T,x)=P_{k,0}(T,0)$. Moreover, we see that the $(1,k)$-entry of $B_{k,x}^{-1}$ is $a_k(x)=a_k$ by definition. On the other hand, the $(1,k)$-entry of $B_{k,x}^{-1}$ can be also written as $\frac{(-1)^{1+k}\det(B_{k-1,x})}{\det(B_{k,x})}$, where we let $\det(B_{0,x})=1$ by convention. Hence, by (i) we conclude that $v_p(a_k)=0$, which completes the proof.
\end{proof}

For $m+1\leq k\leq r'-1$, this statement does not hold in general. For example, if $r'=6$, $k=4$ and $p=19$, then
    \begin{equation*}
        \det B_{4,0}=-\frac{19}{720},
    \end{equation*}
and so $v_{19}(\det B_{4,0})\neq 0$.

We further study the necessary properties of $P_{k,i}(T,x)$ and $Q_{k,i}(T,x)$ as polynomials in $x$. Recall that if $0\leq k\leq m$, then the matrix $B_{k,x}$ is of the form \eqref{eq: B_k,x when k leq m}, so that we have $B_{k,x}=B_{k,0}$.

\begin{lemm}\label{lemma-PQ-k leq m}
Let $0\leq k\leq m$.
\begin{enumerate}[leftmargin=*]
\item if $0\leq i\leq r'-k-1$ then
$$P_{k,i}(T,x)=P_{k,i}(T,0)\quad\mbox{ and }\quad Q_{k,i}(T,x)=Q_{k,i}(T,0)+xP_{k,i}(T,0);$$
\item if $r'-k\leq i\leq r'-1$ then
$$P_{k,i}(T,x)=P_{k,i}(T,0)-xP_{k,i-r'+k}(T,0)$$
and
$$Q_{k,i}(T,x)=Q_{k,i}(T,0)+xP_{k,i}(T,0)-x\big(Q_{k,i-r'+k}(T,0)+xP_{k,i-r'+k}(T,0)\big).$$
\end{enumerate}
\end{lemm}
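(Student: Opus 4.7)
The plan is to exploit the structure of $A_{k,x}$ when $k\leq m$. Since $M_{r,x}$ is upper triangular with $x$'s on the diagonal and since $k\leq m\leq r-k$, the $k\times(r-k)$ block $A_{k,x}$ has $x$'s only at positions $(\ell,\ell)$ for $1\leq\ell\leq k$ (all of which fit inside the block) and constants elsewhere. Thus $A_{k,x}=A_{k,0}+xE$, where $E$ is the $k\times(r-k)$ matrix with $E_{\ell,\ell}=1$ for $1\leq\ell\leq k$ and zero otherwise. Combined with $B_{k,x}=B_{k,0}$ from Lemma~\ref{lemma: B_k,0 k leq m}(i) (so $B_{k,x}^{-1}=B_{k,0}^{-1}$), this yields an explicit linear dependence in $x$ of the inverse matrix in \eqref{eq: inverse of M'} that appears in the definitions of $P_{k,i}$ and $Q_{k,i}$.

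First I will prove the $P$-identities by reading off the $(r-i)$-th column of that inverse. For $0\leq i\leq r-k-1$ we have $r-i\geq r-k+1>k$, so the $(r-i)$-th column of $A_{k,x}$ lies beyond the range of the $x$'s on its diagonal and is thus independent of $x$; this gives $P_{k,i}(T,x)=P_{k,i}(T,0)$ directly. For $r-k\leq i\leq r-1$ we have $1\leq r-i\leq k$, and the $(r-i)$-th column of $A_{k,x}$ equals the same column of $A_{k,0}$ plus $x\mathbf{e}'_{r-i}$, where $\mathbf{e}'_\ell$ denotes the $\ell$-th standard basis vector of $\Q^k$; this contributes an extra term $-x[T^{k-1},\ldots,1]B_{k,0}^{-1}\mathbf{e}'_{r-i}$ to $P_{k,i}(T,x)$. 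Matching with the formula $P_{k,j}(T,0)=[T^{k-1},\ldots,1]B_{k,0}^{-1}\mathbf{e}'_{k-j}$ (valid for $0\leq j\leq k-1$, as computed in the proof of Lemma~\ref{lemma: PQdegree}(ii)) via the substitution $j=i-r+k$, this extra term equals $-xP_{k,i-r+k}(T,0)$, yielding the formula in~(ii).

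For the $Q$-identities, I invoke Lemma~\ref{lemma: PQdegree}(i)---applicable since $\det B_{k,x}=\det B_{k,0}\neq 0$ by Lemma~\ref{lemma: B_k,0 k leq m}(i)---to write $Q_{k,i}(T,x)=[(x+Tf_r(T))P_{k,i}(T,x)]_T^{(r-1)}$. Substituting the $P$-identities just proved and using the $x=0$ specialization $Q_{k,i}(T,0)=[Tf_r(T)P_{k,i}(T,0)]_T^{(r-1)}$, the computation reduces to tracking degree bounds. Lemma~\ref{lemma: PQdegree}(ii)--(iii) gives $\deg_T P_{k,i}(T,0)\leq\max(i,k-1)\leq r-1$ in every case, which ensures that $xP_{k,i}(T,0)$ and (in case~(ii)) $x^2P_{k,i-r+k}(T,0)$ survive the degree-$(r-1)$ truncation intact, while $[xTf_r(T)P_{k,i-r+k}(T,0)]_T^{(r-1)}=xQ_{k,i-r+k}(T,0)$. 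Collecting the terms yields the stated expressions. The main obstacle will be the index bookkeeping in case~(ii): correctly identifying the extra $x$-contribution in $P_{k,i}$---which involves a column extraction in the $\Q^k$-factor of the block decomposition---with $P_{k,i-r+k}(T,0)$ computed in a separate branch of the case analysis, and then propagating the shift $i\mapsto i-r+k$ through the definition of $Q$.
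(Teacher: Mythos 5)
Your proposal follows essentially the same route as the paper: decompose $A_{k,x}=A_{k,0}+xE$ with $E$ the $k\times(r-k)$ matrix having $1$'s on the leading diagonal, use $B_{k,x}=B_{k,0}$ so that the $x$-dependence of the block inverse in \eqref{eq: inverse of M'} is a single explicit $x$-linear term, read off the $P$-identities column by column (matching the extra column with $P_{k,i-r+k}(T,0)$ via Lemma~\ref{lemma: PQdegree}(ii)), and then substitute into $Q_{k,i}(T,x)=[(x+Tf_r(T))P_{k,i}(T,x)]_T^{(r-1)}$ with degree bookkeeping. A minor slip: in case (i) the correct lower bound is $r-i\geq k+1$, not $r-i\geq r-k+1$, and for $0\leq i\leq k-1$ the extracted column $r-i>r-k$ is not a column of $A_{k,x}$ at all but lies in the $B_{k,0}^{-1}$ block of the full inverse --- in both sub-cases the column is $x$-independent, so the conclusion stands.
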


\begin{proof}
As $0\leq k\leq m$, we may write
$  A_{k,x}
    =
    A_{k,0}
    +
    x\begin{bmatrix}
        I_k & 0_{k,r'-2k}
    \end{bmatrix}$
and so by \eqref{eq: inverse of M'}
\begin{equation*}
        \begin{bmatrix}
        I_{r'-k} & 0_{r'-k,k}\\
        A_{k,x} & B_{k,x}
    \end{bmatrix}^{-1}
    =
    \begin{bmatrix}
        I_{r'-k} & 0_{r'-k,k} \\
        -B_{k,0}^{-1}A_{k,0} & B_{k,0}^{-1}
    \end{bmatrix}
    -x
    \begin{bmatrix}
        0_{r'-k,k} & 0_{r'-k,r'-k} \\
        B_{k,0}^{-1}& 0_{k,r'-k}
    \end{bmatrix}.
\end{equation*}
Hence, from the definition of $P_{k,i}(T,x)$ we have
    \begin{align*}
        P_{k,i}(T,x)
        &=
        \begin{bmatrix}
          T^{r'-1} & \cdots & 1
        \end{bmatrix}\left(
        \begin{bmatrix}
            I_{r'-k} & 0_{r'-k,k} \\
            -B_{k,0}^{-1}A_{k,0} & B_{k,0}^{-1}
        \end{bmatrix}
        -x
    \begin{bmatrix}
        0_{r'-k,k} & 0_{r'-k,r'-k} \\
        B_{k,0}^{-1}& 0_{k,r'-k}
    \end{bmatrix}\right)
        \mathbf{e}_{r'-i}
        \\&
        =P_{k,i}(T,0)
        -x\begin{bmatrix}
          T^{r'-1} & \cdots & 1
        \end{bmatrix}
        \begin{bmatrix}
            0_{r'-k,k} & 0_{r'-k,r'-k} \\
            B_{k,0}^{-1}& 0_{k,r'-k}
        \end{bmatrix}
        \mathbf{e}_{r'-i}
    \end{align*}
If $0\leq i\leq r'-k-1$, then the $(r'-i)$-th column of the matrix in the last term is $0$, and so we have $P_{k,i}(T,x)=P_{k,i}(T,0)$. If $r'-k\leq i\leq r'$, then we observe that
    \begin{align*}
        \begin{bmatrix}
          T^{r'-1} & \cdots & 1
        \end{bmatrix}
        \begin{bmatrix}
            0_{r'-k,k} & 0_{r'-k,r'-k} \\
            B_{k,x}^{-1}& 0_{k,r'-k}
        \end{bmatrix}
        \mathbf{e}_{r'-i}
        &=
        \begin{bmatrix}
          T^{k-1} & \cdots & 1
        \end{bmatrix}
        B_{k,x}^{-1}
        \mathbf{e}'_{k-(i-r'+k)}\\
        &=P_{k,i-r'+k}(T,0),
    \end{align*}
providing $P_{k,i}(T,x)=P_{k,i}(T,0)-xP_{k,i-r'+k}(T,0)$.

We now compute for $Q_{k,i}(T,x)$. Recall from Lemma~\ref{lemma: k=0,k=r} and Lemma~\ref{lemma: PQdegree} that
$$Q_{k,i}(T,x)=\big[(x+Tf_{r'}(T))P_{k,i}(T,x)\big]_{T}^{(r'-1)}=\big[Tf_{r'}(T)P_{k,i}(T,x)\big]_T^{(r'-1)}+xP_{k,i}(T,x).$$
If $0\leq i\leq r'-k-1$, then we have
    \begin{align*}
        Q_{k,i}(T,x) &=\big[Tf_{r'}(T)P_{k,i}(T,0)\big]_{T}^{(r'-1)}+xP_{k,i}(T,0)
        \\&=Q_{k,i}(T,0)+xP_{k,i}(T,0).
    \end{align*}
If $r'-k\leq i\leq r'-1$, then we have
    \begin{align*}
        Q_{k,i}(T,x)&=\big[Tf_{r'}(T)\big(P_{k,i}(T,0)-xP_{k,i-r'+k}(T,0)\big)\big]_{T}^{(r'-1)}
        \\&\hspace{7em}
        +x\big(P_{k,i}(T,0)-xP_{k,i-r'+k}(T,0)\big)
        \\&=\big[Tf_{r'}(T)P_{k,i}(T,0)\big]_{T}^{(r'-1)}-x\big[Tf_{r'}(T)P_{k,i-r'+k}(T,0)\big]_{T}^{(r'-1)}
        \\&\hspace{8em}
        +xP_{k,i}(T,0)-x^2P_{k,i-r'+k}(T,0)
        \\&=Q_{k,i}(T,0)-xQ_{k,i-r'+k}(T,0)+xP_{k,i}(T,0)-x^2P_{k,i-r'+k}(T,0).
    \end{align*}
This completes the proof.
\end{proof}

\subsubsection{\textbf{When $m+1\leq k\leq r'$}}
In this section of paragraph, we treat the case $m+1\leq k\leq r'$. In this case, the matrix $B_{k,x}$ is of the form \eqref{eq: B_k,x when k geq m+1}, so that it is easy to see that
\begin{equation*}
    \adj(B_{k,x})=\det(B_{k,x})B_{k,x}^{-1}
    \in \M_{k\times k}(\Z_{(p)}[x]).
\end{equation*}
Moreover, we see that
$$
            \begin{bmatrix}
        I_{r'-k} & 0_{r'-k,k}\\
        A_{k,x} & B_{k,x}
    \end{bmatrix}^{-1}
=
    \begin{bmatrix}
        I_{r'-k} & 0_{r'-k,k} \\
        -B_{k,x}^{-1}A_{k,x} & B_{k,x}^{-1}
    \end{bmatrix}\in\M_{k\times k}(\Q_p(x)).$$

To normalize $P_{k,i}(T,x)$ and $Q_{k,i}(T,x)$, we set
\begin{equation*}
    d_k(x):=(-1)^{r'(k-1)}\frac{\det(B_{k,x})}{\det(B_{r'-k,0})}.
\end{equation*}
Here we set $\det(B_{0,0})=1$ for $k=r'$. Note that $B_{r'-k,x}=B_{r'-k,0}$ and $\det(B_{r'-k,0})$ is a constant in $\Z_{(p)}^\times$. We also note that $d_{k}(x)$ is a monic polynomial in $x$ of degree $2k-r'$ (see Lemma~\ref{lemma-QAinv}).
We now define
\begin{equation*}
   \begin{cases}
    \tilde P_{k,i}(T,x):=d_k(x)P_{k,i}(T,x);\\
    \tilde Q_{k,i}(T,x):=d_k(x)Q_{k,i}(T,x),
    \end{cases}
\end{equation*}
both of which belong to $\Z_{(p)}[x,T]$.

We also set
\begin{equation*}
    s_{k,i}:=\deg_x \tilde P_{k,i}(T,x)\quad\mbox{ and }\quad
    t_{k,i}:=\deg_x \tilde Q_{k,i}(T,x),
\end{equation*}
and write
\begin{equation*}
\tilde P_{k,i}(T,x)=\sum_{s=0}^{s_{k,i}} x^s P_{k,i,s}(T)\quad\mbox{ and }\quad\tilde Q_{k,i}(T,x)=\sum_{s=0}^{t_{k,i}} x^s Q_{k,i,s}(T),
\end{equation*}
with $P_{k,i,s}(T),\, Q_{k,i,s}(T)\in \Z_{(p)}[T].$ It will be useful to use $\tilde P_{k,i}(T,x)$ and $\tilde Q_{k,i}(T,x)$ to describe the valuations of the coefficients in practice.

\begin{lemm}\label{lemma: m+1 to r}
    Let $m+1\leq k\leq r'$. Then
    \begin{equation*}
       t_{k,i}-1= s_{k,i}=
        \begin{cases}
            2k-r'-1 & \mbox{if } 0\leq i\leq 2k-r'-1; \\
            2k-r' & \mbox{if } 2k-r'\leq i\leq k-1;\\
            2k-r'+1 & \mbox{if } k\leq i\leq r'-1,
        \end{cases}
    \end{equation*}
    and
    \begin{equation*}
       Q_{k,i,t_{k,i}}(T)= P_{k,i,s_{k,i}}(T)
        =
        \begin{cases}
            P_{r'-k,i+r'-k}(T,0) & \mbox{if } 0\leq i\leq 2k-r'-1; \\
            P_{r'-k, i+r'-2k}(T,0) & \mbox{if } 2k-r'\leq i\leq k-1;\\
            -P_{r'-k,i-k}(T,0) & \mbox{if } k\leq i\leq r'-1.
        \end{cases}
    \end{equation*}
\end{lemm}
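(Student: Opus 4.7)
The plan is to carry out a careful cofactor analysis of the adjugate $\adj(B_{k,x})$ and relate its leading coefficients in $x$ to the smaller matrix $B_{r-k,0}$ and the polynomials $P_{r-k,*}(T,0)$. First I would show that $\det(B_{k,x})$ has degree exactly $2k-r$ in $x$ with leading coefficient $\pm\det(B_{r-k,0})$. From \eqref{eq: B_k,x when k geq m+1}, the variable $x$ occurs in exactly $2k-r$ entries of $B_{k,x}$, placed at the positions $(r-k+s,s)$ for $s=1,\ldots,2k-r$. Generalized cofactor expansion along these $x$-entries produces a top-in-$x$ contribution $(\pm 1)x^{2k-r}\det M'$, where $M'$ is the complementary $(r-k)\times(r-k)$ minor obtained by deleting these rows and columns; an entry-by-entry comparison with \eqref{eq: B_k,x when k leq m} shows $M'=B_{r-k,0}$, and the sign works out so that $d_k(x)=(-1)^{r(k-1)}\det(B_{k,x})/\det(B_{r-k,0})$ is monic of degree $2k-r$.

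For $0\leq i\leq k-1$, I would start from the formula
\begin{equation*}
P_{k,i}(T,x)=\bigl[T^{k-1}\ \cdots\ 1\bigr]\,B_{k,x}^{-1}\,\mathbf{e}'_{k-i}
\end{equation*}
derived in the proof of Lemma~\ref{lemma: PQdegree}, and multiply by $d_k(x)$ to obtain an identity purely in terms of the adjugate and $(\det B_{r-k,0})^{-1}\in\Z_{(p)}$. Each entry of the column $(k-i)$ of $\adj(B_{k,x})$ is a signed $(k-1)\times(k-1)$ minor of $B_{k,x}$, whose degree in $x$ equals the number of the $2k-r$ distinguished $x$-positions that survive deletion of row $k-i$ and some column $j$. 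Row $k-i$ contains an $x$-position precisely when $r-k+1\leq k-i\leq k$, i.e., when $0\leq i\leq 2k-r-1$; in that case one $x$ is automatically lost, forcing $s_{k,i}\leq 2k-r-1$, and the bound is achieved for any $j$ outside the $x$-column range $\{1,\ldots,2k-r\}$. For $2k-r\leq i\leq k-1$, row $k-i$ contains no $x$-position, and the maximum degree $2k-r$ is achieved. In both sub-cases, extracting the top-in-$x$ coefficient collapses the surviving minor to a minor of $B_{r-k,0}$; applying the inverse formula \eqref{eq: inverse of M'} to this smaller matrix identifies the collapsed minor with the numerator of $P_{r-k,i+r-k}(T,0)$, respectively $P_{r-k,i+r-2k}(T,0)$, matching the claim.

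For $k\leq i\leq r-1$, I would use the alternative expression
\begin{equation*}
P_{k,i}(T,x)=T^i-\bigl[T^{k-1}\ \cdots\ 1\bigr]\,B_{k,x}^{-1}A_{k,x}\,\mathbf{e}''_{r-i}
\end{equation*}
from the proof of Lemma~\ref{lemma: PQdegree}. In the regime $m+1\leq k\leq r$, the relevant column of $A_{k,x}$ carries its own linear factor in $x$, and combining this with the previous degree count for $B_{k,x}^{-1}$ raises the top $x$-degree by one compared with the $0\leq i\leq k-1$ range, producing $s_{k,i}=2k-r+1$ with leading coefficient $-P_{r-k,i-k}(T,0)$ after sign tracking. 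Finally, the statements for $\tilde Q_{k,i}$ follow from the identity $Q_{k,i}(T,x)=[(x+Tf_r(T))P_{k,i}(T,x)]_T^{(r-1)}$ in Lemma~\ref{lemma: PQdegree}(i): multiplying by the prefactor $x$ raises the $x$-degree by exactly one, and since the claimed leading coefficient $P_{k,i,s_{k,i}}(T)=P_{r-k,*}(T,0)$ has $T$-degree at most $r-k-1\leq r-1$, the truncation $[\,\cdot\,]_T^{(r-1)}$ preserves the top-in-$x$ coefficient, yielding $t_{k,i}=s_{k,i}+1$ and $Q_{k,i,t_{k,i}}(T)=P_{k,i,s_{k,i}}(T)$. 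The main obstacle will be the combinatorial bookkeeping in the cofactor analysis: for each $(k,i)$ pair, identifying which rows and columns of $B_{k,x}$ survive in the top-in-$x$ minor, matching the resulting minor of $B_{r-k,0}$ to the prescribed $P_{r-k,*}(T,0)$, and consistently tracking signs against the normalization $(-1)^{r(k-1)}$ built into $d_k(x)$.
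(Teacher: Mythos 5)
Your plan is a genuinely different route from the paper's. The paper does not perform a cofactor or adjugate analysis; instead it writes $B_{k,x}=B_{k,0}+xU_tV_t$ (a rank-$t$ update with $t=2k-r$), applies the Sherman--Morrison--Woodbury formula to get
\begin{equation*}
B_{k,x}^{-1}=B_{k,0}^{-1}-xB_{k,0}^{-1}U_t\bigl(I_t+xV_tB_{k,0}^{-1}U_t\bigr)^{-1}V_tB_{k,0}^{-1},
\end{equation*}
substitutes $y=x^{-1}$, and reads off the top one or two coefficients from a power series expansion in $y$. The leading coefficients are then identified with $P_{r-k,*}(T,0)$ not by matching minors but by verifying that the candidate polynomial $h_{k,i}(T)$ (resp.\ $g_{k,i}(T)$) satisfies the characterizing degree bounds of Lemma~\ref{lemma: PQdegree} and invoking its uniqueness statement. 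The block-inverse algebra in Lemma~\ref{lemma-QAinv} (in particular $B_{r-k,0}^{-1}=A''_k-A'''_k(A'_k)^{-1}A_k$ and $d_k(x)=\det(xI_t+(A'_k)^{-1})$) is what makes all of this go through cleanly. Your approach, by contrast, would expand along the $2k-r$ distinguished $x$-entries directly and then recognize the complementary minors. Both should ultimately work, but the paper's route buys you two things you would otherwise have to rebuild by hand: the signs come out automatically from linear algebra rather than from a generalized Laplace expansion, and the identification of the leading coefficient uses the abstract uniqueness of Pad\'e-type approximants rather than an entry-by-entry minor comparison.

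The gap in your proposal is precisely the ``combinatorial bookkeeping'' you flag at the end, and it is not a small gap. Concretely: (a) you need to prove, not just assert, that the top-in-$x$ minor collapses exactly to an expression from which $P_{r-k,i+r-k}(T,0)$ (resp.\ $P_{r-k,i+r-2k}(T,0)$, resp.\ $-P_{r-k,i-k}(T,0)$) can be read off, including the sign in the third case, which is the trickiest; (b) your statement that for $0\leq i\leq 2k-r-1$ the bound $s_{k,i}=2k-r-1$ ``is achieved for any $j$ outside the $x$-column range'' is incomplete --- you also get degree $2k-r-1$ when $j=2k-r-i$, since then deleting row $k-i$ and column $j$ removes the \emph{same} distinguished $x$-entry rather than two different ones; and (c) you still must show the top coefficient is actually nonzero, which requires knowing $\det B_{r-k,0}\neq 0$ and some control over which cofactors contribute. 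None of these are likely to derail the proof, but as written your argument is an outline that still has the hard part outstanding. You would save yourself most of the case analysis by replacing the cofactor expansion with the paper's power-series-in-$y=x^{-1}$ device and then appealing to the uniqueness clause of Lemma~\ref{lemma: PQdegree} to pin down each leading polynomial, exactly as the paper does.
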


We need a little preparation to prove this Lemma. It is easy to see that the matrix $B_{k,x}$ is a low-rank update of $B_{k,0}$ if $m+1\leq k<r'$. More precisely, if we set $t=2k-r'$ and define
\begin{equation*}
    U_t
    =
    \begin{bmatrix}
       0_{k-t,t} \\
       I_t
    \end{bmatrix}
    \in\M_{k\times t}(\Q)\quad\mbox{ and }\quad
    V_t=
    \begin{bmatrix}
        I_t & 0_{t,k-t}
    \end{bmatrix}
    \in\M_{t\times k}(\Q),
\end{equation*}
then we can write
\begin{equation*}
    B_{k,x}=B_{k,0}+xU_tV_t,
\end{equation*}
and we have
\begin{equation}\label{eq: inverse of B_k,x}
    B_{k,x}^{-1}=B_{k,0}^{-1}-xB_{k,0}^{-1}U_t(I_t+xV_tB_{k,0}^{-1}U_t)^{-1}V_tB_{k,0}^{-1},
\end{equation}
which is the Sherman-Morrison-Woodbury Formula.

We write the matrix $B_{k,0}^{-1}$ in block matrices as
\begin{equation*}
    B_{k,0}^{-1}=
    \begin{bmatrix}
        A_k & A'_k \\
        A''_k & A'''_k
    \end{bmatrix},
\end{equation*}
where $A_k\in\M_{t\times (k-t)}(\Q)$, $A'_k\in\M_{t\times t}(\Q)$, $A''_k\in\M_{(k-t)\times (k-t)}(\Q)$, and $A'''_k\in\M_{(k-t)\times t}(\Q)$.

\begin{lemm}\label{lemma-QAinv}
Let $m+1\leq k<r'$. Then
    \begin{enumerate}[leftmargin=*]
      \item $A'_k$ is invertible and $(A'_k)^{-1}\in\M_{t\times t}(\Z_{(p)})$;
      \item $B_{r'-k,0}^{-1}=A''_k-A'''_k(A'_k)^{-1}A_k$;
      \item $d_k(x)=\det(xI_t+(A'_k)^{-1})\in \Z_{(p)}[x]$, which is monic of degree $t$.
    \end{enumerate}
\end{lemm}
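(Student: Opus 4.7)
My plan is to exploit a Schur-complement identity obtained by block decomposing the identity $(PB_{k,0}^{-1})(B_{k,0}P^T)=I_k$, where $P\in\M_{k\times k}(\Z)$ is a suitable permutation. The crucial observation, which underlies the entire argument, is that $B_{r-k,0}$ appears naturally as a submatrix of $B_{k,0}$: from the entry formula $(B_{k,x})_{i,j}=(M_{r,x})_{i,r-k+j}$, the submatrix of $B_{k,0}$ indexed by the top $r-k=k-t$ rows and the rightmost $k-t$ columns coincides with the top-right $(r-k)\times(r-k)$ block of $M_{r,0}$, which is $B_{r-k,0}$. Taking $P$ to be the permutation matrix of the block-cyclic shift swapping the top $t$ rows with the bottom $k-t$ rows (so $\det P=(-1)^{t(k-t)}$), one then obtains
$$B_{k,0}P^T=\begin{bmatrix}B_{r-k,0}&Y\\Z&W\end{bmatrix},\qquad PB_{k,0}^{-1}=\begin{bmatrix}A''_k&A'''_k\\A_k&A'_k\end{bmatrix},$$
where $Y$, $Z$, $W$ are blocks of $B_{k,0}$ with entries in $\Z_{(p)}$ that can be read off directly from $M_{r,0}$.

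The product $(PB_{k,0}^{-1})(B_{k,0}P^T)=I_k$ now yields four block identities. From $A_kB_{r-k,0}+A'_kZ=0$, using the invertibility of $B_{r-k,0}$ over $\Z_{(p)}$ (Lemma~\ref{lemma: B_k,0 k leq m}(i)), I obtain $A_k=-A'_kZB_{r-k,0}^{-1}$; substituting into $A_kY+A'_kW=I_t$ gives
$$A'_k\bigl(W-ZB_{r-k,0}^{-1}Y\bigr)=I_t,$$
which proves (i) and delivers the explicit formula $(A'_k)^{-1}=W-ZB_{r-k,0}^{-1}Y\in\M_{t\times t}(\Z_{(p)})$. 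The remaining two identities, combined with $ZB_{r-k,0}^{-1}=-(A'_k)^{-1}A_k$, deliver (ii) in the form $A''_k=B_{r-k,0}^{-1}+A'''_k(A'_k)^{-1}A_k$.

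For (iii), I would combine the matrix determinant lemma applied to $B_{k,x}=B_{k,0}+xU_tV_t$ with the Schur-determinant formula for $B_{k,0}P^T$. The former gives $\det(B_{k,x})=\det(B_{k,0})\det(I_t+xA'_k)=\det(B_{k,0})\det(A'_k)\det\bigl(xI_t+(A'_k)^{-1}\bigr)$, while the latter gives $(-1)^{t(k-t)}\det(B_{k,0})=\det(B_{k,0}P^T)=\det(B_{r-k,0})\det\bigl((A'_k)^{-1}\bigr)$; together these yield $\det(B_{k,0})\det(A'_k)=(-1)^{t(k-t)}\det(B_{r-k,0})$, and hence
$$d_k(x)=(-1)^{r(k-1)+t(k-t)}\det\bigl(xI_t+(A'_k)^{-1}\bigr).$$
A short parity check ($r(k-1)+t(k-t)\equiv r(r+1)\equiv 0\pmod 2$) cancels the sign, and monicity of degree $t$ together with $\Z_{(p)}$-integrality follow from $(A'_k)^{-1}\in\M_{t\times t}(\Z_{(p)})$. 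The only conceptually non-routine step is the block identification of $B_{r-k,0}$ inside $B_{k,0}P^T$; once this is in place, the rest is standard Schur-complement bookkeeping, and the main (mild) obstacle is keeping track of the signs introduced by the permutation $P$.
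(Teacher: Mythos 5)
Your proof is correct and follows essentially the same strategy as the paper's: identify $B_{r-k,0}$ as the top-right block of $B_{k,0}$, expand the blockwise product $B_{k,0}B_{k,0}^{-1}=I_k$ to extract the Schur complement $(A'_k)^{-1}=W-ZB_{r-k,0}^{-1}Y=B''-B'''(B')^{-1}B$, and relate $\det(B_{k,x})$ to $\det\bigl(xI_t+(A'_k)^{-1}\bigr)$. Your conjugation by the block-shift permutation $P$ and the two-formula route for part~(iii) (matrix determinant lemma on the rank-$t$ update combined with the Schur determinant of $B_{k,0}P^T$) are cosmetic repackagings of the paper's non-square block product and single row-reduction of $B_{k,x}$; the sign bookkeeping, including your parity check $r(k-1)+t(k-t)\equiv 0\pmod 2$, lands in the same place.
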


\begin{proof}
Write
    \begin{align*}
        B_{k,0}=
        \begin{bmatrix}
            B & B' \\
            B'' & B'''
        \end{bmatrix}
    \end{align*}
where $B\in\M_{(k-t)\times t}(\Q)$, $B'\in\M_{(k-t)\times(k-t)}(\Q)$, $B''\in\M_{t\times t}(\Q)$, and $B'''\in\M_{t\times (k-t)}(\Q)$. Note that $B'=B_{k-t,0}=B_{r'-k,0}$ as $k-t=r'-k$.

Computing $B_{k,0}B_{k,0}^{-1}=I_k$ blockwisely, we get 4 equations:
\begin{equation*}
    \begin{bmatrix}
        BA_k+B'A''_k & BA'_k+B'A'''_k\\
        B''A_k+B'''A''_k & B''A'_k+B'''A'''_k
    \end{bmatrix}
    =
          \begin{bmatrix}
        I_{k-t} & 0_{k-t,t}\\
        0_{t,k-t} & I_t
    \end{bmatrix}.
\end{equation*}
As $B'=B_{r'-k,0}$ is invertible, we can solve the equation in the (1,2)-entry for $A'''_k$ to get
    $$A'''_k=-(B')^{-1}B A'_k.$$
Plugging it into the equation in the (2,2)-entry, we get
$$I_t=B''A'_k-B'''(B')^{-1}BA'_k=(B''-B'''(B')^{-1}B)A'_k.$$
Therefore, $A'_k$ is invertible, and
$$
(A'_k)^{-1}=B''-B'''(B')^{-1}B \in\M_{t\times t}(\Z_{(p)}),$$
which proves (i).

For (ii), we solve the equation in the (1,2)-entry for $B$ to get
$$B=-B'A'''_k(A'_k)^{-1}.$$
Plugging it into the equation in the (1,1)-entry, we get
$$I_{k-t}=-B'A'''_k(A'_k)^{-1}A_k+B'A''_k=B'(A''_k-A'''_k(A'_k)^{-1}A_k),$$
which proves (ii).

For (iii), we observe that
    \begin{align*}
        B_{k,x}
        =
        \begin{bmatrix}
            B & B' \\
            xI_t+B'' & B'''
        \end{bmatrix}.
    \end{align*}
    From the identity
    \begin{align*}
        \begin{bmatrix}
            (B')^{-1} & 0_{k-t,t} \\
            -B'''(B')^{-1} & I_t
        \end{bmatrix}
        \begin{bmatrix}
            B & B' \\
            xI_t+B'' & B'''
        \end{bmatrix}
        =
        \begin{bmatrix}
            (B')^{-1}B & I_{k-t} \\
            xI_t+B''-B'''(B')^{-1}B & 0_{t,k-t}
        \end{bmatrix}
    \end{align*}
    we get
$$\det (B')^{-1}\det B_{k,x}=(-1)^{r'(k-1)}\det(xI_t+B''-B'''(B')^{-1}B)$$
where the sign of right-hand side comes from consecutive cofactor expansions at the rightmost rows. Also, from $B'=B_{r'-k,0}$ and $B''-B'''(B')^{-1}B=(A'_k)^{-1}$, one has
    \begin{align*}
        d_k(x)=(-1)^{r'(k-1)}\frac{\det(B_{k,x})}{\det(B_{r'-k,0})}=\det(xI_t+(A'_k)^{-1}).
    \end{align*}
This is a monic polynomial of $x$ with degree $t$ defined over $\Z_{(p)}$.
\end{proof}

\begin{proof}[Proof of Lemma~\ref{lemma: m+1 to r}]
Let $t=2k-r'$, and we first consider the case $k=r'$. But this case immediately follows from Lemma~\ref{lemma: k=0,k=r}.

From now on, we assume that $m+1\leq k\leq r'-1$. We have the following identities:
$$B_{k,0}^{-1}U_t
=
        \begin{bmatrix}
          A'_k \\
          A'''_k
        \end{bmatrix},
\qquad
    V_tB_{k,0}^{-1}
        =
        \begin{bmatrix}
          A_k & A'_k
        \end{bmatrix},
\qquad\mbox{and}\qquad
    V_tB_{k,0}^{-1}U_t
        =A'_k.$$
We consider $y=x^{-1}$ to compute the terms with highest power of $x$. As $A'_k$ is invertible by Lemma~\ref{lemma-QAinv}~(i), we observe that
    \begin{align*}
        (I_t+xV_tB_{k,0}^{-1}U_t)^{-1}
        =
        (I_t+xA'_k)^{-1}
        =y(A'_k)^{-1}(y(A'_k)^{-1}+I_t)^{-1}.
    \end{align*}
    On $\Q[[y]]$, we have
    \begin{align*}
        (y(A'_k)^{-1}+I_t)^{-1}
        =\sum_{n=0}^\infty (y(A'_k)^{-1})^n
        \in \M_{t\times t}\big( \Q[[y]] \big),
    \end{align*}
    which gives
    \begin{align*}
        (I_t+xA'_k)^{-1}
        \equiv
        y(A'_k)^{-1}+y^2((A'_k)^{-1})^2 \pmod{y^3}.
    \end{align*}
Combining this to \eqref{eq: inverse of B_k,x} gives
    \begin{align*}
        B_{k,x}^{-1}
        &=
        B_{k,0}^{-1}
        -x
        \begin{bmatrix}
            A'_k \\
            A'''_k
        \end{bmatrix}
        (I_t+xA'_k)^{-1}
        \begin{bmatrix}
            A_k & A'_k
        \end{bmatrix}
        \\
        &\equiv
        B_{k,0}^{-1}
        -
        \begin{bmatrix}
            A'_k \\
            A'''_k
        \end{bmatrix}
        \big((A'_k)^{-1}+y((A'_k)^{-1})^2\big)
        \begin{bmatrix}
            A_k & A'_k
        \end{bmatrix}
        \pmod{y^2}.
    \end{align*}

We set
    \begin{align*}
        \tilde A
        :=
        \begin{bmatrix}
            A'_k \\
            A'''_k
        \end{bmatrix}
        (A'_k)^{-1}
        \begin{bmatrix}
            A_k & A'_k
        \end{bmatrix}
        \qquad\mbox{and}\qquad
        \tilde A'
        :=
        \begin{bmatrix}
            A'_k \\
            A'''_k
        \end{bmatrix}
        ((A'_k)^{-1})^2
        \begin{bmatrix}
            A_k & A'_k
        \end{bmatrix}
    \end{align*}
so that we have $B_{k,x}^{-1}\equiv B_{k,0}^{-1}-\tilde A-y\tilde A' \pmod{y^2}$. Then we have
    \begin{align*}
        \tilde A
        =
        \begin{bmatrix}
            A_k & A'_k \\
            A'''_k(A'_k)^{-1}A_k & A'''_k
        \end{bmatrix}.
    \end{align*}
By Lemma~\ref{lemma-QAinv}~(ii),
    \begin{equation}\label{eq: inverse of B_k,x-tilde A}
        B_{k,0}^{-1}-\tilde A
        =
        \begin{bmatrix}
            0_{t,k-t} & 0_{t,t} \\
            A''_k-A'''_k(A'_k)^{-1}A_k & 0_{k-t,t}
        \end{bmatrix}
        =
        \begin{bmatrix}
            0_{t,k-t} & 0_{t,t} \\
            B_{k-t,0}^{-1} & 0_{k-t,t}
        \end{bmatrix}.
    \end{equation}
For $\tilde A'$, we observe that
    \begin{equation}\label{eq: tilde A prime}
        \tilde A'
        =B_{k,0}^{-1}U_t((A'_k)^{-1})^2V_tB_{k,0}^{-1}=
        \begin{bmatrix}
            (A'_k)^{-1}A_k & I_t \\
            A'''_k((A'_k)^{-1})^2A_k & A'''_k(A'_k)^{-1}
        \end{bmatrix}.
    \end{equation}
Then it is easy to see that
    \begin{align*}
        M_{r',0}
        \begin{bmatrix}
            0_{r'-k,t} \\
            \tilde A'
        \end{bmatrix}
        =
        \begin{bmatrix}
            A_{k,0} & B_{k,0} \\
            C_{k,0} & D_{k,0}
        \end{bmatrix}
        \begin{bmatrix}
            0_{r'-k,t} \\
            \tilde A'
        \end{bmatrix}
        =
        \begin{bmatrix}
          B_{k,0}\tilde A' \\
          D_{k,0}\tilde A'
        \end{bmatrix}.
    \end{align*}
On the upper block, from the first equality of \eqref{eq: tilde A prime} we observe
    \begin{align*}
        B_{k,0}\tilde{A}'
        =
        \begin{bmatrix}
            0_{k-t,t} \\
            I_t
        \end{bmatrix}
        ((A'_k)^{-1})^2V_tB_{k,0}^{-1}
        =
        \begin{bmatrix}
            0_{k-t,k} \\
            ((A'_k)^{-1})^2V_tB_{k,0}^{-1}
        \end{bmatrix}.
    \end{align*}

Now we compute $P_{k,i,s_{k,i}}(T)$ for $0\leq i\leq k-1$. Recall that $\mathbf{e}'_1,\cdots,\mathbf{e}'_k$ be the standard basis of $\Q^k$, and that we have
    \begin{align*}
        P_{k,i}(T,x)
        =
        \begin{bmatrix}
            T^{k-1} & \cdots & 1
        \end{bmatrix}
        B_{k,x}^{-1}\mathbf{e}'_{k-i}.
    \end{align*}
From Lemma~\ref{lemma-QAinv}~(iii), we have
    \begin{align*}
        d_k(x)
       =\det(y^{-1}I_t+(A'_k)^{-1})
       \equiv
        y^{-t}+y^{-t+1}\Tr((A'_k)^{-1})
        \pmod{y^{-t+2}},
    \end{align*}
and so
    \begin{align*}
        \tilde P_{k,i}(T,x)
        &=
        \sum_{s=0}^{s_{k,i}}y^{-s} P_{k,i,s}(T)
        =d_k(x)P_{k,i}(T,x)
        \\&\equiv
        \big(y^{-t}+y^{-t+1}\Tr((A'_k)^{-1})\big)
        \begin{bmatrix}
            T^{k-1} & \cdots & 1
        \end{bmatrix}
        (B_{k,0}^{-1}-\tilde A-y\tilde A')
        \mathbf{e}'_{k-i}\pmod{y^{-t+2}}
    \end{align*}
Hence, we observe that $s_{k,i}\leq t=2k-r'$.

We let
    \begin{align*}
        g_{k,i}(T)
        &:=
        \begin{bmatrix}
            T^{k-1} & \cdots & 1
        \end{bmatrix}
        (B_{k,0}^{-1}-\tilde A)
        \mathbf{e}'_{k-i};
        \\
        h_{k,i}(T)
        &:=
        \begin{bmatrix}
            T^{k-1} & \cdots & 1
        \end{bmatrix}
        \tilde A'
        \mathbf{e}'_{k-i},
    \end{align*}
so that we have
    \begin{align}\label{eq: tilde P_k,i}
    \begin{split}
        \tilde P_{k,i}(T,x)
        &\equiv
        y^{-t}\big(1+y\Tr((A'_k)^{-1})\big)
        \big(g_{k,i}(T)-yh_{k,i}(T)\big)
        \\
        &\equiv
        y^{-t}
        g_{k,i}(T)
        +
        y^{-t+1}
        \big(
        \Tr((A'_k)^{-1})g_{k,i}(T)
        -h_{k,i}(T)
        \big)\pmod{y^{-t+2}}.
    \end{split}
    \end{align}

Consider first the case $0\leq i\leq t-1$. We see that $g_{k,i}(T)=0$ as the $(k-i)$-th column of $B_{k,0}^{-1}-\tilde A$ is zero (see \eqref{eq: inverse of B_k,x-tilde A}). As $(k-i)$-th column of $\tilde A'$ is not zero, $h_{k,i}(T)\neq 0$. So, we get $s_{k,i}=t-1$ (from \eqref{eq: tilde P_k,i}) and
    \begin{align*}
        P_{k,i,t-1}(T)
        =h_{k,i}(T).
    \end{align*}
We further characterize $h_{k,i}(T)$. Observe that, from the definition of $h_{k,i}(T)$ together with \eqref{eq: tilde A prime}, we can easily see that
    \begin{align*}
        \deg_T\big(h_{k,i}(T)-T^{i+k-t}\big)
        \leq
        k-t-1.
    \end{align*}
Also, we see that
    \begin{align*}
        [Tf_{r'}(T)h_{k,i}(T)]_{T}^{(r'-1)}
        &=
        \begin{bmatrix}
            T^{k-1} & \cdots & 1
        \end{bmatrix}
        M_{r',0}
        \begin{bmatrix}
            0_{r'-k,t} \\
            \tilde A'
        \end{bmatrix}
        \mathbf{e}'_{k-i}
        \\&=
        \begin{bmatrix}
            T^{k-1} & \cdots & 1
        \end{bmatrix}
        \begin{bmatrix}
          B_{k,0}\tilde A' \\
          D_{k,0}\tilde A'
        \end{bmatrix}
        \mathbf{e}'_{k-i}
        \\&=
        \begin{bmatrix}
            T^{k-1} & \cdots & 1
        \end{bmatrix}
        \begin{bmatrix}
            0_{k-t,k} \\
            ((A'_k)^{-1})^2V_tB_{k,0}^{-1}\\
            D_{k,0}\tilde A'
        \end{bmatrix}
        \mathbf{e}'_{k-i}
    \end{align*}
and so
    \begin{align*}
        \deg_T [Tf_{r'}(T)h_{k,j}(T)]_{T}^{(r'-1)} \leq (r'-1)-(k-t).
    \end{align*}
Noting that $k-t=r'-k$, we observe that
    \begin{align*}
        \begin{cases}
            \deg_T (h_{k,i}(T)-T^{i+r'-k})\leq r'-k-1,\\
            \deg_T [Tf_{r'}(T)h_{k,i}(T)]_{T}^{(r'-1)} \leq k-1.
        \end{cases}
    \end{align*}
According to Lemma~\ref{lemma: PQdegree}, $P_{r'-k,i+r'-k}(T,0)$ is the unique polynomial satisfying such properties.
Therefore, we have
    \begin{align*}
        P_{k,i,t-1}(T)
        =h_{k,i}(T)
        =P_{r'-k,i+r'-k}(T,0).
    \end{align*}

We now consider the case $t\leq i\leq k-1$. From the definition of $g_{k,i}(T)$ together with \eqref{eq: inverse of B_k,x-tilde A}, we see that $g_{k,i}(T)\neq 0$ and $s_{k,i}=t$. Moreover, from \eqref{eq: tilde P_k,i} we have
    \begin{align*}
        P_{k,i,t}(T) =g_{k,i}(T).
    \end{align*}
Setting $\mathbf{e}''_1,\cdots,\mathbf{e}''_{k-t}$ be the standard basis of $\Q^{k-t}$, we have
    \begin{align*}
        g_{k,i}(T)
        =
        \begin{bmatrix}
            T^{k-t-1} & \cdots & 1
        \end{bmatrix}
        B_{k-t,0}^{-1}
        \mathbf{e}''_{k-i}=P_{k-t,i-t}(T,0).
    \end{align*}
Noting that $k-t=r'-k$, we get
    \begin{align*}
        P_{k,i,t}(T)=P_{r'-k,i+r'-2k}(T,0).
    \end{align*}

We finally consider the case $k\leq i\leq r'-1$. For $k\leq i\leq r'-1$, we have
    \begin{align*}
        P_{k,i}(T,x)
        &=
        T^i
        -
        \begin{bmatrix}
            T^{k-1} & \cdots & 1
        \end{bmatrix}
        B_{k,x}^{-1}A_{k,x}
        \mathbf{e}''_{r'-i}.
    \end{align*}
Set $y=x^{-1}$ as before, then
    \begin{align*}
        A_{k,x}\mathbf{e}''_{r'-i}
        &=
        x
        \begin{bmatrix}
           I_{k-t} \\
           0_{t,k-t}
        \end{bmatrix}
        \mathbf{e}''_{r'-i}
        +
        A_{k,0}\mathbf{e}''_{r'-i}
        =
        y^{-1}\mathbf{e}'_{r'-i}
        +A_{k,0}\mathbf{e}''_{r'-i}
        \\
        &\equiv
        y^{-1}\mathbf{e}'_{r'-i}
        \pmod{y^0}.
    \end{align*}
Using $B_{k,x}^{-1}\in \M_{k\times k}(\Q[[y]])$, one get $P_{k,i-r'+k}(T,x)\in (\Q[[y]])[T]$, and
    \begin{align*}
        P_{k,i}(T,x)
        &\equiv
        -
        \begin{bmatrix}
            T^{k-1} & \cdots & 1
        \end{bmatrix}
        B_{k,x}^{-1}(y^{-1}\mathbf{e}'_{r'-i})
        \\
        &\equiv
        -y^{-1}
        \begin{bmatrix}
            T^{k-1} & \cdots & 1
        \end{bmatrix}
        B_{k,x}^{-1}\mathbf{e}'_{k-(i-r'+k)}
        \\
        &\equiv
        -y^{-1}P_{k,i-r'+k}(T,x)
        \pmod{y^0}.
    \end{align*}
So this gives
    \begin{align*}
        \frac{\tilde P_{k,i}(T,x)}{d_k(x)}
        \equiv
        -y^{-1}\frac{\tilde P_{k,i-r'+k}(T,x)}{d_k(x)}
        \pmod{y^0}.
    \end{align*}
As $d_k(x)$ is monic of degree $t$, one has
    \begin{align*}
        d_k(x)
        \equiv
        y^{-t} \pmod{y^{-t+1}}.
    \end{align*}
Also, as $t\leq i-r'+k\leq k-1$, one has
    \begin{align*}
        \tilde P_{k,i-r'+k}(T,x)
        \equiv
        y^{-t}P_{r'-k,i-k}(T,0) \pmod{y^{-t+1}}.
    \end{align*}
Combining these gives
    \begin{align*}
        \tilde P_{k,i}(T,x)
        &\equiv -y^{-1}\tilde P_{k,i-r'+k}(T,x)
        \\&\equiv
        -y^{-t-1}P_{r'-k,i-k}(T,0) \pmod{y^{-t}}.
    \end{align*}
Therefore, we get $s_{k,i}=t+1$ and
    \begin{align*}
        P_{k,i,s_{k,i}}(T)=-P_{r'-k,i-k}(T,0)
    \end{align*}
for $k\leq i\leq r'-1$.

Lastly, from $\tilde Q_{k,i}(T,x)=\big[\tilde P_{k,i}(T,x)(x+Tf_{r'}(T))\big]_{T}^{(r'-1)}$ we have
    \begin{multline*}
        \tilde Q_{k,i}(T,x)=
        x^{s_{k,i}+1} P_{k,i,s_{k,i}}(T)\\ +\sum_{s=1}^{s_{k,i}}x^s\big[P_{k,i,s}(T)Tf_{r'}(T)+P_{k,i,s-1}(T)\big]_{T}^{(r'-1)}
        +\big[P_{k,i,0}(T)Tf_{r'}(T)\big]_{T}^{(r'-1)},
    \end{multline*}
and so we see that $t_{k,i}=s_{k,i}+1$ and $Q_{k,i,t_{k,i}}(T)=P_{k,i,s_{k,i}}(T)$.
\end{proof}

We close this subsection by proving the following lemma, which will be needed to compute the mod-$p$ reduction of our strongly divisible modules.
\begin{lemm}\label{lemma: a_k m+1 leq k}
    Let $m+1\leq k\leq r'$. Then
    \begin{enumerate}[leftmargin=*]
        \item if $k\leq i\leq r'-1$ then $P_{k,i,2k-r'}(T)$ is monic of degree $i$;
        \item if $2k-r'\leq i\leq k-1$ then $Q_{k,i,2k-r'}(T)$ is monic of degree $i+r'-k$;
        \item if $m+2\leq k\leq r'$ then
        \begin{equation*}
            P_{k,0,2k-r'-2}(T)=-\frac{1}{a_{r'-k+1}}P_{k-1,k-1,2k-r'-2}(T)
        \end{equation*}
        whose degree is $k-1$ and whose leading coefficient is $-1/a_{r'-k+1}\in\Z_{(p)}^{\times}$;
        \item if $k=m+1$ and $r'=2m$ then
        \begin{equation*}
            P_{m+1,0,0}(T)=-\frac{1}{a_m}P_{m,m}(T,0)
        \end{equation*}
        whose degree is $m$ and whose leading coefficient is $-1/a_m\in\Z_{(p)}^{\times}$;
        \item if $k=m+1$ and $r'=2m+1 >1$ then $$\frac{1}{a_{m+1}}=d_{m+1}(0)=2H_m\in\Z_{(p)}.$$
    \end{enumerate}
\end{lemm}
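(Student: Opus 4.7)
The strategy is to extend the matrix-series computation in the proof of Lemma~\ref{lemma: m+1 to r} by one further order in $y=x^{-1}$. That proof pinned down the leading $x$-coefficient $P_{k,i,s_{k,i}}(T)$ of $\tilde{P}_{k,i}(T,x)$; parts~(i) and~(ii) here concern the next coefficient $P_{k,i,2k-r}(T)$ (and the corresponding $Q_{k,i,2k-r}(T)$), so I would refine
\begin{equation*}
(I_t+xA'_k)^{-1}\equiv y(A'_k)^{-1}+y^2((A'_k)^{-1})^2+y^3((A'_k)^{-1})^3\pmod{y^4}
\end{equation*}
and
\begin{equation*}
d_k(x)\equiv y^{-t}\bigl(1+y\Tr((A'_k)^{-1})+y^2 e_2((A'_k)^{-1})\bigr)\pmod{y^{-t+3}}
\end{equation*}
(with $e_2$ the second elementary symmetric function of the eigenvalues), and substitute into $B_{k,x}^{-1}=B_{k,0}^{-1}-\tilde A-y\tilde A'-y^2\tilde A''+O(y^3)$. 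The three ranges $0\le i\le t-1$, $t\le i\le k-1$, $k\le i\le r-1$ each yield a formula for $P_{k,i,2k-r}(T)$, and the monicity/degree claim in~(i) follows by matching against the uniqueness characterization of Lemma~\ref{lemma: PQdegree}(iii) applied to the relevant shifted indices.

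For~(ii) a shortcut suffices: apply the already-derived identity $Q_{k,i,s}(T)=P_{k,i,s-1}(T)+[P_{k,i,s}(T)Tf_r(T)]_T^{(r-1)}$ at $s=s_{k,i}=2k-r$. Substituting $P_{k,i,s_{k,i}}(T)=P_{r-k,i+r-2k}(T,0)$ from Lemma~\ref{lemma: m+1 to r} identifies the truncated product with $Q_{r-k,i+r-2k}(T,0)$, which by Lemma~\ref{lemma: PQdegree}(ii) (applied with $k'=r-k$, $i'=i+r-2k<r-k$) is monic of $T$-degree $i+r-k$. The remaining term $P_{k,i,2k-r-1}(T)$ has strictly smaller $T$-degree via the same refined expansion, so the sum is monic of the claimed degree.

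For~(iii) and~(iv) I would start from the Pad\'e identity $a_k(x)P_{k-1,k-1}(T,x)=P_{k,0}(T,x)$ of Lemma~\ref{lemma: PQFad'e}(iii), rewritten as
\begin{equation*}
\tilde P_{k,0}(T,x)=\phi_k(x)\,\tilde P_{k-1,k-1}(T,x),\qquad \phi_k(x):=\frac{d_k(x)a_k(x)}{d_{k-1}(x)}.
\end{equation*}
Comparing leading $T$-coefficients on both sides---the leading $T$-coefficient of $\tilde P_{k-1,k-1}$ is $d_{k-1}(x)$ because $P_{k-1,k-1}$ is monic in $T$, whereas that of $\tilde P_{k,0}$ is $d_k(x)a_k(x)$---together with the matching $x$-degrees forces $\phi_k(x)$ to be a constant. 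The value $\phi_k=-1/a_{r-k+1}$ is identified by matching the top $x$-coefficients via Lemma~\ref{lemma: m+1 to r} and then invoking Lemma~\ref{lemma: PQFad'e}(iii) at index $r-k+1$ to rewrite $P_{r-k,r-k}(T,0)=\tfrac{1}{a_{r-k+1}}P_{r-k+1,0}(T,0)$. Reading off the $x^{2k-r-2}$-coefficient on both sides of the constant-$\phi_k$ identity then gives~(iii); part~(iv) is the boundary $k=m+1$, $r=2m$, where $P_{m,m}(T,x)=P_{m,m}(T,0)-xP_{m,0}(T,0)$ by Lemma~\ref{lemma-PQ-k leq m}(ii), and direct comparison of $x$-coefficients combined with Lemma~\ref{lemma: PQFad'e}(iii) at $k=m$ yields the scalar $-1/a_m=-1/a_{r-k+1}$. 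For~(v), with $r=2m+1$ and $k=m+1$ we have $t=1$ and $s_{m+1,0}=0$, so $\tilde P_{m+1,0}(T,x)=P_{m,m}(T,0)$ is independent of $x$; monicity of $P_{m,m}(T,0)$ in $T$ (Lemma~\ref{lemma: PQdegree}(iii)) gives $d_{m+1}(x)a_{m+1}(x)=1$, so $a_{m+1}=1/d_{m+1}(0)$, and the evaluation $d_{m+1}(0)=(A'_{m+1})^{-1}=2H_m$ follows from a Cramer-rule computation of the $(1,m+1)$-entry of $B_{m+1,0}^{-1}$ combined with a telescoping manipulation of the harmonic sum.

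The principal technical point is the constancy of $\phi_k(x)$ in~(iii)---equivalently the compatibility identity $d_k(x)a_k(x)=-\tfrac{1}{a_{r-k+1}}d_{k-1}(x)$---which ultimately reflects the rigidity of the Pad\'e problem together with the symmetric indexing $(k,r)\leftrightarrow(r-k,r)$ already exploited in Lemma~\ref{lemma: m+1 to r}; the other parts are direct applications of the refined expansion and the uniqueness machinery already in place.
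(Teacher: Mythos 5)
Parts (i)--(iv) of your proposal are sound in direction, but parts (i) and (v) deserve comment.

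For (i) you propose pushing the $y=x^{-1}$ series expansion of Lemma~\ref{lemma: m+1 to r} one order further; this is far more work than needed and reproduces a lot of machinery. The paper's argument is a one-line degree comparison: $P_{k,i}(T,x)$ is monic of degree $i$ in $T$ by Lemma~\ref{lemma: PQdegree}, so the $T^i$-coefficient of $\tilde P_{k,i}(T,x)=d_k(x)P_{k,i}(T,x)$ is exactly $d_k(x)$, which is monic of degree $2k-r$ in $x$; reading off the $x^{2k-r}$-coefficient gives that $P_{k,i,2k-r}(T)$ has $T^i$-coefficient $1$, and since $\tilde P_{k,i}$ has $T$-degree exactly $i$ no higher term can occur. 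The same idea disposes of (ii) directly, using monicity of $Q_{k,i}(T,x)$ in degree $i+r-k$, instead of routing through the recursion $Q_{k,i,s}=P_{k,i,s-1}+[P_{k,i,s}Tf_r]_T^{(r-1)}$. Your recursion argument for (ii) is correct, but you should say explicitly why $P_{k,i,2k-r-1}(T)$ has degree $<i+r-k$; this is because all $P_{k,i,s}$ for $i\le k-1$ have $T$-degree $\le k-1 < i+r-k$ whenever $i\ge 2k-r$, not because of "the same refined expansion."

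For (iii) your framing via the ratio $\phi_k(x)=d_k(x)a_k(x)/d_{k-1}(x)$ matches the paper's computation, but the assertion that degree-matching alone "forces" $\phi_k$ to be constant is not a complete argument (a rational function multiplying a bivariate polynomial to give another bivariate polynomial need not a priori be constant). The clean reason it is constant is that $a_k(x)=(-1)^{k+1}\det(B_{k-1,x})/\det(B_{k,x})$, so the $\det(B_{k-1,x})$ factor cancels and $\phi_k$ reduces to a ratio of $\det(B_{r-k,0})$ and $\det(B_{r-k+1,0})$, both genuine constants. You should state this cancellation rather than appeal to degree counting. The rest of (iii) and all of (iv) follow the paper's route.

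For (v), your derivation of $a_{m+1}d_{m+1}(0)=1$ is fine, but the claim that $d_{m+1}(0)=2H_m$ ``follows from a Cramer-rule computation of the $(1,m+1)$-entry of $B_{m+1,0}^{-1}$ combined with a telescoping manipulation of the harmonic sum'' leaves the substantial part of the lemma unproved. In the paper this is where all the work lives: after expressing $d_{m+1}(0)$ as $(-1)^{m+1}\det M/\det M''$ one must show that the auxiliary determinant $\det M'$ (obtained by replacing the $(m+1,1)$-entry of $M$ with $2H_m$) vanishes, and the paper does this by a nontrivial row-reduction followed by induction on $m$. No simple telescoping identity accomplishes this; as your proposal stands, part (v) has a genuine gap.
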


Note that we exclude $r'=1$ in (v) because $a_1(x)=x^{-1}$ and so $a_1(0)$ cannot be defined.

\begin{proof}
(i) For $k\leq i\leq r'-1$, consider
    \begin{align*}
        d_k(x)P_{k,i}(T,x)
        =\sum_{s=0}^{2k-r'+1}x^sP_{k,i,s}(T).
    \end{align*}
As $P_{k,i}(T,x)$ is monic of degree $i$ in $T$ by Lemma~\ref{lemma: PQdegree} and $d_k(x)$ is monic of degree $2k-r'$ in $x$, the coefficient of $x^{2k-r'}T^i$ on the left-hand side is $1$. On the right-hand side, the term $x^{2k-r'}T^i$ comes only from $x^{2k-r'}P_{k,i,2k-r'}(T)$. Therefore, $P_{k,i,2k-r'}(T)$ should be monic of degree $i$.

(ii) For $2k-r'-1\leq i\leq k-1$, we have $t_{k,i}-1=2k-r'$. Consider
    \begin{align*}
        d_k(x)Q_{k,i}(T,x)
        =\sum_{s=0}^{t_{k,i}}x^sQ_{k,i,s}(T).
    \end{align*}
As $Q_{k,i}(T,x)$ is monic of degree $i+r'-k$ in $T$ by Lemma~\ref{lemma: PQdegree} and $d_k(x)$ is monic of degree $2k-r'$ in $x$, the coefficient of $x^{2k-r'}T^{i+r'-k}$ on the left-hand side is $1$. On the right-hand side, the term $x^{2k-r'}T^{i+r'-k}$ comes only from $x^{2k-r'}Q_{k,i,2k-r'}(T)$. Therefore, $Q_{k,i,2k-r'}(T)$ should be monic of degree $i+r'-k$.

(iii) We compute $a_k(x)$ in Lemma~\ref{lemma: PQFad'e}~(iii). From
        $$P_{k,0}(T,x)=
        \begin{bmatrix}
            T^{k-1} & \cdots & 1
        \end{bmatrix}
        B_{k,x}^{-1}\mathbf{e}'_{k}$$
one gets
    \begin{align*}
        a_k(x)
        =(\mathbf{e}'_1)^T B_{k,x}^{-1}\mathbf{e}'_{k}
        =\frac{1}{\det(B_{k,x})}(\mathbf{e}'_1)^T \adj(B_{k,x}) \mathbf{e}'_{k}
        =(-1)^{1+k}\frac{\det(B_{k-1,x})}{\det(B_{k,x})}
    \end{align*}
because the $(k,1)$-minor of $B_{k,x}$ is $\det(B_{k-1,x})$. Hence, by Lemma~\ref{lemma: PQFad'e}~(iii) we have
    \begin{align*}
        \det(B_{k,x})P_{k,0}(T,x)=(-1)^{k+1}\det(B_{k-1,x})P_{k-1,k-1}(T,x).
    \end{align*}
Equivalently,
    \begin{align*}
        \det(B_{r'-k,0})\tilde P_{k,0}(T,x)
        =(-1)^{r'-k+1}\det(B_{r'-k+1,0})\tilde P_{k-1,k-1}(T,x).
    \end{align*}
Noting that $s_{k-1,k-1}=2(k-1)-r'+1=2k-r'-1$ and $s_{k,0}=2k-r'-1$, we can write
    \begin{align*}
        \sum_{s=0}^{2k-r'-1}x^sP_{k,0,s}(T)
        =\sum_{s=0}^{2k-r'-1}x^s
        \frac{(-1)^{r'-k+1}\det(B_{r'-k+1,0})}{\det(B_{r'-k,0})}P_{k-1,k-1,s}(T),
    \end{align*}
and so we have
    \begin{align}\label{eq: P_k,0,s and P_k-1,k-1,s}
        P_{k,0,s}(T)=\frac{(-1)^{r'-k+1}\det(B_{r'-k+1,0})}{\det(B_{r'-k,0})}P_{k-1,k-1,s}(T).
    \end{align}
If $s=2k-r'-1$, by Lemma~\ref{lemma: m+1 to r} and Lemma~\ref{lemma: PQFad'e}~(iii) one has
    \begin{align*}
        P_{k,0,2k-r'-1}(T)
        &=P_{r'-k,r'-k}(T,0)\\
        &=\frac{1}{a_{r'-k+1}}P_{r'-k+1,0}(T,0)
        =-\frac{1}{a_{r'-k+1}}P_{k-1,k-1,2k-r'-1}(T).
    \end{align*}
Therefore, we have
    \begin{align}\label{eq: a_r-k+1}
        \frac{(-1)^{r'-k+1}\det(B_{r'-k+1,0})}{\det(B_{r'-k,0})}=-\frac{1}{a_{r'-k+1}}
    \end{align}
    and
    \begin{align*}
        P_{k-1,k-1,2k-r'-2}(T)
        =-\frac{1}{a_{r'-k+1}}P_{k,0,2k-r'-2}(T).
    \end{align*}

(iv) Now we let $r'=2m$ and $k=m+1$. As before, we get
    \begin{align*}
        \det(B_{m+1,x})P_{m+1,0}(T,x)=(-1)^{m+2}\det(B_{m,x})P_{m,m}(T,x).
    \end{align*}
Applying Lemma~\ref{lemma-PQ-k leq m} and using $\det(B_{m,x})=\det(B_{m,0})$, one has
    \begin{align*}
        (-1)^{r'(m-1)}\det(B_{m-1,0})\tilde P_{m+1,0}(T,x)
        =(-1)^{m+2}\det(B_{m,0})\big(P_{m,m}(T,0)-xP_{m,0}(T,0)\big)
    \end{align*}
    and so
    \begin{align*}
        P_{m+1,0,0}(T)+xP_{m+1,0,1}(T)
        =\frac{(-1)^m\det(B_{m,0})}{\det(B_{m-1,0})}
        \big(P_{m,m}(T,0)-xP_{m,0}(T,0)\big)
    \end{align*}
As
    \begin{align*}
        P_{m+1,0,1}(T)
        =P_{m-1,m-1}(T,0)
        =\frac{1}{a_m}P_{m,0}(T,0),
    \end{align*}
we can rewrite the equation as
    \begin{align*}
        P_{m+1,0,0}(T)+xP_{m+1,0,1}(T)
        =-\frac{1}{a_m}\big(P_{m,m}(T,0)-xP_{m,0}(T,0)\big),
    \end{align*}
proving the assertion.

(v) Let $r'=2m+1 >1$ and $k=m+1$. By Lemma~\ref{lemma: PQFad'e}~(iii) and by Lemma~\ref{lemma: m+1 to r} we have $$P_{m+1,0,0}(T)=P_{m.m}(T,0)=\frac{1}{a_{m+1}}P_{m+1,0}(T,0).$$
On the other hand, by definition we have $$P_{m+1,0,0}(T)=d_{m+1}(0)P_{m+1,0}(T,0).$$
By Lemma~\ref{lemma: PQdegree} $P_{m,m}(T,0)$ is a monic polynomial of degree $m$, and so $a_{m+1}d_{m+1}(0)=1$ in $\Q$.

We now compute $d_{m+1}(0)$. We set the $(m+1)\times(m+1)$-matrix $M$ as follows:
\begin{align*}
        M
        =
        \begin{bmatrix}
          \frac{1}{m} & \frac{1}{m+1} & \cdots & \frac{1}{2m-1} & \frac{1}{2m} \\
          \frac{1}{m-1} & \frac{1}{m} & \cdots & \frac{1}{2m-2} & \frac{1}{2m-1} \\
          \vdots & \vdots & \ddots & \vdots & \vdots \\
          1 & \frac{1}{2} & \cdots & \frac{1}{m} & \frac{1}{m+1} \\
          0 & 1 & \cdots & \frac{1}{m-1} & \frac{1}{m}
        \end{bmatrix}.
\end{align*}
We also let $M'$ (resp. $M''$) be the matrix obtained from $M$ by replacing the $(m+1,1)$-entry of $M$ with $2H_m$ (resp. by deleting the first column and the last row of $M$). As we have
\begin{align*}
        B_{m+1,0}
        =\mathrm{Diag}((-1)^{m-1},\cdots,(-1)^{0}, (-1)^{-1})
        \cdot
        M
        \cdot
        \mathrm{Diag}((-1)^0,\cdots,(-1)^{m})
\end{align*}
and
\begin{align*}
B_{m,0}=\mathrm{Diag}((-1)^{m},\cdots,(-1)^{2}, (-1)^{1})
        \cdot
        M''
        \cdot
        \mathrm{Dig}((-1)^0,\cdots,(-1)^{m-1}),
\end{align*}
we have $$d_{m+1}(0)=(-1)^{rm}\frac{\det B_{m+1,0}}{\det B_{m,0}}=(-1)^{m+1}\frac{\det M}{\det M''}.$$
By definition of the determinant, it is easy to see that $$(-1)^{m+1}\det M''\left(d_{m+1}(0)-2H_m\right)=\det M'.$$

We now claim that $\det M'=0$, and we prove it by induction on $m$. By subtracting the first row of $M'$ from each other row, we get
    \begin{align*}
       D(m):= \det M'=
        \begin{vmatrix}
          \frac{1}{m} & \frac{1}{m+1} & \cdots & \frac{1}{2m-1} & \frac{1}{2m}\\
          \frac{1}{m-1}-\frac{1}{m} & \frac{1}{m}-\frac{1}{m+1} & \cdots
           & \frac{1}{2m-2}-\frac{1}{2m-1} & \frac{1}{2m-1}-\frac{1}{2m} \\
          \vdots & \vdots & \ddots & \vdots & \vdots \\
          1-\frac{1}{m} & \frac{1}{2}-\frac{1}{m+1} & \cdots
           & \frac{1}{m}-\frac{1}{2m-1} & \frac{1}{m+1}-\frac{1}{2m} \\
          2H_m-\frac{1}{m} & 1-\frac{1}{m+1} & \cdots
           & \frac{1}{m-1}-\frac{1}{2m-1} & \frac{1}{m}-\frac{1}{2m}
        \end{vmatrix}
    \end{align*}
which can be rewritten as
\begin{align*}
    D(m)=
        \begin{vmatrix}
          \frac{1}{m} & \frac{1}{m+1} & \cdots & \frac{1}{2m-1} & \frac{1}{2m} \\
          \frac{1}{(m-1)m} & \frac{1}{m(m+1)} & \cdots
           & \frac{1}{(2m-2)(2m-1)} & \frac{1}{(2m-1)2m} \\
          \vdots & \vdots & \ddots & \vdots & \vdots \\
          \frac{m-1}{1\cdot m} & \frac{m-1}{2(m+1)} & \cdots
           & \frac{m-1}{m(2m-1)} & \frac{m-1}{(m+1)2m} \\
          H_m+H_{m-1} & \frac{m}{1(m+1)} & \cdots
           & \frac{m}{(m-1)(2m-1)} & \frac{m}{m\cdot2m}
        \end{vmatrix}.
\end{align*}
Factoring out the common factors from each column and each row, we have
    \begin{align*}
        D(m)=
        \frac{m!}{m(m+1)\cdots 2m}
        \begin{vmatrix}
          1 & 1 & \cdots & 1 & 1 \\
          \frac{1}{m-1} & \frac{1}{m} & \cdots
           & \frac{1}{2m-2} & \frac{1}{2m-1} \\
          \vdots & \vdots & \ddots & \vdots & \vdots \\
          1 & \frac{1}{2} & \cdots
           & \frac{1}{m} & \frac{1}{m+1} \\
          H_m+H_{m-1} & 1 & \cdots
           & \frac{1}{m-1} & \frac{1}{m}
        \end{vmatrix}.
    \end{align*}
For the last determinant, we subtract the last column from each other column, to get
    \begin{align*}
        \frac{(2m)!}{m!(m-1)!} D(m)
        &=
        \begin{vmatrix}
          0 & 0 & \cdots & 0 & 1 \\
          \frac{1}{m-1}-\frac{1}{2m-1} & \frac{1}{m}-\frac{1}{2m-1} & \cdots
           & \frac{1}{2m-2}-\frac{1}{2m-1} & \frac{1}{2m-1} \\
          \vdots & \vdots & \ddots & \vdots & \vdots \\
          1-\frac{1}{m+1} & \frac{1}{2}-\frac{1}{m+1} & \cdots
           & \frac{1}{m}-\frac{1}{m+1} & \frac{1}{m+1} \\
          H_m+H_{m-1}-\frac{1}{m} & 1-\frac{1}{m} & \cdots
           & \frac{1}{m-1}-\frac{1}{m} & \frac{1}{m}
        \end{vmatrix}\\
        &=
        (-1)^m
        \begin{vmatrix}
          \frac{m}{(m-1)(2m-1)} & \frac{m-1}{m(2m-1)} & \cdots
           & \frac{1}{(2m-2)(2m-1)} \\
          \vdots & \vdots & \ddots & \vdots \\
          \frac{m}{1(m+1)} & \frac{m-1}{2(m+1)} & \cdots
           & \frac{1}{m(m+1)} \\
          2H_{m-1} & \frac{m-1}{1\cdot m} & \cdots
           & \frac{1}{(m-1)m}
        \end{vmatrix}\\
        &=
        \frac{(-1)^m m!}{m(m+1)\cdots(2m-1)}
        \begin{vmatrix}
          \frac{1}{m-1} & \frac{1}{m} & \cdots
           & \frac{1}{2m-2} \\
          \vdots & \vdots & \ddots & \vdots \\
          1 & \frac{1}{2} & \cdots
           & \frac{1}{m} \\
          2H_{m-1} & 1 & \cdots
           & \frac{1}{m-1}
        \end{vmatrix}\\
        &=(-1)^m\frac{m!(m-1)!}{(2m-1)!}D(m-1).
    \end{align*}
It is clear that $D(1)=0$ by direct computation, and so by induction on $m$ we conclude that $D(m)=0$ for all $m\geq 1$. Hence, we have $d_{m+1}(0)=2H_m$.
\end{proof}

\subsection{The elements $\delta_{k}\in S$}\label{subsec: the elements delta}
In this subsection, we define $\delta_{k}(T,x)$ as well as $\delta_k(T)$ for $1\leq k\leq m+1$, which will appear as the coefficients of our strongly divisible modules. Recall that by $r'$ we mean a positive integer with $1\leq r'<p-1$ and that $m=\lfloor r'/2\rfloor$.

\begin{defi}\label{defi: definition of delta}
Set $\delta_0(T,x):=-x$. For $1\leq k\leq m+1$, we set
\begin{equation*}
    \delta_k(T,x) :=\frac{Q_{k,0}(T,x)}{P_{k,0}(T,x)}-x\quad\mbox{ and }\quad \delta_k(T):=\delta_k(T,0).
\end{equation*}
\end{defi}

Note that by Lemma~\ref{lemma: PQFad'e} (iii) if $k\geq 1$ we have
    \begin{equation*}
        \delta_k(T,x)=\frac{Q_{k-1,k-1}(T,x)}{P_{k-1,k-1}(T,x)}-x,
    \end{equation*}
so in particular $\delta_1(T,x)=0$ if $r'=1$. Moreover, unless either $r'=2m$ and $k=m+1$ or $k=0$ we have $$\delta_{k}(T,x)=\delta_k(T)$$
by Lemma~\ref{lemma-PQ-k leq m} for $1\leq k\leq m$ and by Lemma~\ref{lemma: m+1 to r} for $k=m+1$ and $r'=2m+1$, and if $r'=2m$ and $k=m+1$ then
    \begin{equation*}
        \delta_{m+1}(T,x)=\frac{Q_{m,m}(T,0)-xQ_{m,0}(T,0)}{P_{m,m}(T,0)-xP_{m,0}(T,0)}=\frac{[\tilde P_{m+1,0}(T,x)Tf_{r'}(T)]_{T}^{(r'-1)}}{\tilde P_{m+1,0}(T,x)}
    \end{equation*}
where the first equality follows from Lemma~\ref{lemma-PQ-k leq m} and the second equality follows from Lemma~\ref{lemma: PQdegree}~(i) if $m+1<r'$ and from Lemma~\ref{lemma: k=0,k=r} if $m+1=r'$.

For the rest of this subsection, we assume $r'>1$ and investigate some necessary properties of the coefficients. In particular, we compute the values of $\delta_k(-1)$, $\dot{\delta}_k(-1)$ and their differences by series of lemmas.

\begin{lemm}\label{lemma-delta-denom-1}
Assume that $r'\geq 2$.
\begin{enumerate}[leftmargin=*]
  \item If $1\leq k\leq m$, then
    \begin{equation*}
        P_{k,0}(-1,0)=(-1)^{r'}\frac{(r'-k)!}{(r'-2k)!(k-1)!}\in\Z_{(p)}^\times.
    \end{equation*}
    In particular, $P_{k,0}(T,0)\in \Z_p[[T+1]]^\times$, and so
    $P_{k,0}(c,0)\in(S'_\cO)^\times$ and $$\delta_k(T)=\frac{Q_{k,0}(T,0)}{P_{k,0}(T,0)}\in\cO[[T+1]].$$
  \item If $2\leq k<r'/2+1$, then
    \begin{equation*}
        P_{k-1,k-1}(-1,0) =\frac{P_{k,0}(-1,0)}{a_k} =\frac{(r'-2k+1)!(k-1)!}{(r'-k)!} \in\Z_{(p)}^{\times}.
    \end{equation*}
    In particular, $P_{k-1,k-1}(T,0)\in \Z_p[[T+1]]^\times$ and so $P_{k-1,k-1}(c,0)\in(S'_\cO)^\times$.
  \item If $1\leq k\leq r'-1$, then we have
  \begin{align*}
    P_{k,k}(T,x)Q_{k,0}(T,x)-P_{k,0}(T,x)Q_{k,k}(T,x)=T^{r'}.
  \end{align*}
\end{enumerate}
\end{lemm}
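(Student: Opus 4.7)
My strategy is to combine the Pad\'e-approximant interpretation of Lemma~\ref{lemma: PQFad'e} with the uniqueness and degree clauses of Lemma~\ref{lemma: PQdegree}.

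Part (iii) is the quickest. Since both $Q_{k,0}/P_{k,0}$ and $Q_{k,k}/P_{k,k}$ approximate $x+\log(1+T)$ modulo $T^r$ (of Pad\'e types $[(r-k)/(k-1)]$ and $[(r-k-1)/k]$, respectively), their difference is $O(T^r)/(P_{k,0}P_{k,k})$; clearing denominators shows that $P_{k,k}Q_{k,0}-P_{k,0}Q_{k,k}$ lies in $T^r\cdot E(x)[[T]]$ as a polynomial in $T$. By Lemma~\ref{lemma: PQdegree}, $P_{k,k}Q_{k,0}$ is monic of degree $r$ (a product of monic polynomials of degrees $k$ and $r-k$), while $P_{k,0}Q_{k,k}$ has degree at most $r-2$; hence the difference is a monic polynomial of degree $r$ that is simultaneously $O(T^r)$ and therefore equals $T^r$.

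For part (i), I plan to establish the closed form
\[
P_{k,0}(T,0)\;=\;(-1)^r\,\frac{(r-k)!}{(r-2k)!\,((k-1)!)^2}\sum_{j=0}^{k-1}\binom{k-1}{j}\frac{(r-j-1)!}{(r-j-k)!}\,T^j
\]
and then evaluate at $T=-1$. By the uniqueness clause of Lemma~\ref{lemma: PQdegree}(ii) for $i=0$, it suffices to check that the right-hand side has degree $\leq k-1$ (clear) and that in its product with $\log(1+T)$ the coefficient of $T^{r-k}$ equals $1$ while those of $T^{r-k+1},\dots,T^{r-1}$ vanish. Writing $l=r-k+s$ and re-indexing $i=k-1-j$, both reduce to evaluating the rational function
\[
\Phi(u)\;:=\;\sum_{i=0}^{k-1}\frac{(-1)^i\binom{k-1}{i}\binom{r-k+i}{k-1}}{u+i}
\]
at $u_0=r-2k+1+s$ for $s=0,1,\dots,k-1$. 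Using $1/(u+i)=\int_0^1 t^{u+i-1}\,dt$ together with the generating-function identity
\[
\sum_i (-1)^i\binom{k-1}{i}\binom{r-k+i}{k-1}\,t^i=[y^{k-1}]\,(1+y)^{r-k}(1-t(1+y))^{k-1},
\]
each $\Phi(u_0)$ turns into a Beta-function sum. For $1\leq s\leq k-1$, it collapses to $\Delta^{k-1}$ of a polynomial in $l$ of degree $s-1$ and hence vanishes; for $s=0$ it evaluates directly via $\int_0^1 t^{r-2k}(1-t)^{k-1}dt=(r-2k)!(k-1)!/(r-k)!$, which after multiplication by the normalizing constant produces exactly the required $1$. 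Evaluating the explicit formula at $T=-1$ then uses
\[
\sum_{j=0}^{k-1}(-1)^j\binom{k-1}{j}\frac{(r-j-1)!}{(r-j-k)!}=(k-1)!,
\]
immediate because the summand (as a polynomial in $j$) has degree $k-1$ with leading coefficient $(-1)^{k-1}$, so $\Delta^{k-1}$ of it equals $(-1)^{k-1}(k-1)!$. This yields $P_{k,0}(-1,0)=(-1)^r(r-k)!/[(r-2k)!(k-1)!]$, a unit in $\Z_{(p)}$ since every factorial involved is of an integer less than $p$.

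The remaining clauses are formal. By Lemma~\ref{lemma: B_k,0 k leq m}, $P_{k,0}(T,0)\in\Z_{(p)}[T]\subset\Z_p[T]$; its expansion in powers of $T+1$ lies in $\Z_p[[T+1]]$ with constant term $P_{k,0}(-1,0)\in\Z_p^\times$, giving $P_{k,0}(T,0)\in\Z_p[[T+1]]^\times$. For $P_{k,0}(c,0)\in(S'_\cO)^\times$, $c=(p-1)!\,u^p/p!-1$ reduces to $-1$ modulo the maximal ideal $\fm_{S'_\cO}=(\varpi,u)$, so $P_{k,0}(c,0)\equiv P_{k,0}(-1,0)\pmod{\fm_{S'_\cO}}$ is a unit in $\F$ and hence in $S'_\cO$. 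Finally, (ii) follows from (i) together with Lemma~\ref{lemma: PQFad'e}(iii) ($P_{k-1,k-1}(T,x)=P_{k,0}(T,x)/a_k(x)$): reading off the leading coefficient $a_k=a_k(0)=(-1)^r(r-k)!^2/[(r-2k)!(r-2k+1)!((k-1)!)^2]$ from the explicit formula and dividing yields $P_{k-1,k-1}(-1,0)=(r-2k+1)!(k-1)!/(r-k)!\in\Z_{(p)}^\times$. The main obstacle is the verification of the Pad\'e-vanishing identities via the Beta-function/finite-difference computation; everything else is factorial bookkeeping.
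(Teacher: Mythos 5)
Your argument for part (iii) is the paper's argument. Your approach to part (i) is genuinely different and essentially correct: the paper computes $P_{k,0}(-1,0)$ as a ratio of $k\times k$ determinants via Cram\'er's rule and row/column operations, whereas you posit an explicit closed form for the polynomial $P_{k,0}(T,0)$ itself, certify it through the uniqueness clause of Lemma~\ref{lemma: PQdegree}, and only then substitute $T=-1$. The Pad\'e-vanishing verification you sketch does go through, with one small repair: after the Beta-integral reduction the summand is (up to $(r-k)!/(r-k+s)!$) $\binom{k-1}{b}(-1)^b q_s(b)$ with $q_s$ a polynomial of degree $s-1$ in the \emph{summation index} $b$, not in $l$; the $s=0$ case collapses, after factorial cancellations, to $\sum_b\binom{k-1}{b}(-1)^b/(r-2k+1+b)=\int_0^1 t^{r-2k}(1-t)^{k-1}\,dt$ as you assert. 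A tidier way to close this step, avoiding the integral, is to observe that $\Phi(u)$ is already in partial-fraction form, so $\Phi(u)=(k-1)!\binom{r-k-u}{k-1}\big/\bigl(u(u+1)\cdots(u+k-1)\bigr)$; the numerator vanishes at $u=r-2k+1+s$ for $1\le s\le k-1$ and equals $(k-1)!$ at $s=0$. Your route has the bonus of producing the whole coefficient list of $P_{k,0}(T,0)$ rather than just its value at $-1$, at the price of having to guess the closed form.

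There is, however, a genuine gap in your part (ii). You deduce (ii) from (i) and Lemma~\ref{lemma: PQFad'e}(iii) by reading $a_k$ off the closed form of (i). But (i) is stated and proved only for $1\le k\le m$, while (ii) is asserted for $2\le k<r/2+1$; when $r=2m+1$ is odd this range includes $k=m+1$, and then your expression for $a_k$ contains $(r-2k)!=(-1)!$ and neither $P_{m+1,0}(-1,0)$ nor $a_{m+1}$ is supplied by (i). The endpoint conclusion $P_{m,m}(-1,0)=1$ for odd $r$ is precisely this case, and it is used later (it is invoked in the proof of Lemma~\ref{lemma-delta}). The paper's determinant computation for (ii) is carried out uniformly over $2\le k<r/2+1$ and so has no such restriction; your argument needs either a separate treatment of $k=m+1$ or an extension of the closed form to cover $P_{m+1,0}(T,0)$.
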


\begin{proof}
Let $1\leq k< r'/2+1$, and consider the linear equation
    \begin{align*}
        \begin{bmatrix}
            y_{k-1} & y_{k-2} & \cdots & y_0
        \end{bmatrix}
        B_{k,0}
        =
        \begin{bmatrix}
            (-1)^{k-1} & (-1)^{k-2} & \cdots & 1
        \end{bmatrix}.
    \end{align*}
Solving this equation for $y_0$ gives
    \begin{align*}
        y_0
        =
        \begin{bmatrix}
            (-1)^{k-1} & (-1)^{k-2} & \cdots & 1
        \end{bmatrix}
        B_{k,0}^{-1}
        \mathbf{e}'_{k}
        =P_{k,0}(-1,0).
    \end{align*}
On the other hand, using Cram\'er's rule we have
$$y_0=\frac{1}{\det(B_{k,0})}\det(B_{k,0}')$$
where
    \begin{align*}
        B_{k,0}':=
        \begin{bmatrix}
          \frac{(-1)^{r'-k-1}}{r'-k} & \frac{(-1)^{r'-k}}{r'-k+1} & \cdots & \frac{(-1)^{r'-3}}{r'-2} & \frac{(-1)^{r'-2}}{r'-1} \\
          \frac{(-1)^{r'-k-2}}{r'-k-1} & \frac{(-1)^{r'-k-1}}{r'-k} & \cdots & \frac{(-1)^{r'-4}}{r'-3} & \frac{(-1)^{r'-3}}{r'-2} \\
          \vdots & \vdots & \ddots & \vdots & \vdots \\
          \frac{(-1)^{r'-2k+1}}{r'-2k+2} & \frac{(-1)^{r'-2k+2}}{r'-2k+3} & \cdots & \frac{(-1)^{r'-k-1}}{r'-k} & \frac{(-1)^{r'-k}}{r'-k+1} \\
          (-1)^{k-1} & (-1)^{k-2} & \cdots & (-1) & 1
        \end{bmatrix}.
    \end{align*}

Let
$$M=
        \begin{bmatrix}
          \frac{1}{r'-k} & \frac{1}{r'-k+1} & \cdots & \frac{1}{r'-2} & \frac{1}{r'-1} \\
          \frac{1}{r'-k-1} & \frac{1}{r'-k} & \cdots & \frac{1}{r'-3} & \frac{1}{r'-2} \\
          \vdots & \vdots & \ddots & \vdots & \vdots \\
          \frac{1}{r'-2k+2} & \frac{1}{r'-2k+3} & \cdots & \frac{1}{r'-k} & \frac{1}{r'-k+1} \\
          M_{k,1} & \frac{1}{r'-2k+2} & \cdots & \frac{1}{r'-k-1} & \frac{1}{r'-k}
        \end{bmatrix},$$
where $M_{k,1}$ is $\frac{1}{r'-2k+1}$ if $r'-2k+1>0$ and $0$ otherwise,
and let $M'$ be the square matrix of size $k\times k$ obtained from $M$ by replacing the last row with
$\begin{bmatrix}
  1 & 1 & \cdots & 1
\end{bmatrix}.$
Noting that
\begin{align*}
        B_{k,0}
        &=\mathrm{Diag}((-1)^{r'-k-1},\cdots,(-1)^{r'-2k+1}, (-1)^{r'-2k})
        \cdot
        M
        \cdot
        \mathrm{Diag}((-1)^0,\cdots,(-1)^{k-1})
\end{align*}
and
\begin{align*}
B_{k,0}'=\mathrm{Diag}((-1)^{r'-k-1},\cdots,(-1)^{r'-2k+1}, (-1)^{k-1})
        \cdot
        M'
        \cdot
        \mathrm{Dig}((-1)^0,\cdots,(-1)^{k-1}),
\end{align*}
we have
    \begin{equation}\label{eq: P_k,0(-1,0)}
        P_{k,0}(-1,0)=y_0
        =
        (-1)^{r'-k-1}
        \frac{\det M'}{\det M}.
    \end{equation}

(i) We now compute $\frac{\det M'}{\det M}$ for $1\leq k\leq m$ as follows: subtracting the $k$-th row from each row of $M$, we have
    \begin{align*}
        \det M=
        \begin{vmatrix}
          \frac{-(k-1)}{(r'-k)(r'-2k+1)} & \frac{-(k-1)}{(r'-k+1)(r'-2k+2)} & \cdots
           & \frac{-(k-1)}{(r'-2)(r'-k-1)} & \frac{-(k-1)}{(r'-1)(r'-k)} \\
          \frac{-(k-2)}{(r'-k-1)(r'-2k+1)} & \frac{-(k-2)}{(r'-k)(r'-2k+2)} & \cdots
           & \frac{-(k-2)}{(r'-3)(r'-k-1)} & \frac{-(k-2)}{(r'-2)(r'-k)} \\
          \vdots & \vdots & \ddots & \vdots & \vdots \\
          \frac{-1}{(r'-2k+2)(r'-2k+1)} & \frac{-1}{(r'-2k+3)(r'-2k+2)} & \cdots
           & \frac{-1}{(r'-k)(r'-k-1)} & \frac{-1}{(r'-k+1)(r'-k)} \\
          \frac{1}{r'-2k+1} & \frac{1}{r'-2k+2} & \cdots & \frac{1}{r'-k-1} & \frac{1}{r'-k}
        \end{vmatrix}.
    \end{align*}
Factoring out $-(k-i)$ from each $i$-th row and $\frac{1}{r'-2k+j}$ from each $j$-th column, we get
    \begin{align*}
    \det M
        &=
        \frac{(-1)^{k-1} (k-1)!}{(r'-2k+1)(r'-2k+2)\cdots(r'-k)}
        \begin{vmatrix}
          \frac{1}{r'-k} & \frac{1}{r'-k+1} & \cdots & \frac{1}{r'-2} & \frac{1}{r'-1} \\
          \frac{1}{r'-k-1} & \frac{1}{r'-k} & \cdots & \frac{1}{r'-3} & \frac{1}{r'-2} \\
          \vdots & \vdots & \ddots & \vdots & \vdots \\
          \frac{1}{r'-2k+2} & \frac{1}{r'-2k+3} & \cdots & \frac{1}{r'-k} & \frac{1}{r'-k+1} \\
          1 & 1 & \cdots & 1 & 1
        \end{vmatrix}\\
        &=
        \frac{(-1)^{k-1} (r'-2k)!(k-1)!}{(r'-k)!}\det M'.
    \end{align*}
Therefore, this together with \eqref{eq: P_k,0(-1,0)} gives the desired identity of (i).

(ii) From $P_{k,0}(T,0)=
        \begin{bmatrix}
            T^{k-1} & \cdots & 1
        \end{bmatrix}
        B_{k,0}^{-1}
        \mathbf{e}'_k$, we have
\begin{align*}
        a_k
        =(\mathbf{e}'_1)^T B_{k,0}^{-1} \mathbf{e}'_k
        =(-1)^{k+1}\frac{\det(B_{k-1,0})}{\det(B_{k,0})}.
\end{align*}
If we let $M''$ be the square matrix of size $(k-1)\times (k-1)$ obtained from $M$ by deleting the first column and the last row, then it is immediate that
\begin{align*}
        B_{k-1,0}
        &=\mathrm{Diag}((-1)^{r'-k},\cdots, (-1)^{r'-2k+2})
        \cdot
        M''
        \cdot
        \mathrm{Diag}(1,(-1),\cdots,(-1)^{k-2}),
\end{align*}
and so we have
\begin{align*}
        a_k=(-1)^{k+1}\frac{(-1)^{r'}\det M''}{\det M}
        =(-1)^{r'+k+1}\frac{\det M''}{\det M},
\end{align*}
which together with \eqref{eq: P_k,0(-1,0)} and Lemma~\ref{lemma: PQFad'e}~(iii) implies
\begin{align*}
        P_{k-1,k-1}(-1,0)
        =\frac{P_{k,0}(-1,0)}{a_k}
        =\frac{(-1)^{r'-k-1}\frac{\det M'}{\det M}}{(-1)^{r'+k+1}\frac{\det M''}{\det M}}
        =\frac{\det M'}{\det M''}.
\end{align*}

We now compute this quotient as follows: subtracting the first column from each column of $M'$, we have
\begin{align*}
        \det M'=
        \begin{vmatrix}
          \frac{1}{r'-k} & \frac{1}{r'-k+1}-\frac{1}{r'-k} & \cdots
           & \frac{1}{r'-2}-\frac{1}{r'-k} & \frac{1}{r'-1}-\frac{1}{r'-k} \\
          \frac{1}{r'-k-1} & \frac{1}{r'-k}-\frac{1}{r'-k-1} & \cdots
           & \frac{1}{r'-3}-\frac{1}{r'-k-1} & \frac{1}{r'-2}-\frac{1}{r'-k-1} \\
          \vdots & \vdots & \ddots & \vdots & \vdots \\
          \frac{1}{r'-2k+2} & \frac{1}{r'-2k+3}-\frac{1}{r'-2k+2} & \cdots
           & \frac{1}{r'-k}-\frac{1}{r'-2k+2} & \frac{1}{r'-k+1}-\frac{1}{r'-2k+2} \\
          1 & 0 & \cdots & 0 & 0
        \end{vmatrix}
\end{align*}
which can be rewritten as
\begin{align*}
\det M'= (-1)^{k+1}
        \begin{vmatrix}
          \frac{-1}{(r'-k+1)(r'-k)} & \cdots
           & \frac{-(k-2)}{(r'-2)(r'-k)} & \frac{-(k-1)}{(r'-1)(r'-k)} \\
          \frac{-1}{(r'-k)(r'-k-1)} & \cdots
           & \frac{-(k-2)}{(r'-3)(r'-k-1)} & \frac{-(k-1)}{(r'-2)(r'-k-1)} \\
          \vdots & \ddots & \vdots & \vdots \\
          \frac{-1}{(r'-2k+3)(r'-2k+2)} & \cdots
           & \frac{-(k-2)}{(r'-k)(r'-2k+2)} & \frac{-(k-1)}{(r'-k+1)(r'-2k+2)}
        \end{vmatrix}.
    \end{align*}
Factoring out $\frac{1}{r'-k+1-i}$ from each $i$-th row and $-j$ from each $j$-th column, we get
    \begin{align*}
    \det M'
        &=
        (-1)^{k+1}\frac{(-1)^{k-1}(k-1)!}{(r'-k)(r'-k-1)\cdots(r'-2k+2)}
        \begin{vmatrix}
          \frac{1}{r'-k+1} & \frac{1}{r'-k+2} & \cdots & \frac{1}{r'-1} \\
          \frac{1}{r'-k} & \frac{1}{r'-k+1} & \cdots & \frac{1}{r'-2} \\
          \vdots & \vdots & \ddots & \vdots \\
          \frac{1}{r'-2k+3} & \frac{1}{r'-2k+4} & \cdots & \frac{1}{r'-k+1}
        \end{vmatrix}\\
        &=
        \frac{(r'-2k+1)!(k-1)!}{(r'-k)!}\det M''.
    \end{align*}
Therefore, we have the desired identity of (ii).

(iii) By Lemma~\ref{lemma: PQdegree}, we have
    \begin{align*}
        \deg_T P_{k,k}(T,x)Q_{k,0}(T,x)=r'\quad\mbox{ and }\quad\deg_T P_{k,0}(T,x)Q_{k,k}(T,x)=r'-2.
    \end{align*}
As $P_{k,k}(T,x)$ and $Q_{k,0}(T,x)$ are monic, we have
    \begin{align*}
        &P_{k,k}(T,x)Q_{k,0}(T,x)-P_{k,0}(T,x)Q_{k,k}(T,x)
        \\&\hspace{3em}
        =
        T^{r'}+\big[P_{k,k}(T,x)Q_{k,0}(T,x)-P_{k,0}(T,x)Q_{k,k}(T,x)\big]_{T}^{(r'-1)}.
    \end{align*}
On the other hand, by Lemma~\ref{lemma: PQdegree}~(i) we have
    \begin{align*}
        &P_{k,k}(T,x)Q_{k,0}(T,x)-P_{k,0}(T,x)Q_{k,k}(T,x)
        \\&
        \equiv
        P_{k,k}(T,x)P_{k,0}(T,x)(x+\log(1+T))-P_{k,0}(T,x)P_{k,k}(T,x)(x+\log(1+T))
        \\&
        \equiv 0 \pmod{T^{r'}}.
    \end{align*}
Therefore, we get the desired identity of (iii).
\end{proof}

\begin{lemm}\label{lemma-delta}
Assume that $r'\geq 2$.
\begin{enumerate}[leftmargin=*]
  \item If $1\leq k<r'/2$, then we have
  \begin{align*}
    \delta_{k+1}(-1)-\delta_k(-1)=\frac{1}{r'-k}-\frac{1}{k}.
  \end{align*}
  In particular, $\delta_{k+1}(c)-\delta_k(c)\in(S'_\cO)^\times$.
  \item If $1\leq k<r'/2+1$, then we have
  \begin{align*}
    \delta_k(-1)=-H_{r'-k}-H_{k-1}.
  \end{align*}
 In particular, if $r'=2m+1$ then $Q_{m+1,0}(-1,0)=-1$.
  \item We have
    \begin{align*}
        \delta_{m+1}(T,x)-\delta_m(T) =-\frac{T^{r'}}{P_{m,m}(T,x)P_{m,0}(T,0)}.
    \end{align*}
    In particular, if $r'=2m$ and $x\in E$ with $v_p(x)<0$, then
    \begin{align*}
        \delta_{m+1}(T,x)-\delta_m(T)
        \equiv
        \frac{T^{r'}}{xP_{m,0}(T,0)^2}
        \pmod{x^{-1}\fm}
    \end{align*}
    and so $\delta_{m+1}(-1,x)-\delta_{m-1}(-1)\equiv\delta_m(-1)-\delta_{m-1}(-1)\pmod{\fm}$.
  \end{enumerate}
\end{lemm}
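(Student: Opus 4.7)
The plan is to reduce everything to the explicit Pad\'e-type identities already developed, so the three parts follow from a single algebraic manipulation plus the values of $P_{k,0}$ and $P_{k,k}$ at $T=-1$.

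For (i), the starting point is that Lemma~\ref{lemma: PQFad'e}(iii) rewrites $\delta_k(T)$ in two equivalent ways, namely $\delta_k(T)=Q_{k-1,k-1}(T,0)/P_{k-1,k-1}(T,0)=Q_{k,0}(T,0)/P_{k,0}(T,0)$ (and analogously for $\delta_{k+1}$ with index $k$). Placing $\delta_{k+1}(T)-\delta_k(T)$ over a common denominator and invoking Lemma~\ref{lemma-delta-denom-1}(iii) specialized at $x=0$ gives the clean identity
\begin{equation*}
\delta_{k+1}(T)-\delta_k(T)\;=\;\frac{-T^{r}}{P_{k,0}(T,0)\,P_{k,k}(T,0)}.
\end{equation*}
Evaluating at $T=-1$, I would plug in $P_{k,0}(-1,0)=(-1)^{r}(r-k)!/((r-2k)!(k-1)!)$ from Lemma~\ref{lemma-delta-denom-1}(i) and $P_{k,k}(-1,0)=(r-2k-1)!\,k!/(r-k-1)!$ from Lemma~\ref{lemma-delta-denom-1}(ii) applied at index $k+1$, and collapse the resulting factorials to $-(r-2k)/(k(r-k))=\tfrac{1}{r-k}-\tfrac{1}{k}$. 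For the $(S'_\cO)^\times$ assertion, note that $\delta_{k+1}(T)-\delta_k(T)\in\cO[[T+1]]$ by Lemma~\ref{lemma-delta-denom-1}(i) and its value at $T=-1$ is a $p$-adic unit (since $k$, $r-k$, and $2k-r$ all have absolute value at most $r<p-1$); then use that $c+1$ maps to $0$ in $\bar S'_\F=\F[u]/u^p$ (because $c=u^p/p-1$ and $u^p\in\Fil^pS$) to conclude that $\delta_{k+1}(c)-\delta_k(c)$ has unit reduction.

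For (ii), I would induct on $k$ with (i). The base case $k=1$ uses Lemma~\ref{lemma: PQFad'e}(iii) to write $\delta_1(T)=Q_{0,0}(T,0)/P_{0,0}(T,0)$, and Lemma~\ref{lemma: k=0,k=r} gives $P_{0,0}(T,0)=1$ and $Q_{0,0}(T,0)=Tf_r(T)$, so $\delta_1(-1)=-\sum_{i=1}^{r-1}\frac{1}{i}=-H_{r-1}=-H_{r-1}-H_0$. The inductive step is then immediate: adding $\tfrac{1}{r-k}-\tfrac{1}{k}$ to $-H_{r-k}-H_{k-1}$ gives $-H_{r-k-1}-H_{k}$. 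For the ``in particular'' when $r=2m+1$, apply (ii) at $k=m+1$ to get $\delta_{m+1}(-1)=-2H_m$, compute $P_{m+1,0}(-1,0)=a_{m+1}P_{m,m}(-1,0)$ using Lemma~\ref{lemma: PQFad'e}(iii), evaluate $P_{m,m}(-1,0)=1$ from Lemma~\ref{lemma-delta-denom-1}(ii) with $k=m$, and substitute $1/a_{m+1}=2H_m$ from Lemma~\ref{lemma: a_k m+1 leq k}(v), so that $Q_{m+1,0}(-1,0)=\delta_{m+1}(-1)\cdot P_{m+1,0}(-1,0)=-2H_m\cdot\tfrac{1}{2H_m}=-1$.

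For (iii), the strategy splits on the parity of $r$. When $r=2m+1$, Lemma~\ref{lemma-PQ-k leq m}(i) at $k=m$, $i=m$ yields $P_{m,m}(T,x)=P_{m,m}(T,0)$ and $Q_{m,m}(T,x)=Q_{m,m}(T,0)+xP_{m,m}(T,0)$, so that $\delta_{m+1}(T,x)$ is independent of $x$ and the claimed identity reduces to (i) with $k=m$. When $r=2m$, Lemma~\ref{lemma-PQ-k leq m}(ii) at $k=m$, $i=m$ gives $P_{m,m}(T,x)=P_{m,m}(T,0)-xP_{m,0}(T,0)$ and a similar formula for $Q_{m,m}(T,x)$; after subtracting the $-x$ in the definition of $\delta_{m+1}(T,x)$ and placing over a common denominator with $\delta_m(T)=Q_{m,0}(T,0)/P_{m,0}(T,0)$, the numerator becomes exactly $P_{m,0}(T,0)Q_{m,m}(T,0)-P_{m,m}(T,0)Q_{m,0}(T,0)=-T^r$ by Lemma~\ref{lemma-delta-denom-1}(iii), yielding the claimed formula. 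For the mod~$x^{-1}\fm$ congruence, factor out $-x$ from $P_{m,m}(T,x)$ and expand the resulting geometric series: since $v_p(x^{-1})>0$, the tail lies in $x^{-1}\fm$. Evaluating at $T=-1$ gives an error of valuation $-v_p(x)>0$, which is in $\fm$; combined with (i) at $k=m-1$ this yields $\delta_{m+1}(-1,x)-\delta_{m-1}(-1)\equiv\delta_m(-1)-\delta_{m-1}(-1)\pmod\fm$.

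The main obstacle is purely bookkeeping: the $p$-adic unit claim in (i) requires verifying that $c+1$ vanishes in $\bar S'_\F$, and the ``in particular'' statement of (ii) requires carefully assembling three separate inputs (the explicit values of $\delta_{m+1}(-1)$, $P_{m,m}(-1,0)$, and $a_{m+1}$) from three different earlier lemmas. Everything else reduces, via Lemma~\ref{lemma-delta-denom-1}(iii), to the single telescoping identity $\delta_{k+1}-\delta_k=-T^r/(P_{k,0}P_{k,k})$.
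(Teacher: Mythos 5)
Your proof is correct and follows essentially the same route as the paper's: reduce everything to the key determinantal identity $P_{k,k}Q_{k,0}-P_{k,0}Q_{k,k}=T^r$ from Lemma~\ref{lemma-delta-denom-1}(iii), and then plug in the explicit values of $P_{k,0}(-1,0)$ and $P_{k-1,k-1}(-1,0)$. The only structural variation is in part~(iii), where you split on the parity of $r$: for $r=2m+1$ you observe via Lemma~\ref{lemma-PQ-k leq m}(i) that $\delta_{m+1}(T,x)$ is $x$-independent and reduce to~(i), while for $r=2m$ you use Lemma~\ref{lemma-PQ-k leq m}(ii). The paper avoids the case split by instead substituting $Q_{m,0}(T,0)=Q_{m,0}(T,x)-xP_{m,0}(T,0)$ into the expression for $\delta_m(T)$, which cancels the $-x$ in the definition of $\delta_{m+1}$ uniformly; both routes land on the same telescoping identity. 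Your extra argument for the $(S'_\cO)^\times$ claim in~(i) (via $c+1=u^p/p$ being topologically nilpotent so that power series in $T+1$ with unit constant term specialize to units) fills in what the paper leaves implicit, though the reference to Lemma~\ref{lemma-delta-denom-1}(i) alone is slightly imprecise: when $r=2m+1$ and $k=m$ you also need part~(ii) of that lemma, or simply the displayed formula $\delta_{k+1}-\delta_k=-T^r/(P_{k,0}P_{k,k})$ with both denominators units in $\Z_p[[T+1]]$, which is what you actually use.
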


\begin{proof}
(i) By Lemma~\ref{lemma-delta-denom-1}~(iii), for $1\leq k\leq m$ we have
    \begin{align*}
        \delta_{k+1}(-1)-\delta_k(-1)
        &=
        \frac{Q_{k,k}(-1,0)}{P_{k,k}(-1,0)}-\frac{Q_{k,0}(-1,0)}{P_{k,0}(-1,0)}\\
        &=
        -\frac{
        P_{k,k}(-1,0)Q_{k,0}(-1,0)-P_{k,0}(-1,0)Q_{k,k}(-1,0)
        }{
        P_{k,0}(-1,0)P_{k,k}(-1,0)
        }\\
        &=
        \frac{(-1)^{r'+1}}{P_{k,0}(-1,0)P_{k,k}(-1,0)}.
    \end{align*}
Now, by Lemma~\ref{lemma-delta-denom-1}, if $1\leq k<r'/2$, then we have
    \begin{align*}
        \delta_{k+1}(-1)-\delta_k(-1)
        &=
        -\frac{(r'-2k)!(k-1)!}{(r'-k)!}\frac{(r'-k-1)!}{(r'-2k-1)!k!}\\
        &=
        -\frac{r'-2k}{(r'-k)k}=
        \frac{1}{r'-k}-\frac{1}{k}.
    \end{align*}
The last part of (i) follows immediately.

(ii) By Lemma~\ref{lemma: k=0,k=r}, we have
    \begin{align*}
        \delta_1(T)=\frac{Q_{0,0}(T,0)}{P_{0,0}(T,0)}=Q_{0,0}(T,0)
        =\sum_{n=1}^{r'-1}\frac{(-1)^{n-1}}{n}T^n
    \end{align*}
    and so
    \begin{align*}
        \delta_1(-1)
        =-H_{r'-1}.
    \end{align*}
Hence, for each $1\leq k<r'/2+1$, we have
    \begin{align*}
        \delta_k(-1)
        =-H_{r'-1}+\sum_{n=1}^{k-1}\bigg(\frac{1}{r'-n}-\frac{1}{n}\bigg)
        =-H_{r'-k}-H_{k-1}
    \end{align*}
by applying (i) inductively.

For the second part, from the definition together with Lemma~\ref{lemma: PQFad'e}~(iii), we have
$$Q_{m+1,0}(-1,0)=P_{m+1,0}(-1,0)\delta_{m+1}(-1)=P_{m,m}(-1,0)a_{m+1}\delta_{m+1}(-1).$$
By Lemma~\ref{lemma-delta-denom-1}~(ii) we have $P_{m,m}(-1,0)=1$, and by the first part of (ii) we have $\delta_{m+1}(-1)=-2H_m=-\frac{1}{a_{m+1}}$, where the last equality is due to Lemma~\ref{lemma: a_k m+1 leq k}~(v). Therefore, we have $Q_{m+1,0}(-1)=-1$.

(iii) By definition together with Lemma~\ref{lemma-PQ-k leq m}~(i), we have
\begin{align*}
        \delta_{m+1}(T,x)-\delta_m(T)
         &=\frac{Q_{m,m}(T,x)}{P_{m,m}(T,x)}-x-\frac{Q_{m,0}(T,0)}{P_{m,0}(T,0)}\\
         &=\frac{Q_{m,m}(T,x)}{P_{m,m}(T,x)}-x-\frac{Q_{m,0}(T,x)-xP_{m,0}(T,0)}{P_{m,0}(T,0)}.
\end{align*}
Now, by Lemma~\ref{lemma-delta-denom-1}~(iii), we conclude the first part
    \begin{align*}
        \delta_{m+1}(T,x)-\delta_m(T)
        &=-\frac{P_{m,m}(T,x)Q_{m,0}(T,x)-P_{m,0}(T,x)Q_{m,m}(T,x)}{P_{m,m}(T,x)P_{m,0}(T,x)}\\
        &=-\frac{T^{r'}}{P_{m,m}(T,x)P_{m,0}(T,x)}.
    \end{align*}
For the second part, we have
    \begin{align*}
        \delta_{m+1}(T,x)-\delta_m(T)
        =-\frac{T^{r'}}{(P_{m,m}(T,0)-xP_{m,0}(T,0))P_{m,0}(T,0)}
    \end{align*}
by Lemma~\ref{lemma-PQ-k leq m}. By Lemma~\ref{lemma-delta-denom-1}~(i), we have $P_{m,0}(T,0)\in\Z_p[[T+1]]^{\times}$, and so we conclude that
\begin{align*}
        \delta_{m+1}(T,x)-\delta_m(T)
        &=
        \frac{x^{-1}T^{r'}}{P_{m,0}(T,0)(P_{m,0}(T,0)-x^{-1}P_{m,m}(T,0))}\\
        &\equiv
        \frac{x^{-1}T^{r'}}{P_{m,0}(T,0)^2}
        \pmod{x^{-1}\fm},
    \end{align*}
which completes the proof.
\end{proof}

We need to compute $P_{m,m}(-1,0)$ when $r'=2m$, which is not covered by Lemma~\ref{lemma-delta-denom-1}.
\begin{lemm}\label{lemma-P-2m-m}
Let $r'=2m\geq 2$. Then we have
    \begin{align*}
        P_{m,m}(-1,0)=Q_{m,0}(-1,0)=-m(H_m+H_{m-1}).
    \end{align*}
\end{lemm}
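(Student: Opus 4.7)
The statement splits into two parts: $Q_{m,0}(-1,0) = -m(H_m + H_{m-1})$ and $P_{m,m}(-1,0) = -m(H_m + H_{m-1})$. The first is immediate from what has already been established in the paper: Lemma~\ref{lemma-delta-denom-1}(i) with $k = m$, $r = 2m$ gives $P_{m,0}(-1,0) = m$, while Lemma~\ref{lemma-delta}(ii) gives $\delta_m(-1) = -H_m - H_{m-1}$; multiplying and recalling $\delta_m(T) = Q_{m,0}(T,0)/P_{m,0}(T,0)$ yields $Q_{m,0}(-1,0) = -m(H_m + H_{m-1})$.

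The substantive task is the identity $P_{m,m}(-1,0) = -m(H_m + H_{m-1})$. My plan is to invoke the matrix formula obtained in the proof of Lemma~\ref{lemma: PQdegree}(iii), which specializes to
\begin{equation*}
P_{m,m}(T,0) = T^m - \begin{bmatrix} T^{m-1} & \cdots & 1 \end{bmatrix} B_{m,0}^{-1} A_{m,0}\,\mathbf{e}''_m,
\end{equation*}
where $A_{m,0}$ and $B_{m,0}$ are the $m \times m$ upper-left and upper-right blocks of $M_{2m,0}$ and $\mathbf{e}''_m$ is the last standard basis vector of $\mathbb{Q}^m$. Following the strategy in Lemma~\ref{lemma-delta-denom-1}(i), I would introduce the row vector $\vec y = [y_{m-1}, \cdots, y_0]$ defined by $\vec y\, B_{m,0} = [(-1)^{m-1}, \cdots, 1]$, so that $[(-1)^{m-1}, \cdots, 1]\,B_{m,0}^{-1} = \vec y$. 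Reading off the entries of $M_{2m,0}$ in~\eqref{eq: B_k,x when k leq m} (and its block decomposition) shows that $A_{m,0}\mathbf{e}''_m$ has entries $(-1)^{m-i-1}/(m-i)$ in rows $i = 1, \ldots, m-1$ and $0$ in row $m$, giving
\begin{equation*}
P_{m,m}(-1,0) = (-1)^m + \sum_{\ell=1}^{m-1} \frac{(-1)^\ell y_\ell}{\ell}.
\end{equation*}

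The core calculation is to evaluate $\sum_{\ell=1}^{m-1}(-1)^\ell y_\ell/\ell$ from the defining linear system for $\vec y$. After the substitution $\tilde y_\ell := (-1)^\ell y_\ell$ and an explicit unpacking of the entries of $B_{m,0}$, I expect the equation $\vec y\, B_{m,0} = [(-1)^{m-1}, \cdots, 1]$ to collapse to the Cauchy-type system
\begin{equation*}
\sum_{\ell=0}^{m-1} \frac{\tilde y_\ell}{j+\ell} = (-1)^{m-1} \qquad (j = 1, 2, \ldots, m).
\end{equation*}
The rational function $f(j) := \sum_{\ell=0}^{m-1} \tilde y_\ell/(j+\ell)$ has simple poles precisely at $j = 0, -1, \ldots, -(m-1)$, vanishes at infinity, and is forced to take the value $(-1)^{m-1}$ on $\{1, 2, \ldots, m\}$; these data pin $f$ down uniquely as
\begin{equation*}
f(j) = (-1)^{m-1} + \frac{(-1)^m (j-1)(j-2) \cdots (j-m)}{j(j+1)\cdots(j+m-1)}.
\end{equation*}

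Finally I would Laurent-expand $f$ at $j = 0$, using $\prod_{k=1}^m(1 - j/k) = 1 - jH_m + O(j^2)$ and $\prod_{k=1}^{m-1}(1 + j/k) = 1 + jH_{m-1} + O(j^2)$, to obtain
\begin{equation*}
f(j) = \frac{m}{j} + \bigl[(-1)^{m-1} - m(H_m + H_{m-1})\bigr] + O(j).
\end{equation*}
Matching this against the direct expansion $f(j) = \tilde y_0/j + \sum_{\ell=1}^{m-1}\tilde y_\ell/\ell + O(j)$ at once yields the sanity check $\tilde y_0 = m$ (consistent with $y_0 = P_{m,0}(-1,0) = m$) and the crucial identity $\sum_{\ell=1}^{m-1} \tilde y_\ell/\ell = (-1)^{m-1} - m(H_m + H_{m-1})$. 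Plugging back, $P_{m,m}(-1,0) = (-1)^m + (-1)^{m-1} - m(H_m + H_{m-1}) = -m(H_m + H_{m-1})$, as desired. The main obstacle is the bridge from the implicit linear system for $\vec y$ to the closed-form interpolating rational function $f(j)$; once that interpolation step is in place, the Laurent expansion and the final bookkeeping are routine.
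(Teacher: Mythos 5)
Your proposal is correct but follows a genuinely different route from the paper's own proof, so a comparison is worth recording. The first half — deducing $Q_{m,0}(-1,0)=-m(H_m+H_{m-1})$ from $P_{m,0}(-1,0)=m$ (Lemma~\ref{lemma-delta-denom-1}(i)) and $\delta_m(-1)=-H_m-H_{m-1}$ (Lemma~\ref{lemma-delta}(ii)) — is exactly the paper's final step. Where you diverge is the computation of $P_{m,m}(-1,0)$: you go back to the defining linear system, reduce it to a Cauchy-type system $\sum_{\ell=0}^{m-1}\tilde y_\ell/(j+\ell)=(-1)^{m-1}$ for $j=1,\dots,m$, identify the unique interpolating rational function, and extract the constant term by Laurent expansion at $j=0$. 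I have checked the index bookkeeping in the reduction from $\vec y B_{m,0}=[(-1)^{m-1},\dots,1]$ to the Cauchy system, the uniqueness of the interpolant (the numerator must be $(-1)^{m-1}\prod_{k=0}^{m-1}(j+k)+(-1)^m\prod_{k=1}^m(j-k)$ by a degree count and a vanishing argument), and the Laurent coefficients; all steps go through. The paper instead never computes $P_{m,m}(-1,0)$ directly: it differentiates the Pad\'e congruence $P_{m,m}(T,0)\log(1+T)\equiv Q_{m,m}(T,0)\pmod{T^r}$, multiplies by $(1+T)$, combines with $Q_{r-1;m,0}$, and matches degree bounds to turn the congruence into an honest polynomial identity; evaluating at $T=-1$ kills the derivative terms and — using $Q_{r-1;m,0}(-1,0)=-1$, $P_{m,0}(-1,0)=m$, and $P_{r-1;m,0}(-1,0)\neq 0$ — yields directly the cleaner intermediate fact $P_{m,m}(-1,0)=Q_{m,0}(-1,0)$, from which the value follows as above. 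The trade-off: the paper's differentiation trick is shorter and avoids any explicit inversion of $B_{m,0}$, but it leans on the odd-weight result $Q_{2m-1;m,0}(-1,0)=-1$ from Lemma~\ref{lemma-delta}(ii); your route is self-contained modulo Lemma~\ref{lemma: PQdegree}(iii), but the ``bridge'' you flag — from the implicit linear system to the closed-form interpolant — is the step that would need to be written out carefully (the degree count on $P(j)-(-1)^{m-1}Q(j)$ is the crux). Two minor points of hygiene: (a) the entries of $A_{m,0}$ come from the definition of $M_{r,x}$ itself, not from \eqref{eq: B_k,x when k leq m}, which gives $B_{k,x}$; (b) in the sanity check, the identity $\tilde y_0=y_0=P_{m,0}(-1,0)$ deserves a sentence — it holds because $y_0=[(-1)^{m-1}\ \cdots\ 1]B_{m,0}^{-1}\mathbf{e}'_m$, which is exactly the formula for $P_{m,0}(-1,0)$ appearing in the proof of Lemma~\ref{lemma-delta-denom-1}(i).
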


\begin{proof}
During the proof, we write $P_{r';k,j}(T,x)$ and $Q_{r';k,j}(T,x)$ for $P_{k,j}(T,x)$ and $Q_{k,j}(T,x)$, respectively, as we need to indicate $r'$. We also write $\delta_{r';k}$ for $\delta_k$, to emphasize $r'$.

We first claim that the following identity:
\begin{multline*}
Q_{r'-1;m,0}(T,0)\big(P_{r';m,0}(T,0)+(1+T)\dot{P}_{r';m,m}(T,0)\big)-mT^{r'-1}\\
=P_{r'-1;m,0}(T,0)\big(Q_{r';m,0}(T,0)-P_{r';m,m}(T,0)+(1+T)\dot{Q}_{r';m,m}(T,0)\big).
\end{multline*}
By Lemma~\ref{lemma: PQdegree}, we have
    \begin{align*}
        P_{r';m,m}(T,0)\log(1+T)\equiv Q_{r';m,m}(T,0) \pmod{T^{r'}}.
    \end{align*}
Differentiating both sides with respect to $T$, we have
    \begin{align*}
        \dot{P}_{r';m,m}(T,0)\log(1+T) +P_{r';m,m}(T,0)(1+T)^{-1} \equiv \dot{Q}_{r';m,m}(T,0) \pmod{T^{r'-1}},
    \end{align*}
which can be rewritten as
    \begin{align*}
        (1+T)\dot{P}_{r';m,m}(T,0)\log(1+T) \equiv -P_{r';m,m}(T,0)+(1+T)\dot{Q}_{r';m,m}(T,0) \pmod{T^{r'-1}}.
    \end{align*}
Hence, we have
    \begin{align}\label{eq: r=2m, P_m,m, 2}
    \begin{split}
        &Q_{r'-1;m,0}(T,0)\big(P_{r';m,0}(T,0)+(1+T)\dot{P}_{r';m,m}(T,0)\big)\\
        &\equiv
        P_{r'-1;m,0}(T,0)\big(P_{r';m,0}(T,0)+(1+T)\dot{P}_{r';m,m}(T,0)\big)\log(1+T)\\
        &\equiv
        P_{r'-1;m,0}(T,0)\big(
        Q_{r';m,0}(T,0)-P_{r';m,m}(T,0)+(1+T)\dot{Q}_{r';m,m}(T,0)\big)
    \end{split}
    \end{align}
modulo $(T^{r'-1})$.

As $Q_{r'-1;m,0}(T,0)$ is a monic polynomial of degree $m-1$ by Lemma~\ref{lemma: PQdegree}, $P_{r';m,0}(T,0)$ has degree $m-1$ by Lemma~\ref{lemma: B_k,0 k leq m}, and $P_{r';m,m}(T,0)$ is a monic polynomial of degree $m$ by Lemma~\ref{lemma: PQdegree}, we conclude that the polynomial $(1+T)\dot{P}_{r';m,m}(T,0)$ has degree $m$ and the leading coefficient $m$, and so the degree of the most-left side of \eqref{eq: r=2m, P_m,m, 2} satisfies
    \begin{align*}
        \deg_T\Big(Q_{r'-1;m,0}(T,0)\big(P_{r';m,0}(T,0)+(1+T)\dot{P}_{r';m,m}(T,0)\big)-mT^{r'-1}\Big) \leq r'-2.
    \end{align*}
On the other hand, we also have the following:
    \begin{itemize}[leftmargin=*]
    \item $P_{r'-1;m,0}(T,0)$ has degree $m-1$ by Lemma~\ref{lemma: PQdegree} and Lemma~\ref{lemma: B_k,0 k leq m};
    \item both $Q_{r';m,0}(T,0)$ and $P_{r';m,m}(T,0)$ are monic polynomials of degree $m$ by Lemma~\ref{lemma: PQdegree}, so that we have
        $\deg \big(Q_{r';m,0}(T,0)-P_{r';m,m}(T,0)\big)\leq m-1$.
    \item $Q_{r';m,m}(T,0)$ has degree less than equal to $m-1$ by Lemma~\ref{lemma: PQdegree}, and so does $(1+T)\dot{Q}_{r';m,m}(T,0)$.
    \end{itemize}
Hence, we conclude that the degree of the most-right side of \eqref{eq: r=2m, P_m,m, 2} satisfies
    \begin{align*}
        \deg_T P_{r'-1;m,0}(T,0)\big(Q_{r';m,0}(T,0)-P_{r';m,m}(T,0)+(1+T)\dot{Q}_{r';m,m}(T,0)\big)\leq r'-2.
    \end{align*}
Therefore, the congruence \eqref{eq: r=2m, P_m,m, 2} together with these degree bounds gives rise to the identity in the claim.

We now evaluate the identity in the claim at $T=-1$. Then we have
    \begin{align*}
        Q_{r'-1;m,0}(-1,0)P_{r';m,0}(-1,0)+m =P_{r'-1;m,0}(-1,0)\big(Q_{r';m,0}(-1,0)-P_{r';m,m}(-1,0)\big).
    \end{align*}
As $Q_{r'-1;m,0}(-1,0)=-1$ by Lemma~\ref{lemma-delta}~(ii) and $P_{r';m,0}(-1,0)=m$ by Lemma~\ref{lemma-delta-denom-1}~(i), the left-hand side is $0$.
Moreover, from $$-1=Q_{r'-1;m,0}(-1,0) =P_{r'-1;m,0}(-1,0)\delta_{r'-1;m}(-1)$$
we see that $P_{r'-1;m,0}(-1,0)\not= 0$, so that by Lemma~\ref{lemma-delta}~(ii), we conclude that
    \begin{align*}
        P_{r';m,m}(-1,0) =Q_{r';m,0}(-1,0) =P_{r';m,0}(-1,0)\delta_{r';m}(-1) =-m(H_m+H_{m-1}).
    \end{align*}
This completes the proof.
\end{proof}

We further compute $\dot\delta_k(-1)$. By $\dot{P}_{k,i}(-1,0)$ we mean $\left.\frac{dP_{k,i}(T,0)}{dT}\right|_{T=-1}$.
\begin{lemm}\label{lemma-delta-derivative}
Assume that $r'\geq 2$.
\begin{enumerate}[leftmargin=*]
\item If $1\leq k< r'/2+1$, then we have
    \begin{align*}
        \dot{\delta}_k(-1)
        =
        2\frac{\dot{P}_{k,0}(-1,0)}{P_{k,0}(-1,0)}+(r'-1)
        =
        2\frac{\dot{P}_{k-1,k-1}(-1,0)}{P_{k-1,k-1}(-1,0)}+(r'-1).
    \end{align*}
\item If $1\leq k< r'/2$, then we have
    \begin{multline*}
        \frac{\dot{\delta}_{k+1}(-1)}{r'(2k+1)-2k(k+1)} - \frac{\dot{\delta}_k(-1)}{r'(2k-1)-2k(k-1)}\\
        = -\frac{1}{r'(2k+1)-2k(k+1)} +\frac{1}{r'(2k-1)-2k(k-1)}.
    \end{multline*}
\item If $1\leq k <r'/2+1$, then we have
    \begin{align*}
        \dot{\delta}_k(-1)=r'(2k-1)-2k(k-1)-1.
    \end{align*}
\item  If $x\in E$ with $v_p(x)<0$ then $\delta_{m+1}(c,x)\in S'_\cO$ and $$\dot{\delta}_{r'-m}(-1)\equiv\dot{\delta}_{m+1}(-1,x)\pmod{\fm}.$$

\end{enumerate}
\end{lemm}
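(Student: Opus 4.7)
The plan is to establish (i) as a Wronskian-type polynomial identity, then deduce (ii), (iii), and (iv) from (i) together with earlier lemmas on $\delta_k$.

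For (i), the starting point is the Pad\'e approximation property (Lemma~\ref{lemma: PQFad'e}(i) specialized at $x=0$), namely
$$P_{k,0}(T,0)\log(1+T) - Q_{k,0}(T,0) \in T^r\Z_{(p)}[[T]].$$
Differentiating this relation, multiplying by $(1+T)P_{k,0}(T,0)$, and substituting $P_{k,0}\log(1+T)\equiv Q_{k,0}\pmod{T^r}$ back in shows that the polynomial
$$W(T) := (1+T)\bigl[\dot P_{k,0}(T,0)Q_{k,0}(T,0) - P_{k,0}(T,0)\dot Q_{k,0}(T,0)\bigr] + P_{k,0}(T,0)^2$$
is divisible by $T^{r-1}$. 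Since $\deg_T W(T)\leq r-1$ by Lemma~\ref{lemma: PQdegree}, we must have $W(T) = cT^{r-1}$, and substituting $T=-1$ forces $c=(-1)^{r-1}P_{k,0}(-1,0)^2$. Differentiating this polynomial identity once more and evaluating at $T=-1$ kills the $(1+T)$-factor coming from the product rule, yielding
$$\dot P_{k,0}(-1,0)Q_{k,0}(-1,0) - P_{k,0}(-1,0)\dot Q_{k,0}(-1,0) + 2P_{k,0}(-1,0)\dot P_{k,0}(-1,0) = -(r-1)P_{k,0}(-1,0)^2.$$
Dividing by $P_{k,0}(-1,0)^2$ and using $\dot\delta_k(T) = [P_{k,0}(T,0)\dot Q_{k,0}(T,0) - Q_{k,0}(T,0)\dot P_{k,0}(T,0)]/P_{k,0}(T,0)^2$ gives the first equality of (i); the second follows at once from $P_{k-1,k-1}(T,0) = P_{k,0}(T,0)/a_k$ (Lemma~\ref{lemma: PQFad'e}(iii)).

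For (ii), combine Lemma~\ref{lemma-delta-denom-1}(iii) with Lemma~\ref{lemma: PQFad'e}(iii) to write $\delta_{k+1}(T) - \delta_k(T) = -a_{k+1}T^r/[P_{k+1,0}(T,0)\,P_{k,0}(T,0)]$. Differentiate, evaluate at $T=-1$, apply (i) to both $k$ and $k+1$ to convert the logarithmic derivatives $\dot P_{\bullet,0}(-1,0)/P_{\bullet,0}(-1,0)$ into expressions in $\dot\delta_{\bullet}(-1)$, and use the product $P_{k,k}(-1,0)P_{k,0}(-1,0) = (-1)^{r+1}k(r-k)/(2k-r)$ read off Lemma~\ref{lemma-delta}(i). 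After simplification this produces
$$[r(2k-1)-2k(k-1)]\,(\dot\delta_{k+1}(-1)+1) = [r(2k+1)-2k(k+1)]\,(\dot\delta_k(-1)+1),$$
which rearranges to the stated identity. Then (iii) follows by induction: the recursion telescopes to $\dot\delta_k(-1) + 1 = r(2k-1)-2k(k-1)$, with base case $\dot\delta_1(-1) = r-1$ obtained directly from $\delta_1(T) = \sum_{n=1}^{r-1}(-1)^{n-1}T^n/n$ (Lemma~\ref{lemma: k=0,k=r}).

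For (iv), split on the parity of $r$. If $r = 2m+1$ then $r-m = m+1$, and the discussion following Definition~\ref{defi: definition of delta} gives $\delta_{m+1}(T,x)=\delta_{m+1}(T)$, so the stated congruence is an equality, while integrality of $\delta_{m+1}(c,x) \in S'_\cO$ follows from $P_{m+1,0}(c,0) = a_{m+1}P_{m,m}(c,0) \in (S'_\cO)^\times$ (Lemma~\ref{lemma-delta-denom-1}(ii) together with Lemma~\ref{lemma: PQFad'e}(iii)). If $r = 2m$ then $r-m = m$, and Lemma~\ref{lemma-delta}(iii) gives $\delta_{m+1}(T,x) - \delta_m(T) \equiv T^r/[xP_{m,0}(T,0)^2] \pmod{x^{-1}\fm}$ for $v_p(x)<0$; differentiating and evaluating at $T=-1$ shows $\dot\delta_{m+1}(-1,x) - \dot\delta_m(-1) \in \fm$, establishing the congruence, while integrality of $\delta_{m+1}(c,x)$ is obtained by factoring $-xP_{m,0}(c,0)$ out of the denominator $P_{m,m}(c,0) - xP_{m,0}(c,0)$, which leaves a unit in $S'_\cO$ by Lemma~\ref{lemma-delta-denom-1}(i). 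The main obstacle will be the Wronskian identity underlying (i): recognizing that the specific combination $(1+T)[\dot P Q - P\dot Q] + P^2$ collapses to a scalar multiple of $T^{r-1}$ requires careful degree bookkeeping together with the Pad\'e approximation relation, but once this identity is in hand all remaining parts reduce to direct manipulations with the already-proved identities of this section.
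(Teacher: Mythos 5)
Your argument for parts (i)--(iii) matches the paper's proof essentially step for step. For (i), the Wronskian-style polynomial $W(T) = (1+T)[\dot P Q - P\dot Q] + P^2$ (with $P = P_{k,0}(T,0)$, $Q = Q_{k,0}(T,0)$) being forced by degree bounds and the Pad\'e relation to equal $(-1)^{r-1}P(-1)^2\,T^{r-1}$ is precisely the paper's computation; the only cosmetic difference is that you differentiate the scalar identity $W(T) = cT^{r-1}$ once more and evaluate at $T=-1$, whereas the paper instead divides out the $(1+T)$ factor to get a closed form for $\dot\delta_k(T)$ and then takes the limit at $T=-1$. These are equivalent. For (ii), differentiating $\delta_{k+1}-\delta_k = -T^r/[P_{k,0}P_{k,k}]$, plugging $T=-1$, substituting the logarithmic derivatives via (i), and using $(-1)^r/[P_{k,0}(-1,0)P_{k,k}(-1,0)] = (r-2k)/[k(r-k)]$ from Lemma~\ref{lemma-delta}(i) is exactly the paper's route; your intermediate identity $b_k(\dot\delta_{k+1}(-1)+1)=b_{k+1}(\dot\delta_k(-1)+1)$ is what the paper reaches as well. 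Your telescoping for (iii), including the direct computation of the base case $\dot\delta_1(-1)=r-1$ from the explicit formula for $\delta_1$, is also the paper's argument (the paper derives the base case from (i) rather than by direct differentiation, but both are trivial).

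For (iv) you take a slightly different route than the paper. The paper gives a uniform argument for both parities using the leading-in-$x$ congruence $P_{m+1,0}(T,x)\equiv x^{-1}P_{r-m-1,r-m-1}(T,0)\pmod{x^{-1}\fm}$ coming from Lemma~\ref{lemma: m+1 to r}, together with the Pad\'e relation, to derive both integrality and the derivative congruence simultaneously. You instead split on the parity of $r$: for $r=2m+1$ you observe $\delta_{m+1}(T,x)=\delta_{m+1}(T)$ and $r-m=m+1$ so the congruence is a tautology, and for $r=2m$ you differentiate the congruence in Lemma~\ref{lemma-delta}(iii). This works and is arguably more elementary. Two small points you will need to make explicit when filling in details: (a) for the $r=2m+1$ integrality, you also need $Q_{m+1,0}(c,0)\in S'_\cO$, which follows from $Q_{m+1,0}(T,0)=a_{m+1}Q_{m,m}(T,0)$ with $Q_{m,m}(T,0)\in\Z_{(p)}[T]$ by Lemma~\ref{lemma: B_k,0 k leq m}; and (b) for the $r=2m$ integrality, factoring $-xP_{m,0}(c,0)$ out of only the denominator leaves an apparent $-x$ term that must also be absorbed — you should instead use the closed form $\delta_{m+1}(T,x)=\dfrac{Q_{m,m}(T,0)-xQ_{m,0}(T,0)}{P_{m,m}(T,0)-xP_{m,0}(T,0)}$ recorded right after Definition~\ref{defi: definition of delta}, which shows the $-x$ from the definition already cancels, before dividing numerator and denominator by $-x$. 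With those details supplied, your plan is complete.
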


\begin{proof}
(i) It suffices to prove the equation for $P_{k,0}(-1,0)$ by Lemma~\ref{lemma: PQFad'e}. For brevity, let $P(T)=P_{k,0}(T,0)$ and $Q(T)=Q_{k,0}(T,0)$. The condition
$Q(T)\equiv P(T)\log(1+T) \pmod{T^{r'}}$ in Lemma~\ref{lemma: PQdegree} yields
    \begin{align*}
    \dot{P}(T)&Q(T)-P(T)\dot{Q}(T)
        \\
        &\equiv
        \dot{P}(T)P(T)\log(1+T)
        -P(T)\bigg(\dot{P}(T)\log(1+T)+P(T)\frac{1}{1+T}\bigg)
        \\
        &\equiv
        -\frac{P(T)^2}{1+T}
        \pmod{T^{r'-1}},
    \end{align*}
    and so
    \begin{align*}
        (1+T)\big(\dot{P}(T)Q(T)-P(T)\dot{Q}(T)\big)
        \equiv
        -P(T)^2
        \pmod{T^{r'-1}}.
    \end{align*}
Noting that $\deg P=k-1$ and $\deg Q=r'-k$, we have
    \begin{align*}
       & \deg (1+T)\big(\dot{P}(T)Q(T)-P(T)\dot{Q}(T)\big)\leq r'-1;\\
       & \deg P(T)^2=2(k-1)\leq r'-1,
    \end{align*}
so we may write
    \begin{align*}
        (1+T)\big(\dot{P}(T)Q(T)-P(T)\dot{Q}(T)\big)
        =
        AT^{r'-1}-P(T)^2
    \end{align*}
    for some suitable constant $A$. Plugging $T=-1$ in, one has $A=(-1)^{r'-1}P(-1)^2$, and thus
    \begin{align*}
    \dot{\delta}_k(T)
        =
        -\frac{\dot{P}(T)Q(T)-P(T)\dot{Q}(T)}{P(T)^2}
        =
        \frac{P(T)^2-(-1)^{r'-1}P(-1)^2T^{r'-1}}{(1+T)P(T)^2}.
    \end{align*}
Hence, we have
    \begin{align*}
    \dot{\delta}_k(-1)
        &=
        \frac{1}{P(-1)^2}\bigg(
        \bigg.
        \frac{P(T)^2-(-1)^{r'-1}P(-1)^2T^{r'-1}}{1+T}
        \bigg|_{T=-1}
        \bigg)
        \\
        &=
        \frac{1}{P(-1)^2}\bigg(
        \bigg.\frac{d}{dT}\bigg|_{T=-1}
        \big(P(T)^2-(-1)^{r'-1}P(-1)^2T^{r'-1}\big)
        \bigg)
        \\
        &=
        \frac{1}{P(-1)^2}\big(
        2P(-1)\dot{P}(-1)+(r'-1)P(-1)^2
        \big)
        \\
        &=
        2\frac{\dot{P}(-1)}{P(-1)}+(r'-1),
    \end{align*}
    as desired.

(ii) By Lemma~\ref{lemma-delta-denom-1}~(iii) together with Lemma~\ref{lemma-PQ-k leq m}~(i), we have
\begin{equation}\label{eq: delta_k+1-delta_k}
\delta_{k+1}(T)-\delta_k(T)=-\frac{T^{r'}}{P_{k,0}(T,0)P_{k,k}(T,0)}.
\end{equation}
Then by a direct computation we have
    \begin{align*}
    &\dot{\delta}_{k+1}(T)-\dot{\delta}_k(T)\\
    &=
        \frac{d}{dT}\bigg(
        -\frac{T^{r'}}{P_{k,0}(T,0)P_{k,k}(T,0)}
        \bigg)
        \\
        &=
        -\frac{r'T^{r'-1}}{P_{k,0}(T,0)P_{k,k}(T,0)}
        +
        \frac{T^{r'}}{P_{k,0}(T,0)P_{k,k}(T,0)}\bigg(
        \frac{\dot{P}_{k,k}(T,0)}{P_{k,k}(T,0)}
        +\frac{\dot{P}_{k,0}(T,0)}{P_{k,0}(T,0)}
        \bigg).
    \end{align*}
Now, by (i) we have
    \begin{align*}
    &\dot{\delta}_{k+1}(-1)-\dot{\delta}_k(-1)
        \\
        &=
        \frac{(-1)^{r'}}{P_{k,0}(-1,0)P_{k,k}(-1,0)}
        \bigg(
        r'
        +\frac{1}{2}\big(\dot{\delta}_{k+1}(-1)-(r'-1)\big)
        +\frac{1}{2}\big(\dot{\delta}_k(-1)-(r'-1)\big)
        \bigg)
        \\
        &=
        \frac{(-1)^{r'}}{2P_{k,0}(-1,0)P_{k,k}(-1,0)}
        \big(
        \dot{\delta}_{k+1}(-1)+\dot{\delta}_k(-1)+2
        \big).
    \end{align*}
By Lemma~\ref{lemma-delta}~(i) together with \eqref{eq: delta_k+1-delta_k}, we have
    \begin{align*}
        \frac{(-1)^{r'}}{P_{k,0}(-1,0)P_{k,k}(-1,0)}
        =\frac{r'-2k}{k(r'-k)},
    \end{align*}
and so we get
    \begin{align*}
        \frac{r'(2k-1)-2k(k-1)}{2k(r'-k)}\dot{\delta}_{k+1}(-1)
        -\frac{r'(2k+1)-2k(k+1)}{2k(r'-k)}\dot{\delta}_k(-1)
        =\frac{r'-2k}{k(r'-k)}.
    \end{align*}
Noting
    \begin{align*}
        \big(r'(2k+1)-2k(k+1)\big)-\big(r'(2k-1)-2k(k-1)\big)
        =2(r'-2k)
    \end{align*}
completes the proof.

(iii) For brevity, we set $b_k=r'(2k-1)-2k(k-1)$. By (ii) we have
    \begin{align*}
        \frac{\dot{\delta}_{k+1}(-1)}{b_{k+1}}-\frac{\dot{\delta}_k(-1)}{b_k}
        =
        -\frac{1}{b_{k+1}}+\frac{1}{b_k}
    \end{align*}
for $1\leq k<r'/2$. Now, noting that $b_1=r'$ and $\delta'_1(-1)=r'-1$ by (i), we have $\frac{\delta'_1(-1)}{b_1}=1-\frac{1}{b_1}$. Hence, for $1\leq k<r'/2+1$, we have
    \begin{align*}
        \frac{\dot{\delta}_k(-1)}{b_k}
        =1-\frac{1}{b_1}+\sum_{n=1}^{k-1}\bigg(-\frac{1}{b_{n+1}}+\frac{1}{b_n}\bigg)
        =1-\frac{1}{b_k},
    \end{align*}
which gives the desired result.

(iv) By Lemma~\ref{lemma: m+1 to r}, we have
\begin{equation}\label{eq: congruence for delta_m+1}
\tilde{P}_{m+1,0}(T,x)\equiv x^{-1}P_{r'-m-1,r'-m-1}(T,0)\pmod{x^{-1}\fm}
\end{equation} as $v_p(x)<0$, and so we have
$$\delta_{m+1}(T,x)=\frac{\tilde{Q}_{m+1,0}(T,x)}{\tilde{P}_{m+1,0}(T,x)}-x\equiv \frac{Q_{m+1,0,t-1}(T)}{P_{r'-m-1,r'-m-1}(T,0)}\pmod{\fm}.$$
Hence, we have $\delta_{m+1}(c,x)\in S'_\cO$ by Lemma~\ref{lemma-delta-denom-1}~(ii).

By definition together with Lemma~\ref{lemma: PQdegree}, it is clear that $$[P_{m+1,0}(T,x)Tf_{r'}(T)]_T^{(r'-1)}-P_{m+1,0}(T,x)\delta_{m+1}(T,x)=0.$$ On the other hand, by \eqref{eq: congruence for delta_m+1} we have
$$[P_{r'-m-1,r'-m-1}(T,0)Tf_{r'}(T)]_T^{(r'-1)}-P_{r'-m-1,r'-m-1}(T,0)\delta_{m+1}(T,x)\equiv 0\pmod{\fm},$$
which immediately gives rise to
\begin{equation}\label{eq: congruence for delta_m+1, 2}
P_{r'-m-1,r'-m-1}(T-1,0)\big(\delta_{r'-m}(T-1)-\delta_{m+1}(T-1,x)\big)\equiv 0\pmod{\fm}
\end{equation}
by Lemma~\ref{lemma-delta-denom-1}~(ii). In particular, we have $\delta_{r'-m}(-1)\equiv\delta_{m+1}(-1,x)\pmod{\fm}$. Moreover, by differentiating \eqref{eq: congruence for delta_m+1, 2}, we conclude the desired result.
\end{proof}

We close this subsection by computing the following: for $r'=2m+1$, define
\begin{equation*}
b_{r',m+2}(T)
    :=\frac{[P_{m+2,0,1}(T)Tf_{r'}(T)]_{T}^{(r'-1)}-P_{m+2,0,1}(T)\delta_m(T)}{P_{m-1,m-1}(T)},
\end{equation*}
which is the same as
\begin{equation*}
    b_{r',m+2}(T)=-\frac{[P_{m+1,m+1,1}(T)Tf_{r'}(T)]_{T}^{(r'-1)}-P_{m+1,m+1,1}(T)\delta_m(T)}{P_{m,0}(T,0)}
\end{equation*}
by \eqref{eq: P_k,0,s and P_k-1,k-1,s}, \eqref{eq: a_r-k+1}, and Lemma~\ref{lemma: PQFad'e} (iii).
Note that $b_{2m+1,m+2}$ will appear in Proposition~\ref{prop-cM-Case-m+1} and Proposition~\ref{prop-cM-Case-m+2-r-1}.

\begin{lemm}\label{lemma-b-2m+1-m+2}
Let $r'=2m+1\geq 3$. Then the coefficient of $T^m$ in $Q_{m,0}(T,0)$ is $-m(m+1)$, i.e.
    \begin{align*}
        Q_{m,0}(T,0)=T^{m+1}-m(m+1)T^m+[Q_{m,0}(T,0)]_{T}^{(m-1)}.
    \end{align*}
Moreover, we have
    \begin{align*}
        b_{r',m+2}(T)
        =
        \frac{T^{r'+1}-2m(m+1)T^{r'}}{P_{m,0}(T,0)^2}.
    \end{align*}
    In particular, $b_{r',m+2}(-1)=-\frac{2m^2+2m+1}{m^2(m+1)^2}$.
\end{lemm}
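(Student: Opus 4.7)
The lemma has two assertions. For the coefficient claim in the first part, my strategy is to specialize Lemma~\ref{lemma-delta-denom-1}~(iii) at $k=m$ and $x=0$, obtaining
\[
P_{m,m}(T,0)\,Q_{m,0}(T,0) - P_{m,0}(T,0)\,Q_{m,m}(T,0) \;=\; T^{2m+1}.
\]
By Lemma~\ref{lemma: PQdegree}, $P_{m,m}(T,0)$ is monic of degree $m$ and $Q_{m,0}(T,0)$ is monic of degree $m+1$, while $\deg\bigl(P_{m,0}(T,0)Q_{m,m}(T,0)\bigr) \leq (m-1)+m = 2m-1$. Comparing the coefficient of $T^{2m}$ on both sides therefore forces
\[
[T^m]\,Q_{m,0}(T,0) \;=\; -\,[T^{m-1}]\,P_{m,m}(T,0),
\]
so it remains to show $[T^{m-1}]P_{m,m}(T,0) = m(m+1)$.

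To prove this, I would exploit that $P_{m,m}(T,0)$ is (up to a scalar) the denominator of the classical $[m/m]$-Pad\'e approximant of $\log(1+T)$, characterized by the orthogonality $P_{m,m}(T,0)\log(1+T)\equiv Q_{m,m}(T,0)\pmod{T^{2m+1}}$ coming from Lemma~\ref{lemma: PQdegree}. The closed form
\[
P_{m,m}(T,0) \;=\; \binom{2m}{m}\sum_{k=0}^{m}\frac{\binom{m}{k}^{2}}{\binom{2m}{k}}\,T^{k},
\]
which one verifies by induction on $m$ (checking the $m$ Pad\'e vanishing conditions), yields $[T^{m-1}]P_{m,m}(T,0) = \binom{2m}{m}\cdot\binom{m}{m-1}^{2}/\binom{2m}{m-1} = m(m+1)$, as desired.

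For the formula for $b_{r,m+2}(T)$, the plan is to first apply Lemma~\ref{lemma: a_k m+1 leq k}~(iii) with $k=m+2$, $r=2m+1$, giving $P_{m+2,0,1}(T) = -\tfrac{1}{a_m}\,P_{m+1,m+1,1}(T)$. Since $\delta_m(T) = Q_{m,0}(T,0)/P_{m,0}(T,0)$, multiplying $b_{r,m+2}(T)$ by $a_m\,P_{m,0}(T,0)$ converts it into a polynomial expression. Using the Pad\'e-type identity $P_{m,0}(T,0)\,Tf_r(T) - Q_{m,0}(T,0) = T^{r}H(T)$ with $\deg H \leq m-2$, together with the truncation $[P_{m+1,m+1,1}(T)\,Tf_r(T)]_T^{(r-1)} = P_{m+1,m+1,1}(T)\,Tf_r(T) - T^{r}R(T)$ for some polynomial $R(T)$ of degree at most $m$, a direct computation gives
\[
a_m\,P_{m,0}(T,0)\,b_{r,m+2}(T) \;=\; T^{r}\bigl(P_{m+1,m+1,1}(T)\,H(T) - P_{m,0}(T,0)\,R(T)\bigr).
\]
The right-hand side is a polynomial divisible by $T^{r}$. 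To identify the cofactor as $T - 2m(m+1)$, I would compare the two leading degrees: the leading coefficient is pinned down by the leading coefficient $-1/a_m$ of $P_{m+2,0,1}(T)$ (Lemma~\ref{lemma: a_k m+1 leq k}~(iii)), and the subleading coefficient $-2m(m+1)$ is forced by Part~1 of this lemma (the $-m(m+1)$ coefficient of $T^{m}$ in $Q_{m,0}(T,0)$) combined with the expansion $\tilde P_{m+1,m+1}(T,x) = P_{m+1,m+1,0}(T) + x\,P_{m+1,m+1,1}(T) - x^{2}P_{m,0}(T,0)$ from Lemma~\ref{lemma: m+1 to r}. All higher degree coefficients on the right then vanish by degree considerations.

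The main obstacle is establishing the closed form for $P_{m,m}(T,0)$ in Part~1, which requires a careful induction to verify the Pad\'e vanishing conditions; once Part~1 is in hand, Part~2 is essentially a matter of bookkeeping the two top coefficients of the polynomial $a_m P_{m,0}(T,0) b_{r,m+2}(T)/T^{r}$.
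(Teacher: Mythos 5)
Your proof of the first assertion is correct but takes a different route from the paper. The paper computes $[T^m]Q_{m,0}(T,0)$ directly via Cram\'er's rule and a sequence of determinant manipulations, whereas you invoke Lemma~\ref{lemma-delta-denom-1}~(iii) at $k=m$, $x=0$ (the identity $P_{m,m}Q_{m,0}-P_{m,0}Q_{m,m}=T^r$), compare the $T^{2m}$ coefficients to reduce to computing $[T^{m-1}]P_{m,m}(T,0)$, and then quote a classical closed form for the diagonal Pad\'e denominator of $\log(1+T)$. This is a legitimate alternative, though you leave the verification of the closed form (and with it the key identity $[T^{m-1}]P_{m,m}(T,0)=m(m+1)$) as a claim to be established ``by induction,'' which is a nontrivial combinatorial step.

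The proof of the second assertion has a genuine gap. You correctly reduce, using Lemma~\ref{lemma: a_k m+1 leq k}~(iii), to the identity
\[
a_m\,P_{m,0}(T,0)\,b_{r,m+2}(T)\;=\;T^{r}\bigl(P_{m,0}(T,0)R(T)-P_{m+1,m+1,1}(T)H(T)\bigr),
\]
with your stated degree bounds $\deg H\leq m-2$ and $\deg R\leq m$. But these naive bounds only give $\deg\bigl(P_{m,0}R-P_{m+1,m+1,1}H\bigr)\leq 2m-1$, whereas the lemma requires this cofactor to be the degree-$1$ polynomial $T-2m(m+1)$. Pinning down the top two coefficients (which you do) does not show that the coefficients in degrees $2$ through $2m-1$ vanish, and there is no ``degree consideration'' that forces this: for $m\geq 4$ even the degree of the underlying polynomial $-P_{m,0}(T,0)[P_{m+1,m+1,1}(T)Tf_r(T)]_T^{(r-1)}+P_{m+1,m+1,1}(T)Q_{m,0}(T,0)$ is bounded a priori only by $3m-1 > r+1$. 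The missing ingredient is the sharper bound $\deg\,[P_{m+1,m+1,1}(T)Tf_r(T)]_T^{(r-1)}\leq m+1$, far below the naive $r-1=2m$; this follows from the $x$-coefficient expansion of the Pad\'e orthogonality, which is precisely what the paper's proof supplies by establishing $[P_{m+2,0,1}(T)Tf_r(T)]_T^{(r-1)}=Q_{m+2,0,1}(T)-P_{m+2,0,0}(T)$ and then invoking the degree bounds on $Q_{m+2,0,s}$ and $P_{m+2,0,s}$. The paper packages this further by rewriting everything as the tail of $Q_{m-1,m-1}(T,0)^2$, which automatically has degree $r+1$. Without one of these structural inputs, your argument cannot conclude, so the second half of the proof is incomplete as written. (You also have a sign error in the displayed expression for $a_m P_{m,0}(T,0)b_{r,m+2}(T)$, but that is minor.)
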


\begin{proof}
We start to prove the first part of the lemma. From $Q_{m,0}(T,0)
        =
        T^{m+1}
        +
        \begin{bmatrix}
          T^m & T^{m-1} & \cdots & 1
        \end{bmatrix}
        D_{m,0}B_{m,0}^{-1}\mathbf{e}'_m$,
the coefficient of $T^m$ is given by
    \begin{align*}
        \begin{bmatrix}
          1 & 0 & \cdots & 0
        \end{bmatrix}
        D_{m,0}B_{m,0}^{-1}\mathbf{e}'_m
        =
        \begin{bmatrix}
          1 & -\frac{1}{2} & \cdots & \frac{(-1)^{m-1}}{m}
        \end{bmatrix}
        B_{m,0}^{-1}\mathbf{e}'_m.
    \end{align*}
Using Cram\'er's rule as in the proof of Lemma~\ref{lemma-delta-denom-1}~(i), this is equal to
$\frac{1}{\det(B_{m,0})}\det(\tilde{M}')$
where
    \begin{align*}
     \tilde{M}'=
        \begin{bmatrix}
          \frac{(-1)^{m}}{m+1} & \frac{(-1)^{m+1}}{m+2} & \cdots & \frac{(-1)^{r'-3}}{r'-2} & \frac{(-1)^{r'-2}}{r'-1} \\
          \frac{(-1)^{m-1}}{m} & \frac{(-1)^{m}}{m+1} & \cdots & \frac{(-1)^{r'-4}}{r'-3} & \frac{(-1)^{r'-3}}{r'-2} \\
          \vdots & \vdots & \ddots & \vdots & \vdots \\
          \frac{1}{3} & -\frac{1}{4} & \cdots & \frac{(-1)^m}{m+1} & \frac{(-1)^{m+1}}{m+2} \\
          1 & -\frac{1}{2} & \cdots & \frac{(-1)^{m-2}}{m-1} & \frac{(-1)^{m-1}}{m}
        \end{bmatrix}.
    \end{align*}

Let $M$, $M'$ be as in the proof of Lemma~\ref{lemma-delta-denom-1} (i) with $r'=2m+1$ and $k=m$, and let $\tilde M$ be the matrix given by
    \begin{align*}
        \tilde M
        =
        \begin{bmatrix}
          \frac{1}{m+1} & \frac{1}{m+2} & \cdots & \frac{1}{r'-2} & \frac{1}{r'-1} \\
          \frac{1}{m} & \frac{1}{m+1} & \cdots & \frac{1}{r'-3} & \frac{1}{r'-2} \\
          \vdots & \vdots & \ddots & \vdots & \vdots \\
          \frac{1}{3} & \frac{1}{4} & \cdots & \frac{1}{m+1} & \frac{1}{m+2} \\
          1 & \frac{1}{2} & \cdots & \frac{1}{m-1} & \frac{1}{m}
        \end{bmatrix}.
    \end{align*}
Then we have
    \begin{align*}
        \tilde{M}'=
        \mathrm{Diag}((-1)^m,\cdots,(-1)^2, 1)
        \cdot
        \tilde M
        \cdot
        \mathrm{Diag}(1,\cdots,(-1)^{m-2},(-1)^{m-1}),
    \end{align*}
    and we already know that
    \begin{align*}
        B_{m,0}
        =
        \mathrm{Diag}((-1)^m,\cdots,(-1)^2, (-1))
        \cdot
        M
        \cdot
        \mathrm{Diag}(1,\cdots,(-1)^{m-2},(-1)^{m-1}).
    \end{align*}
Hence, the coefficient of $T^m$ now becomes $-\frac{\det \tilde M}{\det M}$.

We now compute $\det \tilde M$: subtracting the $m$-th row from each row gives rise to
    \begin{align*}
       \det\tilde M= \begin{vmatrix}
          \frac{-m}{m+1} & \frac{-m}{(m+2)2} & \cdots
           & \frac{-m}{(r'-2)(m-1)} & \frac{-m}{(r'-1)m} \\
          \frac{-(m-1)}{m} & \frac{-(m-1)}{(m+1)2} & \cdots
           & \frac{-(m-1)}{(r'-3)(m-1)} & \frac{-(k-2)}{(r'-2)m} \\
          \vdots & \vdots & \ddots & \vdots & \vdots \\
          \frac{-2}{3} & \frac{-2}{4\cdot 2} & \cdots
           & \frac{-2}{(m+1)(m-1)} & \frac{-2}{(m+2)m} \\
          1 & \frac{1}{2} & \cdots & \frac{1}{m-1} & \frac{1}{m}
        \end{vmatrix}.
    \end{align*}
Factoring out $-(m-i+1)$ from each $i$-th row and $\frac{1}{j}$ from each $j$-th column, we get
    \begin{align*}
    \det \tilde M
        &=
        \frac{(-1)^{m-1} m!}{m!}
        \begin{vmatrix}
          \frac{1}{m+1} & \frac{1}{m+2} & \cdots & \frac{1}{r'-2} & \frac{1}{r'-1} \\
          \frac{1}{m} & \frac{1}{m+1} & \cdots & \frac{1}{r'-3} & \frac{1}{r'-2} \\
          \vdots & \vdots & \ddots & \vdots & \vdots \\
          \frac{1}{3} & \frac{1}{4} & \cdots & \frac{1}{m+1} & \frac{1}{m+2} \\
          1 & 1 & \cdots & 1 & 1
        \end{vmatrix}
        =
        (-1)^{m-1}\det M'
    \end{align*}

    On the other hand, we know from the proof of Lemma~\ref{lemma-delta-denom-1}~(i) that
    \begin{align*}
        \det M
        =\frac{(-1)^{m-1}(r'-2m)!(m-1)!}{(r'-m)!}\det M'
        =\frac{(-1)^{m-1}}{m(m+1)}\det M'.
    \end{align*}
Combining these two gives
    \begin{align*}
        -\frac{\det\tilde M}{\det M}
        =-(-1)^{m-1}\frac{\det M'}{\det M}
        =-m(m+1).
    \end{align*}

We now treat the second part. Note first that
    \begin{align}\label{eq: P_m-1,m-1 b_r,m+2}
    \begin{split}
    &P_{m-1,m-1}(T,0)^2b_{r',m+2}(T)\\
        &=
        P_{m-1,m-1}(T,0)[P_{m+2,0,1}(T)Tf_{r'}(T)]_{T}^{(r'-1)}
        -Q_{m-1,m-1}(T,0)P_{m+2,0,1}(T).
    \end{split}
    \end{align}
It is easy to see from Lemma~\ref{lemma: PQdegree} that we further have
    \begin{multline*}
    P_{m-1,m-1}(T,0)^2b_{r',m+2}(T)\equiv
        P_{m-1,m-1}(T,0)P_{m+2,0,1}(T)\log(1+T)\\
         -P_{m-1,m-1}(T,0)\log(1+T)P_{m+2,0,1}(T)\pmod{T^{r'}},
    \end{multline*}
and so we conclude that
\begin{equation}\label{eq: P_m-1,m-1 b_r,m+2 = 0}
[P_{m-1,m-1}(T,0)^2b_{r',m+2}(T)]_{T}^{(r'-1)}=0.
\end{equation}

We now claim that
 $$P_{m+2,0,1}(T)=Q_{m+2,0,2}(T)-Q_{m-1,m-1}(T,0).$$
Indeed, looking at the coefficients of $x^2$ in $\tilde Q_{m+2,0}(T,x)=\big[\tilde P_{m+2,0}(T,x)(x+Tf_{r'}(T))\big]_{T}^{(r'-1)}$ from Lemma~\ref{lemma: PQdegree}~(i), it easy to see that
$$Q_{m+2,0,2}(T)=[P_{m+2,0,2}(T)Tf_{r'}(T)]_T^{(r'-1)}+P_{m+2,0,1}(T).$$
By Lemma~\ref{lemma: m+1 to r}, $P_{m+2,0,2}(T)=P_{m-1,m-1}(T,0)$, and so we have $$[P_{m+2,0,2}(T)Tf_{r'}(T)]_T^{(r'-1)}=Q_{m-1,m-1}(T,0)$$ by Lemma~\ref{lemma: PQdegree}~(i), which completes the proof of the claim.

By the same argument as the claim above, we have $$[P_{m+2,0,1}(T)Tf_{r'}(T)]_{T}^{(r'-1)}=Q_{m+2,0,1}(T)-P_{m+2,0,0}(T),$$ which together with the claim immediately implies that from \eqref{eq: P_m-1,m-1 b_r,m+2} we have
\begin{multline}\label{eq: P_m-1,m-1 b_r,m+2 second}
    P_{m-1,m-1}(T,0)^2b_{r',m+2}(T)=
        P_{m-1,m-1}(T,0)\big(Q_{m+2,0,1}(T)-P_{m+2,0,0}(T)\big)  \\
        -Q_{m-1,m-1}(T,0)\big(Q_{m+2,0,2}(T)-Q_{m-1,m-1}(T,0)\big).
\end{multline}

On the other hand, it is easy to see from Lemma~\ref{lemma: PQdegree} that each term on the right hand side of \eqref{eq: P_m-1,m-1 b_r,m+2 second}, except the term $Q_{m-1,m-1}(T,0)^2$, has degree less than $r'$, so that we have the following identity
\begin{multline*}
    [P_{m-1,m-1}(T,0)^2b_{r',m+2}(T)]_{T}^{(r'-1)}= P_{m-1,m-1}(T,0)\big(Q_{m+2,0,1}(T)-P_{m+2,0,0}(T)\big)\\
     -Q_{m-1,m-1}(T,0)Q_{m+2,0,2}(T) +[Q_{m-1,m-1}(T,0)^2]_{T}^{(r'-1)},
\end{multline*}
which together with \eqref{eq: P_m-1,m-1 b_r,m+2 = 0} gives rise to
\begin{multline*}
P_{m-1,m-1}(T,0)\big(Q_{m+2,0,1}(T)-P_{m+2,0,0}(T)\big)
     -Q_{m-1,m-1}(T,0)Q_{m+2,0,2}(T)\\=-[Q_{m-1,m-1}(T,0)^2]_{T}^{(r'-1)}.
\end{multline*}

Hence, from \eqref{eq: P_m-1,m-1 b_r,m+2 second} we may write
\begin{align*}
    P_{m-1,m-1}(T,0)^2b_{r',m+2}(T)=Q_{m-1,m-1}(T,0)^2-[Q_{m-1,m-1}(T,0)^2]_{T}^{(r'-1)},
\end{align*}
which is the same as
    \begin{align*}
    P_{m-1,m-1}(T,0)^2b_{r',m+2}(T)=\frac{1}{a_m^2}\big(
        Q_{m,0}(T,0)^2-[Q_{m,0}(T,0)^2]_{T}^{(r'-1)}
        \big)
    \end{align*}
by Lemma~\ref{lemma: PQFad'e}~(iii). Finally, by (i) we have
    \begin{align*}
        b_{r',m+2}(T)
        =\frac{T^{r'+1}-2m(m+1)T^{r'}}{a_m^2P_{m-1,m-1}(T,0)^2}
        =\frac{T^{r'+1}-2m(m+1)T^{r'}}{P_{m,0}(T,0)^2},
    \end{align*}
as desired. Evaluating $b_{r',m+2}(T)$ at $T=-1$, we have $b_{r',m+2}(-1)=-\frac{2m^2+2m+1}{m^2(m+1)^2}$ because $P_{m,0}(-1,0)=-m(m+1)$ by Lemma~\ref{lemma-delta-denom-1} (i).
\end{proof}

\smallskip

\section{Pseudo-strongly divisible modules}\label{sec: pseudo strongly divisible modules}
In this section, we introduce the pseudo-strongly divisible modules, which are the main objects to be studied in this paper, and deduce the necessary conditions to become an isotypic component of the strongly divisible modules. 

We keep the notation of \S\ref{sec: Frame works}, but we specialize $x$ in $E$. In particular, we recall from \S\ref{subsec: linear expressions of B and C} the polynomials $P_{k,i}(T,x)$ and $Q_{k,i}(T,x)$ in $\mathbf{Case}~(k)$ for $0\leq k\leq r'$.

\subsection{Definitions and Summaries}\label{subsec: pseudo sdm}
In this subsection, we define the pseudo-strongly divisible modules of rank $2$ and introduce various terminology that will be used throughout the paper. Moreover, we also introduce the statement of the main results on pseudo-strongly divisible modules. Throughout this subsection, we fix a pair of integers $(r,r')$ with $0<r'\leq r<p-1$. We also define an $\cO$-submodule of $E((T+1))$
\begin{equation*}
    E(T)_\Delta:=E(T)\cap(E+\cO[[T+1]]).
\end{equation*}

Let $\cD':=S'_E(E'_1,E'_2)$ be the free $S'_E$-module of rank $2$ whose basis is denoted by $E_1',E_2'$, and let $k$ be an integer with $0\leq k\leq r'$ and $(x,\Theta)\in E\times E^{\times}$. For all $0\leq i\leq r'-1$, we define certain elements in $\cD'$
\begin{equation}\label{eq: definition of F'_k,i}
    F'_{k,i}:=
    \begin{cases}
        p^{i+r'-k}\Theta P_{k,i}\Big(\dfrac{v}{p},x\Big)E'_1
        +p^{i+r'-k}Q_{k,i}\Big(\dfrac{v}{p},x\Big)E'_2
        & \mbox{if } 0\leq i\leq k-1; \\
        p^i P_{k,i}\Big(\dfrac{v}{p},x\Big)E'_1
        +\dfrac{p^i}{\Theta} Q_{k,i}\Big(\dfrac{v}{p},x\Big)E'_2
        & \mbox{if } k\leq i\leq r'-1.
    \end{cases}
\end{equation}
In particular, if $r'=1$ then $F'_{0,0}=E'_1+\frac{x}{\Theta}E'_2$ and $F'_{1,0}=\frac{\Theta}{x}E'_1+E'_2$.

\begin{defi}\label{defi: pseudo-sdm}
Fix $k\in ([0,r']\cap\Z)\cup\{\infty\}$. For given $(\Lambda,\Theta,\Omega,x)\in (E^\times)^3\times E$ and $\Delta(T),\tilde{\Delta}(T)\in  E(T)_\Delta$, a \emph{pseudo-strongly divisible module in $\mathbf{Case}~(k)$} consists of quadruple $$(\fM',\,\Fil^{r;r'}_{k}\fM',\,\phi',\,N')$$
where
\begin{itemize}[leftmargin=*]
\item $\fM':=S'_\cO(E'_1,E'_2)$ is the $S'_\cO$-submodule of $\cD'$ generated by $E_1',E_2'$,
\item $\Fil^{r;r'}_{k}\fM'$ is an $S'_\cO$-submodule of $\cD'$ defined by $$\Fil^{r;r'}_{k}\fM':=[\Fil^{r:r'}_{k}\fM']^{(r-1)}+\Fil^r S'_\cO\cdot \fM'$$ where $[\Fil^{r:r'}_{k}\fM']^{(r-1)}$ is a $S'_\cO$-submodule of $\cD'$ generated by 
\begin{equation}\label{eq: definition of FilfM'}
\left\{
  \begin{array}{ll}
    \{v^{r-r'}F'_{k,i}\mid 0\leq i\leq r'-1\} & \hbox{if $k\in[0,r']\cap\Z$;} \\
    \{v^{r-r'}E'_2\} & \hbox{if $k=\infty$,}
  \end{array}
\right.
\end{equation}

\item $\phi':\fM'\rightarrow\cD'$ is a continuous $\phi$-semilinear map induced by
\begin{align}\label{eq: definition of phi'}
\begin{split}
\phi'(E'_1) &:=p\Lambda E'_1 -\frac{\Lambda}{\Omega}\big(x+\Delta(\gamma-1)\big)E'_2 +\frac{\Lambda}{p\Omega}\big(\tilde\Delta(\varphi(\gamma)-1)-\tilde\Delta(-1)\big)E'_2;\\
\phi'(E'_2) &:=\frac{\Lambda\Theta}{\Omega} E'_2,
\end{split}
\end{align}
\item $N':\fM'\rightarrow \cD'$ is a continuous $\cO$-linear derivation defined by
\begin{align}\label{eq: definition of N'}
\begin{split}
N'(E'_1)&:=\frac{1}{\Theta}\bigg(1+\frac{N(\gamma)}{p}\dot{\tilde\Delta}(\gamma-1)\bigg)E'_2;\\
N'(E'_2)&:=0.
\end{split}
\end{align}
\end{itemize}
\end{defi}
We write $\phi'_r:=\frac{1}{p^r}\phi':\Fil^{r;r'}_k\fM'\rightarrow \cD'$.

\begin{rema}
\begin{itemize}[leftmargin=*]
\item We point out that once $r,r',k$ are fixed, the pseudo-strongly divisible modules in $\mathbf{Case}~(k)$ are parameterized by $(\Lambda,\Theta,\Omega,x)\in (E^{\times})^3\times E$.
\item If $0\leq k\leq r'$ then the choice of the generators in the filtration $\Fil^{r;r'}_k\fM'$ in $\mathbf{Case}~(k)$ is induced from the choice of the generators for $\mathbb{M}$ in $\mathbf{Case}~(k)$ (see \eqref{eq: definition of cases}), as we will see in \S\ref{subsec: set up}. Hence, it is harmless to use the notation $\mathbf{Case}~(k)$ for both of them.
\item The choice of the generators in the filtration $\Fil^{r;r'}_k\fM'$ in $\mathbf{Case}~(k)$ for $k=\infty$ is induced from the case $\fL_j=\infty$, as we will see in \S\ref{subsec: set up}.
\end{itemize}
\end{rema}

We wish that a pseudo-strongly divisible module is an isotypic component of the strongly divisible modules, and so it is quite natural to require that the images of $\phi'$ and $N'$ are contained in $\fM'$.

\begin{lemm}\label{lemm: pseudo, condition for phi,N stable}
\begin{enumerate}[leftmargin=*]
\item $\phi'(\fM')$ and $N'(\fM')$ are contained in $\fM'$ if and only if 
\begin{equation}\label{eq: condition for phi,N stable}
\begin{cases}
v_p(\Lambda)\geq -1;\\
v_p(\Lambda)+v_p(x+\Delta(-1))\geq v_p(\Omega);\\
v_p(\Lambda)+v_p(\Theta)\geq v_p(\Omega);\\
v_p(\Theta)\leq 0.
\end{cases}
\end{equation}
\item $N'(E_1')\in \fm\fM'$ if and only if $v_p(\Theta)<0$.
\end{enumerate}
\end{lemm}

\begin{proof}
The proof is straightforward from the definition, as we have
\begin{equation}\label{eq: p^p-1 divides phi(gamma)}
p^{p-1}\mid \big(\tilde\Delta(\phi(\gamma)-1)-\tilde\Delta(-1)\big)
\end{equation}
together with $\Delta(T),\tilde{\Delta}(T)\in  E(T)_\Delta$.
\end{proof}

We will also determine the precise conditions under which $\Fil^{r;r'}_k\fM'$ becomes a $S'_\cO$-submodule of $\fM'$, which is the necessary property to become an isotypic component of strongly divisible modules. 
\begin{theo}\label{theo: filtration of pseudo}
Let $\fM'$ be a pseudo-strongly divisible module in $\mathbf{Case}~(k)$ for $k\in ([0,r']\cap\Z)\cup\{\infty\}$. Assume that $v_p(\Theta)\leq 0$, and further that $v_p(x)<0$ for $m+1\leq k\leq r'-1$. Then $\Fil_k^{r;r'}\fM'$ is a $S'_\cO$-submodule of $\fM'$ if and only if the inequalities
\begin{equation*} 
\begin{cases}
                v_p(\Theta)\leq v_p(x),\,\, v_p(\Theta)\leq -r'+1  & \mbox{if }k=0;\\
                v_p(\Theta)\leq v_p(x),\,\,-r'+2k-1\leq v_p(\Theta)\leq -r'+2k+1
 & \mbox{if } 1\leq k< \frac{r'}{2};\\
                v_p(\Theta)-2v_p(x)\leq 1,\,\,-1\leq v_p(\Theta) &\mbox{if } r'=2m=2k;\\
                v_p(x)\leq v_p(\Theta),\,\, 2k-r'-1\leq v_p(\Theta)-2v_p(x)\leq 2k-r'+1
 &\mbox{if }m+1\leq k\leq r'-1;\\
                v_p(x)\leq v_p(\Theta),\,\, v_p(\Theta)-2v_p(x)\geq r'-1 &\mbox{if }k=r';\\
                \mbox{none} &\mbox{if }k=\infty
\end{cases}
\end{equation*}
hold, in which case we have 
$$\Fil^{r;r'}_{k}\fM=S'_{\cO}(G'_1,G'_2)+\Fil^pS'_{\cO}\fM$$
where
\begin{equation}\label{eq: filtration when r' leq r}
    (G'_1,G'_2):=
    \begin{cases}
        \big(v^{r-r'}F'_{0,0},\, v^{r}E'_2\big)  & \mbox{if }k=0; \\
        \big(v^{r-r'}F'_{k,k},\, v^{r-r'}F'_{k,0}\big) & \mbox{if }0< k<r'; \\
        \big(v^{r}E'_1,\, v^{r-r'}F'_{r',0}\big)  & \mbox{if }k=r'; \\
        \big(v^{r}E'_1,\, v^{r-r'}E'_2\big)  & \mbox{if }k=\infty.
    \end{cases}
\end{equation}
\end{theo}

We prove Theorem~\ref{theo: filtration of pseudo} in \S\ref{subsec: Generators of the filtration} for the cases $r'>1$ and $k\neq\infty$ and in \S\ref{subsec: exceptional cases} for the cases $r'=1$ or $k=\infty$. The framework we developed in \S\ref{sec: Frame works} plays key roles. We also note that the condition $v_p(x)<0$ for $m+1\leq k\leq r'$ is the only artificial assumption we made in this paper, while it is harmless to assume $v_p(\Theta)\leq 0$ by Lemma~\ref{lemm: pseudo, condition for phi,N stable}.

We will further check the strong divisibility of $\fM'$. More precisely, it is necessary for $\phi'$ to satisfy
\begin{equation}\label{eq: strong divisibility, local}
\fM'=S'_\cO\left(\phi'_r(G'_1),\phi'_r(G'_2)\right)
\end{equation}
for each $\mathbf{Case}~(k)$. We will consider the following two possibilities $[\phi'_r(G'_1)]_{E'_1}\in(S_\cO')^\times$ or $[\phi'_r(G'_2)]_{E'_1}\in(S_\cO')^\times$, as it will turn out to be no overlaps between these two possibilities. 

\begin{defi}\label{defi: definition of Case phi k}
Let $k\in ([0,r']\cap\Z)$. We say that a pseudo-strongly divisible module $\fM'$ in $\mathbf{Case}~(k)$ satisfies \emph{the strong divisibility of type} $\mathbf{Case}_\phi(k)$ (resp. \emph{of type} $\mathbf{Case}_\phi(k+\tfrac{1}{2}))$
if it satisfies the strong divisibility \eqref{eq: strong divisibility, local} together with
\begin{equation*}
[\phi_r'(G'_2)]_{E'_1}\in(S_\cO')^\times\quad \big(\mbox{resp.}\,\,\, [\phi_r'(G'_1)]_{E'_1}\in(S_\cO')^\times\big).
\end{equation*}
If $k=\infty$, then we say that a pseudo-strongly divisible module $\fM'$ in $\mathbf{Case}~(\infty)$ satisfies \emph{the strong divisibility of type} $\mathbf{Case}_\phi(\infty)$ if it satisfies the strong divisibility \eqref{eq: strong divisibility, local}.
\end{defi}

Note that $\mathbf{Case}_\phi(0)$ is empty as $[\phi'(E'_2)]_{E'_1}=0$, and that if  a pseudo-strongly divisible module $\fM'$ satisfies the strong divisibility of type $\mathbf{Case}_\phi(\infty)$ then we have $[\phi_r'(G'_1)]_{E'_1}\in(S_\cO')^\times$, as $[\phi'(E'_2)]_{E'_1}=0$. We set $$\cK_{r'}:=\left\{k'\in\tfrac{1}{2}\Z\mid \tfrac{1}{2}\leq k'\leq r'+\tfrac{1}{2}\right\}\sqcup\{\infty\}.$$ By convention, we set $\lfloor \infty\rfloor:=\infty$, and if a pseudo-strongly divisible module $\fM'$ satisfies the strongly divisibility of type $\mathbf{Case}_\phi(k')$ for some $k'\in \cK_{r'}$ then the filtration of $\fM'$ is automatically determined by $k'$, i.e., it has the filtration $\Fil^{r;r'}_{k}\fM'$ where $k:=\lfloor k'\rfloor$. 

In Lemma~\ref{lemm: pseudo, condition for phi,N stable}, we have seen that $\tilde{\Delta}(T)$ is not relevant to $\phi'(\fM')\subseteq \fM'$, which implies that $\tilde{\Delta}(T)$ is not relevant to the strong divisibility in $\mathbf{Case}_\phi(r'+\tfrac{1}{2})$ and in $\mathbf{Case}_\phi(\infty)$. Moreover, it turns out that $\tilde{\Delta}(T)$ is not relevant to the strong divisibility in the other cases either.
\begin{lemm}\label{lemma formula for phi_r(F_kk) and phi_r(F_k0)}
Assume that $v_p(\Theta\Omega)\leq 0$, and further that if $p=3$ then $\tilde{\Delta}(T)$ is a constant in $E$. If $0\leq k\leq r'-1$ then we have
\begin{multline}\label{eq: formula for phi_r(F_k,k)}
\frac{1}{c^{r-r'}}\phi'_r(v^{r-r'}F'_{k,k})=\frac{\Lambda }{p^{r-k-1}}P_{k,k}(c,x)E'_1\\ 
+\frac{\Lambda}{p^{r-k+1}\Theta\Omega}P_{k,k}(c,x)\big(
    \tilde\Delta(\phi(\gamma)-1)-\tilde\Delta(-1)
    \big)E'_2 \\
    +\frac{\Lambda}{p^{r-k}\Omega}\big([P_{k,k}(c,x)cf(c)]_c^{(r-1)}-P_{k,k}(c,x)\Delta(\gamma-1)\big)E'_2,
\end{multline}
and if $1\leq k\leq r'$ then we have
\begin{multline}\label{eq: formula for phi_r(F_k,0)}
\frac{1}{c^{r-r'}}\phi'_r(v^{r-r'}F'_{k,0})=\frac{ \Lambda \Theta}{p^{k-1}}P_{k,0}(c,x)E'_1\\ 
+\frac{\Lambda}{p^{k+1}\Omega}P_{k,0}(c,x)\big(
    \tilde\Delta(\phi(\gamma)-1)-\tilde\Delta(-1) \big)E'_2 \\
    +\frac{\Lambda\Theta}{p^k\Omega} \big([P_{k,0}(c,x)cf(c)]_c^{(r-1)}-P_{k,0}(c,x)\Delta(\gamma-1)\big)E'_2.
\end{multline}
In particular, if $[\phi'_r(v^{r-r'}F'_{k,k})]_{E'_1}\in S_\cO'$ $($resp. if $[\phi'_r(v^{r-r'}F'_{k,0})]_{E'_1}\in S_\cO')$ then $\tilde{\Delta}(T)$ is not relevant to the strong divisibility.
\end{lemm}

Note that it is harmless to assume $v_p(\Omega)\leq 0$ and $v_p(\Theta)\leq0$, as we will see in \S\ref{subsec: set up}. We also note that the assumption on $\tilde{\Delta}$ is harmless either, as we will choose such $\tilde{\Delta}$ when $p=3$.

\begin{proof}
From the definition of $F'_{k,i}$ in \eqref{eq: definition of F'_k,i} together with \eqref{eq: definition of phi'} and \eqref{eq: definition of N'}, it is tedious to compute $\phi'_r(v^{r-r'}F'_{k,k})$ and $\phi'_r(v^{r-r'}F'_{k,0})$, and so we omit the detail.

For $0\leq k\leq r-1$, the coefficient of the second term of $\phi'_r(v^{r-r'}F'_{k,k})$ can be written as
\begin{equation}\label{eq: phi N stable equality}
    [\phi'_r(v^{r-r'}F'_{k,k})]_{E'_1}\cdot \frac{1}{p^2\Theta\Omega}\big(
    \tilde\Delta(\phi(\gamma)-1)-\tilde\Delta(-1)
    \big)
\end{equation}
and so it belongs to $p^{p-3} S_\cO'$ if $[\phi'_r(v^{r-r'}F'_{k,k})]_{E'_1}\in S_\cO'$, by the assumption \eqref{eq: condition for phi,N stable} together with the fact \eqref{eq: p^p-1 divides phi(gamma)}. Hence, $\tilde{\Delta}$ is not relevant to the strong divisibility if $p> 3$. If $p=3$ then the quantity \eqref{eq: phi N stable equality} vanishes by the hypothesis. Similarly, if $1\leq k\leq r$, then the coefficient of the second term of $\phi'_r(v^{r-r'}F'_{k,0})$ can be written as in \eqref{eq: phi N stable equality}, and so by the same argument we conclude that $\tilde{\Delta}$ is not relevant to the strong divisibility.
\end{proof}

Since $\tilde{\Delta}(T)$ is not relevant to the strong divisibility, the following definition makes sense.
\begin{defi}
Let $k'\in\cK_{r'}$ and $k:=\lfloor k'\rfloor$, and let $\fM'$ be a pseudo-strongly divisible module in $\mathbf{Case}~(k)$. For given quadruple $(\Lambda,\Theta,\Omega,x)\in (E^\times)^3\times E$ and $\Delta(T)\in E(T)_\Delta$ (possibly depending on $x$), we say that the quadruple $(\Lambda,\Theta,\Omega,x)$ is \emph{good for $\mathbf{Case}_\phi(k')$ with respect to $\Delta(T)$} if $\fM'$ satisfies the following:
\begin{itemize}[leftmargin=*]
\item $\phi'(\fM')\subseteq \fM'$ and $N'(\fM')\subseteq\fM'$, i.e., the inequalities in \eqref{eq: condition for phi,N stable};
\item $\Fil^{r;r'}_k\fM'\subseteq\fM'$, i.e., $F'_{k,i}\in \fM'$ for all $0\leq i\leq r'-1$;
\item the strong divisibility of type $\mathbf{Case}_\phi(k')$.
\end{itemize}
\end{defi}

Recall that $\delta_{l}(T,x)$ for $0\leq l\leq \lfloor\tfrac{r'}{2}\rfloor+1$ are defined in Definition~\ref{defi: definition of delta}. Note that $\Delta(T)$ of $\mathbf{Case}_\phi(k')$ will be chosen as follows: if we set
\begin{equation}\label{eq: definition of l(r;k')}
l(r';k'):=
\begin{cases}
\lceil k'\rceil & \mbox{if }\frac{1}{2}\leq k'\leq m+1;\\
r'+1-\lceil k'\rceil & \mbox{if }m+1< k'\leq r'+\frac{1}{2};\\
0 & \mbox{if }k'=\infty,
\end{cases}
\end{equation}
then we have 
\begin{equation}\label{eq: Choice of delta}
\Delta(T):=\delta_{l(r';k')}(T,x)
\end{equation} 
in each $\mathbf{Case}_\phi(k')$. We recall that $\delta_{l}(T,x)=\delta_l(T)$ unless either $r'=2m$ and $l=m+1$ or $l=0$, as we pointed out right after Definition~\ref{defi: definition of delta}.

\begin{rema}
The choice of $\Delta(T)$ in \eqref{eq: Choice of delta} is set to assure $[\phi_r(v^{r-r'}G'_i)]_{E'_2}\in S'_{\cO}$ for $i=1$ or $2$ depending on $\mathbf{Case}_\phi(k')$. More precisely, if $k'=k\in\Z$ then $\Delta(T)$ is chosen to have $$[P_{k,0}(c,x)cf(c)]_c^{(r-1)}-\Delta(\gamma-1)P_{k,0}(c,x)\in P_{k,0}(c,x)\fm S'_{\cO}$$ from \eqref{eq: formula for phi_r(F_k,0)}, while if $k'=k+\frac{1}{2}\in \frac{1}{2}+\Z$ then $\Delta(T)$ is chosen to have $$[P_{k,k}(c,x)cf(c)]_c^{(r-1)}-\Delta(\gamma-1)P_{k,k}(c,x)\in P_{k,k}(c,x)\fm S'_{\cO}$$ from \eqref{eq: formula for phi_r(F_k,k)}. We believe that the choice of $\Delta$ is essentially unique!
\end{rema}

The main results in this section is to find out the precise conditions on $(\Lambda,\Theta,\Omega,x)$ under which the quadruple is good for $\mathbf{Case}_\phi(k')$ with respect to $\Delta(T)$ determined in \eqref{eq: Choice of delta} for each $k'\in\cK_{r'}$. The cases $r'>1$ and $k'\neq\infty$ are treated in \S\ref{subsec: Strong divisibility} while the cases $r'=1$ or $k'=\infty$ are in \S\ref{subsec: exceptional cases}. Those conditions are summarized in following table, Table~\ref{tab:my_label}. Recall that we write $k$ for $\lfloor k'\rfloor$ for $k'\in\cK_{r'}$ with convention $\lfloor\infty\rfloor=\infty$.

\begin{defi}\label{defi: equation and inequalities, local}
By the \emph{equations and inequalities of $\mathbf{Case}_\phi(k')$} (resp. by \emph{$\Delta$ of $\mathbf{Case}_\phi(k')$}), we mean those on $(\Lambda,\Theta,\Omega,x)\in (E^\times)^3\times E$ in Table~\ref{tab:my_label} (resp. those defined in \eqref{eq: Choice of delta} or equivalently in Table~\ref{tab:my_label}).
\end{defi}
Note that we often write $\mathbf{Case}_\phi(r';k')$ for $\mathbf{Case}_\phi(k')$ to emphasize $r'$.

\begin{tiny}
\begin{table}[htbp]
    \centering
    \begin{tabular}{|c||c|c|c|c|}
      \hline
       $k'$
        & Eq. of $\mathbf{Case}_\phi(k')$ & Inequalities of $\mathbf{Case}_\phi(k')$ & $\Delta$ of $\mathbf{Case}_\phi(k')$ \\\hline\hline
       $k'=\frac{1}{2}$
        &
        \begin{tabular}{@{}c@{}}
         $v_p(\Lambda)=r'-1$, \\ $v_p(\Theta\Omega^{-1})=-r'+1$
        \end{tabular}
        &
        \begin{tabular}{@{}c@{}}
         $v_p(\Theta)\leq v_p(x)$, \\ $v_p(\Omega)\leq 0$
        \end{tabular}
        &
        \begin{tabular}{@{}c@{}}
         $\delta_1(T)$
        \end{tabular}
        \\\hline
        $\frac{1}{2}< k'=k+\frac{1}{2}\leq \frac{r'}{2}$
        &
        \begin{tabular}{@{}c@{}}
         $v_p(\Lambda)=r'-k-1$, \\ $v_p(\Theta\Omega^{-1})=-r'+2k'$
        \end{tabular}
        &
        \begin{tabular}{@{}c@{}}
         $v_p(\Theta)\leq v_p(x)$, \\ $-1\leq v_p(\Omega)\leq 0$
        \end{tabular}
        &
        \begin{tabular}{@{}c@{}}
         $\delta_{k+1}(T)$
        \end{tabular}
        \\\hline
        $\frac{r'+1}{2}\leq k'=k+\frac{1}{2}<r' $
        &
        \begin{tabular}{@{}c@{}}
         $v_p(\Lambda x)=r'-k-1$, \\ $v_p(\Theta\Omega^{-1} x^{-2})=-r'+2k'$
        \end{tabular}
        &
        \begin{tabular}{@{}c@{}}
         $0> v_p(x)\leq v_p(\Theta)\leq 0$, \\ $-1\leq v_p(\Omega)\leq 0$
        \end{tabular}
        &
        \begin{tabular}{@{}c@{}}
          $\delta_{m+1}(T,x)\mbox{ if }r'=2k$,\\ $\delta_{r-k}(T)\mbox{ otherwise}$
        \end{tabular}

        \\\hline
        $k'=r'+\frac{1}{2} $
        &
        \begin{tabular}{@{}c@{}}
         $v_p(\Lambda)=-1$, \\ $v_p(\Theta\Omega^{-1})=r'+1$
        \end{tabular}
        &
        \begin{tabular}{@{}c@{}}
         $v_p(x)+r'\leq v_p(\Theta)\leq 0$
        \end{tabular}
        &
        $\delta_0(T,x)$

        \\\hline
       $\frac{1}{2}< k'=k\leq \frac{r'}{2}$
        &
        \begin{tabular}{@{}c@{}}
         $v_p(\Lambda \Theta)=k-1$, \\ $v_p(\Theta\Omega)=-r'+2k'-1$
        \end{tabular}
        &
        \begin{tabular}{@{}c@{}}
         $v_p(\Theta)\leq v_p(x)$, \\ $-1\leq v_p(\Omega)\leq 0$
        \end{tabular}
        &
        \begin{tabular}{@{}c@{}}
         $\delta_k(T)$
        \end{tabular}
        \\\hline
       $\frac{r'+1}{2}\leq k'=k <r'$
        &
        \begin{tabular}{@{}c@{}}
         $v_p(\Lambda \Theta x^{-1})=k-1$, \\ $v_p(\Theta\Omega x^{-2})=-r'+2k'-1$
        \end{tabular}
        &
        \begin{tabular}{@{}c@{}}
         $0> v_p(x)\leq v_p(\Theta)\leq 0$, \\ $-1\leq v_p(\Omega)\leq 0$
        \end{tabular}
        &
        \begin{tabular}{@{}c@{}}
         $\delta_{k}(T,x)\mbox{ if }k=m+1$,\\ $\delta_{r+1-k}(T)\mbox{ otherwise}$
        \end{tabular}
        \\\hline
        $k'=r'$
        &
        \begin{tabular}{@{}c@{}}
         $v_p(\Lambda \Theta x^{-1})=r'-1$, \\ $v_p(\Theta\Omega x^{-2})=r'-1$
        \end{tabular}
        &
        \begin{tabular}{@{}c@{}}
         $v_p(x)\leq v_p(\Theta)\leq v_p(x)+r'$, \\ $v_p(\Omega)\leq 0$, $v_p(\Theta)\leq 0$
        \end{tabular}
        &
        \begin{tabular}{@{}c@{}}
         $\delta_2(T,x)\mbox{ if }r'=2$, \\$\delta_1(T)\mbox{ if }r'\neq2$
        \end{tabular}
        \\\hline
        $k'=\infty$
        &
        \begin{tabular}{@{}c@{}} $v_p(\Lambda)=-1$,\\ $v_p(\Theta\Omega^{-1})=r'+1$
        \end{tabular}
        &
        \begin{tabular}{@{}c@{}}
        $v_p(\Theta)\leq 0$
        \end{tabular}
        &
        \begin{tabular}{@{}c@{}}
         $\delta_{0}(T,x)$
        \end{tabular}
        \\ \hline
    \end{tabular}
    \caption{Equations, Inequalities, and $\Delta$ of $\mathbf{Case}_\phi(k')$}
    \label{tab:my_label}
\end{table}
\end{tiny}

We point out that for each $k'\in\cK_{r'}$ the equations and inequalities of $\mathbf{Case}_\phi(k')$ always imply that $v_p(\Theta)\leq 0$ and $v_p(\Omega)\leq 0$, and that if $k'=m+\frac{1}{2}$ and $r'=2m$ then the equations and inequalities in Proposition~\ref{prop-cM-Case-m} imply $-1\leq v_p(\Omega)$, which is added to Table~\ref{tab:my_label} for a uniform expression. From the second column of Table~\ref{tab:my_label} we immediately get
\begin{equation}\label{eq: Lambda Omega Theta = r'-1}
v_p(\Lambda^2\Theta\Omega^{-1})=r'-1.
\end{equation}
This phenomenon is novel and we didn't expect it. Moreover, we also point out that the equations and inequalities of $\mathbf{Case}_\phi(k')$ imply the inequalities in \eqref{eq: condition for phi,N stable} as well as the inequalities in Theorem~\ref{theo: filtration of pseudo}.

\begin{theo}\label{theo: strong divisibility of pseudo}
If the quadruple $(\Lambda,\Theta,\Omega,x)$ satisfies the equations and inequalities of $\mathbf{Case}_{\phi}(r';k')$ for some $k'\in\cK_{r'}$ then the quadruple is good for $\mathbf{Case}_\phi(k')$ with respect to $\Delta(T):=\delta_{l(r';k')}(T,x)$.
\end{theo}

We further compute the mod-$p$ reduction of $G'_1,G'_2$ as well as of $\phi'_r(G'_1),\phi'_r(G'_2)$ for each $k'\in\cK_{r'}$, which will be used to describe the corresponding Breuil modules, for the rest of this section. In fact, due to the simpler structures of the Breuil modules, it is more convenient to compute the mod-$p$ reduction of $F_1',F_2'$, where 
\begin{equation*}
    (F'_1,\, F'_2):=
    \begin{cases}
        \big(G_1'+\frac{x}{\Theta}G_2',\,G_2'  \big)  & \mbox{if }r'=2m\mbox{ and }k'=m; \\
        \big(G_1',\,\frac{\Theta}{x}G_1'+G_2'\big) & \mbox{if }r'=2m\mbox{ and }k'=m+\tfrac{1}{2}; \\
        \big(G_1',\,G_2'\big)  & \mbox{otherwise. }
    \end{cases}
\end{equation*}
This modification only arises in Proposition~\ref{prop-cM-Case-m}.

\subsection{Generators of the filtration}\label{subsec: Generators of the filtration}
In this subsection, we prove Theorem~\ref{theo: filtration of pseudo} when $r'\geq 2$ and $k'\neq\infty$. It is harmless to assume that $r=r'$, as we can recover the general case $r'\leq r$ by multiplying $v^{r-r'}$ to the generators. Assume that $r=r'\geq 2$, and set
    \begin{equation*}
        \Fil^r_{k}\fM':=\Fil^{r;r}_{k}\fM'
    \end{equation*}
for brevity.

We further define a $\barS'_\F$-module
    \begin{equation*}
        \cM':=\fM'/(\fm,\Fil^p S'_\cO)\fM',
    \end{equation*}
and write $\barE'_1$ and $\barE'_2$ for the image of $E'_1$ and $E'_2$, respectively, under the natural quotient map $\pi':\fM'\to\cM'$. We also set
$$\Fil^r_{k}\cM':=\pi'(\Fil^r_{k}\fM').$$

In this subsection, we shall investigate the conditions for $[F'_{k,i}]_{E'_j}$'s to be integral. Also, under such conditions, we shall compute their mod-$p$ reductions. Moreover, we will determine the $S'_\cO$-generators of $\Fil^r_{k}\fM'$ for each $\mathbf{Case}~(k)$.

\subsubsection{\textbf{In $\mathbf{Case}~(0)$}}
In this section of paragraph, we describe the $S'_\cO$-generators of $\Fil^r_{k}\fM'$ and the $\barS'_\F$-generators of $\Fil^r_{k}\cM'$ when $k=0$, i.e., in the case $\mathbf{Case}~(0)$.
\begin{prop}\label{prop: Fil-Case-0}
Assume $k=0$. Then $F'_{0,i}\in \fM'$ for all $0\leq i\leq r-1$ if and only if
\begin{equation}\label{eq: condition for k=0}
v_p(\Theta)\leq v_p(x)\quad\mbox{ and }\quad v_p(\Theta)\leq -r+1,
\end{equation}
in which case we have
    $$\Fil^r_{0}\fM'=S'_\cO (F'_{0,0},v^r E_2')+\Fil^p S'_\cO\fM'$$
    and
    $\Fil^r_{0}\cM'=\barS'_\F(\barF'_{0,0},u^r\barE'_2)$ with
    $$\barF'_{0,0}=\barE'_1 +\bigg(\frac{x}{\Theta}+\frac{1}{p^{r-1}\Theta}\frac{(-1)^{r-2}}{r-1}u^{r-1}\bigg)\barE'_2.$$
\end{prop}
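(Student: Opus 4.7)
The plan is to start by making explicit the candidate generators $F'_{0,i}$ from their definition in \eqref{eq: definition of F'_k,i}. By Lemma~\ref{lemma: k=0,k=r}, when $k=0$ we have $P_{0,i}(T,x)=T^i$ and
\[
Q_{0,i}(T,x)=[T^i(x+Tf_r(T))]_T^{(r-1)}=xT^i+\sum_{j=0}^{r-2-i}\frac{(-1)^j}{j+1}T^{i+j+1}.
\]
Substituting $T=v/p$ yields
\[
F'_{0,i}=v^i E'_1+\left(\frac{xv^i}{\Theta}+\sum_{j=0}^{r-2-i}\frac{(-1)^j v^{i+j+1}}{(j+1)p^{j+1}\Theta}\right)E'_2,
\]
so the $E'_1$-coefficient lies in $S'_\cO$ automatically. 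For the $E'_2$-coefficient, every $v$-exponent appearing is $<p$, so the term-by-term criterion $\alpha v^n\in S'_\cO\iff v_p(\alpha)\geq 0$ applies. The constant term forces $v_p(\Theta)\leq v_p(x)$, while the coefficient of $v^{i+j+1}$ forces $v_p(\Theta)\leq -(j+1)$. The strongest such constraint, achieved at $(i,j)=(0,r-2)$, is $v_p(\Theta)\leq -r+1$; thus the case $i=0$ alone yields the necessity of \eqref{eq: condition for k=0}, and \eqref{eq: condition for k=0} conversely implies the integrality of every $F'_{0,i}$.

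For the description of $\Fil^r_{x,\Theta}\fM'$, I would reduce the $\cO$-spanning set $\{F'_{0,i}\}_{i=0}^{r-1}$ of $[\Fil^r_{x,\Theta}\fM']^{(r-1)}$ modulo $\Fil^r S'_\cO\cdot\fM'=v^r S'_\cO\cdot\fM'+\Fil^p S'_\cO\cdot\fM'$. A direct subtraction gives
\[
F'_{0,i}-v^i F'_{0,0}=-\frac{1}{\Theta}\sum_{j=r-1-i}^{r-2}\frac{(-1)^j v^{i+j+1}}{(j+1)p^{j+1}}E'_2,
\]
whose summands all have $v$-exponent $n=i+j+1\geq r$. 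Splitting the sum according to $n<p$ or $n\geq p$ and applying the above valuation test in the first case, or the divided-power bound $v^n/n!\in\Fil^p S'_\cO$ in the second, each summand lies in $S'_\cO\cdot v^r E'_2$ or in $\Fil^p S'_\cO\cdot\fM'$ respectively (both uses rest on $v_p(\Theta)\leq -(r-1)\leq -(j+1)$). A similar calculation for $v^r F'_{0,0}$ shows that $v^r E'_1\in S'_\cO F'_{0,0}+S'_\cO v^r E'_2+\Fil^p S'_\cO\cdot\fM'$, which combined with the obvious membership of $F'_{0,0}$ and $v^r E'_2$ in $\Fil^r_{x,\Theta}\fM'$ yields the claimed $S'_\cO$-generation.

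Finally, to obtain the mod-$p$ formula for $\barF'_{0,0}$, I would project $F'_{0,0}=E'_1+\frac{x}{\Theta}E'_2+\frac{1}{\Theta}\sum_{j=0}^{r-2}\frac{(-1)^j v^{j+1}}{(j+1)p^{j+1}}E'_2$ to $\cM'$. In $\barS'_\F=\F[u]/u^p$ the image of $v$ is $u$, and the coefficient $\frac{1}{(j+1)p^{j+1}\Theta}$ has $p$-adic valuation $-(j+1)-v_p(\Theta)\geq 0$, vanishing precisely when $v_p(\Theta)=-(j+1)$. Under the hypothesis $v_p(\Theta)\leq -(r-1)$, the only $j\in\{0,\dots,r-2\}$ for which this valuation can vanish is $j=r-2$; for smaller $j$ the inequality is strict and the coefficient lies in $\fm$. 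This isolates the surviving contribution $\frac{(-1)^{r-2}u^{r-1}}{(r-1)p^{r-1}\Theta}\barE'_2$ and yields the stated formula for $\barF'_{0,0}$. The main technical obstacle I anticipate is the valuation bookkeeping in the second paragraph, in particular the transition between the regime $n<p$, where a term lands in $v^r S'_\cO\cdot\fM'$, and $n\geq p$, where the divided power $v^n/n!$ pushes it into $\Fil^p S'_\cO\cdot\fM'$.
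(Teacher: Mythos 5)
Your proposal is correct and follows the same strategy the paper uses: expand $F'_{0,i}$ explicitly via Lemma~\ref{lemma: k=0,k=r}, read off the integrality conditions \eqref{eq: condition for k=0} from the coefficients of the $E'_2$-component (with the $i=0$ term giving the binding constraint), and then show that $F'_{0,0}$ together with $v^rE'_2$ generate the filtration. The only real divergence is in the generation step. The paper first reduces modulo $\fm$, works entirely inside $\barS'_\F=\F[u]/u^p$ (where the identities $\barF'_{0,i}\equiv u^i\barF'_{0,0}-\tfrac{(-1)^{r-2}}{(r-1)p^{r-1}\Theta}u^{i+r-1}\barE'_2$ and $u^r\barE'_1\equiv u^r\barF'_{0,0}-(\cdots)u^r\barE'_2$ are cleaner because high powers of $u$ die automatically), and then lifts via Nakayama. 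You instead carry out the spanning argument directly over $S'_\cO$, which forces the extra case split at $n\geq p$ that you correctly absorb into $\Fil^pS'_\cO\cdot\fM'$ via the divided-power structure. Both are valid; the Nakayama route is somewhat lighter on valuation bookkeeping (no $n\geq p$ branching is needed, since $u^n=0$ in $\F[u]/u^p$), while your direct argument avoids invoking Nakayama and makes the $S'_\cO$-module structure fully explicit. The checks you flagged as the ``main technical obstacle'' are exactly the ones that need care and you handle them correctly.
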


\begin{proof}
If $k=0$, then by Lemma~\ref{lemma: k=0,k=r} we have
\begin{align*}
[F'_{0,i}]_{E'_1}=p^iP_{0,i}\Big(\frac{v}{p},x\Big)
    =v^i \,\,\mbox{ and }\,\,
[F'_{0,i}]_{E'_2}=\frac{p^i}{\Theta}Q_{0,i}\Big(\frac{v}{p},x\Big)
    =\frac{p^i}{\Theta}Q_{0,i}\Big(\dfrac{v}{p},0\Big)+\frac{x}{\Theta}v^i.
\end{align*}
Note that by Lemma~\ref{lemma: k=0,k=r} we have
\begin{itemize}[leftmargin=*]
  \item $Q_{0,r-1}(T,0)=0$;
  \item for $0\leq i\leq r-2$, $Q_{0,i}(T,0)=\sum_{n=0}^{r-2-i}\frac{(-1)^n}{n+1}T^{n+1+i}$.
\end{itemize}
Hence, we have
\begin{align*}
    \frac{p^i}{\Theta}Q_{0,i}\Big(\frac{v}{p},0\Big) =\frac{p^i}{\Theta}\sum_{n=0}^{r-2-i}\frac{(-1)^n}{n+1}\Big(\frac{v}{p}\Big)^{n+1+i}.
    \end{align*}
Thus, $[F'_{0,i}]_{E'_2}\in S'_\cO$ if and only if
\begin{align*}
    v_p(\Theta)\leq v_p(x)
    \qquad\mbox{and}\qquad
    v_p(\Theta)\leq -r+1,
\end{align*}
in which case, we have
\begin{align*}
F'_{0,0}
    &\equiv
    E'_1
    +\bigg(\frac{x}{\Theta}+\frac{1}{p^{r-1}\Theta}\frac{(-1)^{r-2}}{r-1}v^{r-1}\bigg)E'_2
    \pmod{\fm\fM'};
    \\
F'_{0,i}
    &\equiv
    v^iE'_1
    +\frac{x}{\Theta}v^iE'_2
    \pmod{\fm\fM'}\,\,\,\mbox{ if }0<i\leq r-1.
\end{align*}

We note that
\begin{align*}
F'_{0,i}
    &\equiv v^iF'_{0,0}-\frac{1}{p^{r-1}\Theta}\frac{(-1)^{r-2}}{r-1}v^{i+r-1}E'_2
    \pmod{\fm\fM'};\\
v^rE'_1
    &\equiv
     v^rF'_{0,0}-\bigg(
    \frac{x}{\Theta}+\frac{1}{p^{r-1}\Theta}\frac{(-1)^{r-2}}{r-1}v^{r-1}
    \bigg)v^{r}E'_2
    \pmod{\fm\fM'}.
\end{align*}
Hence, we deduce that $\Fil^r_{0}\cM'=S'_\F(\barF'_{0,0},u^r\barE'_2)$. Now, by Nakayama's Lemma,
\begin{align*}
    \Fil^r_{0}\fM'/\Fil^p S'_\cO\fM'  = S'_\cO/\Fil^p S'_\cO(F'_{0,0},v^rE'_2),
\end{align*}
and so we have
\begin{equation}\label{eq: Nakayama}
    \Fil^r_{0}\fM'
    =\SBr'(F'_{0,0},v^rE'_2)+\Fil^p\SBr'\fM',
\end{equation}
which completes the proof.
\end{proof}

\subsubsection{\textbf{In $\mathbf{Case}~(k)$ for $0< k<r/2$}}
In this section of paragraph, we describe the $S'_\cO$-generators of $\Fil^r_{k}\fM'$ and the $\barS'_\F$-generators of $\Fil^r_{k}\cM'$ in the case $\mathbf{Case}~(k)$ for $0< k<r/2$.

\begin{prop}\label{prop: Fil-Case-1-r/2}
Assume that $1\leq k<r/2$. Then $F'_{k,i}\in\fM'$ for all $0\leq i\leq r-1$ if and only if
\begin{equation}\label{eq: condition for 1 leq k <r/2}
-r+2k-1\leq v_p(\Theta)\leq -r+2k+1 \quad\mbox{ and }\quad v_p(\Theta)\leq v_p(x),
\end{equation}
in which case we have $$\Fil^r_{k}\fM'=S'_\cO(F'_{k,k},F'_{k,0})+\Fil^p S'_\cO\fM'$$ and $\Fil^r_{k}\cM'=\barS'_\F(\barF'_{k,k},\barF'_{k,0})$ with
    \begin{align*}
    \barF'_{k,k}&=u^k\barE'_1  +\bigg(\dfrac{1}{p^{r-2k-1}\Theta}\dfrac{1}{a_{k+1}}u^{r-k-1}+\dfrac{x}{\Theta}u^k\bigg)\barE'_2;    \\
    \barF'_{k,0}&=p^{r-2k+1}\Theta a_ku^{k-1}\barE'_1 +(u^{r-k}+p^{r-2k+1}xa_ku^{k-1})\barE'_2.
    \end{align*}
\end{prop}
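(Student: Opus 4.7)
The plan is to follow the same template as the proof of Proposition~\ref{prop: Fil-Case-0} (the $\mathbf{Case}~(0)$ analogue), working through integrality analysis, mod-$p$ reduction, and a Nakayama-type lifting. First I would exploit Lemma~\ref{lemma-PQ-k leq m} to split $P_{k,i}(T,x)$ and $Q_{k,i}(T,x)$ into their $x$-independent parts (which by Lemma~\ref{lemma: B_k,0 k leq m} have coefficients in $\Z_{(p)}$) and their linear-in-$x$ corrections. Substituting $T = v/p$ and reading off the coefficient of each $v^j$ in $[F'_{k,i}]_{E'_1}$ and $[F'_{k,i}]_{E'_2}$, the dominant constraints come from three sources: the top-degree term of $P_{k,0}$ (forcing $v_p(\Theta) \geq 2k-r-1$), the top-degree term of $Q_{k,k}$ (which, via Lemma~\ref{lemma: PQFad'e}\,(iii), equals $\tfrac{1}{a_{k+1}}T^{r-k-1}$ and forces $v_p(\Theta) \leq 2k-r+1$), and the mixed $xP$ contributions inside the $E'_2$-components (forcing $v_p(x)\geq v_p(\Theta)$). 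All other valuations are implied by these three; the biconditional in \eqref{eq: condition for 1 leq k <r/2} then follows.

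Once integrality is established, I would compute $\barF'_{k,0}$ and $\barF'_{k,k}$ explicitly. For $\barF'_{k,0}$, one uses $P_{k,0}(T,0) = a_k T^{k-1} + (\text{lower})$ with $a_k\in\Z_{(p)}^\times$ (Lemma~\ref{lemma: B_k,0 k leq m}\,(iii)) together with the fact that $Q_{k,0}(T,0)$ is monic of degree $r-k$ to check that every term below the leading one has positive $p$-adic valuation under the boundary condition $v_p(\Theta) = 2k-r-1$; all other cases give a cleaner (possibly vanishing) contribution. For $\barF'_{k,k}$, Lemma~\ref{lemma: PQFad'e}\,(iii) gives $P_{k,k}(T,0) = \tfrac{1}{a_{k+1}} P_{k+1,0}(T,0)$ (monic of degree $k$) and $Q_{k,k}(T,0) = \tfrac{1}{a_{k+1}} Q_{k+1,0}(T,0)$, from which the displayed expressions fall out.

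The main work is to verify $\Fil^r_{x,\Theta}\cM' = \barS'_\F(\barF'_{k,0},\barF'_{k,k})$. For each intermediate index, the same valuation bookkeeping as in Step~1 shows that the bulk of $F'_{k,i}$ reduces to a single monomial: one finds $\barF'_{k,i} = u^{i+r-k}\barE'_2$ for $1\leq i\leq k-1$, while for $k\leq i\leq r-1$ the $E'_1$-component is $u^i$-like and the $E'_2$-component can be matched using $u^{i-k}\barF'_{k,k}$ (for $k\leq i\leq r-k-1$) or using Lemma~\ref{lemma-PQ-k leq m}\,(ii) combined with $u^i\barF'_{k,0}$ and $u^{i-k}\barF'_{k,k}$ (for $r-k\leq i\leq r-1$); similarly $u^r\barE'_1$ and $u^r\barE'_2$ are obtained from $u^{r-k}\barF'_{k,k}$ and $u^k\barF'_{k,0}$, absorbing the junk terms into higher powers of $u$ which become negligible in $\barS'_\F = \F[u]/u^p$ (using $r<p-1$). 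Finally, applying Nakayama's Lemma to $\Fil^r_{x,\Theta}\fM'/\Fil^p S'_\cO \fM'$ as in the displayed argument~\eqref{eq: Nakayama} of Proposition~\ref{prop: Fil-Case-0} lifts the mod-$p$ generation to the claimed $S'_\cO$-generation.

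The hard step will be Step~3: the reductions $\barF'_{k,i}$ and the elements $u^r\barE'_j$ must be expressed as $\barS'_\F$-combinations of just $\barF'_{k,0}$ and $\barF'_{k,k}$, and because the formula for $\barF'_{k,0}$ contains the factor $p^{r-2k+1}\Theta$ (which is a unit precisely at the lower endpoint $v_p(\Theta)=2k-r-1$ and vanishes mod $p$ strictly above it), the required cancellations go through only after carefully tracking which "cross terms" survive in each valuation regime. This subcase bookkeeping — rather than any deep new input — is the technical heart of the argument.
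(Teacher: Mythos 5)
Your proposal is correct and follows essentially the same route as the paper: split the polynomials into $x$-independent and linear-in-$x$ parts via Lemma~\ref{lemma-PQ-k leq m}, read off the three binding valuation constraints from the top-degree terms of $P_{k,0}$, $Q_{k,k}$, and the mixed $xP$ contributions, compute the reductions $\barF'_{k,i}$, express the remaining generators and $u^r\barE'_j$ as $\barS'_\F$-combinations of $\barF'_{k,0}$ and $\barF'_{k,k}$, and finish with Nakayama. One small caveat to keep in mind when you execute Step~3: for $k+1\le i\le r-k-1$ the $E'_2$-part is \emph{not} absorbed by $u^{i-k}\barF'_{k,k}$ alone — you also need a $\frac{1}{p^{r-2k-1}\Theta a_{k+1}}u^{i-k-1}\barF'_{k,0}$ correction, which falls out of the key auxiliary identity $vF'_{k,0}-p^{r-2k+1}\Theta a_kF'_{k,k}\equiv v^{r-k+1}E'_2 \pmod{\fm\fM'}$ that organizes all the cancellations uniformly across the valuation regimes.
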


We prove this proposition by a series of lemmas, dividing the values of $i$ into three cases: $0\leq i\leq k-1$, $k\leq i\leq r-k-1$, and $r-k\leq i\leq r-1$.
\begin{lemm}\label{lemma 0 < k <r/2. 0 leq i leq k-1}
Keep the assumption of Proposition~\ref{prop: Fil-Case-1-r/2}. Then $F'_{k,i}\in\fM'$ for $0\leq i\leq k-1$ if and only if
\begin{align*}
    v_p(\Theta)\geq -r+2k-1
    \quad\mbox{ and }\quad
    v_p(x)\geq -r+2k-1,
\end{align*}
in which case we have
\begin{align*}
F'_{k,0}
    &\equiv
    p^{r-2k+1}\Theta a_kv^{k-1}E'_1
    +(v^{r-k}+p^{r-2k+1}xa_kv^{k-1})E'_2
    \pmod{\fm\fM'};
    \\
F'_{k,i}
    &\equiv
    v^{i+r-k}E'_2
    \pmod{\fm\fM'}\,\,\mbox{ if }0<i\leq k-1.
\end{align*}
\end{lemm}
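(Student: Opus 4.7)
The plan is to analyze the integrality and the mod-$\fm$ reduction of $F'_{k,i}$ for $0\leq i\leq k-1$ by tracking $p$-powers after expanding the polynomials $P_{k,i}(v/p,x)$ and $Q_{k,i}(v/p,x)$ in powers of $v$. First, since $1\leq k<r/2\leq m$, Lemma~\ref{lemma-PQ-k leq m} simplifies these to $P_{k,i}(T,x)=P_{k,i}(T,0)$ and $Q_{k,i}(T,x)=Q_{k,i}(T,0)+xP_{k,i}(T,0)$, both lying in $\Z_{(p)}[T]$ by Lemma~\ref{lemma: B_k,0 k leq m}. Combining with the degree bounds of Lemma~\ref{lemma: PQdegree}, I will write
\begin{equation*}
P_{k,i}(T,0)=\sum_{j=0}^{k-1} p_{i,j}T^j \quad\text{and}\quad Q_{k,i}(T,0)=T^{i+r-k}+\sum_{j=0}^{r-k-1} q_{i,j}T^j
\end{equation*}
with $p_{i,j},q_{i,j}\in\Z_{(p)}$; crucially, $p_{0,k-1}=a_k\in\Z_{(p)}^{\times}$ by Lemma~\ref{lemma: B_k,0 k leq m}~(iii).

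Substituting $T=v/p$ in the definition~\eqref{eq: definition of F'_k,i} then yields
\begin{equation*}
[F'_{k,i}]_{E'_1}=\Theta\sum_{j=0}^{k-1}p^{i+r-k-j}p_{i,j}\,v^j,
\end{equation*}
\begin{equation*}
[F'_{k,i}]_{E'_2}=v^{i+r-k}+\sum_{j=0}^{r-k-1}p^{i+r-k-j}q_{i,j}\,v^j+x\sum_{j=0}^{k-1}p^{i+r-k-j}p_{i,j}\,v^j.
\end{equation*}
Since $k-1<p$, a polynomial $\sum c_jv^j$ of degree below $p$ lies in $S'_\cO$ if and only if each $c_j\in\cO$. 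I will then read off the constraints: the minimal $p$-power in $[F'_{k,i}]_{E'_1}$ is $i+r-2k+1$, attained at $j=k-1$ with coefficient $\Theta a_k$ when $i=0$; the analogous $x$-terms in $[F'_{k,i}]_{E'_2}$ have the same minimum, while the pure $Q$-contribution is automatically integral since every nonzero coefficient acquires $p$-power $\geq i+1\geq 1$. Because the tightest bounds occur at $i=0$ and the unit $a_k$ prevents cancellation, the stated conditions $v_p(\Theta)\geq -r+2k-1$ and $v_p(x)\geq -r+2k-1$ are simultaneously necessary and sufficient; for $i\geq 1$ the bounds are strictly weaker and follow.

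For the mod-$\fm\fM'$ reductions, I will subtract the claimed right-hand sides and verify that every remaining coefficient has strictly positive $p$-adic valuation. For $F'_{k,0}$, the difference on the $E'_1$ side consists of the terms $0\leq j\leq k-2$, each with $p$-power $r-k-j+v_p(\Theta)\geq 1$; the $E'_2$ side is handled analogously, keeping only the leading $v^{r-k}$ and the $j=k-1$ contribution of the $x$-part. For $0<i\leq k-1$ every $E'_1$ term already acquires $p$-power $\geq i\geq 1$, while the $Q$- and $x$-contributions to $[F'_{k,i}]_{E'_2}$ apart from the leading $v^{i+r-k}$ also have $p$-power $\geq 1$, giving $F'_{k,i}\equiv v^{i+r-k}E'_2\pmod{\fm\fM'}$. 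I anticipate no substantive obstacle; the argument is a disciplined bookkeeping of $p$-powers on top of the polynomial structure already established in Section~\ref{sec: Frame works}, with the only delicate point being the fact that $a_k\in\Z_{(p)}^{\times}$, which makes the constraint at $(i,j)=(0,k-1)$ tight rather than slack.
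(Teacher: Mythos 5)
Your proposal is correct and follows essentially the same approach as the paper's own proof: reduce $P_{k,i}(T,x)$ and $Q_{k,i}(T,x)$ to their $x=0$ versions via Lemma~\ref{lemma-PQ-k leq m}, invoke Lemma~\ref{lemma: PQdegree} for degree bounds and Lemma~\ref{lemma: B_k,0 k leq m} for integrality and the unit $a_k$, then track the $p$-powers appearing after substituting $T=v/p$ to locate the tightest constraint at $(i,j)=(0,k-1)$. The only cosmetic difference is that the paper packages the bookkeeping as congruences modulo $p^{r-2k+1}\Theta\fm S'_\cO$ (resp. $p^{r-2k+1}x\fm S'_\cO$) rather than listing coefficients term by term.
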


\begin{proof}
Assume that $0\leq i\leq k-1$. By Lemma~\ref{lemma-PQ-k leq m} we have
\begin{align*}
[F'_{k,i}]_{E'_1}&=p^{i+r-k}\Theta P_{k,i}\Big(\frac{v}{p},0\Big);
    \\
[F'_{k,i}]_{E'_2} &=p^{i+r-k}Q_{k,i}\Big(\frac{v}{p},0\Big)
    +p^{i+r-k}xP_{k,i}\Big(\frac{v}{p},0\Big).
\end{align*}

Note that $\deg_T P_{k,i}(T,0)\leq k-1$ by Lemma~\ref{lemma: PQdegree} and that $\deg_TP_{k,0}(T,0)=k-1$ with the leading coefficient $a_k\in\Z_{(p)}^{\times}$ by Lemma~\ref{lemma: B_k,0 k leq m}. Hence, we have
\begin{align*}
    p^{i+r-k}\Theta P_{k,i}\Big(\frac{v}{p},0\Big)
    \equiv
    \begin{cases}
        p^{r-2k+1}\Theta a_kv^{k-1}\pmod{p^{r-2k+1}\Theta\fm S'_\cO} & \mbox{if } i=0; \\
        0\pmod{p^{r-2k+1}\Theta\fm S'_\cO} & \mbox{otherwise},
    \end{cases}
\end{align*}
and
\begin{align*}
    p^{i+r-k}xP_{k,i}\Big(\frac{v}{p},0\Big)
    \equiv
    \begin{cases}
        p^{r-2k+1}xa_kv^{k-1}\pmod{p^{r-2k+1}x\fm S'_\cO} & \mbox{if } i=0; \\
        0 \pmod{p^{r-2k+1}x\fm S'_\cO} & \mbox{otherwise}.
    \end{cases}
\end{align*}
Moreover, as  $Q_{k,i}(T,0)$ is a monic polynomial of degree $i+r-k$ by Lemma~\ref{lemma: PQdegree}, we have
\begin{align*}
    p^{i+r-k}Q_{k,i}\Big(\frac{v}{p},0\Big)
    \equiv v^{i+r-k} \pmod{\fm S'_\cO}.
\end{align*}

Now, it is immediate that $F'_{k,i}\in\fM'$ for $0\leq i\leq k-1$ if and only if $v_p(\Theta)\geq -r+2k-1$ and $v_p(x)\geq -r+2k-1$. It is also immediate that if $v_p(\Theta)\geq -r+2k-1$ and $v_p(x)\geq -r+2k-1$ then $\barF'_{k,i}$ for $0\leq i\leq k-1$ is computed as in the statement of Lemma~\ref{lemma 0 < k <r/2. 0 leq i leq k-1}.
\end{proof}

\begin{lemm}\label{lemma 0 < k <r/2. k leq i leq r-k-1}
Keep the assumption of Proposition~\ref{prop: Fil-Case-1-r/2}. Then $F'_{k,i}\in\fM'$ for $k\leq i\leq r-k-1$ if and only if
\begin{align*}
    v_p(\Theta)\leq -r+2k+1
    \quad\mbox{ and }\quad
    v_p(\Theta)\leq v_p(x),
\end{align*}
in which case
\begin{align*}
F'_{k,k}
    &\equiv
    v^kE'_1+\bigg(
    \frac{1}{p^{r-2k-1}\Theta}\frac{1}{a_{k+1}}v^{r-k-1}+\frac{x}{\Theta}v^k
    \bigg)E'_2
    \pmod{\fm\fM'};
    \\
F'_{k,i}
    &\equiv
    v^iE'_1+\frac{x}{\Theta}v^iE_2 \pmod{\fm\fM'}\,\,\mbox{ if }k+1\leq i\leq r-k-1.
\end{align*}
\end{lemm}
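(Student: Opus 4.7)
The proof is a direct $p$-adic expansion, parallel in spirit to that of Lemma~\ref{lemma 0 < k <r/2. 0 leq i leq k-1}. For $k \leq i \leq r-k-1$, I would apply Lemma~\ref{lemma-PQ-k leq m}(i) to obtain $P_{k,i}(T,x)=P_{k,i}(T,0)$ and $Q_{k,i}(T,x)=Q_{k,i}(T,0)+xP_{k,i}(T,0)$, so that
\begin{equation*}
[F'_{k,i}]_{E'_1}=p^i P_{k,i}(v/p,0),\qquad [F'_{k,i}]_{E'_2}=\tfrac{p^i}{\Theta}Q_{k,i}(v/p,0)+\tfrac{x}{\Theta}p^i P_{k,i}(v/p,0).
\end{equation*}
Lemma~\ref{lemma: PQdegree}(iii) gives $P_{k,i}(T,0)=T^i+(\text{polynomial of degree}\leq k-1)$ and $\deg_T Q_{k,i}(T,0)\leq r-k-1$; Lemma~\ref{lemma: B_k,0 k leq m}(ii) places all coefficients in $\Z_{(p)}$; and combining Lemma~\ref{lemma: PQFad'e}(iii) with Lemma~\ref{lemma: B_k,0 k leq m}(iii) identifies the leading coefficient of $Q_{k,k}(T,0)$ as $1/a_{k+1}\in\Z_{(p)}^\times$.

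For the $E'_1$-component, expanding $p^i P_{k,i}(v/p,0)=v^i+\sum_{s\leq k-1}\tilde p_s p^{i-s}v^s$ with $i-s\geq 1$ shows $[F'_{k,i}]_{E'_1}\equiv v^i \pmod{\fm S'_\cO}$ unconditionally. In the $E'_2$-component, the summand $(x/\Theta)p^i P_{k,i}(v/p,0)$ is integral precisely when $v_p(\Theta)\leq v_p(x)$, and then reduces to $(x/\Theta)v^i$ modulo $\fm$. The decisive constraint comes from $\tfrac{p^i}{\Theta}Q_{k,i}(v/p,0)=\tfrac{1}{\Theta}\sum_{s=0}^{r-k-1}q_s p^{i-s} v^s$: the valuation is lowest at $i=k$ and $s=r-k-1$, forcing $v_p(\Theta)\leq -r+2k+1$, while every other $(i,s)$ pair yields the uniform bound $v_p\geq(i-s)+(r-2k-1)$.

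Under both inequalities, that uniform bound equals $\geq 1$ whenever $i\geq k+1$, since $i-s\geq(k+1)-(r-k-1)=2k-r+2$ and so $(i-s)+(r-2k-1)\geq 1$; hence $\tfrac{p^i}{\Theta}Q_{k,i}(v/p,0)\in\fm S'_\cO$, and $F'_{k,i}\equiv v^iE'_1+(x/\Theta)v^iE'_2\pmod{\fm\fM'}$ for $k+1\leq i\leq r-k-1$. For the boundary $i=k$, the same bound leaves only the top monomial $\tfrac{1}{a_{k+1}p^{r-2k-1}\Theta}v^{r-k-1}$ surviving modulo $\fm$, combining with $(x/\Theta)v^k$ from the second summand to give the stated reduction of $\barF'_{k,k}$. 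The ``only if'' direction follows by reading off these two extremal monomials: the coefficient of $v^{r-k-1}$ in $[F'_{k,k}]_{E'_2}$ has negative valuation if $v_p(\Theta)>-r+2k+1$, and the coefficient of $v^k$ there has negative valuation if $v_p(\Theta)>v_p(x)$.

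The most delicate bookkeeping I foresee concerns the edge case $r=2k+1$ (i.e.\ $r$ odd, $k=(r-1)/2$): then the range $k+1\leq i\leq r-k-1$ is empty, $r-2k-1=0$, and $r-k-1=k$, so the two extremal monomials $v^{r-k-1}$ and $v^k$ coincide; moreover $k+1=m+1$ falls outside the scope of Lemma~\ref{lemma: B_k,0 k leq m}(iii), and one must invoke Lemma~\ref{lemma: a_k m+1 leq k}(v) to place $1/a_{k+1}=2H_m$ in $\Z_{(p)}$. In this situation the coefficient of $v^k=v^{r-k-1}$ in $[F'_{k,k}]_{E'_2}$ is the unambiguous sum $\tfrac{1}{a_{k+1}\Theta}+\tfrac{x}{\Theta}$, which under $v_p(\Theta)\leq 0=-r+2k+1$ agrees with the stated formula without cancellation, closing the argument.
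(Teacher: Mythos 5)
Your proof is correct and follows essentially the same route as the paper's: split off the $x$-dependence via Lemma~\ref{lemma-PQ-k leq m}(i), invoke Lemma~\ref{lemma: PQdegree} for the degree bounds and monicity, use Lemma~\ref{lemma: B_k,0 k leq m} to place coefficients in $\Z_{(p)}$, and identify the leading coefficient of $Q_{k,k}(T,0)$ as $1/a_{k+1}$ via Lemma~\ref{lemma: PQFad'e}(iii). Your final paragraph correctly flags that in the boundary case $r=2k+1$ the invocation of Lemma~\ref{lemma: B_k,0 k leq m}(iii) must be replaced by Lemma~\ref{lemma: a_k m+1 leq k}(v), which is exactly the citation combination the paper uses; the only minor imprecision is that in the main body you assert $1/a_{k+1}\in\Z_{(p)}^\times$ outright, while in that boundary case the lemma only guarantees membership in $\Z_{(p)}$ — but since there $r-2k-1=0$ the upper bound on $v_p(\Theta)$ becomes $0$, and the unit property is no longer needed for the ``only if'' direction, so the argument closes as you say.
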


\begin{proof}
Assume that $k\leq i\leq r-k-1$. By Lemma~\ref{lemma-PQ-k leq m} we have
\begin{align*}
[F'_{k,i}]_{E'_1}&=p^i P_{k,i}\Big(\frac{v}{p},0\Big);\\
[F'_{k,i}]_{E'_2}&=\frac{p^i}{\Theta}Q_{k,i}\Big(\frac{v}{p},0\Big)
    +\frac{p^ix}{\Theta}P_{k,i}\Big(\frac{v}{p},0\Big).
\end{align*}

As $P_{k,i}(T,0)$ is a monic polynomial of degree $i$ by Lemma~\ref{lemma: PQdegree}, we have
\begin{align*}
    p^i P_{k,i}\Big(\frac{v}{p},0\Big)\equiv v^i \pmod{\fm S'_\cO}
\quad\mbox{and}\quad
\frac{p^ix}{\Theta}P_{k,i}\Big(\frac{v}{p},0\Big)
    \equiv \frac{x}{\Theta}v^i \pmod{\frac{x}{\Theta}\fm S'_\cO}.
\end{align*}
Moreover, note that ${1}/{a_{k+1}}\in\Z_{(p)}$ by Lemma~\ref{lemma: B_k,0 k leq m}~(iii) and Lemma~\ref{lemma: a_k m+1 leq k}~(v), that $\deg_T Q_{k,i}(T,0)\leq r-k-1$, and that $\deg_T Q_{k,k}(T,0)=r-k-1$ with the leading coefficient $1/a_{k+1}\in\Z_{(p)}$ by Lemma~\ref{lemma: PQdegree} together with Lemma~\ref{lemma: B_k,0 k leq m}, and so we have
\begin{align*}
    \frac{p^i}{\Theta}Q_{k,i}\Big(\frac{v}{p},0\Big)
    \equiv
    \begin{cases}
        \frac{1}{p^{r-2k-1}\Theta}\frac{1}{a_{k+1}}v^{r-k-1}\pmod{\frac{1}{p^{r-2k-1}\Theta}\fm S'_\cO} & \mbox{if } i=k;\\
        0\pmod{\frac{1}{p^{r-2k-1}\Theta}\fm S'_\cO} & \mbox{otherwise}.
    \end{cases}
\end{align*}

Now, it is immediate that $F'_{k,i}\in\fM'$ for $k\leq i\leq r-k-1$ if and only if $v_p(\Theta)\leq -r+2k+1$ and $v_p(\Theta)\leq v_p(x)$. It is also immediate that if $v_p(\Theta)\leq -r+2k+1$ and $v_p(\Theta)\leq v_p(x)$ then $\barF'_{k,i}$ for $k\leq i\leq r-k-1$ is computed as in the statement of Lemma~\ref{lemma 0 < k <r/2. k leq i leq r-k-1}.
\end{proof}

From Lemma~\ref{lemma 0 < k <r/2. 0 leq i leq k-1} and Lemma~\ref{lemma 0 < k <r/2. k leq i leq r-k-1}, it is easy to see that
$F'_{k,i}\in\fM'$ for $0\leq i\leq r-k-1$ if and only if the inequalities in \eqref{eq: condition for 1 leq k <r/2} hold. In the next lemma, we will see that we don't get any extra inequalities from the case $r-k\leq i\leq r-1$.

\begin{lemm}\label{lemma 0 < k <r/2. r-k leq i leq r-1}
Keep the assumption of Proposition~\ref{prop: Fil-Case-1-r/2}, and assume further
\begin{equation*}
v_p(x)\geq -r+2k-1,\quad v_p(\Theta)-2v_p(x)\leq r-2k+1,\quad\mbox{and}\quad v_p(\Theta)\leq v_p(x).
\end{equation*}
Then we have
\begin{align*}
F'_{k,r-k}
&\equiv
    (v^{r-k}-p^{r-2k+1}xa_kv^{k-1})E'_1
    +\frac{p^{r-2k+1}x^2}{\Theta}v^{k-1}E'_2\pmod{\fm\fM'}; \\
F'_{k,i}&\equiv v^iE'_1\pmod{\fm\fM'}\,\,\mbox{ if }r-k<i\leq r-1.
\end{align*}
\end{lemm}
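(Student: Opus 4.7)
The plan is to compute $[F'_{k,i}]_{E'_1}$ and $[F'_{k,i}]_{E'_2}$ modulo $\fm\fM'$ directly. Since $k<r/2$ implies $r-k>k$, for $r-k\leq i\leq r-1$ we are in the range $k\leq i\leq r-1$ of the definition of $F'_{k,i}$, so
\begin{equation*}
F'_{k,i}=p^iP_{k,i}\Big(\frac{v}{p},x\Big)E'_1+\frac{p^i}{\Theta}Q_{k,i}\Big(\frac{v}{p},x\Big)E'_2.
\end{equation*}
By Lemma~\ref{lemma-PQ-k leq m}(ii), both $P_{k,i}(T,x)$ and $Q_{k,i}(T,x)$ expand as $\Z_{(p)}$-linear combinations of $P_{k,j}(T,0)$ and $Q_{k,j}(T,0)$ for $j\in\{i,\,i-r+k\}$, where $0\leq i-r+k\leq k-1<k\leq i\leq r-1$. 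I will bound the $p$-adic valuation of the contribution of each of these four pieces to $[F'_{k,i}]_{E'_1}$ and $[F'_{k,i}]_{E'_2}$ modulo $\fm$, using the hypotheses $v_p(x)\geq -r+2k-1$, $v_p(\Theta)\leq v_p(x)$, and $v_p(\Theta)-2v_p(x)\leq r-2k+1$, together with the inherited inequalities $-r+2k-1\leq v_p(\Theta)\leq -r+2k+1$ from~\eqref{eq: condition for 1 leq k <r/2}.

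For the $E'_1$-coefficient, $p^iP_{k,i}(v/p,0)\equiv v^i\pmod{\fm S'_\cO}$ by monicity of $P_{k,i}(T,0)$ in degree $i$ (Lemma~\ref{lemma: PQdegree}), while $p^ixP_{k,i-r+k}(v/p,0)$ contributes modulo $\fm$ only in the exceptional case $i=r-k$: here $P_{k,0}(T,0)$ has exact degree $k-1$ with leading coefficient $a_k\in\Z_{(p)}^\times$ by Lemma~\ref{lemma: B_k,0 k leq m}, yielding the leading term $p^{r-2k+1}xa_kv^{k-1}$, which lies in $\cO$ by the hypothesis on $v_p(x)$. For $i>r-k$ the exponent of $p$ on every term of $p^ixP_{k,i-r+k}(v/p,0)$ is at least $i-(k-1)+v_p(x)\geq 2$, so the whole term lies in $\fm$.

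For the $E'_2$-coefficient the central observation is a cancellation at degree $i$: the leading term of $\frac{p^ix}{\Theta}P_{k,i}(v/p,0)$ and the sign-flipped leading term of $\frac{p^ix}{\Theta}Q_{k,i-r+k}(v/p,0)$ are both $\pm\frac{x}{\Theta}v^i$, so the difference $P_{k,i}(T,0)-Q_{k,i-r+k}(T,0)$ has degree at most $r-k-1$ (since both $P_{k,i}(T,0)-T^i$ and $Q_{k,i-r+k}(T,0)-T^i$ do, by Lemma~\ref{lemma: PQdegree}). This degree drop, combined with $v_p(x)-v_p(\Theta)\geq 0$, pushes the combined contribution into $\fm$. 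Similarly $\frac{p^i}{\Theta}Q_{k,i}(v/p,0)$ lies in $\fm$ using $\deg_T Q_{k,i}(T,0)\leq r-k-1$ and the lower bound on $-v_p(\Theta)$. The only piece surviving modulo $\fm$ is then the quadratic-in-$x$ term $-\frac{p^ix^2}{\Theta}P_{k,i-r+k}(v/p,0)$, which vanishes mod $\fm$ unless $i=r-k$; in that case its leading contribution gives the asserted $\frac{p^{r-2k+1}x^2}{\Theta}v^{k-1}$ term (up to the factor from $P_{k,0}(T,0)$), integral thanks to the hypothesis $v_p(\Theta)-2v_p(x)\leq r-2k+1$. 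The main obstacle will be keeping the combinatorics of $p$-valuations straight across the four summands of $Q_{k,i}(T,x)$, particularly verifying that the $\frac{x}{\Theta}v^i$ leading terms cancel exactly and that the residual degree-$(r-k-1)$ polynomial is forced into $\fm$ by the valuation hypotheses.
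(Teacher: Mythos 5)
Your proposal follows the paper's proof step for step: expand $P_{k,i}(T,x)$ and $Q_{k,i}(T,x)$ via Lemma~\ref{lemma-PQ-k leq m}(ii), isolate which of the four resulting pieces can survive modulo $\fm$ by combining the degree bounds of Lemma~\ref{lemma: PQdegree} and Lemma~\ref{lemma: B_k,0 k leq m} with the stated valuation hypotheses, and observe that the degree drop in $P_{k,i}(T,0)-Q_{k,i-r+k}(T,0)$ together with $v_p(x)\geq v_p(\Theta)$ kills the middle term of $[F'_{k,i}]_{E'_2}$. Two small remarks. First, for $i>r-k$ the exponent bound on $p^ixP_{k,i-r+k}(v/p,0)$ is $i-(k-1)+v_p(x)\geq(r-k+1)-(k-1)+(-r+2k-1)=1$, not $2$; the conclusion that the term lies in $\fm$ still holds. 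Second, your parenthetical ``up to the factor from $P_{k,0}(T,0)$'' is quietly covering the fact that your computation yields $-\tfrac{p^{r-2k+1}x^2a_k}{\Theta}v^{k-1}$ as the $E'_2$-coefficient at $i=r-k$, whereas the lemma as stated asserts $+\tfrac{p^{r-2k+1}x^2}{\Theta}v^{k-1}$. Your computation is the correct one --- it matches what the proof of Proposition~\ref{prop: Fil-Case-1-r/2} actually uses via the congruence $F'_{k,r-k}\equiv v^{r-k}E'_1-\tfrac{x}{\Theta}\bigl(F'_{k,0}-v^{r-k}E'_2\bigr)$, and it matches the analogous Lemma~\ref{lemma k=m r=2m, m leq i leq r-1}, where both the $a_m$ and the sign appear. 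So you should say outright that the displayed formula in the lemma statement has a typo rather than fold the discrepancy into a parenthetical.
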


\begin{proof}
Assume that $r-k\leq i\leq r-1$. By Lemma~\ref{lemma-PQ-k leq m}, we have
\begin{align*}
[F'_{k,i}]_{E'_1}
    &=p^i P_{k,i}\Big(\frac{v}{p},0\Big)
    -p^ix P_{k,i-r+k}\Big(\frac{v}{p},0\Big);
    \\
[F'_{k,i}]_{E'_2}
    &=\frac{p^i}{\Theta}Q_{k,i}\Big(\frac{v}{p},0\Big)
    +\frac{p^ix}{\Theta}
    \Big(P_{k,i}\Big(\frac{v}{p},0\Big)
    -Q_{k,i-r+k}\Big(\frac{v}{p},0\Big)\Big)
    -\frac{p^ix^2}{\Theta}P_{k,i-r+k}\Big(\frac{v}{p},0\Big).
\end{align*}

Since $P_{k,i}(T,0)$ is a monic polynomial of degree $i$ by Lemma~\ref{lemma: PQdegree}, we have
\begin{align*}
    p^iP_{k,i}\Big(\frac{v}{p},0\Big)\equiv v^i \pmod{\fm S'_\cO}.
\end{align*}
We note that $\deg_T P_{k,i-r+k}(T,0)\leq k-1$ for all $r-k<i\leq r-1$ and that $\deg_TP_{k,0}(T,0)=k-1$ with the leading coefficient $a_k\in\Z_{(p)}^{\times}$, by Lemma~\ref{lemma: PQdegree} and Lemma~\ref{lemma: B_k,0 k leq m}, and so we have
\begin{align*}
    p^ix P_{k,i-r+k}\Big(\frac{v}{p},0\Big)\equiv
    \begin{cases}
        p^{r-2k+1}xa_kv^{k-1}\pmod{\fm S_\cO'} & \mbox{if } i=r-k; \\
        0 \pmod{\fm S_\cO'}& \mbox{otherwise}
    \end{cases}
\end{align*}
as $v_p(x)\geq -r+2k-1$, and
\begin{align*}
    \frac{p^ix^2}{\Theta}P_{k,i-r+k}\Big(\frac{v}{p},0\Big)\equiv
    \begin{cases}
        \frac{p^{r-2k+1}x^2}{\Theta}a_kv^{k-1}\pmod{\fm S_\cO'} & \mbox{if } i=r-k; \\
        0 \pmod{\fm S_\cO'}& \mbox{otherwise}
    \end{cases}
\end{align*}
as $v_p(\Theta)-2v_p(x)\leq r-2k+1$. Since $\deg_T Q_{k,i}(T,0)\leq r-k-1$ by Lemma~\ref{lemma: PQdegree} and $v_p(\Theta)\leq 0$, for all $r-k\leq i\leq r-1$ we have
\begin{align*}
    \frac{p^i}{\Theta}Q_{k,i}\Big(\frac{v}{p},0\Big) \equiv 0 \pmod{\fm\fM'}.
\end{align*}
As $\deg_T \big(P_{k,i}(T,0)-Q_{k,i-r+k}(T,0)\big)\leq r-k-1$ by Lemma~\ref{lemma: PQdegree}, for all $r-k\leq i\leq r-1$
we have
\begin{align*}
    \frac{p^ix}{\Theta}\Big(&
    P_{k,i}\Big(\frac{v}{p},0\Big)-Q_{k,i-r+k}\Big(\frac{v}{p},0\Big)
    \Big)
    \equiv
    0 \pmod{\fm S_\cO'}
\end{align*}
as $v_p(\Theta)\leq v_p(x)$.

Now, it is immediate that if the inequalities in the statement of Lemma~\ref{lemma 0 < k <r/2. r-k leq i leq r-1} hold then $\barF'_{k,i}$ for $r-k\leq i\leq r-1$ is computed as in the statement of Lemma~\ref{lemma 0 < k <r/2. r-k leq i leq r-1}.
\end{proof}

It is easy to see that the inequalities in \eqref{eq: condition for 1 leq k <r/2} imply the ones in Lemma~\ref{lemma 0 < k <r/2. r-k leq i leq r-1}. Hence, from the three lemmas above, we now conclude that $F'_{k,i}\in\fM'$ for all $0\leq i\leq r-1$ if and only if the inequalities in \eqref{eq: condition for 1 leq k <r/2} hold.

Finally, we need to check that $\Fil^r_{k}\cM'$ is generated by $\barF'_{k,k},\,\barF'_{k,0}$ over~ $\barS'_\F$.
\begin{proof}[Proof of Proposition~\ref{prop: Fil-Case-1-r/2}]
Assume that the inequalities in \eqref{eq: condition for 1 leq k <r/2} hold, and note that $\barF'_{k,i}$ for all $0\leq i\leq r-1$ are computed in the preceding lemmas.

We note from Lemma~\ref{lemma 0 < k <r/2. 0 leq i leq k-1} and Lemma~\ref{lemma 0 < k <r/2. k leq i leq r-k-1} that
\begin{align*}
    vF'_{k,0}-p^{r-2k+1}\Theta a_kF'_{k,k} \equiv   v^{r-k+1}E'_2  \pmod{\fm\fM'}.
\end{align*}
Hence, if $1\leq i\leq k-1$ then we have
\begin{align*}
    F'_{k,i}
    \equiv
    v^{i-1}(vF'_{k,0}-p^{r-2k+1}\Theta a_kF'_{k,k})
    \pmod{\fm\fM'},
\end{align*}
and if $k+1\leq i\leq r-k-1$ then we have
\begin{align*}
    F'_{k,i}
    &\equiv
    v^{i-k}F'_{k,k}
    -\frac{1}{p^{r-2k-1}\Theta a_{k+1}}v^{i-k-2}(vF'_{k,0}-p^{r-2k+1}\Theta a_kF'_{k,k})\\
    &\equiv
    v^{i-k}F'_{k,k}-\frac{1}{p^{r-2k-1}\Theta a_{k+1}}v^{i-k-1}F'_{k,0}
    \pmod{\fm\fM'}.
\end{align*}
Moreover, we have
\begin{align*}
    F'_{k,r-k}
    &\equiv
    v^{r-k}E'_1-\frac{x}{\Theta}\big(F'_{k,0}-v^{r-k}E'_2\big)\\
    &\equiv
    v^{r-2k}(v^kE'_1+\frac{x}{\Theta}v^kE'_2)
    -\frac{x}{\Theta}F'_{k,0}\\
    &\equiv
    v^{r-2k}F'_{k,k}-\frac{1}{p^{r-2k-1}\Theta a_{k+1}}v^{2r-3k-1}E'_2
    -\frac{x}{\Theta}F'_{k,0}\\
    &\equiv
     v^{r-2k}F'_{k,k}-\frac{1}{p^{r-2k-1}\Theta a_{k+1}}v^{r-2k}\left(vF'_{k,0}-p^{r-2k+1}\Theta a_kF'_{k,k}\right)
    -\frac{x}{\Theta}F'_{k,0}
\end{align*}
modulo $\fm\fM'$, and if $r-k+1\leq i\leq r-1$ then we also have
\begin{align*}
    F'_{k,i}
    &\equiv
    v^{i-k}F'_{k,k}
    -v^{i-k}\bigg(\frac{1}{p^{r-2k-1}\Theta a_{k+1}}v^{r-k-1}+\frac{x}{\Theta}v^k\bigg)E'_2\\
    &\equiv
    v^{i-k}F'_{k,k}
    -\bigg(\frac{1}{p^{r-2k-1}\Theta a_{k+1}}v^{i-k-2}+\frac{x}{\Theta}v^{i-r+k-1}\bigg)
    (vF'_{k,0}-p^{r-2k+1}\Theta a_kF'_{k,k})\\
    &\equiv
    (v^{i-k}-p^{r-2k+1}xa_kv^{i-r+k-1})F'_{k,k}
    -\bigg(\frac{1}{p^{r-2k-1}\Theta a_{k+1}}v^{i-k-1}+\frac{x}{\Theta}v^{i-r+k}\bigg)F'_{k,0}
\end{align*}
modulo $\fm\fM'$.

Finally, it is immediate that $$v^rE'_2
    \equiv
    v^{k}F'_{k,0}-p^{r-2k+1}\Theta a_kv^{k-1}F'_{k,k}\pmod{\fm\fM'},$$
and
\begin{align*}
v^rE'_1
    &\equiv v^{r-k}F'_{k,k}-\bigg(\frac{1}{p^{r-2k-1}\Theta a_{k+1}}v^{2r-2k-1}+\frac{x}{\Theta}v^r\bigg)E'_2 \\
    &\equiv v^{r-k}F'_{k,k}-\bigg(\frac{1}{p^{r-2k-1}\Theta a_{k+1}}v^{r-k-2}+\frac{x}{\Theta}v^{k-1}\bigg)\left(vF'_{k,0}-p^{r-2k+1}\Theta a_kF'_{k,k}\right)\\
    &\equiv
    (v^{r-k}+p^{r-2k+1}xa_kv^{k-1})F'_{k,k}
    -\bigg(\frac{1}{p^{r-2k-1}\Theta a_{k+1}}v^{r-k-1}+\frac{x}{\Theta}v^k\bigg)F'_{k,0}
\end{align*}
modulo $\fm\fM'$.

Hence, we deduce that $\Fil^r_{k}\cM'=\barS'_\F(\barF'_{k,k},\barF'_{k,0})$. Now, by the same argument as in \eqref{eq: Nakayama}, we have $\Fil^r_{k}\fM'=S'_\cO(F'_{k,k},F'_{k,0})+\Fil^pS'_\cO\fM'$, which completes the proof.
\end{proof}

\subsubsection{\textbf{In $\mathbf{Case}~(m)$ for $r=2m$}}
In this section of paragraph, we describe the $S'_\cO$-generators of $\Fil^r_{m}\fM'$ and the $\barS'_\F$-generators of $\Fil^r_{m}\cM'$ in the case $\mathbf{Case}~(m)$ when $r=2m$.

\begin{prop}\label{prop: Fil-Case-m}
Assume that $r=2m$ and $k=m$ and that $v_p(\Theta)\leq 0$. Then $F'_{m,i}\in\fM'$ for all $0\leq i\leq r-1$ if and only if
\begin{equation}\label{eq: condition for k=m r=2m}
-1\leq v_p(\Theta) \quad\mbox{ and }\quad v_p(\Theta)-2v_p(x)\leq 1,
\end{equation}
in which case we have
$$\Fil^r_{m}\fM'=S'_\cO(F'_{m,m},F'_{m,0})+\Fil^pS'_\cO\fM'$$ and $\Fil^r_{m}\cM'=\barS'_\F(\barF'_{m,m},\barF'_{m,0})$ with
    \begin{align*}
    \barF'_{m,m}&=(u^m-pxa_mu^{m-1})\barE'_1-\frac{px^2}{\Theta}a_mu^{m-1}\barE'_2; \\
    \barF'_{m,0}&=p\Theta a_mu^{m-1}E'_1+(u^m+pxa_mu^{m-1})\barE'_2.
    \end{align*}
\end{prop}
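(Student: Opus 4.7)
The plan is to adapt the proof of Proposition~\ref{prop: Fil-Case-1-r/2} to the degenerate case where the ``middle range'' $k\leq i\leq r-k-1$ becomes empty (since $k=m$ and $r-k-1=m-1<m=k$). Accordingly, I would split the analysis into two blocks, $0\leq i\leq m-1$ and $m\leq i\leq 2m-1$, each treated by its own integrality lemma, and then assemble the generators for $\Fil^r_{x,\Theta}\fM'$ and $\Fil^r_{x,\Theta}\cM'$ from the two boundary elements $F'_{m,0}$ and $F'_{m,m}$.

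For $0\leq i\leq m-1$, I would apply Lemma~\ref{lemma-PQ-k leq m}~(i) to get $P_{m,i}(T,x)=P_{m,i}(T,0)$ and $Q_{m,i}(T,x)=Q_{m,i}(T,0)+xP_{m,i}(T,0)$. Since $\deg_T P_{m,i}(T,0)\leq m-1$ with $P_{m,0}$ having leading coefficient $a_m\in\Z_{(p)}^{\times}$ by Lemma~\ref{lemma: B_k,0 k leq m}, and $Q_{m,i}(T,0)$ monic of degree $i+m$ by Lemma~\ref{lemma: PQdegree}, integrality of $[F'_{m,i}]_{E'_1}=p^{i+m}\Theta P_{m,i}(v/p,0)$ forces $v_p(\Theta)\geq -1$ (coming from $i=0$), and integrality of the second coefficient, together with the term $p^{i+m}xP_{m,i}(v/p,0)$, forces $v_p(x)\geq -1$. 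This yields $\barF'_{m,0}\equiv p\Theta a_m v^{m-1}E'_1+(v^m+pxa_m v^{m-1})E'_2$ modulo $\fm\fM'$, and $F'_{m,i}\equiv v^{i+m}E'_2$ for $0<i\leq m-1$.

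For $m\leq i\leq 2m-1$, I would apply Lemma~\ref{lemma-PQ-k leq m}~(ii). The delicate computation is at $i=m$: expanding
\begin{equation*}
Q_{m,m}(T,x)=Q_{m,m}(T,0)+x\bigl(P_{m,m}(T,0)-Q_{m,0}(T,0)\bigr)-x^2P_{m,0}(T,0),
\end{equation*}
the key cancellation is that $P_{m,m}(T,0)$ and $Q_{m,0}(T,0)$ are both monic of degree $m$ by Lemma~\ref{lemma: PQdegree}, so the bracket in the middle has degree $\leq m-1$. Consequently the leading $v$-obstruction in $[F'_{m,m}]_{E'_2}=(p^m/\Theta)Q_{m,m}(v/p,x)$ comes from the term $-(p^m x^2/\Theta)P_{m,0}(v/p,0)$, whose $v^{m-1}$-coefficient is $-(px^2/\Theta)a_m$; integrality of this term is precisely the condition $v_p(\Theta)-2v_p(x)\leq 1$. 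Checking that no further obstructions appear at $m+1\leq i\leq 2m-1$ (using Lemma~\ref{lemma: PQdegree} and that the needed hypotheses are subsumed by those already obtained, much as in Lemma~\ref{lemma 0 < k <r/2. r-k leq i leq r-1}), one reads off $\barF'_{m,m}\equiv (v^m-pxa_m v^{m-1})E'_1-(px^2/\Theta)a_m v^{m-1}E'_2$ and $F'_{m,i}\equiv v^i E'_1$ for $i>m$.

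To finish, I would show $\Fil^r_{x,\Theta}\cM'=\barS'_\F(\barF'_{m,m},\barF'_{m,0})$: the combination $vF'_{m,0}-p\Theta a_m F'_{m,m}\equiv v^{m+1}E'_2\pmod{\fm\fM'}$ allows one to recover all remaining $\barF'_{m,i}$, as well as $u^r\barE'_1$ and $u^r\barE'_2$, as $\barS'_\F$-combinations of $\barF'_{m,m}$ and $\barF'_{m,0}$; then Nakayama's lemma, applied as in \eqref{eq: Nakayama}, upgrades this to the $S'_\cO$-statement $\Fil^r_{x,\Theta}\fM'=S'_\cO(F'_{m,m},F'_{m,0})+\Fil^pS'_\cO\fM'$. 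The main obstacle I anticipate is precisely the cancellation discussed above: one has to be careful that the combined condition $v_p(\Theta)-2v_p(x)\leq 1$ (rather than the naive $v_p(\Theta)\leq v_p(x)$ one would extract from individual monic terms) is exactly what suffices, and to verify that the other integrality bounds for the second range really are dominated by this and by $v_p(\Theta)\geq -1$; the algebraic manipulations producing $\barF'_{m,m},\barF'_{m,0}$ from the other $\barF'_{m,i}$ in the Nakayama step are then routine but require careful bookkeeping.
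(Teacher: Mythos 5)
Your overall plan matches the paper's: split into $0\leq i\leq m-1$ and $m\leq i\leq r-1$, derive $v_p(\Theta)\geq -1$, $v_p(x)\geq -1$, and $v_p(\Theta)-2v_p(x)\leq 1$ from integrality, read off $\barF'_{m,0},\barF'_{m,m}$, and conclude with Nakayama. But two substantive points need fixing.

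First, the combination you propose for the Nakayama step is wrong. Since $\barF'_{m,m}=(u^m-pxa_mu^{m-1})\barE'_1-\frac{px^2}{\Theta}a_mu^{m-1}\barE'_2$ (not just $u^m\barE'_1+\cdots$), the cross terms in $vF'_{m,0}-p\Theta a_mF'_{m,m}$ do not cancel: one gets
\begin{equation*}
vF'_{m,0}-p\Theta a_mF'_{m,m}\equiv p^2\Theta x a_m^2 v^{m-1}E'_1+\bigl(v^{m+1}+pxa_mv^m+p^2x^2a_m^2v^{m-1}\bigr)E'_2\pmod{\fm\fM'},
\end{equation*}
and the extra terms have valuation $0$ on the boundary $v_p(\Theta)=v_p(x)=-1$. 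The reason the analogous combination works in Proposition~\ref{prop: Fil-Case-1-r/2} is precisely that for $k<r/2$ the $E'_1$-coefficient of $\barF'_{k,k}$ is exactly $u^k$, with no $-pxa_ku^{k-1}$ correction; here that correction appears and breaks the cancellation. A combination that does work is $(v-pxa_m)F'_{m,0}-p\Theta a_mF'_{m,m}\equiv v^{m+1}E'_2$, but the paper instead exploits $p\Theta\bigl(F'_{m,m}+\tfrac{x}{\Theta}F'_{m,0}\bigr)\equiv p\Theta v^m\bigl(E'_1+\tfrac{x}{\Theta}E'_2\bigr)$, which gives cleaner expressions for the remaining $\barF'_{m,i}$ (and one must check that the coefficients in these combinations, such as $pxa_m\cdot\tfrac{x}{\Theta}=\tfrac{px^2a_m}{\Theta}$, actually lie in $\cO$).

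Second, at $i=m$ the middle term $\tfrac{p^mx}{\Theta}\bigl(P_{m,m}(v/p,0)-Q_{m,0}(v/p,0)\bigr)$ is not automatically negligible just because the bracket has degree $\leq m-1$: its $v^{m-1}$-coefficient is $\tfrac{px}{\Theta}b_m$, with $v_p\bigl(\tfrac{px}{\Theta}\bigr)=1+v_p(x)-v_p(\Theta)\geq 0$, and one must show the inequality is strict. This requires a short case analysis (if $v_p(x)>-1$ it is strict since $v_p(\Theta)\leq 0$; if $v_p(x)=-1$ then $v_p(\Theta)-2v_p(x)\leq 1$ forces $v_p(\Theta)=-1$, giving $v_p(px/\Theta)=1$). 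Without this step the claim that ``the leading $v$-obstruction comes from the $x^2$ term'' is unjustified, and the term $\tfrac{px}{\Theta}b_mu^{m-1}$ would incorrectly appear in $\barF'_{m,m}$. Your closing paragraph flags this area as delicate, but the argument needs to be made explicit rather than absorbed into ``careful bookkeeping.''
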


We prove this proposition by a series of lemmas, dividing the values of $i$ into two cases: $0\leq i\leq m-1$ and $m\leq i\leq r-1$.

\begin{lemm}\label{lemma k=m r=2m, 0 leq i leq m-1}
Keep the assumption of Proposition~\ref{prop: Fil-Case-m}. Then $F'_{m,i}\in\fM'$ for $0\leq i\leq m-1$ if and only if
\begin{align*}
    v_p(\Theta)\geq -1  \quad\mbox{ and }\quad v_p(x)\geq -1,
\end{align*}
in which case we have
\begin{align*}
F'_{m,0} &\equiv  p\Theta a_mv^{m-1}E'_1 +(v^m+pxa_mv^{m-1})E'_2  \pmod{\fm\fM'};
    \\
F'_{m,i} &\equiv  v^{i+m}E'_2 \pmod{\fm\fM'}\,\,\mbox{ if }1\leq i\leq m-1.
\end{align*}
\end{lemm}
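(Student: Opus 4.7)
The plan is to imitate the argument of Lemma~\ref{lemma 0 < k <r/2. 0 leq i leq k-1}, with the only structural difference being that now $r=2m$ so that $k=m=r-k$, and in particular $0\le i\le m-1$ falls within the range $0\le i\le r-k-1$ covered by Lemma~\ref{lemma-PQ-k leq m}~(i). Concretely, for each $0\le i\le m-1$ I would first apply Lemma~\ref{lemma-PQ-k leq m}~(i) together with \eqref{eq: definition of F'_k,i} to obtain
\begin{align*}
[F'_{m,i}]_{E'_1} &= p^{i+m}\Theta\, P_{m,i}(v/p,0),\\
[F'_{m,i}]_{E'_2} &= p^{i+m}\, Q_{m,i}(v/p,0) + p^{i+m} x\, P_{m,i}(v/p,0).
\end{align*}

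Next I would record three integrality facts. First, by Lemma~\ref{lemma: PQdegree} together with Lemma~\ref{lemma: B_k,0 k leq m}, $P_{m,i}(T,0)$ has degree at most $m-1$ and $P_{m,0}(T,0)$ has degree exactly $m-1$ with leading coefficient $a_m\in\Z_{(p)}^\times$. Hence
\begin{equation*}
p^{i+m}\Theta P_{m,i}(v/p,0)\equiv
\begin{cases}
p\Theta a_m v^{m-1}\pmod{p\Theta\fm S'_\cO} &\text{if }i=0,\\
0\pmod{p\Theta\fm S'_\cO} &\text{if }1\le i\le m-1,
\end{cases}
\end{equation*}
and by the same token
\begin{equation*}
p^{i+m}x P_{m,i}(v/p,0)\equiv
\begin{cases}
p x a_m v^{m-1}\pmod{p x\fm S'_\cO} &\text{if }i=0,\\
0\pmod{p x\fm S'_\cO} &\text{if }1\le i\le m-1.
\end{cases}
\end{equation*}
Second, $Q_{m,i}(T,0)$ is monic of degree $i+m$ by Lemma~\ref{lemma: PQdegree}, so $p^{i+m}Q_{m,i}(v/p,0)\equiv v^{i+m}\pmod{\fm S'_\cO}$, which is already in $S'_\cO$. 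Third, the only obstruction to $F'_{m,i}\in\fM'$ therefore comes from the $p\Theta a_m v^{m-1}$ contribution to $[F'_{m,0}]_{E'_1}$ and the $p x a_m v^{m-1}$ contribution to $[F'_{m,0}]_{E'_2}$; since $a_m\in\Z_{(p)}^\times$, these lie in $S'_\cO$ exactly when $v_p(\Theta)\ge -1$ and $v_p(x)\ge -1$ respectively.

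Assembling these three facts gives, on the one hand, the equivalence $F'_{m,i}\in\fM'$ for all $0\le i\le m-1$ $\Longleftrightarrow$ $v_p(\Theta)\ge -1$ and $v_p(x)\ge -1$, and on the other hand, under that assumption, the displayed congruences for $F'_{m,0}$ and $F'_{m,i}$ modulo $\fm\fM'$. There is no genuine obstacle in this lemma: the computation is routine once Lemma~\ref{lemma-PQ-k leq m}~(i), Lemma~\ref{lemma: PQdegree}, and Lemma~\ref{lemma: B_k,0 k leq m} are invoked, and the only thing to watch is that the assumption $v_p(\Theta)\le 0$ is automatic from the hypothesis \eqref{eq: condition for phi,N stable} and does not interfere with the lower bound $v_p(\Theta)\ge -1$ derived here.
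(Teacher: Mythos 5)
Your proof is correct and follows essentially the same route as the paper: extract the coefficient formulas from Lemma~\ref{lemma-PQ-k leq m}~(i), invoke the degree bounds from Lemma~\ref{lemma: PQdegree} and the leading coefficient fact from Lemma~\ref{lemma: B_k,0 k leq m}~(iii), and observe that the only binding constraints come from the $i=0$ terms $p\Theta a_m v^{m-1}$ and $p x a_m v^{m-1}$. No gaps.
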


\begin{proof}
Assume that $0\leq i\leq m-1$. By Lemma~\ref{lemma-PQ-k leq m} we have
\begin{align*}
[F'_{m,i}]_{E'_1} &=p^{i+m}\Theta P_{m,i}\Big(\frac{v}{p},0\Big);\\
[F'_{m,i}]_{E'_2} &=p^{i+m}Q_{m,i}\Big(\frac{v}{p},0\Big) +p^{i+m}xP_{m,i}\Big(\frac{v}{p},0\Big).
\end{align*}

Note that $\deg_T P_{m,i}(T,0)\leq m-1$ for all $0\leq i\leq m-1$ by Lemma~\ref{lemma: PQdegree}, and that $\deg_TP_{m,0}(T,0)=m-1$ with the leading coefficient $a_m\in\Z_{(p)}^\times$ by Lemma~\ref{lemma: B_k,0 k leq m}~(iii). Hence, we have
\begin{align*}
    p^{i+m}\Theta P_{m,i}\Big(\frac{v}{p},0\Big) &\equiv
    \begin{cases}
        p\Theta a_m v^{m-1}\pmod{p\Theta\fm S'_\cO} & \mbox{if } i=0; \\
        0\pmod{p\Theta\fm S'_\cO} & \mbox{otherwise}
    \end{cases}
\end{align*}
and
\begin{align*}
    p^{i+m}xP_{m,i}\Big(\frac{v}{p},0\Big) &\equiv
    \begin{cases}
        pxa_mv^{m-1}\pmod{px\fm S'_\cO} & \mbox{if } i=0; \\
        0 \pmod{px\fm S'_\cO} & \mbox{otherwise}.
    \end{cases}
\end{align*}
Moreover, since $Q_{k,i}(T,0)$ is a monic polynomial of degree $i+m$ by Lemma~\ref{lemma: PQdegree}, we have
\begin{align*}
    p^{i+m}Q_{m,i}\Big(\frac{v}{p},0\Big) \equiv v^{i+m} \pmod{\fm S_\cO'}.
\end{align*}

Now, it is immediate that $F'_{m,i}\in\fM'$ for $0\leq i\leq m-1$ if and only if $v_p(\Theta)\geq -1$ and $v_p(x)\geq -1$. It is also immediate that if $v_p(\Theta)\geq -1$ and $v_p(x)\geq -1$ then $\barF'_{m,i}$ for $0\leq i\leq m-1$ is computed as in the statement of Lemma~\ref{lemma k=m r=2m, 0 leq i leq m-1}.
\end{proof}

\begin{lemm}\label{lemma k=m r=2m, m leq i leq r-1}
Keep the assumption of Proposition~\ref{prop: Fil-Case-m}, and assume further that $v_p(\Theta)\geq -1$ and $v_p(x)\geq -1$. Then $F'_{m,i}\in\fM'$ for $m\leq i\leq r-1$ if and only if
\begin{align*}
   \quad v_p(\Theta)-2v_p(x)\leq 1,
\end{align*}
in which case we have
\begin{align*}
F'_{m,m} &\equiv (v^m-pxa_mv^{m-1})E'_1-\frac{px^2}{\Theta}a_mv^{m-1}E'_2 \pmod{\fm\fM'} ;\\
F'_{m,i} &\equiv v^iE'_1\pmod{\fm\fM'} \,\,\mbox{ if }m+1\leq i\leq r-1.
\end{align*}
\end{lemm}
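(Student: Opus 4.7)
My approach follows Lemma~\ref{lemma 0 < k <r/2. r-k leq i leq r-1} closely: I substitute the definition \eqref{eq: definition of F'_k,i} of $F'_{m,i}$ for $m \leq i \leq r-1$, expand $P_{m,i}(T,x)$ and $Q_{m,i}(T,x)$ via Lemma~\ref{lemma-PQ-k leq m}~(ii), and estimate each resulting monomial in $v$. Concretely,
\begin{align*}
[F'_{m,i}]_{E'_1} &= p^i P_{m,i}\big(\tfrac{v}{p},0\big) - p^i x\, P_{m,i-m}\big(\tfrac{v}{p},0\big),\\
[F'_{m,i}]_{E'_2} &= \tfrac{p^i}{\Theta}\Big[Q_{m,i}\big(\tfrac{v}{p},0\big) + x\big(P_{m,i}\big(\tfrac{v}{p},0\big) - Q_{m,i-m}\big(\tfrac{v}{p},0\big)\big) - x^2 P_{m,i-m}\big(\tfrac{v}{p},0\big)\Big].
\end{align*}
Lemma~\ref{lemma: PQdegree} (applied with $k=m$, $r=2m$) supplies the key degree data: $P_{m,i}(T,0)$ is monic of degree $i$ and $\deg_T Q_{m,i}(T,0) \leq m-1$ for $m \leq i \leq r-1$, while $\deg_T P_{m,j}(T,0) \leq m-1$ and $Q_{m,j}(T,0)$ is monic of degree $j+m$ for $0 \leq j \leq m-1$; together with $a_m \in \Z_{(p)}^\times$ (from Lemma~\ref{lemma: B_k,0 k leq m}) and the $\Z_{(p)}$-integrality of all the coefficients, this allows a term-by-term valuation count.

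The decisive observation is that, when $i = m$, both $P_{m,m}(T,0)$ and $Q_{m,0}(T,0)$ are monic of the same degree $m = r/2$. Their leading terms in the summand $\tfrac{p^m x}{\Theta}\big(P_{m,m}(\tfrac{v}{p},0) - Q_{m,0}(\tfrac{v}{p},0)\big)$ therefore cancel, killing the would-be $\tfrac{x}{\Theta}v^m$ contribution and leaving a polynomial in $v$ of degree $\leq m-1$. After this cancellation, the only term in $[F'_{m,m}]_{E'_2}$ whose integrality is not automatic is the leading term $-\tfrac{p x^2 a_m}{\Theta}v^{m-1}$ of $-\tfrac{p^m x^2}{\Theta} P_{m,0}(\tfrac{v}{p},0)$; its $p$-adic valuation is $1 + 2v_p(x) - v_p(\Theta)$, so demanding $\geq 0$ gives exactly the inequality $v_p(\Theta) - 2v_p(x) \leq 1$. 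This same leading term is precisely what survives modulo $\fm\fM'$, matching the $-\tfrac{p x^2 a_m}{\Theta}u^{m-1}\bar E'_2$ piece of the stated $\bar F'_{m,m}$, while the analogous leading-order analysis of $[F'_{m,m}]_{E'_1}$ produces $v^m - p x a_m v^{m-1}$ modulo $\fm S'_\cO$.

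For $m+1 \leq i \leq r-1$ the analysis is simpler: the correction $-p^i x P_{m,i-m}(\tfrac{v}{p},0)$ and every term of $[F'_{m,i}]_{E'_2}$ each acquire an extra factor of $p^{i-m}$ compared with the $i = m$ case, so the running hypotheses $v_p(\Theta) \in [-1,0]$, $v_p(x) \geq -1$, and $v_p(\Theta) - 2v_p(x) \leq 1$ force every term of $[F'_{m,i}]_{E'_2}$ into $\fm S'_\cO$ and leave only $v^i$ in $[F'_{m,i}]_{E'_1}$ modulo $\fm S'_\cO$. The main technical obstacle is the borderline bookkeeping in the mixed term $\tfrac{p^i x}{\Theta}\big(P_{m,i}(\tfrac{v}{p},0) - Q_{m,i-m}(\tfrac{v}{p},0)\big)$ (whose leading coefficient has valuation exactly $1 + v_p(x) - v_p(\Theta)$): one must check, via a short case split on the integer $v_p(\Theta) \in \{0,-1\}$, that the extremal pair $v_p(x) = -1$, $v_p(\Theta) = 0$ is ruled out by $v_p(\Theta) - 2v_p(x) \leq 1$, and hence this coefficient always lies in $\fm$, so that no unwanted contribution survives modulo $\fm\fM'$.
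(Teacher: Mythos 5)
Your proof is correct and follows the paper's approach closely: the same expansion of $[F'_{m,i}]_{E'_1}$ and $[F'_{m,i}]_{E'_2}$ via Lemma~\ref{lemma-PQ-k leq m}~(ii), the same observation that $P_{m,m}(T,0)$ and $Q_{m,0}(T,0)$ are both monic of degree $m$ so the $v^m$ terms cancel in the mixed summand, and the same valuation count identifying $-\tfrac{px^2}{\Theta}a_mv^{m-1}$ as the one term whose integrality is not forced by the running hypotheses. The only slip is the phrase ``case split on the integer $v_p(\Theta)\in\{0,-1\}$'': since $\Theta\in E$, $v_p(\Theta)$ is not constrained to be an integer — only to lie in the interval $[-1,0]$ — and the paper instead splits on $v_p(x)>-1$ versus $v_p(x)=-1$, deducing $v_p(\Theta)=-1$ in the latter case from the inequality $v_p(\Theta)-2v_p(x)\leq 1$; your underlying observation that $1+v_p(x)-v_p(\Theta)\geq 0$ with equality only at the excluded corner $(v_p(x),v_p(\Theta))=(-1,0)$ is nonetheless correct and yields the same conclusion.
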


\begin{proof}
Assume that $m\leq i\leq r-1$. By Lemma~\ref{lemma-PQ-k leq m} we have
\begin{align*}
[F'_{m,i}]_{E'_1} &=p^i P_{m,i}\Big(\frac{v}{p},0\Big)-p^ix P_{m,i-m}\Big(\frac{v}{p},0\Big); \\
[F'_{m,i}]_{E'_2} &=\frac{p^i}{\Theta}Q_{m,i}\Big(\frac{v}{p},0\Big) +\frac{p^ix}{\Theta}\Big(P_{m,i}\Big(\frac{v}{p},0\Big) -Q_{m,i-m}\Big(\frac{v}{p},0\Big)\Big) -\frac{p^ix^2}{\Theta}P_{m,i-m}\Big(\frac{v}{p},0\Big).
\end{align*}

Since $P_{m,i}(T,0)$ is a monic polynomial of degree $i$ by Lemma~\ref{lemma: PQdegree}, we have
\begin{align*}
    p^iP_{m,i}\Big(\frac{v}{p},0\Big)
    \equiv v^i \pmod{\fm S'_\cO}.
\end{align*}
Similarly, as $\deg_T Q_{m,i}(T,0)\leq m-1$ by Lemma~\ref{lemma: PQdegree} and $v_p(\Theta)\leq 0$, we have
\begin{align*}
    \frac{p^i}{\Theta}Q_{m,i}\Big(\frac{v}{p},0\Big) \equiv 0 \pmod{\fm S'_\cO}.
\end{align*}

Moreover, we note that by Lemma~\ref{lemma: PQdegree} $\deg_T P_{m,i-m}(T,0)\leq m-1$, and that by Lemma~\ref{lemma: B_k,0 k leq m} $\deg_T P_{m,0}(T,0)=m-1$ with the leading coefficient $a_m\in\Z_{(p)}^{\times}$. Hence, we have
\begin{align*}
    p^ix P_{m,i-m}\Big(\frac{v}{p},0\Big)
    &\equiv
    \begin{cases}
        pxa_mv^{m-1}\pmod{\fm S'_\cO} & \mbox{if } i=m; \\
        0\pmod{\fm S'_\cO} & \mbox{otherwise}
    \end{cases}
\end{align*}
as $v_p(x)\geq -1$, and
\begin{align*}
    \frac{p^ix^2}{\Theta}P_{m,i-m}\Big(\frac{v}{p},0\Big)
    &\equiv
    \begin{cases}
        \frac{px^2}{\Theta}a_mv^{m-1}\pmod{\frac{px^2}{\Theta}\fm S'_\cO} & \mbox{if } i=m; \\
        0\pmod{\frac{px^2}{\Theta}\fm S'_\cO} & \mbox{otherwise.}
    \end{cases}
\end{align*}

Finally, it remains to deal with the term $$\frac{p^ix}{\Theta}\Big(P_{m,i}\Big(\frac{v}{p},0\Big) -Q_{m,i-m}\Big(\frac{v}{p},0\Big)\Big).$$ By Lemma~\ref{lemma: PQdegree} we have $\deg_T(P_{m,i}(T,0)-T^i)\leq m-1$ and $\deg_T(Q_{m,i-m}(T,0)-T^i)\leq m-1$, and so $\deg_T\left(P_{m,i}(T,0)-Q_{m,i-m}(T,0)\right)\leq m-1$ for all $m\leq i \leq r-1$. If we write $b_m$ for the coefficient of $T^{m-1}$ in $P_{m,m}(T,0)-Q_{m,0}(T,0)$, then
\begin{align*}
    \frac{p^ix}{\Theta}
    \Big(P_{m,i}\Big(\frac{v}{p},0\Big)-Q_{m,i-m}\Big(\frac{v}{p},0\Big)\Big)
    \equiv
    \begin{cases}
        \frac{px}{\Theta}b_mv^{m-1}\pmod{\fm S'_\cO} & \mbox{if } i=m; \\
        0 \pmod{\fm S'_\cO} & \mbox{otherwise,}
    \end{cases}
\end{align*}
as the inequalities $v_p(x)\geq -1$ and $v_p(\Theta)\leq 0$ imply that $v_p(px/\Theta)\geq 0$.

In fact, it is easy to see that $v_p(\Theta)-2v_p(x)\leq 1$ implies that $v_p(px/\Theta)>0$. Indeed, if $v_p(x)>-1$ then it is obvious that $v_p(px/\Theta)>0$ as $v_p(\Theta)\leq 0$. If $v_p(x)=-1$, then we have $v_p(\Theta)\leq -1$ from $v_p(\Theta)-2v_p(x)\leq 1$, and so we have $v_p(\Theta)=-1$, as $v_p(\Theta)\geq -1$.
Hence, we conclude that if $v_p(x)=-1$ then $v_p(px/\Theta)=1$.

Now, it is immediate that $F'_{m,i}\in\fM'$ for $m\leq i\leq r-1$ if and only if $v_p(\Theta)-2v_p(x)\leq 1$. It is also immediate that if the inequalities in the statement of Lemma~\ref{lemma k=m r=2m, m leq i leq r-1} hold then $\barF'_{m,i}$ for $m\leq i\leq r-1$ is computed as in the statement of Lemma~\ref{lemma k=m r=2m, m leq i leq r-1}.
\end{proof}

From Lemma~\ref{lemma k=m r=2m, 0 leq i leq m-1} and Lemma~\ref{lemma k=m r=2m, m leq i leq r-1}, it is easy to see that
$F'_{k,i}\in\fM'$ for $0\leq i\leq r-1$ if and only if the inequalities in \eqref{eq: condition for k=m r=2m} hold. Note that $v_p(\Theta)-2v_p(x)\leq 1$ together with $v_p(\Theta)\geq -1$ implies that $v_p(x)\geq -1$.

Finally, we need to check that $\Fil^r_{m}\cM'$ is generated by $\barF'_{m,m},\,\barF'_{m,0}$ over~ $\barS'_\F$.

\begin{proof}[Proof of Proposition~\ref{prop: Fil-Case-m}]
Assume that the inequalities in \eqref{eq: condition for k=m r=2m} hold, and note that $\barF'_{m,i}$ for all $0\leq i\leq r-1$ are computed in the preceding lemmas.

We note from Lemma~\ref{lemma k=m r=2m, 0 leq i leq m-1} and Lemma~\ref{lemma k=m r=2m, m leq i leq r-1} that
\begin{align*}
F'_{m,0}
    &\equiv
    v^mE'_2+p\Theta a_mv^{m-1}\bigg(E'_1+\frac{x}{\Theta}E'_2\bigg)
    \pmod{\fm\fM'};\\
F'_{m,m}
    &\equiv
    v^mE'_1-pxa_mv^{m-1}\bigg(E'_1+\frac{x}{\Theta}E'_2\bigg)
    \pmod{\fm\fM'},
\end{align*}
and so we have
\begin{align*}
    p\Theta\bigg(F'_{m,m}+\frac{x}{\Theta}F'_{m,0}\bigg)
    \equiv
    p\Theta v^m\bigg(E'_1+\frac{x}{\Theta}E'_2\bigg)
    \pmod{\fm\fM'}.
\end{align*}
Hence, if $1\leq i\leq m-1$ then we have
\begin{align*}
    F'_{m,i}\equiv v^iF'_{m,0}-p\Theta a_mv^{i-1}\bigg(F'_{m,m}+\dfrac{x}{\Theta}F'_{m,0}\bigg)  \pmod{\fm\fM'},
\end{align*}
and if $m+1\leq i\leq r-1$ then we have
\begin{align*}
    F'_{m,i} \equiv v^{i-m}F'_{m,m}+pxa_mv^{i-m-1}\bigg(F'_{m,m}+\dfrac{x}{\Theta}F'_{m,0}\bigg)  \pmod{\fm\fM'}.
\end{align*}

Now, it is also immediate that $u^r\barE'_1,\,u^r\barE'_2\in \barS'_\F(\barF'_{m,m},\barF'_{m,0})$, as $$v^rE'_1\equiv v^{m-1}F'_{m,m+1}\pmod{\fm\fM'}\quad\mbox{ and }\quad v^rE'_2\equiv v^{m-1}F'_{m,1}\pmod{\fm\fM'}.$$ Hence, we deduce that $\Fil^r_{m}\cM'=\barS'_\F(\barF'_{m,m},\barF'_{m,0})$. Now, by the same argument as in \eqref{eq: Nakayama}, we have
$\Fil^r_{m}\fM' =S'_\cO(F'_{m,m},F'_{m,0})+\Fil^pS'_\cO\fM'$, which completes the proof.
\end{proof}

\subsubsection{\textbf{In $\mathbf{Case}~(k)$ for $m+1\leq k \leq r-1$}}
In this section of paragraph, we describe the $S'_\cO$-generators of $\Fil^r_{k}\fM'$ and the $\barS'_\F$-generators of $\Fil^r_{k}\cM'$ in the case $\mathbf{Case}~(k)$ for $m+1\leq k<r$. In this case, it is enough to consider $x\in E$ with $v_p(x)<0$.

\begin{prop}\label{prop: Fil-Case-m+1-r-1}
Assume that $m+1\leq k\leq r-1$ and that $v_p(\Theta)\leq 0$ and $v_p(x)<0$. Then $F'_{k,i}\in\fM'$ for all $0\leq i\leq r-1$ if and only if
\begin{equation}\label{eq: condition for m+1 leq k < r}
v_p(x)\leq v_p(\Theta)\quad\mbox{ and }\quad 2k-r-1\leq v_p(\Theta)-2v_p(x)\leq 2k-r+1,
\end{equation}
in which case
$$\Fil^r_{k}\fM'=S'_\cO(F'_{k,k},F'_{k,0})+\Fil^pS'_\cO\fM'$$ and $\Fil^r_{k}\cM'=\barS'_\F(\barF'_{k,k},\barF'_{k,0})$ with
    \begin{align*}
    \barF'_{k,k} &=(u^k-p^{2k-r+1}xa_{r-k}u^{r-k-1})\barE'_1-\frac{p^{2k-r+1}x^2}{\Theta}a_{r-k}u^{r-k-1}E'_2;\\
    \barF'_{k,0} &=\bigg(\frac{\Theta}{x}u^{r-k}-\frac{\Theta}{p^{2k-r-1}x^2}a'_{r,k}u^{k-1}\bigg)\barE'_1 +u^{r-k}\barE'_2,
    \end{align*}
    where $a'_{r,k}:=0$ if $(r,k)=(2m+1,m+1)$ and $a'_{r,k}:=1/a_{r-k+1}$ otherwise.
\end{prop}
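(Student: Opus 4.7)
The plan is to mirror the three-lemma structure used in Propositions~\ref{prop: Fil-Case-1-r/2} and~\ref{prop: Fil-Case-m}, but now the key algebraic input is Lemma~\ref{lemma: m+1 to r} rather than Lemma~\ref{lemma-PQ-k leq m}, because the matrix $B_{k,x}$ genuinely depends on $x$ when $k\geq m+1$. Write $P_{k,i}(T,x)=\tilde P_{k,i}(T,x)/d_k(x)$ and $Q_{k,i}(T,x)=\tilde Q_{k,i}(T,x)/d_k(x)$. Since $d_k(x)$ is monic of degree $2k-r$ in $x$ and $v_p(x)<0$, we have $v_p(d_k(x))=(2k-r)v_p(x)$, and the $P_{k,i}$, $Q_{k,i}$ are controlled up to $O(\fm)$ correction by their top-$x$ parts described in Lemma~\ref{lemma: m+1 to r}.

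Next, I would split $0\leq i\leq r-1$ into the three ranges dictated by Lemma~\ref{lemma: m+1 to r}: $0\leq i\leq 2k-r-1$ (where $s_{k,i}=2k-r-1$), $2k-r\leq i\leq k-1$ (where $s_{k,i}=2k-r$), and $k\leq i\leq r-1$ (where $s_{k,i}=2k-r+1$). In each range, compute $[F'_{k,i}]_{E'_1}$ and $[F'_{k,i}]_{E'_2}$ using the normalized polynomials, keep only the leading-in-$x$ contribution and the $Q$-driven part, and use Lemma~\ref{lemma: a_k m+1 leq k}~(i)--(ii) to identify the leading $T$-coefficient. The three conditions obtained for $F'_{k,i}\in\fM'$ in the three ranges are, respectively, $v_p(x)\leq v_p(\Theta)$ (coming from $F'_{k,0}$ when $2k-r-1\geq 1$, i.e.\ $k>m+1$), the lower bound $v_p(\Theta)-2v_p(x)\geq 2k-r-1$ (coming from $F'_{k,k}$), and the upper bound $v_p(\Theta)-2v_p(x)\leq 2k-r+1$ (coming from $F'_{k,r-k}$). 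Putting these together yields \eqref{eq: condition for m+1 leq k < r}, and the explicit reductions $\barF'_{k,k},\barF'_{k,0}$ in the statement follow by substituting $P_{r-k,0}(T,0)=a_{r-k}T^{r-k-1}+\cdots$ and $P_{r-k,r-k}(T,0)=(1/a_{r-k+1})T^{r-k-1}+\cdots$ (using Lemma~\ref{lemma: PQFad'e}~(iii) together with Lemma~\ref{lemma-PQ-k leq m}).

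To finish the statement about $\Fil^r_{x,\Theta}\fM'$ itself, I would show, exactly as in the proof of Proposition~\ref{prop: Fil-Case-1-r/2}, that every $\barF'_{k,i}$ lies in $\barS'_\F(\barF'_{k,k},\barF'_{k,0})$, and that $u^r\barE'_1,u^r\barE'_2$ also lie there. This is a matter of solving the $2\times 2$ linear system: $\barF'_{k,k}$ and $\barF'_{k,0}$ span a rank-$2$ submodule with determinant a unit up to a power of $u$, so one can back-solve each $\barF'_{k,i}$ in the three ranges by multiplying by appropriate monomials in $u$. The conclusion $\Fil^r_{x,\Theta}\fM'=S'_\cO(F'_{k,k},F'_{k,0})+\Fil^pS'_\cO\fM'$ then follows by the same Nakayama argument as in \eqref{eq: Nakayama}.

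The main obstacle I expect is the boundary case $(r,k)=(2m+1,m+1)$, which accounts for the special value $a'_{r,k}=0$ in the statement. Here $r-k+1=m+1$ and Lemma~\ref{lemma: a_k m+1 leq k}~(v) gives $1/a_{m+1}=2H_m$, which is not automatically a unit in $\Z_{(p)}$; moreover the three ranges in Lemma~\ref{lemma: m+1 to r} degenerate since $2k-r-1=0$. I will have to track the leading $T^{k-1}$-coefficient of $[F'_{k,k}]_{E'_1}$ more carefully in this case, using Lemma~\ref{lemma: a_k m+1 leq k}~(iii)--(v) rather than~(i), and verify that the resulting $\barF'_{k,0}$ really does drop the $u^{k-1}$ term, which forces the convention $a'_{r,k}=0$. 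Once this boundary computation is carried out, the remaining verification of the Nakayama generation goes through without further modification.
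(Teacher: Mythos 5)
Your overall strategy matches the paper's: split $0\leq i\leq r-1$ into the three ranges dictated by Lemma~\ref{lemma: m+1 to r}, normalize by $d_k(x)$, and finish with a Nakayama argument. But two of your attributions are wrong in a way that would misdirect the computation if you carried it out literally.

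First, you have swapped and misassigned the sources of the two key inequalities. The lower bound $v_p(\Theta)-2v_p(x)\geq 2k-r-1$ does not come from $F'_{k,k}$; it comes from $F'_{k,0}$, specifically from the term $p^{r-k}\Theta\,\frac{x^{s}}{d_k(x)}P_{k,0,s}\!\left(\frac{v}{p}\right)$ with $s=2k-r-2$ in the expansion of $[F'_{k,0}]_{E'_1}$, whose reduction modulo $\fm$ produces the $\Theta/(p^{2k-r-1}x^2)$ coefficient of $u^{k-1}$ that you see in $\barF'_{k,0}$. In fact range 1 (i.e.\ $0\leq i\leq 2k-r-1$) yields both $v_p(x)\leq v_p(\Theta)$ and this lower bound simultaneously. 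Meanwhile the upper bound $v_p(\Theta)-2v_p(x)\leq 2k-r+1$ comes from $[F'_{k,k}]_{E'_2}$ in range 3 (i.e.\ $k\leq i\leq r-1$), not from $F'_{k,r-k}$; note that $F'_{k,r-k}$ lies in range 1 or range 2 since $r-k\leq k-1$, so it never contributes a new upper bound. Range 2 ($2k-r\leq i\leq k-1$) produces only the condition $v_p(x)\geq -2k+r-1$, which is implied by the other two.

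Second, your explanation of the boundary case $(r,k)=(2m+1,m+1)$ is off. The issue is not whether $1/a_{m+1}=2H_m$ is a unit --- it is in $\Z_{(p)}$ by Lemma~\ref{lemma: a_k m+1 leq k}~(v), and since $a'_{r,k}$ multiplies a coefficient that must already be integral, its being a unit plays no role. The reason $a'_{r,k}=0$ here is simply vacuity: when $(r,k)=(2m+1,m+1)$ we have $2k-r-2=-1<0$, so the sum $\sum_{s=0}^{2k-r-2}x^s P_{k,0,s}(T)/d_k(x)$ in $[F'_{k,0}]_{E'_1}$ is empty and the $u^{k-1}$ term never appears. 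Your instinct to ``track the leading $T^{k-1}$-coefficient of $[F'_{k,k}]_{E'_1}$ more carefully'' is aimed at the wrong object --- it is $[F'_{k,0}]_{E'_1}$ that matters --- and the device needed is not Lemma~\ref{lemma: a_k m+1 leq k}~(iii)--(v) but just the observation that the relevant index range is empty. A related point: you quote $P_{r-k,r-k}(T,0)=(1/a_{r-k+1})T^{r-k-1}+\cdots$, but $P_{r-k,r-k}(T,0)$ is monic of degree $r-k$ by Lemma~\ref{lemma: PQdegree}~(iii); the leading coefficient $-1/a_{r-k+1}$ you want is that of $P_{k,0,2k-r-2}(T)$ via Lemma~\ref{lemma: a_k m+1 leq k}~(iii)--(iv).
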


We prove this proposition by a series of lemmas, dividing the values of $i$ into three cases: $0\leq i\leq 2k-r-1$, $2k-r\leq i\leq k-1$, and $k\leq i\leq r-1$. We will frequently use the following: if $v_p(x)<0$ then
\begin{equation}\label{eq: congruence of d_k(x)}
    \frac{x^s}{d_k(x)}\equiv \frac{1}{x^{2k-r-s}} \pmod{\frac{1}{x^{2k-r-s}}\fm}
\end{equation}
by Lemma~\ref{lemma-QAinv}~(iii).

\begin{lemm}\label{lemma m+1 leq k leq r-1, 0 leq i leq 2k-r-1}
Keep the assumption of Proposition~\ref{prop: Fil-Case-m+1-r-1}. Then $F'_{k,i}\in\fM'$ for $0\leq i\leq 2k-r-1$ if and only if
\begin{align*}
    v_p(x)\leq v_p(\Theta)\quad\mbox{ and }\quad v_p(\Theta)-2v_p(x)\geq 2k-r-1,
\end{align*}
in which case we have
\begin{align*}
    F'_{k,0}&\equiv  \bigg(\dfrac{\Theta}{x}v^{r-k}-\dfrac{\Theta}{p^{2k-r-1}x^2}a_{r,k}'v^{k-1}\bigg)E'_1+v^{r-k}E'_2  \pmod{\fm\fM'};\\
    F'_{k,i}&\equiv  \frac{\Theta}{x}v^{i+r-k}E'_1+v^{i+r-k}E'_2 \pmod{\fm\fM'}\,\,\mbox{ if }1\leq i \leq 2k-r-1,
\end{align*}
where $a'_{r,k}:=0$ if $(r,k)=(2m+1,m+1)$ and $a'_{r,k}:=1/a_{r-k+1}$ otherwise.
\end{lemm}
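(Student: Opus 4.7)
The approach follows the template of the earlier lemmas in this section, but is complicated by the fact that for $m+1\leq k\leq r-1$ the polynomials $P_{k,i}(T,x)$ and $Q_{k,i}(T,x)$ depend non-trivially on $x$ through the denominator $d_k(x)$, which is monic of degree $2k-r$ in $x$ by Lemma~\ref{lemma-QAinv}. The first step is to substitute the normalized expansions
\begin{equation*}
P_{k,i}(T,x)=\frac{1}{d_k(x)}\sum_{s=0}^{2k-r-1}x^s P_{k,i,s}(T),\quad Q_{k,i}(T,x)=\frac{1}{d_k(x)}\sum_{s=0}^{2k-r}x^s Q_{k,i,s}(T)
\end{equation*}
into \eqref{eq: definition of F'_k,i}, and then to use Lemma~\ref{lemma: m+1 to r} to identify the top-$x$ coefficients as $P_{k,i,2k-r-1}(T)=Q_{k,i,2k-r}(T)=P_{r-k,i+r-k}(T,0)$, which by Lemma~\ref{lemma: PQdegree} together with Lemma~\ref{lemma: B_k,0 k leq m} is monic in $T$ of degree $i+r-k$ with $\Z_{(p)}$-coefficients. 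Applying the basic congruence \eqref{eq: congruence of d_k(x)}, the top-$x$ contribution to $[F'_{k,i}]_{E'_1}$ reduces modulo $\fm S'_\cO$ to $\frac{\Theta}{x}v^{i+r-k}$ (integral precisely when $v_p(\Theta)\geq v_p(x)$), while the top-$x$ contribution to $[F'_{k,i}]_{E'_2}$ reduces to $v^{i+r-k}$ unconditionally. For every other term in the double sum, a direct valuation count using $\deg_T P_{k,i,s}\leq k-1$ and the fact that all coefficients of $P_{k,i,s}$ and $Q_{k,i,s}$ lie in $\Z_{(p)}$ shows that, under the two inequalities $v_p(\Theta)\geq v_p(x)$ and $v_p(\Theta)-2v_p(x)\geq 2k-r-1$, the corresponding $v^j$-coefficient lies in $\fm S'_\cO$; the ``only if'' direction is then immediate, since failing either inequality produces a coefficient of strictly negative $v_p$.

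The one subtle point, and the only place where $i=0$ behaves differently from $1\leq i\leq 2k-r-1$, is the extraction of the additional $-\frac{\Theta}{p^{2k-r-1}x^2}a'_{r,k}v^{k-1}$ summand in $[F'_{k,0}]_{E'_1}$. This is precisely the $(s,j)=(2k-r-2,k-1)$ contribution, and it relies on Lemma~\ref{lemma: a_k m+1 leq k}~(iii), or on Lemma~\ref{lemma: a_k m+1 leq k}~(iv) in the edge case $(r,k)=(2m,m+1)$, to pin down $P_{k,0,2k-r-2}(T)=-\frac{1}{a_{r-k+1}}P_{k-1,k-1,2k-r-2}(T)$ as a polynomial of degree $k-1$ with leading coefficient $-1/a_{r-k+1}\in\Z_{(p)}^{\times}$; combined with $\frac{x^{2k-r-2}}{d_k(x)}\equiv\frac{1}{x^2}\pmod{\frac{1}{x^2}\fm}$ from \eqref{eq: congruence of d_k(x)}, this yields exactly the claimed summand. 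In the exceptional case $(r,k)=(2m+1,m+1)$ the range $0\leq s\leq 2k-r-1$ collapses to $s=0$ alone, so no such second-leading term exists and $a'_{r,k}=0$, consistent with the statement. The main obstacle will therefore be the bookkeeping required to isolate these two leading contributions to $[F'_{k,0}]_{E'_1}$ and to verify that every remaining $v^j$-coefficient lies in $\fm S'_\cO$ under the stated inequalities, which I expect to reduce to a uniform valuation estimate once the two leading cases have been separated out.
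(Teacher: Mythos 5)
Your proposal is correct and follows essentially the same route as the paper's own proof: decompose $[F'_{k,i}]_{E'_j}$ via the normalized $x$-expansion of $\tilde P_{k,i}$ and $\tilde Q_{k,i}$ from Lemma~\ref{lemma: m+1 to r}, identify the top-$x$ term as $P_{r-k,i+r-k}(T,0)$ (monic with $\Z_{(p)}$-coefficients by Lemmas~\ref{lemma: PQdegree} and~\ref{lemma: B_k,0 k leq m}), reduce $\frac{x^s}{d_k(x)}$ via~\eqref{eq: congruence of d_k(x)}, and single out the $(i,s,j)=(0,2k-r-2,k-1)$ term using Lemma~\ref{lemma: a_k m+1 leq k}~(iii)/(iv), noting that this term is absent when $(r,k)=(2m+1,m+1)$. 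The remaining valuation bookkeeping you defer is exactly what the paper carries out, and your reading of the ``only if'' direction is sound because the candidate leading contributions at $v^{i+r-k}$ and $v^{k-1}$ have strictly smaller $p$-adic valuation than every competing term, so no cancellation can rescue integrality.
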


\begin{proof}
Assume that $0\leq i\leq 2k-r-1$. By Lemma~\ref{lemma: m+1 to r} we have
\begin{align*}
[F'_{k,i}]_{E'_1}&=
    p^{i+r-k}\Theta\frac{x^{2k-r-1}}{d_k(x)}P_{r-k,i+r-k}\Big(\frac{v}{p},0\Big)
    +p^{i+r-k}\Theta\sum_{s=0}^{2k-r-2}\frac{x^s}{d_k(x)} P_{k,i,s}\Big(\frac{v}{p}\Big);\\
[F'_{k,i}]_{E'_2}&=p^{i+r-k}\sum_{s=0}^{2k-r}
    \frac{x^s}{d_k(x)} Q_{k,i,s}\Big(\frac{v}{p}\Big)
\end{align*}

As $P_{r-k,i+r-k}(T,0)$ is a monic polynomial of degree $i+r-k$ and $\deg_xd_k(x)=2k-r$, we have
\begin{align*}
    p^{i+r-k}\Theta\frac{x^{2k-r-1}}{d_k(x)}P_{r-k,i+r-k}\Big(\frac{v}{p},0\Big)
    &\equiv
    \frac{\Theta}{x}p^{i+r-k}P_{r-k,i+r-k}\Big(\frac{v}{p},0\Big)\pmod{\frac{\Theta}{x}\fm S'_\cO}\\
    &\equiv \frac{\Theta}{x}v^{i+r-k} \pmod{\frac{\Theta}{x}\fm S'_\cO}
\end{align*}
where the first congruence is due to \eqref{eq: congruence of d_k(x)}. (Note that if $k=m+1$ and $r=2m+1$ then we have $2k-r-1=0$.)

We need to compute the remaining terms of $[F'_{k,i}]_{E'_1}$ if $(r,k)\neq(2m+1,m+1)$. We note that if $(r,k)\neq(2m+1,m+1)$ then
\begin{itemize}[leftmargin=*]
  \item $\deg P_{k,i,s}(T)\leq k-1$ for $s\leq 2k-r-2$ by Lemma~\ref{lemma: PQdegree};
  \item $\deg_TP_{k,0,2k-r-2}(T)=k-1$ with the leading coefficient $-1/a_{r-k+1}\in\Z_{(p)}^\times$ by (iii) and (iv) of  Lemma~\ref{lemma: a_k m+1 leq k}.
\end{itemize}
Hence, if $(r,k)\neq (2m+1,m+1)$ and $s\leq 2k-r-2$ then we have
\begin{align*}
    p^{i+r-k}\Theta\frac{x^s}{d_k(x)} P_{k,i,s}\Big(\frac{v}{p}\Big)
    &\equiv
    \frac{\Theta}{p^{2k-r-1-i}x^{2k-r-s}} p^{k-1} P_{k,i,s}\Big(\frac{v}{p}\Big)\\
    &\equiv
    \begin{cases}
        -\frac{\Theta}{p^{2k-r-1}x^2}\frac{1}{a_{r-k+1}}v^{k-1}
         & \mbox{if } (i,s)=(0,2k-r-2);\\
        0 & \mbox{otherwise}
    \end{cases}
\end{align*}
modulo $\frac{\Theta}{p^{2k-r-1}x^2}\fm S'_\cO$, where the first congruence is due to \eqref{eq: congruence of d_k(x)}.

As $v_p(x^s/d_k(x))\geq 0$ for $s\leq 2k-r$
\begin{align*}
Q_{k,i}(T,x)=\sum_{s=0}^{2k-r}\frac{x^s}{d_k(x)}Q_{k,i,s}(T)
\in\cO[T].
\end{align*}
Moreover, $Q_{k,i}(T,x)$ is a monic polynomial of degree $i+r-k$ in $T$ by Lemma~\ref{lemma: PQdegree}, and so we have
\begin{align*}
    [F'_{k,i}]_{E'_2}
    =p^{i+r-k}Q_{k,i}\Big(\frac{v}{p},x\Big)
    \equiv v^{i+r-k} \pmod{\fm S'_\cO}.
\end{align*}

Now, it is immediate that $F'_{k,i}\in\fM'$ for $0\leq i\leq 2k-r-1$ if and only if $v_p(x)\leq v_p(\Theta)$ and $v_p(\Theta)-2v_p(x)\geq 2k-r-1$. It is also immediate that if $v_p(x)\leq v_p(\Theta)$ and $v_p(\Theta)-2v_p(x)\geq 2k-r-1$ then $\barF'_{k,i}$ for $0\leq i\leq 2k-r-1$ is computed as in the statement of Lemma~\ref{lemma m+1 leq k leq r-1, 0 leq i leq 2k-r-1}.
\end{proof}

\begin{lemm}\label{lemma m+1 leq k leq r-1, 2k-r leq i leq k-1}
Keep the assumption of Proposition~\ref{prop: Fil-Case-m+1-r-1}, and assume further that $v_p(x)\leq v_p(\Theta)$. Then $F'_{k,i}\in\fM'$ for $2k-r\leq i\leq k-1$ if and only if
\begin{align*}
    v_p(x)\geq -2k+r-1,
\end{align*}
in which case we have
\begin{align*}
F'_{k,2k-r} &\equiv  p^{2k-r+1}\Theta a_{r-k}v^{r-k-1}E'_1 +\big(p^{2k-r+1}xa_{r-k}v^{r-k-1}+v^k\big)E'_2 \pmod{\fm\fM'};\\
F'_{k,i} &\equiv v^{i+r-k}E'_2 \pmod{\fm\fM'}\,\,\mbox{ if }2k-r+1\leq i\leq k-1.
\end{align*}
\end{lemm}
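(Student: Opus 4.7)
The plan is to parallel the proof of Lemma~\ref{lemma m+1 leq k leq r-1, 0 leq i leq 2k-r-1}, adapted to the new range $2k-r\le i\le k-1$. The key input is Lemma~\ref{lemma: m+1 to r}: for such $i$ one has $s_{k,i}=2k-r$ and $t_{k,i}=2k-r+1$, together with the identifications $P_{k,i,2k-r}(T)=Q_{k,i,2k-r+1}(T)=P_{r-k,\,i+r-2k}(T,0)$, while Lemma~\ref{lemma: a_k m+1 leq k}(ii) guarantees that $Q_{k,i,2k-r}(T)$ is monic of degree $i+r-k$.

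First I will analyze
\[
[F'_{k,i}]_{E'_1}=p^{i+r-k}\Theta\sum_{s=0}^{2k-r}\frac{x^s}{d_k(x)}P_{k,i,s}\!\Big(\frac{v}{p}\Big)
\]
term by term using the congruence \eqref{eq: congruence of d_k(x)}. The leading $s=2k-r$ term reduces modulo $\fm$ to $p^{i+r-k}\Theta\,P_{r-k,\,i+r-2k}(v/p,0)$, a polynomial in $v$ of degree at most $r-k-1$ by Lemma~\ref{lemma: PQdegree}. For $i=2k-r$ its leading coefficient is $a_{r-k}\in\Z_{(p)}^{\times}$ (Lemma~\ref{lemma: B_k,0 k leq m}), producing exactly the $p^{2k-r+1}\Theta a_{r-k}v^{r-k-1}$ that appears in the statement; for $i>2k-r$ an extra factor of $p$ forces this contribution to vanish modulo $\fm$. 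The remaining terms with $s<2k-r$ acquire a factor $x^{s-(2k-r)}$ of strictly negative $p$-adic valuation, and the hypothesis $v_p(x)\ge r-2k-1$, together with the running $v_p(x)\le v_p(\Theta)$, is precisely what is needed to keep them in $S'_\cO$.

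A symmetric argument on $[F'_{k,i}]_{E'_2}=p^{i+r-k}\sum_{s=0}^{2k-r+1}\frac{x^s}{d_k(x)}Q_{k,i,s}(v/p)$ will then finish the computation: monicity of $Q_{k,i,2k-r}(T)$ produces the monomial $v^{i+r-k}$ modulo $\fm$, while the new term $s=2k-r+1$ (absent on the $E'_1$ side) reproduces the $p^{2k-r+1}xa_{r-k}v^{r-k-1}$ correction in the $i=2k-r$ formula and vanishes modulo $\fm$ for $i>2k-r$ by the same $p$-power slack. For the converse direction, inspecting the $s=2k-r-1$ contributions shows that $v_p(x)<r-2k-1$ forces the coefficient of $v^{k-1}$ in some $[F'_{k,i}]_{E'_j}$ to have strictly negative $p$-adic valuation, so the inequality is sharp.

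The main bookkeeping obstacle will be matching the degree bounds on $P_{k,i,s}(T)$ and $Q_{k,i,s}(T)$ coming from Lemmas~\ref{lemma: PQdegree}, \ref{lemma: m+1 to r}, and~\ref{lemma: a_k m+1 leq k} against the mixed powers of $p$ and $x$ produced by the reduction $x^s/d_k(x)\equiv x^{s-(2k-r)}$; once that accounting is done, the displayed reductions $\barF'_{k,i}$ drop out by reading off the surviving monomials.
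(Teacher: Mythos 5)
Your overall plan — expanding $F'_{k,i}$ through the $\tilde P_{k,i}=\sum_s x^s P_{k,i,s}$ decomposition, isolating the top $s$-term via Lemma~\ref{lemma: m+1 to r}, and reducing modulo $\fm$ with the congruence \eqref{eq: congruence of d_k(x)} — matches the paper's, and the identifications you quote from Lemma~\ref{lemma: m+1 to r} and Lemma~\ref{lemma: a_k m+1 leq k} are the right ingredients. However, the bookkeeping in the middle of your sketch is wrong in ways that would derail the argument if carried out as written.

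First, the sign claim is backwards: for $s<2k-r$ the factor $x^{s-(2k-r)}$ has exponent $s-(2k-r)\le -1<0$, and since $v_p(x)<0$ the valuation $v_p(x^{s-(2k-r)})=(s-(2k-r))v_p(x)$ is \emph{strictly positive}, not negative. Second, because of this, the tail terms $s<2k-r$ land in $\fm S'_\cO$ using only the running hypothesis $v_p(x)\le v_p(\Theta)$ (together with the $p$-power slack $i+r-k-(k-1)\ge 1$); the inequality $v_p(x)\ge -2k+r-1$ plays no role there. That inequality is instead what makes the \emph{leading} pieces integral: the $s=2k-r$ contribution on the $E'_1$ side and the $s=2k-r+1$ contribution on the $E'_2$ side reduce to $p^{2k-r+1}\Theta\,a_{r-k}\,v^{r-k-1}$ and $p^{2k-r+1}x\,a_{r-k}\,v^{r-k-1}$ when $i=2k-r$, and since $v_p(\Theta)\ge v_p(x)$ the binding constraint is $v_p(p^{2k-r+1}x)\ge 0$, i.e.\ $v_p(x)\ge -2k+r-1$. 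Third, your converse accordingly looks in the wrong place: if $v_p(x)<-2k+r-1$, the non-integrality is detected at the coefficient of $v^{r-k-1}$ (not $v^{k-1}$) in $[F'_{k,2k-r}]_{E'_2}$, coming from the top $s=2k-r+1$ piece, not from $s=2k-r-1$. As it stands, the sketch does not identify the correct source of the inequality or of its sharpness, so it would need to be reworked along the lines above.
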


\begin{proof}
Assume that $2k-r\leq i\leq k-1$. By Lemma~\ref{lemma: m+1 to r} we have
\begin{align*}
[F'_{k,i}]_{E'_1} &=
    p^{i+r-k}\Theta\frac{x^{2k-r}}{d_k(x)}P_{r-k,i+r-2k}\Big(\frac{v}{p},0\Big)
    +p^{i+r-k}\Theta\sum_{s=0}^{2k-r-1}\frac{x^s}{d_k(x)}P_{k,i,s}\Big(\frac{v}{p}\Big);\\
[F'_{k,i}]_{E'_2} &=p^{i+r-k}
    \frac{x^{2k-r+1}}{d_k(x)}P_{r-k,i+r-2k}\Big(\frac{v}{p},0\Big)
    +p^{i+r-k}\sum_{s=0}^{2k-r}\frac{x^s}{d_k(x)}Q_{k,i,s}\Big(\frac{v}{p}\Big).
\end{align*}

As $\deg_T P_{r-k,i+r-2k}(T,0)\leq r-k-1$ by Lemma~\ref{lemma: PQdegree} and $\deg_TP_{r-k,0}(T,0)=r-k-1$ with the leading coefficient $a_{r-k}\in\Z_{(p)}^\times$ by Lemma~\ref{lemma: B_k,0 k leq m}, we have
\begin{align*}
    &p^{i+r-k}\Theta\frac{x^{2k-r}}{d_k(x)} P_{r-k,i+r-2k}\Big(\frac{v}{p},0\Big)\\
    &\equiv
    p^{i+r-k}\Theta P_{r-k,i+r-2k}\Big(\frac{v}{p},0\Big)\pmod{p^{2k-r+1}\Theta\fm S'_\cO}\\
    &\equiv
    \begin{cases}
        p^{2k-r+1}\Theta a_{r-k}v^{r-k-1} \pmod{p^{2k-r+1}\Theta\fm S'_\cO}& \mbox{if } i=2k-r; \\
        0 \pmod{p^{2k-r+1}\Theta\fm S'_\cO}& \mbox{otherwise}
    \end{cases}
\end{align*}
and
\begin{align*}
    &p^{i+r-k}\frac{x^{2k-r+1}}{d_k(x)} P_{r-k,i+r-2k}\Big(\frac{v}{p},0\Big)\\
    &\equiv
    p^{i+r-k}xP_{r-k,i+r-2k}\Big(\frac{v}{p},0\Big)\pmod{p^{2k-r+1}x\fm S'_\cO}\\
    &\equiv
    \begin{cases}
        p^{2k-r+1}xa_{r-k}v^{r-k-1}\pmod{p^{2k-r+1}x\fm S'_\cO} & \mbox{if } i=2k-r; \\
        0\pmod{p^{2k-r+1}x\fm S'_\cO} & \mbox{otherwise,}
    \end{cases}
\end{align*}
where the first congruence in each case is due to \eqref{eq: congruence of d_k(x)}. Moreover, as $\deg_T P_{k,i,s}(T)\leq k-1$ for $0\leq s\leq 2k-r-1$ by Lemma~\ref{lemma: PQdegree} we have
\begin{align*}
    p^{i+r-k}\Theta\frac{x^s}{d_k(x)}P_{k,i,s}\Big(\frac{v}{p}\Big)
    \equiv p^{i+r-k}\frac{\Theta}{x^{2k-r-s}}P_{k,i,s}\Big(\frac{v}{p}\Big)\equiv 0 \pmod{\frac{\Theta}{x}\fm S'_\cO}.
\end{align*}

Finally, by Lemma~\ref{lemma: m+1 to r} we have
\begin{align*}
    Q_{k,i}(T,x)-\frac{x^{2k-r+1}}{d_k(x)}P_{r-k,i+r-2k}(T,0)=
    \sum_{s=0}^{2k-r}\frac{x^s}{d_k(x)}Q_{k,i,s}(T)
    \in\cO[T],
\end{align*}
as $v_p(x^s/d_k(x))\geq 0$ for $s\leq 2k-r$, which is a monic polynomial of degree $i+r-k$ by Lemma~\ref{lemma: PQdegree}, and so
\begin{align*}
    p^{i+r-k}\sum_{s=0}^{2k-r}\frac{x^s}{d_k(x)}Q_{k,i,s}\Big(\frac{v}{p}\Big)
    \equiv v^{i+r-k} \pmod{\fm S'_\cO}.
\end{align*}

Hence, under the assumption $v_p(x)\leq v_p(\Theta)$, we have $F'_{k,i}\in\fM'$ for $2k-r\leq i\leq k-1$ if and only if $v_p(x)\geq -2k+r-1$. It is also immediate that if $v_p(x)\leq v_p(\Theta)$ and $v_p(x)\geq -2k+r-1$ then $\barF'_{k,i}$ for $2k-r\leq i\leq k-1$ is computed as in the statement of Lemma~\ref{lemma m+1 leq k leq r-1, 2k-r leq i leq k-1}.
\end{proof}

\begin{lemm}\label{lemma m+1 leq k leq r-1, k leq i leq r-1}
Keep the assumption of Proposition~\ref{prop: Fil-Case-m+1-r-1}. Then $F'_{k,i}\in\fM'$ for $k\leq i\leq r-1$ if and only if
\begin{align*}
    v_p(x)\geq -2k+r-1 \quad\mbox{ and }\quad v_p(\Theta)-2v_p(x)\leq 2k-r+1,
\end{align*}
in which case
\begin{align*}
F'_{k,k} &\equiv \big(v^k-p^{2k-r+1}xa_{r-k}v^{r-k-1}\big)E'_1 -\frac{p^{2k-r+1}x^2}{\Theta}a_{r-k}v^{r-k-1}E'_2 \pmod{\fm\fM'};\\
F'_{k,i} &\equiv v^iE'_1\pmod{\fm\fM'}\,\,\mbox{ if }k+1\leq i\leq r-1.
\end{align*}
\end{lemm}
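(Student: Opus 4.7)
The proof will follow the same template as Lemmas~\ref{lemma m+1 leq k leq r-1, 0 leq i leq 2k-r-1} and~\ref{lemma m+1 leq k leq r-1, 2k-r leq i leq k-1}. For $k\leq i\leq r-1$, the plan is to first use Lemma~\ref{lemma: m+1 to r} to decompose
\begin{equation*}
[F'_{k,i}]_{E'_1}=p^i\sum_{s=0}^{2k-r+1}\frac{x^s}{d_k(x)}P_{k,i,s}\Big(\frac{v}{p}\Big),\quad [F'_{k,i}]_{E'_2}=\frac{p^i}{\Theta}\sum_{s=0}^{2k-r+2}\frac{x^s}{d_k(x)}Q_{k,i,s}\Big(\frac{v}{p}\Big),
\end{equation*}
and then analyze each term mod $\fm$ via the approximation $\frac{x^s}{d_k(x)}\equiv \frac{1}{x^{2k-r-s}}$ from \eqref{eq: congruence of d_k(x)}, combined with the degree and leading-coefficient information of Lemma~\ref{lemma: PQdegree} and Lemma~\ref{lemma: a_k m+1 leq k}.

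The dominant contributions to $[F'_{k,i}]_{E'_1}$ come from two sources. The $s=2k-r$ term gives $\frac{x^{2k-r}}{d_k(x)}\equiv 1 \pmod{\fm}$, and by Lemma~\ref{lemma: a_k m+1 leq k}(i) the polynomial $P_{k,i,2k-r}(T)$ is monic of degree $i$, which after multiplication by $p^i$ yields the leading term $v^i$. The $s=2k-r+1$ term gives $\frac{x^{2k-r+1}}{d_k(x)}\equiv x \pmod{x\fm}$, and $P_{k,i,2k-r+1}(T)=-P_{r-k,i-k}(T,0)$ by Lemma~\ref{lemma: m+1 to r}. For $i=k$ specifically, Lemma~\ref{lemma: B_k,0 k leq m}(iii) applied with $k\to r-k$ says $P_{r-k,0}(T,0)$ has degree exactly $r-k-1$ with leading coefficient $a_{r-k}\in\Z_{(p)}^\times$, producing the monomial $-p^{2k-r+1}xa_{r-k}v^{r-k-1}$. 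This forces $v_p(x)\geq -2k+r-1$ for integrality; for $i>k$ the degree bound $\deg_T P_{r-k,i-k}(T,0)\leq r-k-1$ from Lemma~\ref{lemma: PQdegree}(ii) yields only strictly weaker constraints on $v_p(x)$.

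The analysis of $[F'_{k,i}]_{E'_2}$ is parallel. The $s=2k-r+2$ term gives $\frac{x^{2k-r+2}}{d_k(x)}\equiv x^2 \pmod{x^2\fm}$ and $Q_{k,i,2k-r+2}(T)=P_{k,i,2k-r+1}(T)=-P_{r-k,i-k}(T,0)$, contributing $-\frac{p^{2k-r+1}x^2}{\Theta}a_{r-k}v^{r-k-1}$ to $F'_{k,k}$; this imposes the sharp condition $v_p(\Theta)-2v_p(x)\leq 2k-r+1$. All remaining terms for $s\leq 2k-r-1$ (in the $E'_1$-coefficient) and $s\leq 2k-r$ (in the $E'_2$-coefficient) are absorbed into $\fm S'_\cO$ under the conditions \eqref{eq: condition for m+1 leq k < r}, by direct valuation checks using the integrality $v_p(x^s/d_k(x))\geq 0$ for $s\leq 2k-r$ and the degree bounds from Lemma~\ref{lemma: PQdegree}; for $k+1\leq i\leq r-1$ only the monic $v^i$ term survives in $[F'_{k,i}]_{E'_1}$ and nothing survives in $[F'_{k,i}]_{E'_2}$.

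The main (though still routine) obstacle will be the bookkeeping: verifying that under the pair of inequalities $v_p(x)\geq -2k+r-1$ and $v_p(\Theta)-2v_p(x)\leq 2k-r+1$, every non-leading monomial from every $(i,s)$ pair with $k\leq i\leq r-1$ actually vanishes modulo $\fm S'_\cO$, and in particular that no intermediate term secretly contributes to the mod-$\fm$ reduction of $\barF'_{k,k}$ beyond what is listed. This amounts to systematic case analysis of each coefficient $\frac{p^i x^s}{d_k(x)}P_{k,i,s}(v/p)$ and $\frac{p^i x^s}{\Theta d_k(x)}Q_{k,i,s}(v/p)$ together with the degree bounds, exactly in the spirit of the two preceding lemmas.
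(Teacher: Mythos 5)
Your proposal is correct and follows the same route as the paper's proof: decompose $[F'_{k,i}]_{E'_1}$ and $[F'_{k,i}]_{E'_2}$ using Lemma~\ref{lemma: m+1 to r}, isolate the top-degree-in-$x$ terms $P_{k,i,2k-r+1}(T)=Q_{k,i,2k-r+2}(T)=-P_{r-k,i-k}(T,0)$, approximate $x^s/d_k(x)$ via \eqref{eq: congruence of d_k(x)}, and read off the $i=k$ contributions as the source of both inequalities. The one place where you genuinely diverge from the paper is in how you control $p^i\sum_{s=0}^{2k-r}\frac{x^s}{d_k(x)}P_{k,i,s}(v/p)$: the paper observes that this whole sum equals $P_{k,i}(T,x)+\frac{x^{2k-r+1}}{d_k(x)}P_{r-k,i-k}(T,0)$ evaluated at $T=v/p$, which is a single monic polynomial of degree $i$ over $\cO$ by Lemma~\ref{lemma: PQdegree}~(iii), so that $p^i$ times it is $\equiv v^i$ in one stroke; you instead handle the $s=2k-r$ term via the monicity statement of Lemma~\ref{lemma: a_k m+1 leq k}~(i) and kill the $s\leq 2k-r-1$ terms separately. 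Both work, but the paper's grouping avoids invoking Lemma~\ref{lemma: a_k m+1 leq k}~(i) at all.

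One piece of bookkeeping you did not address, and should: the $s=2k-r+1$ term of $[F'_{k,i}]_{E'_2}$ is not covered by your "$s\leq 2k-r$" catch-all nor by the $s=2k-r+2$ analysis. For this term $\frac{x^s}{d_k(x)}\equiv x$ has \emph{negative} valuation, so the argument that works for $s\le 2k-r$ does not apply verbatim. It still vanishes mod $\fm$ because $\deg_T Q_{k,i,s}\le r-k-1$ gives $\frac{p^i}{\Theta}\cdot x\cdot Q_{k,i,2k-r+1}(v/p)$ valuation at least $2k-r+1+v_p(x)-v_p(\Theta)\geq -v_p(x)>0$ (using $v_p(\Theta)-2v_p(x)\leq 2k-r+1$), but this needs to be said explicitly; it is not one of the "absorbed" terms your valuation heuristic covers.

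Finally, your necessity argument is implicit but correct: the $u^{r-k-1}$-coefficient of $[F'_{k,k}]_{E'_1}$ (resp.\ $[F'_{k,k}]_{E'_2}$) is dominated by the $s=2k-r+1$ (resp.\ $s=2k-r+2$) contribution, since all lower-$s$ contributions to that same monomial have strictly larger $p$-adic valuation. You should state this to justify the "only if".
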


\begin{proof}
Assume that $k\leq i\leq r-1$. By Lemma~\ref{lemma: m+1 to r} we have
\begin{align*}
[F'_{k,i}]_{E'_1}&=
    -p^i\frac{x^{2k-r+1}}{d_k(x)}P_{r-k,i-k}\Big(\frac{v}{p},0\Big)
    +p^i\sum_{s=0}^{2k-r}\frac{x^s}{d_k(x)}P_{k,i,s}\Big(\frac{v}{p}\Big);\\
[F'_{k,i}]_{E'_2}&=
    -\frac{p^i}{\Theta}\frac{x^{2k-r+2}}{d_k(x)}P_{r-k,i-k}\Big(\frac{v}{p},0\Big)
    +\frac{p^i}{\Theta}\sum_{s=0}^{2k-r+1}\frac{x^s}{d_k(x)}Q_{k,i,s}\Big(\frac{v}{p}\Big).
\end{align*}

By Lemma~\ref{lemma: PQdegree} $\deg_T P_{r-k,j-k}(T,0)\leq r-k-1$ for $k\leq j\leq r-1$ and by Lemma~\ref{lemma: B_k,0 k leq m} $\deg_TP_{r-k,0}(T,0)=r-k-1$ with the leading coefficient $a_{r-k}\in\Z_{(p)}^\times$, and so we have
\begin{align*}
    p^i \frac{x^{2k-r+1}}{d_k(x)} P_{r-k,i-k}\Big(\frac{v}{p},0\Big)
    &\equiv
    p^{i-r+k+1}x p^{r-k-1}P_{r-k,i-k}\Big(\frac{v}{p},0\Big)\pmod{p^{2k-r+1}x\fm S'_\cO}\\
    &\equiv
    \begin{cases}
        p^{2k-r+1}xa_{r-k}v^{r-k-1}\pmod{p^{2k-r+1}x\fm S'_\cO} & \mbox{if } i=k; \\
        0\pmod{p^{2k-r+1}x\fm S'_\cO} & \mbox{otherwise}
    \end{cases}
\end{align*}
and
\begin{align*}
    \frac{p^i}{\Theta}\frac{x^{2k-r+2}}{d_k(x)} P_{r-k,i-k}\Big(\frac{v}{p},0\Big)
    &\equiv
    \frac{p^{i-r+k+1}x^2}{\Theta}p^{r-k-1}P_{r-k,i-k}\Big(\frac{v}{p},0\Big)\pmod{\frac{p^{2k-r+1}x^2}{\Theta}\fm S'_\cO}\\
    &\equiv
    \begin{cases}
        \frac{p^{2k-r+1}x^2}{\Theta}a_{r-k}v^{r-k-1}\pmod{\frac{p^{2k-r+1}x^2}{\Theta}\fm S'_\cO} & \mbox{if } i=k; \\
        0\pmod{\frac{p^{2k-r+1}x^2}{\Theta}\fm S'_\cO} & \mbox{otherwise,}
    \end{cases}
\end{align*}
where the first congruence in each case is due to \eqref{eq: congruence of d_k(x)}. Moreover, by Lemma~\ref{lemma: m+1 to r} we have
\begin{align*}
    P_{k,i}(T,x)+\frac{x^{2k-r+1}}{d_k(x)}P_{r-k,i-k}(T,0)
    =\sum_{s=0}^{2k-r}\frac{x^s}{d_k(x)}P_{k,i,s}(T) \in\cO[T],
\end{align*}
as $v_p(x^s/d_k(x))\geq 0$ for $s\leq 2k-r$, which is a monic polynomial of degree $i$ by Lemma~\ref{lemma: PQdegree}, and so
\begin{align*}
    p^i\sum_{s=0}^{2k-r}\frac{x^s}{d_k(x)}P_{k,i,s}\Big(\frac{v}{p}\Big)
    \equiv v^i \pmod{\fm S'_\cO}.
\end{align*}

Finally, we have $\deg Q_{k,i,s}(T)\leq r-k-1$ by Lemma~\ref{lemma: PQdegree}, and so if $s\leq 2k-r+1$ then we have
\begin{align*}
    \frac{p^i}{\Theta}\frac{x^s}{d_k(x)}Q_{k,i,s}\Big(\frac{v}{p}\Big)\equiv \frac{p^{i}}{\Theta}\frac{1}{x^{2k-r-2}}Q_{k,i,s}\Big(\frac{v}{p}\Big) \equiv
    0 \pmod{\frac{p^{2k-r+1}x^2}{\Theta}\fm S'_\cO}.
\end{align*}

Hence, we have $F'_{k,i}\in\fM'$ for $k\leq i\leq r-1$ if and only if $v_p(x)\geq -2k+r-1$ and $v_p(X)-2v_p(x)\leq 2k-r+1$. It is also immediate that if $v_p(x)\geq -2k+r-1$ and $v_p(X)-2v_p(x)\leq 2k-r+1$ then $\barF'_{k,i}$ for $k\leq i\leq r-1$ is computed as in the statement of Lemma~\ref{lemma m+1 leq k leq r-1, k leq i leq r-1}.
\end{proof}

From Lemma~\ref{lemma m+1 leq k leq r-1, 0 leq i leq 2k-r-1}, Lemma~\ref{lemma m+1 leq k leq r-1, 2k-r leq i leq k-1}, and Lemma~\ref{lemma m+1 leq k leq r-1, k leq i leq r-1}, it is easy to see that $F'_{k,i}\in\fM'$ for $0\leq i\leq r-1$ if and only if the inequalities in \eqref{eq: condition for m+1 leq k < r} hold. Note that $v_p(\Theta)-2v_p(x)\leq 2k-r+1$ together with $v_p(x)\leq v_p(\Theta)$ implies $ v_p(x)\geq -2k+r-1$.

Finally, we need to check that $\Fil^r_{k}\cM'$ is generated by $\barF'_{k,k},\,\barF'_{k,0}$ over~ $\barS'_\F$.

\begin{proof}[Proof of Proposition~\ref{prop: Fil-Case-m+1-r-1}]
Assume that the inequalities in \eqref{eq: condition for m+1 leq k < r} hold, and note that $\barF'_{k,i}$ for all $0\leq i\leq r-1$ are computed in the preceding lemmas.

We first note that
\begin{equation}\label{eq: congruence for m+1 leq k < r, 1}
    \frac{\Theta}{p^{2k-r-1}x^2}F'_{k,k}
    \equiv
    \frac{\Theta}{p^{2k-r-1}x^2}v^kE'_1
    \pmod{\fm\fM'}
\end{equation}
and
\begin{equation}\label{eq: congruence for m+1 leq k < r, 2}
    vF'_{k,k}+\frac{p^{2k-r+1}x^2}{\Theta}a_{r-k}F'_{k,0}
    \equiv v^{k+1}E'_1  \pmod{\fm\fM'}.
\end{equation}

Hence, if $1\leq i\leq 2k-r-1$ then by \eqref{eq: congruence for m+1 leq k < r, 1} we have
\begin{align*}
F'_{k,i}
    &\equiv
    v^iF'_{k,0}+\frac{\Theta}{p^{2k-r-1}x^2}a'_{r,k}v^{i+k-1}E'_1\pmod{\fm\fM'}\\
    &\equiv
    v^iF'_{k,0}+\frac{\Theta}{p^{2k-r-1}x^2}a'_{r,k}v^{i-1}F'_{k,k}
    \pmod{\fm\fM'},
\end{align*}
and if $i=2k-r$ then by \eqref{eq: congruence for m+1 leq k < r, 1} we have
\begin{align*}
F'_{k,2k-r}
    &\equiv
    -\frac{\Theta}{x}F'_{k,k}
    +\frac{\Theta}{x}v^kE'_1+v^kE'_2\pmod{\fm\fM'}\\
    &\equiv
    -\frac{\Theta}{x}F'_{k,k}
    +v^{2k-r}\bigg(F'_{k,0}+\frac{\Theta}{p^{2k-r-1}x^2}a'_{r,k}v^{k-1}E'_1\bigg)\pmod{\fm\fM'}\\
    &\equiv
    -\bigg(\frac{\Theta}{x}-\frac{\Theta}{p^{2k-r-1}x^2}a'_{r,k}v^{2k-r-1}\bigg)F'_{k,k}
    +v^{2k-r}F'_{k,0}
    \pmod{\fm\fM'}.
\end{align*}
Moreover, if $2k-r+1\leq i\leq k-1$ then by \eqref{eq: congruence for m+1 leq k < r, 1} and \eqref{eq: congruence for m+1 leq k < r, 2}
\begin{align*}
F'_{k,i}&\equiv v^iF'_{k,0}-\frac{\Theta}{x}v^{i+r-k}E_1'+\frac{\Theta}{p^{2k-r-1}x^2}a'_{r,k}v^{i+k-1}E_1'\pmod{\fm\fM'}\\
&\equiv v^iF'_{k,0}-\frac{\Theta}{x}v^{i+r-2k}F'_{k,k} -p^{2k-r+1}xa_{r-k}v^{i+r-2k-1}F'_{k,0} \\
&\hspace{11em} +\frac{\Theta}{p^{2k-r-1}x^2}a'_{r,k}v^{i-1}F_{k,k}'\pmod{\fm\fM'},
\end{align*}
and if $k+1\leq i\leq r-1$ then by \eqref{eq: congruence for m+1 leq k < r, 2}
\begin{align*}
F'_{k,j}\equiv v^{j-k}F'_{k,k}+\frac{p^{2k-r+1}x^2}{\Theta}a_{r-k}v^{j-k-1}F'_{k,0}\pmod{\fm\fM'}.
\end{align*}

Now, it is also immediate that $u^r\barE'_1,\,u^r\barE'_2\in \barS'_\F(\barF'_{k,k},\barF'_{k,0})$, as $$v^rE'_1\equiv vF'_{k,r-1}\pmod{\fm\fM'}\quad\mbox{ and }\quad v^rE'_2\equiv vF'_{k,k-1}\pmod{\fm\fM'}.$$ Hence, we deduce that $\Fil^r_{k}\cM'=\barS'_\F(\barF'_{k,k},\barF'_{k,0})$. Now, by the same argument as in \eqref{eq: Nakayama}, we have
$\Fil^r_{k}\fM' =S'_\cO(F'_{k,k},F'_{k,0})+\Fil^pS'_\cO\fM'$, which completes the proof.
\end{proof}

\subsubsection{\textbf{In $\mathbf{Case}~(r)$}}
In this section of paragraph, we describe the $S'_\cO$-generators of $\Fil^r_{r}\fM'$ and the $\barS'_\F$-generators of $\Fil^r_{r}\cM'$ when $k=r$, i.e., in the case $\mathbf{Case}~(r)$.

\begin{prop}\label{prop: Fil-Case-r}
Assume that $k=r$ and that $v_p(\Theta)\leq 0$. Then $F'_{r,i}\in \fM'$ for all $0\leq i\leq r-1$ if and only if
\begin{equation}\label{eq: condition for k=r}
v_p(x)\leq v_p(\Theta) \quad\mbox{ and }\quad v_p(\Theta)-2v_p(x)\geq r-1,
\end{equation}
in which case
$$\Fil^r_{r}\fM'=S'_\cO (v^rE_1',F'_{r,0})+\Fil^pS'_\cO\fM'$$
and $\Fil^r_{r}\cM'=\barS'_\F(u^r\barE'_1,\barF'_{r,0})$ with
$$\barF'_{r,0}=\bigg(\frac{\Theta}{x}+\frac{\Theta}{p^{r-1}x^2}\frac{(-1)^{r-1}}{r-1}u^{r-1}\bigg)\barE'_1+\barE'_2.$$
\end{prop}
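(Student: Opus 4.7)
The plan is to follow the pattern of proof established in Propositions~\ref{prop: Fil-Case-0}--\ref{prop: Fil-Case-m+1-r-1}, this time taking advantage of the closed-form expressions for $P_{r,i}(T,x)$ and $Q_{r,i}(T,x)$ provided by Lemma~\ref{lemma: k=0,k=r} in this boundary case $k = r$. Since every index satisfies $0 \leq i \leq k-1 = r-1$, the definition \eqref{eq: definition of F'_k,i} gives
$$F'_{r,i} = p^i\Theta P_{r,i}(v/p, x) E'_1 + p^i Q_{r,i}(v/p, x) E'_2.$$
The identity $Q_{r,i}(T,x) = T^i$ renders $[F'_{r,i}]_{E'_2} = v^i$ unconditionally integral, so the entire integrality analysis reduces to $[F'_{r,i}]_{E'_1} = p^i\Theta P_{r,i}(v/p, x)$.

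I would then expand $P_{r,i}(v/p, x)$ into monomials in $v$ using the explicit formula of Lemma~\ref{lemma: k=0,k=r}, and track the $p$-adic valuation of the coefficient of each $v^j$, which is a sum over $s \in [i, r-1]$ of terms $\Theta p^{i-j} x^{s-r}(-1)^{r-s-1} c_{i,s,j}$ with $c_{i,s,j}$ equal to the coefficient of $T^{j-i-r+s+1}$ in $f_r(T)^{r-s-1}$; the crucial input is that $r < p-1$ together with $f_r \in \Z_{(p)}[T]$ forces every $c_{i,s,j} \in \Z_{(p)}$. Two extremal monomials dictate the necessary conditions. For $j = i$, only $s = r-1$ contributes, giving exactly $(\Theta/x)v^i$ and forcing $v_p(x) \leq v_p(\Theta)$. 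For $j \geq i+1$, the summand $s = r-2$ contributes $\Theta p^{i-j} x^{-2} \frac{(-1)^{j-i}}{j-i} v^j$, whose extremal case $(i,j) = (0,r-1)$ gives $\frac{\Theta}{p^{r-1}x^2}\frac{(-1)^{r-1}}{r-1} v^{r-1}$ and forces $v_p(\Theta) - 2v_p(x) \geq r-1$. Conversely, these two inequalities jointly force $v_p(x) < 0$ strictly (since $r \geq 2$ and $v_p(\Theta) \leq 0$), and then for each $s \leq r-3$ the extra factor $x^{-1}$ per unit decrease of $s$ contributes positive valuation $-v_p(x) > 0$, strictly dominating the $s = r-2$ bound; hence all subdominant summands are integral and indeed lie in $\fm S'_\cO$, so the $F'_{r,i}$ are all integral.

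Under \eqref{eq: condition for k=r}, the same dominance argument shows that, mod $\fm$, only the two extremal monomials survive in $\barF'_{r,0}$, recovering the displayed formula. For $1 \leq i \leq r-1$, the $\barE'_2$-coefficient of $\barF'_{r,i}$ is $u^i$, matching the $\barE'_2$-coefficient of $u^i \barF'_{r,0}$, so $\barF'_{r,i} - u^i \barF'_{r,0} \in \barS'_\F \cdot u^r \barE'_1 \pmod{\fm \fM'}$, the leftover $u^r \barE'_1$ accounting for the overflow when multiplying the $\barE'_1$-part of $\barF'_{r,0}$ (of degree less than $r$) by $u^i$. Therefore $\Fil^r_{x,\Theta}\cM' = \barS'_\F(u^r \barE'_1, \barF'_{r,0})$, and Nakayama's Lemma applied as in \eqref{eq: Nakayama} lifts this to $\Fil^r_{x,\Theta}\fM' = S'_\cO(v^r E'_1, F'_{r,0}) + \Fil^p S'_\cO \fM'$.

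The main obstacle I expect is the strict domination of subdominant summands in the sum defining the coefficient of $v^j$: a priori the many summands could cancel and obscure the extremal valuations. The clean way around this is exactly the combination $r < p-1$ (so every $c_{i,s,j} \in \Z_{(p)}$) together with $v_p(x) < 0$ (forced by $v_p(\Theta) - 2v_p(x) \geq r-1 \geq 1$ and $v_p(\Theta) \leq 0$), which gives a monotone strict separation of valuations between consecutive summands and rules out accidental cancellations; the same separation powers both the integrality check and the identification of the mod-$\fm$ reduction.
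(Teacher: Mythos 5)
Your proposal follows essentially the same route as the paper: expand $P_{r,i}(T,x)$ via the closed-form expression in Lemma~\ref{lemma: k=0,k=r}, observe $[F'_{r,i}]_{E'_2}=v^i$ is automatically integral, track $p$-adic valuations of the $v^j$-coefficients of $[F'_{r,i}]_{E'_1}$, use $v_p(x)<0$ to get strict separation of valuations, reduce mod~$\fm$, and conclude with Nakayama. The paper's proof works with the polynomials $P_{r,i,s}(T)$ in place of your monomial-by-monomial expansion, but the content is the same.

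The one genuine gap is the logical order in the necessity direction. You argue that the $s=r-2$ summand at $(i,j)=(0,r-1)$ forces $v_p(\Theta)-2v_p(x)\geq r-1$. But for general $j$ the coefficient of $v^j$ in $[F'_{r,0}]_{E'_1}$ is a sum over $s\in[\,i+r-j-1,\,r-2\,]$, so a priori the $s=r-2$ term could cancel against lower-$s$ terms, making the extremal valuation invisible; you cannot invoke the strict separation of valuations from $v_p(x)<0$ at this point, because you only derive $v_p(x)<0$ \emph{from} the two inequalities you are in the process of proving necessary. The fix (which the paper implements) is to first extract $v_p(x)<0$ from a cancellation-free index pair: for $j=i+1$ only $s=r-2$ contributes, so e.g.\ $[F'_{r,r-2}]_{E'_1}=\frac{\Theta}{x}v^{r-2}-\frac{\Theta}{px^2}v^{r-1}$ has no cancellation and its integrality gives $2v_p(x)\leq v_p(\Theta)-1<0$ outright. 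Once $v_p(x)<0$ is in hand, your strict-domination argument runs cleanly, and the rest (both directions, the mod-$\fm$ formula, and the Nakayama step) is exactly as in the paper. So the proposal needs this re-ordering of the necessity argument to be airtight, but otherwise it is correct.
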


Note that the inequalities in \eqref{eq: condition for k=r} imply that $v_p(x)<0$.

\begin{proof}
If $k=r$, then by Lemma~\ref{lemma: k=0,k=r} we have
\begin{align*}
[F'_{r,i}]_{E'_1}
    =p^i\Theta P_{r,i}\Big(\frac{v}{p},x\Big)
    =\frac{\Theta}{x}v^i+\sum_{s=i}^{r-2}\frac{p^i\Theta}{x^{r-s}}P_{r,i,s}\Big(\frac{v}{p}\Big)\,\mbox{ and }\,
[F'_{r,i}]_{E'_2}
    =p^iQ_{r,i}\Big(\frac{v}{p}\Big)
    =v^i.
\end{align*}

We first claim that if $[F'_{r,i}]_{E'_1}\in\SBr'$ for all $0\leq i\leq r-1$ then $v_p(x)<0$. Indeed, as $P_{r,r-2,r-2}(T)=-T^{r-1}$ by Lemma~\ref{lemma: k=0,k=r}, we have
\begin{align*}
    [F'_{r,r-2}]_{E'_1}=\frac{\Theta}{x}v^{r-2}-\frac{\Theta}{px^2}v^{r-1}.
\end{align*}
Thus, if $[F'_{r,r-2}]_{E'_1}\in S'_\cO$, then $v_p(\Theta/px^2)\geq 0$, which gives
\begin{align*}
    2v_p(x)\leq v_p(\Theta)-1\leq -1<0
\end{align*}
and so $v_p(x)<0$.

Note that the following three are equivalent:
\begin{itemize}[leftmargin=*]
\item $\frac{p^i\Theta}{x^{r-s}}P_{r,i,s}\big(\frac{v}{p}\big)\in S'_\cO$ for all $0\leq i\leq s\leq r-2$;
\item $\frac{\Theta}{x^{r-s}}P_{r,0,s}\big(\frac{v}{p}\big)\in S'_\cO$ all $0\leq s\leq r-2$;
\item $\frac{\Theta}{x^{2}}P_{r,0,r-2}\big(\frac{v}{p}\big)\in S'_\cO$,
\end{itemize}
where the first two items are equivalent as $\deg_T P_{r,i,s}(T)\leq r-1$ for all $0\leq i\leq s\leq r-2$ by Lemma~\ref{lemma: k=0,k=r}, and the last two items are equivalent as $v_p(x)<0$.

Now, as $P_{r,0,r-2}(T)=-[Tf_r(T)]^{(r-1)}_T$ by Lemma~\ref{lemma: k=0,k=r}, we have
\begin{align*}
    \frac{\Theta}{x^{2}}P_{r,0,r-2}\Big(\frac{v}{p}\Big)
    \equiv
    \frac{\Theta}{p^{r-1}x^2}\frac{(-1)^{r-1}}{r-1}v^{r-1}
    \pmod{\frac{\Theta}{p^{r-1}x^{2}}\fm S'_\cO}.
\end{align*}
Hence, $F'_{r,i}\in\fM'$ for all $0\leq i\leq r-1$ if and only if
\begin{align*}
    v_p(x)\leq v_p(\Theta)
    \quad\mbox{ and }\quad
    v_p(\Theta)-2v_p(x)\geq r-1,
\end{align*}
in which case we have
\begin{align*}
F'_{r,0}
    &\equiv
    \bigg(\frac{\Theta}{x}+\frac{\Theta}{p^{r-1}x^2}\frac{(-1)^{r-1}}{r-1}v^{r-1}\bigg)E'_1 +E'_2 \pmod{\fm\fM'};
    \\
F'_{r,i}  &\equiv
    \frac{\Theta}{x}v^iE'_1
    +v^iE'_2 \pmod{\fm\fM'}\,\,\mbox{ if }0<i\leq r-1.
\end{align*}

We note that
\begin{align*}
F'_{r,i}
   & \equiv
    v^iF'_{r,0}-\frac{\Theta}{p^{r-1}x^2}\frac{(-1)^{r-1}}{r-1}v^{i-1}(v^rE'_1)
    \pmod{\fm\fM'}\,\,\mbox{ if }0<i\leq r-1;\\
v^rE'_2
    &\equiv
    v^rF'_{r,0}
    -\bigg(\frac{\Theta}{x}+\frac{\Theta}{p^{r-1}x^2}\frac{(-1)^{r-1}}{r-1}v^{r-1}\bigg)v^rE'_1
    \pmod{\fm\fM'}.
\end{align*}
Hence, we deduce that $\Fil^r_{r}\cM'=\barS'_\F(u^r\barE'_1,\barF'_{r,0})$. Now, by the same argument as in \eqref{eq: Nakayama}, we have $\Fil^r_{r}\fM'=S'_\cO(v^rE'_1,F'_{r,0})+\Fil^pS'_\cO\fM'$, which completes the proof.
\end{proof}

\subsection{Strong divisibility}\label{subsec: Strong divisibility}
In this subsection, we prove Theorem~\ref{theo: strong divisibility of pseudo} when $r'\geq 2$ and $k'\neq\infty$. We keep the notation of \S\ref{sec: Frame works}, \S\ref{subsec: pseudo sdm}, and \S\ref{subsec: Generators of the filtration}. In particular, we assume $r=r'\geq 2$. Note that it is harmless to assume $r=r'$, as one can recover the general case $r'\leq r$ by simply multiplying $c^{r-r'}$ to $\phi'_r(F'_1),\phi'_r(F'_2)$.

\subsubsection{\textbf{In $\mathbf{Case}~(0)$}}
In this section of paragraph, we describe the conditions for $(\Lambda,\Theta,\Omega,x)$ to be good for $\mathbf{Case}_\phi(k')$ when $k'=\frac{1}{2}$. We also determine the corresponding $\Delta$ and describe the image of $\phi'_r$ in $\cM'$ in this case.

\begin{prop}\label{prop-cM-Case-0}
Let $k'=\frac{1}{2}$. If the quadruple $(\Lambda,\Theta,\Omega,x)$ satisfies
\begin{enumerate}[leftmargin=*]
 \item $v_p(\Lambda)=r-1$ and $v_p(\Theta\Omega^{-1})=-r+1$;
 \item $v_p(\Theta)\leq v_p(x)$ and $v_p(\Omega)\leq 0$,
\end{enumerate}
then the quadruple is good for $\mathbf{Case}_\phi(\frac{1}{2})$ with respect to $\Delta(T)=\delta_1(T)$, in which case
$\Fil^r_{0}\cM'=\barS'_\F(\barF'_1,\barF'_2)$ where
$$\barF'_1:=\pi'(F'_{0,0})\quad\mbox{ and }\quad\barF'_2:=\pi'(v^rE'_2),$$
and $\phi'_r:\Fil^r_{0}\cM'\to \cM'$ is described as
    \begin{align*}
    \phi'_r(\barF'_1) &=\frac{\Lambda}{p^{r-1}} P_{0,0}(-1,0)\barE'_1 +\frac{\Lambda}{p^{r-1}\Omega}P_{0,0}(-1,0)\dot{\delta}_1(-1)u^{p-1}\barE'_2.;\\
    \phi'_r(\barF'_2) &=(-1)^r\frac{\Lambda\Theta}{\Omega}\barE'_2.
    \end{align*}
\end{prop}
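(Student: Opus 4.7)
First, I will verify that the conditions in \eqref{eq: condition for phi,N stable} hold under the hypotheses. The inequality $v_p(\Lambda)\geq -1$ is immediate from (i), and $v_p(\Omega)\geq v_p(\Lambda)$ follows from $v_p(\Omega)=-v_p(\Theta)\geq r-1=v_p(\Lambda)$. Using Lemma~\ref{lemma-delta}~(ii) to compute $\delta_1(-1)=-H_{r-1}\in\Z_{(p)}^\times$, the remaining condition $v_p(\Omega)+v_p(x+\delta_1(-1))\geq 0$ reduces to a short case check using $v_p(\Theta)\leq v_p(x)$ and $v_p(\Omega)=-v_p(\Theta)$. Since hypothesis (ii) is exactly \eqref{eq: condition for k=0}, Proposition~\ref{prop: Fil-Case-0} applies and yields $F'_{0,i}\in\fM'$ for all $i$, together with $\Fil^r_{x,\Theta}\fM'=S'_\cO(F'_{0,0},v^rE'_2)+\Fil^pS'_\cO\fM'$ and $\Fil^r_{x,\Theta}\cM'=\barS'_\F(\barF'_1,\barF'_2)$. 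Thus in the notation of \eqref{eq: generators of the filtration}, $G'_1=F'_{0,0}$ and $G'_2=v^rE'_2$.

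Next I will compute the two images under $\phi'_r$. For $G'_2$, the identities $\phi'(v)=pc$ and $\phi'(E'_2)=\Omega\Theta E'_2$ give $\phi'_r(v^rE'_2)=c^r\Omega\Theta E'_2$; since $c\equiv -1\pmod{(\varpi,\Fil^pS'_\cO)}$, reduction produces $\phi'_r(\barF'_2)=(-1)^r\Omega\Theta\barE'_2$. For $G'_1$, Lemma~\ref{lemma: k=0,k=r} gives $P_{0,0}(T,x)=1$ and $Q_{0,0}(T,x)=x+\delta_1(T)$, so $F'_{0,0}=E'_1+\tfrac{1}{\Theta}(x+\delta_1(v/p))E'_2$. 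Applying $\phi'$ with $\Delta=\delta_1$, the $x$-contributions cancel and
\begin{equation*}
\phi'(F'_{0,0})=p\Lambda E'_1+\Omega\bigl(\delta_1(c)-\delta_1(\gamma-1)\bigr)E'_2+\tfrac{\Omega}{p}\bigl(\tilde\Delta(\phi(\gamma)-1)-\tilde\Delta(-1)\bigr)E'_2.
\end{equation*}

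The heart of the proof is the reduction of $\phi'_r(F'_{0,0})=p^{-r}\phi'(F'_{0,0})$ modulo $(\fm,\Fil^pS'_\cO)\fM'$. I will use $c\equiv(\gamma-1)+pu^{p-1}\pmod{p^2S}$ and Taylor-expand $\delta_1\in\Z_{(p)}[T]$ around $\gamma-1$ to obtain $\delta_1(c)-\delta_1(\gamma-1)\equiv pu^{p-1}\dot\delta_1(\gamma-1)\pmod{p^2S}$; since $\gamma\in\Fil^pS$, a further Taylor expansion around $-1$ gives $\dot\delta_1(\gamma-1)\equiv\dot\delta_1(-1)\pmod{\Fil^pS}$. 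Multiplying by $\Omega/p^r$ and invoking $v_p(\Omega)\geq r-1$ shows that the middle term contributes $\tfrac{\Omega}{p^{r-1}}\dot\delta_1(-1)u^{p-1}\barE'_2$ in $\cM'$, while the $\tilde\Delta$-term lies in $\tfrac{\Omega}{p^{r+1}}\cdot p^{p-1}S'_\cO$ of $p$-adic valuation at least $p+v_p(\Omega)-r-2\geq p-3\geq 2$ (noting $r<p-1$ forces $p\geq 5$), hence vanishes. Combined with $P_{0,0}(-1,0)=1$, this yields the stated expression for $\phi'_r(\barF'_1)$. The main technical obstacle is precisely this bookkeeping: tracking which error terms live in $p^2S$ versus $\Fil^pS$ after multiplication by $\Omega/p^r$, and verifying that each really vanishes in the quotient $\cM'$.

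Finally, strong divisibility and the $\mathbf{Case}_\phi(\tfrac{1}{2})$-assertion follow automatically. The matrix of $\phi'_r$ relative to the ordered bases $(G'_1,G'_2)$ and $(E'_1,E'_2)$ is lower triangular with diagonal entries $\Lambda/p^{r-1}$ and $c^r\Omega\Theta$, both units in $S'_\cO$ by hypothesis (i). Hence $\phi'_r(G'_1)$ and $\phi'_r(G'_2)$ generate $\fM'$ over $S'_\cO$, establishing \eqref{eq: strong divisibility, local}; moreover $[\phi'_r(G'_1)]_{E'_1}=\Lambda/p^{r-1}\in(S'_\cO)^\times$, so the quadruple lies in $\mathbf{Case}_\phi(\tfrac{1}{2})$ by \eqref{eq: definition of Case phi k}, completing the proof.
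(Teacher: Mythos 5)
Your proof is correct and follows essentially the same approach as the paper's: verify the hypotheses of \eqref{eq: condition for phi,N stable}, invoke Proposition~\ref{prop: Fil-Case-0} for the filtration, compute $\phi'_r$ on the two generators, and conclude strong divisibility from the lower-triangular matrix with unit diagonal. The only difference is cosmetic: you re-derive $\phi'(F'_{0,0})$ directly from \eqref{eq: definition of phi'} and the explicit form of $Q_{0,0}$, whereas the paper appeals to Lemma~\ref{lemma formula for phi_r(F_kk) and phi_r(F_k0)} and the observation \eqref{eq: phi N stable equality} to dismiss the $\tilde\Delta$-term; both routes give the same valuation bound $p-3\geq 2$ (using $2\leq r<p-1$, hence $p\geq 5$). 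Your direct computation is if anything a touch cleaner, and your explicit remark that the $\phi'_r$-matrix is lower triangular with unit diagonal makes the strong-divisibility conclusion more transparent than the paper's terse ``Hence we conclude.''
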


\begin{proof}
We start the proof by noting that the inequalities in (ii) is equivalent with \eqref{eq: condition for k=0} under the equation (i) and that the equations in (i) and the inequalities in (ii) imply the inequalities in \eqref{eq: condition for phi,N stable}.

Let $k:=\lfloor k'\rfloor=0$ and $\Delta=\delta_1$. By Lemma~\ref{lemma formula for phi_r(F_kk) and phi_r(F_k0)}, it is immediate that if $[\phi'_r(F'_{0,0})]_{E'_1}\in S_{\cO}'$ then $\phi'_r(F'_{0,0})$ can be written as
\begin{equation*}
\phi'_r(F'_{0,0})\equiv\frac{\Lambda }{p^{r-1}}P_{0,0}(c,x)E'_1 +\frac{\Lambda}{p^{r}\Omega}\big([P_{0,0}(c,x)cf(c)]_c^{(r-1)}-P_{0,0}(c,x)\Delta(\gamma-1)\big)E'_2
\end{equation*}
modulo $\fm\fM'$. As $c-(\gamma-1)\equiv pu^{p-1}\pmod{p^2}$, by Lemma~\ref{lemma: k=0,k=r} one can readily see that $\phi'_r(\barF'_1)$ can be written as in the statement.

By Lemma~\ref{lemma: k=0,k=r} $P_{0,0}(c,x)=1$, and so if $v_p(\Lambda)=r-1$ then $[\phi'_r(\barF'_1)]_{\barE'_1}=\frac{\Lambda}{p^{r-1}}\in\F^\times$. It is easy to see that $v_p(\Omega)\leq 0$ and $v_p(\frac{\Lambda}{p^{r-1}})=0$ imply that $v_p(\frac{\Lambda}{p^{r-1}\Omega})\geq 0$, and so we further have $[\phi'_r(\barF'_1)]_{\barE'_2}\in \barS'_\F$ as $\dot{\delta}_1(-1)=r-1\in\F^{\times}$ by Lemma~\ref{lemma-delta-derivative}~(i).
Moreover, it is also immediate from \eqref{eq: definition of phi'} to have that $\phi_r'(v^rE'_2)=c^r\frac{\Lambda\Theta}{\Omega} E'_2$, and so we have $\phi'_r(\barF'_2)$ as in the statement and if $v_p(\Theta\Omega^{-1})=-r+1$ and $v_p(\Lambda)=r-1$ then $v_p(\frac{\Lambda\Theta}{\Omega})=0$ and so $[\phi'_r(\barF'_2)]_{\barE'_2}\in \F^\times$. Hence, we conclude that
the quadruple is good for $\mathbf{Case}_\phi(\frac{1}{2})$ with respect to $\Delta=\delta_1$.
\end{proof}

\subsubsection{\textbf{In $\mathbf{Case}~(k)$ with $1\leq k<r/2$}}
In this section of paragraph, we describe the conditions for $(\Lambda,\Theta,\Omega,x)$ to be good for $\mathbf{Case}_\phi(k')$ when $k'\in\{k,k+\frac{1}{2}\}$ and $k\in\Z$ with $1\leq k< r/2$. We also determine the corresponding $\Delta$ and describe the image of $\phi'_r$ in $\cM'$ in this case.

\begin{prop}\label{prop-cM-Case-1-r/2}
Let $k\in\Z$ with $1\leq k< r/2$, and let $k'\in\{k,k+\frac{1}{2}\}$. If the quadruple $(\Lambda,\Theta,\Omega,x)$ satisfies
    \begin{enumerate}[leftmargin=*]
      \item $\begin{cases}
          v_p(\Lambda \Theta)=k-1\mbox{ and } v_p(\Theta\Omega)=-r+2k-1 & \mbox{if } k'=k; \\
          v_p(\Lambda)=r-k-1\mbox{ and } v_p(\Theta\Omega^{-1})=-r+2k+1 & \mbox{if } k'=k+\frac{1}{2},
        \end{cases}$
      \item $v_p(\Theta)\leq v_p(x)$ and $-1\leq v_p(\Omega)\leq 0$,
    \end{enumerate}
    then the quadruple is good for $\mathbf{Case}_\phi(k')$ with respect to $\Delta(T)=\delta_{\lceil k' \rceil}(T)$, in which case $\Fil^r_{k}\cM'=\barS'_\F(\barF'_1,\barF'_2)$ where
$$\barF'_1:=\pi'(F'_{k,k})\quad\mbox{ and }\quad\barF'_2:=\pi'(F'_{k,0}),$$
and the map
$\phi'_r:\Fil^r_{k}\cM'\to \cM'$ is described as follows:
\begin{itemize}[leftmargin=*]
\item if $k'=k$ then
    \begin{align*}
    \phi'_r(\barF'_1)  &=\frac{\Lambda}{p^{r-k-1}} P_{k,k}(-1,0)\barE'_1 +\frac{\Lambda}{p^{r-k}\Omega} P_{k,k}(-1,0)\big(\delta_{k+1}(-1)-\delta_k(-1)\big)\barE'_2;\\
    \phi'_r(\barF'_2)  &=\frac{\Lambda\Theta}{p^{k-1}}  P_{k,0}(-1,0)\barE'_1   +\frac{\Lambda\Theta}{p^{k-1}\Omega}P_{k,0}(-1,0)\dot{\delta}_k(-1)u^{p-1}\barE'_2,
    \end{align*}
\item if $k'=k+\frac{1}{2}$ then
    \begin{align*}
    \phi'_r(\barF'_1)   &= \frac{\Lambda}{p^{r-k-1}} P_{k,k}(-1,0)\barE'_1 +\frac{\Lambda}{p^{r-k-1}\Omega} P_{k,k}(-1,0)\dot{\delta}_{k+1}(-1)u^{p-1}\barE'_2;\\
    \phi'_r(\barF'_2)   &= \frac{\Lambda \Theta}{p^{k-1}} P_{k,0}(-1,0)\barE'_1 +\frac{\Lambda\Theta}{p^k\Omega}P_{k,0}(-1,0)\big(
        \delta_k(-1)-\delta_{k+1}(-1)
        \big)\barE'_2.
    \end{align*}
\end{itemize}
\end{prop}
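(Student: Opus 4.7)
The plan is to mirror the proof of Proposition~\ref{prop-cM-Case-0} with one additional algebraic input, namely the Pad\'e identities linking $P_{k,0},P_{k,k}$ with $\delta_k,\delta_{k+1}$. First I will verify that the valuation conditions (i) and (ii) imply the hypotheses \eqref{eq: condition for 1 leq k <r/2} of Proposition~\ref{prop: Fil-Case-1-r/2} as well as the inequalities \eqref{eq: condition for phi,N stable} that make $\phi'$ and $N'$ well-defined; this is a routine valuation check, using in addition that $\delta_{\lceil k'\rceil}(-1)=-H_{r-\lceil k'\rceil}-H_{\lceil k'\rceil-1}\in\Z_{(p)}$ from Lemma~\ref{lemma-delta}(ii) to bound $v_p(x+\Delta(-1))$. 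Proposition~\ref{prop: Fil-Case-1-r/2} then directly provides $\Fil^r_{x,\Theta}\cM'=\barS'_\F(\barF'_1,\barF'_2)$ with the generators asserted in the statement.

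Next I will evaluate $\phi'_r(F'_{k,k})$ and $\phi'_r(F'_{k,0})$ by Lemma~\ref{lemma formula for phi_r(F_kk) and phi_r(F_k0)} and reduce modulo $(\fm,\Fil^pS'_\cO)$. The key algebraic input is that for $1\leq k<r/2$ both indices $i=0,k$ lie in $[0,r-k-1]$, so Lemma~\ref{lemma-PQ-k leq m}(i) gives $P_{k,i}(T,x)=P_{k,i}(T,0)$ and $Q_{k,i}(T,x)=Q_{k,i}(T,0)+xP_{k,i}(T,0)$. Combined with Lemma~\ref{lemma: PQdegree}(i) this yields the identities
\begin{equation*}
[P_{k,0}(c,x)\,cf_r(c)]_c^{(r-1)}=P_{k,0}(c,0)\,\delta_k(c),\qquad [P_{k,k}(c,x)\,cf_r(c)]_c^{(r-1)}=P_{k,k}(c,0)\,\delta_{k+1}(c).
\end{equation*}
After substituting $\Delta(T)=\delta_{\lceil k'\rceil}(T)$ and discarding the $\tilde\Delta$-piece (which is negligible by the remark in Lemma~\ref{lemma formula for phi_r(F_kk) and phi_r(F_k0)}, combined with \eqref{eq: p^p-1 divides phi(gamma)} and $v_p(\Omega)\geq r-k-1$), the $\barE'_2$-coefficient of each $\phi'_r(\barF'_i)$ becomes a scalar multiple of $P_{k,*}(c,0)\bigl(\delta_{\ell_1}(c)-\delta_{\ell_2}(\gamma-1)\bigr)$ for suitable $\ell_1,\ell_2\in\{k,k+1\}$.

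Two regimes now appear. When $\ell_1=\ell_2=\ell$, the first-order Taylor expansion using $c-(\gamma-1)\equiv pu^{p-1}\pmod{p^2}$ together with $\delta_\ell(T)\in\cO[[T+1]]$ (Lemma~\ref{lemma-delta-denom-1}(i)) produces the contribution $\dot\delta_\ell(-1)\,u^{p-1}$ after dividing by the matching power of $p$; this is the ``derivative'' form appearing in $\phi'_r(\barF'_2)$ for $k'=k$ and in $\phi'_r(\barF'_1)$ for $k'=k+\tfrac12$. When $\ell_1\neq\ell_2$, both arguments reduce to $-1$ in $\barS'_\F$ and the ``jump'' $\delta_{k+1}(-1)-\delta_k(-1)=\tfrac{1}{r-k}-\tfrac{1}{k}$ (Lemma~\ref{lemma-delta}(i)) survives as the leading term, accounting for the complementary coefficients. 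The prescribed $\barE'_1$-coefficients, proportional to $P_{k,0}(-1,0)$ or $P_{k,k}(-1,0)$, are units in $\F$ exactly under the valuation equalities in (i), using Lemma~\ref{lemma-delta-denom-1}(i)--(ii); this secures $\mathbf{Case}_\phi(k')$. Strong divisibility \eqref{eq: strong divisibility, local} then follows from a $2\times 2$ determinant computation in $\barS'_\F$: the surviving jump unit $\tfrac{1}{r-k}-\tfrac{1}{k}\in\F^\times$ (valid because $1\leq k,r-k<p-1$) combines with the $\barE'_1$-unit to make the transition matrix invertible even when the derivative-term $u^{p-1}$-coefficient happens to vanish on the boundary of the range of $v_p(\Theta)$ permitted by~(ii).

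The principal technical point to justify carefully will be that the higher-order Taylor remainder $O\bigl((c-(\gamma-1))^2\bigr)=O(p^2 u^{2(p-1)})$ is absorbed by $\fm\fM'+\Fil^pS'_\cO\fM'$ after the scalings by $\Omega/p^{r-k}$ and $\Omega\Theta/p^k$; this requires the specific lower bounds on $v_p(\Omega)$ and $v_p(\Theta)$ provided by (i)--(ii) and the hypothesis $r<p-1$. Once this control is in place, the two subcases $k'=k$ and $k'=k+\tfrac12$ are essentially symmetric, with the roles of $\delta_k$ and $\delta_{k+1}$ interchanged so that the derivative-term and jump-term coefficients swap between $\barF'_1$ and $\barF'_2$.
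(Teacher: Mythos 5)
Your proposal is correct and follows essentially the same route as the paper's proof: the valuation check delegating the filtration to Proposition~\ref{prop: Fil-Case-1-r/2}, the use of Lemma~\ref{lemma formula for phi_r(F_kk) and phi_r(F_k0)} together with Lemmas~\ref{lemma: PQdegree}, \ref{lemma-PQ-k leq m}, \ref{lemma: PQFad'e} to convert the $\barE'_2$-coefficients into $P_{k,*}(c,0)(\delta_{\ell_1}(c)-\delta_{\ell_2}(\gamma-1))$, the derivative/jump dichotomy via $c-(\gamma-1)\equiv pu^{p-1}\pmod{p^2}$ and Lemma~\ref{lemma-delta}(i), and the final generation argument. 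Your packaging of the two Pad\'e identities $[P_{k,0}(c,x)cf_r(c)]_c^{(r-1)}=P_{k,0}(c,0)\delta_k(c)$ and $[P_{k,k}(c,x)cf_r(c)]_c^{(r-1)}=P_{k,k}(c,0)\delta_{k+1}(c)$ up front is a slightly cleaner way of organizing the same computation the paper performs term by term.
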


\begin{proof}
We start the proof by noting that the inequalities in (ii) imply the ones in \eqref{eq: condition for 1 leq k <r/2}, and that the equations in (i) and the inequalities in (ii) imply the inequalities in \eqref{eq: condition for phi,N stable}.

Assume first that $k'=k$, and let $\Delta=\delta_k$. Then we have $P_{k,0}(c,x)=P_{k,0}(c,0)$ by Lemma~\ref{lemma: B_k,0 k leq m}~(iii), and $P_{k,0}(-1,0)\in\Z_{(p)}^{\times}$ by Lemma~\ref{lemma-delta-denom-1}~(i), and so if $v_p(\Lambda\Theta)=k-1$ then $[\phi'_r(\barF'_2)]_{\barE_1'}=\frac{\Lambda\Theta}{p^{k-1}}P_{k,0}(-1,0)\in\F^{\times}$. As $c-(\gamma-1)\equiv pu^{p-1}\pmod{p^2}$, by Lemma~\ref{lemma: PQdegree}~(i) together with Lemma~\ref{lemma-PQ-k leq m}~(i) one can readily see that from \eqref{eq: formula for phi_r(F_k,0)}
\begin{align*}
[\phi'_r(\barF'_2)]_{\barE'_2}&= \frac{\Lambda\Theta}{p^k\Omega}\big(Q_{k,0}(c,0)-P_{k,0}(c,0)\delta_k(\gamma-1)\big)\\
&=\frac{\Lambda\Theta}{p^k\Omega}P_{k,0}(c,0)\big(\delta_k(c)-\delta_k(\gamma-1)\big)\\
&=\frac{\Lambda\Theta}{p^{k-1}\Omega}P_{k,0}(-1,0)\dot{\delta}_k(-1)u^{p-1}.
\end{align*}
Hence, $\phi'_r(\barF'_2)$ can be written as in the statement, and we have $[\phi'_r(\barF'_2)]_{\barE_2'}\in u^{p-1}S_\cO'$, by Lemma~\ref{lemma-delta-derivative} together with Lemma~\ref{lemma-delta-denom-1}, as $v_p(\Lambda\Theta\Omega^{-1})\geq k-1$.

To satisfy \eqref{eq: strong divisibility, local}, we should have $[\phi'_r(\barF'_1)]_{\barE_2'}\in (\barS_\F')^\times$ as $[\phi'_r(\barF'_2)]_{\barE_2'}\in u^{p-1}\barS_\F'$. However, by Lemma~\ref{lemma: PQdegree}~(i) together with Lemma~\ref{lemma-PQ-k leq m}~(i) one can readily see that from \eqref{eq: formula for phi_r(F_k,k)}
\begin{align*}
[\phi'_r(\barF'_1)]_{\barE'_2}&=\frac{\Lambda}{p^{r-k}\Omega}\left(Q_{k,k}(c,0) -P_{k,k}(c,0)\delta_{k}(\gamma-1)\right) \\ &=\frac{\Lambda}{p^{r-k}\Omega}P_{k,k}(c,0)\left(\delta_{k+1}(c)-\delta_k(\gamma-1)\right).
\end{align*}
Hence, $\phi'_r(\barF'_1)$ can be written as in the statement, and if $v_p(\Lambda\Theta)=k-1$ and $v_p(\Theta\Omega)=-r+2k-1$ then $v_p(\frac{\Lambda}{\Omega})=v_p(\Lambda\Theta)-v_p(\Theta\Omega)=r-k$ and so we have $[\phi'_r(\barF'_1)]_{\barE_2'}\in \F^\times$, by Lemma~\ref{lemma-delta-derivative}~(i) together with Lemma~\ref{lemma-delta-denom-1}~(ii). Moreover, we have $[\phi'_r(\barF'_1)]_{\barE'_1}\in \barS'_\F$ as $v_p(\frac{\Lambda}{p^{r-k-1}})=v_p(\frac{\Lambda}{p^{r-k}\Omega})+v_p(\Omega)+1\geq 0$.

Assume now that $k'=k+\frac{1}{2}$, and let $\Delta=\delta_{k+1}$. Then we have $P_{k,k}(c,x)=P_{k,k}(c,0)$ by Lemma~\ref{lemma-PQ-k leq m}~(i), and $P_{k,k}(-1,0)\in\Z_{(p)}^{\times}$ by Lemma~\ref{lemma-delta-denom-1}~(ii), and so if $v_p(\Lambda)=r-k-1$ then $[\phi'_r(\barF'_1)]_{\barE_1'}=\frac{\Lambda}{p^{r-k-1}}P_{k,k}(-1,0)\in\F^{\times}$. As $c-(\gamma-1)\equiv pu^{p-1}\pmod{p^2}$, by Lemma~\ref{lemma: PQdegree}~(i) together with Lemma~\ref{lemma-PQ-k leq m}~(i) one can readily see that from \eqref{eq: formula for phi_r(F_k,k)}
\begin{align*}
[\phi'_r(\barF'_1)]_{\barE'_2}&=\frac{\Lambda}{p^{r-k}\Omega}\left(Q_{k,k}(c,0) -P_{k,k}(c,0)\delta_{k+1}(\gamma-1)\right) \\ &=\frac{\Lambda}{p^{r-k}\Omega}P_{k,k}(c,0)\left(\delta_{k+1}(c)-\delta_{k+1}(\gamma-1)\right)\\
&=\frac{\Lambda}{p^{r-k-1}\Omega}P_{k,k}(c,0)\dot{\delta}_{k+1}(-1)u^{p-1}.
\end{align*}
Hence, $\phi'_r(\barF'_1)$ can be written as in the statement, and we have $[\phi'_r(\barF'_1)]_{\barE_2'}\in u^{p-1}S_\cO'$, by Lemma~\ref{lemma-delta-derivative}~(i) together with Lemma~\ref{lemma-delta-denom-1}~(ii), as $v_p(\frac{\Lambda}{p^{r-k-1}\Omega})=v_p(\frac{\Lambda}{p^{r-k-1}})-v_p(\Omega)\geq 0$.

To satisfy \eqref{eq: strong divisibility, local}, we should have $[\phi'_r(\barF'_2)]_{\barE_2'}\in (\barS_\F')^\times$ as $[\phi'_r(\barF'_1)]_{\barE_2'}\in u^{p-1}\barS_\F'$. However, by Lemma~\ref{lemma: PQdegree}~(i) together with Lemma~\ref{lemma-PQ-k leq m}~(i) one can readily see that from \eqref{eq: formula for phi_r(F_k,0)}
\begin{align*}
[\phi'_r(\barF'_2)]_{\barE'_2}&=\frac{\Lambda\Theta}{p^k\Omega} \big(Q_{k,0}(c,0)-P_{k,0}(c,0)\delta_{k+1}(\gamma-1)\big)\\
&=\frac{\Lambda\Theta}{p^k\Omega} P_{k,0}(c,0)\big(\delta_k(c)-\delta_{k+1}(\gamma-1)\big).
\end{align*}
Hence, $\phi'_r(\barF'_2)$ can be written as in the statement, and if $v_p(\Lambda)=r-k-1$ and $v_p(\Theta\Omega^{-1})=-r+2k+1$ then $v_p(\frac{\Lambda\Theta}{\Omega})=v_p(\Lambda)+v_p(\Theta\Omega^{-1})=k$ and so we have $[\phi'_r(\barF'_2)]_{\barE_2'}\in \F^\times$, by Lemma~\ref{lemma-delta-denom-1}~(i) together with Lemma~\ref{lemma-delta}~(ii). Moreover, we have $[\phi'_r(\barF'_2)]_{\barE'_1}\in \barS'_\F$ as $v_p(\frac{\Lambda\Theta}{p^{k-1}})=v_p(\frac{\Lambda\Theta}{p^k\Omega})+v_p(\Omega)+1\geq 0$. This completes the proof.
\end{proof}

\subsubsection{\textbf{In $\mathbf{Case}~(m)$ with $r=2m$}}
In this section of paragraph, we describe the conditions for $(\Lambda,\Omega,\Theta,x)$ to be good for $\mathbf{Case}_\phi(k')$ when $k'\in\{m,m+\frac{1}{2}\}$ and $r=2m$. We also determine the corresponding $\Delta$ and describe the image of $\phi'_r$ in $\cM'$ in this case.

\begin{prop}\label{prop-cM-Case-m}
Let $k=m$ with $r=2m$, let $k'\in\{m,m+\frac{1}{2}\}$, and assume that $v_p(\Theta)\leq 0$ for $k'=m+\frac{1}{2}$. If the quadruple $(\Lambda,\Theta,\Omega,x)$ satisfies
    \begin{enumerate}[leftmargin=*]
      \item $\begin{cases}
          v_p(\Lambda \Theta)=m-1\mbox{ and } v_p(\Theta\Omega)=-1 & \mbox{if } k'=m; \\
          v_p(\Lambda x)=m-1\mbox{ and } v_p(\Theta\Omega^{-1}x^{-2})=1 & \mbox{if }k'=m+\frac{1}{2},
        \end{cases}$
      \item $\begin{cases}
          v_p(\Theta)\leq v_p(x) \mbox{ and } -1\leq v_p(\Omega)\leq 0
           & \mbox{if } k'=m; \\
          v_p(\Theta)\geq v_p(x)<0 \mbox{ and } v_p(\Omega)\leq 0 & \mbox{if }k'=m+\frac{1}{2},
        \end{cases}$
    \end{enumerate}
    then the quadruple is good for $\mathbf{Case}_\phi(k')$ with respect to
    $$\Delta(T)=
        \begin{cases}
            \delta_{m}(T) & \mbox{if } k'=m; \\
            \delta_{m+1}(T,x) & \mbox{if } k'=m+\frac{1}{2},
        \end{cases}$$
in which case
$\Fil^r_{m}\cM'=\barS'_\F(\barF'_1,\barF'_2)$ where
$$
\begin{cases}
\barF'_1:=\pi'\big(F'_{m,m}+\frac{x}{\Theta}F'_{m,0}\big)\quad\mbox{ and }\quad\barF'_2:=\pi'(F'_{m,0}) &\mbox{if }k'=m;\\
\barF'_1:=\pi'(F'_{m,m})\quad\mbox{ and }\quad\barF'_2:=\pi'\big(\frac{\Theta}{x}F'_{m,m}+F'_{m,0}\big) &\mbox{if }k'=m+\frac{1}{2},
\end{cases}$$
and the map $\phi'_r:\Fil^r_{m}\cM'\to \cM'$ is described as follows:
\begin{itemize}[leftmargin=*]
\item if $k'=m$ then
    \begin{align*}
    \phi'_r(\barF'_1) &=\frac{\Lambda}{p^{m-1}} P_{m,m}(-1,0)\barE'_1 -\frac{\Lambda}{p^m\Omega} \frac{(-1)^r}{P_{m,0}(-1,0)}\barE'_2;  \\
    \phi'_r(\barF'_2)  &= \frac{\Lambda \Theta}{p^{m-1}} P_{m,0}(-1,0)\barE'_1 + \frac{\Lambda\Theta}{p^{m-1}\Omega} P_{m,0}(-1,0)\dot{\delta}_m(-1)u^{p-1}\barE'_2,
    \end{align*}
\item if $k'=m+\frac{1}{2}$ then
    \begin{align*}
    \phi'_r(\barF'_1) &= -\frac{\Lambda x}{p^{m-1}} P_{m,0}(-1,0)\barE'_1 -\frac{\Lambda x}{p^{m-1}\Omega} P_{m,0}(-1,0)\dot{\delta}_m(-1)u^{p-1}\barE'_2; \\
    \phi'_r(\barF'_2) &= -\frac{\Lambda\Theta x^{-1}}{p^m\Omega} \frac{(-1)^r}{P_{m,0}(-1,0)}\barE'_2.
    \end{align*}
\end{itemize}
\end{prop}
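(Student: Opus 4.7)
The plan is to proceed in parallel with the proof of Proposition~\ref{prop-cM-Case-1-r/2}, but the case $r=2m$ and $k=m$ requires more care because $P_{m,m}(T,x)$ genuinely depends on $x$ (via Lemma~\ref{lemma-PQ-k leq m}(ii) at $i=m=r-k$) and, for $k'=m+\tfrac{1}{2}$, the chosen $\Delta(T)=\delta_{m+1}(T,x)$ itself depends on $x$. First I would verify that conditions (i) and (ii), combined with $v_p(\Theta)\leq 0$, imply the inequalities in \eqref{eq: condition for k=m r=2m} (so that $F'_{m,i}\in\fM'$ for all $i$ by Proposition~\ref{prop: Fil-Case-m}) and in \eqref{eq: condition for phi,N stable} (so that $\phi'$ and $N'$ are well-defined). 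For $k'=m+\tfrac{1}{2}$ the non-trivial point is that $v_p(\Theta)\geq v_p(x)$ together with $v_p(\Theta)-2v_p(x)\leq 1$ forces $v_p(x)\geq -1$, hence $v_p(\Theta)\geq -1$.

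I would then compute $\phi'_r(F'_{m,m})$ and $\phi'_r(F'_{m,0})$ via Lemma~\ref{lemma formula for phi_r(F_kk) and phi_r(F_k0)}, observing that the $\tilde\Delta$ contributions are $p^{p-1}$-divisible and therefore vanish modulo $\fm\fM'$. The key algebraic inputs are Lemma~\ref{lemma-delta-denom-1}(iii), which yields $Q_{m,m}(T,x)=P_{m,m}(T,x)\delta_m(T)+xP_{m,m}(T,x)-T^r/P_{m,0}(T,0)$, and Lemma~\ref{lemma-delta}(iii), which writes $\delta_{m+1}(T,x)=\delta_m(T)-T^r/(P_{m,m}(T,x)P_{m,0}(T,0))$. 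Substituting, $[\phi'_r(F'_{m,m})]_{E'_2}$ splits into (a) a $P_{m,m}(c,x)(\delta_m(c)-\delta_m(\gamma-1))$ piece that contributes $\dot\delta_m(-1)\,u^{p-1}$ times a scalar via $c-(\gamma-1)\equiv pu^{p-1}\pmod{p^2}$ and Lemma~\ref{lemma-delta-derivative}(i), and (b) two ``rational'' pieces $-c^r/P_{m,0}(c,0)$ and $(\gamma-1)^r P_{m,m}(c,x)/\bigl(P_{m,m}(\gamma-1,x)P_{m,0}(\gamma-1,0)\bigr)$, each of $p$-adic valuation $0$; in $\barS'_\F$ the ratio $P_{m,m}(c,x)/P_{m,m}(\gamma-1,x)$ reduces to $1$ (since both numerator and denominator equal $-xP_{m,0}(\cdot,0)$ up to a term of positive valuation), so these two pieces reduce to $\mp(-1)^r/P_{m,0}(-1,0)$ and cancel. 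An analogous expansion holds for $[\phi'_r(F'_{m,0})]_{E'_2}$.

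For $k'=m$ this cancellation immediately produces the asserted formulas for $\phi'_r(\barF'_1)$ and $\phi'_r(\barF'_2)$; the combination $\phi'_r(\barF'_1)=\phi'_r(F'_{m,m})+\tfrac{x}{\Theta}\phi'_r(F'_{m,0})$ eliminates the residual $P_{m,m}(-1,0)$ contribution to the $\barE'_1$-coefficient arising from $P_{m,m}(c,x)=P_{m,m}(c,0)-xP_{m,0}(c,0)$. For $k'=m+\tfrac{1}{2}$ a subtler phenomenon occurs: because $v_p(\Omega\Theta/p^m)=-1$ and $1/P_{m,m}(\gamma-1,x)$ has valuation $+1$ and, more precisely, $1/P_{m,m}(\gamma-1,x)\equiv -x^{-1}/P_{m,0}(\gamma-1,0)$ modulo $x^{-1}\fm$, the analogous rational piece inside $[\phi'_r(F'_{m,0})]_{E'_2}$ now has valuation exactly $0$ and contributes the non-trivial term $-\tfrac{\Omega\Theta x^{-1}}{p^m}\tfrac{(-1)^r}{P_{m,0}(-1,0)}$. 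The $\dot\delta_m(-1)\,u^{p-1}$ pieces coming from $\phi'_r(F'_{m,m})$ and $\phi'_r(F'_{m,0})$ cancel exactly under the combination $\tfrac{\Theta}{x}F'_{m,m}+F'_{m,0}$, and the use of $\Delta=\delta_{m+1}(T,x)$ in place of $\delta_m(T)$ is harmless modulo $\fm$ by Lemma~\ref{lemma-delta-derivative}(iv), which gives $\dot\delta_{m+1}(-1,x)\equiv \dot\delta_m(-1)\pmod{\fm}$.

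Finally I would confirm that $\{\barF'_1,\barF'_2\}$ generates $\Fil^r_{x,\Theta}\cM'$ (the change-of-basis matrix from the basis $(\barF'_{m,m},\barF'_{m,0})$ of Proposition~\ref{prop: Fil-Case-m} is unimodular) and verify strong divisibility by computing the determinant of the matrix of $(\phi'_r(\barF'_1),\phi'_r(\barF'_2))$ in the basis $(\barE'_1,\barE'_2)$: its constant term in $u$ is the pure-scalar cross-product of the off-diagonal entries in the $k'=m$ case (resp.\ the product of the diagonal entries in the $k'=m+\tfrac{1}{2}$ case), which has $p$-adic valuation $v_p(\Lambda\Omega\Theta)-(2m-1)=0$ under either specification of (i), hence is a unit in $\F\subset\barS'_\F$. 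The main obstacle I expect is the valuation bookkeeping in the $k'=m+\tfrac{1}{2}$ case, where one must carefully isolate precisely which rational pieces survive reduction to $\barS'_\F$ despite their component terms having negative $p$-adic valuations that only conspire to give a non-trivial limit after the expansion $1/P_{m,m}(\gamma-1,x)\equiv -x^{-1}/P_{m,0}(\gamma-1,0)$.
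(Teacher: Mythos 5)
Your overall strategy — expand via Lemma~\ref{lemma formula for phi_r(F_kk) and phi_r(F_k0)}, dispose of the $\tilde\Delta$ piece by $p^{p-1}$-divisibility, use $\delta_{m+1}(T,x)=\delta_m(T)-T^r/(P_{m,m}(T,x)P_{m,0}(T,0))$, $P_{m,m}(T,x)=P_{m,m}(T,0)-xP_{m,0}(T,0)$, and $c-(\gamma-1)\equiv pu^{p-1}$, then reduce — is the same as the paper's, but your decomposition is more circuitous and mis-describes the $k'=m$ case.

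The main gap is in the sentence ``For $k'=m$ this cancellation immediately produces the asserted formulas.'' The two-rational-piece cancellation you describe is specific to $\Delta=\delta_{m+1}(T,x)$: it is the substitution of $\delta_{m+1}(\gamma-1,x)=\delta_m(\gamma-1)-(\gamma-1)^r/(P_{m,m}(\gamma-1,x)P_{m,0}(\gamma-1,0))$ that produces the \emph{second} rational piece. For $k'=m$ the chosen $\Delta$ is $\delta_m(T)$, so after writing $[P_{m,m}(c,x)cf(c)]_c^{(r-1)}=P_{m,m}(c,x)\delta_m(c)-c^r/P_{m,0}(c,0)$ you get exactly \emph{one} rational piece, $-\frac{\Omega}{p^m}\,c^r/P_{m,0}(c,0)$, and there is nothing to cancel: this piece survives and is precisely the constant term $-\frac{\Omega}{p^m}\frac{(-1)^r}{P_{m,0}(-1,0)}$ in $\phi'_r(\barF'_1)$. (The paper splits the same quantity as $\delta_{m+1}(c,x)-\delta_m(\gamma-1)=\bigl(\delta_{m+1}(c,x)-\delta_m(c)\bigr)+\bigl(\delta_m(c)-\delta_m(\gamma-1)\bigr)$ and uses $P_{m,m}(c,x)\bigl(\delta_{m+1}(c,x)-\delta_m(c)\bigr)=-c^r/P_{m,0}(c,0)$, which is exactly the surviving rational piece.) Relatedly, ``eliminates the residual $P_{m,m}(-1,0)$ contribution'' is backwards: the combination $F'_{m,m}+\frac{x}{\Theta}F'_{m,0}$ \emph{preserves} $\frac{\Lambda}{p^{m-1}}P_{m,m}(c,0)$ and \emph{eliminates} the $-xP_{m,0}(c,0)$ piece of $P_{m,m}(c,x)$.

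For the $k'=m+\tfrac12$ argument your cancellation does go through, but be careful with the statement that each rational piece ``has $p$-adic valuation $0$'': with the prefactor $\Omega/p^m$ (whose valuation lies in the half-open interval $(-1,0]$, since $v_p(\Omega/p^m)=v_p(x)-v_p(\Theta)$ and the hypotheses force this into $(-1,0]$) neither piece need be integral by itself, so their mod-$\fm$ reduction isn't individually defined. What saves you is that the difference of the two rational expressions is in $pS'_\cO$ (via $c\equiv\gamma-1\pmod{pS'_\cO}$ and $P_{m,m}(c,x)/P_{m,m}(\gamma-1,x)\equiv1\pmod{\fm}$), and $v_p(\Omega/p^m)>-1$; you should state it this way. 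The paper sidesteps this entirely for $\barF'_1$ by keeping $\delta_{m+1}(c,x)-\delta_{m+1}(\gamma-1,x)$ intact (legal since $\delta_{m+1}(c,x)\in S'_\cO$ when $v_p(x)<0$ by Lemma~\ref{lemma-delta-derivative}(iv)) and applying the first-order expansion directly with $\dot\delta_{m+1}(-1,x)\equiv\dot\delta_m(-1)$; for $\barF'_{m,0}$ it uses the split $\delta_m(c)-\delta_{m+1}(\gamma-1,x)=\bigl(\delta_m(c)-\delta_{m+1}(c,x)\bigr)+\bigl(\delta_{m+1}(c,x)-\delta_{m+1}(\gamma-1,x)\bigr)$, where the first summand gives $c^r/P_{m,m}(c,x)\equiv -x^{-1}c^r/P_{m,0}(c,0)$ directly. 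That route avoids manipulating formally non-integral intermediate terms and is recommended. Finally, you should also note (for $k'=m+\tfrac12$) that $[\phi'_r(\barF'_2)]_{\barE'_1}=\frac{\Lambda\Theta x^{-1}}{p^{m-1}}P_{m,m}(c,0)$ vanishes mod $\fm$ because $v_p(\Lambda\Theta x^{-1})=m>m-1$; this is not automatic from the shape of the formula.
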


\begin{proof}
We start the proof by noting that the inequalities in (ii) imply the ones in \eqref{eq: condition for k=m r=2m} under the equations in (i), and that the equations in (i) and the inequalities in (ii) imply the inequalities in \eqref{eq: condition for phi,N stable}.

Assume first that $k'=m$, and let $\Delta=\delta_m$. Then we have $P_{m,0}(c,x)=P_{m,0}(c,0)$ by Lemma~\ref{lemma: B_k,0 k leq m}~(iii), and $P_{m,0}(-1,0)\in\Z_{(p)}^{\times}$ by Lemma~\ref{lemma-delta-denom-1}~(i), and so if $v_p(\Lambda\Theta)=m-1$ then $[\phi'_r(\barF'_2)]_{\barE_1'}=\frac{\Lambda\Theta}{p^{m-1}}P_{m,0}(-1,0)\in\F^{\times}$. As $c-(\gamma-1)\equiv pu^{p-1}\pmod{p^2}$, by Lemma~\ref{lemma: PQdegree}~(i) together with Lemma~\ref{lemma-PQ-k leq m}~(i) one can readily see that from \eqref{eq: formula for phi_r(F_k,0)}
\begin{align}\label{eq: case k=m r=2m strong div, 1}
\begin{split}
[\phi'_r(\barF'_2)]_{\barE'_2}&= \frac{\Lambda\Theta}{p^m\Omega} \big(Q_{m,0}(c,0)-P_{m,0}(c,0)\delta_m(\gamma-1)\big)\\
&=\frac{\Lambda\Theta}{p^m\Omega} P_{m,0}(c,0)\big(\delta_m(c)-\delta_m(\gamma-1)\big)\\
&=\frac{\Lambda\Theta}{p^{m-1}\Omega} P_{m,0}(-1,0)\dot{\delta}_m(-1)u^{p-1}.
\end{split}
\end{align}
Hence, $\phi'_r(\barF'_2)$ can be written as in the statement, and we have $[\phi'_r(\barF'_2)]_{\barE_2'}\in u^{p-1}\barS_\F'$, by Lemma~\ref{lemma-delta-derivative} together with Lemma~\ref{lemma-delta-denom-1}, as $v_p(\frac{\Lambda\Theta}{p^{m-1}\Omega})=v_p(\frac{\Lambda\Theta}{p^{m-1}})-v_p(\Omega)\geq0$.

To satisfy \eqref{eq: strong divisibility, local}, we should have $[\phi'_r(\barF'_1)]_{\barE_2'}\in (\barS_\F')^\times$ as $[\phi'_r(\barF'_2)]_{\barE_2'}\in u^{p-1}\barS_\F'$. However, one can readily see that from \eqref{eq: formula for phi_r(F_k,k)}
\begin{align*}
[\phi'_r(\barF'_{m,m})]_{\barE'_2}&=\frac{\Lambda}{p^m\Omega}  P_{m,m}(c,x)\big(\delta_{m+1}(c,x)-\delta_m(\gamma-1)\big)\\
            &=\frac{\Lambda}{p^m\Omega} P_{m,m}(c,x)\left[\big(\delta_{m+1}(c,x)-\delta_m(c)\big) +\big(\delta_m(c)-\delta_m(\gamma-1)\big)\right]\\
            &=\frac{\Lambda}{p^m\Omega} \frac{-c^r}{P_{m,0}(c,0)}
            +\frac{\Lambda}{p^m\Omega} \big(P_{m,m}(c,0)-xP_{m,0}(c,0)\big)
            \big(\delta_m(c)-\delta_m(\gamma-1)\big)\\
            &= -\frac{\Lambda}{p^m\Omega} \frac{c^r}{P_{m,0}(c,0)}
            +\frac{\Lambda}{p^{m-1}\Omega} \big(P_{m,m}(c,0)-xP_{m,0}(c,0)\big)
            \dot{\delta}_m(-1)u^{p-1}\\
            &= -\frac{\Lambda}{p^m\Omega} \frac{c^r}{P_{m,0}(c,0)} -\frac{\Lambda x}{p^{m-1}\Omega} P_{m,0}(c,0)\dot{\delta}_m(-1)u^{p-1},
\end{align*}
where the first equality is due to Lemma~\ref{lemma: PQdegree}~(i), Lemma~\ref{lemma: B_k,0 k leq m}, and Lemma~\ref{lemma-PQ-k leq m}~(ii), the third equality is due to Lemma~\ref{lemma-delta}~(iii) and Lemma~\ref{lemma-PQ-k leq m}~(ii), the forth equality is due to $c-(\gamma-1)\equiv pu^{p-1}\pmod{p^2}$, and the last equality is due to $v_p(\Lambda\Omega^{-1})=v_p(\Lambda\Theta)-v_p(\Theta\Omega)=m$. Hence, together with \eqref{eq: case k=m r=2m strong div, 1} it is immediate that $[\phi'_r(\barF'_1)]_{\barE'_2}$ can be written as in the statement, and if $v_p(\Lambda\Theta)=m-1$ and $v_p(\Theta\Omega)=-1$ then we have $[\phi'_r(\barF'_1)]_{\barE_2'}\in \F^\times$, by Lemma~\ref{lemma-delta-denom-1}~(i). Moreover, by Lemma~\ref{lemma-PQ-k leq m} and Lemma~\ref{lemma: B_k,0 k leq m}~(iii) we have
\begin{align*}
[\phi'_r(\barF'_1)]_{\barE'_1}=\frac{\Lambda }{p^{m-1}}P_{m,m}(c,x)+\frac{ \Lambda x}{p^{m-1}}P_{m,0}(c,x)=\frac{\Lambda }{p^{m-1}}P_{m,m}(c,0),
\end{align*}
and so $[\phi'_r(\barF'_1)]_{\barE'_1}=\frac{\Lambda }{p^{m-1}}P_{m,m}(-1,0)\in\F$ as $v_p(\frac{\Lambda}{p^{m-1}})=v_p(\frac{\Lambda\Theta}{p^{m-1}})-v_p(\Theta\Omega)+v_p(\Omega)\geq -(-1)+(-1)=0$.

Assume now that $k'=m+\frac{1}{2}$, and let $\Delta(T)=\delta_{m+1}(T,x)$. Then we have $P_{m,m}(c,x)=P_{m,m}(c,0)-xP_{m,0}(c,0)$ by Lemma~\ref{lemma-PQ-k leq m}~(ii), and $P_{m,0}(-1,0)\in\Z_{(p)}^{\times}$ by Lemma~\ref{lemma-delta-denom-1}~(i), and so if $v_p(\Lambda x)=m-1$ and $v_p(x)<0$ then $[\phi'_r(\barF'_1)]_{\barE_1'}=-\frac{\Lambda x}{p^{m-1}}P_{m,0}(-1,0)\in\F^{\times}$. Moreover, one can readily see that from \eqref{eq: formula for phi_r(F_k,k)}
\begin{align}\label{eq: case k=m r=2m strong div, 2}
\begin{split}
[\phi'_r(\barF'_1)]_{\barE'_2}&=\frac{\Lambda}{p^{m}\Omega} P_{m,m}(c,x)\left(\delta_{m+1}(c,x)-\delta_{m+1}(\gamma-1,x)\right)\\ &= \frac{\Lambda}{p^{m}\Omega} (P_{m,m}(c,0)-xP_{m,0}(c,0))\left(\delta_{m+1}(c,x)-\delta_{m+1}(\gamma-1,x)\right)\\
&=-\frac{\Lambda x}{p^{m-1}\Omega} P_{m,0}(c,0)\dot{\delta}_{m+1}(-1,x)u^{p-1}\\
&=-\frac{\Lambda x}{p^{m-1}\Omega} P_{m,0}(c,0)\dot{\delta}_{m}(-1)u^{p-1},
\end{split}
\end{align}
where the first equality is due to Lemma~\ref{lemma: PQdegree}~(i), Lemma~\ref{lemma: B_k,0 k leq m}, and Lemma~\ref{lemma-PQ-k leq m}~(ii), the second equality is due to Lemma~\ref{lemma-PQ-k leq m}~(ii), and the third equality is due to $c-(\gamma-1)\equiv pu^{p-1}\pmod{p^2}$ and $v_p(x)<0$, and the last equality is due to Lemma~\ref{lemma-delta}~(iii) and $v_p(x)<0$. Hence, $\phi'_r(\barF'_1)$ can be written as in the statement, and we have $\phi'_r(\barF'_1)_{\barE_2'}\in u^{p-1}\barS_\F'$, by Lemma~\ref{lemma-delta}~(ii) and Lemma~\ref{lemma-delta-denom-1}~(i), as $v_p(\frac{\Lambda x}{p^{m-1}\Omega})= v_p(\frac{\Lambda x}{p^{m-1}})-v_p(\Omega)\geq 0$.

To satisfy \eqref{eq: strong divisibility, local}, we should have $[\phi'_r(\barF'_2)]_{\barE_2'}\in (\barS_\F')^\times$ as $[\phi'_r(\barF'_1)]_{\barE_2'}\in u^{p-1}\barS_\F'$. However, one can readily see that from \eqref{eq: formula for phi_r(F_k,0)}
\begin{align*}
[\phi'_r(\barF'_{m,0})]_{\barE'_2}&=\frac{\Lambda\Theta}{p^m\Omega}
      P_{m,0}(c,0)\big(\delta_m(c)-\delta_{m+1}(\gamma-1,x)\big)\\
      &=\frac{\Lambda\Theta}{p^m\Omega}
      P_{m,0}(c,0)\big[\big(\delta_m(c)-\delta_{m+1}(c,x)\big) +\big(\delta_{m+1}(c,x)-\delta_{m+1}(\gamma-1,x)\big)\big]\\
      &=\frac{\Lambda\Theta}{p^m\Omega} \frac{c^r}{P_{m,m}(c,x)} + \frac{\Lambda\Theta}{p^{m-1}\Omega} P_{m,0}(c,0)\dot{\delta}_{m+1}(-1,x)u^{p-1}\\
      &=-\frac{\Lambda\Theta x^{-1}}{p^m\Omega} \frac{c^r}{P_{m,0}(c,0)} + \frac{\Lambda\Theta}{p^{m-1}\Omega} P_{m,0}(c,0)\dot{\delta}_m(-1)u^{p-1},
  \end{align*}
where the first equality is due to Lemma~\ref{lemma: PQdegree} and Lemma~\ref{lemma: B_k,0 k leq m}~(iii), and the third equality is due to Lemma~\ref{lemma-delta}~(iii) and $c-(\gamma-1)\equiv pu^{p-1}\pmod{p^2}$, and the last equality is due to Lemma~\ref{lemma-PQ-k leq m} and $v_p(x)<0$. Hence, together with \eqref{eq: case k=m r=2m strong div, 2} it is immediate that $[\phi'_r(\barF'_2)]_{\barE'_2}$ can be written as in the statement, and if 
$v_p(\Lambda\Theta\Omega^{-1}x^{-1})=v_p(\Lambda x)+v_p(\Theta\Omega^{-1}x^{-2})=m$ then we have $[\phi'_r(\barF'_2)]_{\barE_2'}\in \F^\times$, by Lemma~\ref{lemma-delta-denom-1}~(i). Moreover, by Lemma~\ref{lemma-PQ-k leq m} and Lemma~\ref{lemma: B_k,0 k leq m}~(iii) we have
\begin{align*}
[\phi'_r(\barF'_2)]_{\barE'_1}=\frac{\Lambda\Theta x^{-1} }{p^{m-1}}P_{m,m}(c,x)+\frac{\Lambda\Theta}{p^{m-1}}P_{m,0}(c,x)=\frac{\Lambda\Theta x^{-1}}{p^{m-1}}P_{m,m}(c,0),
\end{align*}
and so $[\phi'_r(\barF'_2)]_{\barE'_1}= \frac{\Lambda\Theta x^{-1}}{p^{m-1}}P_{m,m}(-1,0)=0$ as $v_p(\frac{\Lambda\Theta x^{-1}}{p^{m-1}})=v_p(\frac{\Lambda x}{p^{m-1}})+v_p(\Theta x^{-1})-v_p(x)>0$. This completes the proof.
\end{proof}

\subsubsection{\textbf{In $\mathbf{Case}~(m+1)$ with $m+1<r$}}
In this section of paragraph, we describe the conditions for $(\Lambda,\Theta,\Omega,x)$ to be good for $\mathbf{Case}_\phi(k')$ when $k'\in\{m+1,m+1+\frac{1}{2}\}$ and $r\geq 3$. We also determine the corresponding $\Delta$ and describe the image of $\phi'_r$ in $\cM'$ in this case.

For $0\leq k\leq r$, we define
\begin{equation}\label{eq: definition of b_r,k}
b_{r,k}(T):=-\frac{[P_{m+1,m+1,1}(T)Tf_r(T)]_{T}^{(r-1)} -P_{m+1,m+1,1}(T)\delta_m(T)}{P_{m,0}(T,0)}
\end{equation}
if $(r,k)=(2m+1,m+2)$, and $b_{r,k}(T):=0$ otherwise. Note that an identity for $b_{2m+1,m+2}$ has been illustrated in Lemma~\ref{lemma-b-2m+1-m+2}.

\begin{prop}\label{prop-cM-Case-m+1}
Let $k=m+1$ with $k<r$, let $k'\in\{k,k+\frac{1}{2}\}$, and assume that $v_p(\Theta)\leq0$ and $v_p(x)<0$.

If the quadruple $(\Lambda,\Theta,\Omega,x)$ satisfies
    \begin{enumerate}[leftmargin=*]
      \item $\begin{cases}
          v_p(\Lambda \Theta x^{-1})=m\mbox{ and } v_p(\Theta\Omega x^{-2})=-r+2m+1 & \mbox{if } k'=m+1; \\
          v_p(\Lambda x)=r-m-2\mbox{ and } v_p(\Theta\Omega^{-1} x^{-2})=-r+2m+3 & \mbox{if }k'=m+1+\frac{1}{2},
        \end{cases}$
      \item $v_p(\Theta)\geq v_p(x)$ and 
      $-1\leq v_p(\Omega)\leq 0$,
    \end{enumerate}
    then the quadruple is good for $\mathbf{Case}_\phi(k')$ with respect to
    $$\Delta(T)=
        \begin{cases}
            \delta_{m+1}(T,x) & \mbox{if } k'=m+1; \\
            \delta_{r-m-1}(T) & \mbox{if } k'=m+1+\frac{1}{2},
        \end{cases}$$
in which case $\Fil^r_{m+1}\cM'=\barS'_\F(\barF'_1,\barF'_2)$ where
$$\barF'_1:=\pi'(F'_{m+1,m+1})\quad\mbox{ and }\quad\barF'_2:=\pi'(F'_{m+1,0}),$$
and the map $\phi'_r:\Fil^r_{m+1}\cM'\to \cM'$ is described as follows:
\begin{itemize}[leftmargin=*]
\item if $k'=m+1$ then
    \begin{align*}
    \phi'_r(\barF'_1)&=
        -\frac{\Lambda x}{p^{r-m-2}}P_{r-m-1,0}(-1,0)\barE'_1\\
         &\hspace{2em}
         -\frac{\Lambda x}{p^{r-m-1}\Omega} P_{r-m-1,0}(-1,0)\big(\delta_{r-m-1}(-1)-\delta_{m+1}(-1,x) \big)\barE'_2;\\
    \phi'_r(\barF'_2) &=\frac{\Lambda \Theta x^{-1}}{p^m}P_{r-m-1,r-m-1}(-1,0)\barE'_1\\
     &\hspace{3em}
     +\frac{\Lambda\Theta x^{-1}}{p^m\Omega} P_{r-m-1,r-m-1}(-1,0)\dot{\delta}_{r-m}(-1)u^{p-1}\barE'_2,
    \end{align*}
\item if $k'=m+1+\frac{1}{2}$ then
    \begin{align*}
    \phi'_r(\barF'_1) &=-\frac{\Lambda x}{p^{r-m-2}}P_{r-m-1,0}(-1,0)\barE'_1\\
    &\hspace{2em}
    -\frac{\Lambda x}{p^{r-m-2}\Omega} P_{r-m-1,0}(-1,0)\Big(\dot{\delta}_{r-m-1}(-1)u^{p-1}+\frac{b_{r,m+2}(-1)}{px}\Big)\barE'_2;\\
    \phi'_r(\barF'_2) &=
        \frac{\Lambda \Theta x^{-1}}{p^m}P_{r-m-1,r-m-1}(-1,0)\barE'_1\\
        &\hspace{2em}
        +\frac{\Lambda\Theta x^{-1}}{p^{m+1}\Omega} P_{r-m-1,r-m-1}(-1,0)\big(
        \delta_{r-m}(-1)-\delta_{r-m-1}(-1)
        \big)\barE'_2.
    \end{align*}
\end{itemize}
\end{prop}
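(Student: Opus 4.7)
The plan is to follow the template of Propositions~\ref{prop-cM-Case-0}--\ref{prop-cM-Case-m}, now with $k=m+1$ and the assumption $v_p(x)<0$. A direct check shows that the hypotheses~(i) and~(ii) imply both the inequalities in \eqref{eq: condition for m+1 leq k < r} (so that Proposition~\ref{prop: Fil-Case-m+1-r-1} delivers $\Fil^r_{x,\Theta}\fM'=S'_\cO(F'_{m+1,m+1},F'_{m+1,0})+\Fil^pS'_\cO\fM'$) and the inequalities in \eqref{eq: condition for phi,N stable}. Lemma~\ref{lemma formula for phi_r(F_kk) and phi_r(F_k0)} then gives explicit expansions of $\phi'_r(F'_{m+1,m+1})$ and $\phi'_r(F'_{m+1,0})$ for the $\Delta$ prescribed in the statement, and simultaneously certifies that the $\tilde{\Delta}$-dependent correction is irrelevant once one of the $\barE'_1$-coefficients is seen to be a unit.

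For the $\barE'_1$-coefficients, the leading-order-in-$x$ approximations
\begin{align*}
P_{m+1,0}(c,x)&\equiv x^{-1}P_{r-m-1,r-m-1}(c,0)\pmod{\fm},\\
P_{m+1,m+1}(c,x)&\equiv -x\,P_{r-m-1,0}(c,0)\pmod{\fm}
\end{align*}
follow from combining Lemma~\ref{lemma: m+1 to r} (the top-degree coefficients in $x$ of $\tilde P_{m+1,0}$ and $\tilde P_{m+1,m+1}$) with Lemma~\ref{lemma-QAinv}~(iii) (monicity of $d_{m+1}(x)$ of degree $2(m+1)-r\geq 1$). Reducing $c$ to $-1$ and invoking Lemma~\ref{lemma-delta-denom-1}~(i)--(ii) to ensure $P_{r-m-1,0}(-1,0),\,P_{r-m-1,r-m-1}(-1,0)\in\Z_{(p)}^{\times}$, the valuation equalities in~(i) then place the prescribed $\barE'_1$-coefficient in $\F^\times$, which is exactly the condition $\mathbf{Case}_\phi(k')$.

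The $\barE'_2$-coefficients require more care. For $(k,i)=(m+1,0)$ the $\Delta$-dependent part reduces, after using $Q_{m+1,0}(T,x)-xP_{m+1,0}(T,x)=[P_{m+1,0}(T,x)Tf(T)]_T^{(r-1)}$, to $P_{m+1,0}(c,x)\bigl[\delta_{m+1}(c,x)-\delta_{m+1}(\gamma-1,x)\bigr]$; Taylor-expanding via $c-(\gamma-1)\equiv pu^{p-1}\pmod{p^2}$ and applying Lemma~\ref{lemma-delta-derivative}~(iv) to replace $\dot{\delta}_{m+1}(-1,x)$ by $\dot{\delta}_{r-m}(-1)$ modulo $\fm$ yields the stated $\barE'_2$-coefficient of $\phi'_r(\barF'_2)$. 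For $(k,i)=(m+1,m+1)$, Lemma~\ref{lemma-delta-denom-1}~(iii) converts the corresponding Pad\'e difference into a constant contribution $-c^r/P_{m+1,0}(c,x)$ plus a $u^{p-1}$-contribution; the latter has positive $p$-adic weight under~(i) and vanishes in $\barS'_\F$, while the former is rewritten via Lemma~\ref{lemma-delta}~(i) as $-\tfrac{\Omega x}{p^{r-m-1}}P_{r-m-1,0}(-1,0)[\delta_{r-m-1}(-1)-\delta_{r-m}(-1)]$, and a final application of Lemma~\ref{lemma-delta-derivative}~(iv) (giving $\delta_{m+1}(-1,x)\equiv\delta_{r-m}(-1)$ modulo $\fm$) matches the claimed form. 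The case $k'=m+1+\tfrac12$ with $\Delta=\delta_{r-m-1}(T)$ is entirely parallel, with the roles of $F'_{m+1,m+1}$ and $F'_{m+1,0}$ interchanged. The main obstacle throughout is the simultaneous bookkeeping of three expansions --- the Laurent expansion in $x$ (forced by $v_p(x)<0$), the $u^{p-1}$-expansion from $c-(\gamma-1)$, and the $p$-adic valuation --- required to discard every contribution whose total $p$-adic weight is positive and which therefore vanishes modulo $\fm\fM'$.
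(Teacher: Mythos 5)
Your proposal is correct and takes essentially the same overall route as the paper (filtration via Proposition~\ref{prop: Fil-Case-m+1-r-1}, the $\phi'_r$-expansion from Lemma~\ref{lemma formula for phi_r(F_kk) and phi_r(F_k0)}, the leading-order-in-$x$ approximations of $P_{m+1,0}$ and $P_{m+1,m+1}$ via Lemma~\ref{lemma: m+1 to r} and Lemma~\ref{lemma-QAinv}, and Taylor expansion in $c-(\gamma-1)$), with one genuine difference in the handling of $[\phi'_r(\barF'_1)]_{\barE'_2}$ for $k'=m+1$. The paper first substitutes $P_{m+1,m+1}(c,x)\equiv -xP_{r-m-1,0}(c,0)$, evaluates at $-1$, and then simplifies $\delta_{r-m-1}(-1)-\delta_{m+1}(-1,x)$ by a case split on the parity of $r$ using Lemma~\ref{lemma-delta}~(iii). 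You instead split the Pad\'e quotient using Lemma~\ref{lemma-delta-denom-1}~(iii) into the constant $-c^r/P_{m+1,0}(c,x)$ plus a $u^{p-1}$-contribution (noting, correctly, that the $xP_{m+1,m+1}$ terms cancel in that bookkeeping and that the $u^{p-1}$-part drops out since its total $p$-adic weight is $1>0$), and then simplify the constant part uniformly in $r$. Your route is cleaner in that it avoids the parity split. Two bookkeeping cautions: the identity you implicitly use, $\delta_{k+1}(-1)-\delta_k(-1)=\tfrac{(-1)^{r+1}}{P_{k,0}(-1,0)P_{k,k}(-1,0)}$, lives in the \emph{proof} of Lemma~\ref{lemma-delta}~(i) rather than its statement (though it is easily recoverable from Lemma~\ref{lemma-delta-denom-1}~(i)--(ii)), and similarly the congruence $\delta_{m+1}(-1,x)\equiv\delta_{r-m}(-1)\pmod{\fm}$ is stated only in the proof, not the statement, of Lemma~\ref{lemma-delta-derivative}~(iv).
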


\begin{proof}
We start the proof by noting that the inequalities in (ii) imply the ones in \eqref{eq: condition for m+1 leq k < r} for $k=m+1$ under the equations in (i), and that the equations in (i) and the inequalities in (ii) imply the inequalities in \eqref{eq: condition for phi,N stable}.

We let $t=2(m+1)-r$ for brevity. Assume first that $k'=m+1$, and let $\Delta(T)=\delta_{m+1}(T,x)$. Then we have
\begin{equation}\label{eq: case k=m+1 strong div, 2}
P_{m+1,0}(T,x)=\frac{\tilde{P}_{m+1,0}(T,x)}{d_{m+1}(x)}\equiv x^{-1}P_{r-m-1,r-m-1}(T,0)\pmod{x^{-1}\fm}
\end{equation}
by Lemma~\ref{lemma: m+1 to r} and Lemma~\ref{lemma-delta-denom-1}~(ii), and so by Lemma~\ref{lemma-delta-denom-1} if $v_p(\Lambda\Theta x^{-1})=m$ and $v_p(x)<0$ then
\begin{align*}
[\phi'_r(\barF'_2)]_{\barE'_1}= \frac{ \Lambda \Theta}{p^{m}}\frac{\tilde{P}_{m+1,0}(c,x)}{d_{m+1}(x)} =\frac{ \Lambda \Theta x^{-1}}{p^{m}}P_{r-m-1,r-m-1}(-1,0)\in\F^{\times}.
\end{align*}
Moreover, one can readily see that from \eqref{eq: formula for phi_r(F_k,0)} together with \eqref{eq: case k=m+1 strong div, 2}
\begin{align*}
[\phi'_r(\barF'_2)]_{\barE'_2}&= \frac{\Lambda\Theta}{p^{m+1}\Omega} P_{m+1,0}(c,x)\big(\delta_{m+1}(c,x)-\delta_{m+1}(\gamma-1,x)\big)\\
&=\frac{\Lambda\Theta x^{-1}}{p^{m+1}\Omega} P_{r-m-1,r-m-1}(c,0)\big(\delta_{m+1}(c,x)-\delta_{m+1}(\gamma-1,x)\big)\\
&=\frac{\Lambda\Theta x^{-1}}{p^{m}\Omega} P_{r-m-1,r-m-1}(-1,0)\dot{\delta}_{m+1}(-1,x)u^{p-1}\\
&=\frac{\Lambda\Theta x^{-1}}{p^{m}\Omega} P_{r-m-1,r-m-1}(-1,0)\dot{\delta}_{r-m}(-1)u^{p-1},
\end{align*}
where the third equality is due to $c-(\gamma-1)\equiv pu^{p-1}\pmod{p^2}$ and the last equality is due to Lemma~\ref{lemma-delta-derivative}~(iv). Hence, $\phi'_r(\barF'_2)$ can be written as in the statement, and we have $[\phi'_r(\barF'_2)]_{\barE_2'}\in u^{p-1}\barS_\F'$, by Lemma~\ref{lemma-delta-denom-1} and Lemma~\ref{lemma-delta}, as $v_p(\frac{\Lambda\Theta x^{-1}}{p^m\Omega})=v_p(\frac{\Lambda\Theta x^{-1}}{p^m})-v_p(\Omega) \geq 0$.

To satisfy \eqref{eq: strong divisibility, local}, we should have $[\phi'_r(\barF'_1)]_{\barE_2'}\in (\barS_\F')^\times$ as $[\phi'_r(\barF'_2)]_{\barE_2'}\in u^{p-1}\barS_\F'$. However, we have
\begin{multline}\label{eq: case k=m+1 strong div, 1}
P_{m+1,m+1}(c,x)=\frac{\tilde{P}_{m+1,m+1}(c,x)}{d_{m+1}(x)}\equiv \\
-xP_{r-m-1,0}(c,0) +\frac{1}{x^t}\sum_{s=0}^{t}x^s P_{m+1,m+1,s}(c)\pmod{x\fm S'_\cO}
\end{multline}
by Lemma~\ref{lemma: m+1 to r} and Lemma~\ref{lemma-delta-denom-1}~(i), and so one can readily see that from \eqref{eq: formula for phi_r(F_k,k)}
\begin{align*}
[\phi'_r(\barF'_1)]_{\barE'_2}&=-\frac{\Lambda x}{p^{r-m-1}\Omega} \big([P_{r-m-1,0}(c,0)cf(c)]_c^{(r-1)}-P_{r-m-1,0}(c,0)\delta_{m+1}(\gamma-1,x)\big)\\
&=-\frac{\Lambda x}{p^{r-m-1}\Omega} P_{r-m-1,0}(-1,0)\big(\delta_{r-m-1}(-1)-\delta_{m+1}(-1,x)\big).
\end{align*}
As we have
\begin{align*}
&P_{r-m-1,0}(-1,0)\big(\delta_{r-m-1}(-1)-\delta_{m+1}(-1,x)\big)\\
&=\begin{cases}
   \frac{(-1)^r}{P_{m,m}(-1,0)}&\mbox{if }r=2m;\\
   P_{m-1,0}(-1,0)\big(\delta_{m-1}(-1)-\delta_{m}(-1)\big)&\mbox{if }r=2m+1
  \end{cases}
\end{align*}
by Lemma~\ref{lemma-delta}~(iii), if $v_p(\Lambda\Theta x^{-1})=m$ and $v_p(\Theta\Omega x^{-2})=-r+2m+1$ then $v_p(\frac{\Lambda x}{\Omega})=v_p(\Lambda\Theta x^{-1})-v_p(\Theta\Omega x^{-2})=r-m-1$ and so $[\phi'_r(\barF'_1)]_{\barE'_2}\in\F^\times$ by Lemma~\ref{lemma-delta-denom-1} and Lemma~\ref{lemma-delta}~(i). Moreover, we have
\begin{align*}
[\phi'_r(\barF'_1)]_{\barE'_1}=\frac{\Lambda }{p^{r-m-2}}P_{m+1,m+1}(c,x)=-\frac{\Lambda x}{p^{r-m-2}}P_{r-m-1,0}(-1,0)
\end{align*}
and so by Lemma~\ref{lemma-delta-denom-1} we have $[\phi'_r(\barF'_1)]_{\barE'_1}\in\F$ as $v_p(\Lambda x)=v_p(\frac{\Lambda x}{\Omega})+v_p(\Omega)\geq (r-m-1)+(-1)=r-m-2$.

Assume now that $k'=m+1+\frac{1}{2}$, and let $\Delta(T)=\delta_{r-m-1}(T)$. 
Then by \eqref{eq: case k=m+1 strong div, 1} and Lemma~\ref{lemma-delta-denom-1}, if $v_p(\Lambda x)=r-m-2$ and $v_p(x)<0$ then
\begin{align*}
[\phi'_r(\barF'_1)]_{\barE'_1}= \frac{\Lambda}{p^{r-m-2}}\frac{\tilde{P}_{m+1,m+1}(c,x)}{d_{m+1}(x)} = -\frac{\Lambda x}{p^{r-m-2}}P_{r-m-1,0}(-1,0)\in\F^{\times}
\end{align*}
Moreover, one can readily see that from \eqref{eq: formula for phi_r(F_k,k)}
\begin{align*}
[\phi'_r(\barF'_1)]_{\barE'_2}&=-\frac{\Lambda x}{p^{r-m-1}\Omega} \big([P_{r-m-1,r-m-1}(c,0)cf(c)]_{c}^{(r-1)}-P_{r-m-1,r-m-1}(c,0)\delta_{r-m-1}(\gamma-1)\big)\\
&\hspace{2em}+\frac{\Lambda}{p^{r-m-1}\Omega} \sum_{s=0}^{t}\frac{x^s}{x^{t}}\big([P_{m+1,m+1,s}(c)cf(c)]_{c}^{(r-1)}-P_{m+1,m+1,s}(c)\delta_{r-k+1}(\gamma-1)\big)\\
&=-\frac{\Lambda x}{p^{r-m-1}\Omega} P_{r-m-1,r-m-1}(c,0)\big(\delta_{r-m-1}(c)-\delta_{r-m-1}(\gamma-1)\big)\\
&\hspace{2em}+\frac{\Lambda}{p^{r-m-1}\Omega} \big([P_{m+1,m+1,t}(c)cf(c)]_{c}^{(r-1)}-P_{m+1,m+1,t}(c)\delta_{r-m-1}(\gamma-1)\big),
\end{align*}
where the first equality is due to \eqref{eq: case k=m+1 strong div, 1}, the second equality is due to Lemma~\ref{lemma: m+1 to r}, and the third equality is due to 
$v_p(\frac{\Lambda x^{-s}}{\Omega})=v_p(\Lambda x) +\frac{1}{2}v_p(\Theta\Omega^{-1}x^{-2}) -\frac{1}{2}\big(v_p(\Theta)+v_p(\Omega)\big) +v_p(x^{-s}) \geq (r-m-2)+\frac{1}{2}(t+3)+v_p(x^{-s}) =\frac{r-1}{2}+v_p(x^{-s}) >r-m-1$ if $s\geq 1$. Furthermore, we see that $v_p(\frac{\Lambda}{\Omega})> r-m-1$ unless $r=2m+1$. Hence, we have
\begin{align*}
[\phi'_r(\barF'_1)]_{\barE'_2}
&=-\frac{\Lambda x}{p^{r-m-1}\Omega} P_{r-m-1,0}(-1,0)\big(\delta_{r-m-1}(c)-\delta_{r-m-1}(\gamma-1)\big)\\
&\hspace{3em}
-\frac{\Lambda}{p^{r-m-1}\Omega}P_{r-m-1,0}(-1,0)b_{r,m+2}(-1)\\
&=-\frac{\Lambda x}{p^{r-m-2}\Omega} P_{r-m-1,0}(-1,0)\dot{\delta}_{r-m-1}(-1)u^{p-1}\\
&\hspace{3em}
-\frac{\Lambda}{p^{r-m-1}\Omega}P_{r-m-1,0}(-1,0)b_{r,m+2}(-1)
\end{align*}
where the last equality is due to $c-(\gamma-1)\equiv pu^{p-1}\pmod{p^2}$.
As we already know that $v_p(\frac{\Lambda}{\Omega})\geq r-m-1$ if $r=2m+1$ and as $v_p(\frac{\Lambda x}{p^{r-m-2}\Omega})=v_p(\frac{\Lambda x}{p^{r-m-2}})-v_p(\Omega)\geq 0$, we see that $[\phi'_r(\barF'_1)]_{\barE_2'}\in \barS_\F'$.

To satisfy \eqref{eq: strong divisibility, local}, we should have $[\phi'_r(\barF'_2)]_{\barE_2'}\in (\barS_\F')^\times$ as $[\phi'_r(\barF'_1)]_{\barE_2'}\in u^{p-1}\barS_\F'$. However, by \eqref{eq: case k=m+1 strong div, 2} one can readily see that from \eqref{eq: formula for phi_r(F_k,0)}
\begin{align*}
[\phi'_r(\barF'_2)]_{\barE'_2}& =\frac{\Lambda\Theta x^{-1}}{p^{m+1}\Omega} \big([P_{r-m-1,r-m-1}(c,0)cf(c)]_c^{(r-1)}-P_{r-m-1,r-m-1}(c,0)\delta_{r-m-1}(\gamma-1)\big)\\
&=\frac{\Lambda\Theta x^{-1}}{p^{m+1}\Omega} P_{r-m-1,r-m-1}(c,0)\big(\delta_{r-m}(c)-\delta_{r-m-1}(\gamma-1)\big)
\end{align*}
and so by Lemma~\ref{lemma-delta-denom-1} and Lemma~\ref{lemma-delta} if $v_p(\frac{\Lambda\Theta x^{-1}}{\Omega})=v_p(\Lambda x)+v_p(\Theta\Omega^{-1}x^{-2})=(r-m-2)+(-r+2m+3)=m+1$ then $[\phi'_r(\barF'_2)]_{\barE'_2}\in\F^{\times}$. Moreover, by \eqref{eq: case k=m+1 strong div, 2} we have $$[\phi'_r(\barF'_{2})]_{\barE'_1}=\frac{\Lambda \Theta x^{-1}}{p^{m}}P_{r-m-1,r-m-1}(c,0),$$ and so $[\phi'_r(\barF'_{2})]_{\barE'_1}\in\F$ by Lemma~\ref{lemma-delta-denom-1} as $v_p(\Lambda\Theta x^{-1})=v_p(\frac{\Lambda\Theta x^{-1}}{\Omega})+v_p(\Omega)\geq (m+1)+(-1)=m$. This completes the proof.
\end{proof}

\subsubsection{\textbf{In $\mathbf{Case}~(k)$ with $m+2\leq k<r$}}
In this section of paragraph, we describe the conditions for $(\Lambda,\Theta,\Omega,x)$ to be good for $\mathbf{Case}_\phi(k')$ when $k'\in\{k,k+\frac{1}{2}\}$ and $k\in\Z$ with $m+2\leq k< r$. We also determine the corresponding $\Delta$ and describe the image of $\phi'_r$ in $\cM'$ in this case.

Recall that for $0\leq k\leq r$ we have
\begin{equation*}
b_{r,k}(T):=\frac{[P_{m+2,0,1}(T)Tf_r(T)]_{T}^{(r-1)} -P_{m+2,0,1}(T)\delta_m(T)}{P_{m-1,m-1}(T,0)}
\end{equation*}
if $(r,k)=(2m+1,m+2)$, and $b_{r,k}(T)=0$ otherwise. Note that an identity for $b_{2m+1,m+2}$ has been illustrated in Lemma~\ref{lemma-b-2m+1-m+2}.

\begin{prop}\label{prop-cM-Case-m+2-r-1}
Let $k\in\Z$ with $m+2\leq k<r$, let $k'\in\{k,k+\frac{1}{2}\}$, and assume that $v_p(\Theta)\leq0$. If the quadruple $(\Lambda,\Theta,\Omega,x)$ satisfies
    \begin{enumerate}[leftmargin=*]
      \item $\begin{cases}
          v_p(\Lambda \Theta x^{-1})=k-1\mbox{ and } v_p(\Theta\Omega x^{-2})=-r+2k-1 & \mbox{if } k'=k; \\
          v_p(\Lambda x)=r-k-1\mbox{ and } v_p(\Theta\Omega^{-1} x^{-2})=-r+2k+1 & \mbox{if }k'=k+\frac{1}{2},
        \end{cases}$
      \item $v_p(\Theta)\geq v_p(x)$ and $-1\leq v_p(\Omega)\leq 0$,
      \end{enumerate}
    then the quadruple is good for $\mathbf{Case}_\phi(k')$ with respect to $\Delta(T)=\delta_{r+1-\lceil k' \rceil}(T)$, in which case
   $\Fil^r_{m+2}\cM'=\barS'_\F(\barF'_1,\barF'_2)$ where
    $$\barF'_1:=\pi'(F'_{k,k})\quad\mbox{ and }\quad \barF'_2=\pi'(F'_{k,0}),$$
and the map $\phi'_r:\Fil^r_{m+2}\cM'\to \cM'$ is described as follows:
\begin{itemize}[leftmargin=*]
\item if $k'=k$ then
    \begin{align*}
    \phi'_r(\barF'_1)&= -\frac{\Lambda x}{p^{r-k-1}}P_{r-k,0}(-1,0)\barE'_1 -\frac{\Lambda x}{p^{r-k}\Omega} P_{r-k,0}(-1,0) \big(\delta_{r-k}(-1)-\delta_{r-k+1}(-1)       \big)\barE'_2;\\
    \phi'_r(\barF'_2)
        &=\frac{\Lambda \Theta x^{-1}}{p^{k-1}}P_{r-k,r-k}(-1,0)\barE'_1
        \\&\hspace{3em}
        +\frac{\Lambda\Theta x^{-1}}{p^{k-1}\Omega} 
        P_{r-k,r-k}(-1,0)\Big(\dot{\delta}_{r-k+1}(-1)u^{p-1}
        +\frac{b_{r,k}(-1)}{px}
        \Big)\barE'_2,
    \end{align*}
\item if $k'=k+\frac{1}{2}$ then
    \begin{align*}
    \phi'_r(\barF'_1)&=-\frac{\Lambda x}{p^{r-k-1}}P_{r-k,0}(-1,0)\barE'_1-\frac{\Omega x}{p^{r-k-1}}P_{r-k,0}(-1,0)\dot{\delta}_{r-k}(-1)u^{p-1}\barE'_2;\\
    \phi'_r(\barF'_2)&=\frac{\Lambda \Theta x^{-1}}{p^{k-1}}P_{r-k,r-k}(-1,0)\barE'_1\\
     &\hspace{3em} +\frac{\Lambda\Theta x^{-1}}{p^k\Omega} P_{r-k,r-k}(-1,0)\big(\delta_{r-k+1}(-1)-\delta_{r-k}(-1)\big)\barE'_2.
    \end{align*}
\end{itemize}
\end{prop}

Note that from Proposition~\ref{prop-cM-Case-m+2-r-1} one can readily observe that
\begin{itemize}[leftmargin=*]
\item $[\phi'_r(\barF'_1)]_{\barE_1'}\cdot [\phi'_r(\barF'_2)]_{\barE_2'}=0$ if $k'=k$;
\item the second inequality in (ii) together with $v_p(\Theta)\leq 0$ implies $v_p(x)<0$.
\end{itemize}

\begin{proof}
We start the proof by noting that the inequalities in (ii) imply the ones in \eqref{eq: condition for m+1 leq k < r} for $m+2\leq k\leq r-1$ under the equations in (i), and that the equations in (i) and the inequalities in (ii) imply the inequalities in \eqref{eq: condition for phi,N stable}.

We let $t=2k-r$ for brevity. Assume first that $k'=k$, and let $\Delta(T)=\delta_{r+1-k}(T)$. Then we have
\begin{equation}\label{eq: case m+2 leq k <r strong div, 1}
P_{k,0}(T,x)=\frac{\tilde{P}_{k,0}(T,x)}{d_{k}(x)}\equiv\frac{1}{x^t}\sum_{s=0}^{t-1}x^s P_{k,0,s}(T)\pmod{\fm}
\end{equation}
by Lemma~\ref{lemma: m+1 to r} together with $v_p(x)<0$, and so by Lemma~\ref{lemma-delta-denom-1} if $v_p(\Lambda\Theta x^{-1})=k-1$ and $v_p(x)<0$ then
\begin{align*}
[\phi'_r(\barF'_2)]_{\barE'_1}= \frac{ \Lambda \Theta}{p^{k-1}}\frac{\tilde{P}_{k,0}(c,x)}{d_{k}(x)} =\frac{ \Lambda \Theta x^{-1}}{p^{k-1}}P_{r-k,r-k}(-1,0)\in\F^{\times}.
\end{align*}
Moreover, one can readily see that from \eqref{eq: formula for phi_r(F_k,0)}
\begin{align*}
[\phi'_r(\barF'_2)]_{\barE'_2}&=\frac{\Lambda\Theta}{p^k\Omega} \sum_{s=0}^{t-1}\frac{x^s}{x^t}\big([P_{k,0,s}(c)cf(c)]_{c}^{(r-1)}-P_{k,0,s}(c)\delta_{r-k+1}(\gamma-1)\big)\\
&=\frac{\Lambda\Theta x^{-1}}{p^k\Omega} \big([P_{r-k,r-k}(c,0)cf(c)]_{c}^{(r-1)}-P_{r-k,r-k}(c,0)\delta_{r-k+1}(\gamma-1)\big)\\
&\hspace{2em}+\frac{\Lambda\Theta}{p^k\Omega} \sum_{s=0}^{t-2}\frac{x^s}{x^{t}}\big([P_{k,0,s}(c)cf(c)]_{c}^{(r-1)}-P_{k,0,s}(c)\delta_{r-k+1}(\gamma-1)\big)\\
&=\frac{\Lambda\Theta x^{-1}}{p^k\Omega} P_{r-k,r-k}(c,0)\big(\delta_{r-k+1}(c)-\delta_{r-k+1}(\gamma-1)\big)\\
&\hspace{2em}+\frac{\Lambda\Theta x^{-2}}{p^k\Omega} \big([P_{k,0,t-2}(c)cf(c)]_{c}^{(r-1)}-P_{k,0,t-2}(c)\delta_{r-k+1}(\gamma-1)\big),
\end{align*}
where the first equality is due to \eqref{eq: case m+2 leq k <r strong div, 1}, the second equality is due to Lemma~\ref{lemma: m+1 to r}, and the third equality is due to $v_p(\frac{\Lambda\Theta x^{-s}}{\Omega})=v_p(\Lambda\Theta x^{-1})+\frac{1}{2}v_p(\Theta\Omega x^{-2})-\frac{1}{2}\big(v_p(\Theta)+3v_p(\Omega)\big)+v_p(x^{-(s-2)})\geq (k-1)-\frac{1}{2}(t-1)+v_p(x^{-(s-2)})=k+\frac{1}{2}(t-3)+v_p(x^{-(s-2)})>k$ if $s\geq 3$. Furthermore, we see that $v_p(\frac{\Lambda\Theta x^{-2}}{\Omega})>k$ unless $(r,k)=(2m+1,m+2)$. Hence, we have
\begin{align*}
[\phi'_r(\barF'_2)]_{\barE'_2}&=\frac{\Lambda\Theta x^{-1}}{p^k\Omega} P_{r-k,r-k}(c,0)\big(\delta_{r-k+1}(c)-\delta_{r-k+1}(\gamma-1)\big) + \frac{\Lambda\Theta x^{-2}}{p^k\Omega}P_{r-k,r-k}(c,0)b_{r,k}(-1)\\
&=\frac{\Lambda\Theta x^{-1}}{p^{k-1}\Omega} P_{r-k,r-k}(-1,0)\dot{\delta}_{r-k+1}(-1)u^{p-1} + \frac{\Lambda\Theta x^{-2}}{p^k\Omega} P_{r-k,r-k}(-1,0)b_{r,k}(-1)
\end{align*}
where the second equality is due to $c-(\gamma-1)\equiv pu^{p-1}\pmod{p^2}$. As we already know that $v_p(\frac{\Lambda\Theta x^{-2}}{\Omega})\geq k$ if $(r,k)=(2m+1,m+2)$ and as $v_p(\frac{\Lambda\Theta x^{-1}}{p^{k-1}\Omega})=v_p(\frac{\Lambda\Theta x^{-1}}{p^{k-1}})-v_p(\Omega)\geq 0$, we see that $[\phi'_r(\barF'_2)]_{\barE'_2}\in \barS'_\F$.

We now consider $\phi'_r(\barF'_1)$. By Lemma~\ref{lemma: m+1 to r} and Lemma~\ref{lemma-delta-denom-1}~(i) we have
\begin{equation}\label{eq: case m+2 leq k leq r-1 strong div, 2}
P_{k,k}(c,x)=\frac{\tilde{P}_{k,k}(c,x)}{d_{k}(x)}\equiv-xP_{r-k,0}(c,0)\pmod{x\fm S'_\cO}.
\end{equation}
Hence, from \eqref{eq: formula for phi_r(F_k,k)} we have
\begin{align*}
[\phi'_r(\barF'_1)]_{\barE'_1}=\frac{\Lambda }{p^{r-k-1}}P_{k,k}(c,x)=-\frac{\Lambda x}{p^{r-k-1}}P_{r-k,0}(-1,0)
\end{align*}
and so by Lemma~\ref{lemma-delta-denom-1} we have $[\phi'_r(\barF'_1)]_{\barE'_1}\in\F$ as $v_p(\Lambda x)=v_p(\Lambda\Theta x^{-1})-v_p(\Theta\Omega x^{-2})+v_p(\Omega)\geq (k-1)-(-r+2k-1)+(-1)=r-k-1$.

It is easy to see that $[\phi'_r(\barF'_1)]_{\barE'_1}\cdot[\phi'_r(\barF'_2)]_{\barE'_2}\in\F$ by observing the valuation of the coefficients, so that we should have $[\phi'_r(\barF'_1)]_{\barE_2'}\in (\barS_\F')^\times$ to satisfy \eqref{eq: strong divisibility, local}.
From \eqref{eq: formula for phi_r(F_k,k)} together with \eqref{eq: case m+2 leq k leq r-1 strong div, 2}, we have
\begin{align*}
[\phi'_r(\barF'_1)]_{\barE'_2}&=-\frac{\Lambda x}{p^{r-k}\Omega} \big([P_{r-k,0}(c,0)cf(c)]_c^{(r-1)}-P_{r-k,0}(c,0)\delta_{r+1-k}(\gamma-1)\big)\\
&=-\frac{\Lambda x}{p^{r-k}\Omega} P_{r-k,0}(-1,0)\big(\delta_{r-k}(-1)-\delta_{r+1-k}(-1)\big),
\end{align*}
and so if $v_p(\Lambda\Theta x^{-1})=k-1$ and $v_p(\Theta\Omega x^{-2})=-r+2k-1$ then $v_p(\frac{\Lambda x}{\Omega})=(k-1)-(-r+2k-1)=r-k$ and so $[\phi'_r(\barF'_1)]_{\barE'_2}\in\F^{\times}$, by Lemma~\ref{lemma-delta}~(i) and Lemma~\ref{lemma-delta-denom-1}~(i).

Assume now that $k'=k+\frac{1}{2}$, and let $\Delta(T)=\delta_{r-k}(T)$. Then by \eqref{eq: case m+2 leq k leq r-1 strong div, 2} together with Lemma~\ref{lemma-delta-denom-1} we have $$[\phi'_r(\barF'_1)]_{\barE'_1}=-\frac{\Lambda x}{p^{r-k-1}}P_{r-k,0}(-1,0)\in\F^\times$$ as $v_p(\Lambda x)=r-k-1$. Moreover, by \eqref{eq: case m+2 leq k leq r-1 strong div, 2} we have
\begin{align*}
[\phi'_r(\barF'_1)]_{\barE'_2}&=-\frac{\Lambda x}{p^{r-k}\Omega} \big([P_{r-k,0}(c,0)cf(c)]_c^{(r-1)}-P_{r-k,0}(c,0)\delta_{r-k}(\gamma-1)\big)\\
&=-\frac{\Lambda x}{p^{r-k}\Omega} P_{r-k,0}(-1,0)\big(\delta_{r-k}(c)-\delta_{r-k}(\gamma-1)\big)\\
&=-\frac{\Lambda x}{p^{r-k-1}\Omega} P_{r-k,0}(-1,0)\dot{\delta}_{r-k}(-1)u^{p-1}
\end{align*}
where the last equality is due to $c-(\gamma-1)\equiv pu^{p-1}\pmod{p^2}$. Hence, we have $[\phi'_r(\barF'_1)]_{\barE'_2}\in\barS'_\F$ as $v_p(\frac{\Lambda x}{\Omega})=v_p(\Lambda x)-v_p(\Omega)\geq r-k-1$.

To satisfy \eqref{eq: strong divisibility, local}, we should have $[\phi'_r(\barF'_2)]_{\barE_2'}\in (\barS_\F')^\times$ as $[\phi'_r(\barF'_1)]_{\barE_2'}\in u^{p-1}\barS_\F'$. However, one can readily see that from \eqref{eq: formula for phi_r(F_k,0)}
\begin{align*}
[\phi'_r(\barF'_2)]_{\barE'_2}& =\frac{\Lambda\Theta x^{-1}}{p^{k}\Omega} \big([P_{r-k,r-k}(c,0)cf(c)]_c^{(r-1)}-P_{r-k,r-k}(c,0)\delta_{r-k}(\gamma-1)\big)\\
&=\frac{\Lambda\Theta x^{-1}}{p^{k}\Omega} P_{r-k,r-k}(-1,0)\big(\delta_{r-k+1}(-1)-\delta_{r-k}(-1)\big)
\end{align*}
where the first equality is due to \eqref{eq: case m+2 leq k <r strong div, 1} and $v_p(x)<0$ together with Lemma~\ref{lemma: m+1 to r} and the second equality is due to Lemma~\ref{lemma: PQdegree}. Hence, if $v_p(\Lambda x)=r-k-1$ and $v_p(\Theta\Omega^{-1}x^{-2})=-r+2k+1$ then we have $v_p(\frac{\Lambda\Theta x^{-1}}{\Omega})=v_p(\Lambda x)+v_p(\Theta\Omega^{-1}x^{-2})=k$ and so $[\phi'_r(\barF'_2)]_{\barE'_2}\in\F^{\times}$. Moreover, by \eqref{eq: case m+2 leq k <r strong div, 1} we have
\begin{align*}
[\phi'_r(\barF'_2)]_{\barE'_1}= \frac{ \Lambda \Theta}{p^{k-1}}\frac{\tilde{P}_{k,0}(c,x)}{d_{k}(x)} =\frac{ \Lambda \Theta x^{-1}}{p^{k-1}}P_{r-k,r-k}(-1,0)\in\F
\end{align*}
as $v_p(\Lambda\Theta x^{-1})=v_p(\frac{\Lambda\Theta x^{-1}}{\Omega})+v_p(\Omega)\geq k-1$. This completes the proof.
\end{proof}

\subsubsection{\textbf{In $\mathbf{Case}~(r)$}}
In this section of paragraph, we describe the conditions for $(\Lambda,\Theta,\Omega,x)$ to be good for $\mathbf{Case}_\phi(k')$ when $k'=r$. We also determine the corresponding $\Delta$ and describe the image of $\phi'_r$ in $\cM'$ in this case.

Recall that $b_{r,k}$ has been defined in \eqref{eq: definition of b_r,k}.

\begin{prop}\label{prop-cM-Case-r}
Let $k=r\geq 2$, $k'\in\{r,r+\frac{1}{2}\}$, and assume that $v_p(\Theta)\leq 0$.

If the quadruple $(\Lambda,\Theta,\Omega,x)$ satisfies
\begin{enumerate}[leftmargin=*]
      \item $\begin{cases}
               v_p(\Lambda \Theta x^{-1})=r-1\mbox{ and } v_p(\Theta\Omega x^{-2})=r-1 & \mbox{if } k'=r;\\
               v_p(\Lambda)=-1\mbox{ and } v_p(\Theta\Omega^{-1})=r+1 & \mbox{if } k'=r+\frac{1}{2},
             \end{cases}$
      \item $\begin{cases}
               v_p(x)\leq v_p(\Theta)\leq v_p(x)+r\mbox{ and } v_p(\Omega)\leq 0 & \mbox{if } k'=r;\\
               v_p(\Theta)\geq v_p(x)+r & \mbox{if } k'=r+\frac{1}{2},
             \end{cases}$
\end{enumerate}
then the quadruple is good for $\mathbf{Case}_\phi(k')$ with respect to
    \begin{equation*}
        \Delta(T)=
        \begin{cases}
            \delta_1(T) & \mbox{if } k'=r\geq 3;\\
            \delta_2(T,x) & \mbox{if } k'=r=2;\\
            \delta_0(T,x) & \mbox{if } k'=r+\frac{1}{2},
        \end{cases}
    \end{equation*}
in which case $\Fil^r_{r}\cM'=\barS'_\F(\barF'_1,\barF'_2)$ where
    $$\barF'_1:=\pi'(v^rE'_1)\quad\mbox{ and }\quad\barF'_2:=\pi'(F'_{r,0}),$$
and the map $\phi'_r:\Fil^r_{r}\cM'\to \cM'$ is described as follows:
\begin{itemize}[leftmargin=*]
\item if $k'=r$ then
    \begin{align*}
    \phi'_r(\barF'_1)
        &=(-1)^r p\Lambda\barE'_1+(-1)^{r+1}\frac{\Lambda x}{\Omega}\barE'_2;\\
    \phi'_r(\barF'_2)
        &=\frac{\Lambda \Theta }{p^{r-1}x}\barE'_1+ \frac{\Lambda\Theta}{p^{r-1}\Omega x} \Big(\dot{\delta}(-1)u^{p-1}-\frac{b_{r,r}(-1)}{px}\Big)\barE'_2,
    \end{align*}
\item if $k'=r+\frac{1}{2}$ then
    \begin{align*}
    \phi'_r(\barF'_1)
        &=(-1)^rp\Lambda\barE'_1;\\
    \phi'_r(\barF'_2)
        &=\frac{\Lambda \Theta }{p^{r-1}x}\barE'_1+ \frac{\Lambda\Theta}{p^r\Omega} \barE'_2.
    \end{align*}
\end{itemize}
\end{prop}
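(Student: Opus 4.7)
The plan is to mirror the proofs of the preceding propositions, especially Proposition~\ref{prop-cM-Case-m+2-r-1}, specialized to $k = r$ where Lemma~\ref{lemma: k=0,k=r} provides fully explicit formulas ($Q_{r,0}(T,x) = 1$ and an explicit expansion of $P_{r,0}(T,x)$ in inverse powers of $x$). First I will verify that conditions (i) and (ii) imply both the filtration hypothesis \eqref{eq: condition for k=r} (so Proposition~\ref{prop: Fil-Case-r} gives $\Fil^r_{x,\Theta}\fM' = S'_\cO(v^r E'_1, F'_{r,0}) + \Fil^p S'_\cO\fM'$ and hence $G'_1 = v^rE'_1$, $G'_2 = F'_{r,0}$) and the integrality inequalities in \eqref{eq: condition for phi,N stable} (so $\phi'$, $N'$ are well-defined). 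The condition $v_p(\Theta) \leq 0$ together with $v_p(\Theta) - 2v_p(x) \geq r - 1$ forces $v_p(x) < 0$, which validates the inverse powers of $x$ that appear throughout.

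For $k' = r + \tfrac{1}{2}$ with $\Delta(T) = \delta_0(T,x) = -x$, the identity $x + \Delta(\gamma - 1) = 0$ annihilates the main $E'_2$-contribution of $\phi'(E'_1)$, while the $\tilde\Delta$-correction is killed by $p^{p-2}\Omega$ under $v_p(\Omega\Theta) = r$. Hence $\phi'_r(v^rE'_1) = c^r\phi'(E'_1) \equiv (-1)^r p\Lambda E'_1$ in $\cM'$. For $\phi'_r(F'_{r,0})$, I will apply Lemma~\ref{lemma formula for phi_r(F_kk) and phi_r(F_k0)} with $k = r$ together with the identity
\[
\big[c f_r(c)\, P_{r,0}(c, x)\big]_c^{(r-1)} + x\, P_{r,0}(c,x) = Q_{r,0}(c, x) = 1
\]
from Lemma~\ref{lemma: k=0,k=r}, which (with $\Delta = -x$) collapses the last term of \eqref{eq: formula for phi_r(F_k,0)} to $\tfrac{\Omega\Theta}{p^r}E'_2$. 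In $\barS'_\F$ only the constant term $1/x$ of $P_{r,0}(T,x)$ survives (its higher $T$-degree terms carry extra negative powers of $x$ dominated by $v_p(\Theta) \geq v_p(x) + r$), yielding the $E'_1$-coefficient $\tfrac{\Lambda\Theta}{p^{r-1}x}$.

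For $k' = r$ with $\Delta = \delta_1$ (when $r \geq 3$), the reduction of $\phi'_r(v^rE'_1)$ uses $\delta_1(\gamma - 1) \equiv \delta_1(-1) = -H_{r-1}$ (Lemma~\ref{lemma-delta}~(ii)) so that $x + \Delta(\gamma - 1) \equiv x$ modulo $\fm$ (since $v_p(\Omega H_{r-1}) = v_p(\Omega) > 0 = v_p(\Omega x)$), producing the $(-1)^{r+1}\Omega x\,\barE'_2$ summand. For $\phi'_r(F'_{r,0})$ I will combine the polynomial identity above with the first-order expansion $\delta_1(c) - \delta_1(\gamma - 1) \equiv \dot\delta_1(-1) p u^{p-1}$ (coming from $c - (\gamma - 1) \equiv pu^{p-1} \pmod{p^2}$) and extract the $b_{r,r}(-1)/(p x)$ correction exactly as in Proposition~\ref{prop-cM-Case-m+2-r-1}; by Lemma~\ref{lemma-b-2m+1-m+2} this correction is zero unless $r = 3$. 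The exceptional subcase $r = 2$, $k' = 2$ requires $\Delta = \delta_2(T, x)$: the gap $\delta_2(T, x) - \delta_1(T)$ from Lemma~\ref{lemma-delta}~(iii) and the congruence $\dot\delta_2(-1, x) \equiv \dot\delta_1(-1)$ modulo $\fm$ from Lemma~\ref{lemma-delta-derivative}~(iv) reduce it to the already-treated situation. Strong divisibility \eqref{eq: strong divisibility, local} then follows in each subcase by a routine determinant check on the matrix of $\phi'_r(\barF'_1)$ and $\phi'_r(\barF'_2)$.

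The main obstacle I anticipate is the bookkeeping of negative powers of $x$ in $P_{r,0}(T, x)$ against the inequalities on $v_p(\Theta)$ and $v_p(x)$: specifically, confirming that $P_{r,0}(c, x)$ reduces to exactly $1/x$ in $\barS'_\F$ in $\mathbf{Case}_\phi(r + \tfrac{1}{2})$, and tracking which intermediate-degree coefficients of $P_{r,0}(T, x)$ contribute the $b_{r,r}(-1)/(p x)$ term in $\mathbf{Case}_\phi(r)$ when $r = 3$.
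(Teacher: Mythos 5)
Your proposal follows essentially the same route as the paper: apply Proposition~\ref{prop: Fil-Case-r} to get the generators $G'_1 = v^r E'_1$, $G'_2 = F'_{r,0}$, plug into Lemma~\ref{lemma formula for phi_r(F_kk) and phi_r(F_k0)} with $k = r$, and reduce using the explicit form of $P_{r,0}(T,x)$ from Lemma~\ref{lemma: k=0,k=r} and the Taylor expansion $c - (\gamma-1) \equiv pu^{p-1}$. Your use of the identity $Q_{r,0}(c,x) = 1$ to collapse the $\mathbf{Case}_\phi(r+\tfrac12)$ computation of $[\phi'_r(\barF'_2)]_{E'_2}$ is a clean packaging that the paper achieves more pedestrianly by expanding $P_{r,0}$ into its $x^{-s}$-coefficients and dropping terms by valuation.

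The one spot that is more hand-wavy than the paper's proof is the phrase ``extract the $b_{r,r}(-1)/(px)$ correction exactly as in Proposition~\ref{prop-cM-Case-m+2-r-1}.'' In that proposition the $b_{r,k}$-term arises as the specific summand $s = t-2$ of $\tilde P_{k,0}/d_k$, and the quantity is \emph{defined} to be $b_{r,k}$ by \eqref{eq: definition of b_r,k}. At $k = r$ the paper's computation instead produces the surviving quantity $-x^{-2}\big(P_{r,0,r-3}(-1)-\delta_1(-1)^2\big)$ and then has to \emph{separately verify} that this equals $b_{r,r}(-1)$ when $r = 3$ (and is eliminated by a valuation bound when $r > 3$); it is not literally the same derivation. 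So in your write-up you would need to either repeat the paper's coefficient-by-coefficient estimates to isolate $P_{r,0,r-3}(-1)-\delta_1(-1)^2$, or, working from your $Q_{r,0}=1$ identity, make explicit the tail polynomial $R(T,x)$ with $P_{r,0}(T,x)(x + Tf_r(T)) = 1 + T^r R(T,x)$ and track its reduction at $T = c \equiv -1$; either way the identification with $b_{3,3}(-1) = -\tfrac{5}{4}$ from Lemma~\ref{lemma-b-2m+1-m+2} must be made explicitly rather than by reference. You have correctly flagged this bookkeeping as the main obstacle, so this is a matter of carrying out details rather than a gap in the plan.
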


Note that from Proposition~\ref{prop-cM-Case-r} one can readily observe the following:
\begin{itemize}[leftmargin=*]
\item $[\phi'_r(\barF'_1)]_{\barE_1'}\cdot [\phi'_r(\barF'_2)]_{\barE_2'}=0$ if $k'=r$;
\item the second inequality in (ii) together with $v_p(\Theta)\leq 0$ implies $v_p(x)<0$;
\item $b_{r,r}\neq 0$ if and only if $r=3$, by definition.
\end{itemize}

\begin{proof}
We start the proof by noting that the inequalities in (ii) implies the ones in \eqref{eq: condition for k=r} (including $v_p(x)<0$) under the equations in (i), and that the equations in (i) and the inequalities in (ii) imply the inequalities in \eqref{eq: condition for phi,N stable}, including $v_p(\Theta_0)\leq 0$ if $k'=r$.

Assume first that $k'=r>1$, and let $\Delta(T)=\delta_1(T)$ if $r\geq 3$ and $\Delta(T)=\delta_2(T,x)$ if $r=2$. By Lemma~\ref{lemma: k=0,k=r}, we have $P_{r,0}(c,x)\equiv x^{-1}\pmod{x^{-1}\fm S'_\cO}$ as $v_p(x)<0$, and so if $v_p(\Lambda\Theta x^{-1})=r-1$ then
$[\phi'_r(\barF'_{2})]_{\barE'_1}=\frac{\Lambda \Theta x^{-1}}{p^{r-1}}\in\F^\times$.

To compute $[\phi'_r(\barF'_{2})]_{\barE'_2}$ we separate it into two cases: $r=2$ and $r\geq 3$. If $r=2$ then from \eqref{eq: formula for phi_r(F_k,0)} we have
\begin{align*}
[\phi'_r(\barF'_{2})]_{\barE'_2}&= \frac{\Lambda\Theta}{p^r\Omega} P_{2,0}(c,x)\big(\delta_2(c,x)-\delta_2(\gamma-1,x)\big)\\
&= \frac{\Lambda\Theta x^{-1}}{p^r\Omega} \big(\delta_2(c,x)-\delta_2(\gamma-1,x)\big)\\
&= \frac{\Lambda\Theta x^{-1}}{p^{r-1}\Omega} \dot{\delta}_2(-1,x)u^{p-1}\\
&= \frac{\Lambda\Theta x^{-1}}{p^{r-1}\Omega} \dot{\delta}_1(-1)u^{p-1},
\end{align*}
where the second equality is due to Lemma~\ref{lemma: k=0,k=r} and $v_p(x)<0$, the third equality is due to $c-(\gamma-1)\equiv pu^{p-1}\pmod{p^2}$, and the last equality is due to Lemma~\ref{lemma-delta-derivative}~(iv).  Hence, $[\phi'_r(\barF'_{2})]_{\barE'_2}\in\barS'_\F$ as $v_p(\frac{\Lambda\Theta x^{-1}}{p^{r-1}\Omega})=v_p(\frac{\Lambda\Theta x^{-1}}{p^{r-1}})-v_p(\Omega)\geq 0$.
If $r\geq 3$ then from \eqref{eq: formula for phi_r(F_k,0)} together with $v_p(x)<0$ and $P_{r,0,s}(T)=[(-Tf_r(T))^{r-1-s}]_T^{(r-1)}$ we have
\begin{align*}
[\phi'_r(\barF'_{2})]_{\barE'_2}&=\frac{\Lambda\Theta}{p^r\Omega} x^{-r}\sum_{s=0}^{r-1}x^s\big([P_{r,0,s}(c)cf(c)]_c^{(r-1)}-P_{r,0,s}(c)\delta_1(\gamma-1)\big)\\
&=\frac{\Lambda\Theta}{p^r\Omega} x^{-r}\sum_{s=0}^{r-1}x^s\big(-P_{r,0,s-1}(c)-P_{r,0,s}(c)\delta_1(\gamma-1)\big),
\end{align*}
where the first equality is due to Lemma~\ref{lemma: k=0,k=r} and the second equality is due to the identity $[P_{r,0,s}(c)cf(c)]_{c}^{(r-1)}=-P_{r,0,s-1}(c)$. As $v_p(\frac{\Lambda\Theta x^{-s}}{\Omega})
=v_p(\Lambda\Theta x^{-1})+\frac{1}{2}v_p(\Theta\Omega x^{-2})-\frac{1}{2}\big(v_p(\Theta)+3v_p(\Omega)\big)+v_p(x^{-(s-2)})
\geq\frac{3}{2}(r-1)+v_p(x^{-(s-2)})=r+\frac{1}{2}(r-3)+v_p(x^{-(s-2)})>r$ if $s\geq 3$, we further have

\begin{align*}
[\phi'_r(\barF'_{2})]_{\barE'_2}&=\frac{\Lambda\Theta}{p^r\Omega} \left[x^{-1}\big(\delta_1(c)-\delta_1(\gamma-1)\big)+x^{-2}(-P_{r,0,r-3}(c)+\delta_1(c)\delta_1(\gamma-1))\right]\\
&=\frac{\Lambda\Theta}{p^r\Omega} \big[px^{-1}\dot{\delta}_1(-1)u^{p-1}-x^{-2}\big(P_{r,0,r-3}(-1)-\delta_1(-1)^2\big)\big],
\end{align*}
where the first equality is due to $P_{r,0,r-2}(c)=-\delta_1(c)$ and $P_{r,0,r-1}(c)=1$ and the second equality is due to $c-(\gamma-1)\equiv pu^{p-1}\pmod{p^2}$.
Hence, $[\phi'_r(\barF'_{2})]_{\barE'_2}\in\barS'_\F$ as 
$v_p(\frac{\Lambda\Theta x^{-1}}{p^{r-1}\Omega})=v_p(\frac{\Lambda\Theta x^{-1}}{p^{r-1}})-v_p(\Omega)\geq 0$ and 
$v_p(\frac{\Lambda\Theta x^{-2}}{\Omega})\geq r+\frac{1}{2}(r-3)\geq r$. Moreover,
$$\frac{\Lambda\Theta}{p^r\Omega}x^{-2}\big(P_{r,0,r-3}(-1)-\delta_1(-1)^2\big)=0$$
if $r>3$, as $v_p(\frac{\Lambda\Theta x^{-2}}{\Omega})>r$ if $r>3$, and it is easy to see that $b_{r,r}(-1)=P_{r,0,r-3}(-1)-\delta_1(-1)^2$ if $r=3$.

We now compute $\phi'_r(\barF'_1)$. From \eqref{eq: definition of phi'}, we have
\begin{align*}
\phi'_r(\barF'_1)=c^r\left(p\Lambda E'_1-\frac{\Lambda}{\Omega}\big(x+\delta(\gamma-1)\big)E'_2\right)=(-1)^rp\Lambda E'_1-(-1)^r\frac{\Lambda x}{\Omega}E'_2,
\end{align*}
as $v_p(\frac{\Lambda x}{\Omega})=v_p(\Lambda\Theta x^{-1})-v_p(\Omega\Theta x^{-2})=0$ and $v_p(\frac{\Lambda}{\Omega})=-v_p(x)>0$. It is easy to see that $[\phi'_r(\barF'_1)]_{\barE'_1}\cdot [\phi'_r(\barF'_2)]_{\barE'_2}=0$ by observing the valuation of the coefficient.
Hence, to satisfy \eqref{eq: strong divisibility, local}, we should have $[\phi'_r(\barF'_1)]_{\barE_2'}\in (\barS_\F')^\times$, and it holds as $v_p(\Lambda\Omega^{-1}x)=0$. Moreover, we have $[\phi'_r(\barF'_1)]_{\barE_1'}\in \F$ as $v_p(\Lambda)=v_p(\Lambda\Theta x^{-1})-v_p(\Theta x^{-1})\geq (r-1) -r=-1$.

Assume now that $k'=r+\frac{1}{2}$, and let $\Delta(T)=\delta_0(T,x)$. Then by \eqref{eq: definition of phi'} we have $$[\phi'_r(\barF'_1)]_{\barE'_1}=(-1)^rp\Lambda\in\F^\times$$ as $v_p(\Lambda)=-1$. Moreover, by \eqref{eq: definition of phi'} we have
\begin{align*}
[\phi'_r(\barF'_1)]_{\barE'_2}&=-(-1)^r\frac{\Lambda}{\Omega} \big(x+\delta_0(\gamma-1,x)\big)\\
&=-(-1)^r\frac{\Lambda}{\Omega}\big(x+(-x)\big)\\
&=0
\end{align*}
Hence, we have $[\phi'_r(\barF'_1)]_{\barE'_2}\in\barS'_\F$.

To satisfy \eqref{eq: strong divisibility, local}, we should have $[\phi'_r(\barF'_2)]_{\barE_2'}\in (\barS_\F')^\times$ as $[\phi'_r(\barF'_1)]_{\barE_2'}\in \barS_\F'$. However, one can readily see that from \eqref{eq: formula for phi_r(F_k,0)}
\begin{align*}
[\phi'_r(\barF'_2)]_{\barE'_2}& =\frac{\Lambda\Theta x^{-1}}{p^{r}\Omega}\big([P_{0,0}(c,0)cf(c)]_c^{(r-1)}-P_{0,0}(c,0)\delta_0(\gamma-1,x)\big)\\
&=\frac{\Lambda\Theta x^{-1}}{p^{r}\Omega}(-x)\\
&=-\frac{\Lambda\Theta}{p^r\Omega}
\end{align*}
where the first equality is due to Lemma~\ref{lemma: k=0,k=r} and $v_p(x)<0$, and the second equality is due to $P_{0,0}(c,0)=1$ and $v_p(x)<0$. Hence, if $v_p(\Lambda)=-1$ and $v_p(\Theta\Omega^{-1})=r+1$ then we have $v_p(\frac{\Lambda\Theta}{\Omega})=v_p(\Lambda)+v_p(\Theta\Omega^{-1})=r$ and so $[\phi'_r(\barF'_2)]_{\barE'_2}\in\F^{\times}$. Moreover, we have $$[\phi'_r(\barF'_2)]_{\barE'_1}= \frac{\Lambda \Theta x^{-1}}{p^{r-1}}\in\F$$ as $v_p(\Lambda\Theta x^{-1})=-1+v_p(\Theta x^{-1})\geq r-1$. This completes the proof.
\end{proof}

\subsection{The exceptional cases}\label{subsec: exceptional cases}
In this subsection, we prove Theorem~\ref{theo: filtration of pseudo} and Theorem~\ref{theo: strong divisibility of pseudo} for two exceptional cases: $r'=1$ and $\mathbf{Case}_\phi(\infty)$.

\subsubsection{\textbf{When $r'=1$}}
In this section of paragraph, we describe the filtration and the conditions for the strong divisibility when $r'=1$. Recall that we set $\delta_1(T,x)=0$ and $\delta_0(T,x)=-x$ if $r'=1$. Without loss of generality, we may assume $r=r'$, and we do so.

Set $\Fil^1_{k}\fM':=\Fil^{1;1}_{k}\fM'$ for brevity. It is clear from \eqref{eq: definition of FilfM'} that
\begin{equation}\label{eq: condition for filtration, r=1}
\begin{cases}
F'_{0,0}\in\fM'\,\,\mbox{ if and only if }\,\,v_p(\Theta)\leq v_p(x);\\
F'_{1,0}\in\fM'\,\,\mbox{ if and only if }\,\,v_p(\Theta)\geq v_p(x),
\end{cases}
\end{equation}
in which case we have
\begin{equation}\label{eq: generators of the filtration, r=1}
\Fil^{1}_{k}\fM'=
\begin{cases}
S'_\cO(F'_{0,0},vE'_2)+\Fil^pS'_\cO\cdot\fM'&\mbox{ if }k=0;\\
S'_\cO(vE'_1,F'_{1,0})+\Fil^pS'_\cO\cdot\fM'&\mbox{ if }k=1.
\end{cases}
\end{equation}

We first treat the case $\mathbf{Case}~(0)$.
\begin{prop}\label{prop: case phi 0.5, r=1}
Let $k'=\frac{1}{2}$. If the quadruple $(\Lambda,\Theta,\Omega,x)$ satisfies
    \begin{enumerate}[leftmargin=*]
    \item $v_p(\Lambda)=0$ and $v_p(\Theta\Omega^{-1})=0$;
    \item $v_p(\Theta)\leq v_p(x)$ and $v_p(\Omega)\leq 0$,
    \end{enumerate}
then the quadruple is good for $\mathbf{Case}_\phi(\frac{1}{2})$ with respect to $\Delta(T)=\delta_1(T)$, in which case $\Fil^1_{0}\cM'=\barS'_\F(\barF'_1,\barF'_2)$ with
    \begin{align*}
    \barF'_1:=\pi'(F'_{0,0})\quad\mbox{ and }\quad\barF'_2:=\pi'(vE'_2)
    \end{align*}
and the map $\phi'_1:\Fil^1_{0}\cM'\to\cM'$ is described as
    \begin{align*}
    \phi'_1(\barF'_1)=\Lambda\barE'_1\quad\mbox{ and }\quad \phi'_1(\barF'_2)= -\frac{\Lambda\Theta}{\Omega}\barE'_2.
    \end{align*}
\end{prop}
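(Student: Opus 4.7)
The plan is to follow the blueprint of Proposition~\ref{prop-cM-Case-0}, specialized to $r=1$ where $\delta_1(T)=0$ by definition, which makes the computation cleaner than the generic case. First I would check that conditions (i) and (ii) imply the three inequalities of \eqref{eq: condition for phi,N stable}, ensuring that $\phi'$ and $N'$ are well defined: $v_p(\Lambda)=0 \geq -1$; using $\Delta(-1)=\delta_1(-1)=0$, the bound $v_p(\Omega)+v_p(x+\Delta(-1))=v_p(\Omega)+v_p(x)\geq v_p(\Omega)+v_p(\Theta)=0$; and $0\geq v_p(\Theta)\geq -v_p(\Omega)$ from (i). Then \eqref{eq: condition for filtration, r=1} and \eqref{eq: generators of the filtration, r=1} applied with $v_p(\Theta)\leq v_p(x)$ give
\[
\Fil^1_{x,\Theta}\fM'=S'_\cO(F'_{0,0},\,vE'_2)+\Fil^pS'_\cO\cdot\fM'.
\]

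Next I would compute the two Frobenius images in turn. Expanding $\phi'(F'_{0,0})=\phi'(E'_1)+\tfrac{x}{\Theta}\phi'(E'_2)$ via \eqref{eq: definition of phi'} with $\Delta=\delta_1=0$, the $-\Omega x E'_2$ term in $\phi'(E'_1)$ cancels against $\tfrac{x}{\Theta}\cdot\Omega\Theta E'_2$, leaving
\[
\phi'(F'_{0,0})=p\Lambda E'_1+\frac{\Omega}{p}\bigl(\tilde\Delta(\phi(\gamma)-1)-\tilde\Delta(-1)\bigr)E'_2.
\]
Dividing by $p$ and reducing modulo $\fm\fM'$, the second term vanishes since $p^{p-1}$ divides $\tilde\Delta(\phi(\gamma)-1)-\tilde\Delta(-1)$ by \eqref{eq: p^p-1 divides phi(gamma)} and $v_p(\Omega)\geq 0$ (this also confirms that $\tilde\Delta$ is irrelevant, as noted after Lemma~\ref{lemma formula for phi_r(F_kk) and phi_r(F_k0)}), giving $\phi'_1(\barF'_1)=\Lambda\barE'_1$. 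For the second generator, $\phi'_1(vE'_2)=c\Omega\Theta E'_2$, and since $c\equiv -1\pmod{\Fil^pS'_\cO}$, its image in $\cM'$ is $-\Omega\Theta\barE'_2$.

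Finally, strong divisibility follows directly: conditions (i) give $\Lambda,\Omega\Theta\in\cO^\times$, so the reductions $\phi'_1(\barF'_1)$ and $\phi'_1(\barF'_2)$ form a $\barS'_\F$-basis of $\cM'$. A standard Nakayama argument (as in \eqref{eq: Nakayama}) then shows $\phi'_1(\Fil^1_{x,\Theta}\fM')$ generates $\fM'$ over $S'_\cO$, noting that $\phi_1(\Fil^pS'_\cO)\subseteq p^{p-1}S'_\cO\subseteq S'_\cO$ takes care of the high-filtration part. Since the mod-$p$ computation already identifies the generators of $\Fil^1_{x,\Theta}\cM'$ as $\barF'_1,\barF'_2$, the proposition follows. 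There is no genuine obstacle here: the vanishing of $\delta_1$ eliminates the subtle cancellations with $\dot\delta_k(-1)$ that appear for $r\geq 2$, so the case reduces to bookkeeping of $p$-adic valuations.
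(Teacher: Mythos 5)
Your proof is correct and follows essentially the same route as the paper: verify \eqref{eq: condition for phi,N stable}, pick up the generators of $\Fil^1_{x,\Theta}\fM'$ from \eqref{eq: condition for filtration, r=1} and \eqref{eq: generators of the filtration, r=1}, compute the two images of $\phi'_1$ with $\Delta=\delta_1=0$, use \eqref{eq: p^p-1 divides phi(gamma)} to discard the $\tilde\Delta$-term, and conclude strong divisibility from the two resulting units. Two small remarks. First, your intermediate claim that $c\equiv -1\pmod{\Fil^pS'_\cO}$ is not quite right as stated: from $c-(\gamma-1)\equiv pu^{p-1}\pmod{p^2S}$ one gets $c+1=\gamma+pu^{p-1}+p^2(\cdots)$, which lies in $\Fil^pS'_\cO+pS'_\cO$ but not in $\Fil^pS'_\cO$ alone (for instance $u^p/p\equiv p^{p-1}\pmod{vS'}$ has nonzero image under $v\mapsto 0$); what you actually need, and what you use, is the congruence modulo $(\varpi,\Fil^pS'_\cO)$, i.e.\ $c\mapsto -1$ in $\barS'_\F$, which does give $\phi'_1(\barF'_2)=-\Omega\Theta\barE'_2$. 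Second, your sign here is in fact the correct one and matches the proposition as stated; the paper's own proof of this proposition writes $\phi'_1(\barF'_2)=\Omega\Theta\barE'_2$ without the minus, which appears to be a typo in the paper, so your version is the more careful one.
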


\begin{proof}
We start the proof by noting that the equations in (i) and the inequalities in (ii) imply the inequalities in \eqref{eq: condition for phi,N stable}, and that the first inequality in (ii) implies the one in \eqref{eq: condition for filtration, r=1}.

By \eqref{eq: definition of phi'}, it is tedious to see that
$$\phi'_1(F'_{0,0})\equiv\frac{1}{p}\bigg(\bigg(p\Lambda E'_1-\frac{\Lambda x}{\Omega} E'_2\bigg)+\frac{x}{\Theta}\bigg(\frac{\Lambda\Theta}{\Omega} E'_2\bigg)\bigg)= \Lambda E'_1 \pmod{\fm\fM'},
$$
as $\delta=0$ and \eqref{eq: p^p-1 divides phi(gamma)}. Hence, if $v_p(\Lambda)=0$ then $[\phi'_1(\barF'_1)]_{\barE'_1}\in\F^{\times}$.

As $[\phi'_1(\barF'_1)]_{\barE'_2}=0$, we should have $[\phi'_1(\barF'_2)]_{\barE'_2}\in(\barS'_\F)^\times$. But by \eqref{eq: definition of phi'} $\phi'_1(\barF'_2)=\frac{\Lambda\Theta}{\Omega}\barE'_2$ and so if $v_p(\frac{\Lambda\Theta}{\Omega})=v_p(\Lambda)+v_p(\Theta\Omega^{-1})=0$ then we have $[\phi'_1(\barF'_2)]_{\barE'_2}\in\F^{\times}$, which completes the proof.
\end{proof}

We now treat the case $\mathbf{Case}~(1)$.
\begin{prop}\label{prop: case phi 1, r=1}
Let $k'\in\{1,\frac{3}{2}\}$, and assume that $v_p(\Theta)\leq 0$. If the quadruple $(\Lambda,\Theta,\Omega,x)$ satisfies
\begin{enumerate}[leftmargin=*]
    \item $\begin{cases}
             v_p(\Lambda \Theta x^{-1})=0\mbox{ and } v_p(\Theta\Omega x^{-2})=0 & \mbox{if } k'=1; \\
             v_p(\Lambda)=-1\mbox{ and } v_p(\Theta\Omega^{-1})=2 & \mbox{if } k'=\frac{3}{2},
           \end{cases}$
    \item $\begin{cases}
             v_p(x)\leq v_p(\Theta)\leq v_p(x)+1 & \mbox{if } k'=1; \\
             v_p(\Theta)\geq v_p(x)+1 & \mbox{if } k'=\frac{3}{2},
           \end{cases}$
\end{enumerate}
then the quadruple is good for $\mathbf{Case}_\phi(k')$ with respect to
$$\Delta(T)=
        \begin{cases}
            \delta_1(T) & \mbox{if } k'=1; \\
            \delta_0(T,x) & \mbox{if } k'=\frac{3}{2},
        \end{cases}$$
in which case $\Fil^1_{1}\cM'=\barS'_\F(\barF'_1,\barF'_2)$ with
    \begin{align*}
    \barF'_1:=\pi'(vE'_1)\quad\mbox{ and }\quad\barF'_2:=\pi'(F'_{1,0})
    \end{align*}
    and the map $\phi'_1:\Fil^1_{1}\cM'\to\cM'$ is described as follows:
\begin{itemize}[leftmargin=*]
\item if $k'=1$ then
    \begin{align*}
    \phi'_1(\barF'_1)=-p\Lambda\barE'_1+\frac{\Lambda x}{\Omega}\barE'_2\quad\mbox{ and }\quad\phi'_1(\barF'_2)=\frac{\Lambda\Theta}{x}\barE'_1,
    \end{align*}
\item if $k'=\frac{3}{2}$ then
    \begin{align*}
    \phi'_1(\barF'_1)=-p\Lambda\barE'_1\quad\mbox{ and }\quad\phi'_1(\barF'_2)=\frac{\Lambda\Theta}{x}\barE'_1+\frac{\Lambda\Theta}{p\Omega}\barE'_2.
    \end{align*}
\end{itemize}
\end{prop}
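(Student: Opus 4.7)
The proof will follow the template of Proposition~\ref{prop: case phi 0.5, r=1}, treating the two subcases $k'=1$ and $k'=\tfrac{3}{2}$ in turn.

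First I would verify the preliminary requirements. From (i) and (ii) one reads off directly that $v_p(\Lambda)\geq -1$, $v_p(\Omega)\geq v_p(\Lambda)$, $v_p(\Theta)\leq 0$, and $v_p(\Omega)+v_p(x)\geq 0$ (recall that $\Delta(-1)$ is either $0$ or $-x$, so only $v_p(\Omega)+v_p(x)\geq 0$ needs checking); together these give \eqref{eq: condition for phi,N stable}. Next, the inequality $v_p(\Theta)\geq v_p(x)$, which is furnished by (ii) in both subcases, is precisely the integrality criterion of \eqref{eq: condition for filtration, r=1} ensuring $F'_{1,0}\in\fM'$, so \eqref{eq: generators of the filtration, r=1} yields the asserted presentation $\Fil^1_{x,\Theta}\fM'=S'_\cO(vE'_1,F'_{1,0})+\Fil^pS'_\cO\cdot\fM'$.

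For $k'=1$ with $\Delta=\delta_1=0$, I would specialise \eqref{eq: phi(E1)}: since $\delta_1\equiv 0$, the contribution $-\Omega(x+\delta_1(\gamma-1))E'_2=-\Omega xE'_2$, while the residual $\tfrac{\Omega}{p}(\tilde\delta_1(\phi(\gamma)-1)-\tilde\delta_1(-1))E'_2$ lies in $\fm\fM'$ by \eqref{eq: p^p-1 divides phi(gamma)} together with $v_p(\Omega)\geq -1$ and $p\geq 3$. A direct computation then gives
\begin{align*}
\phi'(F'_{1,0}) &\equiv \tfrac{\Theta}{x}\bigl(p\Lambda E'_1-\Omega xE'_2\bigr)+\Omega\Theta E'_2 \equiv \tfrac{p\Lambda\Theta}{x}E'_1 \pmod{\fm\fM'},\\
\phi'(vE'_1) &\equiv pc\bigl(p\Lambda E'_1-\Omega xE'_2\bigr) \pmod{p\fm\fM'}.
\end{align*}
Dividing by $p$ and reducing in $\cM'$, using $c\equiv -1$ in $\barS'_\F$ (as already invoked in the proof of Proposition~\ref{prop: case phi 0.5, r=1}), yields the displayed formulae for $\phi'_1(\barF'_1)$ and $\phi'_1(\barF'_2)$. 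Condition (i) ensures $\tfrac{\Lambda\Theta}{x},\,\Omega x\in \cO^\times$; since $[\phi'_1(\barF'_2)]_{\barE'_2}=0$ forces strong divisibility to be checked via $[\phi'_1(\barF'_1)]_{\barE'_2}=\Omega x\in\F^\times$, the condition \eqref{eq: strong divisibility, local} follows.

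For $k'=\tfrac{3}{2}$ with $\Delta(T)=\delta_0(T,x)=-x$, the decisive simplification is the cancellation $x+\delta_0(\gamma-1,x)=0$; moreover, since $\tilde\delta_0$ is the constant $-x$, the entire $\tilde\Delta$-tail in \eqref{eq: phi(E1)} vanishes identically. Hence $\phi'(E'_1)=p\Lambda E'_1$ on the nose, whence $\phi'_1(\barF'_1)=-p\Lambda\barE'_1$ and $\phi'_1(\barF'_2)=\tfrac{\Lambda\Theta}{x}\barE'_1+\tfrac{\Omega\Theta}{p}\barE'_2$. Conditions (i), (ii) make $-p\Lambda$ and $\tfrac{\Omega\Theta}{p}$ units in $\F$ and $\tfrac{\Lambda\Theta}{x}$ integral, so the matrix of $\phi'_1$ in the bases $\underline{\barE}'$ and $\underline{\barF}'$ is upper triangular with unit diagonal; strong divisibility is immediate. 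The proof presents no real obstacle beyond careful bookkeeping of valuations: no combinatorial identities involving $\delta_k$ or $H_k$ intervene since $\delta_0$ and $\delta_1$ are trivial, and Lemma~\ref{lemma formula for phi_r(F_kk) and phi_r(F_k0)} guarantees that the $\tilde\Delta$-term—the only potentially delicate ingredient—drops out once $[\phi'(F'_{1,0})]_{E'_1}\in S'_\cO$.
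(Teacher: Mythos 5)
Your proof is correct and follows essentially the same route as the paper: check \eqref{eq: condition for phi,N stable} and \eqref{eq: condition for filtration, r=1}, compute $\phi'_1(vE'_1)$ and $\phi'_1(F'_{1,0})$ from \eqref{eq: definition of phi'} with $\Delta=0$ or $\Delta=-x$, and reduce in $\cM'$ using $c\equiv -1$. One small conceptual slip in the $k'=\tfrac{3}{2}$ case: you write that ``since $\tilde\delta_0$ is the constant $-x$, the entire $\tilde\Delta$-tail vanishes identically, hence $\phi'(E'_1)=p\Lambda E'_1$ on the nose.'' But $\tilde\Delta$ is an \emph{independent} datum from $\Delta$ (it represents $\Delta^{(j-1)}$ for the previous embedding), so there is no reason for $\tilde\Delta$ to equal $\delta_0(T,x)$, and the tail does not vanish identically. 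It only vanishes modulo $\fm\fM'$, via \eqref{eq: p^p-1 divides phi(gamma)} together with $v_p(\Omega)\geq 1$ — which is what the paper uses and what you correctly invoke at the end of your paragraph through Lemma~\ref{lemma formula for phi_r(F_kk) and phi_r(F_k0)}. Since the final formulas are all stated in $\cM'$, the displayed conclusions are unaffected, but the phrasing ``on the nose'' should be weakened to a congruence modulo $\fm\fM'$.
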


\begin{proof}
Assume first that $k'=1$. We start the proof by noting that the equations in (i) and the inequalities in (ii) imply the inequalities in \eqref{eq: condition for phi,N stable}, and that the first inequality in (ii) implies the one in \eqref{eq: condition for filtration, r=1}.

By \eqref{eq: definition of phi'} it is tedious that
$$\phi'_1(F'_{1,0})\equiv \frac{1}{p}\bigg(\frac{\Theta}{x}\bigg(p\Lambda E'_1-\frac{\Lambda x}{\Omega}E'_2\bigg)-\bigg(\frac{\Lambda\Theta}{\Omega} E'_2\bigg)\bigg)\equiv \frac{\Lambda \Theta}{x} E'_1 \pmod{\fm\fM'},
$$
as $\Delta=0$ and \eqref{eq: p^p-1 divides phi(gamma)}. Hence, if $v_p(\Lambda\Theta x^{-1})=0$ then $[\phi'_1(\barF'_2)]_{\barE'_1}\in\F^{\times}$.

As $[\phi'_1(\barF'_2)]_{\barE'_2}=0$, we should have $[\phi'_1(\barF'_1)]_{\barE'_2}\in(\barS'_\F)^\times$. But by \eqref{eq: definition of phi'}
$$\phi'_1(vE'_1)\equiv c\bigg(p\Lambda E'_1-\frac{\Lambda x}{\Omega}E'_2\bigg)  \pmod{\fm\fM'},
$$
as $\Delta=0$ and \eqref{eq: p^p-1 divides phi(gamma)}, and so if $v_p(\Lambda\Theta x^{-1})=v_p(\Theta\Omega x^{-2})=0$ then we have 
$v_p(\frac{\Lambda x}{\Omega})=v_p(\Lambda\Theta x^{-1})-v_p(\Theta\Omega x^{-2})=0$ and so $[\phi'_1(\barF'_1)]_{\barE'_2}\in\F^{\times}$. Moreover, we also have $[\phi'_1(\barF'_1)]_{\barE'_1}\in\F$ as $v_p(\Lambda)\geq -1$.

Now assume $k'=\frac{3}{2}$. We note that the equations in (i), the inequalities in (ii) and the fact that $\Delta=\delta_0=-x$ imply the inequalities in \eqref{eq: condition for phi,N stable}, and that the inequality in (ii) implies the one in \eqref{eq: condition for filtration, r=1}.

By \eqref{eq: definition of phi'}
$$\phi'_1(vE'_1)\equiv cp\Lambda E'_1  \pmod{\fm\fM'},
$$
as $\Delta=-x$ and \eqref{eq: p^p-1 divides phi(gamma)}. Hence, if $v_p(\Lambda)=-1$ then $[\phi'_1(\barF'_2)]_{\barE'_1}\in\F^{\times}$.

As $[\phi'_1(\barF'_1)]_{\barE'_2}=0$, we should have $[\phi'_1(\barF'_2)]_{\barE'_2}\in(\barS'_\F)^\times$. But by \eqref{eq: definition of phi'} it is tedious that
$$\phi'_1(F'_{1,0})\equiv \frac{1}{p}\bigg(\frac{\Theta}{x}(p\Lambda E'_1)-\bigg(\frac{\Lambda\Theta}{\Omega} E'_2\bigg)\bigg)\equiv \frac{\Lambda \Theta}{x} E'_1-\frac{\Lambda\Theta}{p\Omega} E'_2 \pmod{\fm\fM'},
$$
as $\Delta=-x$ and \eqref{eq: p^p-1 divides phi(gamma)}, and so if $v_p(\Lambda)=-1$ and $v_p(\Theta\Omega^{-1})=2$ then we have $v_p(\frac{\Lambda\Theta}{\Omega})=v_p(\Lambda)+v_p(\Theta\Omega^{-1})=1$ and so $[\phi'_1(\barF'_2)]_{\barE'_2}\in\F^{\times}$. Moreover, we also have $[\phi'_1(\barF'_2)]_{\barE'_1}\in\F$ as $v_p(\Lambda \Theta x^{-1})=v_p(\Theta x^{-1})-1\geq 0$. This completes the proof.
\end{proof}

\subsubsection{\textbf{In $\mathbf{Case}_\phi(\infty)$}}
In this section of paragraph, we describe the filtration and the conditions for the strong divisibility when $k'=\infty$ and $1\leq r'\leq r <p-1$. Recall that we set $\delta_0(T,x)=-x$. Without loss of generality, we may assume $r=r$, and we do so.

Set $\Fil^r_\infty\fM':=\Fil^{r;r'}_\infty\fM'$ for brevity. It is clear from \eqref{eq: definition of FilfM'} that
\begin{align}\label{eq: generators of the filtration, infty}
\begin{split}
\Fil^r_\infty\fM'&=S'_\cO\big(E'_2\big)+\Fil^rS'_\cO\cdot\fM'\\
&=S'_\cO\big(v^rE'_1,E'_2\big)+\Fil^pS'_\cO\cdot\fM'.
\end{split}
\end{align}

\begin{prop}\label{prop: case infty}
Let $k'=\infty$. If the quadruple $(\Lambda,\Theta,\Omega,x)$ satisfies
    \begin{enumerate}[leftmargin=*]
    \item $v_p(\Lambda)=-1$ and $v_p(\Theta\Omega^{-1})=r+1$;
    \item $v_p(\Theta)\leq 0$,
    \end{enumerate}
then the quadruple is good for $\mathbf{Case}_\phi(\infty)$ with respect to $\Delta(T)=\delta_0(T,x)$, in which case
$\Fil^{r}_\infty\cM'=\barS'_\F(\barF'_1,\barF'_2)$ where
    \begin{align*}
    \barF'_1:=\pi'(v^rE'_1)\quad\mbox{ and }\quad \barF'_2:=\pi'(E'_2)
    \end{align*}
    and the map $\phi'_r:\Fil^{r}_\infty\cM'\to\cM'$ is described as
    \begin{align*}
        \phi'_r(\barF'_1)=(-1)^rp\Lambda\barE'_1 \quad\mbox{ and }\quad
        \phi'_r(\barF'_2)=\frac{\Lambda\Theta}{p^r\Omega}\barE'_2.
    \end{align*}
\end{prop}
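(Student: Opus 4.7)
The plan is to follow the template of Propositions~\ref{prop-cM-Case-0}--\ref{prop-cM-Case-r}, which becomes particularly simple in $\mathbf{Case}_\phi(\infty)$: the filtration is generated by $v^rE'_1$ and $E'_2$ explicitly (so none of the Pad\'e approximation machinery of \S\ref{subsec: linear expressions of B and C} is needed), and the choice $\Delta(T)=\delta_0(T,x)=-x$ kills the $x$-term in $\phi'(E'_1)$ outright.

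First I would verify that hypotheses (i) and (ii), together with $\Delta(T)=-x$, imply the conditions in \eqref{eq: condition for phi,N stable}. The inequality $v_p(\Omega)\geq v_p(\Lambda)\geq -1$ follows from $v_p(\Lambda)=-1$ and $v_p(\Omega)=r-v_p(\Theta)\geq r\geq 0$; the middle condition is vacuous because $\Delta(-1)=-x$ gives $x+\Delta(-1)=0$; and $0\geq v_p(\Theta)\geq -v_p(\Omega)$ is immediate from $v_p(\Theta)\leq 0$ combined with $v_p(\Omega\Theta)=r\geq 0$. Thus the pseudo-strongly divisible module structure is defined, and the description of $\Fil^r_\infty\fM'$ in \eqref{eq: generators of the filtration, infty} gives at once $\Fil^r_\infty\cM'=\barS'_\F(\barF'_1,\barF'_2)$ with $\barF'_1:=\pi'(v^rE'_1)$ and $\barF'_2:=\pi'(E'_2)$.

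Next I would compute $\phi'_r$ on each generator via \eqref{eq: definition of phi'}. The formula $\phi'_r(E'_2)=\frac{\Omega\Theta}{p^r}E'_2$ is immediate, and $v_p(\Omega\Theta/p^r)=0$ yields the claimed unit-coefficient expression in $\cM'$. For $\phi'_r(v^rE'_1)=c^r\phi'(E'_1)$ the cancellation $x+\Delta(\gamma-1)=0$ leaves
\[
\phi'_r(v^rE'_1)=c^r\cdot p\Lambda\, E'_1+c^r\cdot\frac{\Omega}{p}\bigl(\tilde\Delta(\varphi(\gamma)-1)-\tilde\Delta(-1)\bigr)E'_2.
\]
The second term lies in $\fm\fM'$ because $p^{p-1}$ divides $\tilde\Delta(\varphi(\gamma)-1)-\tilde\Delta(-1)$ by \eqref{eq: p^p-1 divides phi(gamma)} and $v_p(\Omega)\geq r$, giving total valuation $\geq r+p-2>0$. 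The first term reduces to $(-1)^r p\Lambda\,\barE'_1$ in $\cM'$ since $c\equiv -1\pmod{(\fm,\Fil^pS'_\cO)}$ (indeed $c=u^p/p-1$ and $u^p/p=(p-1)!\cdot u^p/p!\in\Fil^pS'_\cO$), and $v_p(p\Lambda)=0$ makes the coefficient a unit. With both $[\phi'_r(\barF'_1)]_{\barE'_1}$ and $[\phi'_r(\barF'_2)]_{\barE'_2}$ units and the off-diagonal entries zero, $\phi'_r(\Fil^r_\infty\cM')$ spans $\cM'$ over $\barS'_\F$; by Nakayama this lifts to $S'_\cO\cdot\phi'_r(\Fil^r_\infty\fM')=\fM'$, establishing $\mathbf{Case}_\phi(\infty)$. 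There is no substantive obstacle; this is the simplest case in \S\ref{sec: pseudo strongly divisible modules}.
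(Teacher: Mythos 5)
Your proof is correct and takes essentially the same route as the paper's: verify the conditions in \eqref{eq: condition for phi,N stable}, read off the filtration from \eqref{eq: generators of the filtration, infty}, note that $\Delta=-x$ cancels the $\Delta(\gamma-1)$ term in $\phi'(E'_1)$, and use \eqref{eq: p^p-1 divides phi(gamma)} together with $v_p(\Omega)\geq r$ to kill the $\tilde\Delta$ term. One small inaccuracy: in justifying $c\equiv -1$ you write that $u^p/p=(p-1)!\cdot u^p/p!\in\Fil^p S'_\cO$, but $u^p/p!$ is not in $\Fil^p S'_\cO$ (the filtration is generated by $v^j/j!$ for $j\geq p$, not $u^j/j!$); the correct statement is that $\gamma-1=v^p/p-1\equiv -1\pmod{\Fil^pS'_\cO}$ and $c-(\gamma-1)\equiv pu^{p-1}\pmod{p^2}$, so $c\equiv -1\pmod{(\fm,\Fil^pS'_\cO)}$. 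This does not affect the argument, since the conclusion $c^r\equiv(-1)^r$ is correct and is used implicitly throughout the paper.
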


\begin{proof}
We start the proof by noting that the equations in (i) and the inequalities in (ii) imply the inequalities in \eqref{eq: condition for phi,N stable}.

It is clear that $\phi'_r(\barF'_2)$ is computed as in the statement, by \eqref{eq: definition of phi'}. Hence, if $v_p(\Lambda)=-1$ and $v_p(\Theta\Omega^{-1})=r+1$ then we have $v_p(\frac{\Lambda\Theta}{\Omega})=v_p(\Lambda)+v_p(\Theta\Omega^{-1})=r$ and so $[\phi'_r(\barF'_2)]_{\barE_2'}\in\F^\times$. From \eqref{eq: definition of phi'}, it is also clear from \eqref{eq: p^p-1 divides phi(gamma)} that we have
$$\phi'(E'_1)\equiv p\Lambda E'_1\pmod{\fm\fM'}$$
as $v_p(\frac{\Lambda}{\Omega})\geq r-v_p(\Theta)\geq r$ and $\Delta=-x$.
Hence, if $v_p(\Lambda)=-1$ then $\phi'_r(\barF'_1)=(-1)^rp\Lambda\barE'_1\in\F^\times$. This completes the proof.
\end{proof}

\smallskip
\section{Main results}\label{sec: main results}
In this section, we state and prove the main results on Galois stable lattices. We also summarize the mod-$p$ reduction of the pseudo-strongly divisible modules.

\subsection{Typical examples of pseudo-strongly divisible modules}\label{subsec: set up}
In this subsection, we show that certain choice of $S_\cO$-generators in $\cD:=\cD(D)$, where $D$ is an admissible filtered $(\phi,N)$-module of rank $2$ in Example~\ref{exam: admissible filtered modules}, gives rise to pseudo-strongly divisible modules.

Throughout this subsection, we keep the notation of Example~\ref{exam: admissible filtered modules}, and, in particular, let $D=D(\lambda,\vL)$ where $\lambda\in\cO$ and $\vL:=(\fL_j)_{j\in\cJ}\in[\mathbb{P}^1(E)]^f$. We also set $\cD:=\cD(D)$. Assume that $0<r_j<p-1$ for all $j\in\cJ$, and fix $r$ with $p-1 > r\geq r_j$ for all $j\in\cJ$.

We let $E_1,E_2$ be the elements in $\cD=\oplus_{j\in\cJ}\cD^{(j)}$ of the following form
\begin{align}\label{eq: form of generators of sdm}
\begin{split}
    E_1&=\vth_1\bigg(\eta_1+\frac{\varphi(\vrho)+\vDel^\varphi(\gamma-1)}{p}\eta_2\bigg);\\
    E_2&=\vth_2\eta_2
\end{split}
\end{align}
for some $\vth_1,\vth_2,\vrho \in (E^\times)^f$, and $\vDel(T)\in (E(T)_\Delta)^f$. We write
$$\vrho=(\varrho_0,\cdots,\varrho_{f-1})\quad\mbox{ and }\quad\vDel(T)=(\Delta^{(0)}(T),\cdots,\Delta^{(f-1)}(T)).$$ 
Similarly, we write $\vth_1=(\theta_{1,0},\theta_{1,1},\cdots,\theta_{1,f-1})$ and $\vth_2=(\theta_{2,0},\theta_{2,1},\cdots,\theta_{2,f-1})$. 
We further require that $\vrho$ satisfies
\begin{equation}\label{eq: definition of varrho}
    \varrho_{j}=\fL_j \mbox{ for } j\not\in I_D.
\end{equation}

\begin{rema}\label{rema: why fL_j=varho_j}
If $j\in I_D$, i.e., $\fL_j=\infty$, then we no condition is imposed on $\varrho_j$, i.e., we can choose any $\varrho_j\in E$. By Lemma~\ref{lemm: computation of phi^diamond N^diamond} and Lemma~\ref{lemm: filtration of fM^diamond}, $\varrho_j$ only appears at $\phi^{\diamond}(E_1)e_{j+1}$ (see \eqref{eq: computation of phi}), if $j\in I_D$. By the identities \eqref{eq: connection between x and L} and \eqref{eq: definition of vx}, $\phi^{\diamond}(E_1)$ can be written as in \eqref{eq: phi(E1) and phi(E2)}. But remember that our choice of $\Delta^{(j)}(T)=-x_j$ if $j\in I_D$, and so the term $x_j+\Delta^{(j)}(\gamma-1)$ of $\phi^{\diamond}(E_1)e_{j+1}$ vanishes. Hence, it is free to choose any $\varrho_j$ in $E$.
\end{rema}

This choice of the two elements in $\cD$ induces the following additional structures:
\begin{itemize}[leftmargin=*]
\item $\fM^{\diamond}:=S_\cO(E_1,E_2)$ is the $S_\cO$-submodule of $\cD$ generated by $E_1,E_2$;
\item the Frobenius map $\phi^{\diamond}:\fM^{\diamond}\rightarrow \cD$ is the restriction of $\phi$ on $\cD$;
\item the monodromy operator $N^\diamond:\fM^{\diamond}\rightarrow \cD$ is the restriction of $N$ on $\cD$;
\item $\Fil^r\fM^\diamond:=\Fil^r\cD\cap \fM^\diamond$.
\end{itemize}
The quadruple 
\begin{equation}\label{eq: definition of fM_vth,vth,vDel}
(\fM^\diamond,\Fil^r\fM^\diamond,\phi^\diamond,N^\diamond)
\end{equation}
is denoted by $\fM_{(\vth_1,\vth_2,\vrho,\vDel)}$. 

\begin{rema}
The main reason we require that $\varrho_j$ satisfies \eqref{eq: definition of varrho} is that the quantity $$\tfrac{p\fL_j-\varrho_{j-1}-\Delta^{(j-1)}(-1)}{p} \quad (\mbox{resp}.\quad \tfrac{p\varrho_j-\varrho_{j-1}-\Delta^{(j-1)}(-1)}{p})$$ naturally appears in the filtration $\Fil^r\fM^\diamond$ (resp. in the formula $\phi^\diamond(E_1)$) as you will see in \eqref{eq: computation of filtration} (resp. in \eqref{eq: computation of phi}), and we wish that these two quantities are identical since the generic fiber of the strongly divisible modules is required to be isomorphic to $\cD(D)$. It is also reasonable to expect that $\Delta^{(j)}$ in \eqref{eq: form of generators of sdm} is a rational function in $\gamma-1$, as $c:=\frac{\phi(v)}{p}$ is congruent to $\gamma-1$ modulo $p$ and $\fM$ is generated by the image of $\phi_r$. If $j\in I_D$, whatever $\varrho_j$ we choose in $E$, it turns out that the resulting strongly divisible modules are homothetic.
\end{rema}

The Frobenius map $\phi^\diamond$ and the monodromy operator $N^\diamond$ can be described explicitly.
\begin{lemm}\label{lemm: computation of phi^diamond N^diamond}
\begin{enumerate}[leftmargin=*]
\item The Frobenius map $\phi^\diamond:\fM^\diamond\rightarrow\cD$ is induced by
\begin{align}\label{eq: computation of phi}
\begin{split}
\phi^\diamond(E_1)=&
    p\frac{\lambda\varphi(\vth_1)}{\vth_1}E_1
    -\frac{\lambda\varphi(\vth_1)}{p\vth_2} \big(p\varphi(\vrho)-\varphi^2(\vrho)-\vDel^{\varphi^2}(-1)\big)E_2\\
    &\qquad-\frac{\lambda\varphi(\vth_1)}{\vth_2}\vDel^\varphi(\gamma-1)E_2  +\frac{\lambda\varphi(\vth_1)}{p\vth_2}\big(
    \vDel^{\varphi^2}(\phi(\gamma)-1)-\vDel^{\varphi^2}(-1)\big)E_2;\\
\phi^\diamond(E_2)=&\frac{\lambda\varphi(\vth_2)}{\vth_2}E_2 =\frac{\lambda\varphi(\vth_1)}{\vth_1}\varphi\left(\frac{\vth_2}{\vth_1}\right)\frac{\vth_1}{\vth_2}E_2.
\end{split}
\end{align}
\item The monodromy operator $N^\diamond:\fM^\diamond\rightarrow\cD$ is induced by
\begin{align}\label{eq: computation of N}
\begin{split}
N^\diamond(E_1) &=\vth_1\bigg(\eta_2+\frac{N(\vDel^\varphi(\gamma-1))}{p}\eta_2\bigg);\\
N^\diamond(E_2) &=0.
\end{split}
\end{align}
\end{enumerate}
\end{lemm}

\begin{proof}
The proof easily follows from tedious computation, and so we omit it.
\end{proof}

Moreover, the filtration $\Fil^r\fM^\diamond$ can be described explicitly as well.
\begin{lemm}\label{lemm: filtration of fM^diamond}
The $S_\cO$-module $\Fil^r\fM^\diamond$ is described as 
\begin{equation}\label{eq: computation of filtration}
\Fil^r\fM^\diamond =\bigoplus_{j\in\cJ}[\Fil^r\fM^{\diamond}e_j]^{(r-1)}+\Fil^rS_\cO\cdot\fM^\diamond
\end{equation}
where for each $j\in\cJ$ 
\begin{enumerate}[leftmargin=*]
\item if $\fL_j=\infty$ then
\begin{equation*}
[\Fil^r\fM^{\diamond}e_j]^{(r-1)}= \big\{v^{r-r_j}C(v)E_2e_j \mid C(T)\in\cO[T]^{(r_j-1)}\big\}
\end{equation*} 
\item if $\fL_j\neq \infty$ then
$[\Fil^r\fM^{\diamond}e_j]^{(r-1)}$ consists of the elements of the form
$$v^{r-r_j}[C(v)\tfrac{\theta_{2,j}}{\theta_{1,j}} E_1+B(v)E_2]e_j$$
for the polynomials $C(T),B(T)\in\cO[T]^{(r_j-1)}$ satisfying $C(T)\tfrac{\theta_{2,j}}{\theta_{1,j}}\in\cO[T]$ and
\begin{equation}\label{eq: equation for C and B}
B(T)=\bigg[C(T)\bigg(x_j+\frac{T}{p}f_{r_j}\Big(\frac{T}{p}\Big) \bigg)\bigg]_{T}^{(r_j-1)}
\end{equation}
where
\begin{equation}\label{eq: definition of x}
x_j:=\frac{p\varrho_j-\varrho_{j-1}-\Delta^{(j-1)}(-1)}{p}.
\end{equation}
\end{enumerate}
\end{lemm}

\begin{proof}
From Example~\ref{exam: filtration of cD}, every element of $\left(\frac{\Fil^r\cD}{\Fil^rS_E\cD}\right)e_j$ can be written as
\begin{equation}\label{eq: elements in Filr cD}
v^{r-r_j}C(v)\theta_{2,j}\Big(
    \xi_j\eta_1+\nu_j\eta_2+\xi_j\frac{v}{p}f_{r_j}\Big(\frac{v}{p}\Big)\eta_2
    \Big)e_j
\end{equation}
for some polynomial $C(T)\in E(T)$ of degree $\leq r_j-1$, and this can be rewritten as
\begin{equation}\label{eq: filtration in general}
v^{r-r_j}C(v)\bigg(
    \xi_j\frac{\theta_{2,j}}{\theta_{1,j}}E_1-\xi_j\frac{\varrho_{j-1}+\Delta^{(j-1)}(-1)}{p}E_2+\nu_jE_2
    +\xi_j\frac{v}{p}f_{r_j}\Big(\frac{v}{p}\Big)E_2
    \bigg)e_j
\end{equation}
modulo $\Fil^rS_E\cD$ by using \eqref{eq: form of generators of sdm}. Recall that we write $\fL_j\in\mathbb{P}^1(E)$ for $\frac{\nu_j}{\xi_j}$, and note that it is harmless to multiply $\vth_2$ to the quantity in \eqref{eq: elements in Filr cD}. 

Now, if $j\in I_D$, i.e., $\fL_j=\infty$ then it is immediate from \eqref{eq: filtration in general} that every element of $\left(\frac{\Fil^r\cD}{\Fil^rS_E\cD}\right)e_j$ can be written as $v^{r-r_j}C(v)E_2e_j$. If $j\not\in I_D$, i.e., $\fL_j\neq\infty$, then due to \eqref{eq: definition of x} it is also easy to see from \eqref{eq: filtration in general} that every element of $\left(\frac{\Fil^r\cD}{\Fil^rS_E\cD}\right)e_j$ can be written as $v^{r-r_j}\left[C(v)\frac{\theta_{2,j}}{\theta_{1,j}} E_1+B(v)E_2\right]e_j$ where
$B(T)$ and $C(T)$ satisfy the equation \eqref{eq: equation for C and B}. This completes the proof.
\end{proof}

By identifying 
\begin{equation}\label{eq: connection between x and L}
\vLam=\frac{\lambda\vth_1}{\varphi^{-1}(\vth_1)}\quad\mbox{ and }\quad \vTh=\frac{\vth_2}{\vth_1}
\end{equation}
as well as
\begin{equation}\label{eq: definition of vx}
\vx=\frac{p\vrho-\varphi(\vrho)-\vDel^\varphi(-1)}{p}
\end{equation}
we may consider $\fM_{(\vth_1,\vth_2,\vrho,\vDel)}$ as parameterized by $(\vLam,\vTh,\vx)\in ((E^{\times})^f)^2\times E^f$ together with $\vDel(T)\in E(T)_\Delta$. (Note that the identity in \eqref{eq: definition of vx} is induced from \eqref{eq: definition of x}.) More precisely, by Lemma~\ref{lemm: filtration of fM^diamond} $\Fil^r\fM^\diamond$ is parameterized by $(\vTh,\vx)$, and by Lemma~\ref{lemm: computation of phi^diamond N^diamond} $\phi^\diamond$ and $N^\diamond$ can be written as
\begin{align}\label{eq: phi(E1) and phi(E2)}
\begin{split}
\phi^\diamond(E_1)&=p\varphi(\vLambda)E_1
-\frac{\varphi(\vLam)}{\vTh}\big(\varphi(\vx)+\vDel^\varphi(\gamma-1)\big)E_2\\
&\qquad\qquad\qquad\qquad+\frac{\varphi(\vLam)}{p\vTh}\big(\vDel^{\varphi^2}(\phi(\gamma)-1)-\vDel^{\varphi^2}(-1)\big)E_2;\\
\phi^\diamond(E_2)&=\frac{\varphi(\vLam\vTh)}{\vTh}E_2
\end{split}
\end{align}
and
\begin{align}\label{eq: N(E1) and N(E2)}
\begin{split}
N^\diamond(E_1) &=\frac{1}{\vTh}\bigg(1+\frac{N(\gamma)}{p}\dot{\vDel}^\varphi(\gamma-1)\bigg)E_2;\\
N^\diamond(E_2) &=0.
\end{split}
\end{align}

It is clear from the first identity in \eqref{eq: connection between x and L} that
\begin{equation}\label{eq: From the setting 2}
\prod_{j=0}^{f-1}\Lambda_j=\lambda^f.
\end{equation}
Moreover, the association $(\vth_1,\vth_2)\mapsto (\vLam,\vTh)$ defined by the identities in \eqref{eq: connection between x and L} gives rise to a bijection 
\begin{equation}\label{eq: Lam,Ome,The vs the1,the2}
\big((E^\times)^f\big)^{2}/E^{\times}\longrightarrow\bigg\{(\vLam,\vTh)\in \big((E^\times)^f\big)^{2}\mid \prod_{j=0}^{f-1}\Lambda_j=\lambda^f\bigg\}
\end{equation}
where the domain $\big((E^\times)^f\big)^{2}/E^{\times}$ is the quotient of $\big((E^\times)^f\big)^{2}$ by $E^\times$ via the diagonal action.

We now show that the isotypic components of $\fM_{(\vth_1,\vth_2,\vrho,\vDel)}$ are typical examples of pseudo-strongly divisible modules. Fix $j\in\cJ$, and consider the isotypic component of $\fM_{(\vth_1,\vth_2,\vrho,\vDel)}$ at $j$
\begin{equation}\label{eq: typical example of pseudo-SDM}
(\fM^\diamond e_j, \Fil^r \fM^\diamond e_j, \phi^{\diamond,(j)}: \fM^\diamond e_j \to \cD e_{j+1}, N^{\diamond,(j)}: \fM^\diamond e_j \to \cD e_j),
\end{equation}
that is parameterized by $(\Lambda_j,\Theta_j,\Theta_{j+1},x_j)$ together with $(\Delta^{(j)},\Delta^{(j-1)})$. 

\begin{defi}
If $\fL_j=\infty$ and so $\Fil^r \fM^\diamond e_j$ can be written as (i) of Lemma~\ref{lemm: filtration of fM^diamond}, then we say that \emph{$\Fil^r \fM^\diamond e_j$ is of $\mathbf{Case}~(\infty)$}. If $\fL_j\neq\infty$ and so $\Fil^r \fM^\diamond e_j$ can be written as (ii) of Lemma~\ref{lemm: filtration of fM^diamond}, then we consider $(r_j+1)$-choices of the generators for $\mathbb{M}$ as in \eqref{eq: definition of cases} (identifying the equation in \eqref{eq: equation for C and B} with the equation in \eqref{eq: B and C} as well as $(r_j,x_j,\Theta_j)=(r',x,\Theta)$), each of which we say that \emph{$\Fil^r \fM^\diamond e_j$ is of $\mathbf{Case}~(k)$}, where $k$ varies over the integers with $0\leq k\leq r_j$.
\end{defi}
Consider the natural isomorphisms of the $S_E'$-modules for all $j\in\cJ$
$$\psi^{(j)}:\cD'\rightarrow \cD e_j:\,E_1'\mapsto E_1 e_j,\,\,E'_2\mapsto E_2e_j.$$

\begin{prop}\label{prop: typical example of pseudo-SDM}
Fix $j\in\cJ$, and let $k\in([0,r_j]\cap\Z)\cup\{\infty\}$. Assume that $\Fil^r \fM^\diamond e_j$ is of $\mathbf{Case}~(k)$. By identifying $(r',\Lambda,\Theta,\Omega,x,\Delta,\tilde{\Delta})$ with $(r_j,\Lambda_j,\Theta_j,\Theta_{j+1},x_j,\Delta^{(j)},\Delta^{(j-1)})$,
we have the following commutative diagram
$$
\xymatrix{
\cD'\ar@{->}[d]_{\psi^{(j)}}^{\simeq}&&\fM'\ar@{->}[ll]_{N'}\ar@{->}[rr]^{\phi'}\ar@{->}[d]_{\psi^{(j)}}^{\simeq} &&\cD'\ar@{->}[d]_{\psi^{(j+1)}}^{\simeq} &&\Fil^{r;r'}_k\fM'\ar@{->}[ll]_{\phi_r'}\ar@{->}[d]_{\psi^{(j)}}^{\simeq}\\
\cD e_{j}&&\fM^{\diamond} e_j\ar@{->}[ll]^{N^{\diamond,(j)}}\ar@{->}[rr]_{\phi^{\diamond,(j)}}&&\cD e_{j+1}&&\Fil^r\fM^{\diamond}e_{j}\ar@{->}[ll]^{\phi_r^{\diamond,(j)}}
}
$$
where $\fM'$ is a pseudo-strongly divisible module in $\mathbf{Case}~(k)$ with the assumption that $\Fil^r\fM'$ is an $S'_\cO$-submodule of $\fM'$. 
\end{prop}

By Proposition~\ref{prop: typical example of pseudo-SDM}, the quadruple in \eqref{eq: typical example of pseudo-SDM} with $\Fil^r \fM^\diamond e_j$ of $\mathbf{Case}~(k)$ can be regarded as an example of a pseudo-strongly divisible module in $\mathbf{Case}~(k)$. We also note that the assumption $\Fil^{r;r'}_k\fM'\subseteq\fM'$ is illustrated in Theorem~\ref{theo: filtration of pseudo}.

\begin{proof}
The commutativity of the squares involving $\phi'$ and $N'$ are immediate from \eqref{eq: phi(E1) and phi(E2)} and $\eqref{eq: N(E1) and N(E2)}$, respectively. For the commutativity of the square involving $\phi'_r$, it is enough to check that the restriction $\phi^{(j)}:\Fil^{r;r'}_k\fM'\rightarrow \Fil^r\fM^{\diamond}e_{j}$ is a well-defined isomorphism, as $\phi'_r$ and $\phi_r^{\diamond,(j)}$ are induced from $\phi'$ and $\phi^{\diamond,j}$, respectively.

If $k=\infty$, then it is immediate by (i) of Lemma~\ref{lemm: filtration of fM^diamond}. Assume $k\neq \infty$. By Lemma~\ref{lemma: C and B in terms of P and Q}, it is easy to see that we have
$$\Theta C(v)E'_1+B(v)E'_2 =\sum_{i=0}^{r'-1} D_i F'_{k,i}.$$
Hence, by (ii) of Lemma~\ref{lemm: filtration of fM^diamond} we complete the proof.
\end{proof}

We will apply the properties of the first row of the diagram in Proposition~\ref{prop: typical example of pseudo-SDM} to the second row, by identifying $(r',\Lambda,\Theta,\Omega,x,\Delta,\tilde\Delta,k')$ with $(r_j,\Lambda_j,\Theta_j,\Theta_{j+1},x_j,\Delta^{(j)},\Delta^{(j-1)},k_j')$, respectively, for all $j\in\cJ$. In particular, one needs to determine when $(\Lambda_j,\Theta_j,\Theta_{j+1},x_j)$ satisfies the equations and inequalities of $\mathbf{Case}_\phi(k_j')$ for $k_j'\in\cK_{r_j}$ (see Definition~\ref{defi: equation and inequalities, local}).

\begin{defi}
Let $\vr:=(r_j)_{j\in\cJ}$ with $0<r_j<p-1$ for all $j\in\cJ$, and $\vk':=(k'_j)_{j\in\cJ}\in\prod_{j\in\cJ}\cK_{r_j}$.
By the \emph{equations and inequalities of $\mathbf{Case}_\phi(\vr;\vk')$}, we mean the equations and inequalities of $\mathbf{Case}_\phi(k_j')$ for all $j\in\cJ$. We say that $(\vLam,\vTh,\vx)\in ((E^\times)^f)^{2}\times E^f$ \emph{satisfies the equations and inequalities of $\mathbf{Case}_\phi(\vr;\vk')$} if $(\Lambda_j,\Theta_j,\Theta_{j+1},x_j)$ satisfies the equations and inequalities of $\mathbf{Case}_\phi(r_j;k_j')$ for all $j\in\cJ$.
\end{defi}

We now sharpen the equations and inequalities in Table~\ref{tab:my_label}. As each equation relates $v_p(\Theta_j)$ and $v_p(\Theta_{j+1})$, the equations and inequalities of $\mathbf{Case}_\phi(r_j;k'_j)$ are quite connected to those of $\mathbf{Case}_\phi(r_{j+1};k'_{j+1})$ for each $j\in\cJ$. We first define a sign function $s_j$ as follows:
\begin{equation*}
s_j:=
\begin{cases}
   (-1)^{2k'_j} & \mbox{if } k'_j\neq\infty; \\
   -1 & \mbox{if } k'_j=\infty.
\end{cases}
\end{equation*}

\begin{prop}\label{prop: system of equations and inequalities sharpened}
Let $\vr:=(r_j)_{j\in\cJ}$ be given with $0<r_j<p-1$ for all $j\in\cJ$ and set $\vk':=(k'_j)_{j\in\cJ}\in \prod_{j\in\cJ}\cK_{r_j}$. Then $(\vLam,\vTh,\vx)\in ((E^\times)^f)^{2}\times E^f$ satisfy the equations and inequalities of $\mathbf{Case}_\phi(\vr;\vk')$ if and only if the tuples satisfy the following: if we set $T_j=v_p(\Theta_j)$ and $t_j=v_p(x_j)$ for all $j\in\cJ$ then
\begin{enumerate}[leftmargin=*]
\item for all $j\in\cJ$ the equation
     \begin{equation}\label{eq: valuation of Lambda}
     v_p(\Lambda_j)=\frac{r_j-1-T_j+T_{j+1}}{2}
     \end{equation}
     hold;
\item for each $j\in\cJ$ the following linear equation
        \begin{equation}\label{eq: system of equations sharpened}
            T_j+s_jT_{j+1}
            =
            \begin{cases}
                2k'_j-r_j-\frac{1+s_j}{2} & \mbox{if } \frac{1}{2}\leq k'_j\leq \frac{r_j}{2}\,\,\mbox{or}\,\,k'_j=r_j+\frac{1}{2};\\
                2k'_j-r_j-\frac{1+s_j}{2}+2t_j  & \mbox{if } \frac{r_j+1}{2}\leq k'_j\leq r_j;\\
                r_j+1 & \mbox{if } k'_j=\infty
            \end{cases}
        \end{equation}
hold;
\item for each $j\in\cJ$ the following inequalities
        \begin{equation}\label{eq: system of inequailties sharpened}
            T_j\leq 0\qquad\mbox{and}\qquad
            \begin{cases}
                T_j\leq t_j  & \mbox{if }k'_j=\frac{1}{2};\\
                T_j\leq t_j,\, -1\leq T_{j+1} & \mbox{if } 1\leq k'_j\leq \frac{r_j}{2};\\
                T_j\geq t_j<0,\, -1\leq T_{j+1} &\mbox{if } \frac{r_j+1}{2}\leq k'_j\leq r_j-\frac{1}{2};\\
                t_j\leq T_j\leq t_j+r_j &\mbox{if }k'_j=r_j;\\
                T_j\geq t_j+r_j &\mbox{if }k'_j=r_j+\frac{1}{2};\\
                \mbox{none} &\mbox{if }k'_j=\infty
            \end{cases}
        \end{equation}
    hold.
\end{enumerate}
\end{prop}

Note that if $(\vTh,\vx)$ satisfies \eqref{eq: system of equations sharpened} and \eqref{eq: system of inequailties sharpened} then there always exist $\vLam$ satisfying the condition in \eqref{eq: valuation of Lambda}.
\begin{proof}
The equation \eqref{eq: valuation of Lambda} is immediate from the equation~\eqref{eq: Lambda Omega Theta = r'-1} by letting $\Omega=\Theta_{j+1}$. The equation \eqref{eq: system of equations sharpened} is nothing else than the second equation in Table~\ref{tab:my_label}. Similarly, one can immediately get the inequalities \eqref{eq: system of inequailties sharpened} from the inequalities in Table~\ref{tab:my_label}. The inverse direction is also straightforward by replacing $\Theta_{j+1}$ with $\Omega$.
\end{proof}

\subsection{Main results on Galois stable lattices}\label{subsec: main results}
In this subsection, we state and prove the main results on Galois stable lattices. We have two versions of the main results. The first one starts with a weakly admissible filtered $(\phi,N)$-module $D$ as in Example~\ref{exam: admissible filtered modules}, and then we give a candidate of strongly divisible modules and determine when this candidate becomes a strongly divisible module in $\cD(D)$ (see Theorem~\ref{theo: main}). The second one starts with the data $(\vLam,\vTh,\vx)$ satisfying all the necessary equations and inequalities for some $\vk'$, and then we show that these data together with $\vDel(T)$ (determined by $\vk'$; see \eqref{eq: Choice of delta}) determine a weakly admissible filtered $(\phi,N)$-module $D$ and a strongly divisible module in $\cD(D)$ (see Theorem~\ref{theo: main 2}).

Throughout this subsection, we keep the notation of Example~\ref{exam: admissible filtered modules}, and, in particular, let $D=D(\lambda,\vL)$ where $\vL:=(\fL_j)_{j\in\cJ}\in[\mathbb{P}^1(E)]^f$. Assume that $0<r_j<p-1$ for all $j\in\cJ$, and fix $r$ with $p-1 > r\geq r_j$ for all $j\in\cJ$. We also set $\cD:=\cD(D)$ and $m_j:=\lfloor\tfrac{r_j}{2}\rfloor$, and recall $\fM_{(\vth_1,\vth_2,\vrho,\vDel)}$ from \eqref{eq: definition of fM_vth,vth,vDel}.

\smallskip
We now prove the first version of our main results.
\begin{theo}\label{theo: main}
Assume that $0<r_j<p-1$ for all $j\in\cJ$, and fix an integer $r$ with $r_j\leq r<p-1$ for all $j\in\cJ$. Let $D=D(\lambda,\vL)$. If there exists $(k'_{j})_{\in\cJ}\in \prod_{j\in\cJ}\cK_{r_j}$ such that
    \begin{enumerate}[leftmargin=*]
    \item $k'_j=\infty$ if and only if $j\in I_D$;
    \item the triple $(\vLam,\vTh,\vx)$ induced by \eqref{eq: connection between x and L} and \eqref{eq: definition of vx} satisfies the equations and inequalities of $\mathbf{Case}_\phi(\vr;\vk')$;
    \item for each $j\in\cJ$ $\Delta^{(j)}$ is equal to $\Delta$ of $\mathbf{Case}_\phi(r_j;k'_j)$,
    \end{enumerate}
then the $S_\cO$-module $\fM_{(\vth_1,\vth_2,\vrho,\vDel)}$ is a strongly divisible module in $\cD$ of weight $r$.
\end{theo}

Note that the equations in \eqref{eq: From the setting 2} are automatically satisfied due to our definition of $(\vLam,\vTh)$ in \eqref{eq: connection between x and L}. We also note that the value of $\vDel^\varphi(-1)$ appearing in \eqref{eq: definition of vx} is explicitly computed in \S\ref{subsec: the elements delta}. More precisely, $\Delta^{(j)}(-1):=\delta_{l_j}(-1,x_j)$ where $l_j:=l(r_j;k'_j)$ is defined in \eqref{eq: definition of l(r;k')}, and $\delta_{l_j}(-1,x_j)$ is computed as follows:
\begin{equation}\label{eq: delta(-1)}
\delta_{l_j}(-1,x_j)=
\begin{cases}
   -x_j & \mbox{if } l_j=0; \\
   -H_{r_j-l_j}-H_{l_j-1} & \mbox{if } 1\leq l_j\leq \frac{r_j+1}{2}; \\
   -H_{m_j}-H_{m_j-1}+\frac{1}{m_j^2(x_j+H_{m_j}+H_{m_j-1})} & \mbox{if } l_j=m_j +1\mbox{ and }r_j=2m_j
\end{cases}
\end{equation}
where the first case is obvious by its definition, the second case is from Lemma~\ref{lemma-delta} (ii), and the last case is from Lemma~\ref{lemma-delta} (iii), Lemma~\ref{lemma-delta-denom-1} (i), and Lemma~\ref{lemma-P-2m-m}.

\begin{rema}
If $l_j=m_j+1$ and $r_j=2m_j$ then the identity \eqref{eq: definition of vx} together with \eqref{eq: delta(-1)} gives rise to an equation of degree $2$
$$px_{j+1}=p\fL_{j+1}-\fL_j+H_{m_j}+H_{m_j-1}-\frac{1}{m_j^2(x_j+H_{m_j}+H_{m_j-1})}.$$
This explains why the construction of some of the strongly divisible modules in \cite{GP} is more subtle than others. It also explain why a similar phenomenon didn't occur in \cite{BM}, as the Hodge--Tate weights treated in \cite{BM} are $(0,r)$ for odd positive integers $r$ with $r<p-1$.
\end{rema}

\begin{proof}[Proof of Theorem~\ref{theo: main}]
By the assumption (ii), we see that for each $j\in\cJ$ the quadruple $(\Lambda_j,\Theta_j,\Theta_{j+1},x_j)$ satisfies the equations and inequalities of $\mathbf{Case}_\phi(r_j;k'_{j})$, and so by Theorem~\ref{theo: strong divisibility of pseudo} the quadruple is good for $\mathbf{Case}_\phi(r_j;k'_{j})$ with respect to $\Delta^{(j)}$. In particular, it satisfies the inequalities in \eqref{eq: condition for phi,N stable} and so $\fM_{(\vth_1,\vth_2,\vrho,\vDel)}$ is $(\phi,N)$-stable by Lemma~\ref{lemm: pseudo, condition for phi,N stable}. We also note that it satisfies the inequalities in Theorem~\ref{theo: filtration of pseudo}. Hence, $\fM_{(\vth_1,\vth_2,\vrho,\vDel)}$ is a strongly divisible module in $\cD$ of weight $r$.
\end{proof}

For a given weakly admissible filtered $(\phi,N)$-module $D$, it is usually very difficult to construct a strongly divisible module in $\cD:=\cD(D)$. An advantage of Theorem~\ref{theo: main} is that one can reduce constructing a strongly divisible module in $\cD$ to finding a triple $(\vLam,\vTh,\vx)$ satisfying the necessary equations and inequalities from Theorem~\ref{theo: main} (ii) for some tuple $(k'_j)_{j\in\cJ}\in\prod_{j\in\cJ}\cK_{r_j}$. On the other hand, there is still an issue finding which tuple $(k'_j)_{j\in\cJ}\in\prod_{j\in\cJ}\cK_{r_j}$ we need to choose, and the second version resolves this issue by starting with a tuple $(k'_j)_{j\in\cJ}\in\prod_{j\in\cJ}\cK_{r_j}$.

\smallskip

For the second version, we start by noting that if $(\vLam,\vTh,\vx)$ satisfies the equations and inequalities of $\mathbf{Case}_\phi(\vr;\vk')$ then we have
$$\sum_{j\in\cJ} v_p(\Lambda_j)=\frac{1}{2}\sum_{j\in\cJ}(r_j-1)$$
due to \eqref{eq: valuation of Lambda}. This is a necessary condition for \eqref{eq: From the setting 2} and \eqref{eq: condition for lambda, weakly admissible}.

For a given $(k'_j)_{j\in\cJ}$, it is not difficult to compute the necessary equations and inequalities. More precisely, for a given subset $\cJ_0\subset\cJ$ with
\begin{equation}\label{eq: condition for infinity}
\sum_{j\in\cJ}(r_j-1)\geq 2\sum_{j\in\cJ_0}r_j
\end{equation}
(coming from \eqref{eq: condition for lambda, weakly admissible}) and for each tuple $\vk':=(k'_j)_{j\in\cJ}\in\prod_{j\in\cJ}\cK_{r_j}$ with  $\cJ_\infty(\vk')=\cJ_0$ where $$\cJ_\infty(\vk'):=\{j\in\cJ\mid k'_j=\infty\},$$ it is not difficult to find $(\vLam,\vTh,\vx)$ satisfying the equations and inequalities of $\mathbf{Case}_\phi(\vr;\vk')$. Furthermore, it is also not very difficult to describe $(\fL_j)_{j\in\cJ\setminus\cJ_\infty(\vk')}$ satisfying the equation \eqref{eq: definition of vx}. Before introducing the second version, we will further introduce the notation and the terminologies.

For a given tuple $\vr:=(r_j)_{j\in\cJ}\in\Z^f$ with $0<r_j<p-1$ for all $j\in\cJ$ we set
\begin{align*}
\cK(\vr):=
    \bigg\{
    \vk'=(k'_j)_{j\in\cJ}\in \prod_{j\in\cJ}\cK_{r_j}\mid
    \sum_{j\in\cJ}(r_j-1)\geq 2\sum_{j\in \cJ_\infty(\vk')}r_j
    \bigg\},
\end{align*}
and for an integer $r\in\Z$ with $0<r<p-1$ we also set
\begin{align*}
\cK(r):=\left\{(\vr;\vk')\mid 0<r_j\leq r\,\,\mbox{ and }\,\,  \vk'\in \cK(\vr)\right\}.
\end{align*}
For a given subset $\cJ_0\subset\cJ$ with \eqref{eq: condition for infinity}, we further define $$E_{\cJ_0}^f:=\{\vL\in [\mathbb{P}^1(E)]^f\mid \fL_j=\infty \mbox{ for } j\in \cJ_0\mbox{ and } \fL_j\neq\infty\mbox{ for }j\not\in\cJ_0\}$$ and
$$\left\{
  \begin{array}{ll}
    \cK(r,\cJ_0):=\{(\vr;\vk')\in \cK(r)\mid \cJ_\infty(\vk')=\cJ_0\};&\\
    \cK(\vr,\cJ_0):=\{\vk'\in \cK(\vr)\mid \cJ_\infty(\vk')=\cJ_0\}.&
  \end{array}
\right.$$
We will later use the map $\psi^{\cJ_0}:E^f\rightarrow E_{\cJ_0}^f$ sending $(y_j)_{j\in\cJ}$ to $(y'_j)_{j\in\cJ_0}$ where $y'_j:=y_j$ if $j\not\in\cJ_0$ and $y'_j:=\infty$ if $j\in\cJ_0$.

For $(\vr;\vk')\in\cK(r)$, we define a subset of $\hat\R^f(=\hat\R^\cJ)$, denoted by $R'(\vr;\vk')$, by the set of $(t_j)_{j\in \cJ}$ in $\hat\R^f$ such that $\big((T_j)_{j\in \cJ},(t_j)_{j\in\cJ}\big)$ satisfies
\eqref{eq: system of equations sharpened} and \eqref{eq: system of inequailties sharpened} for $\mathbf{Case}_\phi(\vr;\vk')$ for some $(T_j)_{j\in \cJ}\in \hat\R^f$. We expect that for each $\cJ_0\subset\cJ$ with \eqref{eq: condition for infinity} $$\bigcup_{\vk'\in \cK(\vr,\cJ_0)}R'(\vr;\vk')=\hat\R^f.$$
We further let $R'_{int}(\vr;\vk')$ be the topological interior of $R'(\vr;\vk')$ in $\hat\R^f$.
\begin{defi}
For a given $(\vr;\vk')\in\cK(r)$, we say that $\vk'$ is \emph{valid} if $R'(\vr;\vk')\neq\emptyset$. 
\end{defi}

We point out that the equation \eqref{eq: system of equations sharpened} and the inequality \eqref{eq: system of inequailties sharpened} do not impose any restriction on $t_j$ if $k'_j=\infty$. Hence, for any valid $\vk'$, there is a set $R(\vr;\vk')\subset \hat\R^{\cJ\setminus\cJ_\infty(\vk')}$ such that 
\begin{equation}\label{eq: R vs R'}
R'(\vr;\vk')=\hat\R^{\cJ_\infty(\vk')}\times R(\vr;\vk'),
\end{equation} 
where $\hat\R^{\cJ_\infty(\vk')}\times\hat\R^{\cJ\setminus\cJ_\infty(\vk')}$ is naturally identified with $\hat\R^{\cJ}$. We call this $R(\vr;\vk')\subset \hat\R^{\cJ\setminus\cJ_\infty(\vk')}$ \emph{the areal support of $\mathbf{Case}_\phi(\vr;\vk')$}, and we define $R_{int}(\vr;\vk')$ as the topological interior of $R(\vr;\vk')$ as well. It is obvious that $\vk'$ is valid if and only if $R(\vr;\vk')$ is nonempty. It is also immediate that $R(\vr;\vk)=R'(\vr;\vk')$ if $\cJ_\infty(\vk')=\emptyset$. We expect that for each $\cJ_0\subset\cJ$ with \eqref{eq: condition for infinity} $$\bigcup_{\vk'\in \cK(\vr;\cJ_0)} R(\vr;\vk')=\hat\R^{\cJ\setminus\cJ_0}.$$

We now attach $\vL\in[\mathbb{P}^1(E)]^f$ to $\vx$ via \eqref{eq: definition of vx} for each $\vk'$. Recall that we set $m_j:=\lfloor r_j/2\rfloor$. Since $\varphi^f=1$, we have the following identity $$(p-\varphi)^{-1}=\frac{1}{p^f-1}(p^{f-1}+p^{f-2}\varphi+\cdots+\varphi^{f-1}),$$ as a linear automorphism on $E^f$, so that the linear map $(p-\varphi)^{-1}$ is well-defined. For each tuple of integers $\vl=(l_j)_{j\in\cJ}$ with $l_j\in [0,m_j+1]$, the association $$\vx\mapsto p\vx+\vdel_{\vl}^{\varphi}(-1,\vx),$$ which is inspired by \eqref{eq: definition of vx}, also gives rise to a function from $E^f$ to itself, denoted by~$\psi_{\vl}$. (In fact, this map $\psi_{\vl}$ is ill-defined as its domain can not be the whole $E^f$ in general. But for our purpose for defining the support below it is good enough since the domain of $\psi_{\vl}$ always contains $v_p^{-1}(R'(\vr;\vk'))$.)
\begin{lemm}
If $(r_j;l_j)\not\in\{(r_j;0),(2m_j;m_{j}+1)\}$ for some $j\in\cJ$ then $\psi_{\vl}$ is injective.
\end{lemm}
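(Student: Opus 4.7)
The plan is to exploit the explicit form of $\delta_l(-1,x)$ given in \eqref{eq: delta(-1)} and propagate a vanishing difference around the cyclic index set $\cJ = \Z/f\Z$. Writing $(\psi_{\vl}(\vx))_j = px_j + \delta_{l_{j-1}}(-1,x_{j-1})$ (from the definition of $\vdel^\varphi$), the equation $\psi_{\vl}(\vx) = \psi_{\vl}(\vy)$ is equivalent to the system
\begin{equation*}
p(x_j - y_j) = \delta_{l_{j-1}}(-1,y_{j-1}) - \delta_{l_{j-1}}(-1,x_{j-1}) \qquad \text{for all } j \in \cJ.
\end{equation*}
Inspecting \eqref{eq: delta(-1)}, I note that $\delta_l(-1,\cdot)$ depends nontrivially on its argument exactly in the two excluded cases, namely $l = 0$ (where it equals $-x$) and $(r,l) = (2m,m+1)$ (where it is a non-constant rational function of $x$); in every other case $\delta_l(-1,x) = -H_{r-l} - H_{l-1}$ is a constant in $x$.

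First I would use the hypothesis to locate an index $j_0 \in \cJ$ with $(r_{j_0},l_{j_0}) \notin \{(r_{j_0},0),(2m_{j_0},m_{j_0}+1)\}$, so that $\delta_{l_{j_0}}(-1,\cdot)$ is constant. Specializing the system above to $j = j_0 + 1$ forces the right-hand side to vanish, hence $x_{j_0+1} = y_{j_0+1}$.

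Next I would propagate this equality cyclically. Given $x_{j_0+k} = y_{j_0+k}$ for some $k \geq 1$, the equation at index $j = j_0 + k + 1$ reads
\begin{equation*}
p(x_{j_0+k+1} - y_{j_0+k+1}) = \delta_{l_{j_0+k}}(-1,y_{j_0+k}) - \delta_{l_{j_0+k}}(-1,x_{j_0+k}) = 0,
\end{equation*}
independently of which of the three cases $l_{j_0+k}$ falls into, since the two arguments coincide. After iterating $f-1$ times we exhaust all of $\cJ$ and conclude $\vx = \vy$, giving injectivity.

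There is essentially no obstacle beyond the bookkeeping of cases for $\delta_l(-1,x)$; the crucial point is simply that a single ``constant'' coordinate $j_0$ breaks the cycle and allows one to solve for the remaining coordinates one by one, without ever having to invert the genuinely nonlinear Case~3 function. (If instead every $l_j$ lay in one of the two excluded cases, the right-hand side would always depend on $x_{j-1}$ and one would have to analyze a nontrivial fixed-point equation around the whole cycle, which is exactly the situation the hypothesis is designed to avoid.)
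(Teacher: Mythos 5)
Your proof is correct and is essentially the same as the paper's: both use the hypothesis to locate an index $j_0$ at which $\delta_{l_{j_0}}(-1,\cdot)$ is constant, recover $x_{j_0+1}$ directly from coordinate $j_0+1$ of $\psi_{\vl}(\vx)$, and then propagate around the cycle (the paper takes $j_0 = f-1$ WLOG and compresses the propagation into ``the injectivity of the first coordinate implies the others''). Your write-up merely makes the cyclic induction explicit and avoids the WLOG normalization; there is no substantive difference.
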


Note that the condition for the lemma is required due to the second comments right after Definition~\ref{defi: definition of delta}.

\begin{proof}
Without loss of generality, we assume that the condition holds only for $j=f-1$. Then we have
$$\psi_{\vl}(\vx)=(px_0+\delta_{l_{f-1}}(-1),px_1+\delta_{l_0}(-1,x_0),\cdots,px_{f-1}+\delta_{l_{f-2}}(-1,x_{f-2})),$$
and so it is obvious that the injectivity of the first coordinate implies the others.
\end{proof}

For $\vl=(l_j)_{j\in\cJ}$ with $l_j\in [0,m_j+1]$ and for $\cJ_0\subset\cJ$ we have the following sequence of functions
\begin{equation*}
\xymatrix{
\hat\R^f && E^f\ar@{->}[ll]_{v_p} \ar@{->}[rr]^{\psi_{\vl}} && E^f\ar@{->}[rr]^{\,(p-\varphi)^{-1}} && E^f\ar@{->}[rr]^{\psi^{\cJ_0}} && E^f_{\cJ_0},
}
\end{equation*}
and for any valid $\vk'$ \emph{the support of $\mathbf{Case}_\phi(\vr;\vk')$} is defined as follows:
\begin{align}\label{eq: definition of support}
\Supp(\vr;\vk'):=\big(\psi^{\cJ_\infty(\vk')}\circ (p-\varphi)^{-1}\circ\psi_{\vl(\vr;\vk')}\big) \left(v_p^{-1}(R'(\vr;\vk'))\right)
\end{align}
where $\vl(\vr;\vk'):=(l(r_j;k_j'))_{j\in\cJ}$ and $l(r_j;k_j')$ is defined in \eqref{eq: definition of l(r;k')}. We emphasize that the composition of maps $\psi^{\cJ_\infty(\vk')}\circ (p-\varphi)^{-1}\circ\psi_{\vl(\vr;\vk')}$ depends on $\vk'$.

We are now ready to state and prove the second version. For a valid $\vk'\in\cK(\vr)$, if $(\vLam,\vTh,\vx)$ satisfies the equations and inequalities of $\mathbf{Case}_\phi(\vr;\vk')$ (see Proposition~\ref{prop: system of equations and inequalities sharpened}) then we have the following data:
\begin{enumerate}[leftmargin=*]
\item $\vL=\psi^{\cJ_\infty(\vk')}\circ (p-\varphi)^{-1}\circ\psi_{\vl(\vr;\vk')}(\vx)\in\Supp(\vr;\vk')$;
\item $\vrho=(p-\varphi)^{-1}\circ\psi_{\vl(\vr;\vk')}(\vx)$;
\item A choice of $\lambda\in\cO$ satisfying the equation \eqref{eq: From the setting 2};
\item $\Delta^{(j)}(T)=\delta_{l(r_j;k_j')}(T,x)$ for all $j\in\cJ$ (see \eqref{eq: Choice of delta});
\item A choice of $(\vth_1,\vth_2)$ via the bijection~\eqref{eq: Lam,Ome,The vs the1,the2}.
\end{enumerate}
Moreover, the data (i) and (iii) together with $\vr$ determine a weakly admissible filtered $(\phi,N)$-module $D=D(\lambda,\vL)$, and the data (ii), (iv), and (v) determine an $S_\cO$-submodule $\fM_{(\vth_1,\vth_2,\vrho,\vDel)}$ defined in \eqref{eq: definition of fM_vth,vth,vDel}. Note that the weakly admissible filtered $(\phi,N)$-module $D=D(\lambda,\vL)$ does not depend on the choice of $\lambda$, by Lemma~\ref{lemm: isomorphism class of filtered phi N modules}.

\begin{theo}\label{theo: main 2}
For a valid $\vk'\in\cK(\vr)$, if $(\vLam,\vTh,\vx)\in ((E^\times)^f)^2\times E^f$ satisfies the equations and inequalities of  $\mathbf{Case}_\phi(\vr;\vk')$ then the corresponding $\fM_{(\vth_1,\vth_2,\vrho,\vDel)}$ is a strongly divisible module in $\cD(D)$ of weight $r$.
\end{theo}

\begin{proof}
We prove this theorem by checking the conditions of Theorem~\ref{theo: main}. It is obvious that $\cJ_0:=\cJ_{\infty}(\vk')=I_D$, which implies the condition (i) of Theorem~\ref{theo: main}. The condition (iii) of Theorem~\ref{theo: main} is also immediate, as that is our choice of $\vDel$.
For the condition (ii) of Theorem~\ref{theo: main}, it is clear that $\psi^{\cJ_0}(\vrho)=\vL$, and it is also clear that $\vx$ and $\vrho$ satisfies the equation~\eqref{eq: definition of vx}. Moreover, $(\vth_1,\vth_2)$ determines $(\vLam,\vTh)$ via \eqref{eq: connection between x and L}, which completes the proof.
\end{proof}

An advantage of Theorem~\ref{theo: main 2} is that we do not have an issue to find an appropriate $\vk'$ for a given $D$. Moreover, for a given $\vk'$ it is easy to compute all the equations and inequalities of Proposition~\ref{prop: system of equations and inequalities sharpened}. However, one can still ask that in this way we can always find a strongly divisible module for each $D$. We expect yes! In other words, we expect that for each $\cJ_0\subset\cJ$ with \eqref{eq: condition for infinity}
\begin{equation}\label{eq: conjecture}
\bigcup_{\vk'\in \cK(\vr,\cJ_0)}\Supp(\vr;\vk')=E^f_{\cJ_0}.
\end{equation}
In fact, we expect more! If the mod-$p$ reduction is an extension of two distinct characters, then we expect that both non-homothetic lattices can be obtained in this way. The reason is that the only artificial assumption we made for the pseudo-strongly divisible modules is $v_p(x)\neq 0$ in some cases (see Table~\ref{tab:my_label} or Proposition~\ref{prop: system of equations and inequalities sharpened}), but we expect that such cases will be covered by other cases. We will illustrate that our expectation is correct for certain cases when $f=2$ in the next section, \S\ref{sec: examples} (see Remarks~\ref{rema: 2,2} and~\ref{rema: 1,5}). We will also show that we can recover the results of \cite{BM,GP,LP} of $f=1$ case.

Finally, we observe when $\fM_{(\vth_1,\vth_2,\vrho,\vDel)}$ are homothetic for different choice of the data.
\begin{coro}
Assume that $(\vLam,\vTh,\vx)$ and $(\vLam',\vTh',\vx')$ satisfy the equations and inequalities of $\mathbf{Case}_\phi(\vr;\vk')$, and choose $(\vth_1,\vth_2)$ and $(\vth_1',\vth_2')$, respectively, via the bijection~\eqref{eq: Lam,Ome,The vs the1,the2}. If
\begin{enumerate}
\item $v_p(\Lambda_j)=v_p(\Lambda'_j)$ for all $j\in\cJ$ and $\prod_{j\in\cJ}\Lambda_j=\prod_{j\in\cJ}\Lambda'_j$;
\item $v_p(\Theta_j)=v_p(\Theta'_j)$ for all $j\in\cJ$;
\item $\vx=\vx'$ and so $\vrho:=(p-\varphi)^{-1}\circ\psi_{\vl(\vr;\vk')}(\vx)=(p-\varphi)^{-1}\circ\psi_{\vl(\vr;\vk')}(\vx')$,
\end{enumerate}
then $\fM_{(\vth_1,\vth_2,\vrho,\vDel)}$ is homothetic to $\fM_{(\vth'_1,\vth'_2,\vrho,\vDel)}$.
\end{coro}
\begin{proof}
By (i) and (ii), there exist $\vec{a},\vec{b}\in(\cO^{\times})^f$ and $c\in E^\times$ such that $\vec{a}\cdot \vth_1=\vth_1'\cdot c$ and $\vec{b}\cdot \vth_2=\vth_2'\cdot c$. Since $\vec{a},\vec{b}\in(\cO^{\times})^f$, the multiplication by these two elements does not change $\fM_{(\vth_1,\vth_2,\vrho,\vDel)}$ and $\fM_{(\vth_1',\vth_2',\vrho',\vDel)}$ as $S_\cO$-modules. Moreover, the multiplication by $c$ is nothing else than homothety. Hence, the two modules are homothetic.
\end{proof}

\subsection{Isotypic components of Breuil modules}\label{subsec: mod p reduction}
In this subsection, we use the mod-$p$ reduction of the pseudo-strongly divisible modules $\fM'$ developed in \S\ref{sec: pseudo strongly divisible modules}
to compute the mod-$p$ reduction of the isotypic component of the $S_\cO$-module $\fM_{\vth_1,\vth_2,\vrho,\vDel}$ at $j\in\cJ$. Once $(\vLam,\vTh,\vx)\in ((E^\times)^f)^{2}\times E^f$ satisfies the equations and inequalities of $\mathbf{Case}_\phi(\vr;\vk')$ for some $\vk'\in\prod_{j\in\cJ}\cK_{r_j}$, one may directly give the structure of the mod-$p$ reduction $$\cM^\diamond:=\fM^\diamond/(\varpi,\Fil^pS_{\cO})\fM^\diamond=\barS_\F(\barE_1,\barE_2).$$

For each $j\in\cJ$, the isotypic component $\cM^\diamond e_j$ of $\cM^\diamond$ has the following additional structure:
$$\big(\Fil^r\cM^\diamond e_j,\,\,\phi_r^{\diamond,(j)}:\Fil^r\cM^\diamond e_j\rightarrow\cM^\diamond e_{j+1},\,\,N^{\diamond,(j)}:\cM^\diamond e_j\rightarrow \cM^\diamond e_j\big)$$ depending on $\mathbf{Case}_\phi(r_j;k'_j)$. We write $\baseE^{(j)}$ for the ordered basis $(\barE_1^{(j)}\ \barE_2^{(j)})$ of the isotypic component $\cM^\diamond e_j$ and we let $\barF^{(j)}=(\barF_1^{(j)}\ \barF_2^{(j)})$ be an ordered pair of generators for $\Fil^r\cM^\diamond e_j$ over $\barS_\F e_j(\cong\barS'_\F)$.

We set $k_j:=\lfloor k'_j \rfloor$, and to lighten the notation we will write
$$
\begin{cases}
M_{N,j}:=\Mat_{\baseE^{(j)},\baseE^{(j)}}(N^{\diamond,(j)});\\
M_{\Fil^r,j}:=\Mat_{\baseE^{(j)},\baseF^{(j)}}(\Fil^r\cM^\diamond e_j);\\
M_{\phi_r,j}:=\Mat_{\baseE^{(j+1)},\baseF^{(j)}}(\phi_r^{\diamond,(j)})
\end{cases}
$$
throughout this subsection. We further define the following pair of units $\alpha_j,\beta_j$ in $\F^\times$ for each $(r_j;k'_j)$ by
$$
(\alpha_j,\beta_j):=
\begin{cases}
    \big( \frac{\Lambda_j}{p^{r_j-k_j-1}},\ \frac{\Lambda_j\Theta_j}{p^{k_j}\Theta_{j+1}} \big)
     & \mbox{if } k'_j=k_j+\tfrac{1}{2}\in(\tfrac{1}{2}+\Z)\cap[\frac{1}{2},\tfrac{r_j}{2}]; \\
    \big( \frac{\Lambda_j\Theta_j}{p^{k_j-1}},\ \frac{\Lambda_j}{p^{r_j-k_j}\Theta_{j+1}} \big)
     & \mbox{if } k'_j=k_j\in\Z\cap[\frac{1}{2},\tfrac{r_j}{2}]; \\
    \big( \frac{\Lambda_jx_j}{p^{r_j-k_j-1}},\ \frac{\Lambda_j\Theta_j}{p^{k_j}x_j\Theta_{j+1}} \big)
     & \mbox{if } k'_j=k_j+\tfrac{1}{2}\in(\tfrac{1}{2}+\Z)\cap[\tfrac{r_j+1}{2},r_j]; \\
    \big( \frac{\Lambda_j\Theta_j}{p^{k_j-1}x_j},\ \frac{\Lambda_jx_j}{p^{r_j-k_j}\Theta_{j+1}} \big)
     & \mbox{if } k'_j=k_j\in\Z\cap[\tfrac{r_j+1}{2},r_j];\\
    \big( p\Lambda_j,\ \frac{\Lambda_j\Theta_j}{p^{r_j}\Theta_{j+1}} \big)
     & \mbox{if }k'_j=r_j+\frac{1}{2},\infty.
    \end{cases}
$$
It is easy to see from this definition that
\begin{equation}\label{eq: alpha_j/beta_j}
\left(\frac{\alpha_j}{\beta_j}\right)^{s_j}=
\begin{cases}
  \Theta_j\Theta_{j+1}^{s_j}p^{r_j-2k'_j+\frac{1+s_j}{2}} & \mbox{if } k'_j\in [\frac{1}{2},\frac{r_j}{2}]\cup\{r_j+\frac{1}{2}\};\\
  \Theta_j\Theta_{j+1}^{s_j}p^{r_j-2k'_j+\frac{1+s_j}{2}}x_j^{-2} & \mbox{if } k'_j\in [\frac{r_j+1}{2},r_j];\\
 \Theta_j\Theta_{j+1}^{s_j}p^{-r_j-1} & \mbox{if } k'_j=\infty
\end{cases}
\end{equation}
where $s_j$ is defined right before Proposition~\ref{prop: system of equations and inequalities sharpened}.

\subsubsection{\textbf{Description of $N^{\diamond,(j)}$}}
In this section of paragraph, we describe the matrix $M_{N,j}$.

As $N(\gamma)/p\equiv -u^{p-1} \pmod{(\varpi,\Fil^p S_{\cO})}$ one has
$$M_{N,j}=
\begin{bmatrix}
  0 & 0 \\
  \frac{1}{\Theta_j}\big(1-\dot\Delta^{(j-1)}(-1)u^{p-1}\big) & 0
\end{bmatrix},$$
coming from \eqref{eq: definition of N'}, where $\Delta^{(j)}(-1)=\delta_{l(r_j;k'_j)}(-1,x_j)$. Here, $l(r_j;k'_j)$ has been defined in \eqref{eq: Choice of delta}. One can find the value of $\dot\delta_{l}(-1)$ in Lemma~\ref{lemma-delta-derivative} for $\delta_l\neq\delta_0$ while one obviously has $\dot\delta_0(-1,x_j)=0$.

We introducing the following definition which lets us to describe $N^{(j)}$ shortly.
\begin{defi}\label{defi: monodromy type}
For a rank 2 Breuil module $\cM$, we define the \textit{monodromy type of $\cM$} by a tuple of integer $\mathrm{MT}=(\mathrm{MT}_0,\cdots,\mathrm{MT}_{f-1})$, where each $\mathrm{MT}_j$ is given by
    $$\mathrm{MT}_j=\operatorname{rank}_{\F}(N^{(j)}:\cM^{(j)}/u\cM^{(j)}\to\cM^{(j)}/u\cM^{(j)}).$$
In the same fashion, we define the monodromy type of a mod-$p$ reduction of pseudo-strongly divisible module.
\end{defi}

Recalling that $v_p(\Theta_j)\leq 0$ for all $j\in\cJ$, it is worth noting that $\mathrm{MT}_j=1$ if $v_p(\Theta_j)=0$, and $\mathrm{MT}_j=0$ if $v_p(\Theta_j)<0$.

\begin{rema}
By Theorem~4.3.2 of \cite{Caruso}, any semisimple rank 2 Breuil module has monodromy type $(0,\cdots,0)$. Hence, any Breuil module with non-trivial monodromy type is not semisimple. This fact will be frequently used to see non-splitness of rank 2 Breuil modules in our examples.
\end{rema}

\subsubsection{\textbf{Description of $\Fil^r\cM^\diamond e_j$ and $\phi_r^{\diamond,(j)}$ when $r_j\geq 2$}}
In this section of paragraph, we describe the matrices $M_{\Fil^r,j}$ and $M_{\phi_r,j}$ when $k'_j<\infty$ and $r_j\geq 2$.

\begin{prop}\label{prop: mod p reduction, r_j>=2, k'_j half integer}
Assume that $r_j\geq 2$. If $k'_j\in (\frac{1}{2}+\Z)$, then the matrices representing $\Fil^r\cM^\diamond e_j$ and $\phi_r^{\diamond,(j)}$ are given as follow:
\begin{enumerate}[leftmargin=*]
\item if $k'_j=\frac{1}{2}$ then
    \begin{align*}
    M_{\Fil^r,j}
        =
        \begin{bmatrix}
            1 & 0 \\
            \frac{x_j}{\Theta_j} & 1
        \end{bmatrix}
        \begin{bmatrix}
            u^{r-r_j} & 0 \\
            \frac{\alpha_j}{\beta_j\Theta_{j+1}}\frac{(-1)^{r_j}u^{r-1}}{r_j-1} & u^r
        \end{bmatrix}
    \end{align*}
    and
    \begin{align*}
    M_{\phi_r,j}
        =
        \begin{bmatrix}
            1 & 0 \\
            \frac{r_j-1}{\Theta_{j+1}}u^{p-1} & 1
        \end{bmatrix}
        \begin{bmatrix}
            \alpha_j (-1)^{r-r_j} & 0 \\
            0 & \beta_j (-1)^r
        \end{bmatrix};
    \end{align*}
\item if $k'_j=k_j+\frac{1}{2}\in[1,\frac{r_j-1}{2}]$ then
    \begin{align*}
    M_{\Fil^r,j}
        =
        \begin{bmatrix}
            1 & 0 \\
            \frac{x_j}{\Theta_j} & 1
        \end{bmatrix}
        \begin{bmatrix}
            u^{r-r_j+k_j} & 0 \\
            \frac{\alpha_j}{\beta_j\Theta_{j+1}}\frac{(-1)^{r_j}u^{r-k_j-1}}{(r_j-2k_j-1)\binom{r_j-k_j-1}{k_j}^2} & u^{r-k_j}
        \end{bmatrix}
    \end{align*}
    and
    \begin{align*}
    M_{\phi_r,j}
        =
        \begin{bmatrix}
            1 & p\Theta_{j+1}\frac{k_j(r_j-k_j)}{r_j-2k_j} \\
            \frac{2(r_j-k_j-1)k_j+r_j-1}{\Theta_{j+1}}u^{p-1} & 1
        \end{bmatrix}
        \begin{bmatrix}
            \alpha_j \frac{(-1)^{r-r_j}}{\binom{r_j-k_j-1}{k_j}} & 0 \\
            0 & \beta_j (-1)^r\binom{r_j-k_j-1}{k_j}
        \end{bmatrix};
    \end{align*}
\item if $k'_j=m_j+\frac{1}{2}$ with $r_j=2m_j$ then
    \begin{align*}
    M_{\Fil^r,j}
        =
        \begin{bmatrix}
            1 & \frac{\Theta_j}{x_j} \\
            0 & 1
        \end{bmatrix}
        \begin{bmatrix}
            u^{r-m_j} & 0 \\
            -\frac{\alpha_j}{\beta_j\Theta_{j+1}} m_j^2 u^{r-m_j-1}
             & u^{r-m_j}
        \end{bmatrix}
    \end{align*}
    and
    \begin{align*}
    M_{\phi_r,j}
        =
        \begin{bmatrix}
            1 & 0 \\
            \frac{2m_j^2-1}{\Theta_{j+1}}u^{p-1} & 1
        \end{bmatrix}
        \begin{bmatrix}
            \alpha_j (-1)^{r+1}m_j & 0 \\
            0 & \beta_j \frac{(-1)^{r+1}}{m_j}
        \end{bmatrix};
    \end{align*}
\item if $k'_j=m_j+\frac{1}{2}$ with $r_j=2m_j+1$ then
    \begin{align*}
    M_{\Fil^r,j}
        =
        \begin{bmatrix}
            1 & 0 \\
            \frac{x_j}{\Theta_j}+\frac{\alpha_j}{\beta_j\Theta_{j+1}}2H_{m_j} & 1
        \end{bmatrix}
        \begin{bmatrix}
            u^{r-m_j-1} & 0 \\
            0 & u^{r-m_j}
        \end{bmatrix}
    \end{align*}
    and
    \begin{align*}
    M_{\phi_r,j}
        =
        \begin{bmatrix}
            1 & p\Theta_{j+1}m_j(m_j+1) \\
            \frac{2m_j(m_j+1)}{\Theta_{j+1}}u^{p-1} & 1
        \end{bmatrix}
        \begin{bmatrix}
            \alpha_j (-1)^{r+1} & 0 \\
            0 & \beta_j (-1)^r
        \end{bmatrix};
    \end{align*}
\item if $k'_j=k_j+\frac{1}{2}\in [m_j+1,r_j]$ then
    \begin{align*}
    M_{\Fil^r,j}
        =
        \begin{bmatrix}
            1 & \frac{\Theta_j}{x_j} \\
            0 & 1
        \end{bmatrix}
        \begin{bmatrix}
            u^{r-r_j+k_j} & 0 \\
            \frac{\alpha_j}{\beta_j\Theta_{j+1}}(-1)^{r_j-1}(2k_j-r_j+1)\binom{k_j}{r_j-k_j-1}^2 u^{r-k_j-1} & u^{r-k_j}
        \end{bmatrix}
    \end{align*}
    and
    \begin{multline*}
    M_{\phi_r,j}
        =
        \begin{bmatrix}
            1 & p\Theta_{j+1}\frac{k_j(r_j-k_j)}{r_j-2k_j} \\
            \frac{2(r_j-k_j-1)k_j+r_j-1}{\Theta_{j+1}}u^{p-1} +\frac{b_{r_j,k_j+1}(-1)}{p\Theta_{j+1}x_j} & 1
        \end{bmatrix}\\
        \times
        \begin{bmatrix}
            \alpha_j (-1)^{r+1}(r_j-k_j)\binom{k_j}{r_j-k_j} & 0 \\
            0 & \beta_j \frac{(-1)^{r-r_j+1}}{(r_j-k_j)\binom{k_j}{r_j-k_j}}
        \end{bmatrix}
    \end{multline*}
    where $b_{r_j,k_j+1}(-1)=-\frac{2m_j^2+2m_j+1}{m_j^2(m_j+1)^2}$ if $(r_j,k_j)=(2m_j+1,m_j+1)$ and $b_{r_j,k_j+1}(-1)=0$ otherwise;
 \item if $k'_j=r_j+\frac{1}{2}$ then
    \begin{align*}
    M_{\Fil^r,j}
        =
        \begin{bmatrix}
            u^r & 0 \\
            0 & u^{r-r_j}
        \end{bmatrix}
    \end{align*}
    and
    \begin{align*}
    M_{\phi_r,j}
        =
        \begin{bmatrix}
          1 & \frac{\alpha_j}{\beta_j}\frac{\Theta_j}{p^{r_j}x_j} \\
          0 & 1
        \end{bmatrix}
        \begin{bmatrix}
            \alpha_j (-1)^r & 0 \\
            0 & \beta_j (-1)^{r-r_j}
        \end{bmatrix}.
    \end{align*}
\end{enumerate}
\end{prop}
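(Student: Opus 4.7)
The plan is to reduce the statement to a case-by-case application of the propositions in \S\ref{subsec: Strong divisibility} via the avatar-to-local-piece isomorphism $\psi^{(j)}:\fM'\xrightarrow{\sim}\fM^\diamond e_j$ described at the end of \S\ref{subsec: pseudo sdm}. For each value of $k'_j\in(\frac{1}{2}+\Z)\cup\{r_j+\tfrac12\}$ appearing in (i)--(vi), the quadruple $(\Lambda_j,\Omega_j,\Theta_j,x_j)$ is good for $\mathbf{Case}_\phi(r_j;k'_j)$ with the $\Delta$ displayed in Table~\ref{tab:my_label}, and so, via the commutative diagram in \S\ref{subsec: pseudo sdm} combined with the extension from $r'$ to $r$ described in \S\ref{subsec: r' leq r}, the generators of $\Fil^r\cM^\diamond e_j$ are $u^{r-r_j}$ times the images under $\psi^{(j)}$ of the generators $\barF_1',\barF_2'$ displayed there, and likewise the images under $\phi_r^{\diamond,(j)}$ are $(-1)^{r-r_j}$ times the images under $\psi^{(j+1)}$ of $\phi_r'(\barF_1'),\phi_r'(\barF_2')$ displayed there.

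First I would treat $k'_j=\frac12$ by substituting the formulae of Proposition~\ref{prop-cM-Case-0} and using $P_{0,0}(T,0)=1$, $\dot\delta_1(-1)=r_j-1$ from Lemma~\ref{lemma-delta-derivative}~(iii). The right matrix in each factorization collects the leading unit coefficients $\alpha_j,\beta_j$; the left matrix absorbs both the effect of passing from $F_{0,0}'$ to its ``filtration-only'' form (accounting for the $u^{r-1}$ term coming from $Q_{0,0}$) and, on the $\phi_r$ side, the $u^{p-1}$ correction arising from $c-(\gamma-1)\equiv pu^{p-1}$. The half-integer cases $k'_j=k_j+\tfrac12$ with $1\le k_j\le m_j-1$ (i.e.\ case (ii)) are handled analogously using Proposition~\ref{prop-cM-Case-1-r/2}: one plugs in
$P_{k,k}(-1,0)=P_{k+1,0}(-1,0)/a_{k+1}$ from Lemma~\ref{lemma-delta-denom-1}~(ii), which yields the combinatorial factor $\binom{r_j-k_j-1}{k_j}^{\pm 1}$, and $\dot\delta_{k+1}(-1)=r_j(2k_j+1)-2k_j(k_j+1)-1$ from Lemma~\ref{lemma-delta-derivative}~(iii). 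The $p\Theta_{j+1}\tfrac{k_j(r_j-k_j)}{r_j-2k_j}$ entry in the left factor of $M_{\phi_r,j}$ is obtained by rewriting the ratio $[\phi'_r(\barF_1')]_{\barE_1'}/[\phi'_r(\barF_2')]_{\barE_1'}$ using \eqref{eq: alpha_j/beta_j} and the ratio $P_{k,k}(-1,0)/P_{k,0}(-1,0)$, and a parallel computation gives the lower-triangular $u^{p-1}$ entry.

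The middle cases $k'_j=m_j+\tfrac12$ (cases (iii) and (iv)) require care. For $r_j=2m_j$ (case (iii)), Proposition~\ref{prop-cM-Case-m} is relevant; here one uses $P_{m,0}(-1,0)=(-1)^r m$, $P_{m,m}(-1,0)=-m(H_m+H_{m-1})$ from Lemma~\ref{lemma-P-2m-m}, and Lemma~\ref{lemma-delta-derivative}~(iii) with $k=m$. For $r_j=2m_j+1$ (case (iv)), Proposition~\ref{prop-cM-Case-m+1} gives the formulae; the additional term $\tfrac{\beta_j}{\alpha_j\Theta_{j+1}}\cdot 2H_{m_j}$ in $M_{\Fil^r,j}$ comes from $P_{m+1,0}(T,x)=x^{-1}P_{m,m}(T,0)+\cdots$ via Lemma~\ref{lemma: m+1 to r} and the value $\delta_{m+1}(-1,x)=-2H_m$ from Lemma~\ref{lemma-delta}~(ii) and Lemma~\ref{lemma: a_k m+1 leq k}~(v). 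For case (v) one dualizes the argument of case (ii) using Proposition~\ref{prop-cM-Case-m+2-r-1} and the symmetry $P_{k,k}\leftrightarrow -xP_{r-k,0}$, $P_{k,0}\leftrightarrow x^{-1}P_{r-k,r-k}$ from Lemma~\ref{lemma: m+1 to r}; the binomial coefficients $\binom{k_j}{r_j-k_j-1}$ and $(r_j-k_j)\binom{k_j}{r_j-k_j}$ arise from $a_{r-k+1}$ together with Lemma~\ref{lemma-delta-denom-1}. Finally, case (vi) is Proposition~\ref{prop-cM-Case-r}: the off-diagonal entry $\tfrac{\alpha_j}{\beta_j}\cdot\tfrac{\Theta_j}{p^{r_j}x_j}$ in the left factor of $M_{\phi_r,j}$ is just $[\phi'_r(\barF_2')]_{\barE_1'}/[\phi'(E_1')]_{\barE_1'}$ rewritten via \eqref{eq: alpha_j/beta_j}.

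The main obstacle I anticipate is not any single computation but the bookkeeping required to bring each output of \S\ref{subsec: Strong divisibility} into the precise normal form $M=LR$ stated in the proposition: one must identify which part of the formula is absorbed into the change-of-basis factor $L$ (the $u^{p-1}$ terms on the $\phi_r$ side and the $x_j/\Theta_j$ or $\Theta_j/x_j$ shear on the $\Fil^r$ side) versus the diagonal of units $R$. Keeping track of the signs $(-1)^{r-r_j}$ arising from the generalization in \S\ref{subsec: r' leq r} and the signs in $P_{k,0}(-1,0)$ and $\delta_k(-1)$ throughout the six cases is the most error-prone step, but it is entirely mechanical once each case is matched with its controlling proposition.
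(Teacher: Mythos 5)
Your overall strategy — reduce to a case-by-case application of the propositions in \S\ref{subsec: Strong divisibility} through the avatar-to-local-piece isomorphism $\psi^{(j)}$, then multiply the generators by $u^{r-r_j}$ and the images of $\phi_r$ by $(-1)^{r-r_j}$ as in \S\ref{subsec: r' leq r}, and finally substitute the explicit values of $P_{k,i}(-1,0)$, $\delta_k(-1)$, $\dot\delta_k(-1)$ — is exactly the paper's approach, and cases (i), (ii), (iii), (vi) are matched with the correct controlling propositions.

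There is, however, a concrete error in case (iv) ($k'_j=m_j+\tfrac12$, $r_j=2m_j+1$). You cite Proposition~\ref{prop-cM-Case-m+1} and Lemma~\ref{lemma: m+1 to r}, and attribute the $2H_{m_j}$ term to the expansion $P_{m+1,0}(T,x)=x^{-1}P_{m,m}(T,0)+\cdots$. But for $k'_j=m_j+\tfrac12$ one has $k_j=\lfloor k'_j\rfloor=m_j$, and since $r_j=2m_j+1$ is odd, $m_j<r_j/2$, so the controlling propositions are Proposition~\ref{prop: Fil-Case-1-r/2} (for $M_{\Fil^r,j}$) and Proposition~\ref{prop-cM-Case-1-r/2} (for $M_{\phi_r,j}$), both with $k=m_j$. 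Proposition~\ref{prop-cM-Case-m+1} and Lemma~\ref{lemma: m+1 to r} are about $k\ge m+1$, which is a different regime. The constant $2H_{m_j}$ does enter, but through the term $\frac{1}{p^{r-2k-1}\Theta}\frac{1}{a_{k+1}}$ in $\barF'_{k,k}$ of Proposition~\ref{prop: Fil-Case-1-r/2} with $k=m_j$, $r=r_j=2m_j+1$ (so $p^{r-2k-1}=1$), combined with $1/a_{m_j+1}=2H_{m_j}$ from Lemma~\ref{lemma: a_k m+1 leq k}~(v); the values $P_{m_j,m_j}(-1,0)=1$, $P_{m_j,0}(-1,0)=-m_j(m_j+1)$, $\delta_{m_j}(-1)-\delta_{m_j+1}(-1)=\tfrac{1}{m_j(m_j+1)}$, $\dot\delta_{m_j+1}(-1)=2m_j(m_j+1)$ then supply the constants in $M_{\phi_r,j}$ from Proposition~\ref{prop-cM-Case-1-r/2}. (The published proof has a misprint at the same spot, naming Proposition~\ref{prop-cM-Case-m} rather than Proposition~\ref{prop-cM-Case-1-r/2}, but the lemma values it lists are unambiguously those for the latter, so this does not excuse citing Proposition~\ref{prop-cM-Case-m+1}.) Two smaller points: in case (v) the boundary value $k_j=m_j+1$ is governed by Proposition~\ref{prop-cM-Case-m+1}, not Proposition~\ref{prop-cM-Case-m+2-r-1}, so your appeal only to the latter leaves that sub-case unjustified; and in case (iii) the half-integer branch of Proposition~\ref{prop-cM-Case-m} uses only $P_{m,0}(-1,0)$ and $\dot\delta_m(-1)$, so the citation of Lemma~\ref{lemma-P-2m-m} for $P_{m,m}(-1,0)$ is superfluous.
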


\begin{proof}
This proposition follows immediately from the results in \S\ref{sec: pseudo strongly divisible modules}. Note that the $r=r_j$ was assumed for the results in \S\ref{sec: pseudo strongly divisible modules}. But by simply multiplying $v^{r-r_j}$ to $G_1',G_2'$ as in \eqref{eq: filtration when r' leq r} and $(-1)^{r-r_j}$ to the image of $\phi_r(\bar{F}_1),\phi(\bar{F}_2)$, we could have the description as in the statement. More precise reference details are provided in the following.
\begin{enumerate}[leftmargin=*]
\item Let $k'_j=\frac{1}{2}$. Then $M_{\Fil^r,j}$ is computed in Proposition~\ref{prop: Fil-Case-0}, and $M_{\phi_r,j}$ is given by Proposition~\ref{prop-cM-Case-0}, where $P_{0,0}(-1,0)=1$ by Lemma~\ref{lemma-delta-denom-1} (i) and $\dot\delta_1(-1)=r_j-1$ by Lemma~\ref{lemma-delta-derivative} (iii).
\item Let $k'_j=k_j+\frac{1}{2}\in[1,\frac{r_j-1}{2}]$. Then $M_{\Fil^r,j}$ is computed in Proposition~\ref{prop: Fil-Case-1-r/2}, where $\frac{1}{a_{k_j+1}}=\frac{(-1)^{r_j}}{(r_j-2k_j-1)\binom{r_j-k_j-1}{k_j}^2}$ by Lemma~\ref{lemma-delta-denom-1} (i) and (ii) and $p^{r_j-2k_j+1}\Theta_j=0$ in $\F$. Also, $M_{\phi_r,j}$ is given by Proposition~\ref{prop-cM-Case-1-r/2}, where $P_{k_j,k_j}(-1,0)=\binom{r_j-k_j-1}{k_j}^{-1}$ and $P_{k_j,0}(-1,0)=(-1)^{r_j}k_j\binom{r_j-k_j}{k_j}$ by Lemma~\ref{lemma-delta-denom-1} (i) and (ii), $\delta_{k_j}(-1)-\delta_{k_j+1}(-1)=\frac{r_j-2k_j}{k_j(r_j-k_j)}$ by Lemma~\ref{lemma-delta} (i), and $\dot\delta_{k_j+1}(-1)=2(r_j-k_j-1)k_j+r_j-1$ by Lemma~\ref{lemma-delta-derivative} (iii).
\item Let $k'_j=m_j+\frac{1}{2}$ with $r_j=2m_j$. Then $M_{\Fil^r,j}$ is computed in Proposition~\ref{prop: Fil-Case-m} and Proposition~\ref{prop-cM-Case-m}, where $a_{m_j}=m_j^2$ by Lemma~\ref{lemma-delta-denom-1} (i) and (ii). Also, $M_{\phi_r,j}$ is given by Proposition~\ref{prop-cM-Case-m}, where $P_{m_j,0}(-1,0)=m_j$ by Lemma~\ref{lemma-delta-denom-1} (i) and $\dot\delta_{m_j}(-1)=2m_j^2-1$ by Lemma~\ref{lemma-delta-derivative} (iii).
\item Let $k'_j=m_j+\frac{1}{2}$ with $r_j=2m_j+1$. Then $M_{\Fil^r,j}$ is computed in Proposition~\ref{prop: Fil-Case-1-r/2}, where $\frac{1}{a_{m_j+1}}=2H_{m_j}$ by Lemma~\ref{lemma: a_k m+1 leq k} (v) and $p^{r_j-2m_j+1}\Theta_j=0$ in $\F$. Also, $M_{\phi_r,j}$ is given by Proposition~\ref{prop-cM-Case-m}, where $P_{m_j,m_j}(-1,0)=1$ and $P_{m_j,0}(-1,0)=-m_j(m_j+1)$ by Lemma~\ref{lemma-delta-denom-1} (i) and (ii), $\delta_{m_j}(-1)-\delta_{m_j+1}(-1)=\frac{1}{m_j(m_j+1)}$ by Lemma~\ref{lemma-delta} (i), and $\dot\delta_{m_j+1}(-1)=2m_j(m_j+1)$ by Lemma~\ref{lemma-delta-derivative} (iii).
\item Let $k'_j=k_j+\frac{1}{2}\in[m_j+1,r_j]$. Then $M_{\Fil^r,j}$ is computed from in Proposition~\ref{prop: Fil-Case-m+1-r-1}, where $a_{r_j-k_j}=(-1)^{r_j}(2k_j-r_j+1)\binom{k_j}{r_j-k_j-1}^2$ by Lemma~\ref{lemma-delta-denom-1} (i) and (ii) and $\frac{\Theta_j}{p^{2k_j-r_j-1}x_j^2}=0$ in $\F$. Also, $M_{\phi_r,j}$ is given by Proposition~\ref{prop-cM-Case-m+1} and Proposition~\ref{prop-cM-Case-m+2-r-1}, where $P_{r_j-k_j,0}(-1,0)=(-1)^{r_j}(r_j-k_j)\binom{k_j}{r_j-k_j}$ and $P_{r_j-k_j,r_j-k_j}(-1,0)=\binom{k_j-1}{r_j-k_j}^{-1}$ by Lemma~\ref{lemma-delta-denom-1} (i) and (ii), $\delta_{r_j-k_j+1}(-1)-\delta_{r_j-k_j}(-1)=\frac{r_j-2k_j}{k_j(r_j-k_j)}$ by Lemma~\ref{lemma-delta} (i), $\dot\delta_{r_j-k_j}(-1)=2(r_j-k_j+1)k_j+r_j-1$ by Lemma~\ref{lemma-delta-derivative} (iii), and $b_{2m_j+1,m_j+2}(-1)=-\frac{2m_j^2+2m_j+1}{m_j^2(m_j+1)^2}$ by Lemma~\ref{lemma-b-2m+1-m+2}.
\item Let $k'_j=r_j+\frac{1}{2}$. $M_{\Fil^r,j}$ is computed in Proposition~\ref{prop: Fil-Case-r} where $\frac{\Theta_j}{x_j}=0$ in $\F$, and the form of $M_{\phi_r,j}$ is given by Proposition~\ref{prop-cM-Case-r}. 
\end{enumerate}
This completes the proof.
\end{proof}

\begin{prop}\label{prop: mod p reduction, r_j>=2, k'_j integer}
Assume that $r_j\geq 2$. If $k'_j\in \Z$, then the matrices representing $\Fil^r\cM^\diamond e_j$ and $\phi_r^{\diamond,(j)}$ are given as follow:
\begin{enumerate}[leftmargin=*]
\item if $k'_j=k_j\in [\frac{1}{2},\frac{r_j-1}{2}]$ then
    \begin{align*}
    M_{\Fil^r,j}
        =
        \begin{bmatrix}
            1 & 0 \\
            \frac{x_j}{\Theta_j} & 1
        \end{bmatrix}
        \begin{bmatrix}
            u^{r-r_j+k_j} & \frac{\alpha_j}{\beta_j\Theta_{j+1}}(-1)^{r_j}(r_j-2k_j+1)\binom{r_j-k_j}{k_j-1}^2 u^{r-r_j+k_j-1} \\
            0 & u^{r-k_j}
        \end{bmatrix}
    \end{align*}
    and
    \begin{multline*}
    M_{\phi_r,j}
        =
        \begin{bmatrix}
            1 & p\Theta_{j+1}\frac{-k_j(r_j-k_j)}{r_j-2k_j} \\
            \frac{2(r_j-k_j)(k_j-1)+r_j-1}{\Theta_{j+1}}u^{p-1} & 1
        \end{bmatrix}
        \begin{bmatrix}
            0 & \alpha_j (-1)^rk_j\binom{r_j-k_j}{k_j} \\
            \beta_j\frac{(-1)^{r-r_j+1}}{k_j\binom{r_j-k_j}{k_j}} & 0
        \end{bmatrix};
    \end{multline*}
\item if $k'_j=m_j$ with $r_j=2m_j$ then
    \begin{align*}
    M_{\Fil^r,j}
        =
        \begin{bmatrix}
            1 & 0 \\
            \frac{x_j}{\Theta_j} & 1
        \end{bmatrix}
        \begin{bmatrix}
            u^{r-m_j} & \frac{\alpha_j}{\beta_j\Theta_{j+1}} m_j^2 u^{r-m_j-1} \\
            0 & u^{r-m_j}
        \end{bmatrix}
    \end{align*}
    and
    \begin{align*}
    M_{\phi_r,j}=
        \begin{bmatrix}
            1 & -p\Theta_{j+1}m_j^2(H_{m_j}+H_{m_j-1}) \\
            \frac{2m_j^2-1}{\Theta_{j+1}}u^{p-1} & 1
        \end{bmatrix}
        \begin{bmatrix}
            0 & \alpha_j (-1)^r m_j \\
            \beta_j \frac{(-1)^{r+1}}{m_j} & 0
        \end{bmatrix};
    \end{align*}
\item if $k'_j=m_j+1$ with $r_j=2m_j+1$ then
    \begin{align*}
    M_{\Fil^r,j}
        =
        \begin{bmatrix}
            1 & \frac{\Theta_j}{x_j} \\
            0 & 1
        \end{bmatrix}
        \begin{bmatrix}
            u^{r-m_j} & 0 \\
            0 & u^{r-m_j-1}
        \end{bmatrix}
    \end{align*}
    and
    \begin{align*}
    M_{\phi_r,j}
        =
        \begin{bmatrix}
            1 & p\Theta_{j+1}m_j(m_j+1) \\
            \frac{2m_j(m_j+1)}{\Theta_{j+1}}u^{p-1} & 1
        \end{bmatrix}
        \begin{bmatrix}
            0 & \alpha_j (-1)^{r+1} \\
            \beta_j (-1)^{r+1} & 0
        \end{bmatrix};
    \end{align*}
\item if $k'_j=k_j\in [\frac{r_j+2}{2}, r_j-1]$ then
    \begin{align*}
    M_{\Fil^r,j}
        =
        \begin{bmatrix}
            1 & \frac{\Theta_j}{x_j} \\
            0 & 1
        \end{bmatrix}
        \begin{bmatrix}
            u^{r-r_j+k_j}
             & \frac{\alpha_j}{\beta_j\Theta_{j+1}}\frac{(-1)^{r_j-1}u^{r-r_j+k_j-1}}{(2k_j-r_j-1)\binom{k_j-1}{r_j-k_j}^2} \\
            0 & u^{r-k_j}
        \end{bmatrix}
    \end{align*}
    and
    \begin{multline*}
    M_{\phi_r,j}
        =
        \begin{bmatrix}
            1 & p\Theta_{j+1}\frac{k_j(r_j-k_j)}{2k_j-r_j} \\
            \frac{2(r_j-k_j)(k_j-1)+r_j-1}{\Theta_{j+1}}u^{p-1}+\frac{b_{r_j,k_j}(-1)}{p\Theta_{j+1}x_j} & 1
        \end{bmatrix}\\
        \times
        \begin{bmatrix}
            0 & \alpha_j \frac{(-1)^{r-r_j}}{\binom{k_j-1}{r_j-k_j}}\\
            \beta_j (-1)^{r+1}\binom{k_j-1}{r_j-k_j} & 0
        \end{bmatrix}
    \end{multline*}
    where $b_{r_j,k_j}(-1)=-\frac{2m_j^2+2m_j+1}{m_j^2(m_j+1)^2}$ if $(r_j,k_j)=(2m_j+1,m_j+2)$ and $b_{r_j,k_j}(-1)=0$ otherwise;
\item if $k'_j=r_j$ then
    \begin{align*}
    M_{\Fil^r,j}
        =
        \begin{bmatrix}
            1 & \frac{\Theta_j}{x_j} \\
            0 & 1
        \end{bmatrix}
        \begin{bmatrix}
            u^r & \frac{\alpha_j}{\beta_j\Theta_{j+1}}\frac{(-1)^{r_j-1}u^{r-1}}{r_j-1} \\
            0 & u^{r-r_j}
        \end{bmatrix}
    \end{align*}
    and
    \begin{align*}
    M_{\phi_r,j}
        =
        \begin{bmatrix}
            1 & -\frac{\alpha_j}{\beta_j}\frac{p^{r_j}x_j}{\Theta_j} \\
            \frac{r_j-1}{\Theta_{j+1}}u^{p-1}+\frac{b_{r_j,r_j}(-1)}{p\Theta_{j+1}x_j} & 1
        \end{bmatrix}
        \begin{bmatrix}
            0 & \alpha_j (-1)^{r-r_j} \\
            \beta_j (-1)^{r+1} & 0
        \end{bmatrix}
    \end{align*}
    where $b_{r_j,r_j}(-1)=-\frac{5}{4}$ if $r_j=3$ and $b_{r_j,r_j}(-1)=0$ otherwise.
\end{enumerate}
\end{prop}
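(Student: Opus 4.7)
The plan is to mirror the proof of the preceding Proposition~\ref{prop: mod p reduction, r_j>=2, k'_j half integer}: for each of the five cases (i)--(v), identify the appropriate pair of propositions from \S\ref{subsec: Generators of the filtration}--\S\ref{subsec: Strong divisibility} that handle the filtration and the Frobenius for $r=r_j$, then apply the $r'\leq r$ adjustment from \S\ref{subsec: r' leq r} (multiply the generators by $u^{r-r_j}$ and the images under $\phi'_r$ by $(-1)^{r-r_j}$). The key observation driving the distinctive antidiagonal shape of $M_{\phi_r,j}$ in this proposition (as opposed to the diagonal shape in Proposition~\ref{prop: mod p reduction, r_j>=2, k'_j half integer}) is that for integer $k'_j=k_j$, by the definition in \eqref{eq: definition of Case phi k} it is $[\phi'_r(G'_2)]_{E'_1}$ rather than $[\phi'_r(G'_1)]_{E'_1}$ that is a unit, so the ordering of $\bar{E}_1^{(j+1)},\bar{E}_2^{(j+1)}$ versus $\bar{F}_1^{(j)},\bar{F}_2^{(j)}$ is swapped in the matrix representation.

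Concretely, for~(i) the filtration follows from Proposition~\ref{prop: Fil-Case-1-r/2} and the Frobenius from Proposition~\ref{prop-cM-Case-1-r/2} with $k'=k_j$; for~(ii) the inputs are Proposition~\ref{prop: Fil-Case-m} and Proposition~\ref{prop-cM-Case-m} with $k'=m_j$; for~(iii) they are Proposition~\ref{prop: Fil-Case-m+1-r-1} and Proposition~\ref{prop-cM-Case-m+1} with $k'=m_j+1$; for~(iv) they are Proposition~\ref{prop: Fil-Case-m+1-r-1} and Proposition~\ref{prop-cM-Case-m+2-r-1} with $k'=k_j$; and for~(v) they are Proposition~\ref{prop: Fil-Case-r} and Proposition~\ref{prop-cM-Case-r} with $k'=r_j$. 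The explicit numerical constants then come from evaluating $P_{k_j,0}(-1,0)$, $P_{k_j-1,k_j-1}(-1,0)$, $P_{r_j-k_j,0}(-1,0)$, $P_{r_j-k_j,r_j-k_j}(-1,0)$ via Lemma~\ref{lemma-delta-denom-1}(i)--(ii), the leading coefficients $a_{k_j},a_{r_j-k_j+1}$ via the same lemma together with Lemma~\ref{lemma: a_k m+1 leq k}(v), the differences $\delta_{k_j+1}(-1)-\delta_{k_j}(-1)$ and $\delta_{r_j-k_j+1}(-1)-\delta_{r_j-k_j}(-1)$ via Lemma~\ref{lemma-delta}(i)--(ii), and the derivatives $\dot\delta_{k_j}(-1)$, $\dot\delta_{r_j-k_j+1}(-1)$ via Lemma~\ref{lemma-delta-derivative}(iii). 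Finally, the correction term $b_{r_j,k_j}(-1)$ which appears only in cases~(iv) and~(v) is extracted from Lemma~\ref{lemma-b-2m+1-m+2} for $(r_j,k_j)=(2m_j+1,m_j+2)$ and from the direct computation in Proposition~\ref{prop-cM-Case-r} for $r_j=3$; in all other subcases one checks that $v_p(\Omega\Theta x^{-2})>r_j$, so the contribution vanishes modulo $\varpi$.

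A handful of routine simplifications modulo $(\varpi,\Fil^p S_\cO)$ are needed throughout: entries of the form $p^a\Theta_j$ with $a\geq 1$ or $\Theta_j/x_j$ with $v_p(\Theta_j x_j^{-1})>0$ vanish in $\barS_\F$ under the inequalities of Table~\ref{tab:eqn-ineq-revised}, which accounts for the fact that several "small'' terms in the statements of the cited propositions disappear in the final matrices above. Using the identity $\Theta_{j+1}\Omega_j=\Lambda_j$ from \eqref{eq: From the setting} together with \eqref{eq: alpha_j/beta_j} then rewrites all coefficients purely in terms of $\alpha_j,\beta_j,\Theta_j,\Theta_{j+1},x_j$ in the normalized form displayed in the statement.

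The main obstacle is bookkeeping: one must carefully track the change of basis that converts the pair $\big(\phi'_r(\bar F_1),\phi'_r(\bar F_2)\big)$ from the form given in the cited propositions (which expresses them in terms of $\bar E'_1,\bar E'_2$) into the factored form $\left[\begin{smallmatrix}1 & \ast\\ \ast & 1\end{smallmatrix}\right]\left[\begin{smallmatrix}0 & \alpha_j\cdots\\ \beta_j\cdots & 0\end{smallmatrix}\right]$ appearing in the statement, while simultaneously absorbing the $u^{p-1}$-type corrections from the monodromy term $N(\gamma)/p\equiv -u^{p-1}$ and the $c-(\gamma-1)\equiv pu^{p-1}$ identity. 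The verification of case~(iv) with $(r_j,k_j)=(2m_j+1,m_j+2)$ is the single most delicate subcase, since there the coefficient $b_{r_j,k_j}(-1)$ comes from a nonvanishing contribution of $P_{k_j,0,2k_j-r_j-2}(T)$ picked up in the analysis of $[\phi'_r(\bar F'_2)]_{\bar E'_2}$ inside the proof of Proposition~\ref{prop-cM-Case-m+2-r-1}, and its numerical value $-(2m_j^2+2m_j+1)/(m_j(m_j+1))^2$ must be extracted from the explicit formula in Lemma~\ref{lemma-b-2m+1-m+2} evaluated at $T=-1$ together with Lemma~\ref{lemma-delta-denom-1}(i).
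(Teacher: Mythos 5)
Your overall strategy matches the paper's: cite the filtration propositions from \S\ref{subsec: Generators of the filtration} and the strong-divisibility propositions from \S\ref{subsec: Strong divisibility} for each case, apply the $r'\leq r$ adjustment from \S\ref{subsec: r' leq r}, and evaluate the constants via Lemmas~\ref{lemma-delta-denom-1}, \ref{lemma-delta}, \ref{lemma-delta-derivative}, \ref{lemma: a_k m+1 leq k}, and \ref{lemma-b-2m+1-m+2}. Your observation that the antidiagonal shape of $M_{\phi_r,j}$ comes from the unit sitting in $[\phi'_r(G'_2)]_{E'_1}$ for integer $k'_j$ is also correct.

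However, there are three concrete gaps in your case-by-case assembly. First, in case~(iv) the range $k_j\in[\frac{r_j+2}{2},r_j-1]$ includes the boundary value $k_j=m_j+1$ when $r_j=2m_j$; your cited Proposition~\ref{prop-cM-Case-m+2-r-1} requires $m+2\leq k$ and so does not cover this subcase --- one must instead invoke Proposition~\ref{prop-cM-Case-m+1} together with the identity $\delta_{r_j-m_j-1}(-1)-\delta_{m_j+1}(-1,x_j)\equiv\delta_{m_j-1}(-1)-\delta_{m_j}(-1)\pmod{\fm}$ from Lemma~\ref{lemma-delta}(iii). Second, in case~(ii) you attribute all the $P$-values at $T=-1$ to Lemma~\ref{lemma-delta-denom-1}(i)--(ii), but that lemma only gives $P_{k-1,k-1}(-1,0)$ for $k<r/2+1$; the value $P_{m_j,m_j}(-1,0)=-m_j(H_{m_j}+H_{m_j-1})$ with $r_j=2m_j$ sits precisely at the excluded boundary $k=r/2+1$ and requires the separate Lemma~\ref{lemma-P-2m-m} (which is the source of the distinctive $-p\Theta_{j+1}m_j^2(H_{m_j}+H_{m_j-1})$ entry). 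Third, in case~(v) with $r_j=2$ the input $\Delta$ is $\delta_2(T,x)$ rather than $\delta_1(T)$, and the identity $\dot\delta_2(-1,x)=\dot\delta_1(-1)=1$ modulo $\fm$ comes from Lemma~\ref{lemma-delta-derivative}(iv), not (iii). None of these changes the strategy, but each one is needed for the stated matrices to hold across the full range of parameters asserted in the proposition.
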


\begin{proof}
By the same reason as Proposition~\ref{prop: mod p reduction, r_j>=2, k'_j half integer}, we get the description as in the statement. More precise reference details are provided in the following.
\begin{enumerate}[leftmargin=*]
\item Let $k'_j=k_j\in[\frac{1}{2},\frac{r_j-1}{2}]$. Then $M_{\Fil^r,j}$ is computed in Proposition~\ref{prop: Fil-Case-1-r/2}, where $a_{k_j}=(-1)^{r_j}(r_j-2k_j+1)\binom{r_j-k_j}{k_j-1}^2$ by Lemma~\ref{lemma-delta-denom-1} (i) and (ii) and $\frac{1}{p^{r_j-2k_j-1}\Theta_j}=0$ in $\F$. Also, $M_{\phi_r,j}$ is given by Proposition~\ref{prop-cM-Case-1-r/2}, where $P_{k_j,0}(-1,0)=(-1)^{r_j}k_j\binom{r_j-k_j}{k_j}$ and $P_{k_j,k_j}(-1,0)=\binom{r_j-k_j-1}{k_j}^{-1}$ by Lemma~\ref{lemma-delta-denom-1} (i) and (ii), $\delta_{k_j+1}(-1)-\delta_{k_j}(-1)=\frac{2k_j-r_j}{k_j(r_j-k_j)}$ by Lemma~\ref{lemma-delta} (i), and $\dot\delta_{k_j}(-1)=2(r_j-k_j)(k_j-1)+r_j-1$ by Lemma~\ref{lemma-delta-derivative} (iii).
\item Let $k'_j=m_j$ with $r_j=2m_j$. Then $M_{\Fil^r,j}$ is computed in Proposition~\ref{prop: Fil-Case-m}, where $a_{m_j}=m_j^2$ by Lemma~\ref{lemma-delta-denom-1} (i) and (ii). Also, $M_{\phi_r,j}$ is given by Proposition~\ref{prop-cM-Case-m}, where $P_{m_j,0}(-1,0)=m_j$ by Lemma~\ref{lemma-delta-denom-1} (i), $P_{m_j,m_j}(-1,0)=-m_j(H_{m_j}+H_{m_j-1})$ by Lemma~\ref{lemma-P-2m-m}, and $\dot\delta_{m_j}(-1)=2m_j^2-1$ by Lemma~\ref{lemma-delta-derivative} (iii).
\item Let $k'_j=m_j+1$ with $r_j=2m_j+1$. Then $M_{\Fil^r,j}$ is computed in Proposition~\ref{prop: Fil-Case-m+1-r-1}, where $\frac{p^{2(m_j+1)-r_j+1}x_j^2}{\Theta_j}=0$ in $\F$. Also, $M_{\phi_r,j}$ is given by Proposition~\ref{prop-cM-Case-m+1}, where $P_{r_j-m_j-1,r_j-m_j-1}(-1,0)=1$ and $P_{r_j-m_j-1,0}(-1,0)=-m_j(m_j+1)$ by Lemma~\ref{lemma-delta-denom-1} (i) and (ii), $\delta_{r_j-m_j-1}(-1)-\delta_{m_j+1}(-1,x_j)=\delta_{m_j}(-1)-\delta_{m_j+1}(-1)=\frac{1}{m_j(m_j+1)}$ by Lemma~\ref{lemma-delta} (i), and $\dot\delta_{r_j-m_j}(-1)=\dot\delta_{m_j+1}(-1)=2m_j(m_j+1)$ by Lemma~\ref{lemma-delta-derivative} (iii).
\item Let $k'_j=k_j\in[\frac{r_j+2}{2},r_j-1]$. Then $M_{\Fil^r,j}$ is computed in Proposition~\ref{prop: Fil-Case-m+1-r-1}, where $\frac{1}{a_{r_j-k_j+1}}=\frac{(-1)^{r_j}}{(2k_j-r_j-1)\binom{k_j-1}{r_j-k_j}^2}$ by Lemma~\ref{lemma-delta-denom-1} (i) and (ii) and $\frac{p^{2k_j-r_j+1}x_j^2}{\Theta_j}=0$ in $\F$. If $(r_j;k_j)=(2m_j;m_j+1)$, then the form of $M_{\phi_r,j}$ is given by Proposition~\ref{prop-cM-Case-m+1} where $\delta_{r_j-m_j-1}(-1)-\delta_{m_j+1}(-1,x_j)=\delta_{m_j-1}(-1)-\delta_{m_j}(-1)$ in $\F$ by Lemma~\ref{lemma-delta} (iii). Otherwise, the form of $M_{\phi_r,j}$ is given by Proposition~\ref{prop-cM-Case-m+2-r-1}. Here, we have $P_{r_j-k_j,r_j-k_j}(-1,0)=\binom{k_j-1}{r_j-k_j}^{-1}$ and $P_{r_j-k_j,0}(-1,0)=(-1)^{r_j}(r_j-k_j)\binom{k_j}{r_j-k_j}$ by Lemma~\ref{lemma-delta-denom-1} (i) and (ii), $\delta_{r_j-k_j}(-1)-\delta_{r_j-k_j+1}(-1)=\frac{2k_j-r_j}{k_j(r_j-k_j)}$ by Lemma~\ref{lemma-delta} (i), $\dot\delta_{r_j-k_j}(-1)=2(r_j-k_j+1)k_j+r_j-1$ by Lemma~\ref{lemma-delta-derivative} (iii), and $b_{2m_j+1,m_j+2}(-1)=-\frac{2m_j^2+2m_j+1}{m_j^2(m_j+1)^2}$ by Lemma~\ref{lemma-b-2m+1-m+2}.
\item Let $k'_j=r_j$. Then $M_{\Fil^r,j}$ is computed in Proposition~\ref{prop: Fil-Case-r}. Also, $M_{\phi_r,j}$ is given by Proposition~\ref{prop-cM-Case-r}, where $b_{3,3}(-1)=-\frac{5}{4}$ by Lemma~\ref{lemma-b-2m+1-m+2} and $\dot\delta_1(-1)=r_j-1$ for $r_j\geq 3$ and $\dot\delta_2(-1,x)=\dot\delta_1(-1)=1$ for $r_j=2$ by Lemma~\ref{lemma-delta-derivative} (iii) and (iv).
\end{enumerate}
This completes the proof.
\end{proof}

\subsubsection{\textbf{Description of $\Fil^r\cM^\diamond e_j$ and $\phi_r^{\diamond,(j)}$ for exceptional cases}}
In this section of paragraph, we describe the matrices $M_{\Fil^r,j}$ and $M_{\phi_r,j}$ when either $r_j=1$ or $k'_j=\infty$.

\begin{prop}\label{prop: mod p reduction, r_j=1}
Assume that $r_j=1$. The matrices representing $\Fil^r\cM^\diamond e_j$ and $\phi_r^{\diamond,(j)}$ are given as follow:
\begin{enumerate}[leftmargin=*]
\item if $k'_j=\frac{1}{2}$ then
    \begin{align*}
    M_{\Fil^r,j}
        =
        \begin{bmatrix}
          1 & 0 \\
          \frac{x_j}{\Theta_j} & 1
        \end{bmatrix}
        \begin{bmatrix}
            u^{r-1} & 0 \\
            0 & u^r
        \end{bmatrix}
    \end{align*}
    and
    \begin{align*}
    M_{\phi_r,j}
        =
        \begin{bmatrix}
            \alpha_j (-1)^{r-1} & 0 \\
            0 & \beta_j (-1)^r
        \end{bmatrix};
    \end{align*}
\item if $k'_j=1$ then
    \begin{align*}
    M_{\Fil^r,j}
        =
        \begin{bmatrix}
          1 & \frac{\Theta_j}{x_j} \\
          0 & 1
        \end{bmatrix}
        \begin{bmatrix}
            u^r & 0 \\
            0 & u^{r-1}
        \end{bmatrix}
    \end{align*}
    and
    \begin{align*}
    M_{\phi_r,j}
        =
        \begin{bmatrix}
          1 & -\frac{\alpha_j}{\beta_j}\frac{px_j}{\Theta_j} \\
          0 & 1
        \end{bmatrix}
        \begin{bmatrix}
            0 & \alpha_j (-1)^{r-1} \\
            \beta_j (-1)^{r-1} & 0
        \end{bmatrix};
    \end{align*}
\item if $k'_j=\frac{3}{2}$ then
    \begin{align*}
    M_{\Fil^r,j}
        =
        \begin{bmatrix}
            u^r & 0 \\
            0 & u^{r-1}
        \end{bmatrix}
    \end{align*}
    and
    \begin{align*}
    M_{\phi_r,j}
        =
        \begin{bmatrix}
          1 & \frac{\alpha_j}{\beta_j}\frac{\Theta_j}{px_j} \\
          0 & 1
        \end{bmatrix}
        \begin{bmatrix}
            \alpha_j (-1)^r & 0 \\
            0 & \beta_j (-1)^{r-1}
        \end{bmatrix}.
    \end{align*}
\end{enumerate}
\end{prop}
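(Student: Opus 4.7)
The proof will follow the same template as those of Proposition~\ref{prop: mod p reduction, r_j>=2, k'_j half integer} and Proposition~\ref{prop: mod p reduction, r_j>=2, k'_j integer}: one reads the matrices directly off the propositions in \S\ref{subsec: exceptional cases} treating the exceptional case $r'=1$, and then applies the bookkeeping from \S\ref{subsec: r' leq r} that accounts for the shift $r'\leq r$, namely multiplication by $u^{r-r_j}$ on each generator of $\Fil^r\fM'$ and multiplication by $(-1)^{r-r_j}$ on each image of $\phi'_r$.

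Concretely, for $k'_j=\tfrac{1}{2}$ I will invoke \eqref{eq: generators of the filtration, r=1} together with Proposition~\ref{prop: case phi 0.5, r=1}: the $S_\cO'$-generators of $\Fil^{1}_{x,\Theta}\fM'$ modulo $\Fil^pS'_\cO\cdot\fM'$ are $F'_{0,0}=E'_1+\tfrac{x_j}{\Theta_j}E'_2$ and $vE'_2$, while the images of $\phi'_1$ on their mod-$p$ reductions are $\Lambda_j\barE'_1$ and $-\Omega_j\Theta_j\barE'_2$. Setting $\alpha_j=\Lambda_j$ and $\beta_j=\Omega_j\Theta_j$ (the appropriate specialization to $r_j=1$ of the definitions of $\alpha_j,\beta_j$ preceding Proposition~\ref{prop: mod p reduction, r_j>=2, k'_j half integer}) and multiplying by $u^{r-1}$ and $(-1)^{r-1}$ respectively yields the matrices of~(i). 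Cases~(ii) and~(iii), corresponding to $k'_j=1$ and $k'_j=\tfrac{3}{2}$, are handled in parallel using Proposition~\ref{prop: case phi 1, r=1}, with filtration generators $vE'_1$ and $F'_{1,0}=\tfrac{\Theta_j}{x_j}E'_1+E'_2$, and the explicit formulas for $\phi'_1(\barF'_i)$ given there; the suitable identifications are $\alpha_j=\tfrac{\Lambda_j\Theta_j}{x_j}$, $\beta_j=\Omega_jx_j$ for $k'_j=1$, and $\alpha_j=p\Lambda_j$, $\beta_j=\tfrac{\Omega_j\Theta_j}{p}$ for $k'_j=\tfrac{3}{2}$. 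Some entries of $M_{\Fil^r,j}$ displayed as nonzero become zero after mod-$p$ reduction under the relevant inequalities (e.g.\ the $\Theta_j/x_j$ entry vanishes in~(iii) since $v_p(\Theta_j)\geq v_p(x_j)+1$); this consistency check is immediate from the inequality column of Table~\ref{tab:eqn-ineq-revised}.

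There is essentially no analytic content beyond the references: the simplification relative to the $r_j\geq 2$ propositions comes from the convention $\delta_1(T)=0$ (and $\delta_0(T,x)=-x$) fixed at the start of \S\ref{subsec: exceptional cases}, so the $\dot\delta(-1)u^{p-1}$ corrections and the $\delta_\ell(-1)-\delta_{\ell\pm 1}(-1)$ residues that appeared in Propositions~\ref{prop: mod p reduction, r_j>=2, k'_j half integer} and~\ref{prop: mod p reduction, r_j>=2, k'_j integer} both vanish when $r_j=1$. The only real work will be the careful tracking of signs and factors of $p$ when transcribing from Propositions~\ref{prop: case phi 0.5, r=1} and~\ref{prop: case phi 1, r=1}; the lone potential pitfall is the uniform $(-1)^{r-r_j}=(-1)^{r-1}$ sign contributed by the passage from $r'=r_j$ to general $r$, which must be threaded consistently through every entry of $M_{\phi_r,j}$.
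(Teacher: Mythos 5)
Your proof is correct and follows the same route as the paper's, which simply cites Propositions~\ref{prop: case phi 0.5, r=1} and~\ref{prop: case phi 1, r=1} together with the $u^{r-r_j}$ and $(-1)^{r-r_j}$ normalizations from \S\ref{subsec: r' leq r}. Your explicit verification that the specializations of $(\alpha_j,\beta_j)$ (namely $(\Lambda_j,\Omega_j\Theta_j)$, $(\Lambda_j\Theta_j/x_j,\Omega_jx_j)$, and $(p\Lambda_j,\Omega_j\Theta_j/p)$) and the vanishing of $\Theta_j/x_j$ for $k'_j=\tfrac{3}{2}$ match what the general definitions give at $r_j=1$ is a correct and slightly more detailed rendering of what the paper leaves implicit.
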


\begin{proof}
By the same reason as Proposition~\ref{prop: mod p reduction, r_j>=2, k'_j half integer}, we get the description as in the statement. More precisely,
\begin{enumerate}[leftmargin=*]
  \item if $k'_j=\frac{1}{2}$, then $M_{\phi_r,j}$ is given by Proposition~\ref{prop: case phi 0.5, r=1};
  \item if $k'_j=1$ or $\frac{3}{2}$, then $M_{\phi_r,j}$ is given by Proposition~\ref{prop: case phi 1, r=1}, and for $M_{\Fil^r,j}$ with $k'_j=\frac{3}{2}$, $\frac{\Theta_j}{x_j}=0$ in $\F$.
\end{enumerate}
This completes the proof.
\end{proof}

\begin{prop}\label{prop: mod p reduction, k'_j=infty}
Assume that $r_j\geq 1$ and $k'_j=\infty$. The matrices representing $\Fil^r\cM^\diamond e_j$ and $\phi_r^{\diamond,(j)}$ are given by
\begin{align*}
M_{\Fil^r,j}
    =
    \begin{bmatrix}
        u^r & 0 \\
        0 & u^{r-r_j}
    \end{bmatrix}
\end{align*}
and
\begin{align*}
M_{\phi_r,j}
    =
    \begin{bmatrix}
        \alpha_j (-1)^r & 0 \\
        0 & \beta_j (-1)^{r-r_j}
    \end{bmatrix}.
\end{align*}
\end{prop}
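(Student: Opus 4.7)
The plan is to specialize the framework developed in \S\ref{sec: Frame works} and \S\ref{sec: pseudo strongly divisible modules} to the exceptional case $k'_j=\infty$, via the canonical identification $\psi^{(j)}:\fM'\xrightarrow{\sim}\fM^{\diamond}e_j$ sending $(E'_1,E'_2)$ to $(E_1e_j,E_2e_j)$. Under this identification, the quadruple $(\Lambda,\Omega,\Theta,x)$ in the avatar corresponds to $(\Lambda_j,\Omega_j,\Theta_j,x_j)$, and the filtration $\Fil^{r;r_j}_\infty\fM'$ (see \eqref{eq: definition of FilrM', infty} and \eqref{eq: filtration when r' leq r}) pulls back to the local piece $\Fil^r\fM^\diamond e_j$. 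Since $k'_j=\infty$, by definition we are in $\mathbf{Case}~(\infty)$, and the equations and inequalities of $\mathbf{Case}_\phi(\vr,\vk')$ at the $j$-th entry (Table~\ref{tab:eqn-ineq-revised}, last row) coincide exactly with conditions (i) and (ii) of Proposition~\ref{prop: case infty}, so that $(\Lambda_j,\Omega_j,\Theta_j,x_j)$ is good for $\mathbf{Case}_\phi(\infty)$ with $\Delta^{(j)}(T)=\delta_0(T,x_j)$.

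First, I would read off $\Fil^r\cM^\diamond e_j$ from \eqref{eq: generators of the filtration, infty}: the $S'_\cO$-generators are $v^rE'_1$ and $v^{r-r_j}E'_2$ (after multiplying by $v^{r-r_j}$ as in \S\ref{subsec: r' leq r} to accommodate $r'=r_j\leq r$), which immediately yields the diagonal matrix $M_{\Fil^r,j}=\mathrm{diag}(u^r,u^{r-r_j})$ after reducing mod $(\varpi,\Fil^pS_\cO)$. Next, I would invoke Proposition~\ref{prop: case infty}, which computes $\phi'_r(\overline{F}'_1)=(-1)^{r_j}p\Lambda\overline{E}'_1$ and $\phi'_r(\overline{F}'_2)=\frac{\Omega\Theta}{p^{r_j}}\overline{E}'_2$ in the weight-$r_j$ setting. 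Passing to weight $r$, the rule from \S\ref{subsec: r' leq r} multiplies each image by $(-1)^{r-r_j}$ (coming from the reduction $c^{r-r_j}\equiv(-1)^{r-r_j}\pmod{(\varpi,\Fil^pS_\cO)}$ after extracting the $p^{r-r_j}$ factor from $\phi(v^{r-r_j})=(pc)^{r-r_j}$). Using the definitions $\alpha_j=p\Lambda_j$ and $\beta_j=\Omega_j\Theta_j/p^{r_j}$ for $k'_j=\infty$, the images become $\phi^{\diamond,(j)}_r(v^rE_1e_j)=\alpha_j(-1)^r\,\overline{E}_1e_{j+1}$ and $\phi^{\diamond,(j)}_r(v^{r-r_j}E_2e_j)=\beta_j(-1)^{r-r_j}\,\overline{E}_2e_{j+1}$, giving the desired diagonal form of $M_{\phi_r,j}$.

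There is essentially no obstacle here: the proposition is a direct bookkeeping translation of Proposition~\ref{prop: case infty} through the isomorphism $\psi^{(j)}$ and the weight-raising convention of \S\ref{subsec: r' leq r}. The only point requiring a line of care is the sign $(-1)^{r-r_j}$, which must be tracked through the identity $\phi(v^{r-r_j})=p^{r-r_j}c^{r-r_j}$ and the congruence $c\equiv-1\pmod{(\varpi,\Fil^pS_\cO)}$ (using $c-(\gamma-1)\equiv pu^{p-1}$ and $\gamma\equiv0$ mod $\Fil^pS_\cO$); together with the overall $(-1)^{r_j}$ already present in Proposition~\ref{prop: case infty}, this produces the claimed sign $(-1)^{r}$ on the $(1,1)$ entry and $(-1)^{r-r_j}$ on the $(2,2)$ entry.
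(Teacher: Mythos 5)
Your proof is correct and takes essentially the same approach as the paper: specialize via $\psi^{(j)}$ to the avatar $\fM'$, invoke Proposition~\ref{prop: case infty} for the weight-$r_j$ computation, and then apply the weight-raising convention of \S\ref{subsec: r' leq r} (multiply the generators by $v^{r-r_j}$ and the $\phi_r$-images by $(-1)^{r-r_j}$, the sign coming from $c\equiv -1 \pmod{(\varpi,\Fil^pS_\cO)}$). The paper's own proof is terser — it merely cites Proposition~\ref{prop: case infty} and refers back to the convention explained in the proof of Proposition~\ref{prop: mod p reduction, r_j>=2, k'_j half integer} — whereas you spell out the sign bookkeeping $(-1)^{r_j}\cdot(-1)^{r-r_j}=(-1)^r$ for the $(1,1)$-entry, which is a welcome clarification but not a different argument.
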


\begin{proof}
By the same reason as Proposition~\ref{prop: mod p reduction, r_j>=2, k'_j half integer}, we get the description as in the statement. More precisely, $M_{\Fil^r,j}$ and $M_{\phi_r,j}$ is given by Proposition~\ref{prop: case infty}.
\end{proof}

\smallskip

\section{Applications}\label{sec: examples}
In this section, we apply our main results to give mod-$p$ reduction of some $2$-dimensional semi-stable representations. We first show that we can recover the known results of the mod-$p$ reduction in \cite{BM,GP,LP}. In fact, our method provides more strongly divisible modules when $r$ is even, which are not discovered before.  We also use the main results to compute the mod-$p$ reduction of the semi-stable non-crystalline representations when $\vr=(2,2)$ and $\vr=(1,5)$. We note that we do not try to compute the unramified characters due to the size of the paper.

Recall that if $v_p(\vx)\in R'(\vr;\vk')$ then the association $\vx\mapsto \vL$ depends on $\vk'$ (see \eqref{eq: definition of support}). In other words, $\vL$ is determined by the equation \eqref{eq: definition of vx}, and $\vDel(-1)$ depends on $l(\vr;\vk')$ (and so on $\vk'$) as explicitly described in \eqref{eq: delta(-1)}. Hence, we will often write $\vx^{l(\vr;\vk')}$ for $\vx$ in $\mathbf{Case}_\phi(\vr;\vk')$ to distinguish $\vx$ in other cases, and its $j$-th coordinate will be written as $x^{l(\vr;\vk')}_j$. For instance, if $f=2$, $\vr=(2,2)$, and $\vk'=(\frac{1}{2},1)$ then we have $l(\vr;\vk')=(1,1)$ and so we write $\vx^{(1,1)}=(x^{(1,1)}_0,x^{(1,1)}_1)$ for $\vx=(x_0,x_1)$ in this case. We also recall that we have defined the monodromy type of the Breuil modules in Definition~\ref{defi: monodromy type}.

For each $\mathbf{Case}_\phi(\vr;\vk')$ in the following examples, we denote $\cM$ be the Breuil module obtained from $\fM=\fM_{(\vth_1,\vth_2,\vrho,\vDel)}$, which is given by Theorem~\ref{theo: main 2}. Also, we denote $\rhobar=\Tst^\ast(\cM)$ for each example.

\subsection{Recovering the case $f=1$}\label{subsec: f=1}
For the case $f=1$, it is not difficult to see that Theorem~\ref{theo: main} recovers the results on strongly divisible modules in \cite{BM,GP,LP}, which is the goal of this subsection. In fact, we even found strongly divisible modules that were not discovered in \cite{GP}. As $f=1$, we write $(\Lambda,\Theta,x)$ for $(\Lambda_0,\Theta_0,x_0)$ and $(r,k',l)$ for $(r_0,k'_0,l(r_0;k'_0))$. We also write $\Delta$ for $\Delta^{(0)}$ and $\mathrm{MT}$ for $\mathrm{MT}_0$. 

By \eqref{eq: From the setting 2}, we have $\Lambda=\lambda$ with $v_p(\lambda)=\frac{r-1}{2}$. Moreover, if $(r,l)\neq(2m,m+1)$ we have $x=\frac{1}{p}(p\fL-\fL-\delta_l(-1))$, and if $(r,l)=(2m,m+1)$ then $x$ is a solution of the equation $x=\frac{1}{p}(p\fL-\fL-\delta_{m+1}(-1,x))$, where $\delta_{l}(-1,x)$ is explicitly written in \eqref{eq: delta(-1)}.

\subsubsection{\textbf{The case $r=1$}}
If $r=1$ then it is easy to see from Proposition~\ref{prop: system of equations and inequalities sharpened} that only the cases $k'=\frac{1}{2}$ and $k'=1$ survive as in Table~\ref{tab: r=1}.

\begin{small}
\begin{table}[htbp]
  \centering
  \begin{tabular}{|c||c|c|c|}
  \hline
    $k'$ & $v_p(\Theta)$ & $v_p(x)$ & $\Delta(-1)$ \\ \hline\hline
    $k'=\frac{1}{2}$ & $(-\infty,\min\{0,v_p(x)\}]$ & $(-\infty,\infty]$ & $\delta_1(-1)=0$ \\\hline
    $k'=1$ & $v_p(x)$ & $(-\infty,0]$ & $\delta_1(-1)=0$ \\\hline
  \end{tabular}
  \caption{For $r=1$}    \label{tab: r=1}
\end{table}
\end{small}
We let $\xi:=\frac{1}{p}(p\fL-\fL)$, which is the same as $x^{(1)}$. By looking at the results in Proposition~\ref{prop: case phi 0.5, r=1} and Proposition~\ref{prop: case phi 1, r=1}, the case $k'=1$ is contained in the case $k'=\frac{1}{2}$. Also, by looking at the Breuil modules in Proposition~\ref{prop: mod p reduction, r_j=1}, we have
$\rhobar|_{I_{\Qp}}\cong
\begin{tiny}
\begin{bmatrix}
    \omega & * \\ 0 & 1
\end{bmatrix}
\end{tiny}$ 
and the extension classes are given by the following:
\begin{itemize}[leftmargin=*]
\item Assume $v_p(\xi)\geq 0$ in $\mbox{Case}_\phi(1;\frac{1}{2})$, so that $v_p(\Theta)\leq 0$.
\begin{itemize}[leftmargin=*]
    \item if we choose $v_p(\Theta)<0$, then $\rhobar$ is split.
    \item if we choose $v_p(\Theta)=0$, then $\rhobar$ is non-split with $\mathrm{MT}=1$. Note that if $v_p(\Theta)=0=v_p(\xi)$, then this case coincides with $\mbox{Case}_\phi(1;1)$.
\end{itemize}
\item Assume $v_p(\xi)<0$ in $\mbox{Case}_\phi(1;\frac{1}{2})$, so that $v_p(\Theta)\leq v_p(\xi)$.
\begin{itemize}[leftmargin=*]
    \item if we choose $v_p(\Theta)<v_p(\xi)$, then $\rhobar$ is split.
    \item if we choose $v_p(\Theta)=v_p(\xi)$, then $\rhobar$ is non-split with the monodromy type is $\mathrm{MT}=0$. Note that this case coincides with $\mbox{Case}_\phi(1;1)$.
\end{itemize}
\end{itemize}
Finally, we point out that $v_p(\xi)\geq 0$ if and only if $v_p(\fL)\geq 1$, and we summarize the non-split mod-$p$ reduction when $r=1$ in the table, Table~\ref{tab: r=1, reduction}.
\begin{small}
\begin{table}
  \begin{tabular}{|c||c|c|}
  \hline
      $\xi$ & $\rhobar|_{I_{\Qp}}$ & $\mathrm{MT}$ \\\hline\hline
      $v_p(\xi)\geq 0$
      & $
      \begin{bmatrix}
          \omega & * \\ 0 & 1
      \end{bmatrix}$
      & $1$ \\\hline
      $v_p(\xi)<0$
      & $
      \begin{bmatrix}
          \omega & * \\ 0 & 1
      \end{bmatrix}$
      & $0$ \\\hline
  \end{tabular}
  \caption{Non-split mod-$p$ reduction when $r=1$.}    \label{tab: r=1, reduction}
\end{table}
\end{small}

\subsubsection{\textbf{The case $r=2$}}
If $r=2$ then it is also easy to see from Proposition~\ref{prop: system of equations and inequalities sharpened} that only the cases in the table, Table~\ref{tab: r=2}, survives.
\begin{small}
\begin{table}[htbp]
  \begin{tabular}{|c||c|c|c|}
  \hline
    $k'$ & $v_p(\Theta)$ & $v_p(x)$ & $\Delta(-1)$ \\ \hline\hline
    $k'=1$ & $-\frac{1}{2}$ & $[-\frac{1}{2},\infty]$ & $\delta_1(-1)=-1$ \\\hline
    $k'=\frac{3}{2}$ & $[-\frac{1}{2},0]$ & $-\frac{1}{2}$ & $\delta_2(-1,x)=-1+\frac{1}{x+1}$ \\\hline
    $k'=2$ & $\frac{1}{2}+v_p(x)$ & $(-\infty,-\frac{1}{2}]$ & $\delta_2(-1,x)=-1+\frac{1}{x+1}$ \\ \hline
  \end{tabular}
  \caption{For $r=2$}    \label{tab: r=2}
\end{table}
\end{small}

We set $\xi:=\frac{1}{p}(p\fL-\fL+1)$, which is the same as $x^{(1)}$. We also set $\xi':=x^{(2)}$, which satisfies the equation $p\xi'=p\fL-\fL+1-\frac{1}{\xi'+1}=p\xi-\frac{1}{\xi'+1}$. Note that the existence of $\xi'$ depends on $\fL$ while $\fL$ is uniquely determined by $\xi'$.
By looking at the Breuil modules in Proposition~\ref{prop: mod p reduction, r_j>=2, k'_j half integer} and Proposition~\ref{prop: mod p reduction, r_j>=2, k'_j integer}, we compute $\rhobar|_{I_{\Qp}}$ as follow:
\begin{itemize}[leftmargin=*]
\item Assume $v_p(\xi)\geq -\frac{1}{2}$.
\begin{itemize}[leftmargin=*]
    \item We first consider $\mathbf{Case}_\phi(2;1)$. In this case it is easy to see that $\rhobar|_{I_{\Qp}}^{ss}\cong\omega\oplus\omega$ with $\mathrm{MT}=0$, by looking at the Breuil modules. By tedious computation on $\Mat(\phi_2)$, one can see that $\rhobar$ is non-split if and only if $v_p(p\xi^2-4)>0$.
    \item We now consider $\mathbf{Case}_\phi(2;\frac{3}{2})$. \emph{Note that the strongly divisible modules in this case was not discovered before.} Assuming $E$ is large enough, one can see that there exists $\xi'$ with $v_p(\xi')=-\frac{1}{2}$ such that $p\xi'=p\xi-\frac{1}{\xi'+1}$. It is also easy to see that $\rhobar|_{I_{\Qp}}^{ss}\cong \omega\oplus\omega$, by looking at the Breuil modules, and we can further sharpen the extension class as follow:
    \begin{itemize}[leftmargin=*]
        \item if $v_p(\Theta)=-\frac{1}{2}$, then we have $\mathrm{MT}=0$, and $\rhobar$ is non-split if and only if $v_p(p\xi^2-4)>0$. Note that this case coincides with $\mathbf{Case}_\phi(2;1)$.
        \item if $v_p(\Theta)\in (-\frac{1}{2},0)$, we have $\mathrm{MT}=0$ and one can see that $\rhobar$ is split.
        \item if $v_p(\Theta)=0$, we have $\mathrm{MT}=1$ and so $\rhobar$ is non-split. Note that this case coincides with $\mathbf{Case}_\phi(2;2)$ when $v_p(\xi')=-\frac{1}{2}$.
    \end{itemize}
\end{itemize}
\item Assume $v_p(\xi)<-\frac{1}{2}$. In this case we consider $\mathbf{Case}_\phi(2;2)$, and one can prove that there exists $\xi'\in E$ with $v_p(\xi)=v_p(\xi')$ satisfying $p\xi'=p\xi-\frac{1}{\xi'+1}$. It is easy to see that $\rhobar$ is irreducible satisfying $\rhobar|_{I_{\Qp}}\cong\omega_2^2\oplus\omega_2^{2p}$.
\end{itemize}
Finally, we point out that $v_p(\xi)\geq -\frac{1}{2}$ if and only if $v_p(\fL-1)\geq \frac{1}{2}$, and we summarize the non-split mod-$p$ reduction when $r=2$ in the table, Table~\ref{tab: r=2, reduction}.

\begin{small}
\begin{table}[htbp]
  \begin{tabular}{|c||c|c|}
  \hline
    $\xi$ & $\rhobar|_{I_{\Qp}}$ & $\mathrm{MT}$ \\\hline\hline
    $v_p(\xi)\geq -\frac{1}{2}$
    & $
    \begin{bmatrix}
        \omega & * \\ 0 & \omega
    \end{bmatrix}$
    & $1$ \\\hline
    \begin{tabular}{c}
        $v_p(\xi)\geq -\frac{1}{2}$, $v_p(p\xi^2-4)>0$
    \end{tabular}
    & $
    \begin{bmatrix}
        \omega & * \\ 0 & \omega
    \end{bmatrix}$
    & $0$ \\\hline
    $v_p(\xi)<-\frac{1}{2}$
    & $\omega_2^2\oplus\omega_2^{2p}$
    & $0$ \\\hline
  \end{tabular}
  \caption{Non-split mod-$p$ reduction when $r=2$.}    \label{tab: r=2, reduction}
\end{table}
\end{small}

\subsubsection{\textbf{The case $r=2m+1\geq 3$}}
We now consider the case $r=2m+1\geq 3$. It is also easy to see from Proposition~\ref{prop: system of equations and inequalities sharpened} that only the cases in the following table, Table~\ref{tab: r=2m+1}, survive.
\begin{small}
\begin{table}[htbp]
\centering
\begin{tabular}{|c||c|c|c|}
\hline
  $k'$
   & $v_p(\Theta)$ & $v_p(x)$ & $\Delta(-1)$ \\ \hline\hline
  $k'=m$
   & $-1$ & $[-1,\infty]$ & $\delta_m(-1)=-H_{m-1}-H_{m+1}$ \\\hline
  $k'=m+\frac{1}{2}$
   & $[-1,\min\{0,v_p(x)\}]$ & $[-1,\infty]$ & $\delta_{m+1}(-1)=-2H_m$ \\\hline
  $m+1\leq k'=k\leq r-1$
   & $k-m-1+v_p(x)$ & $[m-k,m-k+1]$ & $\delta_{r-k+1}(-1)=-H_{k-1}-H_{r-k}$\\\hline
  $k'=r$
   & $m+v_p(x)$ & $(-\infty,-m]$ & $\delta_1=-H_{r-1}$\\ \hline
\end{tabular}
\caption{For $r=2m+1\geq 3$}     \label{tab: r=2m+1}
\end{table}
\end{small}

We set $\xi:=\frac{1}{p}(p\fL-\fL+H_{m-1}+H_{m+1})$ and $\xi':=\frac{1}{p}(p\fL-\fL+2H_m)$. It is immediate that we have $\xi=x^{(m)}$ and $\xi'=x^{(m+1)}$. By looking at the Breuil modules in the corresponding propositions in Proposition~\ref{prop: mod p reduction, r_j>=2, k'_j half integer} and Proposition~\ref{prop: mod p reduction, r_j>=2, k'_j integer}, we compute $\rhobar|_{I_{\Qp}}$ as follow:
\begin{itemize}[leftmargin=*]
\item Assume $v_p(\xi)>-1$. In this case we consider $\mathbf{Case}_\phi(r;m+\frac{1}{2})$. Note that we have $v_p(\xi')=-1$ as $v_p(\xi-\xi')=-1$. It is easy to see that $\rhobar|_{I_{\Qp}}\cong\omega_2^{m+1+mp}\oplus\omega_2^{m+(m+1)p}$, by looking at the Breuil modules. Note that this case coincides with $\mathbf{Case}_\phi(r;m)$ when $v_p(x^{(m)})>-1$.
\item Assume $v_p(\xi)=-1$. In this case there are two non-homothetic lattices whose mod-$p$ reductions are non-split and reducible.
\begin{itemize}[leftmargin=*]
    \item We first consider $\mathbf{Case}_\phi(r;m+2)$. In this case we have $\xi=x^{(m)}$, and it is easy to see that $\rhobar|_{I_{\Qp}}\cong
\begin{tiny}\begin{bmatrix}
    \omega^m & * \\ 0 & \omega^{m+1}
\end{bmatrix}\end{tiny}$ with $\mathrm{MT}=1$.
    \item We also consider $\mathbf{Case}_\phi(r;m+\frac{1}{2})$. Note that $\xi'=x^{(m+1)}$ and $v_p(\xi')\geq -1$ as $v_p(\xi-\xi')=-1$. In this case it is easy to see that $\rhobar|_{I_{\Qp}}\cong
        \begin{tiny}\begin{bmatrix}
         \omega^{m+1} & * \\ 0 & \omega^{m}
        \end{bmatrix}\end{tiny}$, and we can further sharpen the extension classes as follow:
       \begin{itemize}[leftmargin=*]
       \item if $-1\leq v_p(\Theta)= v_p(\xi')<0$ then $\rhobar$ is non-split with $\mathrm{MT}=0$. Note that this case coincides with $\mathbf{Case}_\phi(r;m+1)$, and if $v_p(\xi')=-1$ this case also coincides with $\mathbf{Case}_\phi(r;m)$.
       \item if $v_p(\Theta)=0\leq v_p(\xi')$ then we have $\mathrm{MT}=1$.
       \item otherwise, $\rhobar|_{I_{\Q_p}}\cong \omega^{m+1}\oplus\omega^m$. In particular, it is split.
       \end{itemize}
\end{itemize}
\item Assume $v_p(\xi)\in (m-k,m-k+1)$ for $m+2\leq k\leq r-1$. Equivalently, we have $v_p(x^{(r-k+1)})\in (m-k,m-k+1)$. In this case, we consider $\mathbf{Case}_\phi(r;k)$, and it is easy to see that $\rhobar|_{I_{\Q_p}}\cong \omega_2^{k+(r-k)p}\oplus\omega_2^{(r-k)+kp}$, by looking at the Breuil modules.
\item Assume $v_p(\xi)=m-k$ for $m+2\leq k\leq r-1$. Equivalently, we have $v_p(x^{(r-k+1)})=v_p(x^{(r-k)})=m-k$. In this case there are two non-homothetic lattices whose mod-$p$ reductions are non-split and reducible.
\begin{itemize}[leftmargin=*]
    \item We first consider $\mathbf{Case}_\phi(r;k)$. In this case it is easy to see that $\rhobar$ is non-split and $\rhobar|_{I_{\Qp}}\cong
    \begin{tiny}\begin{bmatrix}
    \omega^k & * \\ 0 & \omega^{r-k}
    \end{bmatrix}\end{tiny}$ with $\mathrm{MT}=0$.
    \item We also consider $\mathbf{Case}_\phi(r;k+1)$. In this case we have $\rhobar|_{I_{\Qp}}\cong
    \begin{tiny}\begin{bmatrix}
    \omega^{r-k} & * \\ 0 & \omega^{k}
    \end{bmatrix}\end{tiny}$ with $\mathrm{MT}=1$.
\end{itemize}
\item Assume $v_p(\xi)<-m$. Equivalently, we have $v_p(x^{(1)})<-m$. From $\mathbf{Case}_\phi(r;r)$, it is easy to see that $\rhobar|_{I_{\Q_p}}\cong \omega_2^r\oplus\omega_2^{rp}$, by looking at the Breuil modules.
\end{itemize}
Finally, we summarize the non-split mod-$p$ reduction when $r=2m+1\geq 3$ in the table, Table~\ref{tab: r=2m+1, reduction}.
\begin{small}
\begin{table}[htbp]
\centering
\begin{tabular}{|c||c|c|}
\hline
    $\xi$ & $\rhobar|_{I_{\Qp}}$ & $\mathrm{MT}$ \\\hline\hline
    $v_p(\xi)>-1$
    & $\omega_2^{m+1+mp}\oplus\omega_2^{m+(m+1)p}$
    & $0$ \\\hline
    \multirow{2}{*}[-1em]{$v_p(\xi)=-1$}
    & $
    \begin{bmatrix}
        \omega^m & * \\ 0 & \omega^{m+1}
    \end{bmatrix}$
    & $1$ \\\cline{2-3}
    & $
    \begin{bmatrix}
        \omega^{m+1} & * \\ 0 & \omega^m
    \end{bmatrix}$
    & $
    \begin{cases}
        0 &\mbox{if } -1\leq v_p(\xi')<0;\\
        1 &\mbox{if } v_p(\xi')\geq 0
    \end{cases}$ \\\hline
    \begin{tabular}{c}
        $m-k<v_p(\xi)<m-k+1$\\ $(m+2\leq k\leq r-1)$
    \end{tabular}
    & $\omega_2^{k+(r-k)p}\oplus\omega_2^{r-k+kp}$
    & $0$ \\\hline
    \multirow{2}{*}[-0.5em]{
    \begin{tabular}{c}
        $v_p(\xi)=m-k$ \\ $(m+2\leq k\leq r-1)$
    \end{tabular}}
    & $
    \begin{bmatrix}
        \omega^k & * \\ 0 & \omega^{r-k}
    \end{bmatrix}$
    & $0$ \\\cline{2-3}
    & $
    \begin{bmatrix}
        \omega^{r-k} & * \\ 0 & \omega^k
    \end{bmatrix}$
    & $1$ \\\hline
    $v_p(\xi)<-m$
    & $\omega_2^{r}\oplus\omega_2^{rp}$
    & $0$ \\\hline
\end{tabular}
\caption{Non-split mod-$p$ reduction when $r=2m+1\geq 3$}     \label{tab: r=2m+1, reduction}
\end{table}
\end{small}

\subsubsection{\textbf{The case $r=2m> 2$}}
Finally, we treat the case $r=2m\geq 4$. It is again easy to see from Proposition~\ref{prop: system of equations and inequalities sharpened} that only the cases in the table, Table~\ref{tab: r=2m}, survive.

\begin{small}
\begin{table}[htbp]
\centering
\begin{tabular}{|c||c|c|c|}
\hline
  $k'$
    & $v_p(\Theta)$ & $v_p(x)$ & $\Delta(-1)$ \\\hline\hline
  $k'=m$
    & $-\frac{1}{2}$ & $[-\frac{1}{2},\infty]$
    & $\delta_m(-1)=-H_{m-1}-H_m$ \\\hline
  $k'=m+\frac{1}{2}$
    & $[-\frac{1}{2},0]$ & $-\frac{1}{2}$
    & \multirow{2}{*}{
    \begin{tabular}{r}
        $\delta_{m+1}(-1)=-H_{m-1}-H_m$ \\ $+\frac{1}{m^2(x+H_m+H_{m-1})}$
    \end{tabular}} \\[0.7ex]\cline{1-3}
  $k'=m+1$
    & $\frac{1}{2}+v_p(x)$ & $[-\frac{3}{2},-\frac{1}{2}]$ &  \\[0.7ex]\hline
  \begin{tabular}{c}
    $k'=k+\frac{1}{2}$ \\ $(m+1\leq k\leq r-1)$
  \end{tabular}
    & $[-1,0]$ & $m-k-\frac{1}{2}$
    & $\delta_{r-k}(-1)=-H_k-H_{r-k-1}$ \\\hline
  \begin{tabular}{c}
    $k'=k$\\$(m+2\leq k\leq r-1)$
  \end{tabular}
    & \begin{tabular}{c} $k-m-\frac{1}{2}$\\ $\,\,\,+v_p(x)$\end{tabular} & $[m-k-\frac{1}{2},m-k+\frac{1}{2}]$
    & $\delta_{r-k+1}(-1)=-H_{k-1}-H_{r-k}$ \\\hline
  $k'=r$
    & $m-\frac{1}{2}+v_p(x)$ & $(-\infty,-m+\frac{1}{2}]$
    & $\delta_1(-1)=-H_{r-1}$
   \\ \hline
\end{tabular}
\caption{For $r=2m\geq 4$}     \label{tab: r=2m}
\end{table}
\end{small}

We set $\xi:=\frac{1}{p}(p\fL-\fL+H_{m-1}+H_m)$, which is the same as $x^{(m)}$. We also set $\xi':=x^{(m+1)}$, which satisfies the equation $p\xi'=p\xi-\frac{1}{m^2(\xi'+H_m+H_{m-1})}$. Note that the existence of $\xi'$ depends on $\fL$, while $\fL$ is uniquely determined by $\xi'$. By looking at the Breuil modules in the corresponding propositions in Proposition~\ref{prop: mod p reduction, r_j>=2, k'_j half integer} and Proposition~\ref{prop: mod p reduction, r_j>=2, k'_j integer}, we compute $\rhobar|_{I_{\Qp}}$ as follow:
\begin{itemize}[leftmargin=*]
\item Assume $v_p(\xi)\geq -\frac{1}{2}$.
\begin{itemize}[leftmargin=*]
    \item We first consider $\mathbf{Case}_\phi(r;m)$. In this case it is easy to see that $\rhobar|_{I_{\Qp}}^{ss}\cong \omega^m\oplus\omega^m$ with $\mathrm{MT}=0$, by looking at the Breuil modules. By tedious computation on $\Mat(\phi_r)$, one can see that this $\rhobar$ is non-split if and only if $v_p(p\xi^2-\frac{4}{m^2})>0$.
    \item We now consider $\mathbf{Case}_\phi(r;m+\frac{1}{2})$. \emph{Note that the strongly divisible modules in this case was not discovered before.} Assuming $E$ is large enough, one can see that there exists $\xi'$ with $v_p(\xi')=-
    \frac{1}{2}$ such that $p\xi'=p\xi-\frac{1}{m^2(\xi'+H_m+H_{m-1})}$. It is easy to see that $\rhobar|_{I_{\Qp}}^{ss}\cong\omega^m\oplus\omega^m$, and we further sharpen the extension classes as follow:
    \begin{itemize}[leftmargin=*]
        \item if $v_p(\Theta)=-\frac{1}{2}$, then we have $\mathrm{MT}=0$, and $\rhobar$ is non-split if and only if $v_p(p\xi^2-\frac{4}{m^2})>0$. Note that this case coincides with $\mathbf{Case}_\phi(r;m)$.
        \item if $v_p(\Theta)=0$, then we have $\mathrm{MT}=1$. Note that this case coincides with $\mathbf{Case}_\phi(r;m+1)$ when $v_p(x^{(m+1)})=-\frac{1}{2}$.
        \item if $v_p(\Theta)\in(-\frac{1}{2},0)$, then $\rhobar|_{I_{\Qp}}\cong\omega^m\oplus\omega^m$. In particular, it is split.
    \end{itemize}
\end{itemize}
\item Assume $v_p(\xi)\in(-\frac{3}{2},-\frac{1}{2})$. We consider $\mathbf{Case}_\phi(r;m+1)$, and one can check that there exists $\xi'$ with $v_p(\xi)=v_p(\xi')$ satisfying $p\xi'=p\xi-\frac{1}{m^2(\xi'+H_m+H_{m-1})}$. In this case it is easy to see that $\rhobar|_{I_{\Qp}}\cong\omega_2^{m+1+(m-1)p}\oplus\omega_2^{m-1+(m+1)p}$, by looking at the Breuil modules.
\item Assume $v_p(\xi)=-\frac{3}{2}$. Equivalently, we have $v_p(x^{(m-1)})=-\frac{3}{2}$. In this case, we consider $\mathbf{Case}_\phi(r;m+\frac{3}{2})$, and there are two non-homothetic lattices whose mod-$p$ reductions are non-split and reducible.
\begin{itemize}[leftmargin=*]
    \item If $v_p(\Theta)=-1$, then it is easy to see that $\rhobar$ is non-split and $\rhobar|_{I_{\Qp}}\cong
    \begin{tiny}\begin{bmatrix}
    \omega^{m+1} & * \\ 0 & \omega^{m-1}
    \end{bmatrix}\end{tiny}$ with $\mathrm{MT}=0$. Note that this case coincides with $\mathbf{Case}_\phi(r;m+1)$ when $v_p(x^{(m+1)})=-\frac{3}{2}$.
    \item If $v_p(\Theta)=0$, then it is easy to see that $\rhobar|_{I_{\Qp}}\cong
    \begin{tiny}\begin{bmatrix}
    \omega^{m-1} & * \\ 0 & \omega^{m+1}
    \end{bmatrix}\end{tiny}$ with $\mathrm{MT}=1$. Note that this case coincides with $\mathbf{Case}_\phi(r;m+2)$ when $v_p(x^{(m-1)})=-\frac{3}{2}$.
    \item If $v_p(\Theta)\in(-1,0)$, then $\rhobar|_{I_{\Q_p}}\cong \omega^{m+1}\oplus\omega^{m-1}$. In particular, it is split.
\end{itemize}
\item Assume $v_p(\xi)\in (m-k-\frac{1}{2},m-k+\frac{1}{2})$ for $m+2\leq k\leq r-1$. Equivalently, we have $v_p(x^{(r-k+1)})\in (m-k-\frac{1}{2},m-k+\frac{1}{2})$. In this case we consider $\mathbf{Case}_\phi(r;k)$, and it is easy to see that $\rhobar|_{I_{\Qp}}\cong\omega_2^{k+(r-k)p}\oplus\omega_2^{r-k+kp}$, by looking at the Breuil modules.
\item Assume $v_p(\xi)=m-k-\frac{1}{2}$ for $m+2\leq k\leq r-1$. Equivalently, we have $v_p(x^{(r-k+1)})=v_p(x^{(r-k)})=m-k-\frac{1}{2}$. In this case we consider $\mathbf{Case}_\phi(r;k+\frac{1}{2})$, and there are two non-homothetic lattices whose mod-$p$ reductions are non-split and reducible.
\begin{itemize}[leftmargin=*]
    \item If $v_p(\Theta)=-1$, then it is easy to see that $\rhobar$ is non-split and $\rhobar|_{I_{\Qp}}\cong
    \begin{tiny}\begin{bmatrix}
    \omega^{k} & * \\ 0 & \omega^{r-k}
    \end{bmatrix}\end{tiny}$ with $\mathrm{MT}=0$. Note that this case coincides with $\mathbf{Case}_\phi(r;k)$ when $v_p(x^{(r-k+1)})=m-k-\frac{1}{2}$.

    \item If $v_p(\Theta)=0$, then it is easy to see that $\rhobar$ is non-split with $\rhobar|_{I_{\Qp}}\cong
    \begin{tiny}\begin{bmatrix}
    \omega^{r-k} & * \\ 0 & \omega^{k}
    \end{bmatrix}\end{tiny}$ and $\mathrm{MT}=1$. Note that this case coincides with $\mathbf{Case}_\phi(r;k+1)$ when $v_p(x^{(r-k)})=m-k-\frac{1}{2}$.
    \item If $v_p(\Theta)\in(-1,0)$, then $\rhobar|_{I_{\Q_p}}\cong\omega^{k}\oplus\omega^{r-k}$. In particular, it is split.
\end{itemize}
\item Assume $v_p(\xi)<-m+\frac{1}{2}$. Equivalently, we have $v_p(x^{(1)})<-m+\frac{1}{2}$. In this case it is easy to see that  $\rhobar|_{I_{\Qp}}\cong\omega_2^r\oplus\omega_2^{rp}$, by looking at the Breuil modules.
\end{itemize}
Finally, we summarize the non-split mod-$p$ reduction when $r=2m\geq 4$ in the table, Table~\ref{tab: r=2m, reduction}.

\begin{small}
\begin{table}[htbp]
\centering
\begin{tabular}{|c||c|c|}
\hline
    $\xi$ & $\rhobar|_{I_{\Qp}}$ & $\mathrm{MT}$ \\\hline\hline
    $v_p(\xi)\geq -\frac{1}{2}$
    & $
    \begin{bmatrix}
        \omega^{m+1} & * \\ 0 & \omega^{m+1}
    \end{bmatrix}$
    & $1$ \\\hline
    \begin{tabular}{c}
        $v_p(\xi)\geq -\frac{1}{2}$, $v_p(p\xi^2-\frac{4}{m^2})>0$
    \end{tabular}
    & $
    \begin{bmatrix}
        \omega^{m+1} & * \\ 0 & \omega^{m+1}
    \end{bmatrix}$
    & $0$ \\\hline
    \begin{tabular}{c}
        $m-k-\frac{1}{2}<v_p(\xi)<m-k+\frac{1}{2}$\\ $(m+1\leq k\leq r-1)$
    \end{tabular}
    & $\omega_2^{k+(r-k)p}\oplus\omega_2^{r-k+kp}$
    & $0$ \\\hline
    \multirow{2}{*}[-0.5em]{
    \begin{tabular}{c}
        $v_p(\xi)=m-k-\frac{1}{2}$ \\ $(m+1\leq k\leq r-1)$
    \end{tabular}}
    & $
    \begin{bmatrix}
        \omega^k & * \\ 0 & \omega^{r-k}
    \end{bmatrix}$
    & $0$ \\\cline{2-3}
    & $
    \begin{bmatrix}
        \omega^{r-k} & * \\ 0 & \omega^k
    \end{bmatrix}$
    & $1$ \\\hline
    $v_p(\xi)<-m+\frac{1}{2}$
    & $\omega_2^{r}\oplus\omega_2^{rp}$
    & $0$ \\\hline
\end{tabular}
\caption{Non-split mod-$p$ reduction when $r=2m\geq 4$}     \label{tab: r=2m, reduction}
\end{table}
\end{small}

\subsection{An example of the case $f=2$: $\vr=(2,2)$}
In this subsection, we apply our main result, Theorem~\ref{theo: main 2}, to compute the mod-$p$ reduction of $2$-dimensional semi-stable representations of $G_{\Q_{p^2}}$ with parallel Hodge--Tate weights $(0,2)$. We set $\vr=(2,2)$ and $r=2$. We assume that $2<p-1$.

We first need to compute the valid $\vk'\in\cK(\vr)$, and it is immediate that $\cJ_\infty(\vk')=\emptyset$ for all $\vk'\in\cK(\vr)$ as $\vr$ is parallel. Recall that we say that $\vk'$ is valid if $R(\vr;\vk')\neq\emptyset$, i.e., the equations and inequalities have common solutions. It turns out that it is enough to consider those $\vk'\in\cK(\vr)$ such that $R_{int}(\vr;\vk')\neq\emptyset$, and such $\vk'$ is listed as follow:
$$(\tfrac{1}{2},1),(\tfrac{1}{2},2),(1,\tfrac{1}{2}),(1,1),(1,\tfrac{3}{2}), (\tfrac{3}{2},1),(\tfrac{3}{2},2),(2,\tfrac{1}{2}),(2,\tfrac{3}{2}).$$
Moreover, we describe the areal support $R(\vr;\vk')$ for each such $\vk'$ in the table, Table~\ref{tab: J((2,2))}.
\begin{small}
\begin{table}[htbp]
  \centering
  \begin{tabular}{|c||c|c|}
    \hline
    % after \\: \hline or \cline{col1-col2} \cline{col3-col4} ...
    $\vk'$ & $R(\vr;\vk')$ & $\vx$ \\\hline\hline
    $(\tfrac{1}{2},1)$ & $t_0\geq -1$, $t_1\geq 0$ & {$\vx^{(1,1)}$} \\\hline
    $(\tfrac{1}{2},2)$ & $t_1\leq -1$, $t_0-t_1\geq 0$ & {$\vx^{(1,1)}$} \\\hline
    $(1,\tfrac{1}{2})$ & $t_0\geq 0$, $t_1\geq -1$ & {$\vx^{(1,1)}$} \\\hline
    $(1,1)$ & $t_0\geq -1$, $t_1\geq -1$, $t_0+t_1\geq -1$ & {$\vx^{(1,1)}$} \\\hline
    $(1,\tfrac{3}{2})$ & $-1\leq t_1<0$, $t_0+t_1\geq -1$ & {$\vx^{(1,2)}$} \\\hline
    $(\tfrac{3}{2},1)$ & $-1\leq t_0<0$, $t_0+t_1\geq -1$ & {$\vx^{(2,1)}$} \\\hline
    $(\tfrac{3}{2},2)$ &
        \begin{tabular}{c}
          $t_0+t_1\leq -1$, $t_0-t_1\geq 0$,\\ $t_1\geq 0$, $t_0<0$
        \end{tabular} & {$\vx^{(2,2)}$} \\\hline
    $(2,\tfrac{1}{2})$ & $t_0\leq -1$, $t_0-t_1\leq 0$ & {$\vx^{(2,1)}$} \\\hline
    $(2,\tfrac{3}{2})$ &
        \begin{tabular}{c}
          $t_0+t_1\leq -1$, $t_0-t_1\leq 0$, \\ $t_0\geq 0$, $t_1<0$
        \end{tabular} & {$\vx^{(2,2)}$} \\
    \hline
  \end{tabular}
  \caption{Areal support for $\vr=(2,2)$}\label{tab: J((2,2))}
\end{table}
\end{small}

We further note that $\Supp(\vr;\vk')\subset\Supp(\vr;(1,1))$ for $\vk'\in\{(\tfrac{1}{2},1),(1,\tfrac{1}{2}),(1,\tfrac{3}{2}),(\tfrac{3}{2},1)\}$. Hence, we exclude these redundancies and only consider
\begin{equation}\label{eq: list of strongly valid for 2,2}
(\tfrac{1}{2},2),(1,1),(\tfrac{3}{2},2),(2,\tfrac{1}{2}),(2,\tfrac{3}{2}),
\end{equation}
and we write $J(\vr)$ for the set of $\vk'$ listed in \eqref{eq: list of strongly valid for 2,2}.

Each areal support $R(\vr;\vk')$ for $\vk'\in J(\vr)$ is depicted in Figure~\ref{fig: vr=(2,2)}. We point out that the pairs of (half) integers in the figure indicate $\vk'\in J(\vr)$, and that each polygon indicate the areal support $R(\vr;\vk')$. Note that the figure does not carefully describe the boundary of each $R(\vr;\vk')$. More precisely, the point $(-1,0)$ (resp. $(0,-1)$) is not included in $R(\vr;(2,\frac{3}{2}))$ (resp. $R(\vr;(\frac{3}{2},2))$). 
\begin{figure}[htbp]
  \centering
  \includegraphics[scale=0.5]{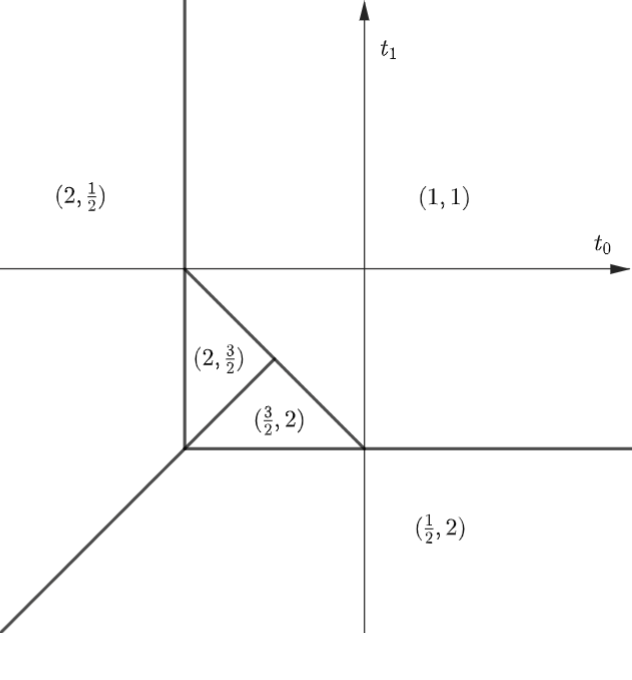}
  \caption{$R(\vr;\vk')$ for $\vr=(2,2)$ and $\vk'\in J(\vr)$}\label{fig: vr=(2,2)}
\end{figure}

For the rest of this subsection, we compute the mod-$p$ reduction of semi-stable representations in each $\mathbf{Case}_\phi(\vr;\vk')$ for $\vk'\in J(\vr)$. Then we summarize the mod-$p$ reduction when $\vr=(2,2)$ at the end of this subsection, \S\ref{subsubsec: summary for 2,2}.

\subsubsection{\textbf{For $\vk'=(\frac{1}{2},2)$ or $\vk'=(2,\frac{1}{2})$}}\label{subsubsec: vr=(2,2), vk=(1/2,2),(2,1/2)}
We only consider the case $\vk'=(\tfrac{1}{2},2)$, as the case $\vk'=(2,\frac{1}{2})$ is symmetric. In this section of paragraph, by $\vk'$ we always mean $(\frac{1}{2},2)$.

In this case, we have $\vx=\vx^{(1,2)}$, and we will write $\cM$ for the Breuil modules. From the results in \S\ref{subsec: mod p reduction}, the Breuil module $\cM$ is determined by the following data:
\begin{itemize}[leftmargin=*]
  \item $M_{N,0}=
  \begin{bmatrix}
    0 & 0 \\
    \frac{1-u^{p-1}}{\Theta_0} & 0
  \end{bmatrix}\quad\&\quad  M_{N,1}=
  \begin{bmatrix}
    0 & 0 \\
    \frac{1-u^{p-1}}{\Theta_1} & 0
  \end{bmatrix}$;
  \item $M_{\Fil^2,0}=
  \begin{bmatrix}
    1 & 0 \\
    \frac{x_0}{\Theta_0} & 1
  \end{bmatrix}
  \begin{bmatrix}
    1 & 0 \\
    \frac{\alpha_0 u}{\beta_0\Theta_1} & u^2
  \end{bmatrix}\quad\&\quad M_{\Fil^2,1}=
  \begin{bmatrix}
    1 & \frac{\Theta_1}{x_1} \\
    0 & 1
  \end{bmatrix}
  \begin{bmatrix}
    u^2 & -\frac{\alpha_1 u}{\beta_1\Theta_0} \\
    0 & 1
  \end{bmatrix}$;
  \item $M_{\phi_2,0}=
  \begin{bmatrix}
    1 & 0 \\
    \frac{u^{p-1}}{\Theta_1} & 1
  \end{bmatrix}
  \begin{bmatrix}
    \alpha_0 & 0 \\
    0 & \beta_0
  \end{bmatrix}\quad\&\quad
  M_{\phi_2,1}=
  \begin{bmatrix}
    1 & -\frac{\alpha_1}{\beta_1}\frac{p^2x_1}{\Theta_1} \\
    \frac{u^{p-1}}{\Theta_0} & 1
  \end{bmatrix}
  \begin{bmatrix}
    0 & \alpha_1 \\
    -\beta_1 & 0
  \end{bmatrix}.$
\end{itemize}
We further have the identities $v_p(\Theta_0)=v_p(x_1)$ and $v_p(\Theta_1)=1+v_p(x_1)$, and so we conclude that $\frac{1}{\Theta_0}=\frac{\Theta_1}{x_1}=\frac{p^2x_1}{\Theta_1}=0$ in $\F$. We will divide $R(\vr;\vk')$ into the following four areas, and in each area we will determine the mod-$p$ reduction:
\begin{enumerate}[leftmargin=*]
  \item $\vec{t}\in R_{int}(\vr;\vk')$: It is easy to see that $\cM\cong\cM_1((0,2),(2,0);(\alpha_0,\alpha_1),(\beta_0,-\beta_1))$, and so $\rhobar|_\IQptwo\cong\omega_4^{2+2p}\oplus\omega_4^{2p^2+2p^3}$ by Lemma~\ref{lemm: Tst^r for irred BrMod}~(ii).
  \item $t_1=-1$, $t_0>-1$: We have $\frac{1}{\Theta_1}\in\F^\times$ and $\frac{x_0}{\Theta_0}=0$ in $\F$. In particular, the monodromy type is $(0,1)$. Put $\cM'=\barS_{\F}(\barE_1^{(0)},\barE_2^{(1)})\subset\cM$. It is easy to see that $\cM'\cong\cM(1,0;-\frac{\alpha_0}{\Theta_1},\alpha_1)$ and $\cM/\cM'\cong\cM(1,2;\beta_0\Theta_1,-\beta_1)$, and so we conclude that $\rhobar^{ss}|_\IQptwo\cong \omega_2^{1+2p}\oplus\omega_2$, by Lemma~\ref{lemm: Tst^r for irred BrMod}~(i), and $\rhobar$ is non-split as the monodromy type is nonzero.
  \item $t_0=t_1=-1$: We have $\frac{1}{\Theta_1},\frac{x_0}{\Theta_0}\in\F^\times$, and so in particular its monodromy type is $(0,1)$. Put $\cM'=\barS_{\F}(\barE_1^{(0)},\barE_2^{(1)})\subseteq\cM$. It is easy to see that $\cM'\cong\cM(2,0;-\frac{\beta_0x_0}{\Theta_0},\alpha_1)$ and $\cM/\cM'\cong\cM(0,2;\frac{\alpha_0\Theta_0}{x_0},-\beta_1)$, and so we conclude that $\rhobar^{ss}|_\IQptwo \cong\omega_2^{2p}\oplus\omega_2^2$, by Lemma~\ref{lemm: Tst^r for irred BrMod}~(i), and $\rhobar$ is non-split as the monodromy type is nonzero.
  \item $t_0-t_1=0$, $t_0<-1$: We have $\frac{x_0}{\Theta_0}\in\F^\times$ and $\frac{1}{\Theta_1}=0$ in $\F$, and so in particular its monodromy type is $(0,0)$. Put $\cM'\subseteq \cM$ as in (iii). Using the same isomorphisms as in (iii) we have $\rhobar^{ss}|_\IQptwo \cong\omega_2^{2p}\oplus\omega_2^2$, by Lemma~\ref{lemm: Tst^r for irred BrMod}~(i). It is not difficult to show that the short exact sequence determined by $\cM'\hookrightarrow \cM$ is non-split.
\end{enumerate}

We summarize the mod-$p$ reduction for $\vk'=(\frac{1}{2},2)$ in the first table of Table~\ref{tab:   (1/2,2),(2,1/2)}. Moreover, the case $\vk'=(2,\frac{1}{2})$ is summarized in the second table. Note that by $\ast$ in the following tables we mean a non-split extension.
\begin{small}
\begin{table}[htbp]
  \centering
  \begin{tabular}{|c||c|c|}%(1/2,2)
    \hline
    % after \\: \hline or \cline{col1-col2} \cline{col3-col4} ...
    $(t_0,t_1)$ & $\rhobar|_{I_{\Q_{p^2}}}$ & $\vec{\mathrm{MT}}$ \\\hline\hline
    $R_{int}(\vr;(\frac{1}{2},2))$
      & $\omega_4^{2+2p}\oplus\omega_4^{2p^2+2p^3}$ & $(0,0)$ \\\hline
    \begin{tabular}{c}
      $t_1=-1$, \\ $t_0>-1$
    \end{tabular}
      & $
      \begin{bmatrix}
        \omega_2 & * \\
        0 & \omega_2^{1+2p}
      \end{bmatrix}$ & $(0,1)$ \\\hline
    $t_0=t_1=-1$
      & $
      \begin{bmatrix}
        \omega_2^2 & * \\
        0 & \omega_2^{2p}
      \end{bmatrix}$ & $(0,1)$ \\\hline
    \begin{tabular}{c}
      $t_0-t_1=0$, \\ $t_0<-1$
    \end{tabular}
      & $
      \begin{bmatrix}
        \omega_2^2 & * \\
        0 & \omega_2^{2p}
      \end{bmatrix}$ & $(0,0)$ \\\hline
  \end{tabular}
  \qquad
  \begin{tabular}{|c||c|c|}%(2,1/2)
    \hline
    % after \\: \hline or \cline{col1-col2} \cline{col3-col4} ...
    $(t_0,t_1)$ & $\rhobar|_{I_{\Q_{p^2}}}$ & $\vec{\mathrm{MT}}$ \\\hline\hline
    $R_{int}(\vr;(2,\frac{1}{2}))$
      & $\omega_4^{2p+2p^2}\oplus\omega_4^{2+2p^3}$ & $(0,0)$ \\\hline
    \begin{tabular}{c}
      $t_0=-1$, \\ $t_1>-1$
    \end{tabular}
      & $
      \begin{bmatrix}
        \omega_2^p & * \\
        0 & \omega_2^{2+p}
      \end{bmatrix}$ & $(1,0)$ \\\hline
    $t_0=t_1=-1$
      & $
      \begin{bmatrix}
        \omega_2^{2p} & * \\
        0 & \omega_2^2
      \end{bmatrix}$ & $(1,0)$ \\\hline
    \begin{tabular}{c}
      $t_0-t_1=0$, \\ $t_1<-1$
    \end{tabular}
      & $
      \begin{bmatrix}
        \omega_2^{2p} & * \\
        0 & \omega_2^2
      \end{bmatrix}$ & $(0,0)$ \\\hline
  \end{tabular}
  \caption{$\rhobar$ for $\vk'\in\{(\frac{1}{2},2)$, $(2,\frac{1}{2})\}$.}\label{tab: (1/2,2),(2,1/2)}
\end{table}
\end{small}

\subsubsection{\textbf{For $\vk'=(1,1)$}}\label{subsubsec: vr=(2,2), vk=(1,1)}
In this section of paragraph, by $\vk'$ we always mean $(1,1)$. In this case, we have $\vx=\vx^{(1,1)}$, and we will write $\cM$ for the Breuil modules. From the results in \S\ref{subsec: mod p reduction}, the Breuil modules $\cM$ is determined by the following data:
\begin{itemize}[leftmargin=*]
  \item $M_{N,0}=
  \begin{bmatrix}
    0 & 0 \\
    \frac{1-u^{p-1}}{\Theta_0} & 0
  \end{bmatrix}\quad\&\quad
M_{N,1}=
  \begin{bmatrix}
    0 & 0 \\
    \frac{1-u^{p-1}}{\Theta_1} & 0
  \end{bmatrix}$;
  \item $M_{\Fil^2,0}=
  \begin{bmatrix}
    1 & 0 \\
    \frac{x_0}{\Theta_0} & 1
  \end{bmatrix}
  \begin{bmatrix}
    u & \frac{\alpha_0}{\beta_0\Theta_1} \\
    0 & u
  \end{bmatrix}\quad\&\quad M_{\Fil^2,1}=
  \begin{bmatrix}
    1 & 0 \\
    \frac{x_1}{\Theta_1} & 1
  \end{bmatrix}
  \begin{bmatrix}
    u & \frac{\alpha_1}{\beta_1\Theta_0} \\
    0 & u
  \end{bmatrix}$;
  \item $M_{\phi_2,0}=
  \begin{bmatrix}
    1 & -p\Theta_1 \\
    \frac{u^{p-1}}{\Theta_1} & 1
  \end{bmatrix}
  \begin{bmatrix}
    0 & \alpha_0\\
    -\beta_0 & 0
  \end{bmatrix}\quad\&\quad M_{\phi_2,1}=
  \begin{bmatrix}
    1 & -p\Theta_0 \\
    \frac{u^{p-1}}{\Theta_0} & 1
  \end{bmatrix}
  \begin{bmatrix}
    0 & \alpha_1\\
    -\beta_1 & 0
  \end{bmatrix}.$
\end{itemize}
Note that one can choose any $\vTh$ satisfying
$$\max\{-1,-1-v_p(x_1)\}\leq v_p(\Theta_0)=1-v_p(\Theta_1)\leq \min\{0,v_p(x_0)\}.$$
We will divide $R(\vr;\vk')$ into following four areas, and in each area we will determine the mod-$p$ reduction:
\begin{enumerate}[leftmargin=*]
  \item $\vec{t}\in R_{int}(\vr;\vk')$: Note that we have either $v_p(\Theta_0)\not\in\{0,v_p(x_0)\}$ or $v_p(\Theta_0)\not\in\{-1,-1-v_p(x_1)\}$, in this case. Consider two $\barS_{\F}$-submodules    $\cM'=\barS_{\F}(\barE_1^{(0)},\barE_2^{(1)})$ and $\cM''=\barS_{\F}(\barE_2^{(0)},\barE_1^{(1)})$ of $\cM$. Then one can observe that
    \begin{itemize}[leftmargin=*]
      \item if $v_p(\Theta_0)\not\in\{0,v_p(x_0)\}$, then $\cM'$ is a Breuil submodule of $\cM$ such that $\cM'\cong\cM(1,1;-\beta_0,\alpha_1)$ and $\cM/\cM'\cong\cM(1,1;\alpha_0,-\beta_1)$;
      \item if $v_p(\Theta_0)\not\in\{-1,-1-v_p(x_1)\}$, then $\cM''$ is a Breuil submodule of $\cM$ such that $\cM''\cong\cM(1,1;\alpha_0,-\beta_1)$ and $\cM/\cM''\cong\cM(1,1;-\beta_0,\alpha_1)$.
    \end{itemize}
    In particular, if $v_p(\Theta_0)\not\in\{-1,0,v_p(x_0),-1-v_p(x_1)\}$, then $\cM=\cM'\oplus\cM''$ as Breuil modules. Otherwise, It is not difficult to show that the short exact sequence determined by $\cM'\hookrightarrow \cM$ or $\cM''\hookrightarrow\cM$ is non-split. So, $\rhobar^{ss}|_\IQptwo\cong\omega_2^{1+p}\oplus\omega_2^{1+p}$, by Lemma~\ref{lemm: Tst^r for irred BrMod}~(i). Furthermore, the monodromy type is given as follow.
    \begin{enumerate}[leftmargin=*]
      \item If $v_p(\Theta_0)=0$, then $t_0\geq 0$, and the monodromy type is $(1,0)$.
      \item If $v_p(\Theta_0)=-1$, then $t_1\geq 0$, and the monodromy type is $(0,1)$.
      \item Otherwise, the monodromy type is $(0,0)$.
    \end{enumerate}
  \item $t_1=-1$, $t_0\geq 0$: We have $v_p(\Theta_0)=0=-1-v_p(x_1)$, and so $\frac{1}{\Theta_0},\frac{x_1}{\Theta_1}\in\F^\times$ and $\frac{1}{\Theta_1}=0$ in $\F$. In particular, the monodromy type is $(1,0)$. Put $\cM'=\barS_{\F}(\barE_2^{(0)},\barE_1^{(1)})$. It is easy to see that $\cM'\cong\cM(1,2;\alpha_0,-px_1\beta_1)$ and $\cM/\cM'\cong\cM(1,0;-\beta_0,\frac{\alpha_1}{px_1})$, and so we conclude that $\rhobar^{ss}|_\IQptwo\cong\omega_2\oplus\omega_2^{1+2p}$, by Lemma~\ref{lemm: Tst^r for irred BrMod}~(i), and $\rhobar$ is non-split as the monodromy type is nonzero.
  \item $t_0+t_1=-1$, $-1<t_0<0$: We have $v_p(\Theta_0)=v_p(x_0)=-1-v_p(x_1)\not\in\{-1,0\}$, and so $\frac{x_0}{\Theta_0},\frac{x_1}{\Theta_1}\in\F^\times$ and $\frac{1}{\Theta_0}=\frac{1}{\Theta_1}=0$ in $\F$. In particular, the monodromy type is $(0,0)$. Put
     \begin{align*}
      A:=
      \begin{bmatrix}
        -\frac{x_1}{\Theta_1}\alpha_1 & \alpha_1 \\
        -\beta_1 & 0
      \end{bmatrix}
      \begin{bmatrix}
        -\frac{x_0}{\Theta_0}\alpha_0 & \alpha_0 \\
        -\beta_0 & 0
      \end{bmatrix}
      =
      \begin{bmatrix}
        \frac{x_0x_1}{\Theta_0\Theta_1}\alpha_0\alpha_1-\beta_0\alpha_1 & -\frac{x_1}{\Theta_1}\alpha_0\alpha_1 \\
        \frac{x_0}{\Theta_0}\alpha_0\beta_1 & -\alpha_0\beta_1
      \end{bmatrix}.
    \end{align*}
    Under the base change
    $$\baseE'^{(1)}=\baseE^{(1)}
    \begin{bmatrix}
      0 & -\frac{1}{\beta_1} \\
      \frac{1}{\alpha_1} & -\frac{x_1}{\beta_1\Theta_1}
    \end{bmatrix},\,
    \baseF'^{(0)}=\baseF^{(0)}
    \begin{bmatrix}
      1 & 0 \\
      -\frac{x_0}{\Theta_0} & 1
    \end{bmatrix},\,
    \baseF'^{(1)}=\baseF^{(1)}
    \begin{bmatrix}
      0 & -\frac{1}{\beta_1} \\
      \frac{1}{\alpha_1} & 0
    \end{bmatrix},$$
    we have
    \begin{itemize}[leftmargin=*]
      \item $\Mat_{\baseE^{(0)},\baseF'^{(0)}}(\Fil^r\cM^{(0)})=\Mat_{\baseE'^{(1)},\baseF'^{(1)}}(\Fil^r\cM^{(1)})=uI_2$;
      \item $\Mat_{\baseE'^{(1)},\baseF'^{(0)}}(\phi_2^{(0)})=A\quad\&\quad \Mat_{\baseE^{(0)},\baseF'^{(1)}}(\phi_2^{(1)})=I_2$.
    \end{itemize}
    Note that we have $\frac{\alpha_0}{\beta_0}=\frac{\alpha_1}{\beta_1}=p\Theta_0\Theta_1$ and $\alpha_0\alpha_1=\lambda^2\Theta_0\Theta_1$, and so one can describe the mod-$p$ reduction as follow:
    \begin{itemize}[leftmargin=*]
    \item if $px_0x_1\neq 4$ in $\F$, then $A$ is diagonalizable. So, extending $\F$ large enough, we have $\rhobar|_\IQptwo\cong\omega_2^{1+p}\oplus\omega_2^{1+p}$ by Lemma~\ref{lemm: Tst^r for irred BrMod}~(i).
    \item if $px_0x_1=4$ in $\F$, then $A$ is similar to $
      \begin{tiny}\begin{bmatrix}
        \frac{\lambda^2}{p} & 1\\
        0 & \frac{\lambda^2}{p}
      \end{bmatrix}\end{tiny}$
    and so $\rhobar$ is non-split with $\rhobar|_\IQptwo^{ss}\cong\omega_2^{1+p}\oplus\omega_2^{1+p}$ by Lemma~\ref{lemm: Tst^r for irred BrMod}~(i).
    \end{itemize}
  \item $t_0=-1$, $t_1\geq 0$: We have $v_p(\Theta_0)=-1=v_p(x_0)$, and so $\frac{1}{\Theta_1},\frac{x_0}{\Theta_0}\in\F^\times$ and $\frac{1}{\Theta_0}=0$ in $\F$. In particular, the monodromy type is $(0,1)$. Put $\cM'=\barS_{\F}(\barE_1^{(0)},\barE_2^{(1)})$. It is easy to see that $\cM'\cong\cM(2,1;-px_0\beta_0,\alpha_1)$ and $\cM/\cM'\cong\cM(0,1;\frac{\alpha_0}{px_0},-\beta_1)$, and so we conclude that $\rhobar^{ss}|_\IQptwo\cong\omega_2^p\oplus\omega_2^{2+p}$, by Lemma~\ref{lemm: Tst^r for irred BrMod}~(i), and $\rhobar$ is non-split as the monodromy type is nonzero.
\end{enumerate}

We summarize the mod-$p$ reduction for $\vk'=(1,1)$ in the following table, Table~\ref{tab: (1,1)}. Note that by $\ast$ in the following table we mean a non-split extension.
\begin{small}
\begin{table}[htbp]%(1,1)
  \centering
  \begin{tabular}{|c|c||c|c|}
    \hline
    % after \\: \hline or \cline{col1-col2} \cline{col3-col4} ...
    \multicolumn{2}{|c||}{$(t_0,t_1)$} & $\rhobar|_{I_{\Q_{p^2}}}$ & $\vec{\mathrm{MT}}$ \\\hline\hline
    \multirow{2}{*}[-3em]{$R_{int}(\vr;(1,1))$}
        & 
        \begin{tabular}{c}
            $v_p(\Theta_0)\not\in\{-1,0,$\\$v_p(x_0),-1-v_p(x_1)\}$ 
        \end{tabular}
        & $
        \begin{bmatrix}
            \omega_2^{1+p} & 0 \\ 0 & \omega_2^{1+p}
        \end{bmatrix}$
        & $(0,0)$ \\\cline{2-4}
        & otherwise
        & $
        \begin{bmatrix}
            \omega_2^{1+p} & * \\ 0 & \omega_2^{1+p}
        \end{bmatrix}$
        & $
        \begin{cases}
            (1,0) & \mbox{if }v_p(\Theta_0)=0\\& \quad\mbox{ and } t_0\geq 0;\\
            (0,1) & \mbox{if }v_p(\Theta_0)=-1\\& \quad\mbox{ and } t_1\geq 0;\\
            (0,0) & \mbox{otherwise}
        \end{cases}$ \\\hline
    \multicolumn{2}{|c||}{$t_1=-1$, $t_0\geq 0$}
        & $
        \begin{bmatrix}
          \omega_2^{1+2p} & * \\
          0 & \omega_2
        \end{bmatrix}$ & $(1,0)$ \\\hline
    \multirow{3}{*}{
    \begin{tabular}{c}
         $t_0+t_1=-1$, \\ $-1<t_0<0$ 
    \end{tabular}
    }
        & $v_p(px_0x_1-4)=0$
        & $
        \begin{bmatrix}
            \omega_2^{1+p} & 0 \\ 0 & \omega_2^{1+p}
        \end{bmatrix}$
        & $(0,0)$ \\\cline{2-4}
        & $v_p(px_0x_1-4)>0$
        & $
        \begin{bmatrix}
            \omega_2^{1+p} & * \\ 0 & \omega_2^{1+p}
        \end{bmatrix}$
        & $(0,0)$ \\\hline
    \multicolumn{2}{|c||}{$t_0=-1$, $t_1\geq 0$}
        & $
        \begin{bmatrix}
          \omega_2^{2+p} & * \\
          0 & \omega_2^p
        \end{bmatrix}$ & $(0,1)$ \\\hline
  \end{tabular}
  \caption{$\rhobar$ for $\vk'=(1,1)$}\label{tab: (1,1)}
\end{table}
\end{small}

\subsubsection{\textbf{For $\vk'=(\frac{3}{2},2)$ or $\vk'=(2,\frac{3}{2})$}}\label{subsubsec: vr=(2,2), vk=(3/2,2),(2,3/2)}
We only consider the case $\vk'=(\tfrac{3}{2},2)$, as the case $\vk'=(2,\frac{3}{2})$ is symmetric. In this section of paragraph, by $\vk'$ we always mean $(\frac{3}{2},2)$.

In this case, we have $\vx=\vx^{(2,2)}$, and we will write $\cM$ for the Breuil modules. From the results in \S\ref{subsec: mod p reduction}, the Breuil module $\cM$ is determined by the following data:
\begin{itemize}[leftmargin=*]
  \item $M_{N,0}=
  \begin{bmatrix}
    0 & 0 \\
    \frac{1-u^{p-1}}{\Theta_0} & 0
  \end{bmatrix}\quad\&\quad
  M_{N,1}=
  \begin{bmatrix}
    0 & 0 \\
    \frac{1-u^{p-1}}{\Theta_1} & 0
  \end{bmatrix}$;
  \item $M_{\Fil^2,0}=
  \begin{bmatrix}
    1 & \frac{\Theta_0}{x_0} \\
    0 & 1
  \end{bmatrix}
  \begin{bmatrix}
    u & 0 \\
    -\frac{\alpha_0}{\beta_0\Theta_1} & u
  \end{bmatrix}\quad\&\quad
  M_{\Fil^2,1}=
  \begin{bmatrix}
    1 & \frac{\Theta_1}{x_1} \\
    0 & 1
  \end{bmatrix}
  \begin{bmatrix}
    u^2 & -\frac{\alpha_1 u}{\beta_1\Theta_0} \\
    0 & 1
  \end{bmatrix}$;
  \item $M_{\phi_2,0}=
  \begin{bmatrix}
    1 & 0 \\
    \frac{u^{p-1}}{\Theta_1} & 1
  \end{bmatrix}
  \begin{bmatrix}
    -\alpha_0 & 0 \\
    0 & -\beta_0
  \end{bmatrix}\quad\&\quad
  M_{\phi_2,1}=
  \begin{bmatrix}
    1 & -\frac{\alpha_1}{\beta_1}\frac{p^2x_1}{\Theta_1} \\
    \frac{u^{p-1}}{\Theta_0} & 1
  \end{bmatrix}
  \begin{bmatrix}
    0 & \alpha_1\\
    -\beta_1 & 0
  \end{bmatrix}.$
\end{itemize}

We further have the identities $v_p(\Theta_0)=1+v_p(x_0)+v_p(x_1)$ and $v_p(\Theta_1)=-v_p(x_0)+v_p(x_1)$, and so we conclude that $\frac{\Theta_1}{x_1}=\frac{p^2x_1}{\Theta_1}=0$ in $\F$. We will divide $R(\vr;\vk')$ into the following five areas, and in each area we will determine the mod-$p$ reduction:
\begin{enumerate}[leftmargin=*]
  \item $\vec{t}\in R_{int}(\vr;\vk')$: It is easy to see that $\cM\cong\cM_1((1,2),(1,0);(-\alpha_0,-\beta_0),(\alpha_1,\beta_1))$, and so we conclude that $\rhobar|_\IQptwo\cong \omega_4^{1+2p+p^2}\oplus\omega_4^{1+p^2+2p^3}$ by Lemma~\ref{lemm: Tst^r for irred BrMod}~(ii).
  \item $t_0+t_1=-1$, $-\frac{1}{2}<t_0<0$: We have $\frac{1}{\Theta_0}\in\F^\times$ and $\frac{1}{\Theta_1}=\frac{\Theta_0}{x_0}=0$ in $\F$. In particular, the monodromy type is $(1,0)$. Put $\cM'=\barS_{\F}(\barE_2)\subset\cM$. It is easy to see that $\cM'\cong\cM(1,1;-\beta_0,-\frac{\alpha_1}{\Theta_0})$ and $\cM/\cM'\cong\cM(1,1;-\alpha_0,-\beta_1\Theta_0)$, and so we conclude that $\rhobar^{ss}|_\IQptwo \cong \omega_2^{1+p}\oplus\omega_2^{1+p}$, by Lemma~\ref{lemm: Tst^r for irred BrMod}~(i), and $\rhobar$ is non-split as the monodromy type is nonzero.
  \item $t_0=t_1=-\frac{1}{2}$: We have $\frac{1}{\Theta_0},\frac{1}{\Theta_1}\in\F^\times$ and $\frac{\Theta_0}{x_0}=0$ in $\F$. In particular, the monodromy type is $(1,1)$. Put $\cM'$ as in (ii). Using the same isomorphisms as in (ii), we conclude that $\rhobar^{ss}|_\IQptwo\cong \omega_2^{1+p}\oplus\omega_2^{1+p}$, by Lemma~\ref{lemm: Tst^r for irred BrMod}~(i), and $\rhobar$ is non-split as the monodromy type is nonzero.
  \item $t_0-t_1=0$, $-1\leq t_0<-\frac{1}{2}$: We have $\frac{1}{\Theta_1}\in\F^\times$ and $\frac{1}{\Theta_0}=0$ in $\F$. In particular, the monodromy type is $(0,1)$. Put $\cM'=\barS_{\F}(\barE_1^{(0)},\barE_2^{(1)})\subset\cM$. It is easy to see that $\cM'\cong \cM(2,0;-\frac{\alpha_0}{\Theta_1},\alpha_1)$ and $\cM/\cM'\cong\cM(0,2;\beta_0\Theta_1,-\beta_1)$, and so we conclude that $\rhobar^{ss}|_\IQptwo \cong\omega_2^{2p}\oplus\omega_2^2$, by Lemma~\ref{lemm: Tst^r for irred BrMod}~(i), and $\rhobar$ is non-split as the monodromy type is nonzero.
  \item $t_1=-1$, $-1<t_0<0$: We have $\frac{\Theta_0}{x_0}\in\F^\times$ and $\frac{1}{\Theta_0}=\frac{1}{\Theta_1}=0$ in $\F$. In particular, the monodromy type is $(0,0)$. Put $\cM'=\barS_{\F}(\barE_2^{(0)},\barE_1^{(1)}-\frac{\beta_0x_0}{\alpha_0\Theta_0}\barE_2^{(1)})$. It is easy to see that $\cM'\cong\cM(1,2;\frac{\alpha_0x_0}{\Theta_0},-\beta_1)$ and $\cM/\cM'\cong\cM(1,0;-\frac{\beta_0x_0}{\Theta_0},\alpha_1)$, and so we conclude that $\rhobar^{ss}|_\IQptwo \cong\omega_2\oplus\omega_2^{1+2p}$, by Lemma~\ref{lemm: Tst^r for irred BrMod}~(i). It is not difficult to show that the short exact sequence determined by $\cM'\hookrightarrow \cM$ is non-split.
\end{enumerate}
We summarize the mod-$p$ reduction for $\vk'=(\frac{3}{2},2)$ in the first table of Table~\ref{tab:   (3/2,2),(2,3/2)}. Moreover, the case $\vk'=(2,\frac{3}{2})$ is summarized in the second table. Note that by $\ast$ in the following tables we mean a non-split extension.
\begin{small}
\begin{table}[htbp]
  \centering
  \begin{tabular}{|c||c|c|}%(3/2,2)
    \hline
    % after \\: \hline or \cline{col1-col2} \cline{col3-col4} ...
    $(t_0,t_1)$ & $\rhobar|_{I_{\Q_{p^2}}}$ & $\vec{\mathrm{MT}}$ \\\hline\hline
    $R_{int}(\vr;(\frac{3}{2},2))$
      & \begin{tabular}{c}
         $\omega_4^{1+2p+p^2}$\\ $\oplus$\\ $\omega_4^{1+p^2+2p^3}$
         \end{tabular}
      & $(0,0)$ \\\hline
    \begin{tabular}{c}
      $t_0+t_1=-1$, \\ $-\frac{1}{2}<t_0<0$
    \end{tabular}
      & $
      \begin{bmatrix}
        \omega_2^{1+p} & * \\
        0 & \omega_2^{1+p}
      \end{bmatrix}$ & $(1,0)$ \\\hline
    $t_0=t_1=-\frac{1}{2}$
      & $
      \begin{bmatrix}
        \omega_2^{1+p} & * \\
        0 & \omega_2^{1+p}
      \end{bmatrix}$ & $(1,1)$ \\\hline
    \begin{tabular}{c}
      $t_0-t_1=0$, \\ $-1\leq t_0<-\frac{1}{2}$
    \end{tabular}
      & $
      \begin{bmatrix}
        \omega_2^2 & * \\
        0 & \omega_2^{2p}
      \end{bmatrix}$ & $(0,1)$ \\\hline
    \begin{tabular}{c}
      $t_1=-1$, \\ $-1<t_0<0$
    \end{tabular}
      & $
      \begin{bmatrix}
        \omega_2^{1+2p} & * \\
        0 & \omega_2
      \end{bmatrix}$ & $(0,0)$ \\\hline
  \end{tabular}\quad
  \begin{tabular}{|c||c|c|}%(2,3/2)
    \hline
    % after \\: \hline or \cline{col1-col2} \cline{col3-col4} ...
    $(t_0,t_1)$ & $\rhobar|_{I_{\Q_{p^2}}}$ & $\vec{\mathrm{MT}}$ \\\hline\hline
    $R_{int}(\vr;(2,\frac{3}{2}))$
      & \begin{tabular}{c}
        $\omega_4^{p+2p^2+p^3}$ \\$\oplus$\\$\omega_4^{2+p+p^3}$
        \end{tabular}
      & $(0,0)$ \\\hline
    \begin{tabular}{c}
      $t_0+t_1=-1$, \\ $-\frac{1}{2}<t_1<0$
    \end{tabular}
      & $
      \begin{bmatrix}
        \omega_2^{1+p} & * \\
        0 & \omega_2^{1+p}
      \end{bmatrix}$ & $(0,1)$ \\\hline
    $t_0=t_1=-\frac{1}{2}$
      & $
      \begin{bmatrix}
        \omega_2^{1+p} & * \\
        0 & \omega_2^{1+p}
      \end{bmatrix}$ & $(1,1)$ \\\hline
    \begin{tabular}{c}
      $t_0-t_1=0$, \\ $-1\leq t_1<-\frac{1}{2}$
    \end{tabular}
      & $
      \begin{bmatrix}
        \omega_2^{2p} & * \\
        0 & \omega_2^2
      \end{bmatrix}$ & $(1,0)$ \\\hline
    \begin{tabular}{c}
      $t_0=-1$, \\ $-1<t_1<0$
    \end{tabular}
      & $
      \begin{bmatrix}
        \omega_2^{2+p} & * \\
        0 & \omega_2^p
      \end{bmatrix}$ & $(0,0)$ \\\hline
  \end{tabular}
  \caption{$\rhobar$ for $\vk'=(\frac{3}{2},2)$, $(2,\frac{3}{2})$.}\label{tab: (3/2,2),(2,3/2)}
\end{table}
\end{small}

\subsubsection{\textbf{Summary on mod-$p$ reduction when $\vr=(2,2)$}}\label{subsubsec: summary for 2,2}
In this section of paragraph, we summarize the results on mod-$p$ reduction when $\vr=(2,2)$. We set $\vec{\xi}:=(\xi_0,\xi_1)\in E^2$ where
$$\xi_0=\frac{1}{p}\left(p\fL_0-\fL_1+1\right) \qquad\mbox{and}\qquad \xi_1=\frac{1}{p}(p\fL_1-\fL_0+1).$$

We first partition $E^2(=E^2_\emptyset)$ in terms of $\vec{\xi}$, such that within each subset the semi-simplification of $\rhobar|_{I_{\Q_{p^2}}}$ remains constant, as follow:
\begin{itemize}[leftmargin=*]
\item $S_1:=\{\vL\in E^2\mid v_p(\xi_1)<-1,\ v_p(\xi_0)>v_p(\xi_1)\}$;
\item $S_2:=\{\vL\in E^2\mid v_p(\xi_0)<-1,\ v_p(\xi_0)<v_p(\xi_1)\}$;
\item $S_3:=\{\vL\in E^2\mid v_p(\xi_0)>-1,\ v_p(\xi_1)>-1, v_p(\xi_0)+v_p(\xi_1)>-1\}$;
\item $S_4:=\{\vL\in E^2\mid v_p(\xi_0)+v_p(\xi_1)<-1,\ v_p(\xi_0)>v_p(\xi_1)>-1\}$;
\item $S_5:=\{\vL\in E^2\mid v_p(\xi_0)+v_p(\xi_1)<-1,\ v_p(\xi_1)>v_p(\xi_0)>-1\}$;
\item $S_6:=\{\vL\in E^2\mid v_p(\xi_1)=-1,\ v_p(\xi_0)>-1\}$;
\item $S_7:=\{\vL\in E^2\mid v_p(\xi_0)=-1,\ v_p(\xi_1)>-1\}$;
\item $S_8:=\{\vL\in E^2\mid v_p(\xi_0)+v_p(\xi_1)=-1,\ -1<v_p(\xi_0)<0\}$;
\item $S_9:=\{\vL\in E^2\mid v_p(\xi_0)=v_p(\xi_1)<-\frac{1}{2}\}$.
\end{itemize}
It is not difficult to show that $S_i\cap S_j=\emptyset$ if $i\neq j$ and that $\cup_{i=1}^9S_i=E^2$. The following picture, Figure~\ref{fig: partition of E^2 for vr=(2,2)}, describes those sets $S_1,\cdots,S_9$ on $(v_p(\xi_0),v_p(\xi_1))$-plane, emphasizing the disjointness. These sets are carefully defined to describe the mod-$p$ reduction on each set.
\begin{figure}[htbp]
  \centering
  \includegraphics[scale=0.5]{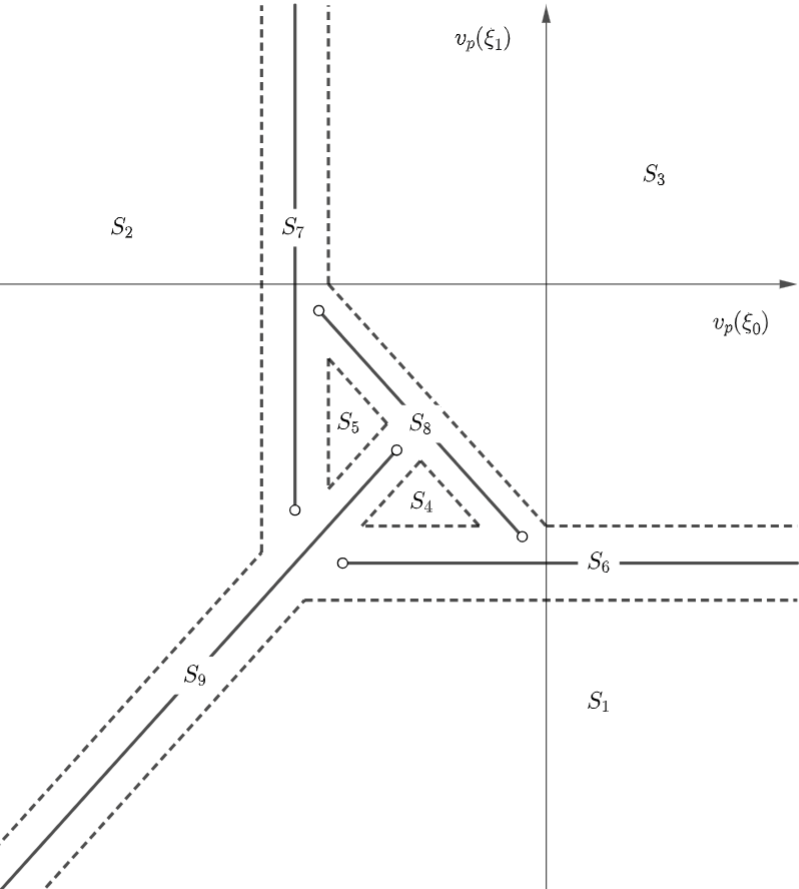}
  \caption{Partition by $S_1,\cdots,S_9$, displayed on $v_p(\vec{\xi})$-plane.}\label{fig: partition of E^2 for vr=(2,2)}
\end{figure}
We further define subsets of $S_3$ and $S_8$ as follow:
\begin{itemize}[leftmargin=*]
\item $S'_3=\{\vL\in S_3\mid -1<v_p(\xi_0+1)-v_p(\xi_1+1)<1\}\subset S_3$;
\item $S'_8=\{\vL\in S_8\mid v_p(p\xi_0\xi_1-4)>0\}\subset S_8$.
\end{itemize}
We summarize the non-split mod-$p$ reduction for all sets $S_1,\cdots,S_9,S_3',S_8'$ in Table~\ref{tab:summary for vr=(2,2)}.

\begin{table}[htbp]
    \centering
    \begin{tabular}{|c||c|c|}
    \hline
        $\vL$ & $\rhobar|_\IQptwo$ & $\vec{\mathrm{MT}}$ \\\hline\hline
    $S_1$
            & $\omega_4^{2+2p}\oplus\omega_4^{2p^2+2p^3}$
            & $(0,0)$ \\\hline
    $S_2$
            & $\omega_4^{2p+2p^2}\oplus\omega_4^{2+2p^3}$
            & $(0,0)$ \\\hline
    \multirow{2}{*}[-0.5em]{$S_3$}
        & $
        \begin{bmatrix}
            \omega & * \\ 0 & \omega
        \end{bmatrix}
        $
        & $
        \begin{cases}
            (1,0) & \mbox{if } v_p(\xi_0)\geq 0;\\
            (0,0) & \mbox{if } -1<v_p(\xi_0)<0
        \end{cases}
        $ \\\cline{2-3}
        & $
        \begin{bmatrix}
            \omega & * \\ 0 & \omega
        \end{bmatrix}
        $
        & $
        \begin{cases}
            (0,1) & \mbox{if } v_p(\xi_1)\geq 0;\\
            (0,0) & \mbox{if } -1<v_p(\xi_1)<0
        \end{cases}$ \\\hline
    $S'_3$
        &
        $
        \begin{bmatrix}
            \omega & * \\ 0 & \omega
        \end{bmatrix}
        $
        & 
        $
        \begin{cases}
            (0,1) &\mbox{if } -1<v_p(\xi_0+1)-v_p(\xi_1+1)<0; \\
            (1,1) &\mbox{if } v_p(\xi_0+1)=v_p(\xi_1+1);\\
            (1,0) &\mbox{if }0<v_p(\xi_0+1)-v_p(\xi_1+1)<1 
        \end{cases}
        $
        \\\hline
    $S_4$
            & $\omega_4^{1+2p+p^2}\oplus\omega_4^{1+p^2+2p^3}$
            & $(0,0)$ \\\hline
    $S_5$
            & $\omega_4^{p+2p^2+p^3}\oplus\omega_4^{2+p+p^3}$
            & $(0,0)$ \\\hline
    \multirow{2}{*}[-0.5em]{$S_6$}
            & $
            \begin{bmatrix}
                \omega_2 & * \\ 0 & \omega_2^{1+2p}
            \end{bmatrix}$
            & $(0,1)$\\\cline{2-3}
            & $
            \begin{bmatrix}
                \omega_2^{1+2p} & * \\ 0 & \omega_2
            \end{bmatrix}$
            & $(0,0)$ \\\hline
    \multirow{2}{*}[-0.5em]{$S_7$}
            & $
            \begin{bmatrix}
                \omega_2^p & * \\ 0 & \omega_2^{2+p}
            \end{bmatrix}$
            & $(1,0)$\\\cline{2-3}
            & $
            \begin{bmatrix}
                \omega_2^{2+p} & * \\ 0 & \omega_2^p
            \end{bmatrix}$
            & $(0,0)$ \\\hline
    $S_8$
        & $
        \begin{bmatrix}
            \omega & * \\ 0 & \omega
        \end{bmatrix}
        $
        & $
        \begin{cases}
            (0,1) &\mbox{if }-1<v_p(\xi_0)<-\frac{1}{2};\\
            (1,1) &\mbox{if } v_p(\xi_0)=-\frac{1}{2};\\
            (1,0) &\mbox{if } -\frac{1}{2}<v_p(\xi_0)<0
        \end{cases}$
        \\\hline
    $S'_8$
        & $
        \begin{bmatrix}
            \omega & * \\ 0 & \omega
        \end{bmatrix}$
        & $(0,0)$ \\\hline
    \multirow{2}{*}[-0.5em]{$S_9$}
            & $
            \begin{bmatrix}
                \omega_2^2 & * \\ 0 & \omega_2^{2p}
            \end{bmatrix}$
            & $(0,1)$\\\cline{2-3}
            & $
            \begin{bmatrix}
                \omega_2^{2p} & * \\ 0 & \omega_2^2
            \end{bmatrix}$
            & $(1,0)$ \\
        \hline
    \end{tabular}
    \caption{Non-split mod-$p$ reduction for $\vr=(2,2)$}
    \label{tab:summary for vr=(2,2)}
\end{table}

\begin{rema}\label{rema: 2,2}
One can extract the following results from Table~\ref{tab:summary for vr=(2,2)}:
\begin{itemize}[leftmargin=*]
\item our method provides with at least one Galois stable lattice in each $2$-dimensional semi-stable non-crystalline representation of $G_{\Q_{p^2}}$ with parallel Hodge--Tate weights $(0,2)$, as $\cup_{i=1}^9S_i=E^2$;
\item if the mod-$p$ reduction is an extension of two distinct characters then our method provides the two non-homothetic lattices (For instance, if $\vL\in S_8\setminus S_8'$ then there is one more lattice whose mod-$p$ reduction is split);
\item if $\vL\in S_3'$ then three non-homothetic lattices are provided, as $S_3'\subset S_3$. In particular, if $\vL\in S_3'$ then $\rhobar$ is an extension of the same characters.
\end{itemize}
\end{rema}

For the rest of this subsection, we explain how one can induce Table~\ref{tab:summary for vr=(2,2)} from the results in the previous sections of paragraph.
We start by noting the following identities $\vx^{(1,1)}=(\xi_0,\xi_1)$, $\vx^{(1,2)}=(\xi_0,\xi_1-\frac{1}{p(\xi_0+1)})$, and $\vx^{(2,1)}=(\xi_0-\frac{1}{p(\xi_1+1)},\xi_1)$. Moreover, if we let $\vec{\xi}'=(\xi'_0,\xi'_1)$ for the solutions of the equation
\begin{equation}\label{eq: equations for xi' when 2,2}
\xi'_0=\xi_0-\frac{1}{p(\xi'_1+1)}
\quad\mbox{and}\quad
\xi'_1=\xi_1-\frac{1}{p(\xi'_0+1)},
\end{equation}
then we have $\vec{\xi}'=\vx^{(2,2)}$, and we describe its necessary properties in the following lemma.
\begin{lemm}\label{lemma: implicit vx^(2,2)}
Let $\vec{\xi}\in E^2$ and let $\vec{\xi}'$ be a solution of the equations in \eqref{eq: equations for xi' when 2,2}.
\begin{enumerate}[leftmargin=*]
\item If $v_p(\xi_0)>-1$, $v_p(\xi_1)>-1$ and $v_p(\xi_0)+v_p(\xi_1)>-1$, then there exists a solution $\vec{\xi}'\in E^2$ whose valuation satisfies
$$v_p(\xi'_0)=\frac{v_p(\xi_0+1)-v_p(\xi_1+1)-1}{2}
\quad\mbox{and}\quad
v_p(\xi'_1)=\frac{v_p(\xi_1+1)-v_p(\xi_0+1)-1}{2}.$$
\item If $-1<v_p(\xi_0)<0$ and $v_p(\xi_0)+v_p(\xi_1)=-1$, then there exits a solution $\vec{\xi}'\in E^2$ whose valuation satisfies $v_p(\vec{\xi}')=v_p(\vec{\xi})$, assuming $E$ is large enough.
\item If $v_p(\xi_0)\geq -1$, $v_p(\xi_1)\geq -1$ and $v_p(\xi_0)+v_p(\xi_1)<-1$, then there exists a unique solution $\vec{\xi}'\in E^2$ whose valuation satisfies $v_p(\vec{\xi}')=v_p(\vec{\xi})$.
\end{enumerate}
\end{lemm}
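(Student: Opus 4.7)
The plan is to reduce the coupled system \eqref{eq: equations for xi' when 2,2} to a single quadratic equation in an auxiliary variable. Setting $a := \xi_0+1$, $b := \xi_1+1$, $a' := \xi'_0+1$, $b' := \xi'_1+1$, the system rewrites as $a' = a - \frac{1}{pb'}$ and $b' = b - \frac{1}{pa'}$. Multiplying through by $pa'b'$ gives $pa'b' = pab' - 1 = pa'b - 1$, which forces $ab' = a'b$; writing $t$ for the common ratio $a'/a = b'/b$ and substituting $a' = at$, $b' = bt$ back into the first equation yields $pabt(1-t) = 1$, that is,
\begin{equation*}
    t^2 - t + \tfrac{1}{pab} = 0.
\end{equation*}
Conversely, any root $t \in E$ of this quadratic recovers a solution $(\xi'_0,\xi'_1) = (at-1,\,bt-1)$, so the problem becomes one of locating a root with the correct $p$-adic valuation.

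For case~(iii), the conditions force $v_p(\xi_i)<0$, hence $v_p(a) = v_p(\xi_0)$, $v_p(b) = v_p(\xi_1)$, and $v_p(1/(pab)) = -1 - v_p(\xi_0) - v_p(\xi_1) > 0$; the quadratic reduces modulo $\fm$ to $t^2 - t = 0$ and Hensel's lemma (using that $p$ is odd) produces exactly one root $t \in 1 + \fm$, which is a unit and gives $v_p(\xi'_i) = v_p(a) + v_p(t) = v_p(\xi_i)$. The second (small) root produces a different valuation profile, yielding the uniqueness claim. For case~(ii), one has $v_p(ab) = -1$ so $1/(pab) \in \cO^\times$ and the discriminant $1 - 4/(pab)$ lies in $\cO$; after enlarging $E$ to contain its square root, both roots are units and again give $v_p(\vec{\xi}\,') = v_p(\vec{\xi}\,)$. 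For case~(i), the hypotheses imply $v_p(a) + v_p(b) > -1$, so $v_p(1/(pab)) < 0$; the constant term dominates, and both roots satisfy $v_p(t) = \tfrac{1}{2}v_p(1/(pab)) = -\tfrac{1}{2}(1 + v_p(a) + v_p(b))$, producing $v_p(\xi'_0+1) = v_p(a) + v_p(t) = \tfrac{1}{2}(v_p(\xi_0+1) - v_p(\xi_1+1) - 1)$ and symmetrically for $\xi'_1$, matching the claimed formulas.

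The main obstacle will be the valuation bookkeeping in case~(i): one must verify $v_p(a)+v_p(b)>-1$ across all sub-regimes (including those where some $v_p(\xi_i) \geq 0$ and cancellations such as $\xi_i \equiv -1 \pmod{\fm}$ could raise $v_p(\xi_i+1)$ above $v_p(\xi_i)$), and one must confirm that $v_p(\xi'_0)$ coincides with $v_p(\xi'_0+1)$ in the relevant regime (which is automatic whenever the latter is negative, the generic situation intended by the lemma). The ``$E$ large enough'' caveat in case~(ii) is a genuine constraint because there the discriminant is a unit whose square root need not lie in the original $E$; in cases~(i) and~(iii) the quadratic splits after at most a tame ramified extension, which is harmless since throughout the paper $E$ is allowed to be enlarged.
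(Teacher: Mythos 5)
Your proposal is correct and follows essentially the same route as the paper: both reduce the coupled system to a single quadratic after observing the identity $pa'b'+1 = pab' = pa'b$, and then read off valuations by a Newton-polygon/Hensel case analysis. The only cosmetic difference is that the paper works with $X = p(\xi'_0+1)(\xi'_1+1)$ and the quadratic $(X+1)^2 = AX$, whereas you parametrize by the ratio $t = a'/a = b'/b$ and get $t^2 - t + 1/(pab) = 0$; these are related by the affine substitution $X+1 = At$, so the Newton polygons and the ensuing valuation computations are the same, and you correctly flag the $v_p(\xi'_j)$ versus $v_p(\xi'_j+1)$ distinction that the paper implicitly resolves via the constraint defining $S'_3$.
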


\begin{proof}
The equation of $\vec{\xi}'$ can be translated to
$$p(\xi'_0+1)(\xi'_1+1)+1=p(\xi_0+1)(\xi'_1+1)=p(\xi'_0+1)(\xi_1+1).$$
Put $X=p(\xi'_0+1)(\xi'_1+1)$ and $A=p(\xi_0+1)(\xi_1+1)$, then one have $(X+1)^2=AX$ and $\xi'_j+1=\frac{X+1}{p(\xi_{j+1}+1)}$ for $j=0,1$.
\begin{enumerate}[leftmargin=*]
\item If $v_p(\xi_0)>-1$, $v_p(\xi_1)>-1$ and $v_p(\xi_0)+v_p(\xi_1)>-1$, then $v_p(A)>0$. By Hensel's Lemma, two solutions for $X$ can be found in $-1+\fm$. For this $X$, $v_p(X+1)=\frac{1}{2}v_p(AX)=\frac{v_p(\xi_0+1)+v_p(\xi_1+1)+1}{2}$. This with $\xi'_j+1=\frac{X+1}{p(\xi_{j+1}+1)}$ gives the result.
\item If $-1<v_p(\xi_0)<0$ and $v_p(\xi_0)+v_p(\xi_1)=-1$, then $v_p(A)=0$. Extending $E$ large enough, two solutions for $X$ can be found in $\cO^\times$. For this $X$, $v_p(X+1)=\frac{1}{2}v_p(AX)=-\frac{1}{2}$, giving the desired result.
\item If $v_p(\xi_0)\geq -1$, $v_p(\xi_1)\geq -1$ and $v_p(\xi_0)+v_p(\xi_1)<-1$, then $v_p(A)>0$. By Hensel's Lemma, two solutions $X=X_1$ and $X=X_2$ are found in $E$, where $v_p(X_1)=v_p(A^{-1})$ and $v_p(X_2)=v_p(A)$. This gives $v_p(X_1+1)=0$ and $v_p(X_2+1)=v_p(A)$. Now it is easy to see that only $\vec{\xi}'$ from $X_2$ can satisfy the condition $v_p(\xi_0)\geq -1$, $v_p(\xi_1)\geq -1$ and $v_p(\xi_0)+v_p(\xi_1)<-1$, and moreover $v_p(\vec{\xi}')=v_p(\vec{\xi})$.
\end{enumerate}
This completes the proof.
\end{proof}

We are now ready to verify how one can induce Table~\ref{tab:summary for vr=(2,2)}.
\begin{itemize}[leftmargin=*]
\item Assume $\vL\in S_1$. Equivalently, we have $v_p(x_0^{(1,2)})<-1$ and $v_p(x_0^{(1,2)})>v_p(x_1^{(1,2)})$. Then we have $\rhobar|_\IQptwo\cong\omega_4^{2+2p}\oplus\omega_4^{2p^2+2p^3}$ from $\mathbf{Case}_\phi(\vr;(\frac{1}{2},2))$ (see \S\ref{subsubsec: vr=(2,2), vk=(1/2,2),(2,1/2)}~(i)).

\item Assume $\vL\in S_2$. Equivalently, we have $v_p(x_1^{(2,1)})<-1$ and $v_p(x_0^{(2,1)})<v_p(x_1^{(2,1)})$. Then we have $\rhobar|_\IQptwo\cong\omega_4^{2p+2p^2}\oplus\omega_4^{2+2p^3}$ from $\mathbf{Case}_\phi(\vr;(\frac{1}{2},2))$ (see \S\ref{subsubsec: vr=(2,2), vk=(1/2,2),(2,1/2)}~(i), as $S_2=\varphi(S_1)$).

\item Assume $\vL\in S_3$, i.e, $v_p(\xi_0)>-1$, $v_p(\xi_1)>-1$, and $v_p(\xi_0)+v_p(\xi_1)>-1$. In this case there are two non-homothetic lattices whose mod-$p$ reductions are non-split and reducible.
    \begin{itemize}[leftmargin=*]
        \item We first consider the lattice from $v_p(\Theta_0)\in\{0,v_p(\xi_0)\}$ of $\mathbf{Case}_\phi(\vr;(1,1))$ (see \S\ref{subsubsec: vr=(2,2), vk=(1,1)}~(i)). This case gives $\rhobar|_\IQptwo\cong
        \begin{bmatrix}
            \omega & * \\ 0 & \omega
        \end{bmatrix}$ with $\vec{\mathrm{MT}}=(1,0)$ if $v_p(\xi_0)\geq 0$ and $\vec{\mathrm{MT}}=(0,0)$ otherwise.
        \item We also consider the lattices from $v_p(\Theta_0)\in\{-1,-1-v_p(\xi_1)\}$ of $\mathbf{Case}_\phi(\vr;(1,1))$ (see \S\ref{subsubsec: vr=(2,2), vk=(1,1)}~(i)). This case gives $\rhobar|_\IQptwo\cong
        \begin{bmatrix}
            \omega & * \\ 0 & \omega
        \end{bmatrix}$ with $\vec{\mathrm{MT}}=(0,1)$ if $v_p(\xi_1)\geq 0$ and $\vec{\mathrm{MT}}=(0,0)$ otherwise.
    \end{itemize}

Now we assume $\vL\in S'_3\subset S_3$. By Lemma~\ref{lemma: implicit vx^(2,2)}~(i), we have $v_p(\xi'_0)+v_p(\xi'_1)=-1$ and $-1<v_p(\xi'_0)<0$. In this case, there are another lattice giving non-split mod-$p$ reduction, which is different from the above two. For this lattice, we have $\rhobar|_\IQptwo\cong
\begin{bmatrix}
    \omega & * \\ 0 & \omega
\end{bmatrix}$, with $\vec{\mathrm{MT}}=(0,1)$ if $-1<v_p(\xi_0+1)-v_p(\xi_1+1)<0$, $\vec{\mathrm{MT}}=(1,1)$ if $v_p(\xi_0+1)=v_p(\xi_1+1)$, and $\vec{\mathrm{MT}}=(1,0)$ if $0<v_p(\xi_0+1)-v_p(\xi_1+1)<1$. This is from \S\ref{subsubsec: vr=(2,2), vk=(3/2,2),(2,3/2)}~(ii) if $v_p(\xi'_0)\neq -\frac{1}{2}$ (i.e., $v_p(\xi_0+1)\neq v_p(\xi_1+1)$ in $S'_3$), and from \S\ref{subsubsec: vr=(2,2), vk=(3/2,2),(2,3/2)}~(iii) if $v_p(\xi'_0)=-\frac{1}{2}$ (i.e., $v_p(\xi_0+1)=v_p(\xi_1+1)$ in $S'_3$).

\item Assume $\vL\in S_4$. Equivalently, we have $v_p(\xi'_0)+v_p(\xi'_1)<-1$, $v_p(\xi'_0)>v_p(\xi'_1)$ and $v_p(\xi'_1)>0$ by Lemma~\ref{lemma: implicit vx^(2,2)} (iii). Then $\rhobar|_\IQptwo\cong\omega_4^{1+2p+p^2}\oplus\omega_4^{1+p^2+2p^3}$ from $\mathbf{Case}_\phi(\vr;(\frac{3}{2},2))$ (see \S\ref{subsubsec: vr=(2,2), vk=(3/2,2),(2,3/2)}~(i)).

\item Assume $\vL\in S_5$. Equivalently, we have $v_p(\xi_0)+v_p(\xi_1)<-1$, $v_p(\xi_0)<v_p(\xi_1)$, and $v_p(\xi_0)>0$ by Lemma~\ref{lemma: implicit vx^(2,2)}. Then, $\rhobar|_\IQptwo\cong\omega_4^{p+2p^2+p^3}\oplus\omega_4^{2+p+p^3}$ from $\mathbf{Case}_\phi(\vr;(2,\frac{3}{2}))$ (see \S\ref{subsubsec: vr=(2,2), vk=(3/2,2),(2,3/2)} (i), as $S_5=\varphi(S_4)$).

\item Assume $\vL\in S_6$. Equivalently, we have $v_p(x_1^{(1,2)})=-1$ and $v_p(x_0^{(1,2)})>-1$. Also, if $-1<v_p(\xi_0)<0$, then $-1<v_p(\xi'_0)<0$ and $v_p(\xi'_1)=-1$, by Lemma~\ref{lemma: implicit vx^(2,2)} (iii). In this case there are two non-homothetic lattices whose mod-$p$ reductions are non-split and reducible.
    \begin{itemize}[leftmargin=*]
    \item We first consider $\rhobar$ from $\mathbf{Case}_\phi(\vr;(\frac{1}{2},2))$ (see \S\ref{subsubsec: vr=(2,2), vk=(1/2,2),(2,1/2)}~(ii)). In this case we have $\rhobar|_\IQptwo\cong
    \begin{bmatrix}
        \omega_2 & * \\ 0 & \omega_2^{1+2p}
    \end{bmatrix}$ with $\vec{\mathrm{MT}}=(0,1)$.
    \item We also consider $\rhobar$ from  $\mathbf{Case}_\phi(\vr;(\frac{3}{2},2))$ or $\mathbf{Case}_\phi(\vr;(1,1))$ (see \S\ref{subsubsec: vr=(2,2), vk=(3/2,2),(2,3/2)}~(v) if $-1<v_p(\xi_0)<0$ and from \S\ref{subsubsec: vr=(2,2), vk=(1,1)}~(ii) if $v_p(\xi_0)\geq 0$). In this case we have $\rhobar_2|_\IQptwo\cong
    \begin{bmatrix}
        \omega_2^{1+2p} & * \\ 0 & \omega_2
    \end{bmatrix}$ with $\vec{\mathrm{MT}}=(0,0)$.
    \end{itemize}

\item Assume $\vL\in S_7$. Equivalently, we have $v_p(x_0^{(2,1)})=-1$ and $v_p(x_1^{(2,1)})>-1$. Also, if $-1<v_p(\xi_1)<0$, then $-1<v_p(\xi'_1)<0$ and $v_p(\xi'_0)=-1$, by Lemma~\ref{lemma: implicit vx^(2,2)} (iii). In this case there are two non-homothetic lattices whose mod-$p$ reductions are non-split and reducible.
    \begin{itemize}[leftmargin=*]
    \item We first consider $\rhobar$ from $\mathbf{Case}_\phi(\vr;(2,\frac{1}{2}))$ (see \S\ref{subsubsec: vr=(2,2), vk=(1/2,2),(2,1/2)}~(ii), as $S_7=\varphi(S_6)$). In this case we have $\rhobar|_\IQptwo\cong
    \begin{bmatrix}
        \omega_2^p & * \\ 0 & \omega_2^{2+p}
    \end{bmatrix}$ with $\vec{\mathrm{MT}}=(1,0)$.
    \item We also consider $\rhobar$ from $\mathbf{Case}_\phi(\vr;(2,\frac{3}{2}))$ or $\mathbf{Case}_\phi(\vr;(1,1))$ (see \S\ref{subsubsec: vr=(2,2), vk=(3/2,2),(2,3/2)}~(v) if $-1<v_p(\xi_1)<0$ and from \S\ref{subsubsec: vr=(2,2), vk=(1,1)}~(ii) if $v_p(\xi_1)\geq 0$, as $S_7=\varphi(S_6)$). In this case we have $\rhobar_2|_\IQptwo\cong
    \begin{bmatrix}
        \omega_2^{2+p} & * \\ 0 & \omega_2^p
    \end{bmatrix}$ with $\vec{\mathrm{MT}}=(0,0)$.
    \end{itemize}

\item Assume $\vL\in S_8$. Extending $E$ large enough, we have $-1<v_p(\xi'_0)<0$ and $v_p(\xi'_0)+v_p(\xi'_1)=-1$, by Lemma~\ref{lemma: implicit vx^(2,2)} (ii). In this case, we consider either $\mathbf{Case}_\phi(\vr;(\frac{3}{2},2))$ or $\mathbf{Case}_\phi(\vr;(2,\frac{3}{2}))$, which coincides if $v_p(\xi_0)=-\frac{1}{2}$ (see \S\ref{subsubsec: vr=(2,2), vk=(3/2,2),(2,3/2)}~(ii)). It is shown that $\rhobar|_\IQptwo\cong
\begin{bmatrix}
    \omega & * \\ 0 & \omega
\end{bmatrix}$ with $\vec{\mathrm{MT}}=(0,1)$ if $-1<v_p(\xi_0)<-\frac{1}{2}$, $\vec{\mathrm{MT}}=(1,1)$ if $v_p(\xi_0)=-\frac{1}{2}$, and $\vec{\mathrm{MT}}=(1,0)$ if $-\frac{1}{2}<v_p(\xi_0)<0$. 

Now we consider $\vL\in S'_8\subset S_8$, consisting $\vL\in S_8$ with $v_p(p\xi_0\xi_1-4)>0$. In this case, we also consider $\mathbf{Case}_\phi(\vr;(1,1))$ to get another lattice giving non-split reducible mod-$p$ reduction (see \S\ref{subsubsec: vr=(2,2), vk=(1,1)}~(iii)). It is shown that $\rhobar|_\IQptwo\cong
\begin{bmatrix}
    \omega & * \\ 0 & \omega
\end{bmatrix}$ with $\vec{\mathrm{MT}}=(0,0)$.

\item Assume $\vL\in S_9$. Equivalently, we have
\begin{itemize}[leftmargin=*]
    \item $v_p(x_0^{\vl})=v_p(x_1^{\vl})\leq 0$ for $\vl=(1,2)$ and $(2,1)$, if $v_p(\xi_0)\leq -1$;
    \item $-1\leq v_p(\xi'_0)=v_p(\xi'_1)<-\frac{1}{2}$, if $-1\leq v_p(\xi_0)<-\frac{1}{2}$, by Lemma~\ref{lemma: implicit vx^(2,2)}.
\end{itemize}
In this case there are two non-homothetic lattices whose mod-$p$ reductions are non-split and reducible.
    \begin{itemize}[leftmargin=*]
    \item We first consider $\mathbf{Case}_\phi(\vr;(\frac{1}{2},2))$ or $\mathbf{Case}_\phi(\vr;(\frac{3}{2},2))$, which coincide if $v_p(\xi_0)=-1$ (see \S\ref{subsubsec: vr=(2,2), vk=(1/2,2),(2,1/2)}~(iii)(iv) if $v_p(\xi_0)\leq -1$ and \S\ref{subsubsec: vr=(2,2), vk=(3/2,2),(2,3/2)}~(iii),(iv) if $-1\leq v_p(\xi_0)<-\frac{1}{2}$). It is shown that $\rhobar|_\IQptwo\cong
    \begin{bmatrix}
        \omega_2^2 & * \\ 0 & \omega_2^{2p}
    \end{bmatrix}$ with $\vec{\mathrm{MT}}=(0,0)$ if $v_p(\xi_0)<-1$ and $\vec{\mathrm{MT}}=(0,1)$ if $-1\leq v_p(\xi_0)<-\frac{1}{2}$.
    \item We also consider $\mathbf{Case}_\phi(\vr;(2,\frac{1}{2}))$ or $\mathbf{Case}_\phi(\vr;(2,\frac{3}{2}))$, which coincide if $v_p(\xi_1)=-1$ (see \S\ref{subsubsec: vr=(2,2), vk=(1/2,2),(2,1/2)}~(iii)(iv) if $v_p(\xi_1)\leq -1$ and \S\ref{subsubsec: vr=(2,2), vk=(3/2,2),(2,3/2)}~(iii),(iv) if $-1\leq v_p(\xi_1)<-\frac{1}{2}$). It is shown that $\rhobar|_\IQptwo\cong
    \begin{bmatrix}
        \omega_2^{2p} & * \\ 0 & \omega_2^2
    \end{bmatrix}$ with $\vec{\mathrm{MT}}=(0,0)$ if $v_p(\xi_1)<-1$ and $\vec{\mathrm{MT}}=(1,0)$ if $-1\leq v_p(\xi_1)<-\frac{1}{2}$.
    \end{itemize}
\end{itemize}

\subsection{An example of the case $f=2$: $\vr=(1,5)$}
In this subsection, we apply our main result, Theorem~\ref{theo: main 2}, to compute the mod-$p$ reduction of $2$-dimensional semi-stable representations of $G_{\Q_{p^2}}$ with $j$-labeled Hodge--Tate weights $(0,r_j)$ where $\vr=(1,5)$. We set $r=5$ and assume $5<p-1$.

We first need to compute the valid $\vk'\in\cK(\vr)$, and it is immediate from \eqref{eq: condition for infinity} that $\cJ_\infty(\vk')\in\{\emptyset,\{0\}\}$ for all $\vk'\in\cK(\vr)$. Recall that we say that $\vk'$ is valid if $R(\vr;\vk')\neq\emptyset$, i.e., the equations and inequalities have common solutions. Following the notation in \eqref{eq: condition for infinity}, by $\cJ_0=\emptyset$ (resp. by $\cJ_0=\{0\}$) we mean we will consider $\vk'$ with $\cJ_\infty(\vk')=\emptyset$ (resp. with $\cJ_\infty(\vk')=\{0\}$). It turns out that it is enough to consider those $\vk'\in\cK(\vr)$ such that $R_{int}(\vr;\vk')\neq\emptyset$, and such $\vk'$ is listed as follow:
\begin{itemize}[leftmargin=*]
\item if $\cJ_\infty(\vk')=\emptyset$ then $\vk'$ is one of the following
$$(\tfrac{1}{2},2),(\tfrac{1}{2},\tfrac{5}{2}),(\tfrac{1}{2},3),(\tfrac{1}{2},4),(\tfrac{1}{2},5),(1,\tfrac{3}{2}),(1,\tfrac{5}{2}), (1,\tfrac{7}{2}),(1,\tfrac{9}{2}),(\tfrac{3}{2},1),(\tfrac{3}{2},\tfrac{3}{2}),(\tfrac{3}{2},2),(\tfrac{3}{2},4),(\tfrac{3}{2},5);$$
\item if $\cJ_\infty(\vk')=\{0\}$ then $\vk'$ is one of the following
$$(\infty,1),(\infty,\tfrac{3}{2}),(\infty,2),(\infty,4),(\infty,5).$$
\end{itemize}
Moreover, we describe the areal support $R(\vr;\vk')$ for each such $\vk'$ in the table, Table~\ref{tab: J((1,5))}.
\begin{table}[htbp]
  \centering
  \begin{tabular}{|c||c|c|}
    \hline
    % after \\: \hline or \cline{col1-col2} \cline{col3-col4} ...
    $\vk'$ & $R(\vr;\vk')$ & $\vx$ \\\hline\hline
    $(\frac{1}{2},2)$ & $t_0\geq -1$, $t_1\geq -1$ & {$\vx^{(1,2)}$} \\\hline
    $(\frac{1}{2},\frac{5}{2})$ & $t_0\geq -1$, $t_1\geq -1$ & {$\vx^{(1,3)}$} \\\hline
    $(\frac{1}{2},3)$ & $-1\leq t_1<0$, $t_0-t_1\geq 0$ & {$\vx^{(1,3)}$} \\\hline
    $(\frac{1}{2},4)$ & $-2\leq t_1\leq -1$, $t_0-t_1\geq 1$ & {$\vx^{(1,2)}$} \\\hline
    $(\frac{1}{2},5)$ & $t_1\leq -2$, $t_0-t_1\geq 2$ & {$\vx^{(1,1)}$} \\\hline
    $(1,\frac{3}{2})$ & $-2\leq t_0\leq -1$, $t_0-t_1\leq 1$ & {$\vx^{(1,2)}$} \\\hline
    $(1,\frac{5}{2})$ & $-1\leq t_0\leq 0$, $t_0-t_1\leq 0$ & {$\vx^{(1,3)}$} \\\hline
    $(1,\frac{7}{2})$ & $t_0\leq -1$, $t_1\geq -1$, $t_0-t_1\leq 1$ & {$\vx^{(1,2)}$} \\\hline
    $(1,\frac{9}{2})$ & $-3\leq t_1\leq -2$, $1\leq t_0-t_1\leq 2$ & {$\vx^{(1,1)}$} \\\hline
    $(\frac{3}{2},1)$ & $t_0\leq -2$, $t_1\geq -3$ & {$\vx^{(0,1)}$} \\\hline
    $(\frac{3}{2},\frac{3}{2})$ & $t_0\leq -1$, $t_1\geq -3$, $t_0-t_1\leq 1$ & {$\vx^{(0,2)}$} \\\hline
    $(\frac{3}{2},2)$ & $t_0\leq -1$, $t_1\geq -2$ & {$\vx^{(0,2)}$} \\\hline
    $(\frac{3}{2},4)$ & $t_0-t_1\leq 1$, $-3\leq t_1\leq -2$ & {$\vx^{(0,2)}$} \\\hline
    $(\frac{3}{2},5)$ & $t_0-t_1\leq 2$, $t_1\leq -3$ & {$\vx^{(0,1)}$} \\\hline
    $(\infty,1)$ & $t_1\geq -3$ & {$\vx^{(0,1)}$} \\\hline
    $(\infty,\frac{3}{2})$ & $t_1\geq -3$ & {$\vx^{(0,2)}$} \\\hline
    $(\infty,2)$ & $t_1\geq -2$ & {$\vx^{(0,2)}$} \\\hline
    $(\infty,4)$ & $-3\leq t_1\leq 2$ &  {$\vx^{(0,2)}$}\\\hline
    $(\infty,5)$ & $t_1\leq -3$ & {$\vx^{(0,1)}$} \\
    \hline
  \end{tabular}
  \caption{Areal support for $\vr=(1,5)$}\label{tab: J((1,5))}
\end{table}

We further note that
\begin{itemize}[leftmargin=*]
\item $\Supp(\vr;\vk')\subset \Supp(\vr;(\frac{1}{2},\frac{5}{2}))$ for $\vk'=(\frac{1}{2},2),(\frac{1}{2},3),(1,\frac{5}{2})$;
\item $\Supp(\vr;\vk')\subset \Supp(\vr;(\frac{3}{2},\frac{3}{2}))$ for $\vk'=(1,\frac{3}{2})$, $(\frac{3}{2},1)$, $(\frac{3}{2},2)$, and $(\frac{3}{2},4)$;
\item $\Supp(\vr;\vk')\subset \Supp(\vr;(\infty,\frac{3}{2}))$ for $\vk'=(\infty,1)$, $(\infty,2)$, and $(\infty,4)$.
\end{itemize}
Hence, we exclude these redundancies and consider only
\begin{equation}\label{eq: list of strongly valid for 1,5}
    \{(\tfrac{1}{2},\tfrac{5}{2}),(\tfrac{1}{2},4),(\tfrac{1}{2},5),(1,\tfrac{7}{2}),(1,\tfrac{9}{2}),(\tfrac{3}{2},\tfrac{3}{2}),(\tfrac{3}{2},5)\} \quad\mbox{and}\quad
    \{(\infty,\tfrac{3}{2}),(\infty,5)\}.
\end{equation}
We will write $J(\vr;\emptyset)$ (resp. $J(\vr;\{0\})$) for the set of $\vk'$ with $\cJ_\infty(\vk')=\emptyset$ (resp. with $\cJ_\infty(\vk')=\{0\}$) listed in \eqref{eq: list of strongly valid for 1,5}. Moreover, we let $J(\vr):=J(\vr;\emptyset)\cup J(\vr;\{0\})$.

Each areal support $R(\vr;\vk')$ for $\vk'\in J(\vr)$ is depicted in Figure~\ref{fig: vr=(1,5)}. We point out that the pairs of (half) integers in the figure indicate $\vk'\in J(\vr)$, and that each polygon indicate the areal support $R(\vr;\vk')$.
\begin{figure}[htbp]
  \centering
  \includegraphics[scale=0.5]{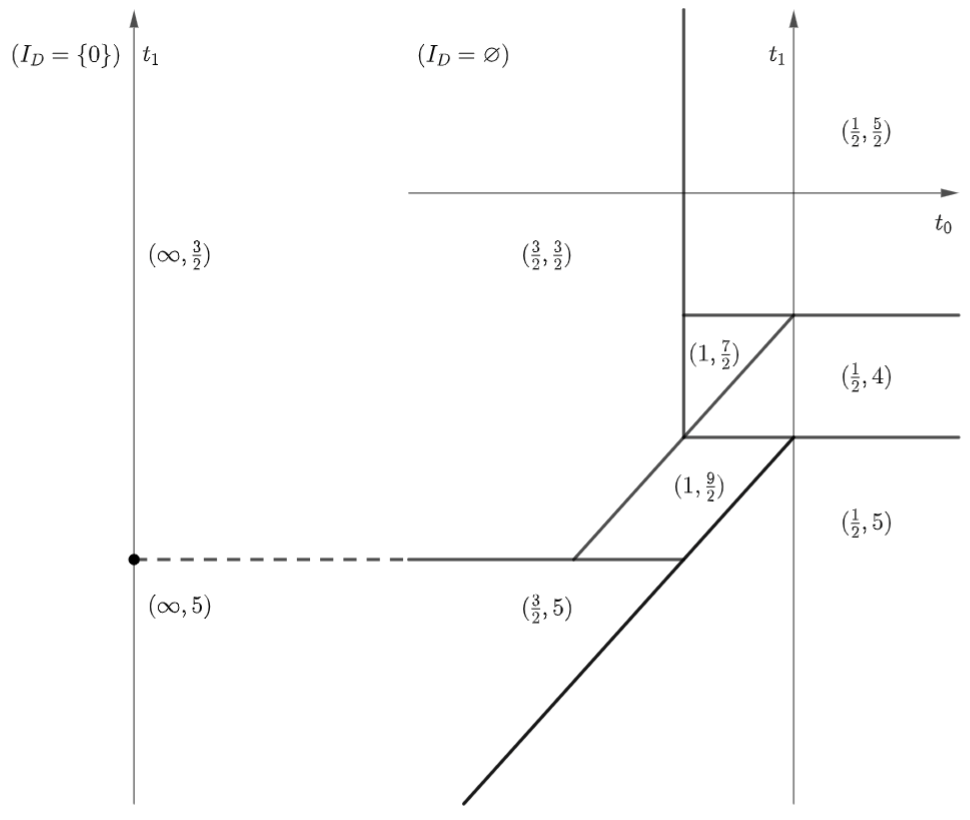}
  \caption{$R(\vr;\vk')$ for $\vr=(1,5)$ and $\vk'\in J(\vr)$}\label{fig: vr=(1,5)}
\end{figure}

For the rest of this subsection, we compute the mod-$p$ reduction of semi-stable representations in each $\mathbf{Case}_\phi(\vr;\vk')$ for $\vk'\in J(\vr)$. Then we summarize the mod-$p$ reduction when $\vr=(1,5)$ at the end of this subsection, \S\ref{subsubsec: summary for 1,5}.

\subsubsection{\textbf{For $\vk'=(\frac{1}{2},\frac{5}{2})$}}\label{subsubsec: vr=(1,5), vk=(1/2,5/2)}
In this section of paragraph, by $\vk'$ we always mean $(\frac{1}{2},\frac{5}{2})$. In this case, we have $\vx=\vx^{(1,3)}$, and we will write $\cM$ for the Breuil modules. From the results in \S\ref{subsec: mod p reduction}, the Breuil modules $\cM$ is determined by the following data:
\begin{itemize}[leftmargin=*]
  \item $M_{N,0}=
  \begin{bmatrix}
    0 & 0 \\
    \frac{1-12u^{p-1}}{\Theta_0} & 0
  \end{bmatrix}\quad\&\quad
  M_{N,1}=
  \begin{bmatrix}
    0 & 0 \\
    \frac{1}{\Theta_1} & 0
  \end{bmatrix}$;
  \item $M_{\Fil^5,0}=
  \begin{bmatrix}
    1 & 0 \\
    \frac{x_0}{\Theta_0} & 1
  \end{bmatrix}
  \begin{bmatrix}
    u^4 & 0 \\
    0 & u^5
  \end{bmatrix}\quad\&\quad
  M_{\Fil^5,1}=
  \begin{bmatrix}
    1 & 0 \\
    \frac{x_1}{\Theta_1}+\frac{3\alpha_1}{\beta_1\Theta_0} & 1
  \end{bmatrix}
  \begin{bmatrix}
    u^2 & 0 \\
    0 & u^3
  \end{bmatrix}$;
  \item $M_{\phi_5,0}=
  \begin{bmatrix}
    \alpha_0 & 0 \\
    0 & -\beta_0
  \end{bmatrix}\quad\&\quad
  M_{\phi_5,1}=
  \begin{bmatrix}
    1 & 6p\Theta_0 \\
    \frac{12u^{p-1}}{\Theta_0} & 1
  \end{bmatrix}
  \begin{bmatrix}
    \alpha_1 & 0 \\
    0 & -\beta_1
  \end{bmatrix}.$
\end{itemize}
Note that one can choose any $\vTh$ satisfying
$$-1\leq v_p(\Theta_0)=v_p(\Theta_1)\leq \min\{0,v_p(x_0),v_p(x_1)\}.$$
We will divide $R(\vr;\vk')$ into following four areas, and in each area we will determine the mod-$p$ reduction:
\begin{enumerate}[leftmargin=*]
  \item $\vec{t}\in R_{int}(\vr;\vk')$: Note that we have either $v_p(\Theta_0)\not\in\{0,v_p(x_0),v_p(x_1)\}$ or $v_p(\Theta_0)\neq -1$, in this case. Consider two $\barS_{\F}$-submodules $\cM'=\barS_{\F}(\barE_1)$ and $\cM''=\barS_{\F}(\barE_2)$ of $\cM$. Then one can observe that
    \begin{itemize}[leftmargin=*]
      \item if $v_p(\Theta_0)\not\in\{0,v_p(x_0),v_p(x_1)\}$, then $\cM'$ is a Breuil submodule of $\cM$ such that $\cM'\cong\cM(4,2;\alpha_0,\alpha_1)$ and the submodule $\cM'''=\barS_\F(\barE_2^{(0)},6p\Theta_0\barE_1^{(1)}+\barE_2^{(1)})$ gives $\cM'''\cong\cM(5,3;-\beta_0,-\beta_1)$ and $\cM=\cM'\oplus\cM'''$;
      \item if $v_p(\Theta_0)\neq -1$, then $\cM''$ is a Breuil submodule of $\cM$ and it satisfies $\cM''\cong\cM(5,3;-\beta_0,-\beta_1)$ and $\cM/\cM''\cong\cM(4,2;\alpha_0,\alpha_1)$. It is not difficult to show that the short exact sequence determined by $\cM''\hookrightarrow\cM$ is non-split.
    \end{itemize}
    So $\rhobar^{ss}|_\IQptwo\cong\omega_2^{1+3p}\oplus\omega_2^{2p}$, by Lemma~\ref{lemm: Tst^r for irred BrMod}~(i). In addition, the monodromy type is given as follow.
    \begin{enumerate}[leftmargin=*]
      \item If $v_p(\Theta_0)=0$, then $t_0\geq 0$ and $t_1\geq 0$ and the monodromy type is $(1,1)$.
      \item If $v_p(\Theta_0)\neq 0$, then the monodromy type is $(0,0)$. Note that, in particular, if $\cM''$ is a Breuil submodule of $\cM$ but $\cM'$ is not, then either $v_p(\Theta_0)=v_p(x_0)$ or $v_p(x_1)$, and so we have either $t_0<0$ or $t_1<0$.
    \end{enumerate}
  \item $t_1=-1$, $t_0>-1$: We have $v_p(\Theta_0)=-1=v_p(x_1)\neq v_p(x_0)$, and so $\frac{x_1}{\Theta_1},p\Theta_0,p\Theta_1\in\F^\times$ and $\frac{1}{\Theta_0}=\frac{1}{\Theta_1}=\frac{x_0}{\Theta_0}=0$ in $\F$. In particular, the monodromy type is $(0,0)$. Put $\cM'=\barS_{\F}(6p\Theta_0\barE_1^{(0)}+\barE_2^{(0)},\barE_2^{(1)})$. It is easy to see that $\cM'\cong\cM(5,3;-\beta_0,-\beta_1)$ and $\cM/\cM'=\barS_{\F}(\barE_1+\cM')\cong\cM(4,2;\alpha_0,\alpha_1)$, and so we conclude that $\rhobar^{ss}|_\IQptwo\cong\omega_2^{2p}\oplus\omega_2^{1+3p}$, by Lemma~\ref{lemm: Tst^r for irred BrMod}~(i). It is not difficult to show that the short exact sequence determined by $\cM'\hookrightarrow \cM$ is non-split.
  \item $t_0=t_1=-1$: We have $v_p(\Theta_0)=-1=v_p(x_0)=v_p(x_1)$, and so $\frac{x_0}{\Theta_0},\frac{x_1}{\Theta_1},p\Theta_0\in\F^\times$ and $\frac{1}{\Theta_0}=\frac{1}{\Theta_1}=0$ in $\F$. In particular, the monodromy type is $(0,0)$.
      \begin{enumerate}[leftmargin=*]
        \item Assume $px_0=\frac{1}{6}$ in $\F$, and put $\cM'=\barS_{\F}(\barE_1^{(0)}+\frac{x_0}{\Theta_0}\barE_2^{(0)},\barE_1^{(1)})$. It is easy to see that $\cM'$ satisfies $\cM'\cong\cM(4,3;\alpha_0,\frac{6p\beta_1\Theta_0x_1}{\Theta_1})$ and $\cM/\cM'=\barS_{\F}(\barE_2+\cM') \cong\cM(5,2;-\beta_0,-\frac{\alpha_1\Theta_1x_0}{\Theta_0x_1})$, and so $\rhobar^{ss}|_\IQptwo\cong\omega_2^{1+2p}\oplus\omega_2^{3p}$, by Lemma~\ref{lemm: Tst^r for irred BrMod}~(i). It is not difficult to show that the short exact sequence determined by $\cM'\hookrightarrow \cM$ is non-split.
        \item Assume $px_0\neq\frac{1}{6}$ in $\F$, put $\cM'=\barS_{\F}(6p\Theta_0\barE_1^{(0)}+\barE_2^{(0)},\barE_2^{(1)})$. It is easy to see that $\cM'\cong\cM(5,3;-\beta_0(1-6px_0),-\beta_1)$ and $\cM/\cM'=\barS_{\F}(\barE_1+\cM')\cong\cM(4,2;\frac{\alpha_0}{1-6px_0},\alpha_1)$, and so $\rhobar^{ss}|_\IQptwo\cong\omega_2^{2p}\oplus\omega_2^{1+3p}$, by Lemma~\ref{lemm: Tst^r for irred BrMod}~(i).It is not difficult to show that the short exact sequence determined by $\cM'\hookrightarrow \cM$ is non-split.
      \end{enumerate}
  \item $t_0=-1$, $t_1>-1$: We have $v_p(\Theta_0)=-1=v_p(x_0)\neq v_p(x_1)$, and so $\frac{x_0}{\Theta_0},p\Theta_0\in\F^\times$ and $\frac{1}{\Theta_0}=\frac{1}{\Theta_1}=\frac{x_1}{\Theta_1}=0$ in $\F$. In particular, the monodromy type is $(0,0)$.
      \begin{enumerate}[leftmargin=*]
        \item Assume $px_0=\frac{1}{6}$ in $\F$. It is easy to see that changing a basis to $\baseE'^{(0)}=(\barE_1^{(0)}, 6p\Theta_0\barE_1^{(0)}+\barE_2^{(0)})$ and $\baseF'^{(0)}=(u\barF_1^{(0)}-\frac{x_0}{\Theta_0}\barF_2^{(0)},6p\Theta_0\barF_1^{(1)})$ gives $\cM\cong\cM_0((5,2),(4,3); (6p\Theta_0\alpha_0,\alpha_1),(\frac{\beta_0x_0}{\Theta_0},-\beta_1))$, and so we conclude that $\rhobar|_\IQptwo \cong\omega_4^{3p+p^2+2p^3}\oplus\omega_4^{1+2p+3p^3}$, by Lemma~\ref{lemm: Tst^r for irred BrMod}~(ii).
        \item Assume $px_0\neq\frac{1}{6}$ in $\F$, and put $\cM'$ as in (iii)(b). By the same isomorphisms for $\cM'$ and $\cM/\cM'$, we conclude that $\rhobar^{ss}|_\IQptwo\cong\omega_2^{2p}\oplus\omega_2^{1+3p}$, by Lemma~\ref{lemm: Tst^r for irred BrMod}~(i).It is not difficult to show that the short exact sequence determined by $\cM'\hookrightarrow \cM$ is non-split.
      \end{enumerate}
\end{enumerate}
We summarize the mod-$p$ reduction for $\vk'=(\frac{1}{2},\frac{5}{2})$ in the following table, Table~\ref{tab: (1/2,5/2)}. Note that by $\ast$ in the following table we mean a non-split extension.
\begin{small}
\begin{table}[htbp]%(1/2,5/2)
  \centering
  \begin{tabular}{|c|c||c|c|}
    \hline
    % after \\: \hline or \cline{col1-col2} \cline{col3-col4} ...
    \multicolumn{2}{|c||}{$(t_0,t_1)$} & $\rhobar|_{I_{\Q_{p^2}}}$ & $\vec{\mathrm{MT}}$ \\\hline\hline
    \multirow{2}{*}[-1.5em]{$R_{int}(\vr;(\frac{1}{2},\frac{5}{2}))$}
        & $v_p(\Theta_0)\not\in\{0,v_p(x_0),v_p(x_1)\}$
        & $
        \begin{bmatrix}
            \omega_2^{1+3p} & 0 \\ 0 & \omega_2^{2p}
        \end{bmatrix}$
        & $(0,0)$ \\\cline{2-4}
        & otherwise
        & $
        \begin{bmatrix}
            \omega_2^{1+3p} & * \\ 0 & \omega_2^{2p}
        \end{bmatrix}$
        & $
        \begin{cases}
            (1,1) &\mbox{if } t_0\geq 0 \\&\mbox{ and }t_1\geq 0;\\
            (0,0) &\mbox{otherwise}
        \end{cases}$ \\\hline
    \multicolumn{2}{|c||}{$t_1=-1$, $t_0>-1$}
        & $
        \begin{bmatrix}
          \omega_2^{1+3p} & * \\
          0 & \omega_2^{2p}
        \end{bmatrix}$ & $(0,0)$ \\\hline
    \multirow{2}{*}[-0.5em]{$t_0=t_1=-1$}
      & $v_p(px_0-\frac{1}{6})>0$
        & $
        \begin{bmatrix}
          \omega_2^{3p} & * \\
          0 & \omega_2^{1+2p}
        \end{bmatrix}$ & $(0,0)$ \\\cline{2-4}
      & $v_p(px_0-\frac{1}{6})=0$
        & $
        \begin{bmatrix}
          \omega_2^{1+3p} & * \\
          0 & \omega_2^{2p}
        \end{bmatrix}$ & $(0,0)$ \\\hline
    \multirow{2}{*}[-0.5em]{
    \begin{tabular}{c}
      $t_0=-1$, \\ $t_1>-1$
    \end{tabular}}
      & $v_p(px_0-\frac{1}{6})>0$
        & $\omega_4^{3p+p^2+2p^3}\oplus\omega_4^{1+2p+3p^3}$ & $(0,0)$ \\\cline{2-4}
      & $v_p(px_0-\frac{1}{6})=0$
        & $
        \begin{bmatrix}
          \omega_2^{1+3p} & * \\
          0 & \omega_2^{2p}
        \end{bmatrix}$ & $(0,0)$ \\\hline
  \end{tabular}
  \caption{$\rhobar$ for $\vk'=(\frac{1}{2},\frac{5}{2})$}\label{tab: (1/2,5/2)}
\end{table}
\end{small}

\subsubsection{\textbf{For $\vk'=(\frac{1}{2},4)$}}\label{subsubsec: vr=(1,5), vk=(1/2,4)}
In this section of paragraph, by $\vk'$ we always mean $(\frac{1}{2},4)$. In this case, we have $\vx=\vx^{(1,2)}$, and we will write $\cM$ for the Breuil modules. From the results in \S\ref{subsec: mod p reduction}, the Breuil modules $\cM$ is determined by the following data:
\begin{itemize}[leftmargin=*]
  \item $M_{N,0}=
  \begin{bmatrix}
    0 & 0 \\
    \frac{1-10u^{p-1}}{\Theta_0} & 0
  \end{bmatrix}\quad\&\quad
  M_{N,1}=
  \begin{bmatrix}
    0 & 0 \\
    \frac{1}{\Theta_1} & 0
  \end{bmatrix}$;
  \item $M_{\Fil^5,0}=
  \begin{bmatrix}
    1 & 0 \\
    \frac{x_0}{\Theta_0} & 1
  \end{bmatrix}
  \begin{bmatrix}
    u^4 & 0 \\
    0 & u^5
  \end{bmatrix}\quad\&\quad
  M_{\Fil^5,1}=
  \begin{bmatrix}
    1 & \frac{\Theta_1}{x_1} \\
    0 & 1
  \end{bmatrix}
  \begin{bmatrix}
    u^4 & \frac{\beta_1u^3}{18\alpha_1\Theta_0} \\
    0 & u
  \end{bmatrix}$;
  \item $M_{\phi_5,0}=
  \begin{bmatrix}
    \alpha_0 & 0 \\
    0 & -\beta_0
  \end{bmatrix}\quad\&\quad
  M_{\phi_5,1}=
  \begin{bmatrix}
    1 & \frac{4p\Theta_0}{3} \\
    \frac{10u^{p-1}}{\Theta_0}-\frac{13}{36p\Theta_0x_1} & 1
  \end{bmatrix}
  \begin{bmatrix}
    0 & \frac{\alpha_1}{3} \\
    3\beta_1 & 0
  \end{bmatrix}.$
\end{itemize}
Note that we have $v_p(\Theta_0)=v_p(\Theta_1)=1+v_p(x_1)$, and so $\frac{\Theta_1}{x_1}=0$ in $\F$. We will divide $R(\vr;\vk')$ into following five areas, and in each area we will determine the mod-$p$ reduction:
\begin{enumerate}[leftmargin=*]
  \item $\vec{t}\in R_{int}(\vr;\vk')$: It is easy to see that $\cM\cong\cM_1((4,4),(5,1);(\alpha_0,\frac{\alpha_1}{3}),(-\beta_0,3\beta_1))$, and so we conclude that $\rhobar|_\IQptwo \cong\omega_4^{1+4p+p^3}\oplus\omega_4^{p+p^2+4p^3}$ by Lemma~\ref{lemm: Tst^r for irred BrMod}~(ii).
  \item $t_1=-1$, $t_0\geq 0$: We have $\frac{1}{\Theta_0},\frac{1}{\Theta_1},\frac{1}{px_1}\in\F^\times$ and $p\Theta_0=0$ in $\F$. In particular, the monodromy type is $(1,1)$. Put $\cM'=\barS_{\F}(\barE_2)\subset\cM$. It is easy to see that $\cM'\cong\cM(5,2;-\beta_0,-\frac{\alpha_1}{18\Theta_0})$ and $\cM/\cM'\cong\cM(4,3;\alpha_0,18\beta_1\Theta_0)$, and so we conclude that $\rhobar^{ss}|_\IQptwo \cong\omega_2^{3p}\oplus\omega_2^{1+2p}$, by Lemma~\ref{lemm: Tst^r for irred BrMod}~(i), and $\rhobar$ is non-split as the monodromy type is nonzero.
  \item $t_0-t_1=1$, $-1<t_0<0$: We have $\frac{x_0}{\Theta_0}\in\F^\times$ and $\frac{1}{\Theta_0}=\frac{1}{\Theta_0}=p\Theta_0=\frac{1}{px_1}=0$ in $\F$. In particular, the monodromy type is $(0,0)$. Put $\cM'=\barS_{\F}(\barE_1^{(0)},\barE_2^{(1)})\subseteq\cM$. It is easy to see that $\cM'\cong\cM(5,1;\frac{\beta_0x_0}{\Theta_0},\alpha_1)$ and $\cM/\cM'\cong\cM(4,4;\frac{\alpha_0\Theta_0}{x_0},\beta_1)$, and so we conclude that $\rhobar^{ss}|_\IQptwo\cong \omega_2^{4p}\oplus\omega_2^{1+p}$, by Lemma~\ref{lemm: Tst^r for irred BrMod}~(i).It is not difficult to show that the short exact sequence determined by $\cM'\hookrightarrow \cM$ is non-split.
  \item $t_0=-1$, $t_1=-2$: We have $\frac{x_0}{\Theta_0},p\Theta_0\in\F^\times$ and $\frac{1}{\Theta_0}=\frac{1}{\Theta_1}=\frac{1}{px_1}=0$ in $\F$. In particular, the monodromy type is $(0,0)$.
     \begin{enumerate}[leftmargin=*]
      \item Assume $px_0=\frac{3}{4}$ in $\F$. Put $\cM':=\barS_{\F}\big(\barE_1^{(0)}+\frac{x_0}{\Theta_0}\barE_2^{(0)},\barE_1^{(1)}\big)$ and $\cM'':=\barS_{\F}(\barE_1^{(0)},\barE_2^{(1)})$. Then one can readily see that
          $\cM'\cong\cM(4,4;\alpha_0,4p\beta_1\Theta_0)$ and $\cM''\cong\cM\big(5,1;\frac{\beta_0x_0}{\Theta_0}, \frac{\alpha_1}{3}\big)$, and so $\cM=\cM'\oplus\cM''$. Hence, we conclude that $\rhobar|_\IQptwo\cong\omega_2^{1+p}\oplus\omega_2^{4p}$, by Lemma~\ref{lemm: Tst^r for irred BrMod}~(i).
      \item Assume $px_0\neq \frac{3}{4}$ in $\F$. Using $\cM''$ in (a), it is easy to see that $\cM''\cong\cM(5,1;\frac{\beta_0x_0}{\Theta_0},\frac{\alpha_1}{3})$ and $\cM/\cM''\cong\cM(4,4;\alpha_0,4p\beta_1\Theta_0)$, and so we conclude that $\rhobar^{ss}|_\IQptwo\cong\omega_2^{4p}\oplus\omega_2^{1+p}$, by Lemma~\ref{lemm: Tst^r for irred BrMod}~(i). It is not difficult to show that the short exact sequence determined by $\cM'\hookrightarrow \cM$ is non-split.
      \end{enumerate}
  \item $t_1=-2$, $t_0>-1$: We have $p\Theta_0\in\F^\times$ and $\frac{1}{\Theta_0}=\frac{1}{\Theta_1}=\frac{x_0}{\Theta_0}=\frac{1}{px_1}=0$ in $\F$. In particular, the monodromy type is $(0,0)$. Changing a basis to $\baseE'^{(0)}=(\barE_1^{(0)}, \frac{4p\Theta_0}{3}\barE_1^{(0)}+\barE_2^{(0)})$ and $\baseF'^{(0)}=(\barF_1^{(0)},\frac{4p\Theta_0u}{3}\barF_1^{(0)}+\barF_2^{(0)})$
      gives rise to $\cM\cong\cM_1((4,4),(5,1);(\alpha_0,\frac{\alpha_1}{3}),(-\beta_0,3\beta_1))$, and so we conclude that $\rhobar|_\IQptwo \cong\omega_4^{1+4p+p^3}\oplus\omega_4^{p+p^2+4p^3}$ by Lemma~\ref{lemm: Tst^r for irred BrMod}~(ii).
\end{enumerate}
We summarize the mod-$p$ reduction for $\vk'=(\frac{1}{2},4)$ in the following table, Table~\ref{tab: (1/2,4)}. Note that by $\ast$ in the following table we mean a non-split extension.
\begin{table}[htbp]%(1/2,4)
  \centering
  \begin{tabular}{|c|c||c|c|}
    \hline
    % after \\: \hline or \cline{col1-col2} \cline{col3-col4} ...
    \multicolumn{2}{|c||}{$(t_0,t_1)$} & $\rhobar|_{I_{\Q_{p^2}}}$ & $\vec{\mathrm{MT}}$ \\\hline\hline
    \multicolumn{2}{|c||}{$R_{int}(\vr;(\frac{1}{2},4))$}
      & $\omega_4^{1+4p+p^3}\oplus\omega_4^{p+p^2+4p^3}$ & $(0,0)$ \\\hline
    \multicolumn{2}{|c||}{$t_1=-1$, $t_0\geq 0$}
      & $
      \begin{bmatrix}
        \omega_2^{1+2p} & * \\
        0 & \omega_2^{3p}
      \end{bmatrix}$ & $(1,1)$ \\\hline
    \multicolumn{2}{|c||}{$t_0-t_1=1$, $-1<t_0<0$}
      & $
      \begin{bmatrix}
        \omega_2^{1+p} & * \\
        0 & \omega_2^{4p}
      \end{bmatrix}$ & $(0,0)$ \\\hline
    \multirow{2}{*}[-0.5em]{
    \begin{tabular}{c}
      $t_0=-1$, \\ $t_1=-2$
    \end{tabular}}
      & $v_p(px_0-\frac{3}{4})>0$
        & $
        \begin{bmatrix}
          \omega_2^{4p} & 0 \\
          0 & \omega_2^{1+p}
        \end{bmatrix}$ & $(0,0)$ \\\cline{2-4}
      & $v_p(px_0-\frac{3}{4})=0$
        & $
        \begin{bmatrix}
          \omega_2^{1+p} & * \\
          0 & \omega_2^{4p}
        \end{bmatrix}$ & $(0,0)$ \\\hline
    \multicolumn{2}{|c||}{$t_1=-2$, $t_0>-1$}
        & $\omega_4^{1+4p+p^3}\oplus\omega_4^{p+p^2+4p^3}$ & $(0,0)$ \\\hline
  \end{tabular}
  \caption{$\rhobar$ for $\vk'=(\frac{1}{2},4)$}\label{tab: (1/2,4)}
\end{table}

\subsubsection{\textbf{For $\vk'=(\frac{1}{2},5)$}}\label{subsubsec: vr=(1,5), vk=(1/2,5)}
In this section of paragraph, by $\vk'$ we always mean $(\frac{1}{2},5)$. In this case, we have $\vx=\vx^{(1,1)}$, and we will write $\cM$ for the Breuil modules. From the results in \S\ref{subsec: mod p reduction}, the Breuil modules $\cM$ is determined by the following data:
\begin{itemize}[leftmargin=*]
  \item $M_{N,0}=
  \begin{bmatrix}
    0 & 0 \\
    \frac{1-4u^{p-1}}{\Theta_0} & 0
  \end{bmatrix}\quad\&\quad
  M_{N,1}=
  \begin{bmatrix}
    0 & 0 \\
    \frac{1}{\Theta_1} & 0
  \end{bmatrix}$;
  \item $M_{\Fil^5,0}=
  \begin{bmatrix}
    1 & 0 \\
    \frac{x_0}{\Theta_0} & 1
  \end{bmatrix}
  \begin{bmatrix}
    u^4 & 0 \\
    0 & u^5
  \end{bmatrix}\quad\&\quad
   M_{\Fil^5,1}=
  \begin{bmatrix}
    1 & \frac{\Theta_1}{x_1} \\
    0 & 1
  \end{bmatrix}
  \begin{bmatrix}
    u^5 & \frac{\alpha_1u^4}{4\beta_1\Theta_0} \\
    0 & 1
  \end{bmatrix}$;
  \item $M_{\phi_5,0}=
  \begin{bmatrix}
    \alpha_0 & 0 \\
    0 & -\beta_0
  \end{bmatrix}\quad\&\quad
  M_{\phi_5,1}=
  \begin{bmatrix}
    1 & -\frac{\alpha_1}{\beta_1}\frac{p^5x_1}{\Theta_1} \\
    \frac{4u^{p-1}}{\Theta_0} & 1
  \end{bmatrix}
  \begin{bmatrix}
    0 & \alpha_1 \\
    \beta_1 & 0
  \end{bmatrix}.$
\end{itemize}
Note that we have $v_p(\Theta_0)=v_p(\Theta_1)=2+v_p(x_1)$, and so $\frac{\Theta_1}{x_1}=\frac{p^5x_1}{\Theta_1}=0$ in $\F$. We will divide $R(\vr;\vk')$ into following three areas, and in each area we will determine the mod-$p$ reduction:
\begin{enumerate}[leftmargin=*]
  \item $\vec{t}\in R_{int}(\vr;\vk')$: It is easy to see that $\cM\cong\cM_1((4,5),(5,0);(\alpha_0,\alpha_1),(-\beta_0,\beta_1))$, and so we conclude that $\rhobar^{ss}\cong\omega_4^{1+5p}\oplus\omega_4^{p^2+5p^3}$ by Lemma~\ref{lemm: Tst^r for irred BrMod}~(ii).
  \item $t_1=-2$, $t_0\geq 0$: We have $\frac{1}{\Theta_0},\frac{1}{\Theta_1}\in\F^\times$. In particular, the monodromy type is $(1,1)$. Put $\cM'=\barS_{\F}(\barE_2)\subset\cM$. It is easy that $\cM'\cong\cM(5,1;-\beta_0,-\frac{\alpha_1}{4\Theta_0})$ and $\cM/\cM'\cong\cM(4,4;\alpha_0,4\beta_1\Theta_0)$, and so we conclude that $\rhobar^{ss}|_\IQptwo \cong\omega_2^{4p}\oplus\omega_2^{1+p}$, by Lemma~\ref{lemm: Tst^r for irred BrMod}~(i), and $\rhobar$ is non-split as the monodromy type is nonzero.
  \item $t_0-t_1=2$, $t_0<0$: We have $\frac{x_0}{\Theta_0}\in\F^\times$ and $\frac{1}{\Theta_0}=\frac{1}{\Theta_1}=0$ in $\F$. In particular, the monodromy type is $(0,0)$. Put $\cM'=\barS_{\F}(\barE_1^{(0)},\barE_2^{(1)})\subset\cM$. It is easy to see that $\cM'\cong\cM(5,0;\frac{\beta_0x_0}{\Theta_0},\alpha_1)$ and $\cM/\cM'\cong\cM(4,5;\frac{\alpha_0\Theta_0}{x_0},\beta_1)$, and so we conclude that $\rhobar^{ss}|_\IQptwo \cong \omega_2^{5p}\oplus\omega_2$, by Lemma~\ref{lemm: Tst^r for irred BrMod}~(i). It is not difficult to show that the short exact sequence determined by $\cM'\hookrightarrow \cM$ is non-split.
\end{enumerate}
We summarize the mod-$p$ reduction for $\vk'=(\frac{1}{2},5)$ in the following table, Table~\ref{tab: (1/2,5)}. Note that by $\ast$ in the following table we mean a non-split extension.
\begin{table}[htbp]%(1/2,5)
  \centering
  \begin{tabular}{|c||c|c|}
    \hline
    % after \\: \hline or \cline{col1-col2} \cline{col3-col4} ...
    $(t_0,t_1)$ & $\rhobar|_{I_{\Q_{p^2}}}$ & $\vec{\mathrm{MT}}$ \\\hline\hline
    $R_{int}(\vr;(\frac{1}{2},5))$
      & $\omega_4^{1+5p}\oplus\omega_4^{p^2+5p^3}$ & $(0,0)$ \\\hline
    $t_1=-2$, $t_0\geq 0$
      & $
      \begin{bmatrix}
        \omega_2^{1+p} & * \\
        0 & \omega_2^{4p}
      \end{bmatrix}$ & $(1,1)$ \\\hline
    $t_0-t_1=2$, $t_0<0$
      & $
      \begin{bmatrix}
        \omega_2 & * \\
        0 & \omega_2^{5p}
      \end{bmatrix}$ & $(0,0)$ \\\hline
  \end{tabular}
  \caption{$\rhobar$ for $\vk'=(\frac{1}{2},5)$}\label{tab: (1/2,5)}
\end{table}

\subsubsection{\textbf{For $\vk'=(1,\frac{7}{2})$}}\label{subsubsec: vr=(1,5), vk=(1,7/2)}
In this section of paragraph, by $\vk'$ we always mean $(1,\frac{7}{2})$. In this case, we have $\vx=\vx^{(1,2)}$, and we will write $\cM$ for the Breuil modules. From the results in \S\ref{subsec: mod p reduction}, the Breuil modules $\cM$ is determined by the following data:
\begin{itemize}[leftmargin=*]
  \item $M_{N,0}=
  \begin{bmatrix}
    0 & 0 \\
    \frac{1-10u^{p-1}}{\Theta_0} & 0
  \end{bmatrix}\quad\&\quad
  M_{N,1}=
  \begin{bmatrix}
    0 & 0 \\
    \frac{1}{\Theta_1} & 0
  \end{bmatrix}$;
  \item $M_{\Fil^5,0}=
  \begin{bmatrix}
    1 & \frac{\Theta_0}{x_0} \\
    0 & 1
  \end{bmatrix}
  \begin{bmatrix}
    u^5 & 0 \\
    0 & u^4
  \end{bmatrix}\quad\&\quad
  M_{\Fil^5,1}=
  \begin{bmatrix}
    1 & \frac{\Theta_1}{x_1} \\
    0 & 1
  \end{bmatrix}
  \begin{bmatrix}
    u^3 & 0 \\
    \frac{18\alpha_1u}{\beta_1\Theta_0} & u^2
  \end{bmatrix}$;
  \item $M_{\phi_5,0}=
  \begin{bmatrix}
    1 & -\frac{\alpha_0}{\beta_0}\frac{px_0}{\Theta_0} \\
    0 & 1
  \end{bmatrix}
  \begin{bmatrix}
    0 & \alpha_0 \\
    \beta_0 & 0
  \end{bmatrix}\quad\&\quad
  M_{\phi_5,1}=
  \begin{bmatrix}
    1 & -6p\Theta_0 \\
    \frac{10u^{p-1}}{\Theta_0} -\frac{13}{36p\Theta_0x_1} & 1
  \end{bmatrix}
  \begin{bmatrix}
    6\alpha_1 & 0 \\
    0 & -\frac{\beta_1}{6}
  \end{bmatrix}.$
\end{itemize}
Note that we have $v_p(\Theta_0)=-1+v_p(x_0)-v_p(x_1)$ and $v_p(\Theta_1)=1+v_p(x_0)+v_p(x_1)$. We will divide $R(\vr;\vk')$ into following five areas, and in each area we will determine the mod-$p$ reduction:
\begin{enumerate}[leftmargin=*]
  \item $\vec{t}\in R_{int}(\vr;\vk')$: It is easy to see that $\cM\cong\cM_0((5,3),(4,2);(\alpha_0,6\alpha_1),(\beta_0,-\frac{\beta_1}{6}))$, and so we conclude that $\rhobar|_\IQptwo\cong\omega_4^{2p+p^2+3p^3}\oplus\omega_4^{1+3p+2p^3}$ by Lemma~\ref{lemm: Tst^r for irred BrMod}~(ii).
  \item $t_1=-1$, $-1<t_0<0$: We have $\frac{\Theta_0}{x_0},\frac{1}{px_1}\in\F^\times$ and $\frac{1}{\Theta_0}=\frac{1}{\Theta_1}=\frac{\Theta_1}{x_1}=\frac{px_0}{\Theta_0}=p\Theta_0=0$ in $\F$. In particular, the monodromy type is $(0,0)$. Put $\cM'=\barS_{\F}(\barE_2)\subset\cM$. It is easy to see that $\cM'\cong\cM(5,2;-\frac{\beta_0\Theta_0}{x_0},-\frac{\beta_1}{6})$ and $\cM/\cM'\cong\cM(4,3;\frac{\alpha_0x_0}{\Theta_0},6\alpha_1)$, and so we conclude that $\rhobar^{ss}|_\IQptwo\cong \omega_2^{3p}\oplus\omega_2^{1+2p}$, by Lemma~\ref{lemm: Tst^r for irred BrMod}~(i). It is not difficult to show that the short exact sequence determined by $\cM'\hookrightarrow \cM$ is non-split.
  \item $t_0=0$, $t_1=-1$: We have $\frac{1}{\Theta_0},\frac{1}{\Theta_1},\frac{\Theta_0}{x_0},\frac{1}{px_1}\in\F^\times$ and $\frac{\Theta_1}{x_1}=\frac{px_0}{\Theta_0}=p\Theta_0=0$ in $\F$. In particular, the monodromy type is $(1,1)$.  Put $\cM'$ as in (ii). By the same isomorphisms for $\cM'$ and $\cM/\cM'$, we conclude that $\rhobar^{ss}|_\IQptwo\cong \omega_2^{3p}\oplus\omega_2^{1+2p}$, by Lemma~\ref{lemm: Tst^r for irred BrMod}~(i), and $\rhobar$ is non-split as the monodromy type is nonzero.
  \item $t_0-t_1=1$, $-1\leq t_0<0$: We have $\frac{1}{\Theta_0}\in\F^\times$ and $\frac{1}{\Theta_1}=\frac{\Theta_0}{x_0}=p\Theta_0=\frac{1}{px_1}=0$ in $\F$. In particular, the monodromy type is $(1,0)$.
      Put $\cM'=\barS_{\F}(\barE_2^{(0)},\barE_1^{(1)})\subset\cM$. It is easy to see that $\cM'\cong\cM(4,4;\alpha_0,\frac{3\alpha_1}{\Theta_0})$ and $\cM/\cM'\cong\cM(5,1;\beta_0,\frac{\beta_1\Theta_0}{3})$, so we conclude that $\rhobar^{ss}|_\IQptwo\cong\omega_2^{1+p}\oplus\omega_2^{4p}$, by Lemma~\ref{lemm: Tst^r for irred BrMod}~(i), and $\rhobar$ is non-split as the monodromy type is nonzero.
  \item $t_0=-1$, $-2<t_1\leq -1$: We have $\frac{\Theta_1}{x_1}\in\F^\times$ and $\frac{1}{\Theta_0}=\frac{1}{\Theta_1}=\frac{px_0}{\Theta_0}=0$ in $\F$. In particular, the monodromy type is $(0,0)$. Put $\cM'=\barS_{\F}(\barE_1^{(0)},\barE_2^{(1)})\subset\cM$. It is easy to see that $\cM'\cong\cM(5,3;\beta_0,\frac{6\alpha_1\Theta_1}{x_1})$ and $\cM/\cM'\cong\cM(4,2;\alpha_0,-\frac{\beta_1x_1}{6\Theta_1})$, and so we conclude that $\rhobar^{ss}|_\IQptwo\cong\omega_2^{2p}\oplus\omega_2^{1+3p}$, by Lemma~\ref{lemm: Tst^r for irred BrMod}~(i). It is not difficult to show that the short exact sequence determined by $\cM'\hookrightarrow \cM$ is non-split.
\end{enumerate}
We summarize the mod-$p$ reduction for $\vk'=(1,\frac{7}{2})$ in the following table, Table~\ref{tab: (1,7/2)}. Note that by $\ast$ in the following table we mean a non-split extension.
\begin{table}[htbp]%(1,7/2)
  \centering
  \begin{tabular}{|c||c|c|}
    \hline
    % after \\: \hline or \cline{col1-col2} \cline{col3-col4} ...
    $(t_0,t_1)$ & $\rhobar|_{I_{\Q_{p^2}}}$ & $\vec{\mathrm{MT}}$ \\\hline\hline
    $R_{int}(\vr;(1,\frac{7}{2}))$
      & $\omega_4^{2p+p^2+3p^3}\oplus\omega_4^{1+3p+2p^3}$ & $(0,0)$ \\\hline
    $t_1=-1$, $-1<t_0<0$
      & $
      \begin{bmatrix}
        \omega_2^{1+2p} & * \\
        0 & \omega_2^{3p}
      \end{bmatrix}$ & $(0,0)$ \\\hline
    $t_0=0$, $t_1=-1$
      & $
      \begin{bmatrix}
        \omega_2^{1+2p} & * \\
        0 & \omega_2^{3p}
      \end{bmatrix}$ & $(1,1)$ \\\hline
    $t_0-t_1=1$, $-1\leq t_0<0$
      & $
      \begin{bmatrix}
        \omega_2^{4p} & * \\
        0 & \omega_2^{1+p}
      \end{bmatrix}$ & $(1,0)$ \\\hline
    $t_0=-1$, $-2<t_1\leq -1$
      & $
      \begin{bmatrix}
        \omega_2^{1+3p} & * \\
        0 & \omega_2^{2p}
      \end{bmatrix}$ & $(0,0)$ \\\hline
  \end{tabular}
  \caption{$\rhobar$ for $\vk'=(1,\frac{7}{2})$}\label{tab: (1,7/2)}
\end{table}

\subsubsection{\textbf{For $\vk'=(1,\frac{9}{2})$}}\label{subsubsec: vr=(1,5), vk=(1,9/2)}
In this section of paragraph, by $\vk'$ we always mean $(1,\frac{9}{2})$. In this case, we have $\vx=\vx^{(1,1)}$, and we will write $\cM$ for the Breuil modules. From the results in \S\ref{subsec: mod p reduction}, the Breuil modules $\cM$ is determined by the following data:
\begin{itemize}[leftmargin=*]
  \item $M_{N,0}=
  \begin{bmatrix}
    0 & 0 \\
    \frac{1-4u^{p-1}}{\Theta_0} & 0
  \end{bmatrix}\quad\&\quad
  M_{N,1}=
  \begin{bmatrix}
    0 & 0 \\
    \frac{1}{\Theta_1} & 0
  \end{bmatrix}$;
  \item $M_{\Fil^5,0}=
  \begin{bmatrix}
    1 & \frac{\Theta_0}{x_0} \\
    0 & 1
  \end{bmatrix}
  \begin{bmatrix}
    u^5 & 0 \\
    0 & u^4
  \end{bmatrix}\quad\&\quad
  M_{\Fil^5,1}=
  \begin{bmatrix}
    1 & \frac{\Theta_1}{x_1} \\
    0 & 1
  \end{bmatrix}
  \begin{bmatrix}
    u^4 & 0 \\
    \frac{4\alpha_1}{\beta_1\Theta_0} & u
  \end{bmatrix}$;
  \item $M_{\phi_5,0}=
  \begin{bmatrix}
    1 & -\frac{\alpha_0}{\beta_0}\frac{px_0}{\Theta_0} \\
    0 & 1
  \end{bmatrix}
  \begin{bmatrix}
    0 & \alpha_0 \\
    \beta_0 & 0
  \end{bmatrix}\quad\&\quad
  M_{\phi_5,1}=
  \begin{bmatrix}
    1 & -\frac{4p\Theta_0}{3} \\
    \frac{4u^{p-1}}{\Theta_0} & 1
  \end{bmatrix}
  \begin{bmatrix}
    4\alpha_1 & 0 \\
    0 & -\frac{\beta_1}{4}
  \end{bmatrix}.$
\end{itemize}
Note that we have $v_p(\Theta_0)=-2+v_p(x_0)-v_p(x_1)$ and $v_p(\Theta_1)=2+v_p(x_0)+v_p(x_1)$. We will divide $R(\vr;\vk')$ into following eight areas, and in each area we will determine the mod-$p$ reduction:
\begin{enumerate}[leftmargin=*]
  \item $(t_0,t_1)\in R_{int}(\vr;\vk')$: It is easy to see that $\cM\cong\cM_0((5,4),(4,1);(\alpha_0,4\alpha_1),(\beta_0,-\frac{\beta_1}{4}))$, and so we conclude that $\rhobar^{ss}|_\IQptwo\cong\omega_4^{p+p^2+4p^3}\oplus\omega_4^{1+4p+p^3}$ by Lemma~\ref{lemm: Tst^r for irred BrMod}~(ii).
  \item $t_1=-2$, $-1<t_0<0$: We have $\frac{\Theta_0}{x_0}\in\F^\times$ and $\frac{1}{\Theta_0}=\frac{1}{\Theta_1}=\frac{\Theta_1}{x_1}=\frac{px_0}{\Theta_0}=p\Theta_0=0$ in $\F$. In particular, the monodromy type is $(0,0)$. Put $\cM'=\barS_{\F}(\barE_2)\subset\cM$. It is easy to see that $\cM'\cong\cM(5,1;-\frac{\beta_0\Theta_0}{x_0},-\frac{\beta_1}{4})$ and $\cM/\cM'\cong\cM(4,4;\frac{\alpha_0x_0}{\Theta_0},4\alpha_1)$, and so we conclude that $\rhobar^{ss}|_\IQptwo\cong \omega_2^{4p}\oplus\omega_2^{1+p}$, by Lemma~\ref{lemm: Tst^r for irred BrMod}~(i). It is not difficult to show that the short exact sequence determined by $\cM'\hookrightarrow \cM$ is non-split.
  \item $t_0=0$, $t_1=-2$: We have $\frac{1}{\Theta_0},\frac{1}{\Theta_1},\frac{\Theta_0}{x_0}\in\F^\times$ and $\frac{\Theta_1}{x_1}=\frac{px_0}{\Theta_0}=p\Theta_0=0$ in $\F$. In particular, the monodromy type is $(1,1)$. Put $\cM'$ as in (ii). By the same isomorphisms for $\cM'$ and $\cM/\cM'$ we have $\rhobar^{ss}|_\IQptwo\cong \omega_2^{4p}\oplus\omega_2^{1+p}$, by Lemma~\ref{lemm: Tst^r for irred BrMod}~(i), and $\rhobar$ is non-split as the monodromy type is nonzero.
  \item $t_0-t_1=2$, $-1\leq t_0<0$: We have $\frac{1}{\Theta_0}\in\F^\times$ and $\frac{1}{\Theta_1}=\frac{\Theta_0}{x_0}=\frac{\Theta_1}{x_1}=p\Theta_0=0$ in $\F$. In particular, the monodromy type is $(1,0)$. Put $\cM'=\barS_{\F}(\barE_2^{(0)},\barE_1^{(1)})\subset\cM$. It is easy to see that $\cM'\cong\cM(4,5;\alpha_0,\frac{\alpha_1}{\Theta_0})$ and $\cM/\cM'\cong\cM(5,0;\beta_0,\beta_1\Theta_0)$, and so we conclude that $\rhobar^{ss}|_\IQptwo\cong\omega_2\oplus\omega_2^{5p}$, by Lemma~\ref{lemm: Tst^r for irred BrMod}~(i), and $\rhobar$ is non-split as the monodromy type is nonzero.
  \item $t_1=-3$, $-2<t_0<-1$: We have $\frac{px_0}{\Theta_0}\in\F^\times$ and $\frac{1}{\Theta_0}=\frac{1}{\Theta_1}=\frac{\Theta_0}{x_0}=\frac{\Theta_1}{x_1}=p\Theta_0=0$ in $\F$. In particular, the monodromy type is $(0,0)$. Put $\cM'=\barS_{\F}(\barE_1^{(0)},\barE_1^{(1)}-\frac{\beta_0}{\alpha_0}\frac{\Theta_0}{px_0}\barE_2^{(1)})$. It is easy to see that $\cM'\cong\cM(5,4;-\frac{p\alpha_0x_0}{\Theta_0},4\alpha_1)$ and $\cM/\cM'=\barS_{\F}(\barE_2+\cM')\cong\cM(4,1;\frac{\beta_0\Theta_0}{px_0},-\frac{\beta_1}{4})$, and so we conclude that $\rhobar^{ss}|_\IQptwo\cong\omega_2^p\oplus\omega_2^{1+4p}$, by Lemma~\ref{lemm: Tst^r for irred BrMod}~(i). It is not difficult to show that the short exact sequence determined by $\cM'\hookrightarrow \cM$ is non-split.
  \item $t_0=-2$, $t_1=-3$: We have $\frac{\Theta_1}{x_1},\frac{px_0}{\Theta_0},p\Theta_0\in\F^\times$ and $\frac{1}{\Theta_0}=\frac{1}{\Theta_1}=\frac{\Theta_0}{x_0}=0$ in $\F$. In particular, the monodromy type is $(0,0)$. We note that $\frac{\alpha_0}{\beta_0}=\frac{\Theta_0\Theta_1}{x_0^2}$ from \eqref{eq: alpha_j/beta_j}.
      \begin{enumerate}[leftmargin=*]
        \item Assume $p^2(px_1+x_0)=0$ in $\F$. Put $\cM'=\barS_{\F}\big(-\frac{4p\Theta_0}{3}\barE_1^{(0)}+\barE_2^{(0)}, \frac{\Theta_1}{x_1}\barE_1^{(1)}+\barE_2^{(1)}\big)$. It is easy to see that  $\cM'\cong\cM(5,1;-\frac{4p\beta_0\Theta_0}{3},-\frac{\beta_1}{4})$ and $\cM/\cM'=\barS_{\F}(\barE_1+\cM')\cong \cM(4,4;\frac{3\alpha_0}{4p\Theta_0},4\alpha_1)$, and so we conclude that $\rhobar^{ss}|_\IQptwo\cong\omega_2^{4p}\oplus\omega_2^{1+p}$, by Lemma~\ref{lemm: Tst^r for irred BrMod}~(i). It is not difficult to show that the short exact sequence determined by $\cM'\hookrightarrow \cM$ is non-split.
        \item Assume $p^2(px_1+x_0)\neq0$ in $\F$. Put
            $\cM'=\barS_{\F}(\barE_1^{(0)}, -\frac{p\Theta_1}{x_0}\barE_1^{(1)}+\barE_2^{(1)})$.
            It is easy to see that $\cM'\cong\cM(5,4;\beta_0,-\frac{4\alpha_1\Theta_1(px_1+x_0)}{x_0x_1})$ and $\cM/\cM'=\barS_{\F}(\barE_2^{(0)}+\cM', \frac{\Theta_1}{x_1}\barE_1^{(1)}+\barE_2^{(1)}+\cM') \cong\cM(4,1;\frac{\alpha_0x_0x_1}{\Theta_1(px_1+x_0)},-\frac{\beta_1}{4})$, and so we conclude that $\rhobar^{ss}|_\IQptwo\cong\omega_2^p\oplus\omega_2^{1+4p}$, by Lemma~\ref{lemm: Tst^r for irred BrMod}~(i). It is not difficult to show that the short exact sequence determined by $\cM'\hookrightarrow \cM$ is non-split.
      \end{enumerate}
  \item $t_0-t_1=1$, $-2<t_0<-1$: We have $p\Theta_0\in\F^\times$ and $\frac{1}{\Theta_0}=\frac{1}{\Theta_1}=\frac{\Theta_0}{x_0}=\frac{\Theta_1}{x_1}=\frac{px_0}{\Theta_0}=0$ in $\F$. In particular, the monodromy type is $(0,0)$. Put $\cM'=\barS_{\F}(-\frac{4p\Theta_0}{3}\barE_1^{(0)}+\barE_2^{(0)},\barE_2^{(1)})$. It is easy to see that $\cM'\cong\cM(5,1;-\frac{4p\beta_0\Theta_0}{3},-\frac{\beta_1}{4})$ and $\cM/\cM'=\barS_{\F}(\barE_1+\cM'\cong\cM(4,4;\frac{3\alpha_0}{4p\Theta_0},4\alpha_1)$, and so we conclude that $\rhobar^{ss}|_\IQptwo\cong\omega_2^{4p}\oplus\omega_2^{1+p}$, by Lemma~\ref{lemm: Tst^r for irred BrMod}~(i). It is not difficult to show that the short exact sequence determined by $\cM'\hookrightarrow \cM$ is non-split.
  \item $t_0=-1$, $t_1=-2$: We have $\frac{\Theta_0}{x_0},p\Theta_0\in\F^\times$ and $\frac{1}{\Theta_0}=\frac{1}{\Theta_1}=\frac{\Theta_1}{x_1}=\frac{px_0}{\Theta_0}=0$ in $\F$. In particular, the monodromy type is $(0,0)$.

      \begin{enumerate}[leftmargin=*]
        \item Assume $px_0=-\frac{3}{4}$ in $\F$. Choosing basis $\baseE'^{(0)}=(\barE_1^{(0)},\frac{\Theta_0}{x_0}\barE_1^{(0)}+\barE_2^{(0)})$, we see that $\cM\cong\cM_0((5,4),(4,1);(\alpha_0,4\alpha_1),(\beta_0,\frac{\beta_1}{4}))$, and so we conclude that $\rhobar|_\IQptwo\cong\omega_4^{p+p^2+4p^3}\oplus\omega_4^{1+4p+p^3}$, by Lemma~\ref{lemm: Tst^r for irred BrMod}~(ii).
        \item Assume $px_0\neq-\frac{3}{4}$ in $\F$. Put $\cM'=\barS_{\F}(-\frac{4p\Theta_0}{3}\barE_1^{(0)}+\barE_2^{(0)},\barE_2^{(1)})$. It is easy to see that $\cM'\cong\cM(5,1;-\frac{\beta_0\Theta_0(3+4x_0)}{3x_0},\frac{\beta_1}{4})$ and $\cM/\cM'=\barS_{\F}(\frac{\Theta_0}{x_0}\barE_1^{(0)}+\barE_2^{(0)}+\cM',\barE_1^{(1)}+\cM') \cong\cM(4,4;\frac{3\alpha_0x_0}{\Theta_0(3+4x_0)},4\alpha_1)$, and so we conclude that $\rhobar^{ss}|_\IQptwo\cong\omega_2^{4p}\oplus\omega_2^{1+p}$, by Lemma~\ref{lemm: Tst^r for irred BrMod}~(i). It is not difficult to show that the short exact sequence determined by $\cM'\hookrightarrow \cM$ is non-split.
      \end{enumerate}
\end{enumerate}
We summarize the mod-$p$ reduction for $\vk'=(1,\frac{9}{2})$ in the following table, Table~\ref{tab: (1,9/2)}. Note that by $\ast$ in the following table we mean a non-split extension.
\begin{table}[htbp]%(1,9/2)
  \centering
  \begin{tabular}{|c|c||c|c|}
    \hline
    % after \\: \hline or \cline{col1-col2} \cline{col3-col4} ...
    \multicolumn{2}{|c||}{$(t_0,t_1)$} & $\rhobar|_{I_{\Q_{p^2}}}$ & $\vec{\mathrm{MT}}$ \\\hline\hline
    \multicolumn{2}{|c||}{$R_{int}(\vr;(1,\frac{9}{2}))$}
      & $\omega_4^{p+p^2+4p^3}\oplus\omega_4^{1+4p+p^3}$ & $(0,0)$ \\\hline
    \multicolumn{2}{|c||}{$t_1=-2$, $-1<t_0<0$}
      & $
      \begin{bmatrix}
        \omega_2^{1+p} & * \\
        0 & \omega_2^{4p}
      \end{bmatrix}$ & $(0,0)$ \\\hline
    \multicolumn{2}{|c||}{$t_0=0$, $t_1=-2$}
      & $
      \begin{bmatrix}
        \omega_2^{1+p} & * \\
        0 & \omega_2^{4p}
      \end{bmatrix}$ & $(1,1)$ \\\hline
    \multicolumn{2}{|c||}{$t_0-t_1=2$, $-1\leq t_0<0$}
      & $
      \begin{bmatrix}
        \omega_2^{5p} & * \\
        0 & \omega_2
      \end{bmatrix}$ & $(1,0)$ \\\hline
    \multicolumn{2}{|c||}{$t_1=-3$, $-2<t_0<-1$}
        & $
        \begin{bmatrix}
          \omega_2^{1+4p} & * \\
          0 & \omega_2^p
        \end{bmatrix}$ & $(0,0)$ \\\hline
    \multirow{2}{*}[-0.5em]{
    \begin{tabular}{c}
      $t_0=-2$, \\ $t_1=-3$
    \end{tabular}}
      & $v_p(px_1+x_0)>-2$
        & $
        \begin{bmatrix}
          \omega_2^{1+p} & * \\
          0 & \omega_2^{4p}
        \end{bmatrix}$ & $(0,0)$ \\\cline{2-4}
      & $v_p(px_1+x_0)=-2$
        & $
        \begin{bmatrix}
          \omega_2^{1+4p} & * \\
          0 & \omega_2^p
        \end{bmatrix}$ & $(0,0)$ \\\hline
    \multicolumn{2}{|c||}{$t_0-t_1=1$, $-2<t_0<-1$}
        & $
        \begin{bmatrix}
          \omega_2^{1+p} & * \\
          0 & \omega_2^{4p}
        \end{bmatrix}$ & $(0,0)$ \\\hline
    \multirow{2}{*}[-0.5em]{
    \begin{tabular}{c}
      $t_0=-1$, \\ $t_1=-2$
    \end{tabular}}
      & $v_p(px_0+\frac{3}{4})>0$
        & $\omega_4^{p+p^2+4p^3}\oplus\omega_4^{1+4p+p^3}$ & $(0,0)$ \\\cline{2-4}
      & $v_p(px_0+\frac{3}{4})=0$
        & $
        \begin{bmatrix}
          \omega_2^{1+p} & * \\
          0 & \omega_2^{4p}
        \end{bmatrix}$ & $(0,0)$ \\\hline
  \end{tabular}
  \caption{$\rhobar$ for $\vk'=(1,\frac{9}{2})$}\label{tab: (1,9/2)}
\end{table}

\subsubsection{\textbf{For $\vk'=(\frac{3}{2},\frac{3}{2})$}}\label{subsubsec: vr=(1,5), vk=(3/2,3/2)}
In this section of paragraph, by $\vk'$ we always mean $(\frac{3}{2},\frac{3}{2})$. In this case, we have $\vx=\vx^{(0,2)}$, and we will write $\cM$ for the Breuil modules. From the results in \S\ref{subsec: mod p reduction}, the Breuil modules $\cM$ is determined by the following data:
\begin{itemize}[leftmargin=*]
  \item $M_{N,0}=
  \begin{bmatrix}
    0 & 0 \\
    \frac{1-10u^{p-1}}{\Theta_0} & 0
  \end{bmatrix}\quad\&\quad
  M_{N,1}=
  \begin{bmatrix}
    0 & 0 \\
    \frac{1}{\Theta_1} & 0
  \end{bmatrix}$;
  \item $M_{\Fil^5,0}=
  \begin{bmatrix}
    u^5 & 0 \\
    0 & u^4
  \end{bmatrix}\quad\&\quad
  M_{\Fil^5,1}=
  \begin{bmatrix}
    1 & 0 \\
    \frac{x_1}{\Theta_1} & 1
  \end{bmatrix}
  \begin{bmatrix}
    u & 0 \\
    -\frac{\alpha_1u^3}{18\beta_1\Theta_0} & u^4
  \end{bmatrix}$;
  \item $M_{\phi_5,0}=
  \begin{bmatrix}
    1 & \frac{\alpha_0}{\beta_0}\frac{\Theta_0}{px_0}\\
    0 & 1
  \end{bmatrix}
  \begin{bmatrix}
    -\alpha_0 & 0\\
    0 & \beta_0
  \end{bmatrix}\quad\&\quad
  M_{\phi_5,1}=
  \begin{bmatrix}
    1 & \frac{4p\Theta_0}{3} \\
    \frac{10u^{p-1}}{\Theta_0} & 1
  \end{bmatrix}
  \begin{bmatrix}
    \frac{\alpha_1}{3} & 0\\
    0 & -3\beta_1
  \end{bmatrix}$.
\end{itemize}
Note that one can choose any $\vTh$ satisfying
$$\max\{-1, v_p(x_0)+1\}\leq v_p(\Theta_0)=v_p(\Theta_1)+2\leq \min\{0,v_p(x_1)+2\},$$
and in particular $\frac{1}{\Theta_1}=0$ in $\F$. We will divide $R(\vr;\vk')$ into following five areas, and in each area we will determine the mod-$p$ reduction:
\begin{enumerate}[leftmargin=*]
  \item $\vec{t}\in R_{int}(\vr;\vk')$: Note that we have either $v_p(\Theta_0)\not\in\{0,v_p(x_1)+2\}$ or $v_p(\Theta_0)\not\in\{-1,v_p(x_0)+1\}$. Consider two $\barS_{\F}$-submodules $\cM'=\barS_{\F}(\barE_1)$ and $\cM''=\barS_{\F}(\barE_2)$ of $\cM$. Then one can observe that
    \begin{itemize}[leftmargin=*]
      \item if $v_p(\Theta_0)\not\in\{ 0,v_p(x_1)+2\}$, then $\cM'$ is a Breuil submodule of $\cM$ satisfying $\cM'\cong\cM(5,1;-\alpha_0,\frac{\alpha_1}{3})$ and $\cM/\cM'\cong\cM(4,4;\beta_0,-3\beta_1)$;
      \item if $v_p(\Theta_0)\not\in\{ -1,v_p(x_0)+1\}$, then $\cM''$ is a Breuil submodule of $\cM$ satisfying $\cM''\cong\cM(4,4;\beta_0,-3\beta_1)$ and $\cM/\cM''\cong\cM(5,1;-\alpha_0,\frac{\alpha_1}{3})$.
    \end{itemize}
    In particular, if $v_p(\Theta_0)\not\in\{-1,0,v_p(x_0)+1,v_p(x_1)+2\}$, then $\cM=\cM'\oplus\cM''$ as Breuil modules, and if $v_p(\Theta)=v_p(x_0)+1\not\in\{-1,0,v_p(x_1)+2\}$, then the short exact sequence determined by $\cM'\hookrightarrow\cM$ is split. Otherwise, It is not difficult to show that the short exact sequence determined by $\cM'\hookrightarrow \cM$ or $\cM''\hookrightarrow\cM$ is non-split. Note that the first sequence is non-split only for 
    $v_p(\Theta_0)=-1\not\in\{0,v_p(x_1)+2\}$, and so it can be happen if $t_0\leq -2$, in order to have $v_p(x_0)+1\leq v_p(\Theta_0)$. Hence, we conclude that $\rhobar^{ss}|_\IQptwo\cong\omega_2^{4p}\oplus\omega_2^{1+p}$, by Lemma~\ref{lemm: Tst^r for irred BrMod}~(i). In addition, the monodromy type is given as follow.
    \begin{enumerate}[leftmargin=*]
      \item If $v_p(\Theta_0)<0$, then $t_1<-2$, and the monodromy type is $(0,0)$.
      \item If $v_p(\Theta_0)=0$, then $t_1\geq -2$, and the monodromy type is $(1,0)$.
    \end{enumerate}
  \item $t_0=-1$, $t_1>-2$: We have $v_p(\Theta_0)=0=v_p(x_0)+1\neq v_p(x_1)+2$, so $\frac{1}{\Theta_0},\frac{\Theta_0}{px_0}\in\F^\times$ and $\frac{x_1}{\Theta_1}=p\Theta_0=0$ in $\F$. In particular, the monodromy type is $(1,0)$. Put $\cM'=\barS_{\F}(\barE_2^{(0)},\frac{\alpha_0}{\beta_0}\frac{\Theta_0}{px_0}\barE_1^{(1)}+\barE_2^{(1)})$. It is easy to see that $\cM'\cong\cM(4,4;\beta_0,-3\beta_1)$ and $\cM/\cM'=\barS_{\F}(\barE_1+\cM')\cong\cM(5,1;-\alpha_0,\frac{\alpha_1}{3})$, and so we conclude that $\rhobar^{ss}|_\IQptwo\cong\omega_2^{1+p}\oplus\omega_2^{4p}$, by Lemma~\ref{lemm: Tst^r for irred BrMod}~(i), and $\rhobar$ is non-split as the monodromy type is nonzero.
  \item $t_0=-1$, $t_1=-2$: We have $v_p(\Theta_0)=0=v_p(x_0)+1=v_p(x_1)+2$, so $\frac{1}{\Theta_0},\frac{x_1}{\Theta_1},\frac{\Theta_0}{px_0}\in\F^\times$ and $p\Theta_0=0$ in $\F$. In particular, the monodromy type is $(1,0)$. We note that $\frac{\beta_0}{\alpha_0}=\frac{\Theta_0}{p^2\Theta_1}$ from \eqref{eq: alpha_j/beta_j}.
      \begin{enumerate}[leftmargin=*]
        \item Assume $\frac{px_1}{x_0}=1$ in $\F$, and put $\cM'=\barS_{\F}(\barE_2^{(0)},\barE_1^{(1)}+\frac{x_1}{\Theta_1}\barE_2^{(1)})$. It is easy to see that $\cM'\cong\cM(4,2;-\frac{\beta_0\Theta_1}{px_0}, -\frac{\alpha_1}{6\Theta_0})$ and $\cM/\cM'=\barS_{\F}(\barE_1^{(0)}+\cM',\barE_2^{(1)}+\cM')\cong \cM(5,3;-\frac{\alpha_0x_1}{\Theta_1},-6\beta_1\Theta_0)$, and so we conclude that $\rhobar^{ss}|_\IQptwo\cong\omega_2^{1+3p}\oplus\omega_2^{2p}$, by Lemma~\ref{lemm: Tst^r for irred BrMod}~(i), and $\rhobar$ is non-split as the monodromy type is nonzero.
        \item Assume $\frac{px_1}{x_0}\neq 1$ in $\F$, and put $\cM'=\barS_{\F}(\barE_2^{(0)},\frac{\Theta_1}{px_0}\barE_1^{(1)}+\barE_2^{(1)})$. It is easy to see that $\cM'\cong\cM(4,4;\beta_0,\frac{-3\beta_1(px_0-x_1)}{px_0})$ and $\cM/\cM'=\barS_{\F}(\barE_1+\cM')\cong\cM(5,1;-\alpha_0,\frac{p\alpha_1x_0}{3(px_0-x_1)})$, and so we conclude that $\rhobar^{ss}|_\IQptwo\cong\omega_2^{1+p}\oplus\omega_2^{4p}$, by Lemma~\ref{lemm: Tst^r for irred BrMod}~(i), and $\rhobar$ is non-split as the monodromy type is nonzero.
      \end{enumerate}
  \item $t_0-t_1=1$, $-2<t_0<-1$: We have $v_p(\Theta_0)=v_p(x_0)+1=v_p(x_1)+2\neq -1,0$, and so $\frac{x_1}{\Theta_1},\frac{\Theta_0}{px_0}\in\F^\times$ and $\frac{1}{\Theta_0}=p\Theta_0=0$ in $\F$. In particular, the monodromy type is $(0,0)$. We note that $\frac{\beta_0}{\alpha_0}=\frac{\Theta_0}{p^2\Theta_1}$ from \ref{eq: alpha_j/beta_j}.
      \begin{enumerate}[leftmargin=*]
        \item Assume $\frac{px_1}{x_0}=1$ in $\F$. Choosing basis $\baseE'^{(1)}=(\barE_1^{(1)}, \frac{\Theta_1}{x_1}\barE_1^{(1)}+\barE_2^{(1)})$ and $\baseF'^{(1)}=(u^3\barF_1^{(1)}-\frac{x_1}{\Theta_1}\barF_2^{(1)}, \frac{\Theta_1}{x_1}\barF_1^{(1)})$, it is easy to see that $\cM$ is isomorphic to $\cM_1((5,4),(4,1);(-\alpha_0,\frac{\alpha_1\Theta_1}{3x_1}),(\beta_0,\frac{3\beta_1x_1}{\Theta_1}))$, and so we conclude that $\rhobar|_\IQptwo\cong\omega_4^{4p+p^2+p^3}\oplus\omega_4^{1+p+4p^3}$, by Lemma~\ref{lemm: Tst^r for irred BrMod}~(ii).
        \item Assume $\frac{px_1}{x_0}\neq 1$ in $\F$. Put $\cM'$ as in (iii)(b). By the same isomorphisms for $\cM'$ and $\cM/\cM'$ we have $\rhobar^{ss}|_\IQptwo\cong\omega_2^{1+p}\oplus\omega_2^{4p}$, by Lemma~\ref{lemm: Tst^r for irred BrMod}~(i). It is not difficult to show that the short exact sequence determined by $\cM'\hookrightarrow \cM$ is non-split.
      \end{enumerate}
  \item $t_1=-3$, $t_0\leq -2$: We have $v_p(\Theta_0)=-1=v_p(x_1)+2$, so $\frac{x_1}{\Theta_1},p\Theta_0\in\F^\times$ and $\frac{1}{\Theta_0}=0$ in $\F$. In particular, the monodromy type is $(0,0)$. Put $\cM'=\barS_{\F}(\frac{4p\Theta_0}{3}\barE_1^{(0)}+\barE_2^{(0)},\barE_1^{(1)})$. It is easy to see that $\cM'\cong\cM(5,4;-\frac{3\beta_1x_1}{\Theta_1},-\frac{4p\alpha_0\Theta_0}{3})$ and $\cM/\cM'=\barS_{\F}(\barE_2+\cM')\cong\cM(4,1;\beta_0,-\frac{\alpha_1\Theta_1}{4p\Theta_0x_1})$, and so we conclude that $\rhobar^{ss}\cong\omega_2^p\oplus\omega_2^{1+4p}$, by Lemma~\ref{lemm: Tst^r for irred BrMod} (i). It is not difficult to show that the short exact sequence determined by $\cM'\hookrightarrow \cM$ is non-split.
\end{enumerate}
We summarize the mod-$p$ reduction for $\vk'=(\frac{3}{2},\frac{3}{2})$ in the following table, Table~\ref{tab: (3/2,3/2)}. Note that by $\ast$ in the following table we mean a non-split extension.
\begin{small}
\begin{table}[htbp]%(3/2,3/2)
  \centering
  \begin{tabular}{|c|c||c|c|}
    \hline
    % after \\: \hline or \cline{col1-col2} \cline{col3-col4} ...
    \multicolumn{2}{|c||}{$(t_0,t_1)$} & $\rhobar|_{I_{\Q_{p^2}}}$ & $\vec{\mathrm{MT}}$ \\\hline\hline
    \multirow{5}{*}{$R_{int}(\vr;(\frac{3}{2},\frac{3}{2}))$}
        & $v_p(\Theta_0)\not\in\{-1,0,v_p(x_1)+2\}$
        & $
        \begin{bmatrix}
            \omega_2^{1+p} & 0 \\ 0 & \omega_2^{4p}
        \end{bmatrix}$
        & $(0,0)$ \\\cline{2-4}
        & $t_0\leq -2$, $v_p(\Theta_0)=-1$
        & $
        \begin{bmatrix}
            \omega_2^{1+p} & * \\ 0 & \omega_2^{4p}
        \end{bmatrix}$
        & $(0,0)$ \\\cline{2-4}
        & $v_p(\Theta_0)\in\{0,v_p(x_1)+2\}$
        & $
        \begin{bmatrix}
            \omega_2^{4p} & * \\ 0 & \omega_2^{1+p}
        \end{bmatrix}$
        & $
        \begin{cases}
            (0,0) &\mbox{if } t_1<-2;\\
            (1,0) &\mbox{if } t_1\geq -2
        \end{cases}$ \\\hline
    \multicolumn{2}{|c||}{$t_0=-1$, $t_1>-2$}
        & $
        \begin{bmatrix}
          \omega_2^{4p} & * \\
          0 & \omega_2^{1+p}
        \end{bmatrix}$ & $(1,0)$ \\\hline
    \multirow{2}{*}[-0.5em]{
    \begin{tabular}{c}
      $t_0=-1$, \\ $t_1=-2$
    \end{tabular}}
      & $v_p(1-\frac{px_1}{x_0})>0$
        & $
        \begin{bmatrix}
          \omega_2^{2p} & * \\
          0 & \omega_2^{1+3p}
        \end{bmatrix}$ & $(1,0)$ \\\cline{2-4}
      & $v_p(1-\frac{px_1}{x_0})=0$
        & $
        \begin{bmatrix}
          \omega_2^{4p} & * \\
          0 & \omega_2^{1+p}
        \end{bmatrix}$ & $(1,0)$ \\\hline
    \multirow{2}{*}[-0.5em]{
    \begin{tabular}{c}
      $t_0-t_1=1$, \\ $-2<t_0<-1$
    \end{tabular}}
      & $v_p(1-\frac{px_1}{x_0})>0$
        & $\omega_4^{4p+p^2+p^3}\oplus\omega_4^{1+p+4p^3}$ & $(0,0)$ \\\cline{2-4}
      & $v_p(1-\frac{px_1}{x_0})=0$
        & $
        \begin{bmatrix}
          \omega_2^{4p} & * \\
          0 & \omega_2^{1+p}
        \end{bmatrix}$ & $(0,0)$ \\\hline
    \multicolumn{2}{|c||}{$t_1=-3$, $t_0\leq -2$}
        & $
        \begin{bmatrix}
          \omega_2^{1+4p} & * \\
          0 & \omega_2^p
        \end{bmatrix}$ & $(0,0)$ \\\hline
  \end{tabular}
  \caption{$\rhobar$ for $\vk'=(\frac{3}{2},\frac{3}{2})$}\label{tab: (3/2,3/2)}
\end{table}
\end{small}

\subsubsection{\textbf{For $\vk'=(\frac{3}{2},5)$}}\label{subsubsec: vr=(1,5), vk=(3/2,5)}
In this section of paragraph, by $\vk'$ we always mean $(\frac{3}{2},5)$. In this case, we have $\vx=\vx^{(0,1)}$, and we will write $\cM$ for the Breuil modules. From the results in \S\ref{subsec: mod p reduction}, the Breuil modules $\cM$ is determined by the following data:
\begin{itemize}[leftmargin=*]
  \item $M_{N,0}=
  \begin{bmatrix}
    0 & 0 \\
    \frac{1-4u^{p-1}}{\Theta_0} & 0
  \end{bmatrix}\quad\&\quad
  M_{N,1}=
  \begin{bmatrix}
    0 & 0 \\
    \frac{1}{\Theta_1} & 0
  \end{bmatrix}$;
  \item $M_{\Fil^5,0}=
  \begin{bmatrix}
    u^5 & 0 \\
    0 & u^4
  \end{bmatrix}\quad\&\quad
  M_{\Fil^5,1}=
  \begin{bmatrix}
    1 & \frac{\Theta_1}{x_1} \\
    0 & 1
  \end{bmatrix}
  \begin{bmatrix}
    u^5 & \frac{\alpha_1u^4}{4\beta_1\Theta_0} \\
    0 & 1
  \end{bmatrix}$;
  \item $M_{\phi_5,0}=
  \begin{bmatrix}
    1 & \frac{\alpha_0}{\beta_0}\frac{\Theta_0}{px_0}\\
    0 & 1
  \end{bmatrix}
  \begin{bmatrix}
    -\alpha_0 & 0\\
    0 & \beta_0
  \end{bmatrix}\quad\&\quad
  M_{\phi_5,1}=
  \begin{bmatrix}
    1 & -\frac{\alpha_1}{\beta_1}\frac{p^5x_1}{\Theta_1} \\
    \frac{4u^{p-1}}{\Theta_0} & 1
  \end{bmatrix}
  \begin{bmatrix}
    0 & \alpha_1 \\
    \beta_1 & 0
  \end{bmatrix}$.
\end{itemize}
Note that we have $v_p(\Theta_0)=3+v_p(x_1)$ and $v_p(\Theta_1)=1+v_p(x_1)$, and so $\frac{1}{\Theta_1}=\frac{\Theta_1}{x_1}=\frac{p^5x_1}{\Theta_1}=0$ in $\F$. We will divide $R(\vr;\vk')$ into following four areas, and in each area we will determine the mod-$p$ reduction:
\begin{enumerate}[leftmargin=*]
  \item $\vec{t}\in R_{int}(\vr;\vk)$: It is easy to see that $\cM\cong\cM_1((5,5),(4,0);(-\alpha_0,\alpha_1),(\beta_0,\beta_1))$, and so we conclude that $\rhobar|_\IQptwo\cong\omega_4^{5p+p^2}\oplus\omega_4^{1+5p^3}$ by Lemma~\ref{lemm: Tst^r for irred BrMod}~(ii).
  \item $t_1=-3$, $t_0<-1$: We have $\frac{1}{\Theta_0}\in\F^\times$ and $\frac{\Theta_0}{px_0}=0$ in $\F$. In particular, the monodromy type is $(1,0)$. Put $\cM'=\barS_{\F}(\barE_2)\subset\cM$. It is easy to see that $\cM'\cong \cM(4,1;\beta_0,-\frac{\alpha_1}{4\Theta_0})$ and $\cM/\cM'\cong \cM(5,4;-\alpha_0,4\beta_1\Theta_0)$, and so we conclude that $\rhobar^{ss}|_\IQptwo\cong \omega_2^{1+4p}\oplus\omega_2^p$, by Lemma~\ref{lemm: Tst^r for irred BrMod}~(i), and $\rhobar$ is non-split as the monodromy type is nonzero.
  \item $t_0=-1$, $t_1=-3$: We have $\frac{1}{\Theta_0},\frac{\Theta_0}{px_0}\in \F^\times$. In particular, the monodromy type is $(1,0)$. Put $\cM'=\barS_{\F}(\barE_2^{(0)}, \frac{\alpha_0}{\beta_0}\frac{\Theta_0}{px_0}\barE_1^{(1)}+\barE_2^{(1)})$. It is easy to see that $\cM'\cong\cM(4,5;\beta_0,\frac{\alpha_0\beta_1}{\beta_0}\frac{\Theta_0}{px_0})$ and $\cM/\cM'=\barS_{\F}(\barE_1^{(0)}+\cM',\barE_2^{(1)}+\cM')\cong \cM(5,0;-\frac{px_0}{\beta_0\Theta_0},\alpha_1)$, and so we conclude that $\rhobar^{ss}|_\IQptwo\cong\omega_2\oplus\omega_2^{5p}$, by Lemma~\ref{lemm: Tst^r for irred BrMod}~(i), and $\rhobar$ is non-split as the monodromy type is nonzero.
  \item $t_0-t_1=2$, $t_0\leq -1$: We have $\frac{\Theta_0}{px_0}\in\F^\times$ and $\frac{1}{\Theta_0}=0$ in $\F$. In particular, the monodromy type is $(0,0)$. Put $\cM'$ as in (iii). By the same isomorphisms for $\cM'$ and $\cM/\cM'$ we have $\rhobar^{ss}|_\IQptwo\cong\omega_2\oplus\omega_2^{5p}$, by Lemma~\ref{lemm: Tst^r for irred BrMod}~(i). It is not difficult to show that the short exact sequence determined by $\cM'\hookrightarrow \cM$ is non-split.
\end{enumerate}
We summarize the mod-$p$ reduction for $\vk'=(\frac{3}{2},5)$ in the following table, Table~\ref{tab: (3/2,5)}. Note that by $\ast$ in the following table we mean a non-split extension.
\begin{table}[htbp]%(3/2,5)
  \centering
  \begin{tabular}{|c||c|c|}
    \hline
    % after \\: \hline or \cline{col1-col2} \cline{col3-col4} ...
    $(t_0,t_1)$ & $\rhobar|_{I_{\Q_{p^2}}}$ & $\vec{\mathrm{MT}}$ \\\hline\hline
    $R_{int}(\vr;(\frac{3}{2},5))$
      & $\omega_4^{5p+p^2}\oplus\omega_4^{1+5p^3}$ & $(0,0)$ \\\hline
    \begin{tabular}{c}
      $t_1=-3$, \\ $t_0<-1$
    \end{tabular}
      & $
      \begin{bmatrix}
        \omega_2^p & * \\
        0 & \omega_2^{1+4p}
      \end{bmatrix}$ & $(1,0)$ \\\hline
    \begin{tabular}{c}
      $t_0=-1$, \\ $t_1=-3$
    \end{tabular}
      & $
      \begin{bmatrix}
        \omega_2^{5p} & * \\
        0 & \omega_2
      \end{bmatrix}$ & $(1,0)$ \\\hline
    \begin{tabular}{c}
      $t_0-t_1=2$, \\ $t_0<-1$
    \end{tabular}
      & $
      \begin{bmatrix}
        \omega_2^{5p} & * \\
        0 & \omega_2
      \end{bmatrix}$ & $(0,0)$ \\\hline
  \end{tabular}
  \caption{$\rhobar$ for $\vk'=(\frac{3}{2},5)$}\label{tab: (3/2,5)}
\end{table}

\subsubsection{\textbf{For $\vk'=(\infty,\frac{3}{2})$}}\label{subsubsec: vr=(1,5), vk=(infty,3/2)}
In this section of paragraph, by $\vk'$ we always mean $(\infty,\frac{3}{2})$. In this case, we have $\vx=\vx^{(0,2)}$, and we will write $\cM$ for the Breuil modules. From the results in \S\ref{subsec: mod p reduction}, the Breuil modules $\cM$ is determined by the following data:
\begin{itemize}[leftmargin=*]
  \item $M_{N,0}=
  \begin{bmatrix}
    0 & 0 \\
    \frac{1-10u^{p-1}}{\Theta_0} & 0
  \end{bmatrix}\quad\&\quad
  M_{N,1}=
  \begin{bmatrix}
    0 & 0 \\
    \frac{1}{\Theta_1} & 0
  \end{bmatrix}$;
  \item $M_{\Fil^5,0}=
  \begin{bmatrix}
    u^5 & 0 \\
    0 & u^4
  \end{bmatrix}\quad\&\quad
  M_{\Fil^5,1}=
  \begin{bmatrix}
    1 & 0 \\
    \frac{x_1}{\Theta_1} & 1
  \end{bmatrix}
  \begin{bmatrix}
    u & 0 \\
    -\frac{\alpha_1u^3}{18\beta_1\Theta_0} & u^4
  \end{bmatrix}$;
  \item $M_{\phi_5,0}=
  \begin{bmatrix}
    -\alpha_0 & 0\\
    0 & \beta_0
  \end{bmatrix}\quad\&\quad
  M_{\phi_5,1}=
  \begin{bmatrix}
    1 & \frac{4p\Theta_0}{3} \\
    \frac{10u^{p-1}}{\Theta_0} & 1
  \end{bmatrix}
  \begin{bmatrix}
    \frac{\alpha_1}{3} & 0\\
    0 & -3\beta_1
  \end{bmatrix}$.
\end{itemize}
Note that one can choose any $\vTh$ satisfying
$$-1\leq v_p(\Theta_0)=v_p(\Theta_1)+2\leq \min\{0,v_p(x_1)+2\},$$
and in particular $\frac{1}{\Theta_1}=0$ in $\F$. We will divide $R(\vr;\vk')$ into following two areas, and in each area we will determine the mod-$p$ reduction:
\begin{enumerate}[leftmargin=*]
  \item $\vec{t}\in R_{int}(\vr;\vk')$: Note that we have either $v_p(\Theta_0)\not\in\{0,v_p(x_1)+2\}$ or $v_p(\Theta_0)\neq -1$. Consider two $\barS_{\F}$-submodules $\cM'=\barS_{\F}(\barE_1)$ and $\cM''=\barS_{\F}(\barE_2)$ of $\cM$. Then one can observe that
    \begin{itemize}[leftmargin=*]
      \item if $v_p(\Theta_0)\not\in\{0,v_p(x_1)+2\}$, then $\cM'$ is a Breuil submodule of $\cM$ satisfying $\cM'\cong\cM(5,1;-\alpha_0,\frac{\alpha_1}{3})$ and $\cM/\cM'\cong\cM(4,4;\beta_0,-3\beta_1)$;
      \item if $v_p(\Theta_0)\neq -1$, then $\cM''$ is a Breuil submodule of $\cM$ and it satisfies $\cM''\cong\cM(4,4;\beta_0,-3\beta_1)$ and $\cM/\cM''\cong\cM(5,1;-\alpha_0,\frac{\alpha_1}{3})$.
    \end{itemize}
    In particular, if $v_p(\Theta_0)\not\in\{-1,0,v_p(x_1)+2\}$, then $\cM=\cM'\oplus\cM''$ as Breuil modules. Otherwise, It is not difficult to show that the short exact sequence determined by $\cM'\hookrightarrow \cM$ or $\cM''\hookrightarrow\cM$ is non-split. Hence, we conclude that $\rhobar|_\IQptwo\cong\omega_2^{4p}\oplus\omega_2^{1+p}$, by Lemma~\ref{lemm: Tst^r for irred BrMod}~(i). In addition, the monodromy type is given as follow.
    \begin{enumerate}[leftmargin=*]
      \item If $v_p(\Theta_0)<0$, then $t_1<-2$, and the monodromy type is $(0,0)$.
      \item If $v_p(\Theta_0)=0$, then $t_1\geq -2$, and the monodromy type is $(1,0)$.
    \end{enumerate}
  \item $t_1=-3$: We have $v_p(\Theta_0)=-1=v_p(x_1)+2$, so $\frac{x_1}{\Theta_1},p\Theta_0\in\F^\times$ and $\frac{1}{\Theta_0}=0$ in $\F$. In particular, the monodromy type is $(0,0)$. Put $\cM'=\barS_{\F}(\frac{4p\Theta_0}{3}\barE_1^{(0)}+\barE_2^{(0)},\barE_1^{(1)})$. It is easy to see that $\cM'\cong\cM(5,4;-\frac{3\beta_1x_1}{\Theta_1},-\frac{4p\alpha_0\Theta_0}{3})$ and $\cM/\cM'=(\barS_{\F}(\barE_2)+\cM')/\cM'\cong\cM(4,1;\beta_0,-\frac{\alpha_1\Theta_1}{4p\Theta_0x_1})$, and so we conclude that $\rhobar^{ss}\cong\omega_2^p\oplus\omega_2^{1+4p}$, by Lemma~\ref{lemm: Tst^r for irred BrMod}~(i). It is not difficult to show that the short exact sequence determined by $\cM'\hookrightarrow \cM$ is non-split.
\end{enumerate}
We summarize the mod-$p$ reduction for $\vk'=(\infty,\frac{3}{2})$ in the following table, Table~\ref{tab: (infty,3/2)}. Note that by $\ast$ in the following table we mean a non-split extension.
\begin{table}[htbp]%(infty,3/2)
  \centering
  \begin{tabular}{|c|c||c|c|}
    \hline
    % after \\: \hline or \cline{col1-col2} \cline{col3-col4} ...
    \multicolumn{2}{|c||}{$(t_0,t_1)$} & $\rhobar|_{I_{\Q_{p^2}}}$ & $\vec{\mathrm{MT}}$ \\\hline\hline
    \multirow{5}{*}{$R_{int}(\vr;(\infty,\frac{3}{2}))$}
        & $v_p(\Theta_0)\not\in\{-1,0,v_p(x_1)+2\}$
        & $
        \begin{bmatrix}
          \omega_2^{1+p} & 0 \\
          0 & \omega_2^{4p}
        \end{bmatrix}$
        & $(0,0)$ \\\cline{2-4}
        & $v_p(\Theta_0)=-1$
        & $
        \begin{bmatrix}
          \omega_2^{1+p} & * \\
          0 & \omega_2^{4p}
        \end{bmatrix}$ & $(0,0)$ \\\cline{2-4}
        & $v_p(\Theta_0)\in\{0,v_p(x_1)+2\}$
        & $
        \begin{bmatrix}
          \omega_2^{4p} & * \\
          0 & \omega_2^{1+p}
        \end{bmatrix}$
        & $
        \begin{cases}
          (0,0) & \mbox{if } t_1<-2; \\
          (1,0) & \mbox{if } t_1\geq -2
        \end{cases}$ \\\hline
    \multicolumn{2}{|c||}{$t_1=-3$}
        & $
        \begin{bmatrix}
          \omega_2^{1+4p} & * \\
          0 & \omega_2^p
        \end{bmatrix}$ & $(0,0)$ \\\hline
  \end{tabular}
  \caption{$\rhobar$ for $\vk'=(\infty,\frac{3}{2})$}\label{tab: (infty,3/2)}
\end{table}

\subsubsection{\textbf{For $\vk'=(\infty,5)$}}\label{subsubsec: vr=(1,5), vk=(infty,5)}
In this section of paragraph, by $\vk'$ we always mean $(\infty,5)$. In this case, we have $\vx=\vx^{(0,1)}$, and we will write $\cM$ for the Breuil modules. From the results in \S\ref{subsec: mod p reduction}, the Breuil modules $\cM$ is determined by the following data:
\begin{itemize}[leftmargin=*]
  \item $M_{N,0}=
  \begin{bmatrix}
    0 & 0 \\
    \frac{1-4u^{p-1}}{\Theta_0} & 0
  \end{bmatrix}\quad\&\quad
  M_{N,1}=
  \begin{bmatrix}
    0 & 0 \\
    \frac{1}{\Theta_1} & 0
  \end{bmatrix}$;
  \item $M_{\Fil^5,0}=
  \begin{bmatrix}
    u^5 & 0 \\
    0 & u^4
  \end{bmatrix}\quad\&\quad
  M_{\Fil^5,1}=
  \begin{bmatrix}
    1 & \frac{\Theta_1}{x_1} \\
    0 & 1
  \end{bmatrix}
  \begin{bmatrix}
    u^5 & \frac{\alpha_1u^4}{4\beta_1\Theta_0} \\
    0 & 1
  \end{bmatrix}$;
  \item $M_{\phi_5,0}=
  \begin{bmatrix}
    -\alpha_0 & 0\\
    0 & \beta_0
  \end{bmatrix}\quad\&\quad
  M_{\phi_5,1}=
  \begin{bmatrix}
    1 & -\frac{\alpha_1}{\beta_1}\frac{p^5x_1}{\Theta_1} \\
    \frac{4u^{p-1}}{\Theta_0} & 1
  \end{bmatrix}
  \begin{bmatrix}
    0 & \alpha_1 \\
    \beta_1 & 0
  \end{bmatrix}$.
\end{itemize}
Note that we have $v_p(\Theta_0)=3+v_p(x_1)$ and $v_p(\Theta_1)=1+v_p(x_1)$, and so $\frac{1}{\Theta_1}=\frac{\Theta_1}{x_1}=\frac{p^5x_1}{\Theta_1}=0$ in $\F$. We will divide $R(\vr;\vk')$ into following two areas, and in each area we will determine the mod-$p$ reduction:
\begin{enumerate}[leftmargin=*]
  \item $t_1\in R_{int}(\vr;\vk')$: It is easy to see that $\cM\cong\cM_1((5,5),(4,0);(-\alpha_0,\alpha_1),(\beta_0,\beta_1))$, and so we conclude that $\rhobar|_\IQptwo\cong\omega_4^{5p+p^2}\oplus\omega_4^{1+5p^3}$ by Lemma~\ref{lemm: Tst^r for irred BrMod}~(ii).
  \item $t_1=-3$: We have $\frac{1}{\Theta_0}\in\F^\times$. In particular, the monodromy type is $(1,0)$. Put $\cM'=\barS_{\F}(\barE_2)\subset\cM$. It is easy to see that $\cM'\cong \cM(4,1;\beta_0,-\frac{\alpha_1}{4\Theta_0})$ and $\cM/\cM'\cong \cM(5,4;-\alpha_0,4\beta_1\Theta_0)$, and so we conclude that $\rhobar^{ss}|_\IQptwo\cong \omega_2^{1+4p}\oplus\omega_2^p$, by Lemma~\ref{lemm: Tst^r for irred BrMod}~(i), and $\rhobar$ is non-split as the monodromy type is nonzero.
\end{enumerate}
We summarize the mod-$p$ reduction for $\vk'=(\infty,5)$ in the following table, Table~\ref{tab: (infty,5)}. Note that by $\ast$ in the following table we mean a non-split extension.
\begin{table}[htbp]%(infty,5)
  \centering
  \begin{tabular}{|c||c|c|}
    \hline
    % after \\: \hline or \cline{col1-col2} \cline{col3-col4} ...
    $t_1$ & $\rhobar|_{I_{\Q_{p^2}}}$ & $\vec{\mathrm{MT}}$ \\\hline\hline
    $R_{int}(\vr;(\infty,5))$
      & $\omega_4^{5p+p^2}\oplus\omega_4^{1+5p^3}$ & $(0,0)$ \\\hline
    $t_1=-3$
      & $
      \begin{bmatrix}
        \omega_2^p & * \\
        0 & \omega_2^{1+4p}
      \end{bmatrix}$ & $(1,0)$ \\\hline
  \end{tabular}
  \caption{$\rhobar$ for $\vk'=(\infty,5)$}\label{tab: (infty,5)}
\end{table}

\subsubsection{\textbf{Summary on mod-$p$ reduction when $\vr=(1,5)$}}\label{subsubsec: summary for 1,5}
In this section of paragraph, we summarize the results on mod-$p$ reduction when $\vr=(1,5)$.

We first consider the case $\cJ_0=\emptyset$, i.e., those $\vk'\in J(\vr;\emptyset)$. We set $\vec{\xi}:=(\xi_0,\xi_1)\in E^2(=E^2_\emptyset)$ where
$$\xi_0=\frac{1}{p}\left(p\fL_0-\fL_1+\frac{17}{6}\right) \qquad\mbox{and}\qquad \xi_1=\frac{1}{p}(p\fL_1-\fL_0).$$
Note that we have $\vx^{(1,2)}=\vec{\xi}$, and that we further have
\begin{align*}
\vx^{(1,1)}&=\left(\xi_0-\tfrac{3}{4p},\xi_1\right);\\
\vx^{(1,3)}&=\left(\xi_0+\tfrac{1}{6p},\xi_1\right);\\ 
\vx^{(0,1)}&=\left(\xi_0-\tfrac{3}{4p},\xi_1+\tfrac{1}{p}\xi_0-\tfrac{3}{4p^2}\right);\\
\vx^{(0,2)}&=\left(\xi_0,\xi_1+\tfrac{1}{p}\xi_0\right).
\end{align*}

We partition $E^2(=E^2_\emptyset)$, such that within each subset the semi-simplification of $\rhobar|_{I_{\Q_{p^2}}}$ remains constant, as follow:
\begin{itemize}[leftmargin=*]
\item $S_1=\{\vL\in E^2\mid v_p(\xi_0)>-1,\ v_p(\xi_1)>-1\}$;
\item $S_2=\{\vL\in E^2\mid v_p(\xi_0)=-1,\ v_p(\xi_1)>-2\}$;
\item $S_3=\{\vL\in E^2\mid v_p(\xi_0)>-1,\ v_p(\xi_1)=-1\}$;
\item $S_4=\{\vL\in E^2\mid  v_p(\xi_0)-v_p(\xi_1)<1,\ v_p(\xi_0)>-1,\ v_p(\xi_1)<-1\}$;
\item $S_5=\{\vL\in E^2\mid v_p(\xi_0)-v_p(\xi_1)>1,\ -2\leq v_p(\xi_1)<-1\}$;
\item $S_6=\{\vL\in E^2\mid v_p(\xi_0)-v_p(\xi_1)=1,\ -2<v_p(\xi_0)<0\}$;
\item $S_7=\{\vL\in E^2\mid v_p(\xi_0)-v_p(\xi_1)<1,\ -2<v_p(\xi_0)<-1\}$;
\item $S_8=\{\vL\in E^2\mid 1<v_p(\xi_0-\tfrac{3}{4p})-v_p(\xi_1)<2,\ -3<v_p(\xi_1)<-2\}$;
\item $S_9=\{\vL\in E^2\mid v_p(\xi_0-\tfrac{3}{4p})-v_p(\xi_1)=2,\ v_p(\xi_0-\tfrac{3}{4p})<0\}$;
\item $S_{10}=\{\vL\in E^2\mid v_p(\xi_0-\tfrac{3}{4p})-v_p(\xi_1)>2,\ v_p(\xi_1)<-2\}$;
\item $S_{11}=\{\vL\in E^2\mid v_p(\xi_0)\leq -2,\ v_p(\xi_1+\tfrac{1}{p}\xi_0)>-3\}$;
\item $S_{12}=\{\vL\in E^2\mid v_p(\xi_0)<-1,\ v_p(\xi_1+\tfrac{1}{p}\xi_0)=-3\}$;
\item $S_{13}=\{\vL\in E^2\mid v_p(\xi_0)-v_p(\xi_1+\tfrac{1}{p}\xi_0)<2,\ v_p(\xi_1+\tfrac{1}{p}\xi_0)<-3\}$.
\end{itemize}
Note that $S_1,\cdots,S_7$ are defined in terms of $v_p(\vec{\xi})$, while $S_8$, $S_9$, $S_{10}$ (resp. $S_{11}$, $S_{12}$, $S_{13}$) are in terms of $(v_p(\xi_0-\frac{3}{4p}),v_p(\xi_1))$ (resp. of $(v_p(\xi_0), v_p(\xi_1+\frac{1}{p}\xi_0))$). But it is easy to see that the unions $S_8\cup S_9\cup S_{10}$ and $S_{11}\cup S_{12}\cup S_{13}$ can be described in terms of $v_p(\vec{\xi})$ as follows:
\begin{align*}
S_8\cup S_9\cup S_{10}
  =& \{\vL\in E^2\mid  v_p(\xi_0)-v_p(\xi_1)>1,\ -3<v_p(\xi_1)<-2\} \\
  &\,\,\, \cup\{\vL\in E^2\mid  v_p(\xi_0)-v_p(\xi_1)\geq 2,\ v_p(\xi_1)<-2\};\\
S_{11}\cup S_{12}\cup S_{13}
  =& \{\vL\in E^2\mid  v_p(\xi_0)\leq -2,\ v_p(\xi_1)\geq -3\} \\
  &\,\,\, \cup\{\vL\in E^2\mid  v_p(\xi_0)-v_p(\xi_1)<2,\ v_p(\xi_1)\leq -3\}.
\end{align*}

It is not difficult to show that $S_i\cap S_j=\emptyset$ if $i\neq j$ and that $\cup_{i=1}^{13}S_i=E^2$. The following picture, Figure~\ref{fig: partition of E^2 for vr=(1,5)}, describes those sets $S_1,\cdots,S_7, S_8\cup S_9\cup S_{10}, S_{11}\cup S_{12}\cup S_{13}$ on $(v_p(\xi_0),v_p(\xi_1))$-plane, emphasizing the disjointness.
\begin{figure}[htbp]
  \centering
  \includegraphics[scale=0.5]{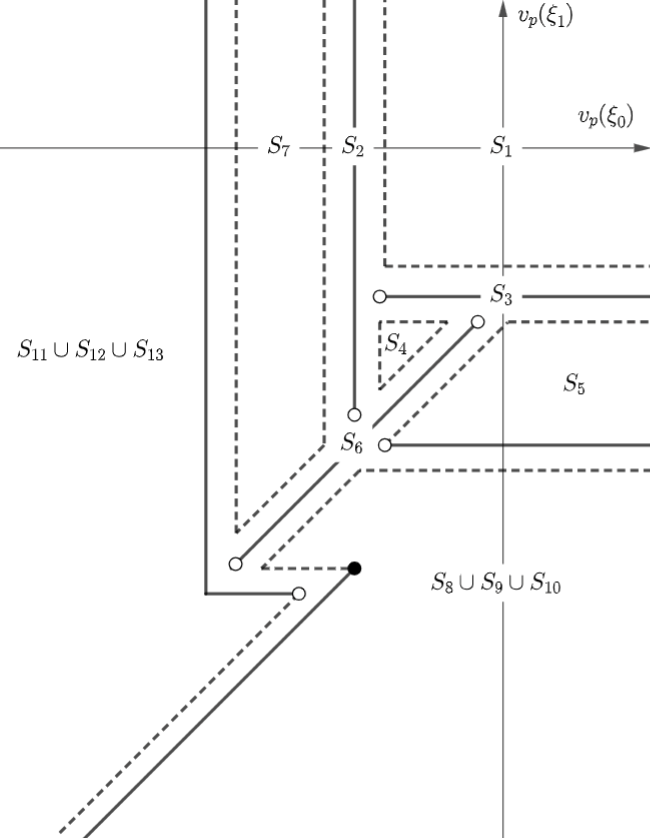}
  \caption{Partition of $E^2$ by $S_1,\cdots,S_7,S_8\cup S_9\cup S_{10}, S_{11}\cup S_{12}\cup S_{13}$.}\label{fig: partition of E^2 for vr=(1,5)}
\end{figure}
We further describe the sets $S_8$, $S_9$, $S_{10}$ (resp. $S_{11}$, $S_{12}$, $S_{13}$) on $(v_p(\xi_0-\frac{3}{4p}),v_p(\xi_1))$-plane (resp. on  $(v_p(\xi_0), v_p(\xi_1+\frac{1}{p}\xi_0))$-plane) in Figure~\ref{fig: vr=(1,5), S8 to S13}.
\begin{figure}[htbp]
  \centering
  \includegraphics[scale=0.5]{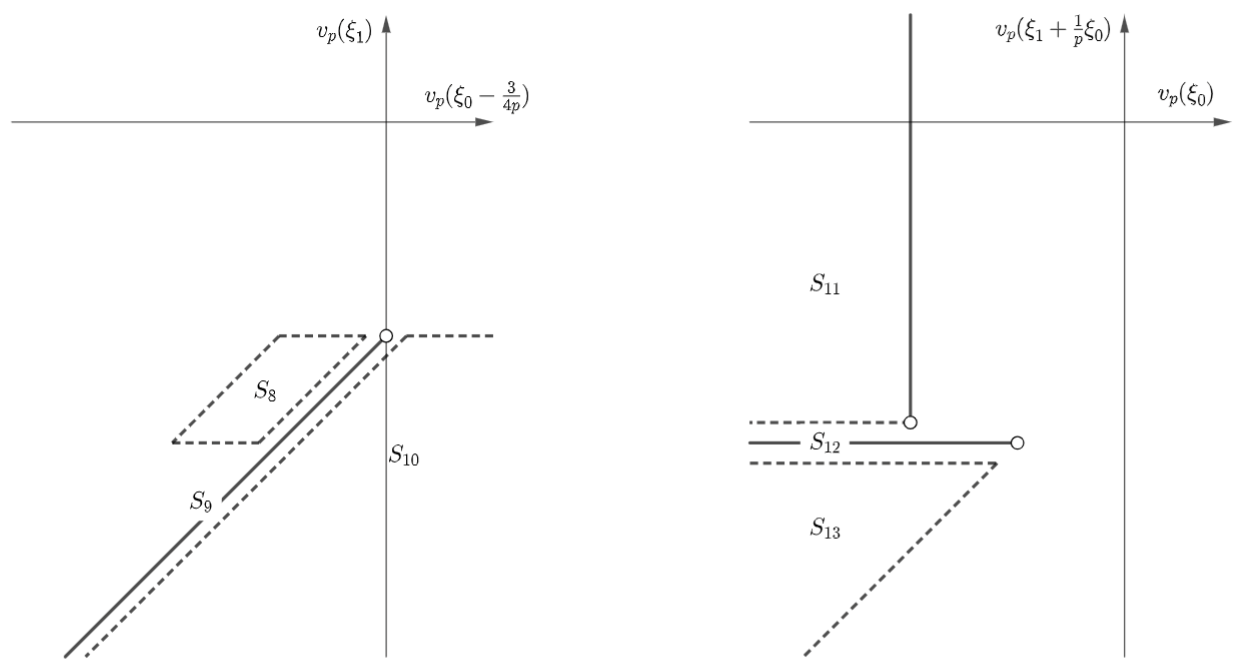}
  \caption{Partitions of $S_8\cup S_9\cup S_{10}$ (left) and of $S_{11}\cup S_{12}\cup S_{13}$ (right)}\label{fig: vr=(1,5), S8 to S13}
\end{figure}

We summarize the non-split mod-$p$ reduction when $\vr=(1,5)$ and $\cJ_0=\emptyset$ in Table~\ref{tab:summary for vr=(1,5), cJ=empty}. One can easily induce the summary in Remark~\ref{rema: 1,5} from Table~\ref{tab:summary for vr=(1,5), cJ=empty}.
\begin{table}[htbp]
    \centering
    \begin{tabular}{|c||c|c|}
    \hline
        $\vL$ & $\rhobar|_\IQptwo$ & $\vec{\mathrm{MT}}$ \\\hline\hline
        %S1
        $S_1$
            & $\omega_4^{3p+p^2+2p^3}\oplus\omega_4^{1+2p+3p^3}$
            & $(0,0)$ \\\hline
        %S2
        \multirow{2}{*}[-1.5ex]{$S_2$}
            & $
            \begin{bmatrix}
                \omega_2^{2p} & * \\ 0 & \omega_2^{1+3p}
            \end{bmatrix}$
            & $(0,0)$\\\cline{2-3}
            & $
            \begin{bmatrix}
                \omega_2^{1+3p} & * \\ 0 & \omega_2^{2p}
            \end{bmatrix}$
            & $
            \begin{cases}
                (1,1) &\mbox{if } v_p(\xi_0+\frac{1}{6p})\geq 0 \mbox{ and }v_p(\xi_1)\geq 0; \\
                (0,0) &\mbox{otherwise}
            \end{cases}$ \\\hline
        %S3
        \multirow{2}{*}[-1.5ex]{$S_3$}
            & $
            \begin{bmatrix}
                \omega_2^{3p} & * \\ 0 & \omega_2^{1+2p}
            \end{bmatrix}$
            & $(0,0)$\\\cline{2-3}
            & $
            \begin{bmatrix}
                \omega_2^{1+2p} & * \\ 0 & \omega_2^{3p}
            \end{bmatrix}$
            & $
            \begin{cases}
                (0,0) &\mbox{if } -1<v_p(\xi_0)<0; \\
                (1,1) &\mbox{if } v_p(\xi_0)\geq 0
            \end{cases}$  \\\hline
        %S4
        $S_4$
            & $\omega_4^{2p+p^2+3p^3}\oplus\omega_4^{1+3p+2p^3}$
            & $(0,0)$ \\\hline
        %S5
        $S_5$
            & $\omega_4^{1+4p+p^3}\oplus\omega_4^{p+p^2+4p^3}$
            & $(0,0)$ \\\hline
        %S6
        \multirow{2}{*}[-2ex]{$S_6$}
            & $
            \begin{bmatrix}
                \omega_2^{4p} & * \\ 0 & \omega_2^{1+p}
            \end{bmatrix}$
            & $
            \begin{cases}
                (0,0) &\mbox{if } v_p(\xi_1+\frac{1}{p}\xi_0)<-2 \\&\mbox{ and } -2<v_p(\xi_0)<-1;\\
                (1,0) &\mbox{otherwise}
            \end{cases}$ \\\cline{2-3}
            & $
            \begin{bmatrix}
                \omega_2^{1+p} & * \\ 0 & \omega_2^{4p}
            \end{bmatrix}$
            & $
            \begin{cases}
                (1,1) &\mbox{if } v_p(\xi_0-\frac{3}{4p})\geq 0;\\
                (0,0) &\mbox{otherwise}
            \end{cases}$ \\\hline
        %S7
        $S_7$
            & $\omega_4^{4p+p^2+p^3}\oplus\omega_4^{1+p+4p^3}$
            & $(0,0)$ \\\hline
        %S8
        $S_8$
            & $\omega_4^{p+p^2+4p^3}\oplus\omega_4^{1+4p+p^3}$
            & $(0,0)$ \\\hline
        %S9
        \multirow{2}{*}[-1.5ex]{$S_9$}
            & $
            \begin{bmatrix}
                \omega_2^{5p} & * \\ 0 & \omega_2
            \end{bmatrix}$
            & $
            \begin{cases}
                (0,0) &\mbox{if } v_p(\xi_0-\frac{3}{4p})<-1;\\
                (1,0) &\mbox{if } -1\leq v_p(\xi_0-\frac{3}{4p})<0
            \end{cases}$ \\\cline{2-3}
            & $
            \begin{bmatrix}
                \omega_2 & * \\ 0 & \omega_2^{5p}
            \end{bmatrix}$
            & $(0,0)$ \\\hline
        %S10
        $S_{10}$
            & $\omega_4^{1+5p}\oplus\omega_4^{p^2+5p^3}$
            & $(0,0)$ \\\hline
        %S11
        \multirow{2}{*}[-1.5ex]{$S_{11}$}
            & $
            \begin{bmatrix}
                \omega_2^{4p} & * \\ 0 & \omega_2^{1+p}
            \end{bmatrix}$
            & $
            \begin{cases}
                (0,0) &\mbox{if } -3<v_p(\xi_1+\frac{1}{p}\xi_0)<-2;\\
                (1,0) &\mbox{if }v_p(\xi_1+\frac{1}{p}\xi_0)\geq -2 
            \end{cases}$\\\cline{2-3}
            & $
            \begin{bmatrix}
                \omega_2^{1+p} & * \\ 0 & \omega_2^{4p}
            \end{bmatrix}$
            & $(0,0)$ \\\hline
        %S12
        \multirow{2}{*}[-0.5em]{$S_{12}$}
            & $
            \begin{bmatrix}
                \omega_2^p & * \\ 0 & \omega_2^{1+4p}
            \end{bmatrix}$
            & $(1,0)$ \\\cline{2-3}
            & $
            \begin{bmatrix}
                \omega_2^{1+4p} & * \\ 0 & \omega_2^p
            \end{bmatrix}$
            & $(0,0)$ \\\hline
        %S13
        $S_{13}$
            & $\omega_4^{5p+p^2}\oplus\omega_4^{1+5p^3}$
            & $(0,0)$ \\
        \hline
    \end{tabular}
    \caption{Non-split mod-$p$ reduction for $\vr=(1,5)$, $\cJ_0=\emptyset$}
    \label{tab:summary for vr=(1,5), cJ=empty}
\end{table}
We now explain how one can induce Table~\ref{tab:summary for vr=(1,5), cJ=empty} from the results in the previous sections of paragraph.
\begin{itemize}[leftmargin=*]
\item Assume $\vL\in S_1$. Equivalently, we have $v_p(x_1^{(1,3)})=-1$, $v_p(x_0^{(1,3)})=-1$ and $v_p(px_0^{(1,3)}-\frac{1}{6})>0$. Then we have $\rhobar|_\IQptwo\cong\omega_4^{3p+p^2+2p^3}\oplus\omega_4^{1+2p+3p^3}$, from $\mathbf{Case}_\phi(\vr;(\frac{1}{2},\frac{5}{2}))$ (see \S\ref{subsubsec: vr=(1,5), vk=(1/2,5/2)}~(iv)(a)).

\item Assume $\vL\in S_2$, i.e.,  $v_p(\xi_0)=-1$ and $v_p(\xi_1)>-2$. Equivalently we have $v_p(x_0^{(0,2)})=-1$, $v_p(x_1^{(0,2)})=-2$ and $v_p(1-\frac{px_1^{(0,1)}}{x_0^{(0,1)}})>0$. Also, for $v_p(\xi_1)\geq -1$, we have another equivalent formulation given by $v_p(x_0^{(1,3)})\geq -1$, $v_p(x_1^{(1,3)})\geq -1$ and $v_p(px_0^{(1,3)}-\frac{1}{6})=0$. In this case there are two non-homothetic lattices whose mod-$p$ reductions are non-split and reducible.
\begin{itemize}[leftmargin=*]
    \item We first consider $\mathbf{Case}_\phi(\vr;(\frac{3}{2},\frac{3}{2}))$ (see \S\ref{subsubsec: vr=(1,5), vk=(3/2,3/2)}~(iii)(a)). In this case we have $\rhobar|_\IQptwo\cong
    \begin{bmatrix}
        \omega_2^{2p} & * \\ 0 & \omega_2^{1+3p}
    \end{bmatrix}$ with $\vec{\mathrm{MT}}=(1,0)$.
    \item We also consider $\mathbf{Case}_\phi(\vr;(\frac{1}{2},\frac{5}{2}))$ or $\mathbf{Case}_\phi(\vr;(1,\frac{7}{2}))$, which coincide if $v_p(\xi_1)=-1$ (see \S\ref{subsubsec: vr=(1,5), vk=(1/2,5/2)}~(iii)(b),(iv)(b) if $v_p(\xi_1)\geq -1$ and \S\ref{subsubsec: vr=(1,5), vk=(1,7/2)}~(v)  if $-2<v_p(\xi_1)\leq -1$). In this case we have $\rhobar|_\IQptwo\cong
    \begin{bmatrix}
        \omega_2^{1+3p} & * \\ 0 & \omega_2^{2p}
    \end{bmatrix}$, with $\vec{\mathrm{MT}}=(1,1)$ if $v_p(\xi_0+\frac{1}{6p})\geq 0$ and $v_p(\xi_1)\geq 0$, and $\vec{\mathrm{MT}}=(0,0)$ otherwise.
\end{itemize}

\item Assume $\vL\in S_3$, i.e., $v_p(\xi_0)>-1$ and $v_p(\xi_1)=-1$. Equivalently, we have $v_p(x_0^{(1,3)})=-1$, $v_p(x_1^{(1,3)})=-1$ and $v_p(px_0^{(1,3)}-\frac{1}{6})>0$. In this case there are two non-homothetic lattices whose mod-$p$ reductions are non-split and reducible.
\begin{itemize}[leftmargin=*]
    \item We first consider $\mathbf{Case}_\phi(\vr;(\frac{1}{2},\frac{5}{2}))$ (see \S\ref{subsubsec: vr=(1,5), vk=(1/2,5/2)}~(iii)(a)). In this case, we have $\rhobar|_\IQptwo\cong
    \begin{bmatrix}
        \omega_2^{3p} & * \\ 0 & \omega_2^{1+2p}
    \end{bmatrix}$, with $\vec{\mathrm{MT}}=(0,0)$.
    \item We also consider $\mathbf{Case}_\phi(\vr;(1,\frac{7}{2}))$ or $\mathbf{Case}_\phi(\vr;(\frac{1}{2},4))$, which coincide if $v_p(\xi_0)=0$ (see \S\ref{subsubsec: vr=(1,5), vk=(1,7/2)}~(ii),(iii) if $-1<v_p(\xi_0)\leq 0$ and \S\ref{subsubsec: vr=(1,5), vk=(1/2,4)}~(ii) if $v_p(\xi_0)\geq 0$). In this case, we have $\rhobar|_\IQptwo\cong
    \begin{bmatrix}
        \omega_2^{1+2p} & * \\ 0 & \omega_2^{3p}
    \end{bmatrix}$, with $\vec{\mathrm{MT}}=(0,0)$ if $-1<v_p(\xi_0)<0$ and $\vec{\mathrm{MT}}=(1,1)$ if $v_p(\xi_0)\geq 0$.
\end{itemize}

\item Assume $\vL\in S_4$, i.e., $v_p(x_0^{(1,2)})-v_p(x_1^{(1,2)})<-1$, $v_p(x_0^{(1,2)})<-1$ and $v_p(x_1^{(1,2)})>-1$. Then $\rhobar|_\IQptwo\cong\omega_4^{2p+p^2+3p^3}\oplus\omega_4^{1+3p+2p^3}$, from $\mathbf{Case}_\phi(\vr;(1,\frac{7}{2}))$ (see \S\ref{subsubsec: vr=(1,5), vk=(1,7/2)}~(i)).

\item Assume $\vL\in S_5$, i.e., $v_p(x_0^{(1,2)})-v_p(x_1^{(1,2)})>1$ and $-2\leq v_p(x_1^{(1,2)})<-1$. Then $\rhobar|_\IQptwo\cong\omega_4^{1+4p+p^3}\oplus\omega_4^{p+p^2+4p^3}$, from $\mathbf{Case}_\phi(\vr;(\frac{1}{2},4))$ (see \S\ref{subsubsec: vr=(1,5), vk=(1/2,4)}~(i),(v)). Note that if $v_p(\xi_1)=-2$, then it is compatible with the result from $\mathbf{Case}_\phi(\vr;(1,\frac{9}{2})$ (see \S\ref{subsubsec: vr=(1,5), vk=(1,9/2)}~(viii)(a)).

\item Assume $\vL\in S_6$, i.e., $v_p(\xi_0)-v_p(\xi_1)=1$ and $-2<v_p(\xi_1)<0$. In this case there are two non-homothetic lattices whose mod-$p$ reductions are non-split and reducible.
\begin{itemize}[leftmargin=*]
    \item We first consider $\mathbf{Case}_\phi(\vr;(\frac{3}{2},\frac{3}{2}))$ or $\mathbf{Case}_\phi(\vr;(1,\frac{7}{2}))$, which coincide if $v_p(\xi_0)=-1$ (see \S\ref{subsubsec: vr=(1,5), vk=(3/2,3/2)}~(i),(iv)(b) if $-2<v_p(\xi_0)\leq -1$ and from \S\ref{subsubsec: vr=(1,5), vk=(1,7/2)}~(iv) if $-1\leq v_p(\xi_0)<0$). In this case we have $\rhobar|_\IQptwo\cong
    \begin{bmatrix}
        \omega_2^{4p} & * \\ 0 & \omega_2^{1+p}
    \end{bmatrix}$ with $\vec{\mathrm{MT}}=(0,0)$ if $v_p(\xi_1+\frac{1}{p}\xi_0)<-2$ and $-2<v_p(\xi_0)<-1$, and $\vec{\mathrm{MT}}=(1,0)$ otherwise.
    \item We also consider
    \begin{itemize}[leftmargin=*]
        \item $\mathbf{Case}_\phi(\vr;(1,\frac{9}{2}))$ if $-2<v_p(\xi_0)<-1$ (see \S\ref{subsubsec: vr=(1,5), vk=(1,9/2)}~(vii);
        \item $\mathbf{Case}_\phi(\vr;(1,\frac{9}{2}))$ if $v_p(\xi_0)=-1$ and $-1\leq v_p(\xi_0-\frac{3}{4p})\leq 0$ (see \S\ref{subsubsec: vr=(1,5), vk=(1,9/2)}~(viii)(b));
        \item $\mathbf{Case}_\phi(\vr;(\frac{1}{2},5))$ if $v_p(\xi_0)=-1$ and $v_p(\xi_0-\frac{3}{4p})\geq 0$ (see \S\ref{subsubsec: vr=(1,5), vk=(1/2,5)}~(ii));
        \item $\mathbf{Case}_\phi(\vr;(\frac{1}{2},4))$ if $-1<v_p(\xi_0)<0$ (see \S\ref{subsubsec: vr=(1,5), vk=(1/2,4)}~(iii)),
    \end{itemize}
    where $\mathbf{Case}_\phi(\vr;(1,\frac{9}{2}))$ and $\mathbf{Case}_\phi(\vr;(\frac{1}{2},5))$ coincide if $v_p(\xi_0-\frac{3}{4p})=0$. In this case, we have $\rhobar|_\IQptwo\cong
    \begin{bmatrix}
        \omega_2^{1+p} & * \\ 0 & \omega_2^{4p}
    \end{bmatrix}$ with $\vec{\mathrm{MT}}=(1,1)$ if $v_p(\xi_0-\frac{3}{4p})\geq 0$, and $\vec{\mathrm{MT}}=(0,0)$ otherwise.
\end{itemize}

\item Assume $\vL\in S_7$. Equivalently, we have $v_p(x_0^{(0,2)})-v_p(x_1^{(0,2)})=1$, $-2<v_p(x_0^{(0,2)})<-1$ and $v_p(1-\frac{px_1^{(0,2)}}{x_0^{(0,2)}})>0$. Then $\rhobar|_\IQptwo\cong\omega_4^{4p+p^2+p^3}\oplus\omega_4^{1+p+4p^3}$, from $\mathbf{Case}_\phi(\vr;(\frac{3}{2},\frac{3}{2}))$ (see \S\ref{subsubsec: vr=(1,5), vk=(3/2,3/2)}~(iv)(a)).

\item Assume $\vL\in S_8$, i.e., $1<v_p(x_0^{(1,1)})-v_p(x_1^{(1,1)})<2$ and $-3<v_p(x_1^{(1,1)})<-2$. Then $\rhobar|_\IQptwo\cong\omega_4^{p+p^2+4p^3}\oplus\omega_4^{1+4p+p^3}$, from $\mathbf{Case}_\phi(\vr;(1,\frac{9}{2}))$ (see \S\ref{subsubsec: vr=(1,5), vk=(1,9/2)}~(i)).

\item Assume $\vL\in S_9$, i.e., $v_p(x_0^{(1,1)})-v_p(x_1^{(1,1)})=2$ and $v_p(x_0^{(1,1)})<0$. If $v_p(\xi_0-\frac{3}{4p}\leq 1$, we have an equivalent formulation given by $v_p(x_0^{(0,1)})-v_p(x_1^{(0,1)})=2$ and $v_p(x_0^{(0,1)})\leq 1$. In this case there are two non-homothetic lattices whose mod-$p$ reductions are non-split and reducible.
\begin{itemize}[leftmargin=*]
    \item We first consider $\mathbf{Case}_\phi(\vr;(\frac{3}{2},5))$ or $\mathbf{Case}_\phi(\vr;(1,\frac{9}{2}))$, which coincide if $v_p(\xi_0-\frac{3}{4p})=-1$ (see \S\ref{subsubsec: vr=(1,5), vk=(3/2,5)}~(iii),(iv) if $v_p(\xi_0-\frac{3}{4p})\leq -1$ and \S\ref{subsubsec: vr=(1,5), vk=(1,9/2)}~(iv) if $-1\leq v_p(\xi_0-\frac{3}{4p})<0$). In this case, we have $\rhobar_1|_\IQptwo\cong
    \begin{bmatrix}
        \omega_2^{5p} & * \\ 0 & \omega_2
    \end{bmatrix}$, with $\vec{\mathrm{MT}}=(0,0)$ if $v_p(\xi_0-\frac{3}{4p})<-1$ and $\vec{\mathrm{MT}}=(1,0)$ if $-1\leq v_p(\xi_0-\frac{3}{4p})<0$.
    \item We also consider $\mathbf{Case}_\phi(\frac{1}{2},5)$ (see \S\ref{subsubsec: vr=(1,5), vk=(1/2,5)}~(iii)). In this case, we have $\rhobar|_\IQptwo\cong
    \begin{bmatrix}
        \omega_2 & * \\ 0 & \omega_2^{5p}
    \end{bmatrix}$ with $\vec{\mathrm{MT}}=(0,0)$.
\end{itemize}

\item Assume $\vL\in S_{10}$, i.e., $v_p(x_0^{(1,1)})-v_p(x_1^{(!,1)})>2$ and $v_p(x_1^{(1,1)})<-2$. Then $\rhobar|_\IQptwo\cong\omega_4^{1+5p}\oplus\omega_4^{p^2+5p^3}$, from $\mathbf{Case}_\phi(\vr;(\frac{1}{2},5))$ (see \S\ref{subsubsec: vr=(1,5), vk=(1/2,5)}~(i)).

\item Assume $\vL\in S_{11}$, i.e., $v_p(x_0^{(0,2)})\leq -2$ and $v_p(x_1^{(0,2)})>-3$. In this case there are two non-homothetic lattices whose mod-$p$ reductions are non-split and reducible.
\begin{itemize}[leftmargin=*]
    \item We first consider the lattice from $v_p(\Theta_0)\in\{0,v_p(x_1^{(0,2)})+2\}$ of $\mathbf{Case}_\phi(\vr;(\frac{3}{2},\frac{3}{2}))$ (see \S\ref{subsubsec: vr=(1,5), vk=(3/2,3/2)}~(i)). In this case, we have $\rhobar|_\IQptwo\cong
    \begin{bmatrix}
        \omega_2^{4p} & * \\ 0 & \omega_2^{1+p}
    \end{bmatrix}$ with $\vec{\mathrm{MT}}=(0,0)$ if $-3<v_p(\xi_1+\frac{1}{p}\xi_0)<-2$ and $\vec{\mathrm{MT}}=(1,0)$ if $v_p(\xi_1+\frac{1}{p}\xi_0)\geq -2$.
    \item We also consider the lattice from $v_p(\Theta_0)=-1$ of $\mathbf{Case}_\phi(\vr;(\frac{3}{2},\frac{3}{2}))$ (see \S\ref{subsubsec: vr=(1,5), vk=(3/2,3/2)}~(i)). In this case, we have $\rhobar|_\IQptwo\cong
    \begin{bmatrix}
        \omega_2^{1+p} & * \\ 0 & \omega_2^{4p}
    \end{bmatrix}$ with $\vec{\mathrm{MT}}=(0,0)$.
\end{itemize}

\item Assume $\vL\in S_{12}$, i.e., $v_p(x_0^{(0,2)})<-1$ and $v_p(x_1^{(0,2)})=-3$. For each $\xi_0$, we have equivalent formulations by
\begin{itemize}[leftmargin=*]
    \item $v_p(x_0^{(0,1)})<-2$ and $v_p(x_1^{(0,1)})=-3$, if $v_p(\xi_0)\leq -2$;
    \item $v_p(x_0^{(1,1)})=v_p(px_1^{(1,1)}+x_0^{(1,1)})=-2$ and $v_p(x_1^{(1,1)})=-3$, if $v_p(\xi_0)=-2$;
    \item $-2<v_p(x_0^{(1,1)})<-1$ and $v_p(x_1^{(1,1)})=-3$, if $-2<v_p(\xi_0)<-1$.
\end{itemize}
In this case there are two non-homothetic lattices whose mod-$p$ reductions are non-split and reducible.
\begin{itemize}[leftmargin=*]
    \item We first consider $\rhobar$ from $\mathbf{Case}_\phi(\vr;(\frac{3}{2},5))$ (see \S\ref{subsubsec: vr=(1,5), vk=(3/2,5)}~(ii)). In this case, we have $\rhobar|_\IQptwo\cong
    \begin{bmatrix}
        \omega_2^p & * \\ 0 & \omega_2^{1+4p}
    \end{bmatrix}$ with $\vec{\mathrm{MT}}=(1,0)$.
    \item We also consider $\rhobar$ from $\mathbf{Case}_\phi(\vr;(\frac{3}{2},\frac{3}{2}))$ or from $\mathbf{Case}_\phi(\vr;(1,\frac{9}{2}))$, which coincide if $v_p(x_0^{(0,1)})=-2$ (see \S\ref{subsubsec: vr=(1,5), vk=(3/2,3/2)}~(v) if $v_p(\xi_0)\leq -2$ and \S\ref{subsubsec: vr=(1,5), vk=(1,9/2)}~(v),(vi)(b) if $-2\leq v_p(\xi_0)<-1$). In this case, we have $\rhobar|_\IQptwo\cong
    \begin{bmatrix}
        \omega_2^{1+4p} & * \\ 0 & \omega_2^p
    \end{bmatrix}$ with $\vec{\mathrm{MT}}=(0,0)$.
\end{itemize}

\item Assume $\vL\in S_{13}$. Equivalently, we have $v_p(x_0^{(0,1)})-v_p(x_1^{(0,1)})<-2$ and $v_p(x_1^{(0,1)})<-3$. Then $\rhobar|_\IQptwo\cong\omega_4^{5p+p^2}\oplus\omega_4^{1+5p^3}$, from $\mathbf{Case}_\phi(\vr;(\frac{3}{2},5))$ (see \S\ref{subsubsec: vr=(1,5), vk=(3/2,5)}~(i)).
\end{itemize}

\smallskip
We now treat the case $\cJ_0=\{0\}$, i.e., those $\vk'\in J(\vr;\{0\})$. We set $$\xi'=\frac{1}{p^2}\left(p^2\fL_1-\fL_1+\frac{17}{6}\right)\in E(= E^2_{\{0\}}).$$
Here, we identify $E$ with $E^2_{\{0\}}$ in the obvious manner. One can readily induce $\xi'$ by solving \eqref{eq: definition of vx} for $x^{(0,2)}_1$, keeping in mind that $\Delta_0(-1)=-x_0^{(0,2)}$ and $\Delta_1(-1)=-\frac{17}{6}$. It is easy to see that
$$\vx^{(0,1)}_1=\xi'-\tfrac{3}{4p^2}
\quad\mbox{and}\quad
\vx^{(0,2)}_1=\xi'.$$
Note that $\vx^{(0,1)}_0$ and $\vx^{(0,2)}_0$ do not affect the mod-$p$ reduction, as $\cJ_0=\{0\}$ (see \eqref{eq: R vs R'}). 

We summarize the non-split mod-$p$ reduction when $\vr=(1,5)$ and $J_0=\{0\}$ in Table~\ref{tab:summary for vr=(1,5), cJ={0}}. One can easily induce the summary in Remark~\ref{rema: 1,5} from Table~\ref{tab:summary for vr=(1,5), cJ={0}}.
\begin{table}[htbp]
    \centering
    \begin{tabular}{|c||c|c|}
    \hline
        $\xi'$ & $\rhobar|_\IQptwo$ & $\vec{\mathrm{MT}}$ \\\hline\hline
        \multirow{2}{*}[-1.5ex]{$v_p(\xi')>-3$}
            & $
            \begin{bmatrix}
                \omega_2^{4p} & * \\ 0 & \omega_2^{1+p}
            \end{bmatrix}$
            & $(0,0)$\\\cline{2-3}
            & $
            \begin{bmatrix}
                \omega_2^{1+p} & * \\ 0 & \omega_2^{4p}
            \end{bmatrix}$
            & $
            \begin{cases}
                (0,0) &\mbox{if }-3<v_p(\xi')<-2; \\
                (1,1) &\mbox{if }v_p(\xi')\geq -2 
            \end{cases}$  \\\hline
        \multirow{2}{*}[-1.5ex]{$v_p(\xi')=-3$}
            & $
            \begin{bmatrix}
                \omega_2^p & * \\ 0 & \omega_2^{1+4p}
            \end{bmatrix}$
            & $(1,0)$\\\cline{2-3}
            & $
            \begin{bmatrix}
                \omega_2^{1+4p} & * \\ 0 & \omega_2^p
            \end{bmatrix}$
            & $(0,0)$ \\\hline
        $v_p(\xi')<-3$
            & $\omega_4^{5p+p^2}\oplus\omega_4^{1+5p^3}$
            & $(0,0)$ \\
        \hline
    \end{tabular}
    \caption{Non-split mod-$p$ reduction for $\vr=(1,5)$, $\cJ_0=\{0\}$}
    \label{tab:summary for vr=(1,5), cJ={0}}
\end{table}
We now explain how one can induce Table~\ref{tab:summary for vr=(1,5), cJ={0}} from the results in the previous sections of paragraph.
\begin{itemize}[leftmargin=*]
\item Assume $v_p(\xi')>-3$. In this case, there are two non-homothetic lattices whose mod-$p$ reductions are non-split and reducible.
\begin{itemize}[leftmargin=*]
    \item We first consider the lattice from $v_p(\Theta_0)=-1$ of $\mathbf{Case}_\phi(\vr;(\infty,\frac{3}{2}))$ (see \S\ref{subsubsec: vr=(1,5), vk=(infty,3/2)}~(i)). In this case, we have $\rhobar_1|_\IQptwo\cong
    \begin{bmatrix}
        \omega_2^{4p} & * \\ 0 & \omega_2^{1+p}
    \end{bmatrix}$ with $\vec{\mathrm{MT}}=(0,0)$.
    \item We also consider the lattice from $v_p(\Theta_0)\in\{0,v_p(\xi')+2\}$ $\mathbf{Case}_\phi(\vr;(\infty,\frac{3}{2}))$ (see \S\ref{subsubsec: vr=(1,5), vk=(infty,3/2)}~(i)). In this case, we have $\rhobar|_\IQptwo\cong
    \begin{bmatrix}
        \omega_2^{1+p} & * \\ 0 & \omega_2^{4p}
    \end{bmatrix}$, with $\vec{\mathrm{MT}}=(0,0)$ if $-3<v_p(\xi')<-2$ and $\vec{\mathrm{MT}}=(1,1)$ if $v_p(\xi')\geq -2$.
\end{itemize}

\item Assume $v_p(\xi')=-3$. Equivalently, we have $v_p(x_1^{(0,1)})=-3$. In this case, there are two non-homothetic lattices whose mod-$p$ reductions are non-split and reducible. 
\begin{itemize}[leftmargin=*]
    \item We first consider $\mathbf{Case}_\phi(\vr;(\infty,5))$ (see \S\ref{subsubsec: vr=(1,5), vk=(infty,5)}~(ii)). In this case, we have $\rhobar|_\IQptwo\cong
    \begin{bmatrix}
        \omega_2^p & * \\ 0 & \omega_2^{1+4p}
    \end{bmatrix}$ with $\vec{\mathrm{MT}}=(1,0)$.
    \item We also consider $\mathbf{Case}_\phi(\vr;(\infty,\frac{3}{2}))$ (see \S\ref{subsubsec: vr=(1,5), vk=(infty,3/2)}~(ii)). In this case, we have $\rhobar|_\IQptwo\cong
    \begin{bmatrix}
        \omega_2^{1+4p} & * \\ 0 & \omega_2^p
    \end{bmatrix}$ with $\vec{\mathrm{MT}}=(0,0)$.
\end{itemize}

\item Assume $v_p(\xi')<-3$. Equivalently, we have $v_p(x_1^{(0,1)})<-3$. Then $\rhobar|_\IQptwo\cong\omega_4^{5p+p^2}\oplus\omega_4^{1+5p^3}$, from $\mathbf{Case}_\phi(\vr;(\infty,5))$ (see \S\ref{subsubsec: vr=(1,5), vk=(infty,5)}~(i)).
\end{itemize}

\begin{rema}\label{rema: 1,5} 
One can extract the following results from Table~\ref{tab:summary for vr=(1,5), cJ=empty} and Table~\ref{tab:summary for vr=(1,5), cJ={0}}:
\begin{itemize}[leftmargin=*]
\item our method provides with at least one Galois stable lattice in each $2$-dimensional semi-stable non-crystalline representation $V$ of $G_{\Q_{p^2}}$ with Hodge--Tate weights $\mathrm{HT}(V)_0=(0,1)$ and $\mathrm{HT}(V)_1=(0,5)$;
\item if the mod-$p$ reduction is an extension of two distinct characters then our method provides the two non-homothetic lattices.
\end{itemize}
\end{rema}

\smallskip

\bibliographystyle{alpha}

\begin{thebibliography}{20}

%\bibitem[BL94]{BL94} L. Barthel, R. Livn\'e, \emph{Irreducible modular representations of $\mathrm {GL} _2 $ of a local field}, Duke Math. J. 1994 Aug, 75(2), 261--92.

%\bibitem[Ber10]{Berger} L. Berger, \emph{Repr\'esentations modulaires de $\GL_2(\Q_p)$ et repr\'esentations galoisiennes de dimension $2$}, Ast\'erisque (2010), no. 330, 263--279.

\bibitem[Bre97]{Bre97} C. Breuil, \emph{Repr\'esentations $p$-adiques semi-stables et transversalit\'e de Griffiths}, Math. Annalen 307 (1997), no. 2, 191--224.

\bibitem[Bre99]{Bre99} C. Breuil,\emph{Repr\'esentations semi-stables et modules fortement divisibles}, Inventiones Math. 136, 89-122, 1999.

%\bibitem[Bre03a]{Bre03I} C. Breuil, \emph{Sur quelques repr\'esentations modulaires et p-adiques de $\GL_2(\Q_p)$ I}, Compositio Math. 138, 2003, 165-188.

%\bibitem[Bre03b]{Bre03II} C. Breuil, \emph{Sur quelques repr\'esentations modulaires et p-adiques de $\GL_2(\Q_p)$ II}, J. Inst. Math. Jussieu 2, 2003, 1-36.

%\bibitem[Bre14]{Bre} C. Breuil, \emph{Sur un probl\`eme de compatibilit\'e local-global modulo {$p$} pour $\GL_2$}, J. f\"ur die Reine und Angew. Math. 692 (2014), 1--76.


%\bibitem[BD14]{BD} C. Breuil, F. Diamond, \emph{Formes modulaires de Hilbert modulo $p$ et valeurs d'extensions entre caract\'eres galoisiens}, Ann. Sci. \'Ecole Norm. Sup. (4) 47 (2014), no. 5, 9055974.

%\bibitem[BDJ10]{BDJ} K. Buzzard, F. Diamond, F. Jarvis, \emph{On Serre's conjecture for mod $\ell$ Galois representations over totally real fields}, Duke Math. J. 155 (2010), no. 1, 105--161.

\bibitem[BHHMS]{BHHMS} C. Breuil, F. Herzig, Y. Hu, S. Morra, B. Schraen, \emph{Gelfand--Kirillov dimension and mod $p$ cohomology for $\GL_2$}, Inventiones Math. 234, 1--128 (2023)

\bibitem[BLL]{BLL} J. Bergdall, B. Levin, T. Liu, \emph{Reductions of $2$-dimensional semi-stable representations with large $L$-invariant}, J. Inst. Math. Jussieu 2022, 1--26.


\bibitem[BM]{BM} C. Breuil, A. M\'ezard, \emph{Multiplicit\'es modulaires et repr\'esentations de $\GL{2}(\Z_p)$ et de $\mathrm{Gal}(\overline{\Q}_p/\Q_p)$ en $\ell = p$}, With an appendix by Guy Henniart, Duke Math. J. 115 (2002), no. 2, 205--310.


%\bibitem[BLGG]{BLGG} T. Barnet-Lamb, T. Gee, D. Geraghty, \emph{Serre weights for $U(n)$}, J. f\"ur die Reine und Angew. Math. 735 (2018), 199–-224.


%\bibitem[BLGGT]{BLGGT} T. Barnet-Lamb, T. Gee, D. Geraghty, R. Taylor, \emph{Potential automorphy and change of weight}, Annals of Math. (2) 179 (2014), no. 2, 501--609.


\bibitem[BhG]{BhG} S. Bhattacharya, E. Ghate \emph{Reductions of Galois representations for slopes in $(1,2)$}, Documenta Math. 20 (2015), 943-987.

\bibitem[BGR]{BGR} S. Bhattacharya, E. Ghate, S. Rozensztajn \emph{Reductions of Galois representations of slope 1} J. Algebra 508 (2018), 98-156.


%\bibitem[BH15]{BH} C. Breuil, F. Herzig, \emph{Ordinary representations of $G(\Q_p)$ and fundamental algebraic representations}, Duke Math. J. 164 (2015), no. 7, 1271--1352.



%\bibitem[BP12]{BP} C. Breuil, V. Pa{\v{s}}k{\=u}nas, \emph{Towards a modulo $p$ Langlands correspondence for $\mathrm{GL}_2$}, Mem. Amer. Math. Soc. 216 (2012), no. 1016.

\bibitem[BG09]{BG09} K. Buzzard, T. Gee, \emph{Explicit reduction modulo p of certain 2-dimensional crystalline representations}, Int. Math. Res. Not. 2009, no. 12, 2303-2317.

\bibitem[BG13]{BG13} K. Buzzard, T. Gee, \emph{Explicit reduction modulo p of certain 2-dimensional crystalline representations, II} Bulletin Lond. Math. Soc. 45 (2013), no. 4, 779–788.



%\bibitem[Cara14]{Cara} A. Caraiani, \emph{Monodromy and local-global compatibility for $l=p$}, Algebra \& Number Theory 8 (2014), no. 7, 1597--1646.


%\bibitem[CEGGPS]{CEGGPS} A. Caraiani, M. Emerton, T. Gee, D. Geraghty, V. Paskunas, S. W. Shin, \emph{Patching and p-adic local Langlands}, Cambridge J. Math. 4.2 (2016), 197–-287.

\bibitem[CEGS]{CEGS} A. Caraiani, M. Emerton, T. Gee, D. Savitt, \emph{The geometric Breuil–Mézard conjecture for two-dimensional potentially Barsotti-Tate Galois representations} Algebra and Number Theory (to appear).

\bibitem[Car06]{Caruso} X. Caruso, \emph{Repr\'esentations semi-stables de torsion dans le case {$er<p-1$}}, J.~Reine Angew.~Math.~594 (2006), 35--92.

%\bibitem[Car11]{Caruso11} X. Caruso, \emph{$\F_p$-repr\'esentations semi-stables}, Ann. Inst. Fourier 61 (2011), no. 4, 1683--1747.


%\bibitem[CL76]{CarterLusztig} R. W. Carter, G. Lusztig, \emph{Modular representations of finite groups of Lie type}, Proc. Lond. Math. Soc. (1976), no. 2, 347--384.

\bibitem[Ch]{Cheon} H. Cheon, \emph{Semi-stable deformation rings over $\Q_{p^f}$ in parallel Hodge--Tate weights $(0,1)$}, my science work, Feb. 2023 (Master Thesis).


\bibitem[CG]{CG} A. Chitrao, E. Ghate, \emph{Reductions of semi-stable representations using the Iwahori mod $p$ local Langlands correspondence}, preprint.

%\bibitem[Col10]{Col} P. Colmez, \emph{Repr\'esentations de $\GL_2(\Q_p)$ et $(\phi,\Gamma)$-modules}, Ast\'erisque (2010), no. 330, 281--309.

%\bibitem[CDP]{CDP} P. Colmez, G. Dospinescu, and V. Paskunas, \emph{The $p$-adic local Langlands correspondence for $\GL_2(\Q_p)$}, Cambridge J. Math. Vol 2.1, 2014.


\bibitem[CF]{CF} P. Colmez, J.-M. Fontaine, \emph{Construction des repr\'esentations $p$-adiques semi-stables}, Inventiones Math. 140 (2000), no. 1, 1--43.

%\bibitem[CDT99]{CDT} B. Conrad, F. Diamond, R. Taylor, \emph{Modularity of certain potentially Barsotti--Tate Galois representations}, J. Amer. Math. Soc. 12 (1999), no. 2, 521--567.

%\bibitem[CHT08]{CHT} L. Clozel, M. Harris, and R. Taylor, \emph{Automorphy for some $\ell$-adic lifts of automorphic mod $\ell$ representations}, Pub. Math. IHES 108 (2008), 1--181.

%\bibitem[CR90]{CurtisReiner} C. Curtis, I. Reiner, \emph{Methods of Representation Theory: With Applications to Finite Groups and Orders. Vol. I}, Wiley-Interscience 1990.


%\bibitem[DL76]{DeligneLusztig} P. Deligne, G. Lusztig, \emph{Representations of reductive groups over finite fields}, Annals of Math. (1976), 103--161.


\bibitem[Dous]{Dous} G. Dousmanis, \emph{Rank two filtered $(\varphi,N)$-modules with Galois descent data and coefficients}, Transactions of the A.M.S., Vol. 362, N. 7, July 2010, pp. 3883-3910.

%\bibitem[Eme]{Emerton} M. Emerton, \emph{Local-global compatibility in the $p$-adic Langlands programme for $\mathrm{GL}_{2/\Q}$}, preprint.


\bibitem[EG14]{EG} M. Emerton, T. Gee, \emph{Geometic perspective on the Breuil--M\'ezard conjecture}, J. Inst. Math. Jussieu 13 (2014), no. 1, 183–223.

\bibitem[EGH15]{EGH} M. Emerton, T. Gee, F. Herzig, \emph{Weight cycling and Serre-type conjectures for unitary groups}, Duke Math. J. 164 (2015), no. 7, 1271--1352.

%\bibitem[EGS15]{EGS} M. Emerton, T. Gee, D. Savitt, \emph{Lattices in the cohomology of Shimura curves}, Inventiones math. 200 (2015), no. 1, 1--96.

\bibitem[EL]{EL} J. Enns, H. Lee, \emph{Mod $p$ local-global compatibility for $\mathrm{GSp}_4(\Q_p)$ in the ordinary case}, Documenta Math. 29 (2024), no. 4, 863–919.

%\bibitem[Fon90]{Fon90} J.-M. Fontaine, \emph{Repr\'esentations {$p$}-adiques des corps locaux. {II}}, The {G}rothendieck {F}estschrift, {V}ol. {II}, Progr. Math. (1990), vol. 87, 249--309.

\bibitem[Fon94]{Fon94} J.-M. Fontaine, \emph{Repr\'esesentations $p$-adiques semi-stables, in Periodes p-adiques}, Asterisque 223 (1994), 113--184.

%\bibitem[FL82]{FL} J.-M. Fontaine, G. Laffaille, \emph{Construction de repr\'esentations $p$-adiques}, Ann. Sci. \'Ecole Norm. Sup. (4) 15 (1982), no. 4, 547-608.


\bibitem[GaG]{GaG} A. Ganguli, E. Ghate, \emph{Reductions of Galois representations via the mod $p$ local Langlands correspondence}, J. Number Theory 147 (2015), 250-286.


%\bibitem[Gee11]{Gee} Toby Gee, \emph{Automorphic lifts of prescribed types}, Math. Annalen 350 (2011), no. 1, 107--144.


%\bibitem[GG10]{GG} T. Gee, D. Geraghty, \emph{Comanion forms for unitary and sympletic groups}, Duke Math. J. 161 (2010), no. 2, 247--303.

%\bibitem[GHLS]{GHLS} T. Gee, F. Herzig, T. Liu, D. Savitt, \emph{Potentially crystalline lifts of certain prescribed types}, Documenta Math. 22 (2017) 397--422.


%\bibitem[GHS]{GHS} T. Gee, F. Herzig, D. Savitt, \emph{General Serre weight conjectures}, J. Europ. Math. Soc. 20 (2018), no. 12, 2859–-2949.


\bibitem[GK14]{GK} T. Gee, M. Kisin, \emph{The Breuil--M\'ezard conjecture for potentially Barsotti-Tate representations}, Forum Math., Pi 2 (2014), e1, 56.


\bibitem[GLS14]{GLS14} T. Gee, T. Liu, D. Savitt, \emph{The Buzzard--Diamond--Jarvis conjecture for unitary groups}, J. Amer. Math. Soc. 27 (2014), no. 2, 389--435.


\bibitem[GLS15]{GLS15} T. Gee, T. Liu, D. Savitt, \emph{The weight part of Serre's conjecture for $\GL(2)$}, Forum Math., Pi 3 (2015), e2,~52.

\bibitem[GRa]{GRa} E. Ghate, V. Rai, \emph{Reductions of Galois representations of slope 3/2}, Documenta Math., to appear.


\bibitem[GP19]{GP} L. Guerberoff, C. Park, \emph{Semi-stable deformation rings in even Hodge--Tate weights}, Pacific J. Math. 298-2 (2019) 299--374.

%\bibitem[Her06]{Herzigthesis} F. Herzig, \emph{The weight in a Serre-type conjecture for tame $n$-dimensional Galois representations}, Ph.D thesis (2006).

%\bibitem[Her09]{herzig-duke} F. Herzig, \emph{The weight in a Serre-type conjecture for tame $n$-dimensional Galois representations}, Duke Math. J. 149 (2009), no. 1, 37--116.

%\bibitem[HLM]{HLM} F. Herzig, D. Le, S. Morra, \emph{On mod $p$ local-global compatibility for $\GL_3(\Q_p)$ in the ordinary case}, Compositio Math. 153 (2017), no. 11, 2215--2286.

%\bibitem[Hu10]{Hu} Y. Hu, \emph{Sur quelques repr{\'e}sentations supersingulieres de $\mathrm{GL}_2 (\Q_{p^f})$}, J. Algebra, 324 (2010), no.7, 1577--1615.

%\bibitem[Jan81]{Jantzen81} J. C. Jantzen, \emph{Zur Reduktion modulo p der Charaktere von Deligne und Lusztig}, J. Algebra, 70(2), 452--474, 1981.

%\bibitem[Jan84]{Jantzen84} J. C. Jantzen, \emph{Filtrierungen Der Darstellungen in Der Hauptserie Endlicher Chevalley-Gruppen}, Proc. Lond. Math. Soc. 49(3), 445-482, 1984.

%\bibitem[Jan03]{Jantzen2003} J. C. Jantzen, \emph{Representations of algebraic groups}, Amer. Math. Soc. No. 107, 2007.

\bibitem[Kis09]{Kis09} M. Kisin, \emph{The Fontaine-Mazur conjecture for $\GL_2$}, J. Amer. Math. Soc. 22 (2009), no. 3, 641–690.


%\bibitem[Lang]{Lang} S. Lang, \emph{Cyclotomic fields I and II}. Vol 121 Springer Science and Business Media 2012.


%\bibitem[Le15]{Daniel15} D. Le, \emph{Lattices in the cohomology of $U(3)$ arithematic manifolds}, Math. Annalen 372 (2018), no. 1-2, 55-89.

\bibitem[LLMPQa]{LLMPQa} D. Le, B. V. Le Hung, S. Morra, C. Park, Z. Qian, \emph{Moduli of Fontaine--Laffaille representations and a mod-$p$ local-global compatibility result}, to appear in Mem. Amer. Math. Soc.

\bibitem[LLMPQ24]{LLMPQb} D. Le, B. V. Le Hung, S. Morra, C. Park, Z. Qian, \emph{Colength one deformation rings}, Trans. Amer. Math. Soc. 377 (2024) 5749--5786


\bibitem[Liu08]{Liu08} T. Liu, \emph{On lattices in semi-stable representations:a proof of a conjecture of Breuil}, Compositio Math. 144 No.1, 61--88, 2008.

\bibitem[Liu]{Liu} T. Liu, \emph{Reductions of certain $3$-dimensional crystalline representations}, preprint.

\bibitem[LHLM18]{LLHLM18} D. Le, B. V. Le Hung, B. Levin, S. Morra, \emph{Potentially crystalline deformation ring and Serre weight conjectures: shapes and shadows}, Inventiones Math. 212 no. 1, (2018), 1--107.

\bibitem[LHLM23]{LLHLM23} D. Le, B. V. Le Hung, B. Levin, S. Morra, \emph{Local models for Galois deformation rings and applications}, Inventiones Math. 231, 1277-1488 (2023).


\bibitem[LLL]{LLL} D. Le, B. V. Le Hung, B. Levin, \emph{Weight elimination in Serre-type conjectures}, Duke Math. J. 168 (2019), no. 13, 2433--2506.


\bibitem[LMP]{LMP} D. Le, S. Morra, C. Park, \emph{On mod $p$ local-global compatibility for $\GL_3(\Q_p)$ in the non-ordinary case}, Proc. Lond. Math. Soc. 117 (2018), no. 4, 790--848.

\bibitem[LP]{LP} W. Lee, C. Park, \emph{Semi-stable deformation rings in even Hodge--Tate weights: the residually reducible case}, Int. J. Number Theory 18 no. 10 (2022) 2171--2209.



\bibitem[MP]{MP} S. Morra, C. Park, \emph{Serre weights for three-dimensional ordinary Galois representations}, J. Lond. Math. Soc. 96 (2017) 394--424.

\bibitem[Par17]{Par} C. Park, \emph{Reduction modulo $p$ of certain semi-stable representations}, Trans. Amer. Math. Soc. 369 (2017) 5425--5466.

\bibitem[PQ]{PQ} C. Park, Z. Qian, \emph{On mod $p$ local-global compatibility for $\GL_n(\Q_p)$ in the ordinary case}, Mém. Soc. Math. France 173 (2022) vi+150.

%\bibitem[Pas13]{Pas} V. Paskunas, \emph{The image of Colmez's Montreal functor}, Publ. Math. Inst. Hautes \'Etudes Sci. 118 (2013), 1--91.

\bibitem[Pas15]{Pas15} V. Paskunas, \emph{On the Breuil–M\'ezard conjecture}, Duke Math. J. 164 (2015), no. 2, 297–359.


\bibitem[Sav05]{Savitt} D. Savitt, \emph{On a conjecture of Conrad, Diamond, and Taylor}, Duke Math. J. 128 (2005), no. 1, 141--197.

%\bibitem[Sch15]{Sch15} P. Scholze, \emph{On the p-adic cohomology of the Lubin-Tate tower}, Ann. Sci. \'Ecole. Norm. Sup. (4) 51 (2018), no. 4, 811--863.

%\bibitem[Schr15]{Schraen} B. Schraen, \emph{Sur la pr{\'e}sentation des repr{\'e}sentations supersingulieres de $\mathrm{GL}_2(F)$}, J. f\"ur die Reine und Angew. Math. (2015), no. 704, 187--208.

\end{thebibliography}

\end{document}